\renewcommand{\geq}{\geqslant}
\renewcommand{\leq}{\leqslant}
\theoremstyle{plain}
\newtheorem{teo}{Theorem}[section]
\newtheorem*{teo*}{Theorem}
\newtheorem{lemma}[teo]{Lemma}
\newtheorem{prop}[teo]{Proposition}
\newtheorem{cor}[teo]{Corollary}
\newtheorem{remark}[teo]{Remark}
\newtheorem{axiom}{Axiom}[section]
\newtheorem{axiom*}{Axiom}
\theoremstyle{definition}
\newtheorem{defin}{Definition}[section]
\newtheorem*{defin*}{Definition}
\theoremstyle{remark}
\newtheorem{note}[teo]{Note}
\newtheorem*{note*}{Note}
\begin{document}

\begin{titlepage}

\addtolength{\hoffset}{24pt}

\begin{center}

\makebox[\linewidth]{
\begin{minipage}[t]{0.4\linewidth}
\centering{\textsc{\Large{Universit\`a degli Studi di Trieste}\\ 
                   \vspace{0.3cm}
                   \large{Dipartimento di Matematica e Geoscienze}}\\
                   \vspace{0.3cm}
                   Corso di Studi in Matematica \\
                   \vspace{0.5cm}
                   \includegraphics[width=0.59\textwidth]{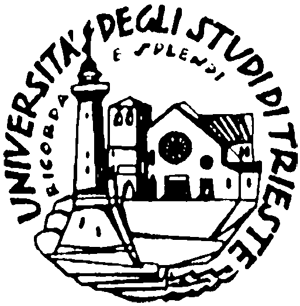}}
\end{minipage}

\hspace{0.2\linewidth}

\begin{minipage}[t]{0.4\linewidth}
\centering{\textsc{\Large{S.I.S.S.A. - Scuola Internazionale Superiore di Studi Avanzati}}\\
                          \vspace{0.3cm}
                          Percorso Formativo Comune per la Laurea Magistrale in Matematica\\
                          \vspace{0.8cm}
                          \includegraphics[width=0.68\textwidth]{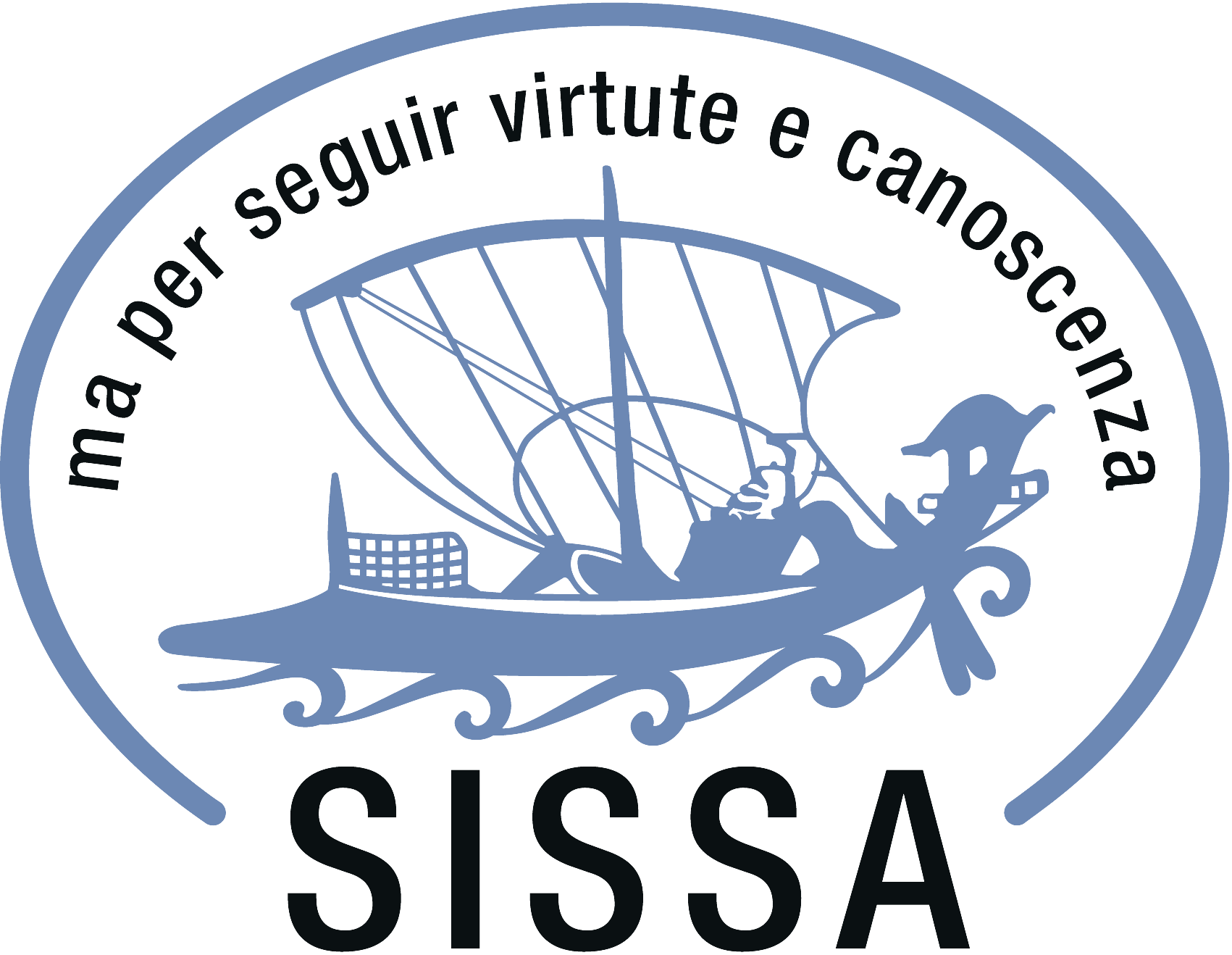}}
\end{minipage}
}

\vspace{3.5cm}
\large{\textsc{Tesi di Laurea Magistrale}}\\

\vspace{1.5cm}
\Large{\textsc{Approximate Hermitian-Yang-Mills structures on semistable Higgs bundles}}\\

\vspace{3cm}

\end{center}

\makebox[\linewidth]{%
\begin{minipage}[t]{0.4\linewidth}
\centering{\large{\textsc{Candidato}\\
                  \textbf{El\'ia Saini}}}
\end{minipage}

\hspace{0.2\linewidth}

\begin{minipage}[t]{0.4\linewidth}
\centering{\large{\textsc{Relatore}\\
                  \textbf{Prof. Ugo Bruzzo}\\
                  S.I.S.S.A.}}
\end{minipage}
}

\vfill
\begin{center}
\normalsize
\rule{8cm}{0.1mm}\\
\bigskip
ANNO ACCADEMICO 2012--2013
\end{center}

\end{titlepage}

\newpage
\hspace{1cm}\thispagestyle{empty}

\newpage

\thispagestyle{empty}

\vspace*{\fill}

\noindent {\sc Strutture Hermitian-Yang-Mills approssimate su fibrati di Higgs semistabili}
 
\vspace{0.5cm}

\noindent {\it Sunto} 
\vspace{0.3cm}

Si introducono le nozioni di struttura (debole) Hermitian-Yang-Mills e di struttura Hermitian-Yang-Mills 
approssimata su fibrati di Higgs, quindi si costruisce il funzionale di Donaldson per fibrati di Higgs su 
variet\`a di K\"ahler compatte e si presentano alcune propriet\`a di base di questo funzionale. In particolare, 
si prova che il suo gradiente pu\`o essere scritto in termini della curvatura media della connessione di 
Hitchin-Simpson e si studiano alcune propriet\`a dell'equazione di evoluzione associata al funzionale di 
Donaldson. Successivamente si affronta il problema dell'esistenza di strutture Hermitian-Yang-Mills approssimate 
su fibrati di Higgs e si studia la relazione tra l'esistenza di tali strutture e la nozione algebro-geometrica 
di semistabilit\`a di Mumford-Takemoto. In particolare, si prova che per un fibrato di Higgs su una superficie di 
Riemann compatta le nozioni di esistenza di strutture Hermitian-Yang-Mills approssimate e di semistabilit\`a 
sono equivalenti.

Infine, usando il flusso del calore associato al funzionale di Donaldson, si prova che la semistabilit\`a di un 
fibrato di Higgs su una variet\`a di K\"ahler compatta e l'esistenza di strutture 
Hermitian-Yang-Mills approssimate sono equivalenti in qualunque dimensione. Come conseguenza di questo fatto, si 
deduce che diversi risultati riguardanti l'esistenza di strutture Hermitian-Yang-Mills 
approssimate su fibrati di Higgs possono essere espressi in termini di semitabilit\`a.
\vspace{3cm}

\noindent {\sc Approximate Hermitian-Yang-Mills structures on semistable Higgs bundles}

\vspace{0.5cm}

\noindent { \it Abstract} 
\vspace{0.3cm}

We review the notions of (weak) Hermitian-Yang-Mills structure and approximate Hermitian-Yang-Mills structure 
for Higgs bundles. Then, we construct the Donaldson functional for Higgs bundles over compact K\"ahler manifolds 
and we present some basic properties of it. In particular, we show that its gradient flow can be written in 
terms of the mean curvature of the Hitchin-Simpson connection. We also study some properties of the evolution 
equation associated to that functional. 
Next, we study the problem of the existence of approximate Hermitian-Yang-Mills structure and its relation with 
the algebro-geometric notion of Mumford-Takemoto semistability and we
show that for a Higgs bundle over a compact Riemann surface, the notion of approximate Hermitian-Yang-Mills 
structure is in fact the differential-geometric counterpart of the notion of semistability.

Finally, using Donaldson heat flow, we show that the semistability of a Higgs bundle over a compact 
K\"ahler manifold (of every dimension) implies the existence of an approximate Hermitian-Yang-Mills structure. 
As a consequence of this we deduce that many results about Higgs 
bundles written in terms of approximate Hermitian-Yang-Mills structures can be translated in terms of 
semistability.
\vspace*{\fill}

\pagebreak

\newpage
\thispagestyle{empty}
\noindent
\beforeepigraphskip 8cm

\epigraph{"Adesso \`e pi\`u normale\\ 
adesso \`e meglio,\\
adesso \`e giusto, giusto, \`e giusto\\
che io vada"\\[5pt]
E quando poi spar\'i del tutto\\
a chi diceva "\`e stato un male"\\
a chi diceva "\`e stato un bene"\\
raccomand\`o "non vi conviene\\ 
venir con me dovunque vada,\\ 
ma c'\`e amore un po' per tutti\\ 
e tutti quanti hanno un amore\\ 
sulla cattiva strada\\ 
sulla cattiva strada"}{\emph{Fabrizio de Andr\'e\linebreak\rm{La cattiva strada}}}

\newpage
\hspace{1cm}\thispagestyle{empty}

\tableofcontents

\chapter*{Introduction}

\section*{Some historical background}

The notion of holomorphic vector bundle is common to some branches of mathematics and theoretical physics. In 
particular, this notion plays a fundamental role in complex differential geomtry, algebraic geometry, conformal 
string 
and Yang-Mills theories. Moreover, the study of holomorphic vector bundles involves 
techniques from geometric analysis, partial differential equations and topology. In this thesis we study Higgs 
bundles and some of their main properties.
We restrict our study to the case when the complex manifold is compact K\"ahler. On the one hand, complex 
manifolds provide a rich class of geometric objects, which behave rather differently than real smooth manifolds.
\\

In complex geometry, the Hitchin-Kobayashi correspondence asserts that the notion of (Mumford-Takemoto) 
stability, originally introduced in algebraic geometry, has a differential-geometric equivalent in terms of 
special metrics. In its classical version, this correspondence is established for holomorphic vector bundles 
over compact K\"ahler manifolds and says that such bundles are polystable if and only if they admit a 
Hermitian-Einstein \footnote{In the literature Hermitian-Einstein, Einstein-Hermite and 
Hermitian-Yang-Mills are all synonymous. Sometimes alse the terminology Hermitian-Yang-Mills-Higgs is used.}
structure. This correspondence also holds for Higgs bundles.\\

The history of this correspondence probably starts in 1965, when Narasimhan and Seshadri \cite{N12} proved that 
a holomorphic bundle over a Riemann surface is stable if and only if it corresponds to a projective irreducible 
representation of the fundamental group of the surface. Then, in the 80's Kobayashi \cite{KO3} introduced for 
the first time the notion of Hermitian-Einstein structure in a holomorphic vector bundle, as a generalization 
of K\"ahler-Einstein metric in a tangent bundle. Shortly after, Kobayashi \cite{KO4} and L\"ubke 
\cite{L41} proved that a bundle with a irreducible Hermitian-Eintein structure must be necessarly stable. 
Donaldson \cite{D50} showed that the result of Narasimhan and Seshadri \cite{N12} can be formulated in term of 
metrics and proved that the concepts of stability and existence of Hermitian-Einstein metrics are equivalent 
for 
holomorphic vector bundles over Riemann surfaces. Then, Kobayashi and Hitchin conjectured that the equivalence 
sholud be true in general for holomorphic vector bundles over K\"ahler manifolds.\\ 

The existence of a Hermitian-Einstein structure in a stable holomorphic vector bundle was proved by Donaldson 
for 
projective algebraic surfaces in \cite{D51} and for projective algebraic manifolds in \cite{D52}. Finally, 
Uhlenbeck and Yau showed in\cite{YAU} proved  for general compact K\"ahler manifolds using some techniques from 
analysis and Yang-Mills theory. Hitchin \cite{H16}, while studying the self-duality equations over a compact 
Riemann surface, introduced the notion of Higgs field and showed that the result of Donaldson for Riemann 
surfaces could be modified to include the presence of a Higgs field. Following the result of Hitchin, Simpson
\cite{SIM} defined a Higgs bundle to be a holomorphic vector bundle together with a Higgs field and proved the 
Hitchin-Kobayashi correspondence for such objects. Actually, using some sophisticated techniques in partial 
differential equations and Yang-Mills theory, he proved the correspondence even for non-compact K\"ahler 
manifolds, if they satisfy some analytic conditions. As an application of this, Simpson \cite{S18} later studied 
in detail a one-to-one correspondence between stable Higgs bundles over a compact K\"ahler manifold with 
vanishing Chern classes and irreducible representations of the foundamental group of that K\"ahler manifold.\\

The Hitchin-Kobayashi correspondence has been extended in several directions. L\"ubke and Teleman 
\cite{L40} studied the correspondence for compact complex manifolds. Bando and Siu \cite{B14} extended the 
correspondence to torsion-free sheaves over compact K\"ahler manifolds and introduced the notion of 
admissible Hermitian metric for such objects. Following the ideas of Bando and Siu, Biswas and Schumacher 
\cite{B24} introduced the notion of admissible Hermitian-Yang-Mills metric in the Higgs case and generalized 
this extension to torsion-free Higgs sheaves.\\

In \cite{D51} and \cite{D52} Donaldson introduced a functional, which is indeed known as the Donaldson 
functional, and 
later Simpson \cite{SIM} extended this functional in his study of the Hitchin-Kobayashi correspondence for Higgs 
bundles. Kobayashi in \cite{KOB} constructed the same functional in a different form and showed that it plays a 
foundamental role in a possible extension of the Hitchin-Simpson correspondence. In fact, he proved in 
\cite{KOB} that for holomorphic vector bundles over projective algebraic manifolds, the counterpart of 
semistability is the notion of approximate Hermitian-Yang-Mills structure.\\

The correspondence between semistability and the existence of approximate Hermitian-Yang-Mills structure in the 
ordinary case has been originally proposed by Kobayashi. In \cite{KOB} he proved that for a holomorphic vector 
bundle over a compact K\"ahler manifold a boundedness property of the Donaldson functional implies the 
existence of a Hermitian-Einstein structure and that implies the semistability of the bundle. Then, using some 
properties of the Donaldson functional and the Mehta-Ramanathan Theorem, he established the boundedness
propery of the Donaldson functional for semistable holomorphic bundles over compact algebraic manifolds. 
As a consequence of this, he obtained the correspondence between semistability and the existence of 
approximate Hermitian-Einstein structures when the base manifold was projective. Then he conjectured that 
all three conditions (the boundedness property, the existence of an approximate Hermitian-Einsten structure and 
the semistability) should be equivalent in general, that is, independently from whether the manifold was
algebraic or not.\\

In the Higgs case and when the manifold is one-dimensional (a compact Riemann surface), the boundedness property 
of the Donaldson functional follows from the semistability in a similar way to the classical case, since we 
need to consider only Higgs subbundles and their quotients and we have a decomposition of the Donaldson 
functional in terms of these objects. The existence of approximate Hermitian-Einsten metrics for semistable 
holomorphic vector bundles has been recently studied in \cite{J46} using some techniques developed by Buchdahl 
\cite{B48}, \cite{B49} for the desingularization of sheaves in the case of compact complex surfaces. One of the 
main difficulties in the study of this correspondence in higher dimensions arises from the notion of stability, 
since for compact K\"ahler manifolds with dimension greater or equal than two, it is necessary to consider 
subsheaves and not only subbundles. On the other hand, properties of the Donaldson functional commonly involve 
holomorphic bundles. Although all these difficulties appears also in the Higgs case,
using Donaldson heat flow, Li and Zhanh \cite{CIN} recently showed that 
the semistability of a Higgs bundle over a compact K\"ahler manifold implies the existence of approximate 
Hermitian-Einsten structure. It remains to be proved that, in dimension greater or equal than two, the 
semistability of the Higgs bundle implies the boundedness propery of the Donaldson functional. In order to 
prove this, it seems natural to introduce first the notion of admissible Hermitian metrics on Higgs sheaves. 
Then, to define the Donaldson functional for such objects using these metrics and finally, following 
\cite{J46} to study how this 
functional defined for a semistable Higgs bunde can be decomposed in terms of Higgs subsheaves and their
quotients.
 
\section*{About this thesis}
This thesis is organized as follows. In Chapter 1 we start with some basic definitions and results, 
in particular 
we introduce complex manifolds and almost complex structures on differentiable manifolds. Then we summarize the 
main properties of complex differential forms over complex manifolds. Finally, we introduce K\"ahler manifolds.
After studying some properties of these objects, we give an exemple of complex manifolds which cannot be 
given a K\"ahlerian structure.\\

In Chapter 2 we present some definitions and results on principal fibre bundles over a differentiable manifold 
$M.$ In particular we study connections and curvatures in principal fibre bundles and present some important 
results such as the structure equation and the Bianchi identity.\\

Although our primary interest lies in holomorphic vector bundles, we begin Chapter 3 with the study of 
connections in differentiable complex bundles. Then, we introduce connections in complex vector bundles over 
complex manifolds and we characterize those complex vector bundles which admit 
holomorphic structures. Moreover, we define Hermitian structures in complex vector bundles over 
(real or complex) manifolds and we study those connections which are compatible with the holomorphic structure
and the Hermitian metric. Finally, we summarize the main properties of subbundles and quotient bundles. 
We study how  Hermitian metrics on holomorphic vector bundles (over complex manifolds) 
induce metrics on subbundles and quotient bundles.
Moreover, we give the definition and study the main properties of the second 
fundamental form of subbundles and quotient bundles.\\

In Chapter 4 we introduce Chern classes. First of all we define the first Chern class of a 
line bundle. Then, using the splitting principle, we define higher Chern classes for complex vector bundles of 
any rank. 
Moreover, we present the axiomatic approach to Chern classes: this enables us to separate differential geometry 
aspects of Chern classes from their topological aspects. Finally, via de Rahm theory, we give a  
representation of Chern classes in terms of the curvature form of a connection
in a complex vector bundle.\\

In Chapter 5 we summarize some basic properties of coherent sheaves over compact K\"ahler manifolds. Then, we 
review the definition of singularity sets for coherent sheaves and briefly comment some of their main 
properties concerning to the codimension of these singularity sets. We review the construction of the 
determinant 
bundle of a coherent sheaf and we write some facts on determinant bundles that are used through this work.
Moreover, we present some useful analytic results such as the Fredholm alternative Theorem and the Maximum 
principle for parabolic equations.\\

In Chapter 6 we study the basics results of Higgs sheaves. These results 
are important mainly because the notion of stability in higher dimension (greater than one) makes reference to 
Higgs subsheaves and not only Higgs subbundles. Moreover, we summarized some properties of metrics and 
connections on Higgs bundles and itroduce the space of Hermitian structures, which is the space where the 
Donaldson functional is defined. Finally, we construct the Donaldson functional for Higgs bundles over compact 
K\"ahler manifolds following a construction similar to that of Kobayashi and we present some basic properties of
it. In particular, we prove that the critical points of this functional are precisely the 
Hermitian-Yang-Mills structures, and we show also that its gradient flow can be written in terms of the mean 
curvature of the Hitchin-Simpson connection. We also establish some properties of the solution of the evolution 
equation associated with that functional. 
Next, we study the problem of the existence of approximate Hermitian-Yang-Mills structure and its relation with 
the algebro-geometric notion of Mumford-Takemoto semistability. We prove that if the Donaldson functional of a 
Higgs bundle over a compact K\"ahler manifold is bounded from below, then there exists an approximate 
Hermitian-Yang-Mills structure on it. This fact, together with a result of Bruzzo and Gra\~{n}a Otero 
\cite{BRU}, implies the semistability of the Higgs bundle.
Then we show that a semistable but non-stable Higgs bundle can be included into a short exact sequence with a 
stable Higgs subsheaf and a semistable Higgs quotient. We use this result in the final part of this chapter when 
we show that for a Higgs bundle over a compact Riemann surface, the notion of approximate Hermitian-Yang-Mills 
structure is in fact the differential-geometric counterpart of the notion of semistability.\\

In Chapter 7, using Donaldson heat flow, we show that the semistability of a Higgs bundle over a compact 
K\"ahler manifold (of every dimension) implies the existence of an approximate Hermitian-Yang-Mills structure;
this proof is due to Jiayu and Zhang \cite{CIN} and essentially follows from Simpson \cite{SIM}.

\section*{Acknowledgements}
I would like to thank my colleagues Umberto Lardo and Gianluca Orlando and my closest friend Alberto "Kravatz" 
Cavallo for many enlighthening discussions about mathematics. 
I wish to mention Francesco de Anna for helping me in the proof of Lemma \ref{Simpsonliminf}.

\chapter{Basics on Complex Manifolds}
 \section{Complex manifolds}
In this first chapter we start with some basic definitions and results, in particular we introduce complex 
manifolds and almost complex structures on differentiable manifolds.
 
 \begin{defin}
  Let $M$ a T2, N2 topological space. A complex $n\text{-atlas}$ $\mathcal{A}$ is a collection of local charts 
  $\{U_{\alpha},\phi_{\alpha}\}$ such that:
  \begin{enumerate}
   \item $\{U_{\alpha}\}$ is a countable open cover of $M,$
   \item $\phi_{\alpha}:U_{\alpha}\longrightarrow\mathbb{C}^n$ are homeomorphism of $U_{\alpha}$
   with an open subset of $\mathbb{C}^n,$
   \item If $U_{\alpha}\cap U_{\beta}\neq\emptyset,$ the transition functions
   \begin{equation*}
    \left.\phi_{\alpha}\circ\phi_{\beta}^{-1}\right|_{\phi_{\beta}(U_{\alpha}\cap U_{\beta})}:
   \phi_{\beta}(U_{\alpha}\cap U_{\beta})\longrightarrow\phi_{\alpha}(U_{\alpha}\cap U_{\beta})
   \end{equation*}
   are biholomorphisms.
  \end{enumerate}
 \end{defin}
 
 \begin{defin}
  Let $M$ a T2, N2 topological space. Two complex $n\text{-atlases}$ 
  \linebreak$\mathcal{A}=\{U_{\alpha},\phi_{\alpha}\}$ and 
  $\mathcal{B}=\{V_{\beta},\psi_{\beta}\}$ are equivalent if every transition function
  \begin{equation*}
   \left.\phi_{\alpha}\circ\psi_{\beta}^{-1}\right|_{\psi_{\beta}(U_{\alpha}\cap V_{\beta})}:
   \psi_{\beta}(U_{\alpha}\cap V_{\beta})\longrightarrow\phi_{\alpha}(U_{\alpha}\cap V_{\beta})
  \end{equation*}
   is a biholomorphim. A complex structure on $M$ is an equivalence class $\overline{\mathcal{A}}$ 
   of complex $n\text{-atlases}.$
 \end{defin}
 
 \begin{defin}
  Let $M$ be a T2, N2 topological space and let $\overline{\mathcal{A}}$ a complex structure on $M.$ 
  We say that the pair $(M,\overline{\mathcal{A}})$ is a complex manifold. We also define the complex 
  dimension of $M$ as $\text{dim}_{\mathbb{C}}=n.$
 \end{defin}
 
 \begin{defin}
  Let $M$ and $N$ be complex manifolds with $\text{dim}_{\mathbb{C}}M=m$ and $\text{dim}_{\mathbb{C}}N=n.$
  Let $f:M\longrightarrow N$ be an application. $f$ is holomorphic if for every point $P\in M$ and local 
  charts $(U_{\alpha},\phi_{\alpha})$ of $P$ in $M$ and
  $(V_{\beta},\psi_{\beta})$ of $f(P)$ in $N$ such that $f(U_{\alpha})\subseteq V_{\beta}$
  the representative $\psi_{\beta}\circ f\circ\phi_{\alpha}^{-1}$
  is holomorphic. 
 \end{defin}
 
Holomorphic functions have some interesting properties, see \cite{CAR} and \cite{GR} for more details.

 \begin{teo}
 \emph{(Open mapping Theorem)}
  Let $M$ and $N$ be complex manifolds and let $f:M\longrightarrow N$ be a holomorphic function. If $f$
  is not constant, then $f$ is an open mapping.
 \end{teo}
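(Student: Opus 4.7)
The plan is to reduce this global statement to the classical open mapping theorem for holomorphic maps between open subsets of complex Euclidean spaces, as proved in the standard references \cite{CAR} and \cite{GR}.

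The starting observation is that openness is a local property at the source: it suffices to show that for every $P\in M$ and every open neighbourhood $U'$ of $P$ contained in a chart domain $U_{\alpha}$ with $f(U_{\alpha})\subseteq V_{\beta}$, the image $f(U')$ is open in $N$. Composing with the chart homeomorphisms $\phi_{\alpha}$ and $\psi_{\beta}$, this is equivalent to showing that the local representative $g=\psi_{\beta}\circ f\circ\phi_{\alpha}^{-1}$ is open as a map between open subsets of $\mathbb{C}^m$ and $\mathbb{C}^n$. Before invoking the local theorem I would need to know that $g$ itself is non-constant, and this is where I would use the identity principle: assuming $M$ is connected (the natural hypothesis, since on a connected component on which $f$ is constant the map is certainly not open), a locally constant $g$ would force $f$ to be constant on all of $M$ by analytic continuation.

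At this point the problem is purely local in $\mathbb{C}^n$, and the main substance is the open mapping theorem for non-constant holomorphic maps between Euclidean domains, which is where I would expect the main obstacle to lie. In the one-variable case $m=n=1$ the argument is classical: for $g$ non-constant near $z_0$, the function $g(z)-g(z_0)$ has an isolated zero, and by Rouch\'e's theorem this zero persists under small perturbations, so every $w$ close to $g(z_0)$ is attained. In several variables the result is more delicate: the standard strategy is either to slice $g$ by one-dimensional complex lines through the point of interest and reduce to the scalar case, or to use the Weierstrass preparation theorem together with the constant-rank theorem for holomorphic maps to handle the generic locus separately from the singular one, invoking eventually Remmert's proper mapping theorem.
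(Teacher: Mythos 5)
The paper offers no proof of this statement; it simply defers to \cite{CAR} and \cite{GR}, where the open mapping theorem is proved for scalar-valued holomorphic functions. Your reduction to charts and your use of the identity principle (together with the observation that one must assume $M$ connected, or read ``not constant'' as ``not locally constant'') are both correct and are exactly the right preliminary steps.

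The genuine gap is in the local statement you defer to at the end. For a holomorphic map $g$ between open subsets of $\mathbb{C}^m$ and $\mathbb{C}^n$ with $n\geq 2$, ``non-constant'' does \emph{not} imply ``open'', and no combination of the Weierstrass preparation theorem, the constant-rank theorem and Remmert's proper mapping theorem will prove it: the map $g(z,w)=(z,zw)$ from $\mathbb{C}^2$ to $\mathbb{C}^2$ is non-constant, yet the image of any small ball about the origin is $\{(a,b):0<|a|<\eps,\ |b|<\eps|a|\}\cup\{(0,0)\}$, which is not open. The theorem as stated in the thesis is therefore false unless $\dim_{\mathbb{C}}N=1$ (in particular when $N=\mathbb{C}$, which is the setting of the cited references). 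In that one-dimensional-target case your slicing strategy does work: pick a complex line through the point on which the local representative is non-constant, restrict to get a non-constant holomorphic function of one variable, and apply the Rouch\'e argument you describe; openness of the image of a neighbourhood then follows because the image already contains a neighbourhood of $g(z_0)$ in $\mathbb{C}$. So the correct diagnosis is not that the several-variables case is ``more delicate'' but that the obstruction lies entirely in the dimension of the target, and your proof can only be completed after restricting the statement to targets of complex dimension one.
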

 
 \begin{defin}
 Let $(M,g)$ be a Riemannian manifold and let $f$ be a $\mathcal{C}^{\infty}$ function on $M.$ The gradient of
 $f$ is the unique vector field $\nabla f\in\chi(M)$ such that for every vector field $X\in\chi(M)$
 \begin{equation*}
  g(\nabla f,X)=\mathrm{d}f(X).
 \end{equation*}
A point $P\in M$ is said to be a critical point of $f$ if $(\nabla f)_P=0.$
\end{defin}

\begin{note}
 In local coordinates we have
\begin{equation*}
 (\nabla f)=g^{ij}\frac{\partial f}{\partial x^i}\frac{\partial}{\partial x^j}.
\end{equation*}
\end{note}

\begin{defin}
 Let $(M,g)$ be a Riemannian manifold of (real) dimension $n$ and let $X\in\chi(M)$ be a vector field on $M.$
 Let $\nabla$ be the Levi-Civita connection associated with the Riemannian metric $g.$ The divergence of $X$ 
 the 
 function on $M$ obtained by contraction of the $(1,1)\text{-tensor}$ field $\nabla X,$ i.e., the function
 \begin{equation*}
  \mathrm{div}(X)=\mathcal{C}_{1}^{1}(\nabla X).
 \end{equation*}
Here $\mathcal{C}_{1}^{1}$ denote the contraction of the $(1,1)\text{-tensor}$ field $\nabla X.$
\end{defin}

\begin{note}
 In local coordinates we have
 \begin{equation*}
  \mathrm{div}(X)=\sum_{k=1}^{n}\left(\frac{\partial X^{k}}{\partial x^{k}}+
  \sum_{h=1}^{n}\Gamma_{kh}^{h}X^{h}\right),
 \end{equation*}
where $\Gamma_{jl}^{i}$ are the Christoffel's symbols of the Levi-Civita connection $\nabla$ associated with $g.$
\end{note}

 \begin{defin}
  Let $(M,g)$ be a Riemannian manifold of (real) dimension $n$ and let $f$ be a $\mathcal{C}^{\infty}$ complex
  valued function on $M.$ We define the Laplace-Beltrami operator as the divergence of the gradient:
  \begin{equation*}
   \Delta f=\mathrm{div}(\nabla f).
  \end{equation*}
 \end{defin}

 \begin{defin}
  Let $M$ be a complex manifold of complex dimension $n$ and let $f:M\longrightarrow\mathbb{C}$ be a 
  $\mathcal{C}^{\infty}$ function. 
  We say that $f$ is harmonic if, regarded as a function on the real Riemannian manifold $M$ of real 
  dimension $2n,$ $\Delta f=0.$ 
 \end{defin}

 \begin{prop}
  Let $M$ be a compact complex manifold and let $f:M\longrightarrow\mathbb{C}$ be a harmonic function. 
  Then $f$ must be a constant.
 \end{prop}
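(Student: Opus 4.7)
The plan is to use a standard integration-by-parts argument on the compact boundaryless Riemannian manifold $M$ to force $\nabla f$ to vanish identically.

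First I would reduce to the real-valued case. Writing $f = u + iv$ with $u, v \in \mathcal{C}^\infty(M,\R)$, the $\R$-linearity of the Laplace-Beltrami operator (and the fact that the metric $g$ is real) gives $\Delta u = \Delta v = 0$, so it suffices to show that an arbitrary real-valued harmonic function on $M$ is constant on each connected component.

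Next, applying the Leibniz rule for the Levi-Civita connection $\nabla$ to the vector field $u\,\nabla u$ yields
\begin{equation*}
\mathrm{div}(u\,\nabla u) \;=\; g(\nabla u,\nabla u) \;+\; u\,\mathrm{div}(\nabla u) \;=\; |\nabla u|_g^{2} \;+\; u\,\Delta u.
\end{equation*}
I would then integrate this identity against the Riemannian volume form $dV_g$. Since $M$ is compact and has no boundary, the divergence theorem forces the integral of the left-hand side to vanish; since $u$ is harmonic, the term $u\,\Delta u$ drops out, leaving
\begin{equation*}
\int_M |\nabla u|_g^{2}\,dV_g \;=\; 0.
\end{equation*}
The integrand being continuous and non-negative, I conclude that $\nabla u \equiv 0$ on $M$, so $u$ is locally constant, hence constant on each connected component. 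Applying the same reasoning to $v$ completes the argument.

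The only non-trivial outside input is the divergence theorem for a closed Riemannian manifold (the vanishing of $\int_M \mathrm{div}(X)\,dV_g$ for any smooth vector field $X$); once that is admitted, the proof is a one-line Bochner-type computation with no real obstacle. A workable alternative would be to note that $|f|^2$ is subharmonic, since a short calculation gives $\Delta(|f|^2) = 2|\nabla u|_g^2 + 2|\nabla v|_g^2 \geq 0$, and then invoke the strong maximum principle at an interior maximum supplied by compactness; I prefer the integration-by-parts route because it uses only the notions of gradient, divergence, and Laplacian already introduced in the chapter.
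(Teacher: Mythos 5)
Your argument is correct: the reduction to real-valued harmonic functions, the identity $\mathrm{div}(u\,\nabla u)=|\nabla u|_g^2+u\,\Delta u$, and the divergence theorem on the closed manifold $M$ together give $\nabla u\equiv 0$, which is exactly the standard proof of this fact. The paper states this proposition without any proof, so there is nothing to compare against; your integration-by-parts route (and the subharmonicity alternative you sketch) is the expected argument, and your remark that one only gets constancy on each connected component is the right caveat.
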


We now define and study almost complex manifolds.

 \begin{defin}
  Let $M$ be a differentiable manifold. A pair $(M,J)$ is called an almost complex
  manifold if $J$ is a tensor field of type $(1,1)$ such that at each point $P\in M,$ we have:
  $J_{P}^2=-\mathds{1}_{P}.$ The tensor field $J$ is called the almost complex structure of $M.$
 \end{defin}
 Let us look at the tangent space of a complex manifold $M$ with $\text{dim}_{\mathbb{C}}M=n$. The tangent 
 space $T_{P}M$, regarded as a $2n\text{-real}$ vector space, is spanned by $2n\text{-real}$ vectors
 \begin{equation*}
  \{\partial/\partial x^1,\dots,\partial/\partial x^n,\dots,
 \partial/\partial y^1\dots,\partial/\partial y^n\},
 \end{equation*}
 where $z^k=x^k+iy^k$ are the complex coordinates in a complex chart $(U,\phi).$
 With the same coordinates the dual space $T_{P}^*M$ as $2n\text{-dimensional}$ (real) vector space is spanned 
 by
 \begin{equation*}
  \{\mathrm{d}x^1,\dots,\mathrm{d}x^n,\mathrm{d}y^1,\dots,\mathrm{d}y^n\}.
 \end{equation*}
 Let us define $2n\text{-vectors}$
 
 \begin{equation}
 \label{basez}
  \partial/\partial z^k=\frac{1}{2}\{\partial/\partial x^k
 -i\partial/\partial y^k\},
 \end{equation}

 \begin{equation}
 \label{basezz}
   \partial/\partial\overline{z}^k=\frac{1}{2}\{\partial/\partial x^k
 +i\partial/\partial y^k\},
 \end{equation}
 where $1\leq k\leq n.$ Clearly they form a basis of the $2n\text{-dimensional}$ (complex) vector space
 $T_{P}M^{\mathbb{C}}=T_{P}M\otimes\mathbb{C}.$ Note that 
 $\overline{\frac{\partial}{\partial z^k}}=\frac{\partial}{\partial\overline{z}^k}.$ 
 Correspondingly $2n$ complex one-forms
 \begin{equation*}
   \mathrm{d}z^k=\mathrm{d}x^k+i\mathrm{d}y^k\hspace{0.5cm}\text{ and }\hspace{0.5cm}
   \mathrm{d}\overline{z}^k=\mathrm{d}x^k-id\mathrm{y}^k
 \end{equation*}

 form a complex basis of $T_{P}^*M^{\mathbb{C}}=T_{P}^{\ast}M\otimes\mathbb{C}.$ They are dual to
 $\partial/\partial z^k$ and
 $\partial/\partial\overline{z}^k.$
 
\begin{prop}
 Let $M$ be a complex manifold of (complex) dimension $n.$ Regarded as a $2n\text{-differentiable}$ 
 manifold, $M$ admits an almost complex 
 structure $J.$
\end{prop}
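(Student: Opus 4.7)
The plan is to build $J$ chart-by-chart from the complex coordinate structure and then verify that the different local definitions glue together into a globally defined tensor field. Given a chart $(U_\alpha,\phi_\alpha)$ with complex coordinates $z^k=x^k+iy^k$, define an endomorphism $J_\alpha$ of the real tangent bundle over $U_\alpha$ by
\begin{equation*}
 J_\alpha\!\left(\frac{\partial}{\partial x^k}\right)=\frac{\partial}{\partial y^k},\qquad
 J_\alpha\!\left(\frac{\partial}{\partial y^k}\right)=-\frac{\partial}{\partial x^k},
\end{equation*}
extended by $\mathbb{R}$-linearity. A direct check on the basis gives $J_\alpha^2=-\mathrm{id}$ pointwise, so each $J_\alpha$ is a $(1,1)$-tensor field of the required algebraic type on $U_\alpha$. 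Its smoothness is clear because its components in the coordinate frame are constant.

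The main obstacle is independence of chart: on an overlap $U_\alpha\cap U_\beta$ one must show $J_\alpha=J_\beta$. Here I would use that the transition map $F=\phi_\alpha\circ\phi_\beta^{-1}$ is biholomorphic. Writing $F$ in real coordinates as $(x,y)\mapsto(u(x,y),v(x,y))$, holomorphicity is equivalent to the Cauchy-Riemann equations
\begin{equation*}
 \frac{\partial u^j}{\partial x^k}=\frac{\partial v^j}{\partial y^k},\qquad
 \frac{\partial u^j}{\partial y^k}=-\frac{\partial v^j}{\partial x^k}.
\end{equation*}
These are precisely the condition that the real Jacobian $DF$ commutes with the standard complex structure $J_0$ on $\mathbb{R}^{2n}$, i.e.\ $DF\circ J_0=J_0\circ DF$. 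Since the local endomorphism $J_\alpha$ corresponds under $(\phi_\alpha)_\ast$ exactly to $J_0$, and likewise for $J_\beta$, pulling back through $F$ shows $(\phi_\beta)_\ast J_\alpha(\phi_\beta)_\ast^{-1}=J_0=(\phi_\beta)_\ast J_\beta(\phi_\beta)_\ast^{-1}$, and hence $J_\alpha=J_\beta$ on the overlap.

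It follows that the local tensors patch together into a globally defined smooth $(1,1)$-tensor field $J$ on $M$, and since $J^2=-\mathrm{id}$ was verified in each chart it holds globally. Therefore $(M,J)$ is an almost complex manifold. The only nontrivial point is the compatibility check, which reduces, as sketched, to the Cauchy-Riemann equations satisfied by holomorphic transition functions; everything else is a routine verification in local coordinates.
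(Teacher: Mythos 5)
Your proposal is correct and follows essentially the same route as the paper: define $J$ chart-by-chart by $\partial/\partial x^k\mapsto\partial/\partial y^k$, $\partial/\partial y^k\mapsto-\partial/\partial x^k$, check $J^2=-\mathrm{id}$, and use the Cauchy--Riemann equations of the holomorphic transition functions to see that the local definitions agree on overlaps. Your phrasing of the compatibility check as ``the real Jacobian commutes with the standard $J_0$'' is just a coordinate-free repackaging of the chain-rule computation the paper carries out explicitly.
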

\begin{proof}
 Let $M$ be a complex manifold and define a linear map 
 \linebreak$J_P:T_PM\longrightarrow T_PM$ by:
 \begin{equation*}
  J_P\left(\left.\frac{\partial}{\partial x^k}\right|_P\right)=
  \left.\frac{\partial}{\partial y^k}\right|_P\hspace{0.5cm}\text{ and }\hspace{0.5cm}
  J_P\left(\left.\frac{\partial}{\partial y^k}\right|_P\right)=
  -\left.\frac{\partial}{\partial x^k}\right|_P.
 \end{equation*}

 $J_P$ is a real tensor of type $(1,1)$ and $J_{P}^2=-\mathds{1}_P$ where $\mathds{1}_P$ is the 
 identity map on $T_PM.$ 
 With respect to the basis  
 $\{\partial/\partial x^1,\dots,\partial/\partial x^n,\dots,
 \partial/\partial y^1\dots,\partial/\partial y^n\},$ $J_P$ takes the form:
 \begin{equation*}
   \left(\begin{array}{cc}
	0		     & -\mathds{1}_P\\
        \mathds{1}_P             &      0 
      \end{array}\right).
  \end{equation*}
  We only have to show that the action of $J_P$ is independent of the chart.
 In fact, let $(U,\phi)$ and $(V,\psi)$ be overlapping charts with local coordinates
 $z^k=x^k+iy^k$ and $w^k=u^k+iv^k.$
 On $U\cap V,$ the functions $z^k=z^k(w)$ satisfy the Cauchy-Riemann relations. Then we find:
 \begin{equation*}
  J\left(\frac{\partial}{\partial u^k}\right)=
 J\left(\frac{\partial x^h}{\partial u^k}\frac{\partial}{\partial x^h}+
 \frac{\partial y^h}{\partial u^k}\frac{\partial}{\partial y^h}\right)=
 \frac{\partial y^h}{\partial v^k}\frac{\partial}{\partial y^h}+
 \frac{\partial x^h}{\partial v^k}\frac{\partial}{\partial x^h}=\frac{\partial}{\partial v^k}.
 \end{equation*}
 We also find $J(\partial/\partial v^k)=-\partial/\partial u^k$ and this completes the proof.
\end{proof}

Let $(M,J)$ be an almost complex manifold. $J_P$ may be defined on $T_PM^{\mathbb{C}}$ by setting

\begin{equation*}
 J_P(X+iY)=J_PX+iJ_PY.
\end{equation*}
Then we have the identities
\begin{equation*}
 J_P(\partial/\partial z^k)=i\partial/\partial z^k\hspace{0.5cm}\text{ and }\hspace{0.5cm}
 J_P(\partial/\partial\overline{z}^k)=-i\partial/\partial\overline{z}^k.
\end{equation*}
Thus we have an expression for $J_P$ in (anti-) holomorphic bases,
\begin{equation*}
 J_P=idz^k\otimes\partial/\partial{z^k}-id\overline{z}^k\otimes\partial/\partial{z^k}
\end{equation*}
whose components are given by
\begin{equation*}
   \left(\begin{array}{cc}
	i\mathds{1}_P		     & 0\\
        0             &      -i\mathds{1}_P 
      \end{array}\right).
  \end{equation*}
Let $Z\in T_PM^{\mathbb{C}}$ be a vector of the form $Z=Z^k\partial/\partial z^k$. Then $Z$ is
an eigenvector of $J_P,$ in fact we have $J_PZ=iZ.$ In the same way if 
$W=W^k\partial/\partial\overline{z}^k,$
it satisfies $J_PW=-iW.$ In this way $T_PM^{\mathbb{C}}$ is decomposed into a direct sum
\begin{equation}
 \label{SUM}
 T_PM^{\mathbb{C}}=T_PM^{\mathbb{C}+}\oplus T_PM^{\mathbb{C}-},
\end{equation}
where
\begin{equation*}
 T_PM^{\mathbb{C}\pm}=\{Z\in T_PM^{\mathbb{C}}|J_PZ=\pm iZ\}.
\end{equation*}
Now $Z\in T_PM^{\mathbb{C}}$ is uniquely decomposed as $Z=Z^{+}+Z^-$ and $T_PM^{\mathbb{C}+}$ is
spanned by $\{\partial/\partial z^k\},$ while 
$T_PM^{\mathbb{C}-}$ by $\{\partial/\partial\overline{z}^k\}.$
In the same way \linebreak$TM^{\mathbb{C}}=TM\otimes\mathbb{C}$ is decomposed into a direct sum 
\begin{equation*}
 TM^{\mathbb{C}}=TM^{\mathbb{C}+}\oplus TM^{\mathbb{C}-}
\end{equation*}
where 
\begin{equation*}
 TM^{\mathbb{C}\pm}=\{Z\in TM^{\mathbb{C}}|JZ=\pm iZ\}.
\end{equation*}

\begin{defin}
 $Z\in T_PM^{\mathbb{C}+}$ is called a vector of type $(1,0),$ while \linebreak$W\in T_PM^{\mathbb{C}-}$ is 
 called a vector of type $(0,1).$
\end{defin}
We can easy verify that

\begin{equation*}
 T_PM^{\mathbb{C}-}=\overline{T_PM^{\mathbb{C}+}}.
\end{equation*}

\begin{note}
 We have the following identity for the dimensions of these complex vector spaces:
 \begin{equation*}
  \text{dim}_{\mathbb{C}}T_PM^{\mathbb{C}+}=\text{dim}_{\mathbb{C}}T_PM^{\mathbb{C}-}=
  \frac{1}{2}\text{dim}_{\mathbb{C}}T_PM^{\mathbb{C}}=\text{dim}_{\mathbb{C}}M.
 \end{equation*}
 \end{note}
 
 \begin{defin}
  Let $M$ be a complex manifold of (complex) dimension $n.$ We define 
  $\mathcal{X}(M)^{\mathbb{C}}=\mathcal{X}(M)\otimes\mathbb{C}.$

  Given a complex vector field $Z\in\mathcal{X}(M)^{\mathbb{C}},$ $Z$ is naturally decomposed as 
  $Z=Z^{+}+Z^-.$ $Z^+$ is called the component of type $(1,0),$ while $Z^-$ of type $(0,1).$
  Accordingly once $J$ is given, $\mathcal{X}(M)^{\mathbb{C}}$ is decomposed uniquely as
  \begin{equation}
   \mathcal{X}(M)^{\mathbb{C}}=\mathcal{X}(M)^{\mathbb{C}+}\oplus\mathcal{X}(M)^{\mathbb{C}-}
  \end{equation}
 \end{defin}
 
 \begin{defin}
  Let $(M,J)$ be an almost complex manifold. If the Lie bracket of any vector fields of type $(1,0)$
  $X,Y\in \mathcal{X}(M)^{\mathbb{C}+}$ is again of type $(1,0),$ i.e.
  $[X,Y]\in \mathcal{X}(M)^{\mathbb{C}+},$ the almost complex structure $J$ is said to be 
  integrable.
 \end{defin}
 
\begin{teo}
 Let $(M,J)$ be an almost complex manifold. If the almost complex structure $J$ is integrable then $M$ admits a
 complex structure and such complex structure is unique up to biholomorphism.
\end{teo}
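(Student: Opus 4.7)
The plan is to split the proof into an existence and a uniqueness part. For existence, the goal is to construct, around every point $P\in M$, a local chart $(U,\phi=(z^1,\dots,z^n))$ whose coordinate vector fields $\partial/\partial z^k$ span $TM^{\mathbb{C}+}$ on $U$; equivalently, the coordinate functions $z^k$ should be annihilated by every vector field of type $(0,1)$ and have $\mathbb{C}$-linearly independent differentials at $P$. Any atlas of such charts is automatically complex, because on an overlap the transition map is biholomorphic: if $w^j$ is another such tuple, then $\mathrm{d}w^j$ kills $TM^{\mathbb{C}-}$, which, expanded in the basis $\{\partial/\partial\overline{z}^k\}$ of $TM^{\mathbb{C}-}$, reads $\partial w^j/\partial\overline{z}^k=0$, i.e.\ the Cauchy-Riemann equations.

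The key input is the production of the functions $z^k$. Conjugating the integrability hypothesis I first note that $TM^{\mathbb{C}-}$ is also involutive, since for $X,Y\in\mathcal{X}(M)^{\mathbb{C}+}$ one has $\overline{[X,Y]}=[\overline{X},\overline{Y}]$ and $[X,Y]\in\mathcal{X}(M)^{\mathbb{C}+}$, so $[\overline{X},\overline{Y}]\in\mathcal{X}(M)^{\mathbb{C}-}$. Reducing to a local problem near $P$, I would seek $n$ smooth $\mathbb{C}$-valued first integrals of the rank-$n$ involutive complex distribution $TM^{\mathbb{C}-}$ inside $TM^{\mathbb{C}}$, with $\mathbb{C}$-linearly independent differentials at $P$. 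The existence of such first integrals is precisely the content of the complex Frobenius theorem of Newlander and Nirenberg, which I would quote as a black box. This is the main obstacle: in the real-analytic category the statement reduces, via complexification of coordinates, to the classical real Frobenius theorem already available to us, but in the $\mathcal{C}^{\infty}$ category one genuinely needs hard analytic input, amounting to the local solvability of a nonlinear $\overline{\partial}$-type system.

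For uniqueness, suppose $\overline{\mathcal{A}}$ and $\overline{\mathcal{B}}$ are two complex structures on $M$ that both induce the given almost complex structure $J$. In any chart $(U,(z^1,\dots,z^n))$ of either atlas the distribution spanned by $\partial/\partial\overline{z}^k$ agrees with $TU^{\mathbb{C}-}$, which depends only on $J$; hence the $\mathcal{C}^{\infty}$ functions $f:U\to\mathbb{C}$ that are holomorphic with respect to $\overline{\mathcal{A}}$ coincide with those holomorphic with respect to $\overline{\mathcal{B}}$, being exactly the solutions of the first-order system $Zf=0$ for all $Z\in\mathcal{X}(U)^{\mathbb{C}-}$. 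Therefore the identity map $\mathrm{id}:(M,\overline{\mathcal{A}})\to(M,\overline{\mathcal{B}})$ pulls back local holomorphic functions to local holomorphic functions, so its chart representatives are biholomorphic, and $\mathrm{id}$ itself is a biholomorphism between the two complex manifolds, yielding uniqueness up to biholomorphism as claimed.
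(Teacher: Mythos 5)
The paper does not actually prove this statement: it is the Newlander--Nirenberg theorem, recorded as a known result with no proof given, so there is no argument of the paper's to compare yours against. Judged on its own terms, your proposal is a correct skeleton. The reduction of the global statement to the local existence of $n$ first integrals of the involutive distribution $TM^{\mathbb{C}-}$ with independent differentials, the observation that any atlas of such charts has holomorphic transition functions (since $\partial w^j/\partial\overline{z}^k=0$ is exactly the Cauchy--Riemann system), and the uniqueness argument via the coincidence of the sheaves of local holomorphic functions for two complex structures inducing the same $J$ are all sound; the conjugation argument showing $TM^{\mathbb{C}-}$ is involutive is also correct.

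The one thing to be clear-eyed about is that the step you quote as a black box is not an auxiliary lemma but the entire analytic content of the theorem: the ``complex Frobenius theorem'' producing the first integrals in the $\mathcal{C}^{\infty}$ category \emph{is} the Newlander--Nirenberg theorem, so the existence half of your argument is a reformulation rather than a proof. You acknowledge this honestly, and in the real-analytic case your remark that one can complexify and invoke the classical Frobenius theorem does give a genuine proof. Since the thesis itself supplies nothing more, your write-up is, if anything, more informative than the source; just make sure the citation of Newlander--Nirenberg is explicit so the logical status of the existence step is not obscured.
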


\section{Complex differential forms}
In this section we define the main properties of complex differential forms over a complex manifold.

\begin{defin}
 Let $M$ be an $n\text{-dimensional}$ real differentiable manifold. We define 
 $A^p=\Gamma(M,T^{\ast}M\otimes\mathbb{C})$ the space of $\mathcal{C}^{\infty}$ 
 complex $p\text{-forms}$ over $M.$ In local coordinates $\omega\in A^p$ has the form:
 \begin{equation*}
  \omega=\omega_{j_1\ldots j_p}\mathrm{d}x^{j_1}\wedge\ldots\wedge \mathrm{d}x^{j_p},
 \end{equation*}
here $\omega_{j_1\ldots j_p}$ are $\mathcal{C}^{\infty}(U,\mathbb{C})$ functions on the local chart 
$(U;x^1,\ldots,x^n).$
\end{defin}
Now we define complex differential forms on a complex manifold $M$ and then we introduce the Dolbeault
complex and the Dolbeault cohomology groups.

\begin{defin}
 Let $M$ be a complex manifold of (complex) dimension $n$ and let $p,q\geq0.$ We define 
 \begin{equation*}
   A^{p,q}=\Gamma(M,\bigwedge_{p}T^{\ast}M^{\mathbb{C}+}\bigotimes\bigwedge_{q}T^{\ast}M^{\mathbb{C}-})
 \end{equation*}
 the space of $(p,q)\text{-forms}$
 over $M.$ As in the previous definition, in local coordinates $\omega\in A^{p,q}$ has the form:
 \begin{equation*}
  \omega=\omega_{i_1\ldots i_pj_1\ldots j_p}\mathrm{d}z^{i_1}\wedge\ldots\wedge\mathrm{d}z^{i_p}\wedge
  \mathrm{d}\overline{z}^{j_1}\wedge\ldots\wedge \mathrm{d}\overline{z}^{j_q},
 \end{equation*}
where $\omega_{i_1\ldots i_pj_1\ldots j_p}$ are $\mathcal{C}^{\infty}(U,\mathbb{C})$ functions on
$(U;z^1,\ldots,z^n,\overline{z}^1,\ldots,\overline{z}^n).$
\end{defin}

So we have $A^r=\sum_{p+q=r}A^{p,q}$ and the exterior differential $\mathrm{d}$ has the form 
$\mathrm{d}=\mathrm{d}'+\mathrm{d''},$
where $\mathrm{d}':A^{p,q}\longrightarrow A^{p+1,q}$ and $\mathrm{d''}:A^{p,q}\longrightarrow A^{p,q+1}.$
Now we can define the Dolbeault cohomology groups.

\begin{defin}
 Let $M$ be a complex manifold of (complex) dimension $n$ and let $r\geq0.$ The sequence of 
 $\mathbb{C}\text{-linear}$ maps
 
 \begin{equation*}
 \begin{tikzpicture}[node distance=2cm, auto]
  \node (A) {$A^{r,0}$};
  \node (B) [right of=A] {$A^{r,1}$};
  \node (C) [right of=B] {$\cdots$};
  \node (D) [right of=C] {$A^{r,n-1}$};
  \node (E) [right of=D] {$A^{r,n}$};
  \draw[->] (A) to node {$\mathrm{d''}$} (B);
  \draw[->] (B) to node {$\mathrm{d''}$} (C);
  \draw[->] (C) to node {$\mathrm{d''}$} (D);
  \draw[->] (D) to node {$\mathrm{d''}$} (E);
\end{tikzpicture}
\end{equation*}
 
is the Dolbeault complex. 

The set of 
$\mathrm{d''}\text{-closed}$ $(p,q)\text{-forms}$
is called the is denoted by $Z^{p,q}(M)$ and its element are called $(p,q)\text{-cocycles},$ while the set of 
$\mathrm{d''}\text{-exact}$
$(p,q)\text{-forms}$ is denoted by $B^{p,q}(M)$ and its element are called $(p,q)\text{-coboundaries}.$
Since $\mathrm{d''}\circ \mathrm{d''}=0,$ we have $B^{p,q}(M)\subseteq Z^{p,q}(M).$ 
The complex vector space
\begin{equation*}
 H^{p,q}(M)=\faktor{Z^{p,q}(M)}{B^{p,q}(M)}.
\end{equation*}
is called the $(p,q)\text{th}$ Dolbeault cohomology group of the complex manifold $M.$
\end{defin}

In the real case every closed differential form is locally exact (Poincar\'e's Lemma). 
As seen in \cite{GR} in the complex case we have a similar result due to Grothendieck.
Every $\mathrm{d''}\text{-closed}$ complex (p,q)-form, with $q\geq1,$ is locally 
$\mathrm{d''}\text{-exact}.$

\begin{defin}
 Let $r_1,\ldots,r_n\in(0,+\infty].$ A polydisc in $\mathbb{C}^n$ is a set 
 \linebreak$\Delta=\{(z^1,\ldots,z^n)\in\mathbb{C}^n||z^j|<r_j\}.$
\end{defin}

\begin{teo}
 \emph{(Dolbeault's Lemma)}
Let $\Delta\subseteq\mathbb{C}^n$ a polydisc with compact closure $\overline\Delta,$ i.e. $\Delta$ is bounded.
Let $\zeta$ a complex $(p,q)\text{-form}$ on an open neighborhood $U$ of $\overline\Delta$ and assume
$q\geq1.$ Then $\zeta$ is locally $\mathrm{d''}\text{-exact},$ i.e. there exists a complex 
$(p,q-1)\text{-form}$ $\eta$ on $\Delta$ such that $\mathrm{d''}\eta=\zeta.$
\end{teo}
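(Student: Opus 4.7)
The plan is to prove Dolbeault's Lemma by induction on what one might call the \emph{antiholomorphic degree} of $\zeta$: say $\zeta$ has antiholomorphic degree at most $k$ if, in its local expansion, every term contains only $\mathrm{d}\overline{z}^{j_1}\wedge\cdots\wedge\mathrm{d}\overline{z}^{j_q}$ with all $j_i\leq k$. The base case $k=0$ is immediate, since $q\geq 1$ forces $\zeta=0$ and we take $\eta=0$. Hence the real content is the inductive step.

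The key analytic input is the one-variable solution of the inhomogeneous Cauchy-Riemann equation. Given a smooth function $f$ defined on a neighborhood of a closed disc $\overline{D}\subseteq\mathbb{C}$, depending smoothly on additional (real or complex) parameters, I would multiply by a smooth cutoff $\chi$ equal to $1$ on a neighborhood of $\overline{D}$ and supported in the domain of $f$, and define
\begin{equation*}
u(z,\text{params})=\frac{1}{2\pi i}\iint_{\mathbb{C}}\frac{(\chi f)(w,\text{params})}{w-z}\,\mathrm{d}w\wedge \mathrm{d}\overline{w}.
\end{equation*}
A change of variables $w\mapsto w+z$ shows $u$ is smooth (even in the parameters, by differentiation under the integral sign), and the Cauchy-Green / Pompeiu formula obtained by applying Stokes' theorem on $\overline{D}\setminus B_\varepsilon(z)$ and letting $\varepsilon\to 0$ gives $\partial u/\partial\overline{z}=f$ on the interior of $D$. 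This is the engine of the whole argument.

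For the induction, assume the result for forms of antiholomorphic degree at most $k-1$ and let $\zeta$ have antiholomorphic degree at most $k$ with $\mathrm{d}''\zeta=0$. Write
\begin{equation*}
\zeta=\mathrm{d}\overline{z}^{k}\wedge\alpha+\beta,
\end{equation*}
where $\alpha$ and $\beta$ involve only $\mathrm{d}\overline{z}^{1},\ldots,\mathrm{d}\overline{z}^{k-1}$. Equating to zero the part of $\mathrm{d}''\zeta$ containing $\mathrm{d}\overline{z}^{k}\wedge\mathrm{d}\overline{z}^{j}$ for $j>k$ forces $\partial/\partial\overline{z}^{j}$ of each coefficient of $\alpha$ to vanish for $j>k$; i.e., those coefficients are holomorphic in $z^{k+1},\ldots,z^{n}$ (and smooth in all variables). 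I then apply the one-variable lemma coefficient-wise in the variable $z^{k}$, with $z^{1},\ldots,z^{k-1},z^{k+1},\ldots,z^{n}$ as smooth parameters, to produce a $(p,q-1)$-form $\gamma$ on $\Delta$ of antiholomorphic degree at most $k-1$ such that
\begin{equation*}
\mathrm{d}''\gamma=\mathrm{d}\overline{z}^{k}\wedge\alpha+\rho,
\end{equation*}
where $\rho$ has antiholomorphic degree at most $k-1$. Then $\zeta-\mathrm{d}''\gamma=\beta-\rho$ is $\mathrm{d}''$-closed and of antiholomorphic degree at most $k-1$, so the inductive hypothesis furnishes $\eta'$ on $\Delta$ with $\mathrm{d}''\eta'=\zeta-\mathrm{d}''\gamma$, and $\eta:=\eta'+\gamma$ does the job.

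The main obstacle I expect is the careful analytic bookkeeping around the one-variable step: first, ensuring that the cutoff $\chi$ can be chosen so that $\chi f$ is globally smooth and compactly supported while $\chi\equiv 1$ on a neighborhood of $\overline{\Delta}$ (this is where the hypothesis that $\zeta$ is defined on a neighborhood $U$ of $\overline{\Delta}$ enters, and where one may need to shrink slightly); and second, verifying smoothness and the correct $\overline{\partial}$-identity for the Cauchy-Green integral, which requires treating the principal-value singularity $1/(w-z)$ by Stokes' theorem on a punctured disc. Once that single analytic lemma is in hand, the induction collapses into the essentially algebraic manipulation sketched above.
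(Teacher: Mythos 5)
The paper gives no proof of this theorem at all: it is stated as a quoted result, with the surrounding text deferring to Gunning--Rossi \cite{GR} for the Grothendieck--Dolbeault lemma. So there is nothing internal to compare against; what you have written is precisely the standard proof from that reference, and it is correct. Your induction on the largest antiholomorphic index $k$, with the base case $k=0$ forcing $\zeta=0$ when $q\geq1$, the splitting $\zeta=\mathrm{d}\overline{z}^{k}\wedge\alpha+\beta$, the observation that $\mathrm{d''}\zeta=0$ forces the coefficients of $\alpha$ to be holomorphic in $z^{k+1},\ldots,z^{n}$, and the coefficient-wise application of the Cauchy--Pompeiu integral with a cutoff, is exactly the classical argument.

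Two remarks. First, the statement as printed in the paper omits the hypothesis $\mathrm{d''}\zeta=0$ (it appears only in the informal sentence preceding the theorem); you are right to assume it, since it is obviously necessary. Second, for the inductive step to close you need the cutoff $\chi$ to depend on the variable $w=z^{k}$ alone (which the product structure of the polydisc permits), so that differentiation under the integral sign gives $\partial g/\partial\overline{z}^{j}=0$ for $j>k$ and the correction term $\rho$ really has antiholomorphic degree at most $k-1$; and each of the $n$ inductive steps may shrink the neighborhood of $\overline{\Delta}$ slightly, which is harmless since there are only finitely many of them. You flag both points, so the argument is complete.
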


\begin{note}
 $Z^{p,0}(M)$ is the set of holomorphic $p\text{-forms}.$
\end{note}

\section{K\"ahler manifolds}
Let $M$ be a complex manifold with $\text{dim}_{\mathbb{C}}M=n$ and let $g$ be a Riemann metric on $M$ 
as a differentiable manifold. Take $Z=X+iY$ and $W=U+iV$ and extend $g$ to $T_PM^{\mathbb{C}}$ so that
\begin{equation*}
\begin{split}
  g_P(Z,W)&=g_P(X+iY,U+iV)=\\
          &=g_P(X,U+iV)+g_P(iY,U+iV)=\\
          &=g_P(X,U)+ig_P(X,V)+ig_P(Y,U)-g_P(Y,V)=\\
          &=g_P(X,U)-g_P(Y,V)+i[g_P(X,V)+g_P(Y,U)].
\end{split}
\end{equation*}

The components of $g$ with respect to the bases
\eqref{basez} of $T_PM^{\mathbb{C}+}$ and \eqref{basezz} of $T_PM^{\mathbb{C}-}$ are:
\begin{equation*}
 g_{ij}(P)=g_P(\partial/\partial z^i,\partial/\partial z^j)
\end{equation*}

\begin{equation*}
 g_{i\bar\jmath}(P)=g_P(\partial/\partial z^i,\partial/\partial\overline{z}^j),
\end{equation*}

\begin{equation*}
 g_{\bar\imath j}(P)=g_P(\partial/\partial\overline{z}^i,\partial/\partial z^j),
\end{equation*}

\begin{equation*}
 g_{\bar\imath\bar\jmath}(P)=g_P(\partial/\partial\overline{z}^i,\partial/\partial\overline{z}^j).
\end{equation*}
We write $\bar\imath$ and $\bar\jmath$ to stress that $g$ is now meant to be extended to 
$T_PM^{\mathbb{C}}.$
One checks that:
\begin{equation}
\label{hermi}
 g_{ij}=g_{ji},\hspace{0.4cm}
 g_{\bar\imath\bar\jmath}=g_{\bar\jmath\bar\imath},\hspace{0.4cm}
 g_{\bar\imath j}=g_{j\bar\imath},\hspace{0.4cm}
 \overline{g_{i\bar\jmath}}=g_{\bar\imath j},\hspace{0.4cm}
 \overline{g_{ij}}=g_{\bar\imath\bar\jmath}.
\end{equation}

\begin{defin}
 If a Riemannian metric $g$ of a complex manifold $M$ satisfies
 \begin{equation}
 \label{hermitian}
 g_P(J_PX,J_PY)=g_P(X,Y)
 \end{equation}
at each point $P\in M$ and for any $X,Y\in T_PM,$ it is said to be a Hermitian metric. The pair $(M,g)$
is called a Hermitian manifold.
\end{defin}

\begin{teo}
 A complex manifold $(M,J)$ always admits a Hermitian metric.
\end{teo}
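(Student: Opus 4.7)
The plan is to use a standard averaging trick: start with an arbitrary Riemannian metric on $M$ (viewed as a real differentiable manifold) and symmetrize it with respect to the almost complex structure $J$ induced by the complex structure of $M$.

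First, I would invoke the existence of a Riemannian metric on $M$. Since $M$ is a complex manifold, by definition its underlying topological space is Hausdorff and second countable, and the real $2n$-dimensional differentiable manifold structure is paracompact. A standard partition-of-unity argument then provides some Riemannian metric $g$ on $M$.

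Next, I would define, at each point $P \in M$ and for $X, Y \in T_P M$,
\begin{equation*}
h_P(X,Y) = \tfrac{1}{2}\bigl(g_P(X,Y) + g_P(J_P X, J_P Y)\bigr).
\end{equation*}
I would then verify the three properties that make $h$ a Hermitian metric in the sense of the definition just given. Symmetry in $X,Y$ is immediate from the symmetry of $g$. Positive definiteness follows because $g_P$ is positive definite and $J_P$ is an $\mathbb{R}$-linear isomorphism of $T_P M$, so $g_P(J_P X, J_P X) \geq 0$ with equality iff $X = 0$; hence $h_P(X,X) > 0$ for $X \neq 0$. Smoothness of $h$ in $P$ is clear since $g$ and $J$ are both smooth tensor fields. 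Finally, the Hermitian compatibility condition \eqref{hermitian} reduces to
\begin{equation*}
h_P(J_P X, J_P Y) = \tfrac{1}{2}\bigl(g_P(J_P X, J_P Y) + g_P(J_P^2 X, J_P^2 Y)\bigr),
\end{equation*}
and using $J_P^2 = -\mathds{1}_P$ together with the $\mathbb{R}$-bilinearity of $g_P$, the second term becomes $g_P(X,Y)$, so the right-hand side equals $h_P(X,Y)$.

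There is no real obstacle here: the whole argument is a one-line symmetrization, and the only point that requires any care is checking that $h_P$ is still positive definite after the averaging, which follows immediately from the fact that $J_P$ is invertible (so $g_P(J_PX, J_PX)$ is a nonnegative summand that cannot cancel the genuinely positive term $g_P(X,X)$). The construction is manifestly independent of charts because it is written entirely in terms of the globally defined tensors $g$ and $J$.
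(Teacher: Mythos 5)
Your proposal is correct and follows exactly the same symmetrization argument as the paper: take any Riemannian metric $g$ and average it via $\tilde{g}_P(X,Y)=\frac{1}{2}[g_P(X,Y)+g_P(J_PX,J_PY)]$. You simply spell out the verification of positive definiteness and $J$-invariance in more detail than the paper does.
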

\begin{proof}
 Let $g$ be any Riemannian metric of a complex manifold $M$. Define a new metric $\tilde{g}$ by
 \begin{equation*}
  \tilde{g}_P(X,Y)=\frac{1}{2}[g_P(X,Y)+g_P(JX,JY)].
 \end{equation*}
 Clearly $\tilde{g}_P(JX,JY)=\tilde{g}_P(X,Y)$. Moreover, $\tilde{g}$ is positive definite provided 
 that $g$ is. Hence $\tilde{g}$ is a Hermitian metric on $M.$
\end{proof}
Let $g$ be a Hermitian metric on a complex maifold $M$. From \eqref{hermitian}, we find that
$g_{ij}=g_{\bar\imath\bar\jmath}.$ Thus the Hermitian metric $g$ takes the form
\begin{equation*}
 g=g_{i\bar\jmath}dz^i\otimes d\overline{z}^j+g_{\bar\imath j}d\overline{z}^i\otimes dz^j.
\end{equation*}

\begin{note}
 Take $X,Y\in T_PM^{\mathbb{C}+}.$ Define an inner product $h_P$ in $T_PM^{\mathbb{C}+}$ by 
 \begin{equation*}
  h_P(X,Y)=g_P(X,\overline{Y}).
 \end{equation*}
It is easy to see that $h_P$ is a positive-definite Hermitian form in $T_PM^{\mathbb{C}+}.$ In fact:
\begin{equation*}
 \overline{h(X,Y)}=\overline{g(X,\overline{Y})}=g(\overline{X},Y)=h(Y,\overline{X}),
\end{equation*}
and $h(X,X)=g(X,\overline{X})=g(X_1,X_1)+g(X_2,X_2)\geq0$ for $X=X_1+iX_2.$ This is why a metric $g$
satisfying \eqref{hermitian} is called Hermitian.
\end{note}

Let $(M,J,g)$ a Hermitian manifold. Define a tensor field $\omega$ whose action on 
$X,Y\in T_PM$ is 
\begin{equation}
 \label{kahler1}
 \omega_P(X,Y)=g_P(J_PX,Y).
\end{equation}
Note that $\omega$ is antisymmetric because we have identity
\begin{equation*}
 \omega(X,Y)=g(JX,Y)=g(J^2X,JY)=-g(JY,X)=-\omega(Y,X),
\end{equation*}
hence $\omega$ is a $2\text{-form}.$

\begin{defin}
 Let $(M,J,g)$ a Hermitian manifold. The $2\text{-form}$ $\omega$ is called the K\"ahler form of the 
 Hermitian metric $g.$
\end{defin}

\begin{defin}
 A K\"ahler manifold $(M,\omega)$ is a Hermitian manifold $(M,g)$ whose K\"ahler form $\omega$ is closed: 
 $\mathrm{d}\omega=0.$ The metric $g$ is called the K\"ahler metric of $M.$
\end{defin}

\begin{prop}
 Let $(M,g)$ a Hermitian manifold. The K\"ahler form $\omega$ is a complex $2\text{-form}$ of type $(1,1).$
\end{prop}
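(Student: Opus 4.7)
The plan is to extend $\omega$ by $\mathbb{C}$-bilinearity to $T_PM^{\mathbb{C}}$ and then show that its only non-vanishing components are those where one argument lies in $T_PM^{\mathbb{C}+}$ and the other in $T_PM^{\mathbb{C}-}$; this is exactly what it means to be a $(1,1)$-form in view of the decomposition \eqref{SUM}.

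First I would note that since $g$ is a real tensor extended $\mathbb{C}$-bilinearly and $J$ is a real $(1,1)$-tensor also extended $\mathbb{C}$-linearly, the Hermitian identity \eqref{hermitian} continues to hold for arbitrary $Z,W\in T_PM^{\mathbb{C}}$, that is $g_P(J_PZ,J_PW)=g_P(Z,W)$. Similarly the antisymmetry computation given just before the definition of $\omega$ carries over verbatim to the complexification, so $\omega$ remains a (complex) $2$-form.

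Next comes the key step. Take $Z,W\in T_PM^{\mathbb{C}+}$, so that $J_PZ=iZ$ and $J_PW=iW$. Then the extended Hermitian property gives
\begin{equation*}
g_P(Z,W)=g_P(J_PZ,J_PW)=g_P(iZ,iW)=-g_P(Z,W),
\end{equation*}
so $g_P$ vanishes identically on $T_PM^{\mathbb{C}+}\times T_PM^{\mathbb{C}+}$. Consequently
\begin{equation*}
\omega_P(Z,W)=g_P(J_PZ,W)=i\,g_P(Z,W)=0.
\end{equation*}
The same argument, using $J_P=-i$ on $T_PM^{\mathbb{C}-}$, shows that $\omega_P$ vanishes on $T_PM^{\mathbb{C}-}\times T_PM^{\mathbb{C}-}$ as well.

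To conclude, I would use the decomposition \eqref{SUM}: any pair of tangent vectors splits into components of type $(1,0)$ and $(0,1)$, and by the previous step only the mixed pairings contribute. Therefore in local holomorphic coordinates $\omega$ has the form $\omega_{i\bar\jmath}\,\mathrm{d}z^i\wedge\mathrm{d}\overline{z}^j$, which is precisely the definition of a $(1,1)$-form. There is no real obstacle here; the whole content is that the Hermitian condition \eqref{hermitian} forces $g$ (hence $\omega$) to pair holomorphic with antiholomorphic directions only.
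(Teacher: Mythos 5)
Your proof is correct and follows essentially the same route as the paper: both arguments exploit that $J_P$ acts as multiplication by $\pm i$ on $T_PM^{\mathbb{C}\pm}$ together with the Hermitian identity \eqref{hermitian} to force $\omega$ to vanish on pairs of vectors of the same type. The only cosmetic difference is that you first show $g$ itself vanishes on $T_PM^{\mathbb{C}+}\times T_PM^{\mathbb{C}+}$ and then deduce the vanishing of $\omega$, whereas the paper derives $\omega(X,Y)=-\omega(X,Y)$ directly; the content is identical.
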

\begin{proof}
 Let $J$ be the almost complex structure and let $X,Y\in TM^{\mathbb{C}+}.$ From the definition of
 $TM^{\mathbb{C}+}$ we have $JX=iX$ and $JY=iY.$
 Then we have
 \begin{equation*}
  \begin{split}
   \omega(X,Y)&=\omega(-iJX,-iJY)=g(J(-iJX),-iJY)=\\
              &=(-i)^2g(J^2X,JY)=-g(-X,JY)=\\
              &=g(X,JY)=g(JY,X)=\omega(Y,X)=\\
              &=-\omega(X,Y).
  \end{split}
 \end{equation*}
 Hence, $\omega(X,Y)=0.$ In the same way, if $X,Y\in TM^{\mathbb{C}-},$ $\omega(X,Y)=0,$ proving that the 
 K\"ahler form $\omega$ is of type $(1,1).$
\end{proof}

\begin{note}
 Let $(M,g)$ a Hermitian manifold and let $\omega$ its K\"ahler form. In local coordinates we have
 \begin{equation}
 \label{kahlerlocal}
  \omega=ig_{i\bar\jmath}\mathrm{d}z^i\wedge \mathrm{d}\overline{z}^j.
 \end{equation}
\end{note}

\begin{prop}
 Let $(M,g)$ a Hermitian manifold. The K\"ahler form $\omega$ is real, i.e.,
 $\overline{\omega}=\omega.$
\end{prop}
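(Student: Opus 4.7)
The plan is to verify the equality \(\overline{\omega}=\omega\) by a direct computation in local coordinates, using the local expression \eqref{kahlerlocal} together with the symmetry relations \eqref{hermi} for the components of a Hermitian metric. A more conceptual route is also available: since $g$ is (the complex-bilinear extension of) a real Riemannian metric and $J$ is a real tensor field, the defining formula $\omega_P(X,Y) = g_P(J_PX,Y)$ produces a real number on any pair of real tangent vectors $X,Y \in T_PM$; a $2$-form with this property is real in the sense $\overline{\omega}=\omega$. I would probably mention this observation briefly but base the actual argument on the coordinate computation, since the author has just produced the local formula and the symmetry identities \eqref{hermi} for exactly this kind of use.

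Concretely, first I would take the complex conjugate of
\[
\omega = i\, g_{i\bar\jmath}\,\mathrm{d}z^i\wedge \mathrm{d}\overline{z}^j,
\]
remembering that complex conjugation sends $i\mapsto -i$, $\mathrm{d}z^i\mapsto \mathrm{d}\overline{z}^i$ and $\mathrm{d}\overline{z}^j\mapsto \mathrm{d}z^j$, to obtain
\[
\overline{\omega} = -i\,\overline{g_{i\bar\jmath}}\,\mathrm{d}\overline{z}^i\wedge \mathrm{d}z^j.
\]
Next, applying the identities from \eqref{hermi}, namely $\overline{g_{i\bar\jmath}} = g_{\bar\imath j}$ and $g_{\bar\imath j}=g_{j\bar\imath}$, and then swapping the wedge factors (which contributes an extra minus sign), I would get
\[
\overline{\omega} = -i\, g_{j\bar\imath}\,\mathrm{d}\overline{z}^i\wedge \mathrm{d}z^j = i\, g_{j\bar\imath}\,\mathrm{d}z^j\wedge \mathrm{d}\overline{z}^i.
\]
Finally, relabeling the summation indices $i\leftrightarrow j$ gives exactly $\omega$, concluding the proof.

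There is essentially no obstacle here: the statement is a bookkeeping exercise once \eqref{kahlerlocal} and \eqref{hermi} are in hand. The only place where one needs to be careful is in combining the minus sign produced by the swap of $\mathrm{d}\overline{z}^i\wedge \mathrm{d}z^j$ with the minus sign coming from $\overline{i}=-i$, so that the two signs cancel; a sign error would wrongly yield $\overline{\omega}=-\omega$ (which is the statement that $i\omega$ is real, also a common reformulation, so one must be attentive to conventions).
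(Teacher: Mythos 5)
Your computation is correct and is essentially identical to the paper's own proof: both conjugate the local expression $\omega=ig_{i\bar\jmath}\,\mathrm{d}z^i\wedge\mathrm{d}\overline{z}^j$, apply the symmetry relations \eqref{hermi} to get $\overline{g_{i\bar\jmath}}=g_{j\bar\imath}$, and cancel the sign from $\overline{i}=-i$ against the sign from swapping the wedge factors. Your remark about the cancellation of the two minus signs being the only delicate point is accurate.
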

\begin{proof}
 In local coordinates we have
 \begin{equation*}
 \begin{split}
   \overline{\omega}&=\overline{ig_{i\bar\jmath}\mathrm{d}z^{i}\wedge \mathrm{d}\overline{z}^j}=
 -i\overline{g_{i\bar\jmath}}\mathrm{d}\overline{z}^i\wedge \mathrm{d}z^j=\\
   &=-ig_{j\bar\imath}\mathrm{d}\overline{z}^i\wedge \mathrm{d}z^j=
   ig_{j\bar\imath}\mathrm{d}z^j\wedge \mathrm{d}\overline{z}^i=\omega.
  \end{split}
 \end{equation*}
\end{proof}

\begin{prop}
 A Hermitian manifold $(M,g)$ of complex dimension $1$ is K\"ahler.
\end{prop}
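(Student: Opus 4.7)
The plan is to exploit the fact that a complex manifold of complex dimension $1$ has real dimension $2$, so any differential form of degree strictly greater than $2$ must vanish identically for purely dimensional reasons.

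First I would recall the definition: $(M,g)$ is K\"ahler precisely when its associated K\"ahler form $\omega$ satisfies $\mathrm{d}\omega=0$. By the preceding propositions, $\omega$ is a smooth real $2$-form on $M$ (of type $(1,1)$), so its exterior derivative $\mathrm{d}\omega$ is a smooth $3$-form on $M$.

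Next I would observe that $M$, regarded as a real differentiable manifold, has real dimension $2\cdot\dim_{\mathbb{C}}M=2$. Consequently the bundle $\bigwedge^{3}T^{\ast}M$ is the zero bundle, and every $3$-form on $M$ vanishes identically. In particular $\mathrm{d}\omega=0$, so $\omega$ is closed and $(M,g)$ is a K\"ahler manifold.

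There is essentially no obstacle here: the argument is purely dimensional and uses only that the K\"ahler form is a genuine $2$-form together with the fact that $\dim_{\mathbb{R}}M=2$. The only thing worth double-checking in a careful write-up is that $\omega$ is indeed globally defined (which follows from \eqref{kahler1}, as the definition is tensorial and independent of coordinates), so that the vanishing of $\mathrm{d}\omega$ may be asserted globally rather than only in a chart.
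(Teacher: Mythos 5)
Your proof is correct and rests on the same underlying fact as the paper's: in complex dimension $1$ the expression $\mathrm{d}\omega$ cannot survive, the paper showing this by the local computation $\mathrm{d}\omega=i\,\partial_{z^k}g_{i\bar\jmath}\,\mathrm{d}z^k\wedge\mathrm{d}z^i\wedge\mathrm{d}\overline{z}^j+i\,\partial_{\overline{z}^h}g_{i\bar\jmath}\,\mathrm{d}\overline{z}^h\wedge\mathrm{d}z^i\wedge\mathrm{d}\overline{z}^j=0$, where each term dies because it contains a repeated $1$-form. Your global formulation (every $3$-form on a real $2$-manifold vanishes) is the coordinate-free version of the same dimensional argument, and it is perfectly valid.
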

\begin{proof}
 In local coordinates the K\"ahler form is 
 $\omega=ig_{i\bar\jmath}\mathrm{d}z^i\wedge \mathrm{d}\overline{z}^j.$
 Then we have
 \begin{equation*}
  \mathrm{d}\omega=i\frac{\partial g_{i\bar\jmath}}{\partial z^k}\mathrm{d}z^k\wedge \mathrm{d}z^i\wedge
  \mathrm{d}\overline{z}^j+i\frac{\partial g_{i\bar\jmath}}{\partial\overline{z}^h}
  \mathrm{d}\overline{z}^h\wedge \mathrm{d}z^i\wedge \mathrm{d}\overline{z}^j=0.
 \end{equation*}
\end{proof}

\begin{prop}
 A Hermitian manifold $(M,g)$ of (complex) dimension $n$ regarded as
 a $2n\text{-dimensional}$ differentiable manifold is always orientable.
\end{prop}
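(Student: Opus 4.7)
The plan is to exhibit a nowhere-vanishing real $2n$-form on $M$, which by the standard criterion is equivalent to orientability. The natural candidate is $\omega^n$, the top exterior power of the K\"ahler form, which has already been shown in this section to be a real global $2$-form of type $(1,1)$.

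First I would fix a complex chart $(U; z^1,\dots,z^n)$ with $z^k=x^k+iy^k$, and use the local expression
\begin{equation*}
\omega = i\, g_{i\bar\jmath}\, \mathrm{d}z^{i}\wedge \mathrm{d}\overline{z}^{\,j}
\end{equation*}
from \eqref{kahlerlocal}. Taking the $n$-fold wedge, only terms where the indices $i_1,\dots,i_n$ are a permutation of $1,\dots,n$ and similarly for $j_1,\dots,j_n$ survive, and a direct combinatorial calculation gives
\begin{equation*}
\omega^{n} = n!\, i^{n}\, \det(g_{i\bar\jmath})\, \mathrm{d}z^{1}\wedge \mathrm{d}\overline{z}^{\,1}\wedge\cdots\wedge \mathrm{d}z^{n}\wedge \mathrm{d}\overline{z}^{\,n}.
\end{equation*}
Next I would convert to real coordinates via $\mathrm{d}z^{k}\wedge \mathrm{d}\overline{z}^{\,k} = -2i\, \mathrm{d}x^{k}\wedge \mathrm{d}y^{k}$, so that $i\,\mathrm{d}z^{k}\wedge \mathrm{d}\overline{z}^{\,k} = 2\,\mathrm{d}x^{k}\wedge \mathrm{d}y^{k}$, obtaining
\begin{equation*}
\omega^{n} = 2^{n}\, n!\, \det(g_{i\bar\jmath})\, \mathrm{d}x^{1}\wedge \mathrm{d}y^{1}\wedge\cdots\wedge \mathrm{d}x^{n}\wedge \mathrm{d}y^{n}.
\end{equation*}

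To close the argument I would show that $\det(g_{i\bar\jmath})$ is strictly positive at every point. This follows from the Note preceding the definition of the K\"ahler form: the matrix $(g_{i\bar\jmath}(P))$ is the matrix of the positive-definite Hermitian inner product $h_{P}$ on $T_{P}M^{\mathbb{C}+}$ defined by $h_{P}(X,Y)=g_{P}(X,\overline{Y})$, and therefore its determinant is a positive real number. Consequently $\omega^{n}$ is nowhere zero on $U$, and since $\omega$ is globally defined, the form $\omega^{n}$ is a nowhere-vanishing global real $2n$-form on $M$, so $M$ is orientable.

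The main obstacle is essentially bookkeeping: carrying out the wedge computation and tracking the factor $i^{n}$ against the conversion factors $-2i$ from $\mathrm{d}z^{k}\wedge \mathrm{d}\overline{z}^{\,k}$ so that one ends up with a manifestly positive real coefficient $2^{n} n! \det(g_{i\bar\jmath})$. Once this is settled the orientability is immediate. Note that the hypothesis that $g$ is merely Hermitian (not necessarily K\"ahler) is used only through the positivity of $\det(g_{i\bar\jmath})$; closedness of $\omega$ is not needed.
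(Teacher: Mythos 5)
Your proposal is correct and follows essentially the same route as the paper: both exhibit $\omega^{n}=i^{n}n!\det(g_{i\bar\jmath})\,\mathrm{d}z^{1}\wedge \mathrm{d}\overline{z}^{1}\wedge\cdots\wedge \mathrm{d}z^{n}\wedge \mathrm{d}\overline{z}^{n}$ as a nowhere-vanishing real $2n$-form. You merely add the explicit conversion to real coordinates and the observation that $\det(g_{i\bar\jmath})>0$ by positive-definiteness, which the paper leaves implicit.
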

\begin{proof}
 Let $\omega$ the K\"ahler form. Then $\omega^n$ is a $2n$ non-vanishing real form, in fact
 \begin{equation*}
  \omega^n=(i)^nn!\det(g_{i\bar\jmath})\mathrm{d}z^1\wedge \mathrm{d}\overline{z}^1\wedge\cdots
  \wedge \mathrm{d}z^n\wedge \mathrm{d}\overline{z}^n
 \end{equation*}
 and $\det(g_{i\bar\jmath})$ is nowhere vanishing.
\end{proof}

\begin{prop}
 Let $(M,g)$ be a Hermitian manifold. The following conditions are equivalent:
 \begin{enumerate}
  \item $g$ is a K\"ahler metric, i.e., $\mathrm{d}\omega=0,$
  \item $\displaystyle\frac{\partial g_{i\bar\jmath}}{\partial z^k}=
         \frac{\partial g_{k\bar\jmath}}{\partial z^i},$
  \item $\displaystyle\frac{\partial g_{i\bar\jmath}}{\partial\overline{z}^k}=
         \frac{\partial g_{i\bar{k}}}{\partial\overline{z}^j},$
  \item There exists a locally defined real function $f$ such that $\omega=i\mathrm{d}'\mathrm{d''}f,$
  i.e., 
  \begin{equation*}
   g_{i\bar\jmath}=\frac{\partial^2f}{\partial z^i\partial\overline{z}^j}.
  \end{equation*}
 \end{enumerate}
\end{prop}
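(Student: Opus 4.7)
The natural strategy is to organize the four conditions into equivalences $(1)\Leftrightarrow(2)\Leftrightarrow(3)$ established by a single local computation, plus the two implications $(4)\Rightarrow(1)$ and $(1)\Rightarrow(4)$, of which the last will be the real work.

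First I would tackle $(1)\Leftrightarrow(2)$ and $(1)\Leftrightarrow(3)$ simultaneously. Using \eqref{kahlerlocal} I would compute
\begin{equation*}
\mathrm{d}\omega = i\,\frac{\partial g_{i\bar\jmath}}{\partial z^{k}}\,\mathrm{d}z^{k}\wedge \mathrm{d}z^{i}\wedge \mathrm{d}\overline{z}^{j} + i\,\frac{\partial g_{i\bar\jmath}}{\partial\overline{z}^{h}}\,\mathrm{d}\overline{z}^{h}\wedge \mathrm{d}z^{i}\wedge \mathrm{d}\overline{z}^{j},
\end{equation*}
the two summands being of pure type $(2,1)$ and $(1,2)$ respectively, so $\mathrm{d}\omega=0$ if and only if both vanish. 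Antisymmetrizing the first sum in the indices $k,i$ and the second in $h,j$ then yields precisely conditions (2) and (3). As a byproduct, (2) and (3) are equivalent (they are complex conjugates of each other, which matches \eqref{hermi}).

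For $(4)\Rightarrow(1)$ the work is trivial: if $\omega=i\,\mathrm{d}'\mathrm{d}''f$ then applying $\mathrm{d}=\mathrm{d}'+\mathrm{d}''$ and using $(\mathrm{d}')^{2}=(\mathrm{d}'')^{2}=0$ together with $\mathrm{d}'\mathrm{d}''=-\mathrm{d}''\mathrm{d}'$ gives $\mathrm{d}\omega=0$ on the nose.

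The genuinely interesting step is $(1)\Rightarrow(4)$, which is a local statement about real closed $(1,1)$-forms and will be proved on a polydisc $\Delta$ of the kind considered in Dolbeault's Lemma. The plan is: apply the Poincaré Lemma to write $\omega=\mathrm{d}\alpha$ locally, decompose $\alpha=\alpha^{1,0}+\alpha^{0,1}$, and exploit that $\omega$ is real and of pure type $(1,1)$. Reality lets me take $\alpha^{0,1}=\overline{\alpha^{1,0}}$, and type considerations force the $(2,0)$- and $(0,2)$-components of $\mathrm{d}\alpha$ to vanish, i.e.\ $\mathrm{d}'\alpha^{1,0}=0$ and $\mathrm{d}''\alpha^{0,1}=0$. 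Dolbeault's Lemma (applied on $\Delta$ to the $\mathrm{d}''$-closed form $\alpha^{0,1}$) produces a local function $h$ with $\alpha^{0,1}=\mathrm{d}''h$; conjugating gives $\alpha^{1,0}=\mathrm{d}'\bar h$. Plugging back and using $\mathrm{d}'\mathrm{d}''=-\mathrm{d}''\mathrm{d}'$ reduces $\omega$ to $\mathrm{d}'\mathrm{d}''(h-\bar h)$, and setting $f=2\operatorname{Im}h$ (which is real) converts this to $\omega=i\,\mathrm{d}'\mathrm{d}''f$, whence the local expression $g_{i\bar\jmath}=\partial^{2}f/\partial z^{i}\partial\overline{z}^{j}$ follows from \eqref{kahlerlocal}. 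The main obstacle is exactly the reality/type bookkeeping in this last step: keeping track of signs and of the fact that Poincaré's $\alpha$ need not itself be of pure type, so the decomposition and the symmetric choice $\alpha^{0,1}=\overline{\alpha^{1,0}}$ must be justified carefully before Dolbeault's Lemma can be invoked.
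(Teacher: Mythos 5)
Your proposal is correct; the thesis gives no proof of its own here (it simply defers to Kobayashi \cite{KOB}), and your argument is essentially the standard one found there: the type decomposition of $\mathrm{d}\omega$ into its $(2,1)$ and $(1,2)$ parts for $(1)\Leftrightarrow(2)\Leftrightarrow(3)$, the one-line check for $(4)\Rightarrow(1)$, and the local $\mathrm{d}'\mathrm{d''}$-lemma via Poincar\'e plus Dolbeault's Lemma for $(1)\Rightarrow(4)$. The reality and type bookkeeping you rightly flag is resolved exactly as you indicate: replace the Poincar\'e primitive by its real part so that $\alpha^{0,1}=\overline{\alpha^{1,0}}$, after which the pure $(1,1)$ type of $\omega$ forces $\mathrm{d}'\alpha^{1,0}=0$ and $\mathrm{d''}\alpha^{0,1}=0$ and the rest goes through as written.
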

\begin{proof}
 See \cite{KOB} for a detailed proof.
\end{proof}

\section{The Hopf manifold}

Let $\Delta$ be the discrete group generated by $z^k\mapsto2z^k,$ $1\leq k\leq n.$ The quotient manifold
$(\mathbb{C}^n\setminus\{0\})/\Delta$ is a complex manifold called the Hopf manifold. It is homeomorphic to
$S^1\times S^{2n-1}.$

Now, we want to prove that for $n\geq2$ the Hopf manifold cannot be given a K\"ahlerian structure. In fact compact
K\"ahler manifolds have strong topological restrictions. Perhaps the simplest among them is the following:

\begin{prop}
 The second Betti number of a compact K\"ahler manifold $(M,\omega)$ of (complex) dimension $n$ is 
 strictly positive.
\end{prop}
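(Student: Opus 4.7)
The plan is to use the Kähler form $\omega$ itself to exhibit a nonzero de Rham cohomology class in degree $2$, thereby forcing $b_2(M) = \dim_\R H^2_{dR}(M,\R) \geq 1$. Since the excerpt has already shown that $\omega$ is a real, closed $2$-form, the only thing to check is that $\omega$ is not exact.

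First, I would recall that $\omega^n$ is a nowhere-vanishing top-degree real form on $M$, as established in the earlier proposition via the local expression
\begin{equation*}
  \omega^n = i^n\, n!\, \det(g_{i\bar\jmath})\, \mathrm{d}z^1 \wedge \mathrm{d}\overline{z}^1 \wedge \cdots \wedge \mathrm{d}z^n \wedge \mathrm{d}\overline{z}^n,
\end{equation*}
with $\det(g_{i\bar\jmath}) > 0$ everywhere because $g$ is a Hermitian metric. Fixing the orientation of $M$ induced by $\omega^n$, the integral $\int_M \omega^n$ is then strictly positive.

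Next, I would argue by contradiction: suppose $\omega = \mathrm{d}\eta$ for some global $1$-form $\eta$ on $M$. Since $\mathrm{d}\omega = 0$, we have $\mathrm{d}\omega^{n-1} = 0$ as well, so
\begin{equation*}
  \omega^n = \mathrm{d}\eta \wedge \omega^{n-1} = \mathrm{d}\bigl(\eta \wedge \omega^{n-1}\bigr).
\end{equation*}
By Stokes' theorem, using that $M$ is compact and oriented without boundary, this would give $\int_M \omega^n = 0$, contradicting the previous step. Hence $[\omega] \neq 0$ in $H^2_{dR}(M,\R)$.

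The argument has no real obstacle once one observes the trick of pairing $\omega$ with its own $(n-1)$-st power; the only subtlety is the use of compactness (so that Stokes applies with no boundary term and $\int_M \omega^n$ is finite) and orientability, both of which are available here, the latter having been proved just above in the excerpt. Since $H^2_{dR}(M,\R)$ contains the nonzero class $[\omega]$, we conclude $b_2(M) \geq 1 > 0$.
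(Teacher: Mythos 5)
Your proof is correct and follows essentially the same route as the paper: both arguments rest on the observation that $\int_M \omega^n > 0$ forces the class $[\omega^n]=[\omega]^n$ to be nonzero, hence $[\omega]\neq 0$ in $H^2_{dR}(M)$. You merely make explicit, via Stokes' theorem applied to $\eta\wedge\omega^{n-1}$, the step that the paper leaves implicit in asserting $u^n\neq 0$ from the positivity of the integral.
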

\begin{proof}
 Let $(M,\omega)$ be a compact K\"ahler manifold of (complex) dimension $n.$
 Since $\omega$ is closed, it determines an element $u=[\omega]\in H^{2}_{\text{de}}(M).$
 Consider now the $2n\text{-form}$ $\omega^n,$ this determines an element $u^n=[\omega]^n=[\omega^n]$ in 
 $H^{2n}_{\text{de}}(M).$ Integrating the volume form $\frac{\omega^n}{n!}$ 
we find
\begin{equation*}
 \int_{M}\frac{\omega^n}{n!}=\mathrm{Vol}(M)>0,
\end{equation*}
hence $u^n\neq0.$ 
Therefore $u\neq0,$ and then $H^2_{\text{de}}(M)\neq0.$ 
\end{proof}

\begin{cor}
 Let $n\geq2.$ The Hopf manifold $S^1\times S^{2n-1}$ cannot be given a K\"ahlerian structure.
\end{cor}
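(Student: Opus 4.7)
The plan is to combine the preceding proposition, which asserts that any compact K\"ahler manifold has strictly positive second Betti number, with a direct computation showing that the Hopf manifold has vanishing second Betti number in the relevant range. Since the Hopf manifold is compact (it is homeomorphic to $S^1\times S^{2n-1}$), it suffices to produce a contradiction under the hypothesis that it admits a K\"ahler structure.

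First I would note compactness of $(\mathbb{C}^n\setminus\{0\})/\Delta$: it is the quotient of the annular domain $\{1\leq|z|\leq 2\}$ by identifying $|z|=1$ with $|z|=2$ via the generator of $\Delta$, which gives a compact manifold, and indeed the classical argument identifies it with $S^1\times S^{2n-1}$. Then I would compute the singular cohomology of $S^1\times S^{2n-1}$ via the K\"unneth formula
\begin{equation*}
H^2(S^1\times S^{2n-1};\mathbb{R}) \;\cong\; \bigoplus_{p+q=2} H^p(S^1;\mathbb{R})\otimes H^q(S^{2n-1};\mathbb{R}).
\end{equation*}
For $n\geq 2$ we have $2n-1\geq 3$, so $H^1(S^{2n-1};\mathbb{R})=H^2(S^{2n-1};\mathbb{R})=0$, while also $H^2(S^1;\mathbb{R})=0$. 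Each of the three possible K\"unneth summands therefore vanishes, giving $H^2(S^1\times S^{2n-1};\mathbb{R})=0$, i.e.\ $b_2=0$.

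Finally, by the de Rham theorem the second de Rham cohomology $H^2_{\mathrm{de}}$ of the Hopf manifold also vanishes. If the Hopf manifold admitted a K\"ahler structure, the preceding proposition would force $H^2_{\mathrm{de}}\neq 0$, contradicting this computation. Hence no such structure can exist when $n\geq 2$.

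The only mildly delicate point is the K\"unneth computation (and the implicit use of de Rham's theorem to compare the topological and smooth/complex invariants); there is no analytic obstacle, since everything reduces to the topological input $b_2=0$ together with the already-established lower bound $b_2\geq 1$ for compact K\"ahler manifolds. Note the hypothesis $n\geq 2$ is essential: for $n=1$ the Hopf manifold is the elliptic curve-like torus $S^1\times S^1$ and does admit K\"ahler structures, consistent with $b_2(S^1\times S^1)=1$.
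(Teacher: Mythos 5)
Your proof is correct and follows essentially the same route as the paper: the K\"unneth computation showing $H^2(S^1\times S^{2n-1};\mathbb{R})=0$ for $n\geq2,$ contradicting the preceding proposition that a compact K\"ahler manifold has strictly positive second Betti number. The additional remarks on compactness and the $n=1$ case are sound but not needed for the argument.
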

\begin{proof}
Since $n\geq2,$ $H^1_{\text{de}}(S^{2n-1})=H^2_{\text{de}}(S^{2n-1})=0.$
 From K\"unneth formula we have
 \begin{equation*}
 \begin{split}
  H^2_{\text{de}}(S^1\times S^{2n-1})&=
  \bigoplus_{p+q=2}\left[H^p_{\text{de}}(S^1)\otimes H^q_{\text{de}}(S^{2n-1})\right]=\\
  &=\left[H^2_{\text{de}}(S^1)\times H^0_{\text{de}}(S^{2n-1})\right]\oplus\\
  &\oplus\left[H^1_{\text{de}}(S^1)\times H^1_{\text{de}}(S^{2n-1})\right]\oplus\\
  &\oplus\left[H^0_{\text{de}}(S^1)\times H^2_{\text{de}}(S^{2n-1})\right]=\\
  &=\left[0\otimes\mathbb{R}\right]\oplus\left[\mathbb{R}\otimes0\right]\oplus
    \left[\mathbb{R}\otimes0\right]=0.
 \end{split}
 \end{equation*}
Then, for $n\geq2$ the second Betti number of the Hopf manifold $S^1\times S^{2n-1}$ is $0,$ 
and this shows that for $n\geq2$ the Hopf manifold cannot be given a K\"ahlerian structure.
\end{proof}

\chapter{Principal Fibre Bundles}
In this chapter we present some definitions and results on principal fibre bundles over a differentiable
manifold $M.$ See \cite{KN2} for more details.

\section{Basics on principal G-bundles}

\begin{defin}
Let $M$ be a differentiable manifold and $G$ a Lie group. A (differentiable) principal fibre bundle over
$M$ with group $G$ consists of a manifold $P$ and an action $G$ on $P$
satisfying the following conditions:
\begin{enumerate}
 \item $G$ acts freely on $P$ on the right,
 \item $M$ is the quotient space of $P$ by the equivalence relation induced by $G,$ i.e., $M=\faktor{P}{G}$
       and the canonical projection $\pi:P\longrightarrow M$ is differentiable,
 \item $P$ is locally trivial, that is, every point $x$ of $M$ has a 
       neighborhood $U$ such that $\pi^{-1}(U)$ is isomorphic with 
       $U\times G$ in the sense that there is a diffeomorphism
       $\psi:\pi^{-1}(U)\longrightarrow U\times G$ such that
       $\psi(u)=(\pi(u),\varphi(u))$ where $\varphi$ is a mapping of 
       $\pi^{-1}(U)$ into $G$ satifsfying $\varphi(ua)=(\varphi(u))a$
       for all $u\in\pi^{-1}(U)$ and $a\in G.$
\end{enumerate}
A principal fibre bundle will be denoted by $P(M,G),$ or simply $P.$ We call $P$ the total space, $M$ the
base space, $G$ the structure group and $\pi$ the projection. For each $x\in M,$ $\pi^{-1}(x)$ is a closed
submanifold of $P$ called the fibre over $x.$ If $u\in\pi^{-1}(x)$ is a point in the fibre over $x,$ then
$\pi^{-1}(x)$ is the set of point $ua$ with $a\in G.$
Every fibre is diffeomorphic to $G.$
\end{defin}

\begin{defin}
 Let $P(M,G)$ and $Q(N,H)$ be principal fibre bundles over the (differentiable)
 manifolds $M$ and $N$ respectively. A morphism of principal fibre bundles 
 consist of a $\mathcal{C}^{\infty}$ mapping $f:P\longrightarrow Q$ and a 
 homomorphism $\psi:G\longrightarrow H$ such that $f(ua)=f(u)\psi(a)$ for all $u\in P$ and $a\in G.$
 For the sake of simplicity, we shall denote $f$ and $\psi$ by the same letter $f.$
 Every morphism $f:P\longrightarrow Q$ maps each fibre of $P$ into a fibre of $Q$ and hence it induces 
 a mapping of $M$ into $N,$ which will be also denoted by $f.$
\end{defin}

\begin{defin}
A morphism $f:P(M,G)\longrightarrow Q(N,H)$ is called an imbedding if $(P,f)$ is a regular submanifold of 
$Q$ and if $f:G\longrightarrow H$ is a homomorphism of groups. If $f:P\longrightarrow Q$ is an
imbedding, then $(M,f)$ is a regular submanifold of $N.$ By identifying $P$ with $f(P),$ $G$ with $f(G)$ and
$M$ with $f(M),$ we say that $P(M,G)$ is a subbundle of $Q(N,H).$
\end{defin}

\begin{defin}
 Given a principal fibre bundle $P(M,G),$ the action of $G$ on $P$ induces a homomorphism $\sigma$ of the
 Lie algebra $\frak{g}$ of $G$ into the Lie algebra $\mathcal{X}(P)$ of vector fields on $P$ (see 
 Proposition 4.1 in \cite{KN2}). For each $A\in\frak{g},$ $A^{\ast}=\sigma(A)$ is called the foundamental
 vector field corresponding to $A.$
\end{defin}

In order to relate our intrinsic definition of a principal fibre bundle to the definition and the construction
by means of an open cover, we need the concept of transition functions. By $(3)$ for a principal fibre bundle
$P(M,G)$ it is possible to choose an open covering $\{U_{\alpha}\}$ of $M$ where each $\pi^{-1}(U_{\alpha})$
provided 
with a diffeomorphism $u\longrightarrow (\pi(u),\varphi_{\alpha}(u))$ of $\pi^{-1}(U_{\alpha})$ onto 
$U_{\alpha}\times G$ such that 
\linebreak$\varphi_{\alpha}(ua)=(\varphi_{\alpha}(u))a$ for all $u\in\pi^{-1}(U_{\alpha})$
and $a\in G.$

If $u\in\pi^{-1}(U_{\alpha}\cap U_{\beta}),$ then 
$\varphi_{\beta}(ua)(\varphi_{\alpha}(ua))^{-1}=\varphi_{\beta}(u)(\varphi_{\alpha}(u))^{-1},$ which shows that 
$\varphi_{\beta}(u)(\varphi_{\alpha}(u))^{-1}$ depends only on $\pi(u)$ not on $u.$ Hence, if 
\linebreak$U_{\alpha}\cap U_{\beta}\neq\emptyset,$ we can define a mapping
$\psi_{\beta\alpha}:U_{\alpha}\cap U_{\beta}\longrightarrow G$ by 
\linebreak$\psi_{\beta\alpha}(\pi(u))=\varphi_{\beta}(u)(\varphi_{\alpha}(u))^{-1}.$ The family of mappings 
$\psi_{\beta\alpha}$ are called transition functions of the bundle $P(M,G)$ corresponding to the open cover 
$\{U_{\alpha}\}$ of $M.$ It is easy to verify that
\begin{equation*}
 \psi_{\alpha\beta}(x)\circ\psi_{\beta\gamma}(x)\circ\psi_{\gamma\alpha}(x)=I_G\text{ for }
  x\in U_{\alpha}\cap U_{\beta}\cap U_{\gamma}.
\end{equation*}
Conversely, we have the following:

\begin{teo}
 Let $M$ be a differentiable manifold, $\{U_{\alpha}\}$ an open cover of $M$ and $G$ a Lie group.
 Given a mapping $\psi_{\alpha\beta}:U_{\alpha}\cap U_{\beta}\longrightarrow G$ for every nonempty 
 $U_{\alpha}\cap U_{\beta}$ such that
 \begin{equation*}
  \psi_{\alpha\beta}(x)\circ\psi_{\beta\gamma}(x)\circ\psi_{\gamma\alpha}(x)=I_G\text{ for }
  x\in U_{\alpha}\cap U_{\beta}\cap U_{\gamma},
 \end{equation*}
 we can construct a (differentiable) principal fibre bundle $P(M,G)$ with transition functions
 $\psi_{\alpha\beta}.$
\end{teo}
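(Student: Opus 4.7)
The plan is to realize $P$ as a quotient of the disjoint union $\tilde P = \bigsqcup_\alpha U_\alpha \times G$, glued along the prescribed cocycle. First I would extract from the cocycle identity the auxiliary relations $\psi_{\alpha\alpha}(x) = I_G$ and $\psi_{\beta\alpha}(x) = \psi_{\alpha\beta}(x)^{-1}$ (specialising the hypothesis first with $\alpha=\beta=\gamma$ and then with $\gamma = \alpha$). On $\tilde P$ declare $(x,a) \in U_\alpha \times G$ equivalent to $(y,b) \in U_\beta \times G$ iff $x = y$ and $b = \psi_{\beta\alpha}(x)\,a$; reflexivity, symmetry and transitivity are then precisely the three identities just listed together with the given cocycle condition. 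Set $P := \tilde P/{\sim}$ with the quotient topology and define $\pi \colon P \to M$ by $\pi([(x,a)]) = x$, which is well defined.

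Next I would install a differentiable structure and a right $G$-action. The composition $U_\alpha \times G \hookrightarrow \tilde P \twoheadrightarrow P$ is a bijection onto $\pi^{-1}(U_\alpha)$, because the gluing rule pairs each equivalence class over a point of $U_\alpha$ with a unique element of $U_\alpha \times G$; call its inverse $\Phi_\alpha$. Declaring each $\pi^{-1}(U_\alpha)$ open and each $\Phi_\alpha$ a diffeomorphism produces an atlas whose chart changes
\[
\Phi_\beta \circ \Phi_\alpha^{-1}(x,a) = \bigl(x,\, \psi_{\beta\alpha}(x)\,a\bigr)
\]
are smooth by hypothesis on the $\psi_{\beta\alpha}$ and smoothness of group multiplication. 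Now define the right action by $[(x,a)]\cdot g := [(x,ag)]$; this is well defined since the gluing uses only \emph{left} multiplication by $\psi_{\beta\alpha}(x)$, clearly free in each chart, and smooth. The orbit space is manifestly $M$ with $\pi$ as the canonical projection, verifying (1) and (2), while local triviality (3) is built in via the $\Phi_\alpha$: writing $\Phi_\alpha = (\pi, \varphi_\alpha)$, one has $\varphi_\alpha(u\cdot g) = \varphi_\alpha(u)\,g$.

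To close the loop, if $u \in \pi^{-1}(U_\alpha \cap U_\beta)$ is represented by $(x,a) \in U_\alpha \times G$, it is also represented by $(x, \psi_{\beta\alpha}(x)\,a) \in U_\beta \times G$; hence $\varphi_\beta(u)\,\varphi_\alpha(u)^{-1} = \psi_{\beta\alpha}(x)$, so the reconstructed transition functions coincide with the given ones. The main obstacle is verifying that $P$ with the quotient topology is Hausdorff, which can fail for arbitrary quotients. I would argue it by distinguishing two non-equivalent classes either through disjoint preimages of separating open neighbourhoods in $M$ (if their base points differ) or inside a common chart $\pi^{-1}(U_\alpha)$, using Hausdorffness of $G$ in $U_\alpha \times G$ (if their base points agree). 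Second countability of $P$ descends from that of $M$ after refining $\{U_\alpha\}$ to a countable cover, so the manifold structure produced above is legitimate.
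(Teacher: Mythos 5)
Your construction is correct. The paper itself gives no proof of this theorem --- it states it and defers to Kobayashi--Nomizu \cite{KN2} --- and what you have written is precisely the standard gluing construction found there: quotient of $\bigsqcup_\alpha U_\alpha\times G$ by the cocycle, right $G$-action surviving because the gluing acts by left multiplication, and recovery of the $\psi_{\beta\alpha}$ from the charts. The only step I would spell out a little more is the compatibility between the quotient topology and the topology you impose by declaring the $\Phi_\alpha$ to be diffeomorphisms: this follows because the saturation of an open set $V\subseteq U_\alpha\times G$ meets each $U_\beta\times G$ in the set $\{(x,\psi_{\beta\alpha}(x)a):(x,a)\in V,\ x\in U_\beta\}$, which is open by continuity of $\psi_{\beta\alpha}$ and of multiplication in $G$; hence the quotient map is open and restricts to a homeomorphism on each chart, after which your Hausdorff and second-countability arguments go through as stated.
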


\begin{defin}
 Let $P(M,G)$ a principal fibre bundle over the differentiable manifold $M$ and let $f:N\longrightarrow M$
 be a $\mathcal{C}^{\infty}$ mapping. The pull-back of $P$ (via $f$) is the principal fibre bundle over $N$ 
 defined as the fibre product \linebreak$f^{\ast}P=N\times_{M}P=\{(n,u)\in N\times P|f(n)=\pi(u)\}.$
\end{defin}

\section{Connections in a principal fibre bundle}
\begin{defin}
 Let $P(M,G)$ be a principal fibre bundle over a manifold $M$ with structure group $G.$ For each $u\in P,$
 let $T_uP$ be the tangent space of $P$ at the point $u$ and $G_u\subseteq T_uP$ the subspace of $T_uP$
 consisting of vectors tangent to the fibre through $u.$ 
 A connection $\Gamma$ in $P$ is a $\mathcal{C}^{\infty}$ assignment of a subspace $Q_u$ of $T_uP$ for 
 each $u\in P$ such that
 \begin{enumerate}
  \item $T_uP=G_u\oplus Q_u,$
  \item $Q_{ua}=(R_a)_{\ast}Q_u$ for every $u\in P$ and $a\in G.$
 \end{enumerate}
The second condition means that the distribution $u\longrightarrow Q_u$ is invariant under the action of
$G$ over the total space $P.$
We call $G_u$ the vertical subspace and $Q_u$ the horizontal subspace of $T_uP.$ In this way a vector
$X_u\in T_uP$ can be uniquely written as direct sum of his vertical and horizontal component:
\begin{equation*}
 X_u=Y_u\oplus Z_u\hspace{0.5cm}\text{ where }\hspace{0.5cm}Y_u\in G_u,\hspace{0.5cm}Z_u\in Q_u.
\end{equation*}
\end{defin}

\begin{defin}
 Given a connection $\Gamma$ in a principal fibre bundle $P,$ we define a 1-form $\omega$ on $P$ with 
 values in the Lie algebra $\frak{g}$ of $G$ as follows. For each $X_u\in T_uP,$ we define $\omega(X)$
 to be the unique $A\in\frak{g}$ such that $(A^{\ast})_u$ is equal to the vertical component of $X_u.$
 The form $\omega$ is called the connection form of the given connection $\Gamma.$
\end{defin}

\begin{prop}
 The connection form $\omega$ of a connection $\Gamma$ in $P$ satisfies the following conditions:
 \begin{enumerate}
  \item $\omega(A^{\ast})=A$ for every $A\in\frak{g},$
  \item $(R_a)^{\ast}\omega=\mathrm{ad}_{a^{-1}}\omega,$ that is, 
        $\omega((R_a)_{\ast}X)=\mathrm{ad}_{a^{-1}}(\omega(X))$ for every $a\in G$ and every vector field
        $X$ on $P,$ where $\mathrm{ad}$ denotes the adjoint representation of $G$ on $\frak{g}.$
 \end{enumerate}
\end{prop}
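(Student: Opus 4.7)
The plan is to treat the two claims separately, with the first being essentially tautological and the second reducing to a standard identity about how fundamental vector fields behave under the right action.

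For claim $(1)$, I would simply unwind the definition. Since $A^{\ast}$ is by construction a fundamental vector field corresponding to $A$, it is tangent to the fibre, hence vertical, so its vertical component at each point $u$ is $(A^{\ast})_u$ itself. By the defining property of $\omega$, the unique element of $\mathfrak{g}$ sending to this vertical component under $\sigma$ is $A$, so $\omega(A^{\ast})=A$. There is no obstacle here.

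For claim $(2)$, I would first observe that since $\omega$ vanishes identically on horizontal vectors and the distribution $u\mapsto Q_u$ is $R_a$-invariant, one can decompose any $X_u = Y_u \oplus Z_u$ with $Y_u\in G_u$ and $Z_u\in Q_u$ and reduce to the vertical case: both $\omega((R_a)_{\ast}X)$ and $\mathrm{ad}_{a^{-1}}\omega(X)$ see only the vertical component. So it suffices to prove the identity for $X_u = (A^{\ast})_u$ for some $A\in\mathfrak{g}$. The key computation is then the functorial identity
\begin{equation*}
(R_a)_{\ast}(A^{\ast})_u = (\mathrm{ad}_{a^{-1}}A)^{\ast}_{ua},
\end{equation*}
which I would derive from the definition $(A^{\ast})_u = \frac{d}{dt}\bigl|_{t=0}\,u\exp(tA)$ and the computation $u\exp(tA)a = (ua)(a^{-1}\exp(tA)a) = (ua)\exp(t\,\mathrm{ad}_{a^{-1}}A)$. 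Once this identity is in hand, applying claim $(1)$ gives
\begin{equation*}
\omega\bigl((R_a)_{\ast}(A^{\ast})_u\bigr) = \omega\bigl((\mathrm{ad}_{a^{-1}}A)^{\ast}_{ua}\bigr) = \mathrm{ad}_{a^{-1}}A = \mathrm{ad}_{a^{-1}}\omega(A^{\ast}_u),
\end{equation*}
which is the desired equivariance.

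The only nontrivial step is the identity $(R_a)_{\ast}A^{\ast} = (\mathrm{ad}_{a^{-1}}A)^{\ast}$, and even this is a short calculation using the flow definition of the fundamental vector field together with the right-invariant setup. I do not anticipate any real obstacle; the proof is essentially bookkeeping once the vertical/horizontal split is handled.
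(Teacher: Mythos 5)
Your proof is correct and is precisely the standard argument (the one in Kobayashi--Nomizu, to which the paper defers for this proposition rather than giving its own proof): part $(1)$ is the tautological unwinding of the definition of $\omega$ via the verticality of $A^{\ast}$, and part $(2)$ reduces by the vertical/horizontal splitting and the $R_a$-invariance of the horizontal distribution to the identity $(R_a)_{\ast}A^{\ast}=(\mathrm{ad}_{a^{-1}}A)^{\ast}$, proved exactly by your flow computation $u\exp(tA)a=(ua)\exp(t\,\mathrm{ad}_{a^{-1}}A)$. No gaps; the only implicit point worth keeping in mind is that the freeness of the action makes $A\mapsto(A^{\ast})_u$ a bijection of $\frak{g}$ onto the vertical subspace, which is what lets you write every vertical vector as $(A^{\ast})_u$ and makes the value $\omega(X)$ well defined in the first place.
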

The projection $\pi:P\longrightarrow M$ induces a linear mapping $\pi:T_uP\longrightarrow T_xM$ for each
$u\in P,$ where $x=\pi(u).$ When a connection is given, $\pi$ maps the horizontal subspace $Q_u$
isomorphically onto $T_xM.$

\begin{defin}
 Given a connecion $\Gamma$ in $P,$ the horizontal lift of a vector field $X$ on $M$ is the unique vector
 field $X^{\ast}$ on $P$ which is horizontal and which projects onto $X,$ i.e. 
 $\pi(X_{u}^{\ast})=X_{\pi(u)}$ for every $u\in P.$ 
\end{defin}

\begin{prop}
 Let $\Gamma$ be a connection in $P$ and let $X$ be a vector field on $M,$ there is a unique horizontal lift
 $X^{\ast}$ of $X.$ The lift $X^{\ast}$ is invariant by $R_a$ for every $a\in G.$ Conversely, every
 horizontal vector field $X^{\ast}$ on $P$ invariant by $G$ is the lift of a vector field $X$ on $M.$
\end{prop}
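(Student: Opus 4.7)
The plan is to handle the three assertions in order: construct $X^*$ pointwise via the isomorphism $\pi_*|_{Q_u}\colon Q_u\to T_{\pi(u)}M$ noted just before the statement, verify smoothness, then establish $R_a$-invariance, and finally treat the converse direction.

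First I would construct $X^*$ pointwise. For each $u\in P$ the restriction $\pi_*\colon Q_u\to T_{\pi(u)}M$ is a linear isomorphism, so I simply define $X^*_u$ to be the unique element of $Q_u$ satisfying $\pi_*(X^*_u)=X_{\pi(u)}$. Uniqueness of the horizontal lift is then tautological: any other horizontal lift must coincide with $X^*_u$ at every point by the same isomorphism. The subtle point is smoothness of the vector field $u\mapsto X^*_u$; to check it, I would pick a local trivialization $\pi^{-1}(U)\cong U\times G$ and observe that in such a chart any tangent vector at $u=(x,g)$ admits a canonical splitting into a $T_xU$-part and a $T_gG$-part. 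The horizontal subspace $Q_u$ is the graph of a smooth linear map $T_xM\to T_gG$ obtained from the connection form $\omega$ (one reads off the vertical component as $-\omega$ applied to the naive lift), so $X^*$ depends smoothly on $u$ whenever $X$ depends smoothly on $x$.

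Next, for the $R_a$-invariance, I would compute $(R_a)_*X^*_u$ and show it satisfies the two defining properties of $X^*_{ua}$. It is horizontal because condition (2) in the definition of a connection gives $(R_a)_*Q_u=Q_{ua}$. It projects correctly because $\pi\circ R_a=\pi$, so
\begin{equation*}
\pi_*\bigl((R_a)_*X^*_u\bigr)=\pi_*(X^*_u)=X_{\pi(u)}=X_{\pi(ua)}.
\end{equation*}
Uniqueness of the horizontal lift at $ua$ then forces $(R_a)_*X^*_u=X^*_{ua}$, which is exactly the $R_a$-invariance of the vector field $X^*$.

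For the converse, given a horizontal vector field $X^*$ on $P$ that is invariant under every $R_a$, I would define a candidate $X$ on $M$ by $X_x:=\pi_*(X^*_u)$ for any $u\in\pi^{-1}(x)$. Well-definedness is the only thing to check, and it follows from invariance: if $u'=ua$ then
\begin{equation*}
\pi_*(X^*_{u'})=\pi_*\bigl((R_a)_*X^*_u\bigr)=(\pi\circ R_a)_*X^*_u=\pi_*(X^*_u).
\end{equation*}
Smoothness of $X$ follows by composing with a local section $\sigma\colon U\to P$ (which exists by local triviality), giving $X|_U=\pi_*\circ X^*\circ\sigma$. By construction $X^*$ is horizontal and projects to $X$, so it is the horizontal lift of $X$. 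I expect the main (only) nontrivial step to be the smoothness argument in the forward direction, as everything else is a direct consequence of the isomorphism $\pi_*|_{Q_u}$ and the $G$-equivariance of the horizontal distribution.
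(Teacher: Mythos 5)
Your proof is correct and complete. The paper states this proposition without proof, deferring to Kobayashi--Nomizu \cite{KN2}; your argument is exactly the standard one found there: pointwise definition of $X^{\ast}_u$ via the isomorphism $\pi_{\ast}\colon Q_u\to T_{\pi(u)}M$ (which the paper records immediately before the statement), smoothness checked in a local trivialization where $Q_u$ is the graph of a smooth map determined by the connection form, $R_a$-invariance from condition (2) in the definition of a connection together with $\pi\circ R_a=\pi$ and uniqueness of the lift, and the converse by well-definedness of $\pi_{\ast}(X^{\ast}_u)$ under the $G$-action. You also correctly single out smoothness as the only nontrivial point.
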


\begin{prop}
 Let $X^{\ast}$ and $Y^{\ast}$ be the horizontal lifts of $X$ and $Y$ respectively. Then
 \begin{enumerate}
  \item $X^{\ast}+Y^{\ast}$ is the horizontal lift of $X+Y.$
  \item For every function $f\in\mathcal{C}^{\infty}(M,\mathbb{R}),$ $f^{\ast}X^{\ast}$ is the horizontal
        lift of $fX$ where $f^{\ast}$ is the pull-back function $f^{\ast}=f\circ\pi.$
  \item The horizontal component of $[X^{\ast},Y^{\ast}]$ is the horizontal lift of $[X,Y].$
 \end{enumerate}
\end{prop}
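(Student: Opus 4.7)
The strategy for all three parts is the same: appeal to the uniqueness of the horizontal lift proved in the preceding proposition. For a vector field on $P$ to equal the horizontal lift of some $Z \in \mathcal{X}(M)$, it suffices to verify that it is horizontal at each point, that its pushforward under $\pi$ equals $Z$, and (equivalently, by the preceding proposition) that it is $R_a$-invariant for every $a \in G$. Since horizontality is preserved under addition and scalar multiplication (because each $Q_u$ is a linear subspace of $T_uP$), and since $\pi_\ast$ is $\mathbb{R}$-linear, parts (1) and (2) reduce to routine bookkeeping. For (1), $X^\ast + Y^\ast$ is horizontal because it is a sum of horizontal vectors at every point, and $\pi_\ast(X^\ast + Y^\ast) = X + Y$, so uniqueness finishes the job. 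For (2), at each $u \in P$ we have $(f^\ast X^\ast)_u = f(\pi(u)) X^\ast_u \in Q_u$, and $\pi_\ast(f^\ast X^\ast)_u = f(\pi(u)) X_{\pi(u)} = (fX)_{\pi(u)}$; again uniqueness concludes.

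Part (3) is the only one with real content. The first key observation is that if two vector fields $X^\ast, Y^\ast$ on $P$ are $\pi$-related to $X, Y$ on $M$, then $[X^\ast, Y^\ast]$ is $\pi$-related to $[X, Y]$, i.e.\ $\pi_\ast [X^\ast, Y^\ast] = [X, Y] \circ \pi$. This is a general fact about Lie brackets and pushforwards and does not require any connection. Now decompose $[X^\ast, Y^\ast] = H + V$ into its horizontal and vertical components pointwise. Since $V_u \in G_u = \ker \pi_\ast|_{T_uP}$, we obtain $\pi_\ast H = [X, Y] \circ \pi$, so the horizontal component $H$ projects correctly onto $[X, Y]$.

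What remains is to check that $H$ is $R_a$-invariant for every $a \in G$, so that I can actually invoke the uniqueness clause of the previous proposition. Here I would argue as follows: by that same proposition, $X^\ast$ and $Y^\ast$ are $R_a$-invariant, and the Lie bracket of $R_a$-invariant vector fields is $R_a$-invariant, so $[X^\ast, Y^\ast]$ is $R_a$-invariant. The horizontal distribution satisfies $Q_{ua} = (R_a)_\ast Q_u$ by the definition of a connection, so the horizontal/vertical decomposition is itself equivariant under $R_a$; hence the horizontal projection of an $R_a$-invariant vector field is again $R_a$-invariant. Therefore $H$ is $R_a$-invariant, horizontal, and projects to $[X, Y]$, so by the uniqueness of the horizontal lift $H = [X, Y]^\ast$. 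The only subtle point in the whole argument is this equivariance of the horizontal projection, which is not the hardest step but is the one I would be most careful to spell out.
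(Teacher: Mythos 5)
Your proof is correct. The paper itself gives no proof of this proposition (it is quoted from Kobayashi--Nomizu \cite{KN2}, to which the chapter defers), so there is nothing to compare against; your argument is the standard one. Parts (1) and (2) are exactly right: since each $Q_u$ is a linear subspace and $\pi_{\ast}\vert_{Q_u}\colon Q_u\to T_{\pi(u)}M$ is an isomorphism, a horizontal field projecting onto a given field is unique pointwise, so checking horizontality and the projection suffices. In part (3) the two genuine ingredients are the ones you name: $\pi$-relatedness of brackets of $\pi$-related fields, and the vanishing of $\pi_{\ast}$ on the vertical component, which together give $\pi_{\ast}H=[X,Y]\circ\pi$. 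One remark: the final $R_a$-invariance check is superfluous. Uniqueness of the horizontal lift already follows from the pointwise isomorphism $\pi_{\ast}\vert_{Q_u}\cong T_{\pi(u)}M$, so $H$ horizontal together with $\pi_{\ast}H=[X,Y]\circ\pi$ forces $H=[X,Y]^{\ast}$ at every point; equivariance of the horizontal projection (true though it is, by condition $Q_{ua}=(R_a)_{\ast}Q_u$) is not needed. Relatedly, your opening parenthetical ``(equivalently, by the preceding proposition) that it is $R_a$-invariant'' is loosely worded: $R_a$-invariance is a consequence of being a horizontal lift, not a condition equivalent to projecting onto the prescribed field, and the converse statement in that proposition only identifies an invariant horizontal field as the lift of \emph{some} field on $M$.
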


\section{Curvature form and structure equation}
\begin{defin}
 Let $P(M,G)$ be a principal fibre bundle and $\rho$ a representation of $G$ on a finite dimensional 
 vector space $V,$ i.e., a group homomorphism \linebreak$\rho:G\longrightarrow GL(V).$ 
 A pseudotensorial form of 
 degree $r$ on $P$ of type $(\rho,V)$ is a $V\text{-valued}$ $r\text{-form}$ $\varphi$ on $P$ such that
 \begin{equation*}
  R_{a}^{\ast}\varphi=\rho(a^{-1})\cdot\varphi\hspace{0.5cm}\text{ for }\hspace{0.5cm}a\in G.
 \end{equation*}
Such a form is called a tensorial form if it is horizontal in the sense that 
\linebreak$\varphi(X_1,\ldots,X_r)=0$ whenever at least one of the tangent vectors $X_i$ is vertical, 
i.e., tangent to a fibre.
\end{defin}
By the above definition we see that if $\Gamma$ is a connection in a principal fibre bundle $P$ with 
structure group $G,$ then its connection form $\omega$ is a pseudotensorial $1\text{-form}$ of type
$(\mathrm{ad},\frak{g}).$
Now we present the main results of this section (see \cite{KN2}, p.76-78 for details). 
Let $\Gamma$ be a connection in $P$ and let $G_u$ and $Q_u$ be the vertical and the horizontal subspaces 
of $T_uP,$ respectively. Let $h:T_uP\longrightarrow Q_u$ be the projection.

\begin{prop}
 If $\varphi$ is a pseudotensorial $r\text{-form}$ on $P$ of type $(\rho,V),$ then
 \begin{enumerate}
  \item The form $\varphi h$ defined by $(\varphi h)(X_1,\ldots,X_r)=\varphi(hX_1,\ldots,hX_r)$ is a
        tensorial form of type $(\rho,V),$
  \item $\mathrm{d}\varphi$ is a pseudotensorial $(r+1)\text{-form}$ of type $(\rho,V).$
 \end{enumerate}
\end{prop}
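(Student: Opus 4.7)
The statement splits naturally into two independent claims, and I would tackle each using the defining properties of a connection. The key structural fact I will lean on is that the horizontal projection $h$ commutes with the differential of the right action, namely $h \circ (R_a)_\ast = (R_a)_\ast \circ h$. This follows directly from condition (2) in the definition of a connection: since $Q_{ua} = (R_a)_\ast Q_u$ and, from the freeness of the $G$-action, also $G_{ua} = (R_a)_\ast G_u$, the right translation $(R_a)_\ast$ preserves the direct sum decomposition $T_u P = G_u \oplus Q_u$ and hence commutes with the projection onto either summand.

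For part (1), I would first verify horizontality of $\varphi h$: if any $X_i$ is vertical, then $h X_i = 0$, so $(\varphi h)(X_1,\ldots,X_r) = \varphi(hX_1,\ldots,hX_r) = 0$ by multilinearity. Next, to check pseudotensoriality of type $(\rho, V)$, I would compute
\begin{equation*}
(R_a^\ast (\varphi h))(X_1,\ldots,X_r) = \varphi\bigl(h (R_a)_\ast X_1, \ldots, h (R_a)_\ast X_r\bigr),
\end{equation*}
then push $h$ past $(R_a)_\ast$ using the commutation observed above, and finally invoke the pseudotensoriality of $\varphi$ itself to pull out the factor $\rho(a^{-1})$. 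This yields $R_a^\ast(\varphi h) = \rho(a^{-1}) \cdot (\varphi h)$, completing the claim.

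For part (2), the argument is shorter and uses only naturality of the exterior derivative with respect to pullbacks, together with the fact that the linear automorphism $\rho(a^{-1})$ of $V$ is constant on $P$ and therefore commutes with $d$ when applied coefficient-wise to a $V$-valued form. Explicitly,
\begin{equation*}
R_a^\ast (d\varphi) = d(R_a^\ast \varphi) = d(\rho(a^{-1}) \cdot \varphi) = \rho(a^{-1}) \cdot d\varphi,
\end{equation*}
so $d\varphi$ is pseudotensorial of type $(\rho, V)$; no horizontality is claimed, consistent with the statement.

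The main (and really only) conceptual obstacle is the equivariance $h \circ (R_a)_\ast = (R_a)_\ast \circ h$, so I would spell it out carefully rather than treating it as obvious; everything else reduces to multilinearity, the definition of ``horizontal'', and the naturality of $d$ under smooth pullback.
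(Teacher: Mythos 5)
Your proof is correct and is essentially the standard argument of Kobayashi--Nomizu, to which the paper defers (it states the proposition with a reference to \cite{KN2}, p.~76--78, and gives no proof of its own). One small remark: the invariance $(R_a)_{\ast}G_u=G_{ua}$ of the vertical subspaces follows from $\pi\circ R_a=\pi$ (so $(R_a)_{\ast}$ preserves $\ker\pi_{\ast}$), not from freeness of the action; apart from that, your argument --- the equivariance $h\circ(R_a)_{\ast}=(R_a)_{\ast}\circ h$, multilinearity for horizontality, and naturality of $\mathrm{d}$ together with the constancy of $\rho(a^{-1})$ for part (2) --- is exactly the intended one.
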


\begin{defin}
 The $(r+1)\text{-tensorial}$ form $D\varphi=(\mathrm{d}\varphi)h$ is called the exterior covariant derivative
 of $\varphi$ and $D$ is called exterior covarian differentiation.
\end{defin}

\begin{defin}
 Let $\Gamma$ be a connection in $P$ and let $\omega$ be its connection form.
 By the above Proposition, $\Omega=D\omega$ is a tensorial $2\text{-form}$ of type $(\mathrm{ad},\frak{g}).$ 
 It is called the curvature form of $\Gamma.$ 
\end{defin}
 
 \begin{teo}
  \emph{(Structure equation)}
  Let $\Gamma$ be a connection in a principal fibre $P$ and let $\omega$ and $\Omega$ be its connection form and curvature form 
  respectively. Then
   \begin{equation*}
  \Omega(X,Y)=\mathrm{d}\omega(X,Y)+\frac{1}{2}[\omega(X),\omega(Y)].
 \end{equation*}
 \end{teo}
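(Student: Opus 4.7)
The plan is to verify the identity pointwise and exploit the decomposition $T_uP = G_u \oplus Q_u$. Since both sides of the equation are $\mathbb{R}$-bilinear and antisymmetric in $X, Y$ at each point (the RHS being tensorial follows once we check it has a pointwise meaning, which it does since $[\omega(X), \omega(Y)]$ depends only on the values of $X, Y$ at the point), it suffices to verify the equality when each of $X$ and $Y$ is either purely vertical or purely horizontal. Up to antisymmetry this gives three cases: $(X,Y)$ both horizontal, both vertical, and one of each. In each case I extend the tangent vectors at $u$ to convenient vector fields defined near $u$, using the preceding theory.

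In the first case, both $X, Y$ horizontal: then $\omega(X) = \omega(Y) = 0$ because $\omega$ returns the Lie algebra element whose fundamental field equals the vertical component, so $[\omega(X),\omega(Y)] = 0$. Also $hX = X$ and $hY = Y$, so $\Omega(X,Y) = D\omega(X,Y) = d\omega(hX, hY) = d\omega(X,Y)$, and the identity is immediate.

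In the second case, both $X, Y$ vertical: I extend them to fundamental vector fields $A^{\ast}$ and $B^{\ast}$ for some $A, B \in \mathfrak{g}$, which is legitimate since fundamental vector fields span the vertical space at every point. Then $hA^{\ast} = hB^{\ast} = 0$, so the LHS vanishes. For the RHS I use $\omega(A^{\ast}) = A$ and $\omega(B^{\ast}) = B$ (both constant functions on $P$) together with the fact that $A \mapsto A^{\ast}$ is a Lie algebra homomorphism, so $[A^{\ast}, B^{\ast}] = [A,B]^{\ast}$ and hence $\omega([A^{\ast}, B^{\ast}]) = [A,B]$; combined with the invariant formula for $d\omega$ on a pair of vector fields, this gives $d\omega(A^{\ast}, B^{\ast}) = -\tfrac{1}{2}[A,B]$, which exactly cancels the bracket term on the RHS.

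In the mixed case, say $X$ horizontal and $Y = A^{\ast}$ vertical: I extend $X$ to the horizontal lift of its projection $\pi_{\ast}X$, which is $G$-invariant by the proposition on horizontal lifts. Then the flow of $A^{\ast}$ is right translation by $\exp(tA)$, so $G$-invariance of $X$ gives $[A^{\ast}, X] = 0$. Since $\omega(X) = 0$ the bracket term on the RHS vanishes, and the invariant formula for $d\omega$ reduces to $d\omega(X, A^{\ast}) = -\omega([X,A^{\ast}])/2$, which is $0$; meanwhile $\Omega(X, A^{\ast}) = d\omega(hX, hA^{\ast}) = d\omega(X, 0) = 0$, so both sides vanish. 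The only place where one must be a little careful is this mixed case: one needs the freedom to choose the horizontal extension of $X$ to be $G$-invariant, and to know that this invariance forces the Lie bracket with any fundamental vector field to vanish. Once that is in hand the three cases combine by bilinearity to yield the structure equation.
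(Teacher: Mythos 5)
Your proof is correct and is exactly the standard argument: the paper does not in fact prove this theorem, stating it with only a reference to Kobayashi--Nomizu, pp.~76--78, and your case analysis (both vectors horizontal, both vertical, one of each), justified by the tensoriality of both sides, the relation $[A^{\ast},B^{\ast}]=[A,B]^{\ast}$, and the vanishing of $[A^{\ast},X]$ for a $G$-invariant horizontal $X$, is precisely the proof given in that reference. The only minor imprecision is that in the mixed case one should first extend $\pi_{\ast}X_{u}$ to a vector field on $M$ and then take its horizontal lift, which is implicit in your wording.
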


\begin{teo}
 \emph{(Bianchi's identity)}
 Let $\Gamma$ be a connection in a principal fibre bundle $P$ and let $\omega$ and $\Omega$ its connection
 form and its curvature form respectively. Then the exterior covariant derivative of the curvature is $0,$
 i.e.,
 \begin{equation*}
  D\Omega=0.
 \end{equation*}
\end{teo}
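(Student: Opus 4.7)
The plan is to differentiate the structure equation and then restrict to horizontal vectors, exploiting the fact that the connection form $\omega$ vanishes on horizontal vector fields. Since by definition $D\Omega = (\mathrm{d}\Omega)h$, where $h\colon T_uP \to Q_u$ projects onto the horizontal subspace, it suffices to show that $\mathrm{d}\Omega(hX, hY, hZ) = 0$ for every triple of tangent vectors $X, Y, Z$ at any point of $P$.

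To compute $\mathrm{d}\Omega$ globally, I would rewrite the structure equation as the identity of $\mathfrak{g}$-valued forms $\Omega = \mathrm{d}\omega + \tfrac{1}{2}[\omega, \omega]$, where $[\omega, \omega]$ denotes the $\mathfrak{g}$-valued $2$-form obtained by combining the exterior product with the Lie bracket on $\mathfrak{g}$. Applying the exterior derivative and using $\mathrm{d}^2 = 0$, the graded Leibniz rule for bracket-valued forms yields
\begin{equation*}
\mathrm{d}\Omega \;=\; \tfrac{1}{2}\,\mathrm{d}[\omega, \omega] \;=\; [\mathrm{d}\omega, \omega],
\end{equation*}
the two contributions from Leibniz coinciding by the graded symmetry of the bracket of $1$-forms, which cancels the factor $\tfrac{1}{2}$.

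It then remains to evaluate on horizontal triples. By the very definition of the connection form, a horizontal vector has zero vertical component, so $\omega(hX) = 0$ for every $X \in T_uP$. Each term in the expansion of $[\mathrm{d}\omega, \omega](hX, hY, hZ)$ has the shape
\begin{equation*}
\bigl[\mathrm{d}\omega(hX_i, hX_j),\; \omega(hX_k)\bigr]
\end{equation*}
and therefore contains the vanishing factor $\omega(hX_k) = 0$. Summing over the signed permutations gives $(\mathrm{d}\Omega)h = 0$, which is precisely $D\Omega = 0$.

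The only real care required is in the graded-algebraic manipulation leading to $\mathrm{d}\Omega = [\mathrm{d}\omega, \omega]$: one must apply the correct sign conventions in $\mathrm{d}[\alpha, \beta] = [\mathrm{d}\alpha, \beta] + (-1)^{|\alpha|}[\alpha, \mathrm{d}\beta]$ together with the graded symmetry $[\alpha, \beta] = -(-1)^{|\alpha||\beta|}[\beta, \alpha]$ for $\mathfrak{g}$-valued forms. Once that algebraic identity is in hand, the vanishing of $\omega$ on horizontal vectors makes the rest of the argument automatic, with no analytic input required.
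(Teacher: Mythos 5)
Your proof is correct and is exactly the standard argument the paper itself defers to (it states the theorem without proof, citing Kobayashi--Nomizu, pp.~76--78, where this very computation appears): differentiate the structure equation to get $\mathrm{d}\Omega=[\mathrm{d}\omega,\omega]$ and then observe that every term vanishes on horizontal vectors because $\omega\circ h=0$. The only point worth flagging is a harmless normalization mismatch --- the paper writes the structure equation as $\Omega(X,Y)=\mathrm{d}\omega(X,Y)+\tfrac{1}{2}[\omega(X),\omega(Y)]$, which corresponds to a different convention for $[\omega,\omega]$ than the one you use, but this only changes constants and does not affect the vanishing argument.
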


\chapter{Connections in Vector Bundles}
Although our primary interest lies in holomorphic vector bundles, we begin this chapter with the study 
of connections in differentiable complex vector bundles.

\begin{defin}
 Let $M$ be a differentiable manifold of (real) dimension $n.$ A $\mathcal{C}^{\infty}$ complex vector bundle
 over $M$ of rank $r$ is differentiable manifold $E$ together with a surjective $\mathcal{C}^{\infty}$ map 
 $\pi:E\longrightarrow M$ such that
 exists a countable open covering $\{U_{\alpha}\}$ satisfying:
 \begin{enumerate}
  \item There are diffeomorphisms $\phi_{\alpha}:\pi^{-1}(U_{\alpha})\longrightarrow 
        U_{\alpha}\times\mathbb{C}^r$ such that the diagrams commute 
        \begin{equation*}
 \begin{tikzpicture}[node distance=3cm, auto]
\node (A) {$\pi^{-1}(U_{\alpha})$};
  \node (B) [below of=A] {$U_{\alpha}$};
  \node (C) [right of=A] {$U_{\alpha}\times\mathbb{C}^r$};
  \draw[->] (A) to node [swap]{$\pi$} (B);
  \draw[->] (A) to node  {$\phi_{\alpha}$} (C);
  \draw[->] (C) to node  {$pr_1$} (B);
\end{tikzpicture}.
\end{equation*}
Here $pr_1$ is the projection on the first factor.
 \item When $U_{\alpha}\cap U_{\beta}\neq\emptyset,$ $\phi_{\beta}\circ\phi_{\alpha}^{-1}$ is an automorphism 
 of $(U_{\alpha}\cap U_{\beta})\times\mathbb{C}^{r}$ with the condition that 
$\forall P\in U_{\alpha}\cap U_{\beta}$
\begin{equation*}
 \phi_{\beta}\circ\phi_{\alpha}^{-1}(P,\ast):\mathbb{C}^r\longrightarrow\mathbb{C}^r
\end{equation*}
is $\mathbb{C}\text{-linear}.$
 \end{enumerate}
\end{defin}

\begin{note}
 From $(1)$ it immediately follows that if  $U_{\alpha}\cap U_{\beta}\neq\emptyset,$ then
 \linebreak$\phi_{\beta}\circ\phi_{\alpha}^{-1}(P,\ast):\mathbb{C}^r\longrightarrow\mathbb{C}^r$ is invertible.
 In fact it is a bijective $\mathbb{C}\text{-linear}$ endomorphism of $\mathbb{C}^r.$  
\end{note}

\begin{note}
 In the same manner, if $M$ is complex manifold, we can define a holomorphic vector bundle $E$ over $M$ in
 the obvious way.
\end{note}

As in the case of principal fibre bundles, we have the following result:

\begin{teo}
\label{cocycle}
 Let $M$ be a differentiable manifold, $\{U_{\alpha}\}$ an open covering of $M.$ 
 Given a $\mathcal{C}^{\infty}$ mapping
 $\psi_{\alpha\beta}:U_{\alpha}\cap U_{\beta}\longrightarrow GL(r,\mathbb{C})$ for every nonempty 
 $U_{\alpha}\cap U_{\beta}$ such that for every $x\in U_{\alpha}\cap U_{\beta}\cap U_{\gamma}$
 \begin{equation}
 \label{cocycleidentity}
  \psi_{\alpha\beta}(x)\circ\psi_{\beta\gamma}(x)\circ\psi_{\gamma\alpha}(x)=\mathds{1}_{\mathbb{C}^r}
 \end{equation}
 we can construct a complex vector bundle of rank r over the manifold $M$ with transition functions
 $\psi_{\alpha\beta}.$
\end{teo}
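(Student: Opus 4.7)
The plan is to construct $E$ by gluing together the local trivial pieces $U_\alpha \times \mathbb{C}^r$ along the overlaps via the prescribed transition data, and then verify that the cocycle condition exactly gives what is needed for this gluing to produce a well-defined smooth manifold with the structure of a rank-$r$ complex vector bundle.

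First, I would form the disjoint union $\widetilde{E} = \bigsqcup_\alpha U_\alpha \times \mathbb{C}^r$, which is naturally a differentiable manifold, and introduce the relation $(x,v) \sim (y,w)$ for $(x,v) \in U_\alpha \times \mathbb{C}^r$ and $(y,w) \in U_\beta \times \mathbb{C}^r$ iff $x = y \in U_\alpha \cap U_\beta$ and $w = \psi_{\beta\alpha}(x)\,v$. Reflexivity follows by taking $\alpha = \beta = \gamma$ in \eqref{cocycleidentity}, which forces $\psi_{\alpha\alpha}(x) = \mathds{1}_{\mathbb{C}^r}$; symmetry follows from $\psi_{\alpha\beta}(x) = \psi_{\beta\alpha}(x)^{-1}$, itself a consequence of the cocycle identity with $\gamma = \alpha$; transitivity is the cocycle identity itself. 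Then I would set $E = \widetilde{E}/\sim$, endow it with the quotient topology, and define $\pi:E\to M$ by $\pi([x,v]) = x$, which is well-defined since the relation preserves the base point.

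Next, I would produce the local trivializations. For each $\alpha$, the composition
\[
 U_\alpha \times \mathbb{C}^r \hookrightarrow \widetilde{E} \longrightarrow E
\]
is an injection (two points of the same chart are equivalent only if equal, because $\psi_{\alpha\alpha} = \mathds{1}_{\mathbb{C}^r}$) onto $\pi^{-1}(U_\alpha)$, and I take $\phi_\alpha$ to be the inverse of this injection. By construction the diagram over $U_\alpha$ commutes, and on $U_\alpha \cap U_\beta$ one computes $\phi_\beta \circ \phi_\alpha^{-1}(x,v) = (x, \psi_{\beta\alpha}(x)\,v)$, which is smooth and fibrewise $\mathbb{C}$-linear, so the two axioms of a complex vector bundle are satisfied and the given $\psi_{\alpha\beta}$ are recovered as the transition functions. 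I would also equip $E$ with the unique differentiable structure making every $\phi_\alpha$ a diffeomorphism; compatibility on overlaps is precisely the smoothness of $\psi_{\beta\alpha}$.

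The main technical obstacle, as usual for gluing constructions, is the point-set topology: one must check that the quotient topology on $E$ is Hausdorff and second countable, since these are needed to call $E$ a manifold. Hausdorffness is verified by separating points in the same fibre inside a single $U_\alpha \times \mathbb{C}^r$ (using Hausdorffness of $\mathbb{C}^r$) and points in different fibres by pulling back disjoint neighbourhoods of their images in the Hausdorff space $M$; second countability follows because $M$ admits a countable atlas refining $\{U_\alpha\}$ and each $\mathbb{C}^r$ is second countable. Once these checks are in place, everything else is essentially bookkeeping, and a more detailed discussion of the parallel construction in the principal-bundle setting can be found in the preceding chapter.
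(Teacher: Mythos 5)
Your construction is the standard one, and the thesis itself states this theorem without proof (just as it does for the analogous statement about principal fibre bundles in Chapter 2, where the reader is referred to Kobayashi--Nomizu), so you are supplying an argument the text omits rather than diverging from it. The gluing of $\bigsqcup_\alpha U_\alpha\times\mathbb{C}^r$ along the prescribed identifications, the recognition that Hausdorffness and second countability of the quotient are the only nontrivial point-set checks, and the recovery of the $\psi_{\alpha\beta}$ as the transition functions of the resulting bundle are all exactly as in the standard references, and your treatment of each of these points is correct.

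One step is asserted more strongly than the hypothesis warrants: setting $\alpha=\beta=\gamma$ in \eqref{cocycleidentity} yields only $\psi_{\alpha\alpha}(x)^{3}=\mathds{1}_{\mathbb{C}^r},$ not $\psi_{\alpha\alpha}(x)=\mathds{1}_{\mathbb{C}^r};$ an element of $GL(r,\mathbb{C})$ of order three, such as $e^{2\pi i/3}\mathds{1}_{\mathbb{C}^r},$ satisfies the former but not the latter. This matters because reflexivity of your relation, the identity $\psi_{\alpha\beta}=\psi_{\beta\alpha}^{-1}$ (hence symmetry), and the injectivity of the chart inclusion $U_\alpha\times\mathbb{C}^r\hookrightarrow E$ all rest on $\psi_{\alpha\alpha}=\mathds{1}_{\mathbb{C}^r}.$ The repair is to add the normalization $\psi_{\alpha\alpha}=\mathds{1}_{\mathbb{C}^r}$ to the hypotheses; this is harmless and is implicit in the statement, since the transition functions of any genuine bundle satisfy $\phi_\alpha\circ\phi_\alpha^{-1}=\mathrm{id},$ so without this normalization the conclusion ``with transition functions $\psi_{\alpha\beta}$'' is literally unattainable. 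Equivalently, one may read the cocycle condition in the form $\psi_{\alpha\beta}\psi_{\beta\gamma}=\psi_{\alpha\gamma}$ on triple overlaps, from which $\psi_{\alpha\alpha}=\mathds{1}_{\mathbb{C}^r}$ does follow by taking $\beta=\gamma=\alpha.$ With that adjustment your proof is complete.
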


\begin{defin}
 Let $(E,M,\pi_E)$ and $(F,N,\pi_F)$ be complex vector bundles of rank $r$ and $p$ over the differentiable
 manifolds $M$ and $N,$ respectively. A morphism of complex vector bundles is a pair of 
 $\mathcal{C}^{\infty}$ mapping $f:E\longrightarrow F$ and $\tilde{f}:M\longrightarrow N$ such that
 \begin{enumerate}
  \item The following diagram commutes
              \begin{equation*}
                  \begin{tikzpicture}[node distance=2cm, auto]
                       \node (A) {$E$};
		        \node (B) [right of=A] {$F$};
                       \node (C) [below of=A] {$M$};
                       \node (D) [below of=B] {$N$};
                       \draw[->] (A) to node {$f$} (B);
                       \draw[->] (A) to node  [swap ]{$\pi_E$} (C);
                       \draw[->] (B) to node  {$\pi_F$} (D);
                       \draw[->] (C) to node  {$\tilde{f}$} (D);
                   \end{tikzpicture}.
            \end{equation*}
   \item  For every $P\in M$ the mapping $\left.f\right|_{E_P}:E_P\longrightarrow F_{\tilde{f}(P)}$ is
          $\mathbb{C}\text{-linear}$ on the fibres.
 \end{enumerate}
Finally, if $f$ is a diffeomorphism, we say that
the complex vector bundles $(E,M,\pi_E)$ and $(F,N,\pi_F)$ are isomorphic.
\end{defin}

\begin{defin}
 Let $(E,M,\pi_E)$ be a complex vector bundle of rank $r$ over the differentiable manifold $M.$ A subbundle
 of $E$ is a complex vector bundle $(S,M,\pi_S)$ over $M$ with a morphism $i:S\longrightarrow E$ such
 that the following diagram commutes 
            \begin{equation*}
                  \begin{tikzpicture}[node distance=2cm, auto]
                       \node (A) {$S$};
		        \node (B) [right of=A] {$E$};
                       \node (C) [below of=A] {$M$};
                       \node (D) [below of=B] {$M$};
                       \draw[->] (A) to node {$i$} (B);
                       \draw[->] (A) to node  [swap ]{$\pi_S$} (C);
                       \draw[->] (B) to node  {$\pi_E$} (D);
                       \draw[->] (C) to node  {$\mathrm{Id}_M$} (D);
                   \end{tikzpicture}.
            \end{equation*}
\end{defin}

\begin{defin}
 Let $(E,M,\pi)$ be a complex vector bundle of rank $r$ over the differentiable manifold $M$ and let 
 $f:N\longrightarrow M$ be a $\mathcal{C}^{\infty}$ mapping. The pull-back of $E$ (via $f$) is the
 complex vector bundle over $N$ defined as the fibre product
 \linebreak$f^{\ast}E=N\times_{M}E=\{(n,e)\in M\times E|f(n)=\pi(e)\}.$
\end{defin}

\begin{note}
From the above definition we can deduce that the following diagram
  \begin{equation*}
                  \begin{tikzpicture}[node distance=2cm, auto]
                       \node (A) {$f^{\ast}E$};
		        \node (B) [right of=A] {$E$};
                       \node (C) [below of=A] {$M$};
                       \node (D) [below of=B] {$N$};
                       \draw[->] (A) to node {$pr_2$} (B);
                       \draw[->] (A) to node  [swap ]{$pr_1$} (C);
                       \draw[->] (B) to node  {$\pi$} (D);
                       \draw[->] (C) to node  {$f$} (D);
                   \end{tikzpicture}
            \end{equation*}
            is commutative.
 In particular $(f^{\ast}E,M,pr_1)$ is a complex vector bundle over $M$ of rank $r$ and 
 $pr_2:f^{\ast}E\longrightarrow E$ is a morphism of complex vector bundles.
\end{note}

\begin{note}
 In the same way we can define morphisms, subbundles and pull-back bundles of holomorphic vector bundles
 over complex manifolds.
\end{note}

\section{Connections in complex vector bundles (over real manifolds)}
Let $M$ be a differentiable manifold of (real) dimension $n$ and let $E$ a $\mathcal{C}^{\infty}$ complex
vector bundle of rank $r$ over $M.$ We make use of the following notation:
\begin{enumerate}
 \item $A^p$ is the space of $\mathcal{C}^{\infty}$ complex $p\text{-forms}$ over $M,$
 \item $A^p(E)=\Gamma(M,\bigwedge_{p}T^{*}M^{\mathbb{C}}\otimes E)$ is the space of $\mathcal{C}^{\infty}$ 
       complex $p\text{-forms}$ over $M$ with values in $E.$
\end{enumerate}

\begin{defin}
 A connection $D$ in $E$ is a $\mathbb{C}\text{-linear}$ homomorphism \linebreak$D:A^0(E)\longrightarrow A^1(E)$
such that
\begin{equation}
 \label{leibniz}
 D(f\sigma)=\sigma\otimes\mathrm{d}f+fD\sigma\hspace{0.5cm}\text{ for }
 f\in A^0\hspace{0.5cm}\text{ and }\sigma\in A^0(E).
\end{equation}
By abuse of notation we omit $\otimes$ and write $D(f\sigma)=\sigma\mathrm{d}f+fD\sigma.$
\end{defin}

\begin{defin}
Let $s=(s_1,\ldots,s_r)$ be a local frame field of $E$ over an open set $U\subseteq M,$ then given a
connection $D$, we can write 
\begin{equation}
 \label{local}
 Ds_i=\sum_{j=1}^{r}s_j\omega_{i}^{j}.
\end{equation}
We call the matrix 1-form $\omega=(\omega_{i}^{j})$ the connection form of $D$ with respect to the frame
fiels $s.$
\end{defin}

Considering $s=(s_1,\ldots,s_r)$ as a row vector, we can rewrite \eqref{local} in matrix notation 
as follows:
\begin{equation*}
 Ds=s\omega.
\end{equation*}

\begin{defin}
 If $\zeta=\zeta^is_i$ is an arbitrary section of $E$ over $U,$ then \eqref{leibniz} and \eqref{local}
imply
\begin{equation*}
 D\zeta=\sum_{i=1}^{r}s_i\mathrm{d}\zeta^i+\sum_{i,j=1}^{r}s_i\zeta^j\omega_{j}^{i}.
\end{equation*}
We call $D\zeta$ the covariant derivative of $\zeta.$
\end{defin}
Evaluating $D$ on a tangent vector $X$ of $M$ at the point $x,$ we obtain an element of the fibre $E_x$
denoted by
\begin{equation*}
 D_X\zeta=(D\zeta)(X)\in E_x
\end{equation*}

\begin{defin}
 A section $\zeta$ of $E$ is said to be parallel if $D\zeta=0.$ If $c=c(t),$ $0<t<a,$ is a curve
 in $M,$ a section $\zeta$ defined along $c$ if
 \begin{equation}
 \label{curve}
  D_{c'(t)}\zeta=0\hspace{0.5cm}\text{ for }0\leq t\leq a, 
 \end{equation}
In terms of the local frame field $s,$ \eqref{curve} can be written as a system of ordinary differential
equations
\begin{equation*}
 \frac{d\zeta^i}{dt}+\sum_{j=1}^{r}\omega_{j}^{i}(c'(t))\zeta^j=0.
\end{equation*}
\end{defin}
We shall now study how the connection form $\omega$ changes when we change the local frame field $s$. Let
$s'=(s'_1,\ldots,s'_r)$ be another local frame field over $U.$ It is related to $s$ by
\begin{equation*}
 s=s'a,
\end{equation*}
where $a:U\longrightarrow GL(r,\mathbb{C})$ is a matrix-valued function on $U.$ Let $\omega'$ be the 
connection form of $D$ with respect to $s'.$ Then
\begin{equation}
\label{connectionchange}
 \omega=a^{-1}\omega'a+a^{-1}\mathrm{d}a.
\end{equation}
In fact we have,
\begin{equation*}
 s\omega=Ds=D(s'a)=(Ds')a+s'\mathrm{d}a=s'\omega'a+s'\mathrm{d}a=s(a^{-1}\omega'a+a^{-1}\mathrm{d}a).
\end{equation*}
We extend a connection $D:A^0(E)\longrightarrow A^1(E)$ to a $\mathbb{C}\text{-linear}$ map
\begin{equation*}
 D:A^p(E)\longrightarrow A^{p+1}(E),\hspace{0.5cm}\text{ for }p\geq0.
\end{equation*}
by setting
\begin{equation*}
 D(\eta\otimes s)=\mathrm{d}\eta\otimes s+(-1)^{p}\eta\otimes Ds\hspace{0.5cm}\text{ for }
 \eta\in A^{p}\hspace{0.5cm}\text{ and }s\in A^{0}(E).
\end{equation*}

\begin{defin}
 Using this extended $D,$ we define the curvature $R$ of the connection $D$ to be
 \begin{equation*}
  R=D\circ D:A^0(E)\longrightarrow A^2(E).
 \end{equation*}
Then $R$ is $A^0$-linear. In fact, if $f\in A^0$ and $\sigma\in A^0(E),$ then
\begin{equation*}
 \begin{split}
  D^{2}(fs)=D(\mathrm{d}f\otimes s+fDs)&=D(\mathrm{d}f\otimes s)+D(fDs)=\\
                                       &=-\mathrm{d}f\otimes Ds+\mathrm{d}f\otimes Ds+fD^{2}s=D^{2}s.
 \end{split}
\end{equation*}
Hence $R$ is a 2-form on $M$ with values in $\mathrm{End}(E),$ i.e., $R\in A^2(\mathrm{End}(E)).$
\end{defin}

\begin{defin}
 Using the matrix notations of \eqref{local} the curvature form $\Omega$ of $D$ with respect to the frame
 field $s$ is defined by
 \begin{equation*}
  s\Omega=D^2s.
 \end{equation*}
\end{defin}
Then
\begin{equation}
 \label{curvature}
 \Omega=d\omega+\omega\wedge\omega.
\end{equation}
In fact,
\begin{equation*}
 s\Omega=D(s\omega)=Ds\wedge\omega+s\mathrm{d}\omega=s(\omega\wedge\omega+\mathrm{d}\omega).
\end{equation*}
Exterior differentiation of \eqref{curvature} gives Bianchi identity:
\begin{equation}
 \label{bianchi}
 \mathrm{d}\Omega=\Omega\wedge\omega-\omega\wedge\Omega=[\Omega,\omega].
\end{equation}

If $\omega'$ is the connection form of $D$ relative to another frame field $s'=sa^{-1}$, the corresponing
curvature form $\Omega'$ is related to $\Omega$ by
\begin{equation}
\label{transition2}
 \Omega=a^{-1}\Omega a.
\end{equation}
In fact,
\begin{equation*}
 \begin{split}
 s\Omega&=D^2s=D^2(s'a)=D(Ds'a+s'\mathrm{d}a)=\\
        &=D^2s'a-Ds'\wedge \mathrm{d}a+Ds'\wedge \mathrm{d}a=\\
        &=s'\Omega'a=s(a^{-1}\Omega'a).
        \end{split}
\end{equation*}
Let $\{U_{\alpha}\}$ be an open cover of $M$ with local frame field $s_{\alpha}$ on each $U_{\alpha}.$ 
If $U_{\alpha}\cap U_{\beta}\neq\emptyset,$ then
\begin{equation*}
 s_{\alpha}=s_{\beta}g_{\beta\alpha}\hspace{0.5cm}\text{ on }U_{\alpha}\cap U_{\beta}.
\end{equation*}
where $g_{\beta\alpha}:U_{\alpha}\cap U_{\beta}\longrightarrow GL(r,\mathbb{C})$ is the 
$\mathcal{C}^{\infty}$ transition function.
Then we have:
\begin{equation}
\label{transition1}
 \omega_{\alpha}=g_{\beta\alpha}^{-1}\omega_{\beta}g_{\beta\alpha}+g_{\beta\alpha}^{-1}\mathrm{d}
 g_{\beta\alpha}\hspace{0.5cm}\text{ on }U_{\alpha}\cap U_{\beta}.
\end{equation}
Conversely, given a system of $\frak{gl}(r,\mathbb{C})\text{-valued}$ $1\text{-forms}$ $\omega_{\alpha}$ on 
$U_{\alpha}$ satisfying \eqref{transition1}, we obtain a connection $D$ in $E$ having $\{\omega_{\alpha}\}$ as 
connection forms.
If $\Omega_U$ is the curvature form of $D$ relative to $s_{\alpha},$ then \eqref{transition2} means
\begin{equation*}
 \Omega_{\alpha}=g_{\beta\alpha}^{-1}\Omega_{\beta}g_{\beta\alpha}\hspace{0.5cm}\text{ on }
 U_{\alpha}\cap U_{\beta}.
\end{equation*}

\begin{defin}
 Let $E$ be a $\mathcal{C}^{\infty}$ complex vector bundle over a real manifold $M.$ Let $E^{\ast}$
 be the dual vector bundle of $E.$ The duality pairing
 
 \begin{equation*}
  \langle\cdot,\cdot\rangle:E_{x}^{\ast}\times E_{x}\longrightarrow\mathbb{C}
 \end{equation*}
induces a duality pairing

\begin{equation*}
 \langle\cdot,\cdot\rangle:A^0(E^{\ast})\times A^0(E)\longrightarrow A^0.
\end{equation*}
Given a connection $D$ in $E,$ we define a connection, also denoted by $D,$ in $E^{\ast}$ by the 
following formula:

\begin{equation*}
 \mathrm{d}\langle\zeta,\eta\rangle=\langle D\zeta,\eta\rangle+\langle\zeta,D\eta\rangle
 \hspace{0.5cm}\text{ for }\zeta\in A^0(E)\hspace{0.5cm}\text{ and }\eta\in A^0(E^{\ast}).
\end{equation*}
\end{defin}

\begin{prop}
 Let $E$ and $F$ two complex vector bundles over the same manifold $M.$ Let $D_E$ and $D_F$ be connections
 in $E$ and $F,$ respectively. Then we can
 define connections
 \begin{enumerate}
  \item $D_E\oplus D_F$ in the direct sum $E\oplus F$ in the obvious way,
  \item $D_{E\otimes F}$ in the tensor product $E\otimes F.$ 
        by $D_{E\otimes F}=D_E\otimes I_F+I_E\otimes D_F$
 \end{enumerate}
If we denote the curvatures of $D_E$ and $D_F$ by $R_E$ and $R_F,$ then we have
\begin{enumerate}
  \item $D_E\oplus D_F$ has curvature $R_E\oplus R_F,$
  \item $D_{E\otimes F}$ has curvature $R_E\otimes I_F+I_E\otimes R_F.$
\end{enumerate}
\end{prop}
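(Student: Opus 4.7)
The plan is to define each connection by the natural local formula, check that the defining Leibniz rule \eqref{leibniz} holds, and then compute the curvature directly from $R=D\circ D$.

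First, every section of $E\oplus F$ splits uniquely as $\sigma\oplus\tau$ with $\sigma\in A^{0}(E)$ and $\tau\in A^{0}(F)$, and I set $(D_{E}\oplus D_{F})(\sigma\oplus\tau)=D_{E}\sigma\oplus D_{F}\tau$. Both $\mathbb{C}$-linearity and the Leibniz rule for $f\in A^{0}$ reduce to the corresponding properties of $D_{E}$ and $D_{F}$ on the two factors, so this defines a connection. For the tensor product, I would first define $D_{E\otimes F}$ on decomposable sections by $D_{E\otimes F}(\sigma\otimes\tau)=D_{E}\sigma\otimes\tau+\sigma\otimes D_{F}\tau$ and extend by $\mathbb{C}$-linearity. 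Well-definedness amounts to checking the compatibility $(f\sigma)\otimes\tau=\sigma\otimes(f\tau)$ for $f\in A^{0}$, and evaluating the formula on both sides using the Leibniz rule for $D_{E}$ and $D_{F}$ shows the two outputs agree. The Leibniz rule for $D_{E\otimes F}$ itself then follows by a one-line computation.

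To compute the curvatures I would extend both connections to forms via $D(\eta\otimes s)=\mathrm{d}\eta\otimes s+(-1)^{p}\eta\otimes Ds$ and square them. For the direct sum this is immediate: $(D_{E}\oplus D_{F})^{2}(\sigma\oplus\tau)=D_{E}^{2}\sigma\oplus D_{F}^{2}\tau=R_{E}\sigma\oplus R_{F}\tau$, which is precisely the action of $R_{E}\oplus R_{F}$. For the tensor product, applying $D_{E\otimes F}$ a second time to $D_{E}\sigma\otimes\tau+\sigma\otimes D_{F}\tau$ produces diagonal contributions $D_{E}^{2}\sigma\otimes\tau=R_{E}\sigma\otimes\tau$ and $\sigma\otimes D_{F}^{2}\tau=\sigma\otimes R_{F}\tau$, plus two mixed terms of the form $D_{E}\sigma\wedge D_{F}\tau$ appearing with opposite signs. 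The mixed terms cancel, and the remaining piece is exactly the action of $R_{E}\otimes I_{F}+I_{E}\otimes R_{F}$.

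The main subtlety, although elementary, lies in the sign bookkeeping for the tensor product: one has to apply $D_{E\otimes F}$ to a $1$-form with values in $E\otimes F$ and correctly track the $(-1)^{p}$ factor from the extended Leibniz rule, because this sign is precisely what makes the two cross terms cancel. Once that is handled, the identification of the curvatures with $R_{E}\oplus R_{F}$ and $R_{E}\otimes I_{F}+I_{E}\otimes R_{F}$ is forced by the very definition $R=D\circ D$.
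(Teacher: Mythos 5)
Your proof is correct, and it takes a genuinely different (and more self-contained) route than the paper. The paper states this proposition without a formal proof and instead immediately passes to local frame fields: it records that the connection form of $D_E\oplus D_F$ is the block-diagonal matrix built from $\omega_E$ and $\omega_F$, and that the connection form of $D_{E\otimes F}$ is $\omega_E\otimes I_p+I_r\otimes\omega_F$; the curvature formulas then follow from $\Omega=\mathrm{d}\omega+\omega\wedge\omega$, where for the tensor product the cross terms $(\omega_E\otimes I_p)\wedge(I_r\otimes\omega_F)+(I_r\otimes\omega_F)\wedge(\omega_E\otimes I_p)$ cancel by anticommutativity of $1$-forms. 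Your argument is intrinsic: you define the connections on sections, verify the Leibniz rule (including the well-definedness of $D_{E\otimes F}$ over the relation $(f\sigma)\otimes\tau=\sigma\otimes(f\tau)$, which the paper never addresses), and compute $R=D\circ D$ globally, with the cancellation of the mixed terms $\mp D_E\sigma\wedge D_F\tau$ playing exactly the role of the cancellation of the cross terms in $\omega\wedge\omega$. The local route is shorter and gives the explicit matrix forms that the paper reuses later; your route avoids choosing frames and makes the sign bookkeeping (the $(-1)^p$ in the extended Leibniz rule) the visible mechanism behind the cancellation. Both are complete proofs of the statement.
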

If $s=(s_1,\ldots,s_r)$ is a local frame field of $E$ and $t=(t_1,\ldots,t_p)$ is a local frame field of
$F$ and $\omega_E,$ $\omega_F,$ $\Omega_E,$ $\Omega_F,$ are the connecionts and the curvature forms with
respect to these frame fields, then in a natural manner the connection and curvature forms of
$D_E\oplus D_F$ are given by
\begin{equation*}
\left(\begin{array}{cc}
	\omega_E       &	   0\\
         0             &      \omega_F 
   
      \end{array}\right)
\hspace{0.5cm}\text{ and }\hspace{0.5cm}
\left(\begin{array}{cc}
	\Omega_E       &	   0\\
         0             &      \Omega_F 
   
      \end{array}\right),
\end{equation*}
while those of $D_{E\otimes F}$ are given by
\begin{equation*}
 \omega_E\otimes I_p+I_r\otimes\omega_F\hspace{0.5cm}\text{ and }\hspace{0.5cm}
 \Omega_E\otimes I_p+I_r\otimes\Omega_F.
\end{equation*}
Here $I_r$ and $I_p$ denote the identity matrices of rank $r$ and $p.$ All these formulas extend in an 
obvious way to the direct sum and the tensor product of any number of vector bundles and so they give
formulas for the connection and curvature in

\begin{equation*}
 E^{\otimes p}\otimes E^{\ast\otimes q}=E\otimes\dots\otimes E^{\ast}\otimes\dots\otimes E^{\ast}
\end{equation*}
Let $E$ be a complex vector bundle over $M$ and let $N$ be another manifold. Given a $\mathcal{C}^{\infty}$ 
mapping $f:N\longrightarrow M,$ we obtain an induced vector bundle $f^{\ast}E$ over $N.$ 

Since in the cathegory of $\mathcal{C}^{\infty}$ bundles there is an isomorphism
\begin{equation*}
 f^{\ast}E\cong f^{-1}E\otimes_{f^{-1}\mathcal{A}_{M}}\mathcal{A}_{N},
\end{equation*}
if $D$ is a connection on $E,$ there is a pull-back connection $f^{\ast}D$ on $f^{\ast}E$ defined by
\begin{equation*}
 (f^{\ast}D)(\sum\varphi_{j}s_{j})=\sum\varphi_{j}D(s_{j})+\sum\mathrm{d}\varphi_{j}\otimes s_{j},
\end{equation*}
where $\varphi_{j}\in A_{N}^{0}$ are $\mathcal{C}^{\infty}$ functions on $N$ and $s_{j}\in\Gamma(M,E)$ are 
$\mathrm{C}^{\infty}$ sections of $E.$

If $\omega$ and $\Omega$ are respectively the connection form
and the curvature form of $D$ over a local frame field $(U_{\alpha},s_{\alpha}),$ then $f^{\ast}\omega$
and $f^{\ast}\Omega$ are the connection form and the curvature form with respect
to the pull-back local frame field $(f^{-1}U,f^{\ast}s).$

\section{Connections in complex vector bundles (over complex manifolds)}

Let $M$ be a complex manifold with (complex) dimension n and $E$ a $\mathcal{C}^{\infty}$ complex vector
bundle of rank r over $M.$ In addition to the notations $A^p$ and $A^p(E)$ introduced in the previous 
section, we use the following:
\begin{itemize}
 \item $A^{p,q}=\Gamma(M,\bigwedge_{p}T^{\ast}M^{\mathbb{C}+}\otimes\bigwedge_{q}T^{\ast}M^{\mathbb{C}-})$ 
       the space of complex $(p,q)\text{-forms}$ over $M,$
 \item $A^{p,q}(E)=\Gamma(M,\bigwedge_{p}T^{\ast}M^{\mathbb{C}+}
       \otimes\bigwedge_{q}T^{\ast}M^{\mathbb{C}-}\otimes E)$ the space of complex 
       \linebreak$(p,q)\text{-forms}$ over $M$ with values in $E.$
\end{itemize}
Let $D$ be a connection in $E$. We can write $D=D'+D'',$ where
\begin{equation*}
 D':A^{p,q}(E)\longrightarrow A^{p+1,q}(E)\hspace{0.5cm}\text{ and }
 D'':A^{p,q}(E)\longrightarrow A^{p,q+1}(E).
\end{equation*}

Decomposing $D$ according to the bidegree, we have, for $\sigma\in A^0(E)$ and \linebreak$\phi\in A^{p,q},$
\begin{equation*}
 D'(\sigma\phi)=D'\sigma\wedge\phi+\sigma \mathrm{d}'\phi,
\end{equation*}

\begin{equation*}
 D''(\sigma\phi)=D''\sigma\wedge\phi+\sigma \mathrm{d''}\phi.
\end{equation*}

Let $R$ be the curvature of $D,$ i.e., $R=D\circ D\in A^2(\mathrm{End}(E)).$ Then 
\begin{equation*}
 R=D'\circ D'+(D'\circ D''+D''\circ D')+D''\circ D'',
\end{equation*}
where $D'\circ D'\in A^{2,0}(\mathrm{End}(E)),$ and $D''\circ D''\in A^{0,2}(\mathrm{End}(E)),$ while 
\linebreak$D'\circ D''+D''\circ D'\in A^{1,1}(\mathrm{End}(E)).$ 

Let $s$ be a local frame field of $E$ and let $\omega$ and $\Omega$ be the connection and the curvature
forms of $D$ with respect to $s.$ We can write
\begin{equation*}
 \omega=\omega^{1,0}+\omega^{0,1},
\end{equation*}

\begin{equation*}
 \Omega=\Omega^{2,0}+\Omega^{1,1}+\Omega^{0,2}.
\end{equation*}

Let $M$ be a complex manifold of (complex) dimension $n.$ and let $E$ be a holomorphic vector bundle of rank $r$
over $M.$ Let 
$\{U_{\alpha}\}$ be an oper cover which trivializes $E$ and let 
$s_{\alpha}=(s_{1}^{\alpha},\ldots,s_{r}^{\alpha})$ be a holomorphic frame field 
on $U_{\alpha}.$ Let $\zeta\in A^{0}(E)$ be a $\mathcal{C}^{\infty}$ section. 
On $U_{\alpha}$ we write
\begin{equation*}
 \zeta=\sum_{i}\zeta_{\alpha}^{j}s_{j}^{\alpha},
\end{equation*}
where $\zeta_{\alpha}^{j}$ are $\mathcal{C}^{\infty}(U_{\alpha},\mathbb{C})$ functions.
Then we set 
\begin{equation*}
 \mathrm{d''}_{E}(\zeta)=\sum_{j}\mathrm{d''}(\zeta_{\alpha}^{j})s_{j}^{\alpha}\hspace{0.5cm}\text{ on }
 U_{\alpha}.
\end{equation*}

\begin{prop}
$\mathrm{d''}_{E}:\Gamma(M,E)\longrightarrow \Gamma(M,E\otimes TM^{\mathbb{C}})$ is well defined and  
\begin{equation*}
 \mathrm{d''}_{E}(f\zeta)=\mathrm{d''}(f)\otimes\zeta+f\mathrm{d''}_{E}(\zeta)\hspace{0.5cm}\text{ for }
 \hspace{0.3cm}f\in A^{0},\zeta\in A^{0}(E).
\end{equation*}
 Moreover, $\mathrm{d''}_{E}\circ\mathrm{d''}_{E}=0.$
\end{prop}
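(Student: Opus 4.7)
The plan is to verify the three claims in order: well-definedness of $\mathrm{d''}_{E}$, the Leibniz rule, and the identity $\mathrm{d''}_{E}\circ\mathrm{d''}_{E}=0$. The main point to check (and the only one that uses the holomorphic structure of $E$ in a nontrivial way) is well-definedness, since the other two will reduce to the corresponding statements for the ordinary operator $\mathrm{d''}$ on scalar forms.

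\textbf{Well-definedness.} Suppose $U_{\alpha}\cap U_{\beta}\neq\emptyset$ and let $s_{\alpha}=s_{\beta}g_{\beta\alpha}$ be the relation between the two holomorphic frames, where $g_{\beta\alpha}:U_{\alpha}\cap U_{\beta}\longrightarrow GL(r,\mathbb{C})$ is the transition function. Because both frames are \emph{holomorphic}, the matrix-valued function $g_{\beta\alpha}$ is holomorphic, hence $\mathrm{d''}(g_{\beta\alpha})=0$. Writing $\zeta$ as column vectors of components, $\zeta_{\beta}=g_{\beta\alpha}\zeta_{\alpha}$, and the ordinary Leibniz rule gives
\begin{equation*}
\mathrm{d''}(\zeta_{\beta})=\mathrm{d''}(g_{\beta\alpha})\,\zeta_{\alpha}+g_{\beta\alpha}\,\mathrm{d''}(\zeta_{\alpha})=g_{\beta\alpha}\,\mathrm{d''}(\zeta_{\alpha}).
\end{equation*}
Multiplying on the left by $s_{\beta}$ and using $s_{\beta}g_{\beta\alpha}=s_{\alpha}$ shows that $\sum_{j}\mathrm{d''}(\zeta_{\beta}^{j})s_{j}^{\beta}=\sum_{j}\mathrm{d''}(\zeta_{\alpha}^{j})s_{j}^{\alpha}$ on $U_{\alpha}\cap U_{\beta}$, so the local formulas glue to a global $E$-valued $(0,1)$-form.

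\textbf{Leibniz rule.} For $f\in A^{0}$ and $\zeta\in A^{0}(E)$, write $\zeta=\sum_{j}\zeta_{\alpha}^{j}s_{j}^{\alpha}$ locally; then $f\zeta=\sum_{j}(f\zeta_{\alpha}^{j})s_{j}^{\alpha}$ and the ordinary Leibniz identity for $\mathrm{d''}$ on scalar functions gives
\begin{equation*}
\mathrm{d''}_{E}(f\zeta)=\sum_{j}\bigl(\mathrm{d''}(f)\zeta_{\alpha}^{j}+f\,\mathrm{d''}(\zeta_{\alpha}^{j})\bigr)s_{j}^{\alpha}=\mathrm{d''}(f)\otimes\zeta+f\,\mathrm{d''}_{E}(\zeta).
\end{equation*}

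\textbf{Vanishing of the square.} One first extends $\mathrm{d''}_{E}$ to $E$-valued $(p,q)$-forms by exactly the same recipe: on a holomorphic frame, write $\phi=\sum_{j}\phi_{\alpha}^{j}\otimes s_{j}^{\alpha}$ with scalar forms $\phi_{\alpha}^{j}$, and set $\mathrm{d''}_{E}(\phi)=\sum_{j}\mathrm{d''}(\phi_{\alpha}^{j})\otimes s_{j}^{\alpha}$. The well-definedness of this extension is proved exactly as above, using once more that $\mathrm{d''}(g_{\beta\alpha})=0$. Then, for $\zeta\in\Gamma(M,E)$, computing in a holomorphic frame,
\begin{equation*}
\mathrm{d''}_{E}\bigl(\mathrm{d''}_{E}(\zeta)\bigr)=\sum_{j}\mathrm{d''}\bigl(\mathrm{d''}(\zeta_{\alpha}^{j})\bigr)s_{j}^{\alpha}=0,
\end{equation*}
since $\mathrm{d''}\circ\mathrm{d''}=0$ on scalar forms. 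The main (indeed the only) conceptual obstacle is therefore ensuring that one works with \emph{holomorphic} frames: the argument breaks down for merely $\mathcal{C}^{\infty}$ frames because then $\mathrm{d''}(g_{\beta\alpha})\neq 0$ and an extra term would obstruct both the gluing and the vanishing of the square.
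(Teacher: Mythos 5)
Your proof is correct and follows essentially the same route as the paper: well-definedness via the holomorphicity of the transition matrices (so that $\mathrm{d''}$ kills them), the Leibniz rule by direct local computation, and the vanishing of the square from $\mathrm{d''}\circ\mathrm{d''}=0$ on scalar forms. If anything you are a bit more careful than the paper on the last point, since you explicitly spell out the extension of $\mathrm{d''}_{E}$ to $E$-valued forms needed to even state $\mathrm{d''}_{E}\circ\mathrm{d''}_{E}=0$, whereas the paper asserts it in one line.
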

\begin{proof}
 First of all we prove that $\mathrm{d''}_E$ is well defined. 
 Let $s_{\beta}=(s_{1}^{\beta},\ldots,s_{r}^{\beta})$ be a holomorphic frame field on $U_{\beta}.$ On the 
 overlapping open set $U_{\alpha}\cap U_{\beta},$ $s_{\beta}$ is related to $s_{\alpha}$ by
 \begin{equation*}
  s_{\alpha}=s_{\beta}\cdot a_{\alpha\beta},
 \end{equation*}
where $a=a_{\alpha\beta}:U_{\alpha}\cap U_{\beta}\longrightarrow GL(r,\mathbb{C})$ is a holomorphic matrix-valued 
function on $U_{\alpha}\cap U_{\beta}.$ $a$ is a holomorphic matrix-valued function, so that 
$\mathrm{d''}(a_{j}^{i})=0.$
Then we have
\begin{equation*}
 \begin{split}
  \mathrm{d''}_{E}(\zeta)&=\sum\mathrm{d''}(\zeta_{\alpha}^{i})s_{i}^{\alpha}=
  \sum\mathrm{d''}(\zeta_{\alpha}^{i})a_{i}^{j}s_{j}^{\beta}=\\
  &=\sum\mathrm{d''}(\zeta_{\alpha}^{i}a_{i}^{j})s_{j}^{\beta}=\sum\mathrm{d''}(\zeta_{\beta}^{j})s_{j}^{\beta}=\\
  &=\mathrm{d''}_{E}(\zeta),
 \end{split}
\end{equation*}
and this proves that $\mathrm{d''}_{E}$ is well defined.

\noindent In order to prove that $\mathrm{d''}_{E}(f\zeta)=\mathrm{d''}(f)\otimes\zeta+f\mathrm{d''}_{E}(\zeta),$
 let $f\in A^{0}$ and $\zeta\in A^{0}(E).$
After a straightforward computation we find
\begin{equation*}
 \begin{split}
\mathrm{d''}_{E}(f\zeta)&=\sum\mathrm{d''}(f\zeta_{\alpha}^{i})s_{i}^{\alpha}=\\
&=\sum\mathrm{d''}(f)\zeta_{\alpha}^{i}s_{i}^{\alpha}+\sum f\mathrm{d''}(\zeta_{\alpha}^{i})s_{i}^{\alpha}=\\
&=\mathrm{d''}(f)\otimes\sum\zeta_{\alpha}^{i}s_{i}^{\alpha}+
f\sum\mathrm{d''}(\zeta_{\alpha}^{i})s_{i}^{\alpha}=\\
&=\mathrm{d''}(f)\otimes\zeta+f\mathrm{d''}_{E}(\zeta).
 \end{split}
\end{equation*}
Finally, from $\mathrm{d''}\circ\mathrm{d''}=0$ we have $\mathrm{d''}_{E}\circ\mathrm{d''}_{E}=0,$ and this 
completes the proof.
\end{proof}

We shall now describe those complex vector bundles which admit holomorphic structures. Let 
$\mathcal{A}^{p,q}$ be the sheaf of complex $(p,q)\text{-forms}$ over $M$ and let 
$\mathcal{A}^{p,q}(E)=E\otimes\mathcal{A}^{p,q}$ be the sheaf of complex $(p,q)\text{-forms}$ over $M$ with 
values in $E.$ Then we have the following
\begin{teo}
 Let $M$ be a complex manifold of (complex) dimension $n$ and let $E$ be a complex vector bundle of rank $r$ 
 over $M.$ The following conditions are equivalent:
 \begin{enumerate}
  \item There exists a holomorphic vector bundle structure on $E,$
  \item There exists an operator $\mathrm{d''}_{E}:\mathcal{A}^{0,0}(E)\longrightarrow\mathcal{A}^{0,1}(E)$
        such that
        \begin{enumerate}
         \item $\mathrm{d''}_{E}(f\zeta)=\mathrm{d''}(f)\otimes\zeta+f\mathrm{d''}_{E}(\zeta)\hspace{0.5cm}
         \text{ for }\hspace{0.3cm}f\in\mathcal{A}^{0,0},\zeta\in\mathcal{A}^{0,0}(E),$
         \item $\mathrm{d''}_{E}\circ\mathrm{d''}_{E}=0.$
        \end{enumerate}
\end{enumerate}
\end{teo}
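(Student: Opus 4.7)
The implication $(1)\Rightarrow(2)$ is essentially the content of the preceding proposition: given a holomorphic structure on $E$, the operator $\mathrm{d''}_{E}$ constructed there using local holomorphic frames satisfies the Leibniz rule and, since $\mathrm{d''}\circ\mathrm{d''}=0$ scalarwise, also squares to zero. So I would simply invoke the previous proposition, remarking that its construction extends from $\mathcal{A}^{0,0}$ to the full sheaf level because it is defined locally.

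The substantive direction is $(2)\Rightarrow(1)$. The plan is to manufacture a covering of $M$ by open sets $\{U_{\alpha}\}$ on each of which $E$ admits a local frame $s_{\alpha}$ annihilated by $\mathrm{d''}_{E}$, and then check that the transition functions between any two such frames are holomorphic; Theorem \ref{cocycle} (in its holomorphic version) will then upgrade $E$ to a holomorphic vector bundle, and the given $\mathrm{d''}_{E}$ will automatically coincide with the canonical one associated to this holomorphic structure. Fix a point $P\in M$ and pick an arbitrary $\mathcal{C}^{\infty}$ local frame $e=(e_{1},\dots,e_{r})$ on some neighborhood $U$ of $P$. By $(a)$, the operator $\mathrm{d''}_{E}$ is completely determined on $U$ by the $r\times r$ matrix $\theta=(\theta^{j}_{i})$ of $(0,1)$-forms defined by $\mathrm{d''}_{E}e_{i}=\sum_{j}e_{j}\theta^{j}_{i}$. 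A direct computation using $(a)$ and the decomposition $A^{1}=A^{1,0}\oplus A^{0,1}$ then turns the hypothesis $\mathrm{d''}_{E}\circ\mathrm{d''}_{E}=0$ into the Maurer-Cartan type integrability condition
\begin{equation*}
\mathrm{d''}\theta+\theta\wedge\theta=0.
\end{equation*}

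Next I look for a change of frame $s=e\cdot g$, with $g:V\to GL(r,\mathbb{C})$ a $\mathcal{C}^{\infty}$ map on a possibly smaller neighborhood $V\subseteq U$ of $P$, such that $\mathrm{d''}_{E}s=0$. Using the Leibniz rule $(a)$ this reduces to the linear matrix equation
\begin{equation*}
\mathrm{d''}g+\theta\, g=0
\end{equation*}
for an invertible matrix-valued unknown $g$. The integrability condition above is exactly the necessary and sufficient condition for local solvability of this $\overline{\partial}$-type system; this is the main technical obstacle and is resolved by the Koszul-Malgrange theorem, which is the holomorphic-bundle analogue of Newlander-Nirenberg. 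I would invoke this result to produce, in a neighborhood of each point, a solution $g$ with $g(P)=\mathds{1}$ (so $g$ stays invertible on a smaller neighborhood), yielding the desired local frame $s_{\alpha}$ with $\mathrm{d''}_{E}s_{\alpha}=0$.

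Finally, suppose $s_{\alpha}$ on $U_{\alpha}$ and $s_{\beta}$ on $U_{\beta}$ are two such $\mathrm{d''}_{E}$-annihilated frames, related on $U_{\alpha}\cap U_{\beta}$ by $s_{\alpha}=s_{\beta}\cdot g_{\beta\alpha}$ with $g_{\beta\alpha}:U_{\alpha}\cap U_{\beta}\to GL(r,\mathbb{C})$ smooth. Applying $\mathrm{d''}_{E}$ to this identity and using $(a)$ together with $\mathrm{d''}_{E}s_{\alpha}=\mathrm{d''}_{E}s_{\beta}=0$ forces $\mathrm{d''}g_{\beta\alpha}=0$ entrywise, so the transition functions $g_{\beta\alpha}$ are holomorphic. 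These obviously satisfy the cocycle condition \eqref{cocycleidentity}, so by the holomorphic version of Theorem \ref{cocycle} they define a holomorphic vector bundle structure on $E$. By construction, the local frames $s_{\alpha}$ are holomorphic for this structure, and the $\mathrm{d''}_{E}$ of the preceding proposition associated to this holomorphic structure kills them — hence it agrees with the given operator, closing the equivalence.
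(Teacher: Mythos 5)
The paper states this theorem without proof (as with the neighbouring Proposition~\ref{holomorphicstructureD}, it implicitly defers to Kobayashi \cite{KOB}), so there is no in-paper argument to compare against; I can only assess your proposal on its own terms. It is correct and is the standard route: $(1)\Rightarrow(2)$ is the preceding proposition, and for $(2)\Rightarrow(1)$ you correctly reduce everything to producing, near each point, a frame annihilated by $\mathrm{d''}_{E}$ --- the matrix $\theta$ of $(0,1)$-forms, the integrability condition $\mathrm{d''}\theta+\theta\wedge\theta=0$ extracted from $\mathrm{d''}_{E}\circ\mathrm{d''}_{E}=0$, the linear system $\mathrm{d''}g+\theta g=0$ for the change of frame, and the holomorphy of the transition functions between flat frames are all exactly as in Kobayashi's treatment, and the final identification of the given $\mathrm{d''}_{E}$ with the canonical one is handled correctly.

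One caution: the local solvability of $\mathrm{d''}g+\theta g=0$ under the integrability condition is not a side remark but the entire analytic content of the theorem, and the theorem you are proving \emph{is} the Koszul--Malgrange theorem. Citing ``Koszul--Malgrange'' for that step therefore risks circularity unless you are explicit that what you invoke is only the \emph{local} existence lemma (a Newlander--Nirenberg-type statement, proved e.g.\ by induction on the complex dimension starting from the elementary one-variable $\overline{\partial}$-equation, as Kobayashi does in \cite{KOB}, Ch.~I, \S 3), not the global bundle-theoretic statement. With the citation pointed at that local lemma, the argument is complete.
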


Moreover, we have the following results:

\begin{prop}
 Let $E$ be a holomorphic vector bundle over a complex manifold $M.$ Then there exists a connection $D$ such
 that
 \begin{equation*}
  D''=\mathrm{d''}_{E}
 \end{equation*}
For such a connection, the $(0,2)\text{-component}$ $D''\circ D''$ of the curvature $R$ vanishes.
\end{prop}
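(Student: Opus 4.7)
The plan is to build $D$ locally on each trivializing chart and then patch the pieces together with a partition of unity. First I would choose a locally finite open cover $\{U_\alpha\}$ of $M$ which trivializes $E$, together with holomorphic frames $s_\alpha=(s_1^\alpha,\dots,s_r^\alpha)$ on each $U_\alpha$. On $U_\alpha$ I would define the ``trivial'' connection $D_\alpha$ by prescribing that its connection form relative to $s_\alpha$ vanish, i.e.\
\[
 D_\alpha\!\left(\sum_j \zeta^j s_j^\alpha\right)=\sum_j \mathrm{d}\zeta^j\otimes s_j^\alpha
\]
for $\zeta^j\in\mathcal{C}^\infty(U_\alpha,\mathbb{C})$. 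Since the frame is holomorphic, the very definition of $\mathrm{d''}_E$ given earlier in the excerpt shows that on $U_\alpha$ one has $D_\alpha''=\mathrm{d''}_E$.

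Next I would pick a $\mathcal{C}^\infty$ partition of unity $\{\rho_\alpha\}$ subordinate to $\{U_\alpha\}$ and set, for $\sigma\in A^0(E)$,
\[
 D\sigma\;=\;\sum_\alpha \rho_\alpha\, D_\alpha\bigl(\sigma|_{U_\alpha}\bigr),
\]
each summand being extended by zero outside its support. A direct computation using $\sum_\alpha \rho_\alpha\equiv 1$ verifies the Leibniz rule $D(f\sigma)=\sigma\,\mathrm{d}f+fD\sigma$, so $D$ is indeed a connection in $E$; equivalently, one may recall that connections form an affine space over $A^1(\mathrm{End}(E))$, whence convex combinations of connections remain connections. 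Decomposing $D=D'+D''$ and taking $(0,1)$-parts term by term then yields $D''=\sum_\alpha \rho_\alpha\, D_\alpha''=\mathrm{d''}_E$, which is the first claim.

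For the second assertion, the $(0,2)$-component of the curvature $R=D\circ D$ is exactly the composition $D''\circ D'':A^0(E)\to A^{0,2}(E)$, where $D''$ is extended to $A^{0,1}(E)$ through the graded Leibniz rule introduced in the previous section. Since $D''=\mathrm{d''}_E$ on $A^0(E)$ and both extensions are forced by the same Leibniz rule with respect to $\mathrm{d''}$, they must agree on $A^{0,1}(E)$ as well, so
\[
 D''\circ D''=\mathrm{d''}_E\circ\mathrm{d''}_E=0
\]
by the preceding proposition. The only delicate point in the whole argument is the well-definedness and globality of the patching step: once one is convinced that the partition-of-unity sum produces an honest global connection whose bidegree decomposition commutes with the summation, the vanishing of the $(0,2)$-curvature is an immediate consequence of $\mathrm{d''}_E\circ\mathrm{d''}_E=0$.
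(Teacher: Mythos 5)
Your proposal is correct and follows essentially the same route as the paper: local flat connections $D_U(s_U)=0$ on holomorphic frames, glued by a partition of unity, with $D''=\mathrm{d''}_E$ following termwise and the $(0,2)$-curvature vanishing because $\mathrm{d''}_E\circ\mathrm{d''}_E=0$. Your write-up merely spells out the Leibniz-rule verification and the bidegree bookkeeping that the paper leaves implicit.
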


\begin{proof}
 Let $\{U\}$ be a locally finite open cover of $M$ and let $\{\rho_U\}$ a partition of unity subordinate
 to $\{U\}.$ Let $s_U$ be a holomorphic frame field of $E$ on $U$ and let $D_U$ be the flat connection
 in $\left.E\right|_U$ defined by $D_U(s_U)=0.$ Then $D=\sum\rho_UD_U$ is a connection in $E$ with the 
 property that $D''=\mathrm{d''}_{E}.$ The second assertion is obvious, since
 $\mathrm{d''}_{E}\circ \mathrm{d''}_{E}=0.$
\end{proof}

\begin{prop}
\label{holomorphicstructureD}
 Let $E$ be a $\mathcal{C}^{\infty}$ complex vector bundle over a complex manifold $M.$ If $D$ is a 
 connection in $E$ such that $D''\circ D''=0,$ then there is a unique holomorphic vector bundle structure
 in $E$ such that $D''=\mathrm{d''}_{E}.$
\end{prop}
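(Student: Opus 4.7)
The plan is to construct, on a sufficiently fine open cover $\{U_{\alpha}\}$ of $M$, smooth local frames $s^{\alpha} = (s_{1}^{\alpha}, \ldots, s_{r}^{\alpha})$ of $E$ satisfying $D'' s_{i}^{\alpha} = 0$. Once such \emph{$D''$-holomorphic frames} are in hand, applying $D''$ to the transition relation $s^{\alpha} = s^{\beta} \cdot g_{\beta\alpha}$ on $U_{\alpha}\cap U_{\beta}$ forces $\mathrm{d''} g_{\beta\alpha} = 0$; hence $\{g_{\beta\alpha}\}$ is a holomorphic cocycle and Theorem \ref{cocycle} produces a holomorphic structure on $E$. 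By construction the Dolbeault operator $\mathrm{d''}_{E}$ associated with this structure annihilates each $s_{i}^{\alpha}$ exactly as $D''$ does, and since both operators obey the Leibniz rule with respect to $\mathrm{d''}$ they agree on all smooth sections.

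The existence problem reduces to a PDE as follows. Fix any smooth local frame $e = (e_{1}, \ldots, e_{r})$ on a polydisc $U$ and write $D''e = e \cdot \theta$, where $\theta$ is a matrix of $(0,1)$-forms on $U$. Searching for $s = e \cdot a$ with $a: U \to GL(r,\mathbb{C})$ smooth and $D'' s = 0$, the transformation rule analogous to \eqref{connectionchange} reduces the condition to
\begin{equation*}
 \mathrm{d''} a + \theta \cdot a = 0.
\end{equation*}
A computation parallel to \eqref{curvature} shows that the $(0,2)$-component of the curvature of $D$ relative to $e$ is $\mathrm{d''}\theta + \theta \wedge \theta$, so the hypothesis $D'' \circ D'' = 0$ translates into the Maurer-Cartan identity $\mathrm{d''}\theta + \theta \wedge \theta = 0$, which is precisely the consistency condition one expects for local solvability.

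The main obstacle is therefore the analytic step: producing, on a sufficiently small polydisc $\Delta \subseteq U$, a smooth $GL(r,\mathbb{C})$-valued matrix $a$ with $\mathrm{d''} a = -\theta \cdot a$. This is the Koszul-Malgrange integrability theorem. Two natural routes are available. The first is a direct iterative construction in the spirit of the proof of Dolbeault's Lemma: one inverts $\mathrm{d''}$ on shrinking polydiscs via a Cauchy-Green style operator and runs a Picard-type iteration whose convergence rests on the Maurer-Cartan identity for $\theta$. The second is to endow the total space of $E$ with the almost complex structure whose $(0,1)$-tangent space is spanned by the horizontal lifts (under $D$) of the $(0,1)$-vectors on $M$ together with the fibrewise antiholomorphic vectors; one checks that its Nijenhuis tensor vanishes if and only if $D'' \circ D'' = 0$, and then invokes Newlander-Nirenberg to extract local holomorphic coordinates from which $D''$-holomorphic frames may be read off. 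Either approach relies on a substantive PDE input, and this is where the real difficulty of the proposition sits.

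For uniqueness, suppose $\overline{\mathcal{A}}_{1}$ and $\overline{\mathcal{A}}_{2}$ are two holomorphic structures on $E$ whose Dolbeault operators both equal $D''$. A local frame holomorphic for $\overline{\mathcal{A}}_{1}$ is annihilated by the first Dolbeault operator, hence by $D''$, hence by the second Dolbeault operator, and is therefore holomorphic for $\overline{\mathcal{A}}_{2}$ as well. Two holomorphic bundle structures sharing a common family of local holomorphic frames have biholomorphic transition functions and must coincide, so the holomorphic structure produced above is the unique one with $D'' = \mathrm{d''}_{E}$.
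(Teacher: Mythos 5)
Your proposal is correct and follows essentially the same route as the source the paper defers to: the paper's own ``proof'' of this proposition is only the citation ``See \cite{KOB} for a detailed proof,'' and the argument there is precisely the reduction you describe --- local $D''$-holomorphic frames, the integrability condition $\mathrm{d''}\theta+\theta\wedge\theta=0$ extracted from $D''\circ D''=0$, and the Koszul--Malgrange/Newlander--Nirenberg local solvability of $\mathrm{d''}a=-\theta\cdot a$, followed by the cocycle and uniqueness arguments you give. You leave that analytic core as a named black box rather than proving it, but since the thesis itself supplies no proof beyond the citation, this is not a gap relative to the paper.
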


\begin{proof}
 See \cite{KOB} for a detailed proof.
\end{proof}

\begin{prop}
 For a connection $D$ in a holomorphic vector bundle $E,$ the following conditions are equivalent:
 \begin{enumerate}
  \item $D''=\mathrm{d''}_{E},$
  \item For every local holomorphic section $s,$ $Ds$ is of degree $(1,0),$
  \item With respect to a local holomorphic frame field, the connection form $\omega$ is of degree $(1,0).$
 \end{enumerate}
\end{prop}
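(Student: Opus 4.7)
The plan is to establish the cyclic chain of implications $(1)\Rightarrow(2)\Rightarrow(3)\Rightarrow(1)$, working locally and exploiting the fundamental fact that $\mathrm{d''}_{E} s_{i}=0$ whenever $s=(s_{1},\ldots,s_{r})$ is a local holomorphic frame field. This last fact is immediate from the local formula for $\mathrm{d''}_{E}$: writing a section in the frame $s$ as $\sum\zeta^{j}s_{j}$, one has $\mathrm{d''}_{E}(\sum\zeta^{j}s_{j})=\sum\mathrm{d''}(\zeta^{j})s_{j}$, which vanishes in particular when the coefficients are constant.

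For $(1)\Rightarrow(2)$, let $s$ be a local holomorphic section. Then $D''s=\mathrm{d''}_{E}s=0$ by the observation above, so that $Ds=D's+D''s=D's$ lies in $A^{1,0}(E)$.

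For $(2)\Rightarrow(3)$, choose a local holomorphic frame field $s=(s_{1},\ldots,s_{r})$. By hypothesis each $Ds_{i}$ is of type $(1,0)$. Expanding in the frame gives $Ds_{i}=\sum_{j}s_{j}\omega_{i}^{j}$, and since the sections $s_{j}$ are of pure type $(0,0)$, the bidegree of $Ds_{i}$ forces each scalar form $\omega_{i}^{j}$ to be of type $(1,0)$; hence the whole matrix $\omega=(\omega_{i}^{j})$ is of type $(1,0)$.

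For $(3)\Rightarrow(1)$, let $s$ be a local holomorphic frame with connection form $\omega$ of type $(1,0)$. For an arbitrary smooth section $\zeta=\sum_{i}\zeta^{i}s_{i}$, the Leibniz rule gives
\begin{equation*}
D\zeta=\sum_{i}(\mathrm{d}\zeta^{i})\,s_{i}+\sum_{i,j}\zeta^{i}\omega_{i}^{j}\,s_{j}.
\end{equation*}
Taking the $(0,1)$-component and using that $\omega$ has no $(0,1)$-part, we obtain $D''\zeta=\sum_{i}(\mathrm{d''}\zeta^{i})\,s_{i}$, which coincides with the local expression for $\mathrm{d''}_{E}\zeta$. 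Since this holds on a local frame covering $M$, one concludes $D''=\mathrm{d''}_{E}$ globally.

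The argument is entirely formal, based on bookkeeping with bidegrees and the behaviour of $\mathrm{d''}_{E}$ on holomorphic frames; there is no real analytic obstacle. The only point requiring a little attention is to make sure that in $(3)\Rightarrow(1)$ one works with a holomorphic (not merely smooth) frame, since otherwise the local formula $\mathrm{d''}_{E}(\sum\zeta^{j}s_{j})=\sum\mathrm{d''}(\zeta^{j})s_{j}$ would pick up extra terms.
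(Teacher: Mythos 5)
Your proof is correct: the paper states this proposition without giving a proof (implicitly deferring to Kobayashi), and your cyclic chain $(1)\Rightarrow(2)\Rightarrow(3)\Rightarrow(1)$, driven by the fact that $\mathrm{d''}_{E}$ annihilates local holomorphic sections and by bidegree bookkeeping in a holomorphic frame, is exactly the standard argument the paper relies on. The only point worth tightening is in $(1)\Rightarrow(2)$: a general local holomorphic section has holomorphic (not constant) coefficients $\zeta^{j}$ in a holomorphic frame, so one should say $\mathrm{d''}_{E}s=\sum\mathrm{d''}(\zeta^{j})s_{j}=0$ because each $\mathrm{d''}\zeta^{j}=0$, rather than appealing only to the case of constant coefficients.
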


\section{Connections in Hermitian vector bundles}
\label{compatibility}

\begin{defin}
 Let $E$ be a $\mathcal{C}^{\infty}$ complex vector bundle over a (real or complex) manifold $M.$ 
 A Hermitian structure or Hermitian metric $h$ in $E$ is a $\mathcal{C}^{\infty}$ field of Hermitian
 inner products in the fibre of $E.$ Thus,
 \begin{enumerate}
  \item $h(\zeta,\eta)$ is $\mathbb{C}\text{-linear}$ in $\zeta,$ where $\zeta,\eta\in E_x,$
  \item $h(\zeta,\eta)=\overline{h(\eta,\zeta)},$
  \item $h(\zeta,\zeta)>0$ for $\zeta\neq0,$
  \item $h(\zeta,\eta)$ is a $\mathcal{C}^{\infty}$ function if $\zeta$ and $\eta$ are 
        $\mathcal{C}^{\infty}$ sections.
 \end{enumerate}
We call $(E,h)$ a Hermitian vector bundle.
\end{defin}

Given a local frame field $s_{\alpha}=(s_1,\ldots,s_r)$ of $E$ over $U_{\alpha},$ we set
\begin{equation*}
 h_{i\overline{j}}=h(s_i,s_j)\hspace{0.5cm}\text{ for }i,j=1,\ldots,r
\end{equation*}
and

\begin{equation}
 H_{\alpha}=(h_{i\overline{j}})
\end{equation}
Then $H_{\alpha}$ is a positive definite Hermitian matrix at every point of $U_{\alpha}.$ 
When we are working with a single frame field, we often drop the subscript $\alpha.$ 
We say that $s_{\alpha}$ is a unitary frame field or orthonormal frame field if $H_{\alpha}$ is the
identity matrix. Under a change of frame field given by $s_{\alpha}=s_{\beta}g_{\beta\alpha},$ we have

\begin{equation*}
 H_{\alpha}=g_{\beta\alpha}^{t}H_{\beta}g_{\beta\alpha}\hspace{0.5cm}\text{ on }U_{\alpha}\cap U_{\beta}.
\end{equation*}

\begin{defin}
 A connection $D$ in $(E,h)$ is called an $h\text{-connection}$ if it preserves $h,$ i.e., if it makes $h$ 
 parallel in the following sense:
 \begin{equation}
 \label{hconnection}
  \mathrm{d}(h(\zeta,\eta))=h(D\zeta,\eta)+h(\zeta,D\eta)\hspace{0.5cm}\text{ for }\zeta,\eta\in A^0(E).
 \end{equation}
\end{defin}
Let $\omega=(\omega_{j}^{i})$ be the connection form relative to the local frame field $s_U.$ Then setting
$\zeta=s_i$ and $\eta=s_j$ in \eqref{hconnection}, we obtain
\begin{equation}
\label{localhconnection}
 \mathrm{d}h_{i\bar\jmath}=h(Ds_i,s_j)+h(s_i,Ds_j)=\omega_{i}^{a}h_{a\bar\jmath}+
 h_{ib}\omega_{\bar\jmath}^{b}.
\end{equation}
So in matrix notation we have

\begin{equation*}
\label{matrixhconnection1}
 \mathrm{d}H=\omega^tH+H\overline{\omega}.
\end{equation*}
Applying $\mathrm{d}$ to \eqref{matrixhconnection1} we obtain

\begin{equation}
 \label{matrixhconnection2}
 \Omega^tH+H\overline{\Omega}=0.
\end{equation}
If $E$ is a holomorphic vector bundle over a complex 
manifold $M,$ then a Hermitian structure $h$ determines a natural $h$-connection satisfying 
$D''=\mathrm{d''}_{E}.$ Namely, we have

\begin{prop}
 Given a Hermitian structure $h$ in a holomorphic vector bundle $E$ over a complex manifold $M,$ there
 is a unique $h\text{-connection}$ $D$ such that $D''=\mathrm{d''}_{E}.$
\end{prop}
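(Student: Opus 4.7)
The plan is to reduce the problem to a local computation in a holomorphic frame, where the two conditions $D'' = \mathrm{d}''_E$ and $h$-compatibility force an explicit formula for the connection form, giving both existence and uniqueness simultaneously.

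Fix a holomorphic frame $s_\alpha = (s_1, \ldots, s_r)$ on a trivializing open set $U_\alpha$, and let $H_\alpha = (h_{i\bar\jmath})$ be the matrix of $h$ in this frame. Let $\omega$ denote the connection form of a candidate $D$ relative to $s_\alpha$. Since $s_\alpha$ is holomorphic, $\mathrm{d}''_E s_i = 0$, so the requirement $D'' = \mathrm{d}''_E$ forces $D'' s_i = 0$, which by the result of the previous proposition (characterizing such connections) is equivalent to $\omega$ being of type $(1,0)$. Consequently $\bar\omega$ is of type $(0,1)$.

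Next I would use the $h$-compatibility identity already derived in the chapter, namely $\mathrm{d}H = \omega^{t} H + H \bar\omega$, and decompose it by bidegree. Since the left-hand side splits as $\mathrm{d}H = \mathrm{d}'H + \mathrm{d}''H$ with $\mathrm{d}'H$ of type $(1,0)$ and $\mathrm{d}''H$ of type $(0,1)$, matching types yields the two equations
\begin{equation*}
\mathrm{d}'H = \omega^{t} H, \qquad \mathrm{d}''H = H \bar\omega.
\end{equation*}
The first equation determines $\omega$ uniquely: since $H$ is invertible (being positive definite),
\begin{equation*}
\omega^{t} = (\mathrm{d}'H)\, H^{-1},
\end{equation*}
and one checks that the second equation is automatically satisfied by this choice, because $H$ is Hermitian, so taking the conjugate transpose of the first equation recovers the second. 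This explicit formula both proves uniqueness on $U_\alpha$ and produces a candidate connection form. Conversely, defining $\omega$ by this formula yields a $(1,0)$-form with values in $\mathfrak{gl}(r,\mathbb{C})$ that satisfies both $D'' s_i = 0$ and the compatibility relation, so it genuinely defines an $h$-connection with $D'' = \mathrm{d}''_E$ over $U_\alpha$.

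What remains is the global step: verifying that the locally defined $\omega_\alpha$ patch into a single connection. For this I would take two holomorphic frames $s_\alpha, s_\beta$ with holomorphic transition $s_\alpha = s_\beta g_{\beta\alpha}$, so that $H_\alpha = g_{\beta\alpha}^{t} H_\beta \overline{g_{\beta\alpha}}$; because $g_{\beta\alpha}$ is holomorphic, $\mathrm{d}'\overline{g_{\beta\alpha}} = 0$ and $\mathrm{d}' g_{\beta\alpha} = \mathrm{d}\,g_{\beta\alpha}$, and a direct substitution into the explicit formula shows that $\omega_\alpha$ and $\omega_\beta$ satisfy the transformation rule \eqref{transition1}. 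By the converse statement following \eqref{transition1}, the family $\{\omega_\alpha\}$ defines a global connection $D$, which by construction is the unique $h$-connection with $D'' = \mathrm{d}''_E$. The main technical nuisance — and really the only place to be careful — is the bookkeeping with transposes and conjugates when checking this cocycle compatibility; the rest is essentially forced.
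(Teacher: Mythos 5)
Your proposal is correct and follows essentially the same route as the paper: both derive the forced formula $\omega_{\alpha}^{t}=\mathrm{d}'H_{\alpha}H_{\alpha}^{-1}$ from the $(1,0)$-type of $\omega$ and the bidegree decomposition of the compatibility identity, which gives uniqueness, and then obtain existence by checking that these local forms satisfy the transition rule \eqref{transition1}. Your additional observation that the $(0,1)$-part of the compatibility equation is automatic by Hermitian symmetry of $H$ is a point the paper leaves implicit, but it is the same argument.
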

\begin{proof}
 \begin{enumerate}
  \item First af all we prove the uniqueness. Let $D$ be such a connection and let 
        $s_{\alpha}=(s_1,\ldots,s_r)$ be a local frame field on $U_{\alpha}.$ Since $Ds_i=D's_i,$ 
        the connection form $\omega_{\alpha}=(\omega_{j}^{i})$ is of degree $(1,0).$ 
        From \eqref{localhconnection} we obtain
        \begin{equation*}
         \mathrm{d}'h_{i\bar\jmath}=\omega_{i}^{a}h_{a\bar\jmath},
        \end{equation*}
        or, in matrix notation,
        \begin{equation*}
         \mathrm{d}'H_{\alpha}=\omega_{\alpha}^{t}H_{\alpha}.
        \end{equation*}

        This determines the connection form $\omega_{\alpha},$ i.e.,
        \begin{equation*}
         \omega_{\alpha}^{t}=\mathrm{d}'H_{\alpha}H_{\alpha}^{-1}.
        \end{equation*}
        Hence, we have proved uniqueness.
  \item Now, we want to prove existence. Let $\omega_{\alpha}^t=\mathrm{d}'H_{\alpha}H_{\alpha}^{-1}.$ 
  By a straightforward calculation we can see that the collection $\{\omega_{\alpha}\}$ satisfies
  \eqref{transition1}
  \begin{equation*}
   \omega_{\alpha}=g_{\beta\alpha}^{-1}\omega_{\beta}g_{\beta\alpha}+g_{\beta\alpha}^{-1}\mathrm{d}
   g_{\beta\alpha}\hspace{0.5cm}\text{ on }U_{\alpha}\cap U_{\beta},
  \end{equation*}
 and this completes the proof.
 \end{enumerate}
\end{proof}

\begin{defin}
 The connection given by the previous Proposition is called the Hermitian connection of the holomorphic
 Hermitian vector bundle $(E,h).$
 \end{defin}

\begin{prop}
 The curvature of the Hermitian connection in a holomorphic vector bundle is of degree $(1,1).$
 If $(E,h)$ is a $\mathcal{C}^{\infty}$ complex vector bundle over a complex manifold $M$ with an 
 Hermitian structure $h$ and $D$ is an $h\text{-connection}$ whose curvature is of degree $(1,1),$ then there is 
 a unique holomorphic structure in $E$ which makes $D$ the Hermitian connection of 
 the vector bundle $(E,h).$
\end{prop}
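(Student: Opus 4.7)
The plan is to prove the two assertions in order, using the preceding results in the excerpt as the main engine.

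For the first statement, let $D$ be the Hermitian connection of a holomorphic bundle $(E,h)$, so by the previous proposition $D'' = \mathrm{d''}_E$. Since $\mathrm{d''}_E \circ \mathrm{d''}_E = 0$, the $(0,2)$-component of $R = D \circ D$ vanishes, i.e., $\Omega^{0,2} = 0$. To obtain the vanishing of $\Omega^{2,0}$, I would exploit the compatibility relation with $h$ derived in Section \ref{compatibility}: in matrix form on any local frame field,
\begin{equation*}
\Omega^{t} H + H \overline{\Omega} = 0.
\end{equation*}
Decomposing this identity by bidegree, and noting that $\overline{\Omega^{0,2}}$ is of type $(2,0)$ while $\overline{\Omega^{2,0}}$ is of type $(0,2)$, the $(2,0)$-component gives $(\Omega^{2,0})^{t} H + H \overline{\Omega^{0,2}} = 0$. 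Since $\Omega^{0,2} = 0$ and $H$ is invertible, I conclude $\Omega^{2,0} = 0$, hence $\Omega = \Omega^{1,1}$.

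For the second statement, the key observation is that the type decomposition of $R$ reads
\begin{equation*}
R = D' \circ D' + (D' \circ D'' + D'' \circ D') + D'' \circ D'',
\end{equation*}
where the three summands are of pure bidegree $(2,0)$, $(1,1)$, $(0,2)$ respectively. The hypothesis that $R$ is of type $(1,1)$ forces in particular $D'' \circ D'' = 0$. At this point I would invoke Proposition \ref{holomorphicstructureD}, which gives a unique holomorphic structure on $E$ such that $D'' = \mathrm{d''}_E$ for this structure.

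It remains to verify that with this holomorphic structure, $D$ is the Hermitian connection of $(E,h)$: but $D$ is by hypothesis an $h$-connection, and it satisfies $D'' = \mathrm{d''}_E$, so by the uniqueness clause of the previous proposition it coincides with the Hermitian connection. Uniqueness of the holomorphic structure follows from the uniqueness clause in Proposition \ref{holomorphicstructureD}, since any holomorphic structure making $D$ the Hermitian connection must in particular satisfy $D'' = \mathrm{d''}_E$, which determines the structure. The only subtle point, and the step I would treat most carefully, is the bidegree manipulation in the first part: it is essential that the compatibility identity mixes $\Omega$ with $\overline{\Omega}$, because this is what swaps the $(0,2)$ and $(2,0)$ pieces and lets the vanishing of one propagate to the other.
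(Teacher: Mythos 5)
Your proof is correct and follows essentially the same route as the paper: the vanishing of $\Omega^{0,2}$ from $\mathrm{d''}_{E}\circ\mathrm{d''}_{E}=0$, the vanishing of $\Omega^{2,0}$ from the compatibility identity $\Omega^{t}H+H\overline{\Omega}=0$, and the second assertion via Proposition \ref{holomorphicstructureD} together with the uniqueness of the Hermitian connection. You have merely spelled out the bidegree bookkeeping that the paper leaves implicit, which is a sound elaboration rather than a different argument.
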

\begin{proof}
 Let $D$ be a Hermitian connection in the holomorphic Hermitian vector bundle $(E,h)$ over the
 complex manifold $M.$ Its connection form is given locally by
 $\omega_{\alpha}^{t}=\mathrm{d}'H_{\alpha}H_{\alpha}^{-1},$ and its curvature $R$ has no 
 $(0,2)\text{-components}$
 since 
 \linebreak$D''\circ D''=\mathrm{d''}_{E}\circ \mathrm{d''}_{E}=0.$ By \eqref{matrixhconnection2} it has no 
 $(2,0)\text{-components}$ either. So the curvature is a $(1,1)\text{-form}$ with values in $\mathrm{End}(E)$
 \begin{equation*}
  R=D'\circ D''+D''\circ D'\in A^{1,1}(E).
 \end{equation*}
 The second part of this Proposition follows \ref{holomorphicstructureD}. (See
 \cite{KOB}, p. 9 and p. 12 for more details).
\end{proof}
 With respect to a local holomorphic frame field the connection form $\omega=\omega_{j}^{i}$ is of 
 degree $(1,0).$ Since the curvature form $\Omega$ s equal to the $(1,1)\text{-component}$ of 
 $\mathrm{d}\omega+\omega\wedge\omega,$ we obtain
 
 \begin{equation*}
  \Omega=\mathrm{d''}\omega.
 \end{equation*}
From $\omega_{\alpha}^{t}=\mathrm{d}'H_{\alpha}H_{\alpha}^{-1}$ we obtain

\begin{equation*}
 \Omega^t=\mathrm{d''}\mathrm{d}'HH^{-1}+\mathrm{d}'HH^{-1}\wedge \mathrm{d''}HH^{-1}.
\end{equation*}
In local coordinates we write

\begin{equation*}
 \Omega^{i}_{j}=R^{i}_{j\alpha\overline{\beta}}\mathrm{d}z^{\alpha}\wedge \mathrm{d}\overline{z}^{\beta}.
\end{equation*}

\section{Subbundles and quotient bundles}   
Let $E$ be a holomorphic vector bundle of rank $r$ over a complex manifold $M$ of (complex) dimension
$n$ and let $S$ be a holomorphic subbundle of rank $p$ of $E.$ Then the quotient bundle 
$Q=\faktor{E}{S}$ is a
holomorphic vector bundle of rank $r-p.$ We can express this situation as an exact sequence

\begin{equation*}
 0\longrightarrow S\longrightarrow E\longrightarrow Q\longrightarrow0.
\end{equation*}
Let $h$ be a Hermitian structure in $E.$ Restricting $h$ to $S,$ we obtain a Hermitian structure $h_S$ 
in $S.$ Taking the orthogonal complement of $S$ in $E$ with respect to $h,$ we obtain a complex
subbundle $S^{\bot}$ of $E.$

\begin{note}
 The complex subbundle $S^{\bot}$ of $E$ may not be a holomorphic subbundle of $E$ in general. Thus
 
  \begin{equation*}
   E=S\oplus S^{\bot}
  \end{equation*}
is merely a $\mathcal{C}^{\infty}$ orthogonal decomposition of $E.$
\end{note}
As a $\mathcal{C}^{\infty}$ complex vector bundle, $Q$ is naturally isomorphic to $S^{\bot}.$ 
Hence, we obtain also a Hermitian structure $h_Q$ in a natural way.

\begin{defin}
Let $D$ denote the Hermitian connection in $(E,h).$ We define $D_S$ and $A$ by

\begin{equation}
\label{subconnection}
 D\zeta=D_S\zeta+A\zeta\hspace{0.5cm}\text{ for }\zeta\in A^0(S),
\end{equation}
where $D_S\zeta\in A^1(S)$ and $A\zeta\in A^1(S^{\bot}).$
\end{defin}

\begin{prop}
Under the hypothesis of the previous definition, we have the following results:
\begin{enumerate}
 \item $D_S$ is the Hermitian connection of $(S,h_S),$
 \item $A$ is a (1,0)-form with values in $\mathrm{Hom}(S,S^{\bot}),$ i.e., 
       $A\in A^{1,0}(\mathrm{Hom}(S,S^{\bot})).$
\end{enumerate}
\end{prop}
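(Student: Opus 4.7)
The plan is to verify first that $D_S$ and $A$, as defined by the orthogonal splitting \eqref{subconnection}, have the stated algebraic properties, then use the uniqueness of the Hermitian connection (established in the previous proposition) together with the holomorphicity of $S$ to identify $D_S$ and pin down the type of $A$.

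First I would check $\mathcal{C}^{\infty}$-linearity and the Leibniz rule simultaneously. For $f\in A^{0}$ and $\zeta\in A^{0}(S)$, expanding $D(f\zeta)=\zeta\otimes \mathrm{d}f+fD\zeta$ and then decomposing both sides via the pointwise orthogonal splitting $E=S\oplus S^{\bot}$ (noting that $\zeta\otimes \mathrm{d}f$ lies in $A^{1}(S)$ while $A\zeta$ lies in $A^{1}(S^{\bot})$) shows that
\begin{equation*}
D_{S}(f\zeta)=\zeta\otimes \mathrm{d}f+fD_{S}\zeta,\qquad A(f\zeta)=fA\zeta.
\end{equation*}
Thus $D_{S}$ is a genuine connection in $S$, and $A$ is $A^{0}$-linear in its $S$-argument, so it defines a section of $T^{\ast}M^{\mathbb{C}}\otimes \mathrm{Hom}(S,S^{\bot})$, that is, an element of $A^{1}(\mathrm{Hom}(S,S^{\bot}))$.

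Next I would establish compatibility of $D_{S}$ with $h_{S}$. For $\zeta,\eta\in A^{0}(S)$ the metric compatibility of $D$ with $h$ gives
\begin{equation*}
\mathrm{d}(h_{S}(\zeta,\eta))=\mathrm{d}(h(\zeta,\eta))=h(D_{S}\zeta+A\zeta,\eta)+h(\zeta,D_{S}\eta+A\eta);
\end{equation*}
the cross terms $h(A\zeta,\eta)$ and $h(\zeta,A\eta)$ vanish because $A\zeta,A\eta\in S^{\bot}$ while $\zeta,\eta\in S$, leaving $\mathrm{d}(h_{S}(\zeta,\eta))=h_{S}(D_{S}\zeta,\eta)+h_{S}(\zeta,D_{S}\eta)$. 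Hence $D_{S}$ is an $h_{S}$-connection in $(S,h_{S})$.

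Then I would use the holomorphicity of $S\subseteq E$ to identify $D_{S}''$ with $\mathrm{d}''_{S}$. Since $S$ is a holomorphic subbundle, any local holomorphic section $s$ of $S$ is also a local holomorphic section of $E$, so $\mathrm{d}''_{E}s=0$, and by the defining property of the Hermitian connection $D$ we have $D''s=0$. Decomposing along $S\oplus S^{\bot}$ gives simultaneously $D_{S}''s=0$ and $A''s=0$. By the uniqueness clause of the previous proposition ($D_{S}$ is an $h_{S}$-connection with $D_{S}''=\mathrm{d}''_{S}$), this forces $D_{S}$ to coincide with the Hermitian connection of $(S,h_{S})$, proving $(1)$.

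Finally, for $(2)$ it remains to rule out the $(0,1)$-part of $A$. Write $A=A^{1,0}+A^{0,1}$ with $A^{0,1}\in A^{0,1}(\mathrm{Hom}(S,S^{\bot}))$. We just saw that $A^{0,1}$ annihilates every local holomorphic section of $S$. Since any $\zeta\in A^{0}(S)$ can be written locally as $\zeta=\sum f_{i}s_{i}$ for a holomorphic frame $(s_{i})$ of $S$, and $A^{0,1}$ is $\mathcal{C}^{\infty}$-linear (being tensorial), $A^{0,1}\zeta=\sum f_{i}A^{0,1}s_{i}=0$ identically; therefore $A=A^{1,0}\in A^{1,0}(\mathrm{Hom}(S,S^{\bot}))$. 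I expect the only mildly subtle point is the tensoriality argument that promotes ``$A^{0,1}$ vanishes on holomorphic sections'' to ``$A^{0,1}$ vanishes'', but this is immediate from the $\mathcal{C}^{\infty}$-linearity established in the first step.
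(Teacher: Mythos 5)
Your proposal is correct and follows essentially the same route as the paper: compare the two decompositions of $D(f\zeta)$ to get the Leibniz rule for $D_S$ and the tensoriality of $A$, evaluate on local holomorphic sections of $S$ to see that the $(0,1)$-parts vanish, and check $h_S$-compatibility by noting the cross terms $h(A\zeta,\eta)$ drop out. The only difference is that you make explicit the two small steps the paper leaves implicit, namely the appeal to uniqueness of the Hermitian connection to identify $D_S$ and the tensoriality argument promoting ``$A^{0,1}$ vanishes on holomorphic sections'' to ``$A^{0,1}=0$''; both are correct as written.
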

\begin{proof}
 Let $f$ be a function on $M.$ Replacing $\zeta$ by $f\zeta$ in \eqref{subconnection}, we obtain
 
 \begin{equation*}
  D(f\zeta)=D_S(f\zeta)+A(f\zeta).
 \end{equation*}
On the other hand,

\begin{equation*}
 D(f\zeta)=\mathrm{d}f\zeta+fD\zeta=\mathrm{d}f\zeta+fD_S\zeta+fA\zeta.
\end{equation*}
Comparing the components of the two decompositions of $D(f\zeta),$ we conclude

\begin{equation}
 D_S(f\zeta)=\mathrm{d}f\zeta+fD_S\zeta\hspace{0.5cm}\text{ and }\hspace{0.5cm}A(f\zeta)=fA\zeta.
\end{equation}
The first equality says that $D_S$ is a connection and the second says that $A$ is a 1-form with values in
$\mathrm{Hom}(S,S^{\bot}).$
If $\zeta$ in  $D\zeta=D_S\zeta+A\zeta$ is holomorphic, then $D\zeta$ is a (1,0)-form with values in $E.$ 
Hence, $D_S\zeta$ is a (1,0)-form with values in $S$ while $A$ is a (1,0)-form with values in
$\mathrm{Hom}(S,S^{\bot}).$ Finally, if $\zeta,\zeta'\in A^0(S),$ then

\begin{equation*}
 \begin{split}
  \mathrm{d}(h(\zeta,\zeta'))&=h(D\zeta,\zeta')+h(\zeta,D\zeta')=\\
                             &=h(D_S\zeta+A\zeta,\zeta')+h(\zeta,D_S\zeta'+A\zeta')=\\
                             &=h(D_S\zeta,\zeta')+h(\zeta,D_S\zeta'),
 \end{split}
\end{equation*}
and this proves that $D_S$ preserves $h_S.$
\end{proof}

\begin{defin}
 We call $A\in A^{1,0}(\mathrm{Hom}(S,S^{\bot}))$ the second foundamental form of $S$ in $(E,h).$ With the
 identification $Q=S^{\bot},$ we can consider $A$ as an element of $A^{1,0}(\mathrm{Hom}(S,Q)).$
\end{defin}

\begin{defin}
 Similarly, we define $D_{S^{\bot}}$ and $B$ by setting
 
 \begin{equation*}
  D\eta=B\eta+D_{S^{\bot}}\eta\hspace{0.5cm}\text{ for }\eta\in A^0(S^{\bot}),
 \end{equation*}
where $B\eta\in A^1(S)$ and $D_{S^{\bot}}\eta\in A^1(S^{\bot}).$ Under the identification $Q=S^{\bot},$ we
may consider $D_{S^{\bot}}$ as a mapping $A^0(Q)\longrightarrow A^1(Q).$ The we write $D_Q$ in place of
$D_{S^{\bot}}.$
\end{defin}

\begin{prop}
 In the hypothesis of the previous definitions and constructions we have the following results:
 
 \begin{enumerate}
  \item $D_Q$ is the Hermitian connection of $(Q,h_Q),$
  \item $B$ is a (0,1)-form with values in $\mathrm{Hom}(S^{\bot},S),$ i.e., 
        $B\in A^{0,1}(\mathrm{Hom}(S^{\bot},S)),$
  \item $B$ is the adjoint of $-A,$ i.e., 
  \begin{equation*}
   h(A\zeta,\eta)+h(\zeta,B\eta)=0\hspace{0.5cm}\text{ for }\zeta\in A^0(S)
   \hspace{0.5cm}\text{ and }\eta\in A^0(S^{\bot}).
  \end{equation*}
 \end{enumerate} 
\end{prop}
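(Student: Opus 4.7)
The plan is to dispatch (3) first, since it is the cleanest consequence of $D$ being an $h$-connection, then deduce the type of $B$ in (2), and finally treat (1) separately.

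For (3), observe that $S$ and $S^\perp$ are $h$-orthogonal, so $h(\zeta,\eta)=0$ identically for $\zeta\in A^0(S)$ and $\eta\in A^0(S^\perp)$. Differentiating this identity and using that $D$ preserves $h$ gives
\begin{equation*}
0 \;=\; h(D\zeta,\eta) + h(\zeta,D\eta) \;=\; h(D_S\zeta + A\zeta,\eta) + h(\zeta,B\eta + D_{S^\perp}\eta).
\end{equation*}
Since $D_S\zeta$ is $S$-valued and $D_{S^\perp}\eta$ is $S^\perp$-valued, the terms $h(D_S\zeta,\eta)$ and $h(\zeta,D_{S^\perp}\eta)$ vanish, leaving $h(A\zeta,\eta)+h(\zeta,B\eta)=0$.

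For (2), the $\mathcal{A}^0$-linearity of $B$ is a direct copy of the argument already used for $A$: expanding $D(f\eta)$ by the Leibniz rule and comparing the $S$- and $S^\perp$-components with the decomposition $D(f\eta)=B(f\eta)+D_{S^\perp}(f\eta)$ gives $B(f\eta)=fB\eta$, so $B\in A^1(\mathrm{Hom}(S^\perp,S))$. To pin down the bidegree, use (3): for every $\zeta\in A^0(S)$ one has $h(\zeta,B\eta)=-h(A\zeta,\eta)$. Since $A$ is of type $(1,0)$ by the preceding proposition, the right-hand side is a $(1,0)$-form, and because $h$ is conjugate-linear in the second slot, $h(\zeta,B\eta)$ being of type $(1,0)$ forces $B\eta$ itself to be of type $(0,1)$. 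As $\zeta$ was arbitrary and $h$ is non-degenerate on fibres of $S$, this determines the bidegree of $B$.

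For (1), the fact that $D_Q$ is a connection and that it preserves $h_Q$ is a word-for-word adaptation of the corresponding argument for $D_S$: Leibniz on $D(f\eta)$ yields the connection property, and differentiating $h(\eta,\eta')$ with $\eta,\eta'\in A^0(S^\perp)$ together with the orthogonality $S\perp S^\perp$ kills the $B$-contributions. The genuinely new point, and the only real obstacle, is to show $D_Q'' = \mathrm{d''}_Q$; here one must remember that the identification $Q\cong S^\perp$ is only $\mathcal{C}^\infty$, whereas $\mathrm{d''}_Q$ is defined by the holomorphic surjection $\pi\colon E\twoheadrightarrow Q$ as $\mathrm{d''}_Q[\eta]=\pi(\mathrm{d''}_E\eta)$. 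Decomposing $\mathrm{d''}_E\eta = D''\eta = B\eta + D_{S^\perp}''\eta$, the component $B\eta$ lies in $S=\ker\pi$, so $\pi(\mathrm{d''}_E\eta)=D_{S^\perp}''\eta$; under the identification this is precisely $D_Q''\eta=\mathrm{d''}_Q[\eta]$. Combined with the $h_Q$-compatibility and the uniqueness of the Hermitian connection, this identifies $D_Q$ as the Hermitian connection of $(Q,h_Q)$.
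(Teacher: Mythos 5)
Your proof is correct and follows exactly the route the paper has in mind: the paper itself only says the argument is ``very similar to the previous one'' and defers to Kobayashi, and what you have written is precisely that standard argument, with the details (the adjointness identity forcing $B$ to be of type $(0,1)$ via the conjugate-linearity of $h$ in its second slot, and the identification $D_Q''=\mathrm{d''}_Q$ through the holomorphic surjection $E\twoheadrightarrow Q$ killing the $S$-valued term $B\eta$) filled in correctly. In particular you rightly isolate the one genuinely nontrivial point, namely that the isomorphism $Q\cong S^{\bot}$ is only $\mathcal{C}^{\infty}$, so $D_Q''=\mathrm{d''}_Q$ must be checked through the quotient map rather than asserted.
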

\begin{proof}
 The proof is very similar to the previous one. See \cite{KOB} for more details.
\end{proof}

\chapter{Chern Classes}

\section{Chern classes of a line bundle}
In this section we define the first Chern class of a line bundle.
We start with some useful definitions.

\begin{defin}
 Consider $\mathbb{P}^n(\mathbb{C})$ with the open covering $\{U_i\},$ where 
 \linebreak$U_i=\{z^i\neq0\}$ and 
 consider the holomorphic line bundle over $\mathbb{P}^n(\mathbb{C})$ with transition functions
 $g_{ij}=\frac{z^i}{z^j}.$ We will call it the universal line bundle over $\mathbb{P}^n(\mathbb{C}).$
\end{defin}

\begin{defin}
 Let $L$ be the universal line bundle over $\mathbb{P}^n(\mathbb{C}),$ and let 
 \linebreak$H=\{a_0z^0+\cdots a_nz^n=0\}$ be an hyperplane in $\mathbb{P}^n(\mathbb{C}).$ 
 If we take non-homogeneous coordinates we have
 \begin{equation*}
 \label{hyperplane}
  \sum a_kz^k=z^i\left(a_0\frac{z^0}{z^i}+\cdots+a_i+\cdots+a_n\frac{z^n}{z^i}\right)\hspace{0.5cm}
  \text{ on }U_i.
 \end{equation*}
So on $U_i\cap U_j$ we have
\begin{equation*}
\begin{split}
 \frac{\left(a_0\frac{z^0}{z^i}+\cdots+a_i+\cdots+a_n\frac{z^n}{z^i}\right)}
 {\left(a_0\frac{z^0}{z^j}+\cdots+a_j+\cdots+a_n\frac{z^n}{z^j}\right)}&=
 \frac{\left(a_0\frac{z^0}{z^i}+\cdots+a_i+\cdots+a_n\frac{z^n}{z^i}\right)}
 {\left(a_0\frac{z^0}{z^j}+\cdots+a_j+\cdots+a_n\frac{z^n}{z^j}\right)}\frac{z^i}{z^j}\frac{z^j}{z^i}=\\
 =\frac{\sum a_kz^k}{\sum a_kz^k}\frac{z^j}{z^i}&=\frac{z^j}{z^i}.
\end{split}
\end{equation*}
Because of this origin the latter bundle, to be denoted by $H,$ is called the hyperplane section bundle
of $\mathbb{P}^n(\mathbb{C}).$ It is the dual bundle of the universal line bundle over
$\mathbb{P}^n(\mathbb{C}).$
\end{defin}

\begin{defin}
 Let $M$ be a differentiable manifold. We define the Picard group of $M$ as
 \begin{equation*}
  Pic(M)=\{\text{isomorphism classes of complex line bundles over M}\}
 \end{equation*}
This is a group with the operation $[L_1]\otimes[L_2]=[L_1\otimes L_2].$
\end{defin}

\begin{defin}
 Let $M$ be a differentiable manifold and let $\mathcal{A}$ (resp. $\mathcal{A}^{\ast}$) be the sheaf of
 germs of $\mathcal{C}^{\infty}$ complex functions (resp. nowhere vanishing complex functions) over $M.$
 Consider the exact sequence of sheaves:
 \begin{equation*}
 0\longrightarrow\mathbb{Z}\longrightarrow\mathcal{A}\longrightarrow\mathcal{A}^{\ast}\longrightarrow1,
 \end{equation*}
where $j:\mathbb{Z}\longrightarrow\mathcal{A}$ is simply the natural injection and
$e:\mathcal{A}\longrightarrow\mathcal{A}^{\ast}$ is the exponential map
\begin{equation*}
 e(f)=\exp(2\pi if)\hspace{0.5cm}\text{ for }\hspace{0.5cm}f\in\mathcal{A}.
\end{equation*}
This induces an exact sequence of cohomology groups
 \begin{equation*}
  \begin{tikzpicture}[node distance=2.3cm, auto]
 \node (A) {$\cdots$};
 \node (B) [right of=A] {$H^1(M,\mathcal{A})$};
 \node (C) [right of=B] {$H^1(M,\mathcal{A}^{\ast})$};
 \node (D) [right of=C] {$H^2(M,\mathbb{Z})$};
 \node (E) [right of=D] {$H^2(M,\mathcal{A})$};
 \node (F) [right of=E] {$\cdots$};
 \draw[->] (A) to node {$j^{\ast}$} (B);
 \draw[->] (B) to node {$e^{\ast}$} (C);
 \draw[->] (C) to node {$\delta$} (D);
 \draw[->] (D) to node {$j^{\ast}$} (E);
 \draw[->] (E) to node {$e^{\ast}$} (F);
\end{tikzpicture}
 \end{equation*}
where $\delta$ is the connecting homomorphism.
Since $\mathcal{A}$ is a fine sheaf over the differentiable manifold $M,$ we have
\begin{equation*}
 H^p(M,\mathcal{A})=0\hspace{0.5cm}\text{ for }\hspace{0.5cm}p\geq1.
\end{equation*}
Then $\delta:H^1(M,\mathcal{A}^{\ast}\longrightarrow H^2(M,\mathbb{Z})$ 
is a group isomorphism.
Identifying \linebreak$H^1(M,\mathcal{A}^{\ast})$ with the Picard group $Pic(M)$ of $M,$ 
(see Theorem \ref{correspondence} for details), we can define the first Chern class of a complex 
line bundle $L$ over $M$ by
\begin{equation*}
 c_1(L)=\delta(L),\hspace{0.5cm}L\in H^1(M,\mathcal{A}^{\ast}).
\end{equation*}
We also set $c_0(L)=1$ and $c(L)=1+c_1(L).$
\end{defin}

\begin{remark}
 From the above definition we immediately see that 
 \begin{enumerate}
  \item Two line bundles over $M$ are isomorphic, if and only if their first Chern classes coincide,
  \item For two line bundles $L_1$ and $L_2$ over $M$ we have 
        \begin{equation*}
         c_1(L_1\otimes L_2)=c_1(L_1)+c_1(L_2).
        \end{equation*}
 \end{enumerate}
\end{remark}

\begin{note}
Let $\{U_{\alpha}\}$ be an open cover which trivializes the complex line bundle $L,$ and let be 
$g_{\alpha\beta}$ its transition functions.
 From the definition of the connecting homomorphism we can deduce an explicit formula for a \v{C}ech
 cocycle representing $c_1(L)$ with respect to the open cover $\{U_{\alpha}\}:$
 \begin{equation}
  \label{cechcocycle}
  \{c_1(L)\}_{\alpha\beta\gamma}=\frac{1}{2\pi i}
  (\ln g_{\alpha\beta}+\ln g_{\beta\gamma}+\ln g_{\gamma\alpha}).
 \end{equation}
\end{note}

\begin{note}
 If $M$ is a complex manifold, we can define Chern classes of holomorphic line bundles in a similar way.
 (See Chern \cite{CHE}, Chapter VI for more details).
\end{note}

\section{Chern classes of a complex vector bundle}

In this section we define higher Chern classes for complex vector bundles of any rank. We proceed in two
steps:
\begin{enumerate}
 \item We first define Chern classes of vector bundles that are direct sums of line bundles,
 \item We show that we can always reduce the computation of Chern classes to the previous case.
\end{enumerate}

\begin{defin}
 For $i=1,\ldots,k$ let $\sigma_i$ denote the symmetric function of order $i$ in $k$ arguments, defined as
 \begin{equation*}
  \sigma_i(x_1,\ldots,x_k)=\sum_{1\leq j_1<\cdots<j_i\leq k}x_{j_1}\ldots x_{j_i}.
 \end{equation*}
\end{defin}

\begin{defin}
 Let $M$ be a differentiable manifold and let $E$ a complex vector bundle of rank $r$ over $M.$ 
 Let us assume $E=L_1\oplus\cdots\oplus L_r$ is the direct sum of complex line bundles over $M.$ For
 $i=1,\ldots,k$ we define the $i-\text{th}$ Chern class of $E$ as
 \begin{equation*}
  c_i(E)=\sigma_i(c_1(L_1),\ldots,c_1(L_k))\in H^{2i}(M,\mathbb{Z}),
 \end{equation*}
 where
 \begin{equation*}
  \sigma_i(c_1(L_1),\ldots,c_1(L_k))=\sum_{1\leq j_1<\cdots<j_i\leq k}c_1(L_{j_1})\cup\cdots\cup c_1(L_{j_i})
 \end{equation*}
and $\cup$ denotes the cup product.
\end{defin}

Step 2 relies on the following result, sometimes called the splitting principle.
(See Bott-Tu \cite{BOT}, p. 273-278, for more details). Our proof uses the de Rham cohomology of $M,$ but with 
some little changes one can prove the splitting principle also for the cohomology with coefficient in 
$\mathbb{Z}.$

\begin{teo}
 \emph{(Splitting principle)}
 Let $E$ be a complex vector bundle of rank $r$ over a differentiable manifold $M.$ There exists a differentiable 
 manifold $N$ (also called splitting manifold) and a $\mathcal{C}^{\infty}$ mapping 
 $f:N\longrightarrow M$ such that
 \begin{enumerate}
  \item the pull-back bundle $f^{\ast}E$ is a direct sum of line bundles,
  \item the cohomology morphism $f^{\sharp}:H^{\ast}(M)\longrightarrow H^{\ast}(N)$
        is injective,
  \item the Chern classes $c_i(f^{\ast}E)$ lie in the image of the cohomology morphism $f^{\sharp}.$
 \end{enumerate}
\end{teo}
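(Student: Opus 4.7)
The plan is to construct $N$ as the complete flag bundle of $E$, built by iterating projective bundle constructions. First, form the projectivization $\pi_1 \colon \mathbb{P}(E) \to M$, which carries a tautological line subbundle $L_1 \subset \pi_1^{\ast} E$. Fixing a Hermitian metric on $E$ as in the section on connections in Hermitian vector bundles, the orthogonal complement construction yields a smooth decomposition $\pi_1^{\ast} E = L_1 \oplus E_1$ with $E_1$ of rank $r-1$. I would then iterate: form $\pi_2 \colon \mathbb{P}(E_1) \to \mathbb{P}(E)$, extract the tautological line subbundle $L_2$ of $\pi_2^{\ast} E_1$, split off its orthogonal complement, and continue. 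After $r-1$ such steps one obtains a tower of smooth fibrations whose total space $N$ comes with a composite map $f \colon N \to M$ such that $f^{\ast} E \cong L_1' \oplus \cdots \oplus L_r'$ as $\mathcal{C}^{\infty}$ complex vector bundles, where $L_k'$ denotes the pull-back of $L_k$ to $N$. This gives (1).

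For (2), I would appeal to the Leray-Hirsch theorem applied to each projectivization separately. For a rank-$k$ complex vector bundle $\pi \colon \mathbb{P}(F) \to X$ the cohomology $H^{\ast}(\mathbb{P}(F))$ is a free $H^{\ast}(X)$-module with basis $1, \xi, \xi^2, \ldots, \xi^{k-1}$, where $\xi \in H^2(\mathbb{P}(F))$ is the first Chern class of the tautological line bundle; in particular each $\pi^{\sharp}$ is injective. Composing these injections across the tower yields injectivity of $f^{\sharp}$.

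For (3), set $\xi_k = c_1(L_k') \in H^2(N)$. Since $f^{\ast} E = L_1' \oplus \cdots \oplus L_r'$, the definition of Chern classes for a direct sum of line bundles gives $c_i(f^{\ast} E) = \sigma_i(\xi_1, \ldots, \xi_r)$. At the same time, each step of the Leray-Hirsch presentation expresses $\xi_k^{r-k+1}$ as an $H^{\ast}$-linear combination of lower powers with coefficients that are pull-backs from the previous stage of the tower, and ultimately from $H^{\ast}(M)$. A descending induction on $k$ then shows that every symmetric polynomial in $\xi_1, \ldots, \xi_r$ can be rewritten as a polynomial with coefficients in $f^{\sharp} H^{\ast}(M)$; for the elementary symmetric functions $\sigma_i$ the coefficients turn out to lie in $f^{\sharp} H^{\ast}(M)$ itself, giving (3).

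The main obstacle is (3): the Leray-Hirsch isomorphism provides a free module basis of $H^{\ast}(N)$ over $H^{\ast}(M)$ in monomials of the $\xi_k$, but identifying the elementary symmetric polynomials in the $\xi_k$ with pull-backs from $M$ requires a careful inductive book-keeping of the relations $\xi_k^{r-k+1} = \cdots$ at each stage of the tower. This is essentially the content of Grothendieck's definition of Chern classes via the projective bundle formula, and represents the bootstrap between the splitting principle and the very definition of higher Chern classes; (1) and (2) are comparatively standard.
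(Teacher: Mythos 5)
Your construction of the splitting manifold is exactly the paper's: an iterated tower of projectivizations, splitting off one tautological line subbundle at each stage (your metric-orthogonal complement $E_1$ is the same $\mathcal{C}^{\infty}$ bundle as the paper's universal quotient $Q_E$). In fact you go further than the paper, whose proof stops after establishing (1) and setting $F(E)=\mathbb{P}(Q_{n-2})$, deferring (2) and (3) to Bott--Tu. Your argument for (2) via Leray--Hirsch is correct and is a genuine completion of what the paper leaves implicit.

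The gap is in (3), and it is real, not just a matter of book-keeping. By Leray--Hirsch \emph{every} class in $H^{\ast}(N)$ is a polynomial in $\xi_1,\ldots,\xi_r$ with coefficients in $f^{\sharp}H^{\ast}(M)$, so the intermediate claim of your descending induction is vacuous; what must be shown is that $\sigma_i(\xi_1,\ldots,\xi_r)$ is itself a pull-back, i.e.\ a ``polynomial of degree zero'' in the $\xi_k$. The relations $\xi_k^{r-k+1}=\cdots$ cannot produce this: they only reduce high powers of a single $\xi_k$, whereas $\sigma_i$ is already of degree at most one in each variable, so no reduction ever applies and the induction terminates without proving anything. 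The standard way to close this (the bootstrap you allude to) is to define, at each projectivization $\pi\colon\mathbb{P}(F)\to X$ with $\mathrm{rk}(F)=k$, classes $a_1,\ldots,a_k\in H^{\ast}(X)$ by the unique Leray--Hirsch relation $\xi^{k}+\pi^{\sharp}a_1\,\xi^{k-1}+\cdots+\pi^{\sharp}a_k=0$, verify that these classes are natural and satisfy the Whitney formula for $F=L\oplus F'$, and deduce that on the top of the tower $\sigma_i(\xi_1,\ldots,\xi_r)$ coincides with $f^{\sharp}$ of the class so defined on $M$. (Equivalently, one shows that $f^{\sharp}H^{\ast}(M)$ is exactly the subring of $H^{\ast}(N)$ invariant under the Weyl-group action permuting the $\xi_k$, and observes that $\sigma_i$ is invariant; but that identification of the invariants also requires proof.) Without one of these inputs, statement (3) --- which is precisely what makes the paper's definition of higher Chern classes well posed --- is not established.
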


\begin{proof}
 Let $\tau:E\longrightarrow M$ be a $\mathcal{C}^{\infty}$ complex vector bundle of rank $r$ over a 
differentiable manifold $M.$ Our goal is to construct the splitting manifold $N=F(E)$ and the splitting map 
$f:F(E)\longrightarrow E.$ 
We prove this Theorem by induction on the rank of $E.$
\begin{enumerate}
 \item If $E$ has rank $1,$ there is nothing to prove.
 \item If $E$ has rank $2,$ we can take as a splitting manifold $F(E)$ the projective bundle $\mathbb{P}(E),$
       which
       is by definition the fibre bundle over $M$ whose fibre at a point $P\in M$ is the projective
       space $\mathbb{P}(E_P)$ and whose transition functions are 
       $\tilde{g}_{\alpha\beta}:U_{\alpha}\cap U_{\beta}\longrightarrow PGL(r,\mathbb{C}),$ induced 
       from the transition functions $g_{\alpha\beta}$ of $E.$
       As on the projectivization of a vector space, on $\mathbb{P}(E)$ there are several tautological
       bundles: the pull-back bundle $\pi^{\ast}E,$ the universal subbundle $S_E$ and the universal
       quotient bundle $Q_E.$
       \begin{equation*}
        \begin{tikzpicture}[node distance=1.5cm, auto]
         \node (A) {$0$};
         \node (B) [right of=A] {$S_E$};
         \node (C) [right of=B] {$\pi^{\ast}E$};
         \node (D) [right of=C] {$Q_E$};
         \node (E) [right of=D] {$0$};
         \node (F) [below of=C] {$\mathbb{P}(E)$};
         \node (G) [right of=F] {$E$};
         \node (H) [below of=G] {$M$};
         \draw[->] (A) to node {$$} (B);
         \draw[->] (B) to node {$$} (C);
         \draw[->] (C) to node {$$} (D);
         \draw[->] (D) to node {$$} (E);
         \draw[->] (C) to node [swap]{$\tau^{\ast}$} (F);
         \draw[->] (G) to node {$\tau$} (H);
         \draw[->] (F) to node {$\pi$} (H);
\end{tikzpicture}
       \end{equation*}
Here the universal subbundle $S_E$ over $\mathbb{P}(E)$ is defined by
\begin{equation*}
 S_E=\{(l_P,v)\in\pi^{\ast}E|v\in l_p\},
\end{equation*}
while the quotient bundle $Q_E$ is determined by the tautological exact sequence
\begin{equation*}
 0\longrightarrow S_E\longrightarrow\pi^{\ast}E\longrightarrow Q_E\longrightarrow0.
\end{equation*}
If $E$ has rank $2$ we have $\pi^{\ast}E=S_E\oplus Q_E,$ which is a direct sum of line bundles.

\item Now suppose $E$ has rank $3.$ Over $\mathbb{P}(E)$ the line bundle $S_E$ splits off as before.
The quotient bundle $Q_E$ over $\mathbb{P}(E)$ has rank 2 and so can be split into a direct sum of line 
bundles when pulled back to $\mathbb{P}(Q_E).$
\begin{equation*}
 \begin{tikzpicture}[node distance=1.5cm, auto]
         \node (A) {$\beta^{\ast}S_E\oplus S_E\oplus Q_{Q_E}$};
         \node (B) [below of=A] {$\mathbb{P}(Q_E)$};
         \node (C) [right of=B] {$S_E\oplus Q_E$};
         \node (D) [below of=C] {$\mathbb{P}(E)$};
         \node (E) [right of=D] {$E$};
         \node (F) [below of=E] {$M$};
         \draw[->] (A) to node {$$} (B);
         \draw[->] (C) to node {$$} (D);
         \draw[->] (B) to node {$\beta$} (D);
         \draw[->] (D) to node {$\alpha$} (F);
         \draw[->] (E) to node {$\tau$} (F);
\end{tikzpicture}
\end{equation*}
\item The pattern is now clear, we split off one subbundle at a time by pulling back to the
      projectivization of a quotient bundle (see Figure \ref{SplittingDiagram} for details). 
\end{enumerate}
Setting $F(E)=\mathbb{P}(Q_{n-2}),$  this is the splitting manifold and this completes the proof.   
\end{proof}

\begin{defin}
 Let $E$ be a complex vector bundle of rank $r$ over a differentiable manifold $M.$ 
 The $i-\text{th}$ Chern class $c_i(E)$ of $E$ is the unique class in $H^{2i}(M,\mathbb{Z})$ such that
 $f^{\sharp}(c_i(E))=c_i(f^{\ast}E).$
 We also set $c_0(E)=1$ and we define the total Chern class of $E$ as $c(E)=\sum_{i=0}^{r}c_i(E).$
\end{defin}

\begin{prop}
 The Chern classes of a complex vector bundle $E$ of rank $r$ over a differentiable manifold $M$ satisfy the
 following properties:
 \begin{enumerate}
  \item if two vector bundles $E$ and $F$ over $M$ are isomorphic, their Chern classes coincide,
  \item (Naturality): if $f:N\longrightarrow M$ is a differentiable map and $E$ is a complex vector
        bundle over $M,$ then
        \begin{equation*}
         f^{\sharp}(c_i(E))=c_i(f^{\ast}E),
        \end{equation*}
  \item (Whitney product formula): if $E$ and $F$ are complex vector bundles over $M,$ then
        \begin{equation*}
         c(E\oplus F)=c(E)\cup c(F)
        \end{equation*}
   \item (Normalization): if $L$ is the universal line bundle over $\mathbb{P}^1(\mathbb{C}),$ then $-c_1(L)$ is 
   the positive generator
 of $H^2(\mathbb{P}^1(\mathbb{C}),\mathbb{Z});$ in other words, $c_1(L)$ integrated on the foundamental
 2-cycle $\mathbb{P}^1(\mathbb{C})$ is equal to $-1.$
 \end{enumerate}
\end{prop}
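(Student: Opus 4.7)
The plan is to reduce every property to the case of a direct sum of line bundles via the splitting principle, because there both the definition and the identities become explicit through elementary symmetric polynomials and through the sheaf-theoretic first Chern class. Throughout, the injectivity of the pullback $f^{\sharp}$ along a splitting map is what allows identities proved on a splitting manifold to descend to the base.

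I would first establish naturality, property (2), since it is used in the other proofs. For a line bundle $L$ on $M$ and a smooth map $g:N\to M$, the identity $g^{\sharp}(c_1(L))=c_1(g^{\ast}L)$ follows from the functoriality of sheaf cohomology, and in particular of the connecting homomorphism $\delta$ attached to the exponential sheaf sequence. To lift this to arbitrary $E$, I would pull the splitting tower of $E$ back along $g$ one stage at a time: each projective bundle $\mathbb{P}(E),\mathbb{P}(Q_E),\ldots$ pulls back to a compatible projective bundle over $N$, and the pulled-back tower is a splitting tower for $g^{\ast}E$. Naturality for each of the constituent line bundles, combined with the injectivity of the full tower's pullback, then forces naturality for $c_i(E)$. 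Isomorphism invariance, property (1), is now immediate: an isomorphism $E\cong F$ induces isomorphisms of the bundles at every stage of the splitting tower, hence equality of the first Chern classes of the constituent line bundles and therefore of the elementary symmetric polynomials defining $c_i(E)$ and $c_i(F)$.

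For the Whitney product formula, property (3), I would construct a common splitting manifold for both $E$ and $F$ by iterating the construction: first build $F(E)$, then the splitting manifold of $F$ pulled back to $F(E)$. Over this doubly split base the identity reduces to the factorization of generating polynomials,
\begin{equation*}
\sum_{k} c_k(E\oplus F) \;=\; \prod_{i}\bigl(1+c_1(L_i)\bigr)\prod_{j}\bigl(1+c_1(L'_j)\bigr) \;=\; \Bigl(\sum_i c_i(E)\Bigr)\cup\Bigl(\sum_j c_j(F)\Bigr),
\end{equation*}
which is just the split-line-bundle case. Injectivity of the combined pullback then transports the equality back to $M$.

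Property (4), normalization, is a direct computation and is the step I expect to require the most care. The cleanest route is to equip the universal line bundle $L$ over $\mathbb{P}^1(\mathbb{C})$ with the Fubini--Study Hermitian metric, form its Hermitian connection, and then invoke the de~Rham representation of the Chern class announced in the introduction to this chapter to identify $c_1(L)$ with a normalized multiple of the curvature two-form in $H^{2}_{\text{de}}(\mathbb{P}^1(\mathbb{C}))$; a direct integration then yields $\int_{\mathbb{P}^1(\mathbb{C})}c_1(L)=-1$. The real obstacle is matching this differential-geometric representative with the sheaf-theoretic definition through $\delta$: one has to run a \v{C}ech--de~Rham comparison, which is delicate because the triple-intersection \v{C}ech formula \eqref{cechcocycle} does not apply directly to the standard two-chart atlas on $\mathbb{P}^1(\mathbb{C})$ and must be computed on a refinement or replaced by a partition-of-unity argument.
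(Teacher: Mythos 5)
Your treatment of (1)--(3) is essentially the paper's: reduce to direct sums of line bundles by the splitting principle, prove naturality for line bundles from the behaviour of transition functions (the paper does this explicitly with the \v{C}ech cocycle \eqref{cechcocycle}, you phrase it as functoriality of the connecting homomorphism --- the same fact), and obtain the Whitney formula from the factorization of $\prod_i(1+c_1(L_i))$. You are in fact more careful than the paper about why the splitting principle suffices: the paper simply asserts that it is enough to treat line bundles, whereas you spell out the pullback of the splitting tower for naturality and the construction of a common splitting manifold for the Whitney formula; that extra care is welcome and fills a step the paper leaves implicit. Where you genuinely diverge is the normalization (4). The paper deduces $\int_{\mathbb{P}^1(\mathbb{C})}c_1(L)=-1$ from the divisor formula $\int_X c_1(D)=\mathrm{deg}(D)$ on a compact Riemann surface, applied to the hyperplane bundle $H=L^{\ast}$ of degree $1$. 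You instead propose the Fubini--Study curvature computation; this is exactly the computation the paper performs later when verifying Axiom 4 for the curvature representatives $\gamma_k$ (connection form $\overline{z}\,\mathrm{d}z/(1+|z|^2)$, curvature $-\mathrm{d}z\wedge\mathrm{d}\overline{z}/(1+|z|^2)^2$, integral equal to $-1$). Your route buys a self-contained, explicit verification at the cost of the \v{C}ech--de Rham comparison you rightly flag (the two-chart cover of $\mathbb{P}^1(\mathbb{C})$ is not a good cover, so \eqref{cechcocycle} must be evaluated on a refinement or replaced by a partition-of-unity argument); the paper's route is shorter but merely relocates that same comparison into the cited degree formula for divisors. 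Both are sound.
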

\begin{proof}
 In view of the splitting principle, it is enough to prove the properties
 $(1)$ and $(2)$ when $E$ and $F$ are line bundles.
 \begin{enumerate}
  \item Follows from the definition of the first Chern class of a line bunde.
  \item Let $E$ be a line bundle over $M$ and let $f:N\longrightarrow M$ be a differentiable map. Let
        $\{U_{\alpha}\}$ be an open cover which trivializes the line bundle $E$ 
        with transition functions
        $g_{\alpha\beta}.$ Then $\{f^{-1}(U_{\alpha})\}$ is an open cover which trivializes the pull-back 
        bundle $f^{\ast}E,$ with transition functions 
        \linebreak$\tilde{g}_{\alpha\beta}=g_{\alpha\beta}\circ f.$
        From \eqref{cechcocycle} we deduce that:
        \begin{equation*}
        \begin{split}
         \{c_1(f^{\ast}E)\}_{\alpha\beta\gamma}&=\frac{1}{2\pi i}
         (\ln\tilde{g}_{\alpha\beta}+\ln\tilde{g}_{\beta\gamma}+\ln\tilde{g}_{\gamma\alpha})=\\
                                               &=\frac{1}{2\pi i}
         (\ln g_{\alpha\beta}\circ f+\ln g_{\beta\gamma}\circ f+\ln g_{\gamma\alpha}\circ f)=\\
                                               &=\frac{1}{2\pi i}
         f^{\sharp}(\ln g_{\alpha\beta}+\ln\ g_{\beta\gamma}+\ln g_{\gamma\alpha})=\\
                                               &=\{f^{\sharp}c_1(E)\}_{\alpha\beta\gamma}.
        \end{split}
        \end{equation*}
        and this completes the proof of $(2).$
  \item By the
        splitting principle we can assume $E=L_1\oplus\cdots\oplus L_r$ and 
        \linebreak$F=L_{r+1}\oplus\cdots\oplus F_{r+q}$
        are direct sum of line bundles. Then 
        \begin{equation*}
        \begin{split}
         c(E\oplus F)&=c(L_1\oplus\cdots\oplus L_r\oplus L_{r+1}\oplus\cdots\oplus L_{r+q})=\\
                     &=\sum_{i=0}^{r+q}\sigma_i(L_1,\ldots,L_r,L_{r+1},\ldots,L_{r+q})=\\
                     &=\prod_{i=1}^{r+q}(1+c_1(L_i))=\\
                     &=\left[\prod_{h=1}^{r}(1+c_1(L_h))\right]
                     \left[\prod_{k=1}^{q}(1+c_1(L_{r+k}))\right]=\\
                     &=c(E)\cup c(F).
        \end{split}
        \end{equation*}
   \item It follows from a general results: for any divisors $D\in\mathrm{Div}(X)$ in a compact Riemann surface
         \begin{equation*}
          \int_{X}c_1(D)=\text{deg}(D),
         \end{equation*}
         where $c_1(D)$ is the Chern class of the line bundle associated with the divisor $D.$
         Then, if $H$ is the hyperplane line bundle over $\mathbb{P}^1(\mathbb{C}),$ we find
         \begin{equation*}
          \int_{\mathbb{P}^1(\mathbb{C})}c_1(H)=\text{deg}H=1.
         \end{equation*}
 \end{enumerate}
\end{proof}

\begin{figure}[H]
      \begin{tikzpicture}[node distance=2.5cm, auto]
         \node (A) {$S_1\oplus\cdots\oplus S_{n-2}\oplus S_{n-1}\oplus Q_{n-1}$};
         \node (B) [below of=A] {$\mathbb{P}(Q_{n-2})$};
         \node (C) [right of=B] {$S_1\oplus S_2\oplus Q_2$};
         \node (D) [below of=C] {$\mathbb{P}(Q_1)$};
         \node (E) [right of=D] {$S_1\oplus Q_1$};
         \node (F) [below of=E] {$\mathbb{P}(E)$};
         \node (G) [right of=F] {$E$};
         \node (H) [below of=G] {$M$};
         \draw[->] (A) to node {$$} (B);
         \draw[->] (C) to node {$$} (D);
         \draw[->] (B) to node {$$} (D);
         \draw[->] (D) to node {$$} (F);
         \draw[->] (E) to node {$$} (F);
         \draw[->] (G) to node {$\tau$} (H);
         \draw[->] (F) to node {$$} (H); 
       \end{tikzpicture}
      \caption{Splitting Diagram\label{SplittingDiagram}}
      \end{figure}

\begin{prop}
 If $E=M\times\mathbb{C}$ is the trivial line bundle over the differentiable manifold $M,$ then 
 $c_1(E)=0.$
\end{prop}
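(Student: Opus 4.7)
The plan is to read off $c_1(E)=0$ directly from the \v{C}ech-cocycle description of the first Chern class given in the note after the definition of $c_1$. Since $E=M\times\mathbb{C}$, with respect to any open cover $\{U_\alpha\}$ of $M$ we may take the trivializations to be the identity maps $\mathrm{id}_{U_\alpha}\times\mathrm{id}_{\mathbb{C}}$, whose transition functions are the constant functions $g_{\alpha\beta}\equiv 1$ on every overlap $U_\alpha\cap U_\beta$.

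Plugging these into the explicit formula
\begin{equation*}
\{c_1(E)\}_{\alpha\beta\gamma}=\frac{1}{2\pi i}\bigl(\ln g_{\alpha\beta}+\ln g_{\beta\gamma}+\ln g_{\gamma\alpha}\bigr),
\end{equation*}
we get $\{c_1(E)\}_{\alpha\beta\gamma}=0$ for every triple $\alpha,\beta,\gamma$, so the representing \v{C}ech 2-cocycle is identically zero and hence $c_1(E)=0$ in $H^2(M,\mathbb{Z})$.

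Alternatively, and perhaps more cleanly, one can argue at the level of the exponential sheaf sequence: under the isomorphism $H^1(M,\mathcal{A}^{\ast})\cong\mathrm{Pic}(M)$, the trivial line bundle $E$ is the identity element of the Picard group, and $c_1=\delta$ is a group homomorphism into $H^2(M,\mathbb{Z})$, so it must send the identity to the identity, i.e.\ $c_1(E)=0$. There is no real obstacle here: both arguments are one-liners, the only subtlety being to make the choice of trivializations of $E$ explicit so that the transition cocycle is literally the constant $1$.
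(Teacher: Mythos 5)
Your first argument is exactly the paper's proof: the paper simply cites the \v{C}ech-cocycle formula \eqref{cechcocycle}, and you correctly observe that the trivial bundle admits trivializations with transition functions identically $1$, making the cocycle vanish. The alternative via the connecting homomorphism $\delta$ being a group homomorphism is a fine extra remark, but the main route coincides with the paper's.
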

\begin{proof}
 It follows immediately from \eqref{cechcocycle}.
\end{proof}

\begin{defin}
 Let $E$ be a complex vector bundle of rank $r$ over the differentiable manifold $M$ with transition functions
 $g_{\alpha\beta}.$ The determiant bundle of $E$ is the line bundle $\det(E)$ with transition functions 
 $\tilde{g}_{\alpha\beta}\det(g_{\alpha\beta}).$
\end{defin}

\begin{prop}
 Let $E$ be a complex vector bundle of rank $r$ over the differentiable manifold $M,$ and let 
 $\det(E)$ its determinant bundle. Then we have
 \begin{equation*}
  c_1(E)=c_1(\det(E)).
 \end{equation*}
\end{prop}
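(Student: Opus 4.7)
The plan is to invoke the splitting principle to reduce the statement to the case in which $E$ decomposes as a direct sum of line bundles, where the identity becomes transparent.

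First I would apply the splitting principle stated earlier to obtain a differentiable manifold $N$ and a map $f\colon N \longrightarrow M$ such that $f^{\ast}E = L_1\oplus\cdots\oplus L_r$ is a direct sum of line bundles and $f^{\sharp}\colon H^{\ast}(M,\mathbb{Z})\longrightarrow H^{\ast}(N,\mathbb{Z})$ is injective. By the injectivity of $f^{\sharp}$, it suffices to check the equality $f^{\sharp}c_1(E)=f^{\sharp}c_1(\det E)$ in $H^2(N,\mathbb{Z})$.

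Next I would compute both sides after pullback, using the naturality property of Chern classes proved in the previous proposition. On the one hand, $f^{\sharp}c_1(E)=c_1(f^{\ast}E)=c_1(L_1\oplus\cdots\oplus L_r)$, and by the Whitney product formula the degree-$2$ part of $c(L_1)\cup\cdots\cup c(L_r)$ is precisely $c_1(L_1)+\cdots+c_1(L_r)$. On the other hand, the determinant construction commutes with pullback (since the transition functions of $f^{\ast}E$ are the pullbacks of those of $E$, and $\det$ commutes with composition by $f$), so $f^{\ast}\det E=\det(f^{\ast}E)$. Moreover, the transition functions of $L_1\oplus\cdots\oplus L_r$ are block-diagonal with the transition functions of the $L_i$ on the diagonal, so their determinants are exactly the transition functions of $L_1\otimes\cdots\otimes L_r$; hence $\det(L_1\oplus\cdots\oplus L_r)=L_1\otimes\cdots\otimes L_r$. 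Using the additivity of $c_1$ under tensor product of line bundles (the second item of the earlier remark), we then obtain $f^{\sharp}c_1(\det E)=c_1(L_1)+\cdots+c_1(L_r)$, which matches the previous expression.

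Combining these two computations gives $f^{\sharp}c_1(E)=f^{\sharp}c_1(\det E)$, and injectivity of $f^{\sharp}$ yields $c_1(E)=c_1(\det E)$. The only step requiring genuine (albeit routine) verification is the compatibility $\det(L_1\oplus\cdots\oplus L_r)=L_1\otimes\cdots\otimes L_r$ at the level of transition functions; this is the main bookkeeping obstacle, but it is immediate from the definition of the determinant bundle via the cocycle construction of Theorem \ref{cocycle} applied to block-diagonal matrices.
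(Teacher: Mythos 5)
Your proposal is correct and follows essentially the same route as the paper: reduce via the splitting principle to the case of a direct sum of line bundles, identify $\det(L_1\oplus\cdots\oplus L_r)$ with $L_1\otimes\cdots\otimes L_r$, and conclude by additivity of $c_1$ under tensor products. You are somewhat more careful than the paper in making the reduction explicit (injectivity of $f^{\sharp}$, naturality, and the compatibility $f^{\ast}\det E=\det(f^{\ast}E)$), details which the paper's proof leaves implicit in the phrase ``we may assume.''
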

\begin{proof}
 \begin{enumerate}
  \item If $E$ is a line bundle the result is trivial, since $E=\det(E).$
  \item From te splitting principle we may assume $E=L_1\oplus\cdots\oplus L_r$ is direct sum of line 
        bundles. From $(1)$ we have 
        $\det(L_1\otimes\cdots\otimes L_r)=c_1(L_1\otimes\cdots\otimes L_r)$ and then we have
        \begin{equation*}
         \begin{split}
          c_1(\det(E))&=c_1(\det(L_1\otimes\cdots\otimes L_r))=\\
                            &=c_1(L_1\otimes\cdots\otimes L_r)=\\
                            &=c_1(L_1)+\cdots+c_1(L_r)=c_1(E).
         \end{split}
        \end{equation*}

 \end{enumerate}

\end{proof}

\begin{cor}
 Let $E$ be a complex line bundle over $M$ and let $E^{\ast}$ its dual bundle. Then 
 $c_1(E^{\ast})=-c_1(E).$
\end{cor}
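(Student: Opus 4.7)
The plan is to derive the identity from the multiplicativity of the first Chern class with respect to tensor products, combined with the fact that the trivial line bundle has vanishing first Chern class. Both of these are recorded earlier in the chapter (in the remark following the definition of $c_1$, and in the proposition asserting $c_1(M \times \mathbb{C}) = 0$), so the argument will be essentially a one-line algebraic manipulation.

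Concretely, I would first observe that for any complex line bundle $E$ over $M$ the tensor product $E \otimes E^{\ast}$ is canonically isomorphic to the trivial line bundle $M \times \mathbb{C}$. This is standard: the natural evaluation pairing $E_x \otimes E_x^{\ast} \to \mathbb{C}$ is a fibrewise isomorphism (since both sides are one-dimensional and the pairing is non-degenerate), and it glues globally because it is defined invariantly. In terms of transition functions, if $\{g_{\alpha\beta}\}$ trivializes $E$, then $E^\ast$ has transition functions $\{g_{\alpha\beta}^{-1}\}$, and the tensor product has transition functions $g_{\alpha\beta} \cdot g_{\alpha\beta}^{-1} = 1$.

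Next, applying the additivity $c_1(L_1 \otimes L_2) = c_1(L_1) + c_1(L_2)$ recorded in the remark to the pair $(E, E^\ast)$, together with $c_1(E \otimes E^\ast) = c_1(M \times \mathbb{C}) = 0$, yields
\begin{equation*}
c_1(E) + c_1(E^\ast) = 0,
\end{equation*}
from which the claim $c_1(E^\ast) = -c_1(E)$ follows at once.

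There is no real obstacle here; the only point one should not skip is the justification that $E \otimes E^\ast$ is trivial, since the rest is just invoking properties already established. As an alternative (or as a sanity check) one could bypass the tensor-product argument entirely and read the result directly from the \v{C}ech formula \eqref{cechcocycle}: substituting the transition functions $g_{\alpha\beta}^{-1}$ of $E^\ast$ produces exactly the negative of the cocycle representing $c_1(E)$.
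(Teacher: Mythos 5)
Your proof is correct and follows the same route as the paper: the triviality of $E\otimes E^{\ast}$ combined with the additivity $c_1(L_1\otimes L_2)=c_1(L_1)+c_1(L_2)$ and $c_1(M\times\mathbb{C})=0$. Your added justification of the triviality via the evaluation pairing (and the cross-check via the \v{C}ech cocycle) only makes explicit what the paper dismisses as ``obvious.''
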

\begin{proof}
 Obviously $E\otimes E^{\ast}$ is isomorphic to the trivial line bundle $M\times\mathbb{C}$ over $M.$
 Then $0=c_1(E\otimes E^{\ast})=c_1(E)+c_1(E^{\ast})$ and this completes the proof.
 \end{proof}
 
 \begin{cor}
  Let $E$ be a complex vector bundle of rank $r$ over the differentiable manifold $M,$ and let $E^{\ast}$
  be its dual bundle. Then for $i=1,\ldots,r$ we have $c_i(E^{\ast})=(-1)^{i}c_i(E).$
 \end{cor}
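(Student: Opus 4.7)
The plan is to invoke the splitting principle to reduce immediately to the case where $E$ decomposes as a direct sum of line bundles, and then to exploit the homogeneity of the elementary symmetric polynomials.

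First I would apply the splitting principle stated earlier: there is a $\mathcal{C}^\infty$ map $f\colon N\longrightarrow M$ with $f^\sharp$ injective on cohomology and such that $f^*E=L_1\oplus\cdots\oplus L_r$ is a direct sum of line bundles. Since taking duals commutes with pull-backs and with direct sums, we automatically have $f^*(E^*)=L_1^*\oplus\cdots\oplus L_r^*$. By the naturality property of Chern classes established in the previous proposition, it is enough to show the identity $c_i(f^*E^*)=(-1)^i c_i(f^*E)$ after pulling back, because $f^\sharp$ is injective.

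Next I would compute both sides via the definition of Chern classes for a direct sum of line bundles. By definition,
\begin{equation*}
c_i(f^*E)=\sigma_i\bigl(c_1(L_1),\ldots,c_1(L_r)\bigr),\qquad
c_i(f^*E^*)=\sigma_i\bigl(c_1(L_1^*),\ldots,c_1(L_r^*)\bigr).
\end{equation*}
The preceding corollary gives $c_1(L_j^*)=-c_1(L_j)$ for each $j$. Since $\sigma_i$ is homogeneous of degree $i$ in its arguments, we get
\begin{equation*}
\sigma_i\bigl(-c_1(L_1),\ldots,-c_1(L_r)\bigr)=(-1)^i\,\sigma_i\bigl(c_1(L_1),\ldots,c_1(L_r)\bigr),
\end{equation*}
which yields $c_i(f^*E^*)=(-1)^i c_i(f^*E)$, and then by naturality and injectivity of $f^\sharp$ the same identity descends to $M$.

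There is really no serious obstacle here; the only subtle point is ensuring that the formation of the dual bundle is compatible with both pull-back and the direct-sum decomposition provided by the splitting construction, so that the case of a sum of line bundles genuinely captures $E^*$. Once this compatibility is invoked, the proof is purely the algebraic identity $\sigma_i(-x_1,\ldots,-x_r)=(-1)^i\sigma_i(x_1,\ldots,x_r)$ combined with the rank-one case already proved in the preceding corollary.
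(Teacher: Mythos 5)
Your argument is correct and is essentially the paper's own proof: both reduce to a direct sum of line bundles via the splitting principle and then apply the identity $c_1(L^{\ast})=-c_1(L)$ together with the homogeneity $\sigma_i(-x_1,\ldots,-x_r)=(-1)^i\sigma_i(x_1,\ldots,x_r)$. You merely make explicit the compatibility of dualization with pull-back and the role of the injectivity of $f^{\sharp}$, which the paper leaves implicit.
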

\begin{proof}
 From the splitting principle we may assume $E=L_1\oplus\cdots\oplus L_r$ is direct sum of line bundles
 over $M.$ Then $E^{\ast}=L_{1}^{\ast}\oplus\cdots\oplus L_{r}^{\ast}.$ From the previous corollary and
 elementary properites of symmetric functions we obtain
 \begin{equation*}
  \begin{split}
   c_i(E^{\ast})&=c_i(L_{1}^{\ast}\oplus\cdots\oplus L_{r}^{\ast})=\\
                &=\sigma_i(c_1(L_{1}^{\ast}),\ldots,c_1(L_{1}^{\ast}))=\\
                &=\sigma_i(-c_1(L_1),\ldots,-c_1(L_r))=\\
                &=(-1)^{i}\sigma_i(c_1(L_1),\ldots,c_1(L_r))=(-1)^{i}c_i(E).
  \end{split}
 \end{equation*}
\end{proof}

\section{Axiomatic approach to Chern classes}
In order to minimize topological prerequisites, in this section we take the axiomatic approach to Chern classes.
This enables us to separate differential geometry aspects of Chern classes from their topological aspects. 
We consider the cathegory of complex vector bundles over real manifolds. 
For more references see Kobayashi \cite{KOB} and Kobayashi-Nomizu vol. 1 \cite{KN1}.

\begin{axiom}
 For each complex vector bundle $E$ over $M$ and for each integer $0\leq i\leq\mathrm{rk}(E),$ the $i\text{-th}$
 Chern class $c_i(E)\in H^{2i}(M,\mathbb{R})$ is given and $c_0(E)=1.$ We set 
 \begin{equation*}
  c(E)=\sum_{i=0}^{\mathrm{rk}(E)}c_i(E),
 \end{equation*}
 and call $c(E)$ the total Chern class of $E.$
\end{axiom}

\begin{axiom}
 \emph{(Naturality)}
 Let $E$ be a complex vector bundle over $M$ and let $f:N\longrightarrow M$ be a $\mathcal{C}^{\infty}$
 mapping. Then
 \begin{equation*}
  c(f^{\ast}E)=f^{\ast}(c(E))\in H^{\ast}(M,\mathbb{R}).
 \end{equation*}
\end{axiom}

\begin{axiom}
 \emph{(Whitney sum formula)}
 Let $E_i,\ldots,E_q$ be complex line bundles over $M$ and let $E_1\oplus\cdots\oplus E_q$ be their
 Whitney sum. Then
 \begin{equation*}
  c(E_1\oplus\cdots\oplus E_q)=c(E_1)\cup\cdots\cup c(E_q).
 \end{equation*}
\end{axiom}

\begin{axiom}
 \emph{(Normalization)}
 If $L$ is the universal line bundle over $\mathbb{P}^1(\mathbb{C}),$ then $-c_1(L)$ is the positive generator
 of $H^2(\mathbb{P}^1(\mathbb{C}),\mathbb{Z}),$ that is, the one compatible with the orientation of 
 $\mathbb{P}^{1}(\mathbb{C}).$ In other words, $c_1(L)$ integrated on the foundamental
 2-cycle $\mathbb{P}^1(\mathbb{C})$ is equal to $-1.$
\end{axiom}

\section{Chern classes in terms of curvature}
In the previous section we introduced the $i\text{-th}$ Chern class $c_i(E)$ of a complex vector bundle as an 
element of $H^{2i}(M,\mathbb{R}).$ Via the de-Rham theory we should be able to represent $c_i(E)$ by a closed 
$2i\text{-form}$ $\gamma_i.$ In this section we shall construct such a $\gamma_i$ using the curvature form of a 
connection in $E.$ For convenience, we imbed $H^{2i}(M,\mathbb{R})$ into $H^{2i}(M,\mathbb{C})$ and
represent $c_i(E)$ by a closed complex $2i\text{-form}$ $\gamma_i.$

\begin{defin}
 Let $V$ be the Lie algebra $\frak{gl}(r,\mathbb{C})$ of the linear group $GL(r,\mathbb{C}),$ i.e., the
 Lie algebra of all $r\times r$ complex matrices. Let $G$ be $GL(r,\mathbb{C})$ acting on
 $\frak{gl}(r,\mathbb{C})$ by the adjoint action, i.e.,
 \begin{equation*}
  X\in\frak{gl}(r,\mathbb{C})\longrightarrow aXa^{-1}\in\frak{gl}(r,\mathbb{C}),\hspace{0.5cm}
  a\in GL(r,\mathbb{C}).
 \end{equation*}
Now we define homogeneous polynomials $f_k$ on $\frak{gl}(r,\mathbb{C})$ of degree
$k=1,\ldots,r$ by
\begin{equation}
\label{definepolynomials}
 \text{det}\left(I_r-\frac{1}{2\pi i}X\right)=1+f_1(X)+f_2(X)+\cdots+f_r(X),\hspace{0.5cm}
 X\in\frak{gl}(r,\mathbb{C}).
\end{equation}
Since 
\begin{equation*}
 \text{det}\left(I_r-\frac{1}{2\pi i}aXa^{-1}\right)=
 \text{det}\left(a\left(I_r-\frac{1}{2\pi i}X\right)a^{-1}\right)=
 \text{det}\left(I_r-\frac{1}{2\pi i}X\right)
\end{equation*}
the polynomials $f_1,\ldots,f_r$ are $GL(r,\mathbb{C})\text{-invariants}.$ It is known that these polynomials
generate the algebra of $GL(r,\mathbb{C})\text{-invariant}$ polynomials on $\frak{gl}(r,\mathbb{C}).$
\end{defin}

Since $GL(r,\mathbb{C})$ is a connected Lie group, the $GL(r,\mathbb{C})\text{-invariance}$ can be 
expressed infinitesimally. In fact, we have the following result:

\begin{prop}
A symmetric $\mathbb{C}\text{-multilinear}$ k-form $f$ on $\frak{gl}(r,\mathbb{C})$ is 
\linebreak$GL(r,\mathbb{C})\text{-invariant}$ if and only if
\begin{equation}
\label{lieinvariance}
 \sum_{j=1}^{k} f(X_1,\ldots,[Y,X_j],\ldots,X_k)=0\hspace{0.5cm}\text{ for }\hspace{0.5cm}
 X_j,Y\in\frak{gl}(r,\mathbb{C}).
\end{equation}
\end{prop}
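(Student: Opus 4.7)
The plan is to translate between global $GL(r,\mathbb{C})$-invariance and its infinitesimal counterpart via the one-parameter subgroups of $GL(r,\mathbb{C})$. Write $\rho_a(X)=aXa^{-1}$ for the adjoint action, so that the invariance condition reads
\begin{equation*}
f(\rho_a X_1,\ldots,\rho_a X_k)=f(X_1,\ldots,X_k)\qquad\text{for all }a\in GL(r,\mathbb{C}).
\end{equation*}
For every $Y\in\mathfrak{gl}(r,\mathbb{C})$ the curve $a(t)=\exp(tY)$ is a smooth path in $GL(r,\mathbb{C})$ with $a(0)=I_r$, and an elementary computation shows
\begin{equation*}
\frac{d}{dt}\Bigl(a(t)X a(t)^{-1}\Bigr)\bigg|_{t=0}=YX-XY=[Y,X].
\end{equation*}

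For the forward direction, assume $f$ is $GL(r,\mathbb{C})$-invariant. Applying the invariance to $a(t)=\exp(tY)$ gives
\begin{equation*}
f\bigl(a(t)X_1a(t)^{-1},\ldots,a(t)X_ka(t)^{-1}\bigr)=f(X_1,\ldots,X_k)
\end{equation*}
for all $t$. Differentiating at $t=0$ and using the $\mathbb{C}$-multilinearity of $f$ together with the derivative formula above yields exactly \eqref{lieinvariance}.

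For the converse, suppose \eqref{lieinvariance} holds and fix $X_1,\ldots,X_k$ and $Y$. Consider the smooth function
\begin{equation*}
g(t)=f\bigl(a(t)X_1a(t)^{-1},\ldots,a(t)X_ka(t)^{-1}\bigr),\qquad a(t)=\exp(tY).
\end{equation*}
By multilinearity and the chain rule,
\begin{equation*}
g'(t)=\sum_{j=1}^{k} f\Bigl(\rho_{a(t)}X_1,\ldots,[Y,\rho_{a(t)}X_j],\ldots,\rho_{a(t)}X_k\Bigr),
\end{equation*}
which vanishes by the hypothesis (applied to the arguments $\rho_{a(t)}X_1,\ldots,\rho_{a(t)}X_k$). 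Hence $g$ is constant, so $f$ is invariant under every $\exp(tY)$. Since $\exp$ is a local diffeomorphism at $0$, the image of $\exp$ contains an open neighbourhood of $I_r$, and because $GL(r,\mathbb{C})$ is connected, any element is a finite product of such exponentials. Invariance under each factor then yields invariance under the product, establishing $GL(r,\mathbb{C})$-invariance of $f$.

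The main obstacle is the converse direction, and within it the step that promotes invariance under the one-parameter subgroups $\exp(tY)$ to invariance under all of $GL(r,\mathbb{C})$; this is where connectedness of $GL(r,\mathbb{C})$ is essential. The rest is a mechanical differentiation argument and makes full use of the symmetry and multilinearity of $f$.
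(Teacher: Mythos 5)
Your proof is correct. The paper itself states this proposition without proof, merely prefacing it with the remark that invariance can be expressed infinitesimally because $GL(r,\mathbb{C})$ is connected; your argument via one-parameter subgroups $\exp(tY)$, the derivative of $g(t)$ vanishing by the hypothesis applied to the conjugated arguments, and connectedness to pass from exponentials to the whole group is exactly the standard argument that remark alludes to. One small note: the symmetry of $f$ plays no role in either direction — multilinearity alone suffices — so your closing sentence slightly overstates what is used.
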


\begin{defin}
 Let $E$ be a complex vector bundle of rank $r$ over a differentiable manifold $M$ of (real) dimension $n.$
 Let $D$ be a connection in $E$ and let $R$ be its curvature. Choosing a local frame field 
 $s=(s_1,\ldots,s_r),$ we denote the connection form and the curvature form of $D$ by 
 $\omega$ and $\Omega$ respectively. Given a $GL(r,\mathbb{C})\text{-invariant}$ symmetric multilinear
 form $f$ of degree $k$ on $\frak{gl}(r,\mathbb{C}),$ we set
 \begin{equation*}
  \gamma=f(\Omega)=f(\Omega,\ldots,\Omega).
 \end{equation*}
 \end{defin}

\begin{prop}
 $\gamma$ is indipendent of the choice of the local frame field $s$ and hence is a globally defined 
 differential form of degree $2k.$
\end{prop}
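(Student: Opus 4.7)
The plan is to exploit the transformation law for the curvature form under a change of local frame, which was established earlier in this chapter as equation \eqref{transition2}: on an overlap $U_\alpha\cap U_\beta$ with $s_\alpha=s_\beta g_{\beta\alpha}$ one has
\begin{equation*}
\Omega_\alpha=g_{\beta\alpha}^{-1}\Omega_\beta\, g_{\beta\alpha}.
\end{equation*}
Since $g_{\beta\alpha}$ takes values in $GL(r,\mathbb{C})$, the local $\mathfrak{gl}(r,\mathbb{C})$-valued curvature forms are related by the adjoint action, so if $f$ is $GL(r,\mathbb{C})$-invariant then $f(\Omega_\alpha)$ and $f(\Omega_\beta)$ should agree on the overlap, yielding a globally defined form.

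First I would make precise the meaning of $f(\Omega)$: regarding $\Omega$ as an $r\times r$ matrix whose entries are scalar $2$-forms on $U_\alpha$, and extending $f$ by multilinearity with the understanding that all products of matrix entries are wedge products, we obtain a well-defined scalar $2k$-form $f(\Omega)\in A^{2k}(U_\alpha)$. Here the fact that the entries of $\Omega$ are $2$-forms (even degree, hence commuting under wedge) is what guarantees that the symmetric multilinear extension is unambiguous.

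Next, since the components of $g=g_{\beta\alpha}$ are smooth functions, hence $0$-forms, they commute with wedge products of higher-degree forms. Consequently the purely algebraic identity
\begin{equation*}
f(a^{-1}Xa,\ldots,a^{-1}Xa)=f(X,\ldots,X),\qquad a\in GL(r,\mathbb{C}),\ X\in\mathfrak{gl}(r,\mathbb{C}),
\end{equation*}
which expresses the $GL(r,\mathbb{C})$-invariance of $f$, transfers verbatim to matrices whose entries are $2$-forms: one just replaces ordinary products of scalars by wedge products. Applying this pointwise on $U_\alpha\cap U_\beta$ with $a=g_{\beta\alpha}(x)$ and $X=\Omega_\beta$ gives
\begin{equation*}
f(\Omega_\alpha)=f\bigl(g_{\beta\alpha}^{-1}\Omega_\beta\, g_{\beta\alpha}\bigr)=f(\Omega_\beta)\qquad\text{on }U_\alpha\cap U_\beta.
\end{equation*}
Therefore the locally defined $2k$-forms $\{f(\Omega_\alpha)\}$ glue together into a single differential form $\gamma\in A^{2k}(M)$, which moreover is manifestly independent of the chosen frame.

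The only real obstacle is the bookkeeping in the second paragraph: verifying that the polynomial identity for $GL(r,\mathbb{C})$-invariance survives the replacement of scalar matrix entries by differential $2$-forms. There is no analytic difficulty, but one must be explicit that the relevant commutativity is supplied on the one hand by the even degree of $\Omega$'s entries, and on the other hand by the fact that the entries of the transition matrix $g_{\beta\alpha}$ are $0$-forms. If preferred, one can alternatively phrase the computation via the infinitesimal characterization \eqref{lieinvariance}, but the direct global argument is conceptually cleaner and suffices.
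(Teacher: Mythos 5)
Your proof is correct and follows the same route as the paper: invoke the transformation law $\Omega_\alpha=g_{\beta\alpha}^{-1}\Omega_\beta g_{\beta\alpha}$ from \eqref{transition2} and conclude by the $GL(r,\mathbb{C})$-invariance of $f$. The only difference is that you spell out why the algebraic invariance identity survives when scalar entries are replaced by $2$-forms (even degree, hence commuting under wedge, with the transition functions being $0$-forms), a point the paper's one-line proof leaves implicit.
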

\begin{proof}
 If $s'=sa^{-1}$ is another frame field, then the corresponding curvature form is given by
 $a\Omega a^{-1}.$ Since $f$ is $GL(r,\mathbb{C})\text{-invariant},$ it immediately follows that $\gamma$ is a 
 globally defined differential form of degree $2k.$
\end{proof}

\begin{prop}
 $\gamma$ is closed, i.e., $\mathrm{d}\gamma=0.$ Then $\gamma$ represents a cohomology class in 
 $H^{2k}(M,\mathbb{C}).$
\end{prop}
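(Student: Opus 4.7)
The plan is to compute $\mathrm{d}\gamma$ directly in a local frame field and show it vanishes by combining three ingredients already established in the excerpt: the Leibniz/multilinearity expansion, Bianchi's identity \eqref{bianchi}, and the infinitesimal $GL(r,\mathbb{C})$-invariance condition \eqref{lieinvariance}. Since the preceding proposition guarantees that $\gamma$ is globally well defined, it suffices to verify closedness on a trivializing open set, where we have $\omega$ and $\Omega$ at our disposal.

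First, because $f$ is symmetric and multilinear and every $\Omega$ is a $2$-form (so each $\mathrm{d}$ crosses an even-degree form without generating signs), the Leibniz rule gives
\begin{equation*}
\mathrm{d}\gamma \;=\; \sum_{j=1}^{k} f(\Omega,\ldots,\mathrm{d}\Omega,\ldots,\Omega),
\end{equation*}
where $\mathrm{d}\Omega$ appears in the $j$-th slot. Next I would substitute the Bianchi identity \eqref{bianchi}, $\mathrm{d}\Omega = [\Omega,\omega] = \Omega\wedge\omega - \omega\wedge\Omega$, to rewrite the right-hand side as $\sum_{j} f(\Omega,\ldots,[\Omega,\omega],\ldots,\Omega)$. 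Finally, I would invoke \eqref{lieinvariance}, but in a form-valued version, with $Y=\omega$ and all $X_j=\Omega$, to conclude that this sum is zero.

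The one nontrivial step, and the main obstacle, is justifying the form-valued version of \eqref{lieinvariance}. The identity \eqref{lieinvariance} is a statement about ordinary matrices, whereas here $\omega$ is a matrix-valued $1$-form and each $\Omega$ a matrix-valued $2$-form. I would argue as follows: decompose $\omega = \sum_a \omega^a \otimes Y_a$ and $\Omega = \sum_b \Omega^b \otimes X_b$ in a basis of $\mathfrak{gl}(r,\mathbb{C})$, with scalar-valued forms $\omega^a$, $\Omega^b$. Multilinearity of $f$ and bilinearity of the bracket reduce the sum $\sum_j f(\Omega,\ldots,[\omega,\Omega],\ldots,\Omega)$ to a linear combination of terms of the form $(\text{wedge product of scalar forms})\cdot\sum_j f(X_{b_1},\ldots,[Y_a,X_{b_j}],\ldots,X_{b_k})$, each of which vanishes by \eqref{lieinvariance}. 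Because $\omega$ has odd degree, swapping $[\omega,\Omega]$ and $[\Omega,\omega]$ in principle could produce a sign, but for a $1$-form and a $2$-form the graded commutator satisfies $[\Omega,\omega]=-[\omega,\Omega]$, and the wedge-factor signs accumulated from moving scalar forms past each other also combine to an overall even sign (each $\omega^a$ crosses $2$-forms $\Omega^b$, picking up $(-1)^{2}=1$). Hence $\mathrm{d}\gamma=0$, and $\gamma$ defines a class in $H^{2k}(M,\mathbb{C})$ as claimed.
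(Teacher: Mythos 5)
Your proof is correct and follows essentially the same route as the paper's: expand $\mathrm{d}\gamma$ by the Leibniz rule (no signs since each $\Omega$ has even degree), substitute the Bianchi identity $\mathrm{d}\Omega=[\Omega,\omega],$ and conclude by the infinitesimal invariance \eqref{lieinvariance}. The only difference is that you carefully justify the form-valued version of \eqref{lieinvariance} via a basis decomposition of $\frak{gl}(r,\mathbb{C}),$ a step the paper applies without comment; your sign bookkeeping there is correct.
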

\begin{proof}
 Using the Bianchi identity $\mathrm{d}\Omega=[\Omega,\omega]$ and \eqref{lieinvariance} we have
 \begin{equation*}
  \begin{split}
   \mathrm{d}\gamma&=\mathrm{d}f(\Omega,\ldots,\Omega)=\\
                   &=f(\mathrm{d}\Omega,\ldots,\Omega)+\cdots+f(\Omega,\ldots,\mathrm{d}\Omega)=\\
                   &=f([\Omega,\omega],\ldots,\Omega)+\cdots+f(\Omega,\ldots,[\Omega,\omega])=0.
  \end{split}
 \end{equation*}
\end{proof}

Now we show that the cohomology class is well defined.

\begin{prop}
 The cohomology class of $\gamma$ does not depend on the choice of the connection $D.$ 
\end{prop}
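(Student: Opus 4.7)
The plan is to exploit the affine structure on the space of connections and show that any two connections can be joined by a smooth path, along which the derivative of $\gamma$ is an exact form.

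First I would observe that if $D_0$ and $D_1$ are two connections on $E$ with local connection forms $\omega_0,\omega_1$ (relative to a fixed local frame field $s$), then the transition formula \eqref{connectionchange} shows that the difference $\alpha := \omega_1 - \omega_0$ transforms tensorially, i.e.\ defines a global element $\alpha \in A^1(\mathrm{End}(E))$. Consequently the family $D_t = D_0 + t\alpha$, $t\in[0,1]$, is a smooth $1$-parameter family of connections with connection form $\omega_t = \omega_0 + t\alpha$ and curvature form $\Omega_t = \mathrm{d}\omega_t + \omega_t\wedge\omega_t$. A direct differentiation gives
\begin{equation*}
\frac{\mathrm{d}\Omega_t}{\mathrm{d}t} = \mathrm{d}\alpha + \alpha\wedge\omega_t + \omega_t\wedge\alpha = D_t\alpha,
\end{equation*}
where $D_t$ denotes the induced exterior covariant derivative on $\mathrm{End}(E)$-valued forms.

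Next I would set $\gamma_t = f(\Omega_t)=f(\Omega_t,\ldots,\Omega_t)$, with $f$ the symmetric $k$-linear polarization of the invariant polynomial. Using the symmetry of $f$,
\begin{equation*}
\frac{\mathrm{d}\gamma_t}{\mathrm{d}t} = k\,f(D_t\alpha,\Omega_t,\ldots,\Omega_t).
\end{equation*}
Now the infinitesimal $GL(r,\mathbb{C})$-invariance identity \eqref{lieinvariance}, combined with a graded Leibniz rule for $\mathrm{d}$ applied to $f$ evaluated on matrix-valued forms, yields
\begin{equation*}
\mathrm{d}\,f(\alpha,\Omega_t,\ldots,\Omega_t) = f(D_t\alpha,\Omega_t,\ldots,\Omega_t) - \sum_{j=2}^{k} f(\alpha,\ldots,D_t\Omega_t,\ldots,\Omega_t).
\end{equation*}
Bianchi's identity \eqref{bianchi} gives $D_t\Omega_t = 0$, so the right-hand side reduces to $f(D_t\alpha,\Omega_t,\ldots,\Omega_t)$. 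Hence $\frac{\mathrm{d}\gamma_t}{\mathrm{d}t} = k\,\mathrm{d}\,f(\alpha,\Omega_t,\ldots,\Omega_t)$.

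Finally, integrating from $0$ to $1$ and commuting $\mathrm{d}$ with the integral over the parameter $t$ gives
\begin{equation*}
\gamma_1-\gamma_0 = \mathrm{d}\!\left(k\int_0^1 f(\alpha,\Omega_t,\ldots,\Omega_t)\,\mathrm{d}t\right),
\end{equation*}
so $\gamma_0$ and $\gamma_1$ represent the same class in $H^{2k}(M,\mathbb{C})$. The main technical obstacle is the derivation of the graded Leibniz/invariance identity above: one must carefully track the signs produced by moving $\mathrm{d}$ past the odd-degree entry $\alpha$, and then recognize the combination of commutator terms $[\omega_t,\,\cdot\,]$ coming from $\omega_t\wedge\alpha-\alpha\wedge\omega_t$ as precisely the infinitesimal invariance relation \eqref{lieinvariance} applied with $Y$ taking values in the forms. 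Once this bookkeeping is done, the rest is a straightforward ODE-style integration.
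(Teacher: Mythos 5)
Your proposal is correct and follows essentially the same route as the paper: the affine path of connections $\omega_t=\omega_0+t\alpha$, the identity $\tfrac{\mathrm{d}\Omega_t}{\mathrm{d}t}=D_t\alpha$, the tensoriality of $\alpha$ guaranteeing that $f(\alpha,\Omega_t,\ldots,\Omega_t)$ is globally defined, and the transgression form $\varphi=k\int_0^1 f(\alpha,\Omega_t,\ldots,\Omega_t)\,\mathrm{d}t$ with $\mathrm{d}\varphi=\gamma_1-\gamma_0$ via Bianchi's identity and the infinitesimal invariance of $f$. Your more explicit bookkeeping of the graded Leibniz rule is just an expanded version of the paper's step $k\,\mathrm{d}f(\alpha,\Omega_t,\ldots,\Omega_t)=k\,D_tf(\alpha,\Omega_t,\ldots,\Omega_t)=k\,f(D_t\alpha,\Omega_t,\ldots,\Omega_t)$.
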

\begin{proof}
 We consider two connections $D_0$ and $D_1$ in $E$ and connect them by a segment of connections
 \begin{equation*}
  D_t=(1-t)D_0+tD_1,\hspace{0.5cm}0\leq t\leq1.
 \end{equation*}
Let $\omega_t$ and $\Omega_t$ be the connection form and the curvature form of $D_t$ with respect to a 
local frame field $s.$ Then we write
\begin{equation*}
 \omega_t=\omega_0+t\alpha,\hspace{0.5cm}\text{ where }\hspace{0.5cm}\alpha=\omega_1-\omega_0,
\end{equation*}
and
\begin{equation*}
 \Omega_t=\mathrm{d}\omega_t+\omega_t\wedge\omega_t.
\end{equation*}
Then
\begin{equation*}
 \frac{\mathrm{d}\Omega_t}{\mathrm{d}t}=
 \mathrm{d}\alpha+\alpha\wedge\omega_t+\omega_t\wedge\alpha=D_t\alpha.
\end{equation*}
Finally we set
\begin{equation*}
 \varphi=k\int_{0}^{1}f(\alpha,\Omega_t,\ldots,\Omega_t)\mathrm{d}t.
\end{equation*}
From \eqref{connectionchange} we see that the difference $\alpha$ of two connection forms 
transforms in the same way as the curvature form under a transformation of the local frame field $s.$
It follows that $f(\alpha,\Omega_t,\ldots,\Omega_t)$ is indipendent of $s$ and hence a globally defined
$(2k-1)\text{-form}$ on $M.$ Therefore, $\varphi$ is a $(2k-1)\text{-form}$ on $M.$
From Bianchi identity $D_t\Omega_t=0,$ we obtain
\begin{equation*}
 \begin{split}
  k\mathrm{d}f(\alpha,\Omega_t,\ldots,\Omega_t)&=kD_tf(\alpha,\Omega_t,\ldots,\Omega_t)=
  kf(D_t\alpha,\Omega_t,\ldots,\Omega_t)=\\
  &=kf\left(\frac{\mathrm{d}\Omega_t}{\mathrm{d}t},\Omega_t,\ldots,\Omega_t\right)=
  \frac{\mathrm{d}}{\mathrm{d}t}f(\Omega_t,\Omega_t,\ldots,\Omega_t).
 \end{split}
\end{equation*}
Hence,
\begin{equation*}
 \mathrm{d}\varphi=\int_{0}^{1}\frac{\mathrm{d}}{\mathrm{d}t}f(\Omega_t,\Omega_t,\ldots,\Omega_t)=
 f(\Omega_1,\ldots,\Omega_1)-f(\Omega_0,\ldots,\Omega_0),
\end{equation*}
which proves that the cohomology class of $\gamma$ does not depend on the connection $D.$
\end{proof}

\begin{defin}
 Using the $GL(r,\mathbb{C})\text{-invariant}$ polynomials $f_k$ defined by \eqref{definepolynomials}, we may
 define 
 \begin{equation}
  \label{CHERN1}
  \gamma_k=f_k(\Omega),\hspace{0.5cm}k=1,\cdots,r.
 \end{equation}
In other words,
\begin{equation}
 \label{CHERN2}
 \text{det}\left(I_r-\frac{1}{2\pi i}\Omega\right)=1+\gamma_1+\gamma_2+\cdots+\gamma_r.
\end{equation}
\end{defin}

\begin{note}
 After some linear algebra calculations we find
 \begin{equation*}
  \gamma_k=\frac{(-1)^k}{(2\pi i)^k}\sum \delta_{i_1\cdots i_k}^{j_1\cdots j_k}
  \Omega_{j_1}^{i_1}\wedge\cdots\wedge\Omega_{j_k}^{i_k}.
 \end{equation*}
In particular
\begin{equation*}
 \gamma_i=-\frac{1}{2\pi i}\mathrm{tr}(\Omega),
\end{equation*}
and 
\begin{equation*}
 \gamma_2=-\frac{1}{8\pi^2}(\mathrm{tr}(\Omega)\wedge\mathrm{tr}(\Omega)-\mathrm{tr}(\Omega\wedge\Omega)).
\end{equation*}
\end{note}

\begin{teo}
 Let $E$ be a complex vector bundle of rank $r$ over a differentiable manifold $M$ of (real) dimension $n.$ 
 The $k\text{-th}$ Chern class $c_k(E)$ of a complex vector bundle $E,$ as a cohomology class in 
 $H^{2k}(M,\mathbb{C}),$ is represented by the closed $2k\text{-form}$ $\gamma_k$ defined by
 \eqref{CHERN1} or \eqref{CHERN2}.
\end{teo}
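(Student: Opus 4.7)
The plan is to show that the cohomology classes $[\gamma_k]\in H^{2k}(M,\mathbb{C})$ constructed from the curvature satisfy the four axioms characterizing Chern classes, and then invoke uniqueness. Set $\tilde{c}_k(E)=[\gamma_k]$ and $\tilde{c}(E)=\sum_{k=0}^{r}\tilde{c}_k(E).$ By the previous two propositions, $\tilde{c}_k(E)$ is a well-defined cohomology class independent of the chosen connection, so Axiom 1 is immediate.

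For naturality, I would use the fact established in the section on connections: if $f:N\to M$ is a $\mathcal{C}^{\infty}$ mapping and $D$ is a connection on $E$ with connection form $\omega$ and curvature form $\Omega$ relative to a local frame field $s$, then $f^{\ast}D$ is a connection on $f^{\ast}E$ with connection form $f^{\ast}\omega$ and curvature form $f^{\ast}\Omega$ relative to $f^{\ast}s.$ Since $f_k$ is a polynomial applied pointwise, $f_k(f^{\ast}\Omega)=f^{\ast}f_k(\Omega),$ and passing to cohomology gives $\tilde{c}_k(f^{\ast}E)=f^{\sharp}\tilde{c}_k(E).$ For the Whitney sum formula, given $E=E_1\oplus\cdots\oplus E_q$ I would choose a connection $D=D_1\oplus\cdots\oplus D_q$ on $E$; by the proposition on direct sums in Chapter 3 its curvature form is block-diagonal with blocks $\Omega_1,\ldots,\Omega_q.$ The multiplicativity of the determinant under block-diagonal matrices gives
\begin{equation*}
\det\!\Bigl(I_r-\tfrac{1}{2\pi i}\Omega\Bigr)=\prod_{j=1}^{q}\det\!\Bigl(I_{r_j}-\tfrac{1}{2\pi i}\Omega_j\Bigr),
\end{equation*}
which, after expanding both sides via \eqref{CHERN2} and passing to cohomology, yields $\tilde{c}(E)=\tilde{c}(E_1)\cup\cdots\cup\tilde{c}(E_q).$

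For normalization I must show that the $2$-form $\gamma_1=-\frac{1}{2\pi i}\mathrm{tr}(\Omega)$ associated with the universal line bundle $L$ over $\mathbb{P}^1(\mathbb{C})$ integrates to $-1.$ The plan is to equip $L$ with its natural Hermitian structure induced by the standard Hermitian inner product on $\mathbb{C}^2$: on the chart $U_0=\{z^0\neq 0\}$ with coordinate $w=z^1/z^0,$ the tautological section has squared norm $h=1+|w|^2,$ so the Hermitian connection of Section \ref{compatibility} has connection form $\omega=\partial\log h$ and curvature $\Omega=\mathrm{d''}\partial\log(1+|w|^2)=\frac{\mathrm{d}w\wedge\mathrm{d}\overline{w}}{(1+|w|^2)^2}.$ A direct computation of
\begin{equation*}
\int_{\mathbb{P}^1(\mathbb{C})}\tilde{c}_1(L)=-\frac{1}{2\pi i}\int_{\mathbb{C}}\frac{\mathrm{d}w\wedge\mathrm{d}\overline{w}}{(1+|w|^2)^2}
\end{equation*}
in polar coordinates (using $\mathrm{d}w\wedge\mathrm{d}\overline{w}=-2i\,r\,\mathrm{d}r\wedge\mathrm{d}\theta$) gives $-1,$ matching Axiom 4.

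The main obstacle is this normalization step: it is the only place where one must do an honest curvature computation and justify that the integrand extends across the point at infinity (equivalently, that it represents the de Rham class of the globally defined form $\gamma_1$). Once all four axioms are verified, uniqueness of Chern classes (which follows from the splitting principle together with naturality and normalization, exactly as in the construction of the preceding section) forces $\tilde{c}_k(E)=c_k(E)$ in $H^{2k}(M,\mathbb{C}),$ completing the proof.
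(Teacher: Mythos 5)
Your proposal follows exactly the paper's route: verify the four axioms for the classes $[\gamma_k]$ (well-definedness from the two preceding propositions, naturality via the pull-back connection, the Whitney formula via the block-diagonal curvature of a direct-sum connection, and normalization by an explicit curvature computation on the tautological bundle over $\mathbb{P}^1(\mathbb{C})$), then invoke uniqueness. One slip in the normalization step: with $\omega=\mathrm{d}'\log(1+|w|^2)=\frac{\overline{w}\,\mathrm{d}w}{1+|w|^2}$ one gets $\Omega=\mathrm{d''}\omega=\frac{\mathrm{d}\overline{w}\wedge\mathrm{d}w}{(1+|w|^2)^2}=-\frac{\mathrm{d}w\wedge\mathrm{d}\overline{w}}{(1+|w|^2)^2}$, not the sign you wrote; with your sign the integral comes out to $+1$ rather than $-1$, so the minus sign must be restored for Axiom 4 to check out (the paper records the curvature with the correct sign).
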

\begin{proof}
 We have to show that the cohomology classes represented by the closed 2k-forms $\gamma_k$ satisfy the 
 four axioms given in the previous section.
 \begin{enumerate}
  \item Axiom 1 is trivially satisfied. We simply need to set $\gamma_0=1.$
  \item For Axiom 2, in the vector bundle $f^{\ast}E$ induced from $E$ by the $\mathcal{C}^{\infty}$
        mapping $f:N\longrightarrow M,$ we use the connection $f^{\ast}D$ induced from a connection $D$ 
        in $E.$ Then its curvature form is given by $f^{\ast}\Omega.$ Since
        \linebreak$f_k(f^{\ast}\Omega)=f^{\ast}(f_k(\Omega))=f^{\ast}\gamma_k,$ Axiom 2 is satisfied.
  \item To verify Axiom 3, let $D_1,\ldots,D_q$ be connections in line bundles 
        $E_1,\ldots,E_q$ respectively and let $\Omega_1,\ldots,\Omega_q$ be their curvature forms.
        We use the connection $D=D_1\otimes\cdots\otimes D_q$ in $E=E_1\otimes\cdots\otimes E_q,$ then
        its curvature form is diagonal with diagonal entries $\Omega_1,\ldots,\Omega_q.$
        Hence
        \begin{equation*}
         \text{det}\left(I_r-\frac{1}{2\pi i}\Omega\right)=\left(1-\frac{1}{2\pi i}\Omega_1\right)\wedge
         \cdots\wedge\left(1-\frac{1}{2\pi i}\Omega_q\right),
        \end{equation*}
         which establishes Axiom 3.
  \item We take a natural Hermitian structure in the tautological line bundle $L$ over 
        $\mathbb{P}^1(\mathbb{C}),$ i.e., the one arising from the natural inner product in 
        $\mathbb{C}^2.$ Since a fibre of $L$ is a complex line through the origin of $\mathbb{C}^2,$ each
        element $\zeta\in L$ is represented by a vector $(\zeta^0,\zeta^1)$ in $\mathbb{C}^2.$ Then the
        Hermitian structure $h$ is defined by:
        \begin{equation*}
         h(\zeta,\zeta)=|\zeta^0|^2+|\zeta^1|^2
        \end{equation*}
        Considering $[\zeta^0,\zeta^1]$ as a homogeneous coordinate in $\mathbb{P}^1(\mathbb{C}),$ let
        $z=\zeta^1/\zeta^0$ be the inhomogeneous coordinate in the local chart 
        $U_0=\mathbb{P}^1(\mathbb{C})-\{[0,1]\}.$ Let $s$ be the frame field of $L$ over $U_0$ defined by
        \begin{equation*}
         s(z)=(1,z)\in L_z\subseteq\mathbb{C}^2.
        \end{equation*}
        With respect to $s,$ $h$ is given by the function
        \begin{equation*}
         H(z)=h(s(z),s(z))=1+|z|^2.
        \end{equation*}
        Hence the connection form and the curvature form of the Hermitian connection $D$ are given by
        \begin{equation*}
         \omega=\frac{\overline{z}\mathrm{d}z}{1+|z|^2},\hspace{0.5cm}
         \Omega=-\frac{\mathrm{d}z\wedge\mathrm{d}\overline{z}}{(1+|z|^2)^2}.
        \end{equation*}
        So
        \begin{equation*}
         \gamma_1=\frac{\mathrm{d}z\wedge\mathrm{d}\overline{z}}{2\pi i(1+|z|^2)^2}.
        \end{equation*}
        Using polar coordinates $(r,t)$ defined by $z=re^{2\pi it},$ $0\leq t\leq 1$ we write
        \begin{equation*}
         \gamma_1=-\frac{2r\mathrm{d}r\wedge\mathrm{d}t}{(1+r^2)^2}\hspace{0.5cm}\text{ on }U_0.
        \end{equation*}
        Then integrating $\gamma_1$ over $\mathbb{P}^1(\mathbb{C}),$ we obtain
        \begin{equation*}
        \begin{split}
         \int_{\mathbb{P}^1(\mathbb{C})}\gamma_1&=\int_{U_0}\gamma_1=
         \int_{0}^{1}\int_{0}^{+\infty}-\frac{2r\mathrm{d}r\wedge\mathrm{d}t}{(1+r^2)^2}=\\
         &=\int_{0}^{1}\mathrm{d}t\left(\int_{0}^{+\infty}-\frac{2r\mathrm{d}r}{(1+r^2)^2}\right)=\\
         &=\int_{0}^{1}\mathrm{d}t\left[\frac{1}{1+r^{2}}\right]_{t=0}^{t=+\infty}
         =\int_{0}^{1}-\mathrm{d}t=-1.
        \end{split} 
        \end{equation*}
        and this verifies Axiom 4.
 \end{enumerate}

\end{proof}

\chapter{Algebraic and Analytic Tools}
In this chapter we present some algebraic notions such as coherent sheaves, torsion-free sheaves and 
locally-free sheaves
over a compact K\"ahler manifold $(X,\omega).$ Moreover, we introduce the notion of $\omega\text{-stable}$
and $\omega\text{-semistable}$ torsion-free sheaf of $\mathcal{O}_X\text{-modules}$ over a compact
K\"ahler manifold
$(X,\omega).$ For more detailed definitions and proofs see \cite{HAR}, \cite{KOB} and \cite{WEL}.

On the other hand, in the last section of this chapter we present some useful analytic tools 
which are involved in the proofs of the main results of this work: 
the Fredholm alternative Theorem and the Maximum Principle.
See \cite{EVA} and \cite{KOB} for more details.

\section{Torsion-free and locally-free analytic coherent sheaves}
\begin{defin}
 Let $X$ be complex manifold of (complex) dimension $n$ and let $\mathcal{O}=\mathcal{O}_X$ 
 be the structure sheaf of $X,$
 i.e., the sheaf of germs of holomorphic functions on $X.$ We write
 \begin{equation*}
  \mathcal{O}^p=\mathcal{O}\otimes\cdots\otimes\mathcal{O}
 \end{equation*}
An analytic sheaf over $X$ is a sheaf of $\mathcal{O}_X\text{-modules}$ over $X.$ 
\end{defin}

\begin{defin}
 Let $\mathcal{S}$ be an analytic sheaf over a complex manifold $X$ of (complex) dimension $n.$ 
 $\mathcal{S}$ is coherent if, for every point $P\in X,$ there exists a neighborhood $U$ of $P$ in $X$ and
 an exact sequence of sheaves
 \begin{equation*}
  \mathcal{O}^{q}\vert_{U}\longrightarrow\mathcal{O}^{p}\vert_{U}\longrightarrow\mathcal{S}\vert_{U}
  \longrightarrow0.
 \end{equation*}
\end{defin}

\begin{defin}
 Let $\mathcal{S}$ be an analytic sheaf over a complex manifold $X$ of (complex) dimension $n.$ We have the
 following definitions:
 \begin{enumerate}
  \item  $\mathcal{S}$ is free if it is isomorphic to a direct sum of copies of the structure sheaf 
         $\mathcal{O}_X,$
  \item  $\mathcal{S}$ is locally-free if $X$ can be covered by open sets $\{U\}$ for which
         $\mathcal{S}\vert_U$ is a free $\mathcal{O}\vert_U\text{-module}.$
 \end{enumerate}
 If $\mathcal{S}$ is locally-free, the rank of $\mathcal{S}$ on such an open set is the number of copies 
 of the structure sheaf needed (finite or infinite).
If $X$ is connected, the rank of a locally-free sheaf is the same everywhere. A locally-free sheaf of 
rank $1$ is also called an invertible sheaf.
\end{defin}

\begin{prop}
\label{finiterank}
 Let $\mathcal{S}$ be a coherent sheaf over a complex manifold $X$ of (complex) dimension $n.$
 Then $\mathcal{S}$ has finite rank.
\end{prop}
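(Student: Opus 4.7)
The plan is essentially to read off the result directly from the defining local presentation of coherence, so the proof is short and the only care needed is to say precisely what ``rank'' means and why it is a \emph{local} invariant that the presentation controls.

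First I would fix a point $P \in X$ and invoke the definition of coherence: there exists an open neighborhood $U$ of $P$ and an exact sequence
\begin{equation*}
\mathcal{O}^{q}\vert_{U}\longrightarrow\mathcal{O}^{p}\vert_{U}\longrightarrow\mathcal{S}\vert_{U}\longrightarrow 0
\end{equation*}
for some nonnegative integers $p$ and $q$. The key observation is that the second arrow is surjective, so $\mathcal{S}\vert_U$ is a quotient of the locally free sheaf $\mathcal{O}^p\vert_U$ of rank $p<\infty$.

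Next I would pass to stalks and fibers. For every $x \in U$, taking stalks yields a surjection of $\mathcal{O}_{X,x}$-modules $\mathcal{O}_{X,x}^{p}\twoheadrightarrow \mathcal{S}_{x}$, so $\mathcal{S}_{x}$ is finitely generated by at most $p$ elements over the local ring. Tensoring with the residue field $\mathbb{C}(x)=\mathcal{O}_{X,x}/\mathfrak{m}_{x}$ preserves surjectivity, giving
\begin{equation*}
\mathrm{rk}_{x}(\mathcal{S}) \;=\; \dim_{\mathbb{C}}\bigl(\mathcal{S}_{x}\otimes_{\mathcal{O}_{X,x}}\mathbb{C}(x)\bigr)\;\leq\; p,
\end{equation*}
which is precisely the statement that the rank at $x$ is finite. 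Since $P$ was arbitrary, every point of $X$ admits a neighborhood on which the rank is uniformly bounded.

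There is really no main obstacle here: coherence is defined so that exactly this argument works, and no global hypothesis (such as compactness or connectedness of $X$) is required for finiteness of rank at a point. The only subtlety worth flagging in the write-up is terminological, namely to make clear whether ``rank'' is meant pointwise (stalk dimension modulo the maximal ideal) or generically; both interpretations are controlled by the same bound $p$ coming from the local presentation, so the conclusion is the same either way.
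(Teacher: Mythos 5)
Your proof is correct, and it is genuinely more self-contained than what the paper does: the paper disposes of this Proposition with a one-line appeal to the Oka Lemma and the Syzygy Theorem, deferring to Gunning--Rossi, whereas you extract the finiteness directly from the local presentation $\mathcal{O}^{q}\vert_{U}\to\mathcal{O}^{p}\vert_{U}\to\mathcal{S}\vert_{U}\to 0$ that the paper itself takes as the \emph{definition} of coherence. Given that definition, your route is the right one: the surjection onto $\mathcal{S}\vert_{U}$ bounds the number of generators of every stalk by $p$, and right-exactness of the tensor product with the residue field turns this into the bound $\dim_{\mathbb{C}}\bigl(\mathcal{S}_{x}\otimes_{\mathcal{O}_{X,x}}\mathbb{C}(x)\bigr)\leq p$; in the locally-free case (which is the only one in which the paper actually defines and later uses the rank, namely to conclude that $\mathcal{S}\vert_U\cong\mathcal{O}^{r}\vert_U$ forces $r<\infty$) the same surjection on fibers gives $r\leq p$ directly. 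The deep results the paper cites are needed elsewhere (coherence of $\mathcal{O}_X$ itself, existence of finite free resolutions), but not for this statement, so your argument buys transparency at no cost. Your closing remark about the two possible meanings of ``rank'' is also well placed, since the paper only defines rank for locally-free sheaves and both interpretations are controlled by the same $p$.
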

\begin{proof}
 It follows essentially from Oka Lemma and the Syzygy Theorem. (See Gunning-Rossi \cite{GR} for
 details).
\end{proof}

\begin{defin}
 Let $\mathcal{S}$ be an analytic sheaf over a complex manifold $X$ of (complex) dimension $n.$ 
 $\mathcal{S}$ is torsion-free if for every point $P\in X,$ the corresponding stalk $\mathcal{S}_P$ is 
 a torsion-free $\mathcal{O}_P\text{-module}.$
\end{defin}

\begin{prop}
 Every subsheaf of an analytic torsion-free sheaf is also torsion-free.
\end{prop}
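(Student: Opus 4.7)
The plan is to reduce the claim to a purely algebraic statement about modules on each stalk. Let $\mathcal{T}\subseteq\mathcal{S}$ be a subsheaf of the torsion-free analytic sheaf $\mathcal{S}$ over the complex manifold $X$. By the very definition of a subsheaf of $\mathcal{O}_X$-modules, for every point $P\in X$ the induced map $\mathcal{T}_P\hookrightarrow\mathcal{S}_P$ is an injective homomorphism of $\mathcal{O}_{X,P}$-modules, so $\mathcal{T}_P$ can be identified with a submodule of $\mathcal{S}_P$.

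Next, I would recall that the local ring $\mathcal{O}_{X,P}$ is isomorphic to the ring $\mathbb{C}\{z^{1},\ldots,z^{n}\}$ of germs of holomorphic functions, which is well known to be an integral domain. In particular, torsion-freeness of $\mathcal{S}_P$ means that for $f\in\mathcal{O}_{X,P}$ and $s\in\mathcal{S}_P$ the relation $f\cdot s=0$ forces $f=0$ or $s=0$.

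Finally, I would fix an arbitrary element $t\in\mathcal{T}_P$ together with a germ $f\in\mathcal{O}_{X,P}$ such that $f\cdot t=0$ in $\mathcal{T}_P$. Viewing $t$ as an element of $\mathcal{S}_P$ via the inclusion $\mathcal{T}_P\hookrightarrow\mathcal{S}_P$, the equality $f\cdot t=0$ continues to hold in $\mathcal{S}_P$ because the inclusion is $\mathcal{O}_{X,P}$-linear. Since $\mathcal{S}_P$ is torsion-free, we conclude $f=0$ or $t=0$, so $\mathcal{T}_P$ is a torsion-free $\mathcal{O}_{X,P}$-module. As $P$ was arbitrary, $\mathcal{T}$ is torsion-free in the sense of the previous definition.

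There is essentially no obstacle here: the proposition is an immediate transcription of the elementary fact that any submodule of a torsion-free module over an integral domain is torsion-free. The only point to watch, and the only reason the argument is not a single line, is the verification that a zero-relation in the subsheaf remains a zero-relation in the ambient sheaf, which follows from the injectivity of the inclusion on stalks.
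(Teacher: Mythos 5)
Your argument is correct and follows exactly the same route as the paper's own proof: pass to stalks, identify $\mathcal{T}_P$ as an $\mathcal{O}_{X,P}$-submodule of $\mathcal{S}_P$, and invoke the fact that a submodule of a torsion-free module is torsion-free. You merely spell out the details (integrality of the local ring, persistence of the zero-relation under the injective inclusion) that the paper leaves implicit.
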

\begin{proof}
 Let $\mathcal{F}\subseteq\mathcal{S}$ be a subsheaf of the analytic sheaf $\mathcal{S}.$ For every
 point $P\in X$ the stalk $\mathcal{F}_P$ is an $\mathcal{O}_P\text{-submodule}$ of $\mathcal{S}_P.$ 
 Since $\mathcal{S}_P$ is a torsion-free $\mathcal{O}_P\text{-module},$ we conclude that
 $\mathcal{F}_P$ is a torsion-free $\mathcal{O}_P\text{-module},$ and this completes the proof.
\end{proof}

\begin{prop}
 Let $\mathcal{S}$ be a coherent sheaf over a complex manifold $X$ of (complex) dimension $n.$ If
 $\mathcal{S}$ is locally-free then it is torsion-free.
\end{prop}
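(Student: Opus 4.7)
The plan is to argue that torsion-freeness is a purely stalkwise property, so by the local-freeness hypothesis we may reduce to showing that a free module over the local ring $\mathcal{O}_{X,P}$ is torsion-free for every $P\in X$. First I would fix a point $P\in X$ and invoke local-freeness to choose an open neighborhood $U$ of $P$ and an isomorphism $\mathcal{S}|_U\cong\mathcal{O}_X^{\,r}|_U$, where $r$ is the rank of $\mathcal{S}$ on $U$ (finite by Proposition \ref{finiterank}). Passing to stalks at $P$, this gives an $\mathcal{O}_{X,P}$-module isomorphism $\mathcal{S}_P\cong\mathcal{O}_{X,P}^{\,r}$.

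The crux of the argument is then to verify that $\mathcal{O}_{X,P}$ is an integral domain and hence any free $\mathcal{O}_{X,P}$-module is torsion-free. For this I would use that $\mathcal{O}_{X,P}$, the ring of germs of holomorphic functions at $P$, is isomorphic to the ring of convergent power series $\mathbb{C}\{z_1,\ldots,z_n\}$ in local holomorphic coordinates centered at $P$. This ring is well known to be an integral domain (it embeds in the formal power series ring $\mathbb{C}[[z_1,\ldots,z_n]]$, which is an integral domain since the product of two nonzero formal series is nonzero by looking at the lowest-degree terms). Consequently, if $f\in\mathcal{O}_{X,P}$ is nonzero and $(s_1,\ldots,s_r)\in\mathcal{O}_{X,P}^{\,r}$ satisfies $f\cdot(s_1,\ldots,s_r)=0$, then each $f\cdot s_i=0$ in the integral domain $\mathcal{O}_{X,P}$ forces $s_i=0$.

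Combining these two steps: no nonzero element of $\mathcal{S}_P$ is annihilated by a nonzero element of $\mathcal{O}_{X,P}$, i.e.\ $\mathcal{S}_P$ is a torsion-free $\mathcal{O}_{X,P}$-module. Since $P\in X$ was arbitrary, $\mathcal{S}$ is torsion-free by definition, completing the proof. I do not expect a real obstacle here; the only non-formal ingredient is the integral-domain property of $\mathcal{O}_{X,P}$, which is a standard fact from the theory of several complex variables and can be cited from \cite{GR} if one prefers not to reprove it.
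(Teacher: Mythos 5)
Your proof is correct and follows essentially the same route as the paper: reduce to stalks via local-freeness and finite rank, then conclude from the fact that $\mathcal{O}_{X,P}$ has no zero divisors (the paper cites that $\mathcal{O}_P$ is a UFD, which in particular is an integral domain, so your slightly weaker input suffices). No further comment is needed.
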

\begin{proof}
 Let $P\in X$ a point of $X.$ We only have to show that the stalk $\mathcal{S}_P$ is a torsion-free 
 $\mathcal{O}_P\text{-module}.$ Since $\mathcal{S}$ is locally-free, we can find  a neighborhood 
 $U\subseteq X$ of $P$ in $X$ such that 
 $\mathcal{S}\vert_U=\mathcal{O}^{q}\vert_{U}.$ Since $\mathcal{S}$ is coherent, according to 
 Proposition \ref{finiterank}, the rank on $U$ is finite. If we consider the stalk at the point $P$ we
 have $\mathcal{S}_P=\mathcal{O}_{P}^{q}$ and then, since $\mathcal{O}_P$ is UFD, we have that the stalk
 $\mathcal{S}_P$ is a torsion-free $\mathcal{O}_P\text{-module}.$
\end{proof}

The following result demonstrates a deep relationship between vector bundles and locally-free sheaves. For
the proof see Wells \cite{WEL}.

\begin{teo}
\label{correspondence}
 Let $X$ be a complex connected manifold of (complex) dimensions n. There is a one-to-one correspondence
 between isomorphism classes of holomorphic vector bundles over $X$ and isomorphism classes
 of locally-free coherent sheaves over $X.$
\end{teo}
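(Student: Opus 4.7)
The plan is to construct two maps, one in each direction, between (isomorphism classes of) holomorphic vector bundles over $X$ and (isomorphism classes of) locally-free coherent sheaves of $\mathcal{O}_X$-modules, and then verify that these maps are mutually inverse.

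First I would build the map from bundles to sheaves. Given a holomorphic vector bundle $\pi:E\longrightarrow X$, the natural candidate is the sheaf of holomorphic sections, that is, $\mathcal{E}(U)=\Gamma(U,E)$ with the obvious restriction maps. This is automatically a sheaf of $\mathcal{O}_X$-modules. To see that $\mathcal{E}$ is locally free of rank $r=\mathrm{rk}(E)$, I would fix a trivializing open cover $\{U_\alpha\}$ for $E$: on each $U_\alpha$ the isomorphism $\phi_\alpha:\pi^{-1}(U_\alpha)\longrightarrow U_\alpha\times\mathbb{C}^r$ induces an isomorphism of $\mathcal{O}_{U_\alpha}$-modules $\mathcal{E}|_{U_\alpha}\cong\mathcal{O}_{U_\alpha}^{r}$. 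In particular $\mathcal{E}$ admits a presentation of the form $0\longrightarrow\mathcal{O}^{0}|_U\longrightarrow\mathcal{O}^{r}|_U\longrightarrow\mathcal{E}|_U\longrightarrow0$ locally, hence $\mathcal{E}$ is coherent as well.

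Next I would construct the inverse map, from locally-free coherent sheaves to holomorphic vector bundles. Let $\mathcal{F}$ be a locally-free coherent sheaf of rank $r$ and choose an open cover $\{U_\alpha\}$ with isomorphisms $\psi_\alpha:\mathcal{F}|_{U_\alpha}\longrightarrow\mathcal{O}_{U_\alpha}^{r}$. On overlaps $U_\alpha\cap U_\beta\neq\emptyset$, the composition $\psi_\alpha\circ\psi_\beta^{-1}$ is an automorphism of $\mathcal{O}_{U_\alpha\cap U_\beta}^{r}$, which is determined by its action on the standard generators and therefore corresponds to a holomorphic map $g_{\alpha\beta}:U_\alpha\cap U_\beta\longrightarrow GL(r,\mathbb{C})$. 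The cocycle identity $g_{\alpha\beta}g_{\beta\gamma}g_{\gamma\alpha}=\mathds{1}_{\mathbb{C}^r}$ on triple overlaps follows from the associativity of composition. Applying Theorem \ref{cocycle} (the holomorphic version, which works verbatim because the $g_{\alpha\beta}$ are holomorphic) then yields a holomorphic vector bundle $E_{\mathcal{F}}$ with transition functions $g_{\alpha\beta}$.

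Then I would verify that the two constructions are inverse to each other up to isomorphism. Starting from a bundle $E$, pass to $\mathcal{E}$ and back, using frame fields coming from trivializations of $E$; one checks that the cocycle produced by the sheaf construction coincides with the original cocycle of $E$, so the reconstructed bundle is canonically isomorphic to $E$. In the opposite direction, starting from $\mathcal{F}$ and producing $E_{\mathcal{F}}$, its sheaf of holomorphic sections is naturally identified with $\mathcal{F}$ via the $\psi_\alpha$, and the identifications glue because both sides transform by the same $g_{\alpha\beta}$. Finally, a morphism of bundles $f:E\longrightarrow F$ clearly induces a morphism of sheaves of sections, while a morphism of locally-free sheaves induces a compatible collection of maps $U_\alpha\to\mathrm{Hom}(\mathbb{C}^r,\mathbb{C}^p)$ that assemble into a bundle morphism; isomorphisms go to isomorphisms on both sides.

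The main obstacle I anticipate is purely bookkeeping: checking that the two constructions are canonical enough that the cocycles match up on overlaps without an unintended coboundary, so that one obtains isomorphisms of bundles (and not just abstract equalities of isomorphism classes), and that the correspondence is well defined at the level of isomorphism classes, i.e.\ independent of the chosen trivializations $\phi_\alpha$ or $\psi_\alpha$. Everything else is a direct translation between the local data defining a sheaf and the local data defining a bundle, facilitated by Theorem \ref{cocycle}.
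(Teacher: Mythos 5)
Your proposal is correct and is the standard argument: sections functor in one direction, the cocycle construction (via Theorem \ref{cocycle}, adapted verbatim to the holomorphic setting) in the other, followed by the check that the two are mutually inverse up to isomorphism. The paper itself does not prove this theorem but defers to Wells \cite{WEL}, and your argument is essentially the one found there; the points you flag as remaining bookkeeping (independence of the chosen trivializations and matching of cocycles up to coboundary) are exactly the routine verifications that reference carries out, so there is no gap of substance.
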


\begin{defin}
 Let $\mathcal{S}$ be a coherent sheaf over a complex manifold $X$ of complex dimension $n.$
 The singularity set of the sheaf $\mathcal{S}$ is
 \begin{equation*}
  S=S_{n-1}(\mathcal{S})=\{P\in X|\mathcal{S}_P\text{ is not free}\}
 \end{equation*}
\end{defin}

\begin{teo}
 Let $\mathcal{S}$ be coherent torsion-free sheaf over a complex manifold $X$ of (complex) dimension
 $n.$ The singularity set $S$ is a closed analytic subset of $X$ of dimension
 $\mathrm{dim}_{\mathbb{C}}S\leq n-2.$
\end{teo}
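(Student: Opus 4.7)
The statement decomposes into two claims: that $S$ is a closed analytic subset of $X$, and that $\dim_{\mathbb{C}}S\leq n-2$. For the analytic structure I would argue locally. Around any point $P_{0}\in X$, coherence of $\mathcal{S}$ supplies a finite free presentation
\[
\mathcal{O}_{U}^{q}\xrightarrow{\phi}\mathcal{O}_{U}^{p}\longrightarrow\mathcal{S}\vert_{U}\longrightarrow 0,
\]
with $\phi$ given by a $p\times q$ matrix of holomorphic functions on a neighborhood $U$ of $P_{0}$. Let $r$ be the generic rank of $\mathcal{S}$ on $U$, well defined by torsion-freeness and finite by Proposition \ref{finiterank}. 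A point $P\in U$ lies in $S$ exactly when $\mathrm{rank}\,\phi(P)<p-r$, so $S\cap U$ coincides with the common zero locus of all $(p-r)\times(p-r)$ minors of $\phi$, i.e. of the relevant Fitting ideal. This exhibits $S$ as a closed analytic subset; and since $\phi$ attains rank $p-r$ at the generic point, $S$ is a proper subset of $X$, yielding the weak bound $\dim_{\mathbb{C}}S\leq n-1$.

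The core of the theorem is sharpening this to $\dim_{\mathbb{C}}S\leq n-2$, and here torsion-freeness is used in an essential way. My plan is to argue by contradiction: assume an irreducible component $Y$ of $S$ has codimension exactly $1$; pick a smooth point $P\in Y$ and local coordinates $(z_{1},\ldots,z_{n})$ centered at $P$ such that $Y=\{z_{1}=0\}$. Since $\mathcal{S}_{P}$ is torsion-free over the regular local ring $\mathcal{O}_{X,P}$, the element $z_{1}$ is a non-zero-divisor on $\mathcal{S}_{P}$, whence $\mathrm{Tor}_{1}^{\mathcal{O}_{X,P}}(\mathcal{S}_{P},\mathcal{O}_{X,P}/(z_{1}))=0$. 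Choose a minimal surjection $\pi:\mathcal{O}_{X,P}^{r}\twoheadrightarrow\mathcal{S}_{P}$ with kernel $K$. Tensoring the short exact sequence $0\to K\to\mathcal{O}_{X,P}^{r}\to\mathcal{S}_{P}\to 0$ with $\mathcal{O}_{X,P}/(z_{1})$ and using the Tor vanishing one obtains
\[
0\longrightarrow K/z_{1}K\longrightarrow\bigl(\mathcal{O}_{X,P}/(z_{1})\bigr)^{r}\longrightarrow\mathcal{S}_{P}/z_{1}\mathcal{S}_{P}\longrightarrow 0.
\]
If $\mathcal{S}_{P}/z_{1}\mathcal{S}_{P}$ were free of rank $r$ over $\mathcal{O}_{X,P}/(z_{1})=\mathcal{O}_{Y,P}$, the induced surjection of free modules of equal rank would be an isomorphism, forcing $K/z_{1}K=0$ and hence, by Nakayama, $K=0$. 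This would render $\mathcal{S}_{P}$ free, contradicting $P\in S$.

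The main obstacle is therefore the freeness of $\mathcal{S}_{P}/z_{1}\mathcal{S}_{P}$ over $\mathcal{O}_{Y,P}$, since in general $\mathcal{S}/z_{1}\mathcal{S}$ need not be torsion-free on $Y$ and one cannot naively iterate. My plan to overcome this is to proceed by induction on $n=\dim_{\mathbb{C}}X$: the base case $n=1$ is the Riemann-surface statement that every coherent torsion-free sheaf on a curve is locally free, and for $n>1$ one replaces $\mathcal{S}/z_{1}\mathcal{S}$ by its torsion-free quotient on $Y$, relates its singular set to $S\cap Y$ (which, by the codimension-$1$ assumption, is essentially $Y$ itself near $P$), and feeds the induction hypothesis into the Tor collapse above. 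An equivalent and possibly cleaner reformulation uses the Auslander--Buchsbaum identity $\mathrm{proj.dim}\,\mathcal{S}_{P}+\mathrm{depth}\,\mathcal{S}_{P}=n$ to recast the goal as building a length-$n$ regular sequence on $\mathcal{S}_{P}$, the first element being $z_{1}$ and the remaining ones being parameters transverse to $Y$; since the manuscript defers to an external source, I would adopt whichever reformulation makes the inductive step cleanest.
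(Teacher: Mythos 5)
The first half of your argument is sound: exhibiting $S\cap U$ as the common zero locus of the $(p-r)\times(p-r)$ minors of a local presentation matrix does show that $S$ is a proper closed analytic subset, hence $\dim_{\mathbb{C}}S\leq n-1$. The decisive step, however --- improving this to $n-2$ --- is not actually proved. You correctly reduce it to the freeness of $\mathcal{S}_{P}/z_{1}\mathcal{S}_{P}$ over $\mathcal{O}_{Y,P}$, but then explicitly defer that point to a ``plan,'' offering either an induction on $n$ via torsion-free quotients of $\mathcal{S}/z_{1}\mathcal{S}$ or an Auslander--Buchsbaum reformulation, neither of which is carried out. The induction as described is moreover doubtful: the torsion subsheaf of $\mathcal{S}/z_{1}\mathcal{S}$ on $Y$ may well be supported on all of $Y$, and once you replace $\mathcal{S}/z_{1}\mathcal{S}$ by its torsion-free quotient you have changed the cokernel in your Tor sequence, so a conclusion about the quotient no longer forces $K/z_{1}K=0$. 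As written, the proof is incomplete exactly where torsion-freeness has to do its work. (Note also a notational clash: the $r$ in your minimal surjection is the minimal number of generators of $\mathcal{S}_{P}$, not the generic rank $r$ of the first paragraph.)

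The gap can in fact be closed with your own tools, without induction, by not insisting on the initially chosen point: since $Y$ is a component of $S$, \emph{every} point of $Y$ violates freeness, while $\mathcal{T}=\mathcal{S}/z_{1}\mathcal{S}$ is a coherent sheaf on the $(n-1)$-dimensional manifold $Y$ near $P$ and is therefore locally free on a dense open subset of $Y$ by the same generic-freeness fact you used in the first half. Pick $Q$ in that subset. The rank of $\mathcal{T}$ there equals $\dim\mathcal{T}(Q)=\dim\mathcal{S}(Q)$, which is the minimal number of generators of $\mathcal{S}_{Q}$; your exact sequence $0\to K/z_{1}K\to\mathcal{O}_{Y,Q}^{r}\to\mathcal{T}_{Q}\to0$ then ends in a surjection of free $\mathcal{O}_{Y,Q}$-modules of equal finite rank, hence an isomorphism, so $K/z_{1}K=0$, Nakayama gives $K=0$, and $\mathcal{S}_{Q}$ is free, contradicting $Q\in S.$ With this repair your argument becomes a complete proof, and one genuinely different from the source the thesis cites: Kobayashi's argument runs through homological codimension, the stratification bound $\dim S_{m}(\mathcal{S})\leq m$, and a local embedding of a torsion-free sheaf into a free sheaf to shift the strata, whereas yours is an elementary Tor and generic-point argument.
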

\begin{proof}
 See Kobayashi \cite{KOB} p. 154-159 for a detailed proof.
\end{proof}

\begin{cor}
\label{riemannsurface}
 Let $\mathcal{S}$ be a coherent torsion-free sheaf over a complex manifold $X$ of (complex) dimension $1,$ i.e, 
 a Riemann surface. Then $\mathcal{S}$ is locally-free.
\end{cor}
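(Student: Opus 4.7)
The plan is to apply the preceding theorem on singularity sets directly. Let $\mathcal{S}$ be a coherent torsion-free sheaf over a Riemann surface $X$, so $n = \dim_{\mathbb{C}} X = 1$. By the theorem, the singularity set
\begin{equation*}
S = S_{n-1}(\mathcal{S}) = \{P \in X \mid \mathcal{S}_P \text{ is not free}\}
\end{equation*}
is a closed analytic subset of $X$ with $\dim_{\mathbb{C}} S \leq n - 2 = -1$. Since there is no analytic subset of negative dimension, this forces $S = \emptyset$, i.e., every stalk $\mathcal{S}_P$ is a free $\mathcal{O}_P$-module.

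Next I would promote this pointwise freeness of stalks to genuine local freeness of the sheaf. Fix $P \in X$. By Proposition \ref{finiterank}, $\mathcal{S}_P$ has finite rank, say $r$, so pick germs $\sigma_1, \ldots, \sigma_r \in \mathcal{S}_P$ forming an $\mathcal{O}_P$-basis. These germs are represented by sections defined on some open neighborhood $U$ of $P$, and they induce a morphism of sheaves $\varphi : \mathcal{O}^{r}|_U \longrightarrow \mathcal{S}|_U$ which is an isomorphism on the stalk at $P$. Since $\mathcal{S}$ is coherent, both the kernel and cokernel of $\varphi$ are coherent, hence their supports are closed; as these supports avoid $P$, after shrinking $U$ we may assume $\varphi$ is an isomorphism on $U$. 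This shows that $\mathcal{S}$ is free on a neighborhood of each point, which is precisely the definition of locally-free.

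The argument is essentially immediate from the dimension estimate on $S$, so there is no serious obstacle; the only subtlety is the passage from ``every stalk is free'' to ``the sheaf is locally free'', which uses coherence of $\mathcal{S}$ in an essential way to ensure that a stalk-wise isomorphism extends to a sheaf isomorphism on some neighborhood. If one wished to avoid even this mild argument, one could simply invoke the fact (standard in the theory of coherent sheaves over a regular one-dimensional base) that over the local ring $\mathcal{O}_P$ of a Riemann surface, which is a discrete valuation ring, every finitely generated torsion-free module is free; combined with coherence, this again yields local freeness of $\mathcal{S}$.
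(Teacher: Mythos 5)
Your proof is correct and follows exactly the paper's argument: the singularity-set theorem forces $\dim_{\mathbb{C}} S \leq n-2 = -1$, hence $S = \emptyset$, and local freeness follows. The extra paragraph promoting stalk-wise freeness to local freeness via coherence (or the DVR remark) is a valid elaboration of a step the paper treats as immediate, but the approach is the same.
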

\begin{proof}
 Clearly $\mathcal{S}$ is locally-free outside the singularity set $S.$ From the previous Theorem, since $X$ 
 is a Riemann surface, we conclude that $S$ is empty and this completes the proof.
\end{proof}

\section{Stable and semistable sheaves}

According to Theorem \ref{correspondence}, every exact sequance
\begin{equation}
\label{bundlesequence}
 0\longrightarrow E_m\longrightarrow\cdots\longrightarrow E_0\longrightarrow0
\end{equation}
of holomorphic vector bundles over a complex manifold $X$ of (complex) dimension $n,$ induces an exact
sequence of locally-free coherent sheaves over $X$
\begin{equation}
\label{sheavessequence}
 0\longrightarrow\mathcal{E}_m\longrightarrow\cdots\longrightarrow\mathcal{E}_0\longrightarrow0,
\end{equation}
where $\mathcal{E}_i$ denotes the sheaf $\mathcal{O}(E_i)$ of germs of holomorphic sections of $E_i.$
Conversely, every exact sequence \eqref{sheavessequence} of locally-free sheaves over a complex manifold
$X$ comes from an exact sequence \eqref{bundlesequence} of the corresponding holomorphic vector bundles.

\begin{defin}
 Given a coherent sheaf $\mathcal{S}$ over a complex manifold $X,$ we shall define its determinant bundle
 $\det\mathcal{S}.$ Let
 \begin{equation}
 \label{determinantresolution}
  0\longrightarrow\mathcal{E}_n\longrightarrow\cdots\longrightarrow\mathcal{E}_0\longrightarrow
  \mathcal{S}\vert_U\longrightarrow0
 \end{equation}
be a locally-free resolution of $\mathcal{S}\vert_U,$ where $U$ is a small open set in the
base manifold $X.$ Thanks to Syzygy Theorem and Oka Lemma such a resolution always exists. Let $E_i$
denote the holomorphic vector bundle corresponding to the sheaf $\mathcal{E}_i.$ We set
\begin{equation}
\label{localdet}
 \mathrm{det}\mathcal{S}\vert_U=\bigotimes_{i=0}^{n}(\mathrm{det}E_i)^{(-1)^i}.
\end{equation}
\end{defin}
   
One should check that $\det\mathcal{S}\vert_U$ is indipendent of the choice
of the resolution \eqref{determinantresolution}, for details see Kobayashi \cite{KOB}, p. 163-165.

\begin{prop}
 Let $\mathcal{S}$ be a coherent sheaf over a complex manifold $X$ of complex dimension $n.$ Then 
 $\mathrm{det}\mathcal{S}$ is a line bundle over $X.$
\end{prop}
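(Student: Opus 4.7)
The plan is to verify two things: (a) that the local formula $\det\mathcal{S}|_U=\bigotimes_{i=0}^{n}(\det E_i)^{(-1)^i}$ actually produces a line bundle on $U$, and (b) that these locally defined line bundles are canonically isomorphic on overlaps, so they glue to a global line bundle on $X$. The overall strategy is the standard one: reduce everything to the key algebraic fact that the alternating tensor product of determinants of the terms in a bounded exact complex of vector bundles is canonically trivial.

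First I would argue the local claim. Each $\mathcal{E}_i$ is locally free and hence corresponds to a holomorphic vector bundle $E_i$ of some finite rank $r_i$, so $\det E_i=\bigwedge^{r_i}E_i$ is a holomorphic line bundle over $U$. Since the category of line bundles over $U$ is closed under $\otimes$ and under taking duals (with $L^{-1}=L^{\ast}$), the expression in \eqref{localdet} defines a line bundle on $U,$ provided that we can show independence of the chosen resolution.

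The main obstacle, and the only nontrivial step, is the independence of the resolution. For this I would use the following lemma, which is a direct consequence of the multiplicativity of $\det$ on short exact sequences of locally free sheaves: if $0\to\mathcal{F}_m\to\cdots\to\mathcal{F}_0\to 0$ is any bounded exact complex of locally free coherent sheaves on $U$, then there is a canonical isomorphism $\bigotimes_i(\det F_i)^{(-1)^i}\cong\mathcal{O}_U.$ Given two locally free resolutions $\mathcal{E}_\bullet\to\mathcal{S}|_U$ and $\mathcal{E}'_\bullet\to\mathcal{S}|_U$, the standard comparison lemma of homological algebra (on a sufficiently small $U$, shrinking if needed) produces a quasi-isomorphism lifting the identity on $\mathcal{S}|_U$. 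Its mapping cone is a bounded exact complex of locally free sheaves, and applying the lemma above to this mapping cone yields a canonical isomorphism between $\bigotimes_i(\det E_i)^{(-1)^i}$ and $\bigotimes_i(\det E'_i)^{(-1)^i}$. Alternatively, one can dominate both resolutions by a common third locally free resolution and apply the lemma twice.

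Finally, for the gluing step, suppose $U,V$ are two small open sets on which resolutions $\mathcal{E}_\bullet\to\mathcal{S}|_U$ and $\mathcal{F}_\bullet\to\mathcal{S}|_V$ have been chosen. Restricting both to $U\cap V$ produces two locally free resolutions of $\mathcal{S}|_{U\cap V}$, and by the independence of the resolution established above there is a canonical isomorphism $\det\mathcal{S}|_U|_{U\cap V}\cong\det\mathcal{S}|_V|_{U\cap V}.$ Because these comparison isomorphisms come from canonical maps between resolutions, they satisfy the cocycle condition on triple intersections. The resulting transition data defines, via Theorem \ref{cocycle}, a global holomorphic line bundle $\det\mathcal{S}$ on $X$ whose restriction to each $U$ recovers the local expression \eqref{localdet}. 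This completes the proof.
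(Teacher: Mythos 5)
Your proof is correct and follows essentially the same route as the paper: define $\det\mathcal{S}$ locally by the alternating tensor product of determinants of a locally free resolution, check independence of the resolution, and glue. The only difference is one of completeness: the paper simply asserts that \eqref{localdet} makes $\det\mathcal{S}$ locally a line bundle and delegates the independence-of-resolution step to Kobayashi, whereas you supply that step explicitly (via the mapping cone of a comparison quasi-isomorphism) together with the cocycle condition for gluing.
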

\begin{proof}
 From \eqref{localdet} we know that $\mathrm{det}\mathcal{S}$ is locally a holomorphic line
 bundle over $X$ and then, thanks to Theorem \ref{correspondence}, it is a locally-free sheaf 
 over $X$ of rank $1.$  Hence, also from Theorem \ref{correspondence} we deduce that
 $\mathrm{det}\mathcal{S}$ is in one-to-one correspondence with an isomorphism class of holomorphic
 line bundle over $X.$
\end{proof}

\begin{defin}
 Let $\det\mathcal{S}$ be the determinant bundle of a coherent sheaf $\mathcal{S}$ over a
 complex manifold $X$ of complex dimension $n.$ We define the first Chern class $c_1(\mathcal{S})$ by
 \begin{equation*}
  c_1(\mathcal{S})=c_1(\det\mathcal{S})\in H^{2}(X,\mathbb{C}).
 \end{equation*}
\end{defin}

\begin{remark}
 If $\mathcal{S}$ and $\mathcal{F}$ are isomorphic coherent sheaves over the complex manifold $X,$ then 
 $\det\mathcal{S}$ and $\det\mathcal{F}$ are isomorphic. In particular $\mathcal{S}$ and $\mathcal{F}$ 
 have the same Chern class $c_1(\mathcal{S})=c_1(\mathcal{F}).$
\end{remark}

\begin{defin}
 Let $\mathcal{S}$ be a torsion-free coherent sheaf over a compact K\"ahler manifold $(X,\omega)$ of 
 (complex) dimension $n.$ The degree of $\mathcal{S}$ is defined by
 \begin{equation}
  \label{degree}
  \mathrm{deg}(\mathcal{S})=\int_{M}c_1(\mathcal{S})\wedge\omega^{n-1},
 \end{equation}
while the slope of $\mathcal{S}$ is defined by
 \begin{equation}
  \label{slope}
  \mu(\mathcal{S})=\frac{\mathrm{deg}(\mathcal{S})}{\text{rk}(\mathcal{S})}=
  \frac{\int_{M}c_1(\mathcal{S})\wedge\omega^{n-1}}{\text{rk}(\mathcal{S})}.
 \end{equation}
\end{defin}

\begin{defin}
 Let $\mathcal{S}$ be a torsion-free coherent sheaf over a compact K\"ahler manifold $(X,\omega)$
 of (complex) dimension $n.$ $\mathcal{S}$ is $\omega\text{-semistable}$ if for every
 coherent subsheaf $\mathcal{S}'\subseteq\mathcal{S}$ with 
 $0<\text{rk}(\mathcal{S}')<\text{rk}\mathcal{S},$ the inequality
 \begin{equation*}
  \mu(\mathcal{S'})\leq\mu(\mathcal{S})
 \end{equation*}
 holds. If moreover the strict inequality
 \begin{equation*}
  \mu(\mathcal{S'})<\mu(\mathcal{S})
 \end{equation*}
 holds for all coherent subsheaf $\mathcal{S'}$ with $0<\text{rk}(\mathcal{S'})<\text{rk}(\mathcal{S}),$
 we say that $\mathcal{S}$ is $\omega\text{-stable}.$
\end{defin}

\section{Analytic tools}
\begin{teo}
\label{Fredholm}
 \emph{(Fredholm alternative)} Let $H$ be a Hilbert space and let 
 \linebreak$K:H\longrightarrow H$ 
 be a compact bounded linear operator. Then
 \begin{enumerate}
  \item $\ker(I-K)$ is finite dimensional (and hence it is closed),
  \item $\mathrm{Im}(I-K)$ is closed,
  \item $\mathrm{Im}(I-K)=\ker(I-K^{\ast})^{\bot},$
  \item $\ker(I-K)=0$ if and only if $\mathrm{Im}(I-K)=H,$
  \item $\text{dim}\ker(I-K)=\text{dim}\ker(I-K^{\ast}).$
 \end{enumerate}
\end{teo}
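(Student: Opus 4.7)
The plan is to prove the five statements in order, since each builds on the previous ones. For (1), the key observation is that $K$ restricted to $\ker(I-K)$ acts as the identity, so the unit ball of $\ker(I-K)$ equals $K(\text{unit ball of }\ker(I-K))$, which is relatively compact by compactness of $K$. A Banach space with relatively compact unit ball is finite-dimensional (Riesz), giving (1). For (2), I take a sequence $y_n=(I-K)x_n\to y$; using the orthogonal decomposition $H=\ker(I-K)\oplus\ker(I-K)^{\perp}$, I can replace $x_n$ by its component in $\ker(I-K)^{\perp}$ without changing $(I-K)x_n$. I then claim $\|x_n\|$ stays bounded: otherwise, rescaling $z_n=x_n/\|x_n\|$ gives $(I-K)z_n\to 0$, and compactness of $K$ yields a subsequence $Kz_n\to z$, so $z_n\to z$, with $z\in\ker(I-K)\cap\ker(I-K)^{\perp}=\{0\}$, contradicting $\|z_n\|=1$. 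Once $\{x_n\}$ is bounded, $Kx_n$ has a convergent subsequence, so $x_n$ does too, and the limit lies in $(I-K)^{-1}(y)$.

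For (3), I use the general Hilbert-space fact $\overline{\mathrm{Im}(T)}=\ker(T^{\ast})^{\perp}$ for any bounded $T$, applied to $T=I-K$ (noting $(I-K)^{\ast}=I-K^{\ast}$ and that $K^{\ast}$ is also compact). Since (2) gives closedness, the closure is unnecessary. For (4), the plan is to study the ascending chain $N_k=\ker((I-K)^k)$ and the descending chain $R_k=\mathrm{Im}((I-K)^k)$. Both chains must stabilize: if $N_k\subsetneq N_{k+1}$ for all $k$, Riesz's lemma gives unit vectors $x_k\in N_k$ with $d(x_k,N_{k-1})\geq 1/2$, and I show $\|Kx_n-Kx_m\|\geq 1/2$ for $n>m$ by writing $Kx_n-Kx_m=x_n-[(I-K)x_n+x_m-(I-K)x_m]$ with the bracketed term in $N_{n-1}$, contradicting compactness of $K$. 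The analogous argument for $R_k$ uses that $R_k$ is closed (iteration of (2)). Choosing $k$ large enough that both stabilize, I then verify $H=N_k\oplus R_k$ via two steps: $N_k\cap R_k=\{0\}$ from $N_{2k}=N_k$, and $H=N_k+R_k$ from $R_k=R_{2k}$. With this decomposition, $\ker(I-K)=0\iff N_k=0\iff R_k=H\iff\mathrm{Im}(I-K)=H$.

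For (5), the standard move is a finite-rank perturbation. Suppose $\alpha=\dim\ker(I-K)>\beta=\dim\ker(I-K^{\ast})$ and pick orthonormal bases $\{e_i\}_{i=1}^{\alpha}$ and $\{f_j\}_{j=1}^{\beta}$ of these kernels. Define $F\colon H\to H$ of finite rank by $F(x)=\sum_{i=1}^{\beta}\langle x,e_i\rangle f_i$ and set $K'=K+F$, still compact. Using (3) — which identifies $\mathrm{Im}(I-K)$ with $\ker(I-K^{\ast})^{\perp}$ — I will show $\ker(I-K')$ has dimension exactly $\alpha-\beta>0$, while $\ker(I-K'^{\ast})=0$: the point is that $(I-K)x$ and $Fx$ sit in complementary orthogonal subspaces, so $(I-K')x=0$ forces both to vanish separately. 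This contradicts (4) applied to $K'$. By symmetry ($K\leftrightarrow K^{\ast}$), $\alpha=\beta$. The main obstacle I expect is the technical bookkeeping of step (4), specifically proving the stabilization of the chains and the direct-sum decomposition $H=N_k\oplus R_k$ cleanly enough that (5) rests on firm ground; once (4) is in place, (5) is a clever but short algebraic manipulation.
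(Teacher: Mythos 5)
The paper does not prove this theorem at all: it is stated in the ``Analytic tools'' section as a standard background result, with the reader referred to the textbooks cited at the start of the chapter (Evans and Kobayashi). So there is no proof in the paper to compare yours against; the only question is whether your argument stands on its own, and it does. Your five steps are the standard Riesz--Schauder proof and they are logically complete: (1) via Riesz's theorem on the compactness of the unit ball of $\ker(I-K)$; (2) via the rescaling/contradiction argument after projecting onto $\ker(I-K)^{\perp}$; (3) from $\overline{\mathrm{Im}(T)}=\ker(T^{\ast})^{\perp}$ together with (2); (4) via stabilization of the kernel and range chains and the decomposition $H=N_k\oplus R_k$; (5) via the finite-rank perturbation $K'=K+F$, where the orthogonality $\mathrm{Im}(I-K)\perp\ker(I-K^{\ast})$ from (3) forces $(I-K)x$ and $Fx$ to vanish separately.

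Two small points deserve one line each when you write this out. First, in step (4) the closedness of $R_k=\mathrm{Im}((I-K)^k)$ is not literally an ``iteration of (2)'': you should expand $(I-K)^k=I-K_k$ with $K_k$ compact (every term of the binomial expansion other than $I$ contains a factor of $K$, and the compact operators form an ideal) and then apply (2) to $K_k$. Second, in step (5) you should note explicitly that $\beta=\dim\ker(I-K^{\ast})$ is finite --- this follows from (1) applied to the compact operator $K^{\ast}$ --- since otherwise the finite orthonormal basis $\{f_j\}_{j=1}^{\beta}$ and the finite-rank operator $F$ are not available. With those remarks added, the proof is complete and correct.
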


\begin{teo}
\label{MPPE}
 \emph{(Maximum principle for parabolic equations)}
 Let $M$ be a compact Riemannian manifold and let $f:M\times[0,a)\longrightarrow\mathcal{R}$ a function
 of class $\mathcal{C}^1$ with continuous laplacian $\Delta f$ satifsfying the inequality
 \begin{equation*}
  \partial_{t}f+c\Delta f\leq0,\hspace{0.5cm}(c>0.)
 \end{equation*}
Set $F(t)=\max_{M}f(x,t).$ Then $F(t)$ is a monotone decreasing function of $t.$
\end{teo}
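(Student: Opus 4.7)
The plan is to apply the classical Hopf argument for the parabolic maximum principle, adapted to a compact Riemannian manifold without boundary. It suffices to prove $F(t_1)\leq F(t_0)$ for arbitrary $0\leq t_0<t_1<a$. For each $\epsilon>0$ I would introduce the auxiliary function $g_\epsilon(x,t)=f(x,t)-\epsilon(t-t_0)$ on $M\times[t_0,t_1]$, which by the hypothesis satisfies the \emph{strict} inequality
\[
\partial_t g_\epsilon+c\Delta g_\epsilon=\partial_t f+c\Delta f-\epsilon\leq -\epsilon<0.
\]
Subtracting this small linear term in $t$ is the standard device that upgrades the weak hypothesis to the strict version the argument requires.

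Next, since $M\times[t_0,t_1]$ is compact and $g_\epsilon$ is continuous, it attains its maximum at some point $(x^\ast,t^\ast)$, and I would claim that $t^\ast=t_0$. Suppose instead $t^\ast\in(t_0,t_1]$. Because $M$ has no boundary, $x^\ast$ is an interior spatial critical point of $g_\epsilon(\cdot,t^\ast)$, so the first-order condition gives $\nabla g_\epsilon(x^\ast,t^\ast)=0$ and the second-derivative test forces a definite sign for $\Delta g_\epsilon(x^\ast,t^\ast)$, namely the sign that makes $c\Delta g_\epsilon(x^\ast,t^\ast)\geq 0$ under the convention for the Laplace-Beltrami operator used in the hypothesis. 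In the time variable one has $\partial_t g_\epsilon(x^\ast,t^\ast)=0$ if $t^\ast<t_1$ and $\partial_t g_\epsilon(x^\ast,t^\ast)\geq 0$ if $t^\ast=t_1$. Feeding these conditions into the strict inequality would give $0\leq\partial_t g_\epsilon(x^\ast,t^\ast)+c\Delta g_\epsilon(x^\ast,t^\ast)\leq -\epsilon<0$, a contradiction. Hence $t^\ast=t_0$, and consequently
\[
g_\epsilon(x,t)\leq g_\epsilon(x^\ast,t_0)=f(x^\ast,t_0)\leq F(t_0)
\]
for every $(x,t)\in M\times[t_0,t_1]$. Evaluating at $t=t_1$ yields $F(t_1)\leq F(t_0)+\epsilon(t_1-t_0)$, and sending $\epsilon\downarrow 0$ gives $F(t_1)\leq F(t_0)$, as desired.

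The main technical obstacle is the weak regularity hypothesis: $f$ is assumed only of class $C^1$ with $\Delta f$ continuous, not necessarily $C^2$, so the classical second-derivative test at the interior spatial maximum is not immediately available. Two standard remedies are (i) to invoke elliptic regularity, which is applicable because $\Delta f(\cdot,t^\ast)$ is continuous, in order to work as if $f(\cdot,t^\ast)$ were $C^2$ in a neighborhood of $x^\ast$; or (ii) to spatially mollify $f$, carry out the classical argument on the mollifications, and pass to the limit using the continuity of $\Delta f$. Once this point is addressed, the rest of the proof is a routine manipulation of inequalities and a limiting procedure in $\epsilon$.
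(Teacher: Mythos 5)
The thesis does not actually prove this statement: it is quoted in Chapter~5 as a known analytic tool, with a pointer to \cite{EVA} and \cite{KOB}, so there is no internal proof to compare against. Your argument is the standard one from those references --- perturb by $-\epsilon(t-t_0)$ to make the differential inequality strict, locate the maximum of $g_\epsilon$ on the compact cylinder $M\times[t_0,t_1]$, rule out $t^\ast>t_0$ using the first- and second-order conditions at the spatial maximum (there is no spatial boundary since $M$ is closed) together with $\partial_t g_\epsilon(x^\ast,t^\ast)\geq 0$, and let $\epsilon\downarrow0$ --- and in outline it is correct.

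The one step you leave deliberately vague is the only one carrying real content: the sign of $\Delta g_\epsilon(x^\ast,t^\ast)$ at the spatial maximum. Saying it has ``the sign that makes $c\Delta g_\epsilon\geq0$ under the convention used in the hypothesis'' is not an argument, and the convention actually written down in Chapter~1 of this thesis, $\Delta f=\mathrm{div}(\nabla f)$, gives the \emph{wrong} sign: at an interior maximum $\mathrm{div}(\nabla g_\epsilon)(x^\ast)\leq0$, so your contradiction evaporates. Indeed with that convention the theorem is false: if $\phi$ is a nonconstant eigenfunction with $\mathrm{div}(\nabla\phi)=-\mu\phi,$ $\mu>0,$ then $f=e^{c\mu t}\phi$ satisfies $\partial_tf+c\,\mathrm{div}(\nabla f)=0$ while $\max_Mf$ grows exponentially. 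The statement, and your proof, require $\Delta$ to be the nonnegative Laplacian $-\mathrm{div}\circ\nabla$, which is the convention of \cite{KOB} and the one under which $\Box_0=i\varLambda\mathrm{d''}\mathrm{d}'$ and $\tilde{\Box}_{h}$ play the role of $c\Delta$ in the applications (e.g.\ the inequality \eqref{HSCardona}); you should state this explicitly rather than leave the sign to the reader. Your closing remarks on regularity are right in spirit, with one caveat: continuity of $\Delta f$ yields $W^{2,p}$ regularity for every $p$, not $\mathcal{C}^2$, so the honest route is your remedy (ii), or the observation that $\mathrm{div}(\nabla g_\epsilon)(x^\ast,t^\ast)>0$ would force the spherical averages of $g_\epsilon(\cdot,t^\ast)$ over small geodesic spheres centred at $x^\ast$ to exceed $g_\epsilon(x^\ast,t^\ast)$, contradicting maximality; this uses only the $\mathcal{C}^1$ hypothesis and the continuity of the Laplacian.
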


\begin{lemma}
 \label{liminfBanach}
 Let $(X,\|\cdot\|_{X})$ be a Banach space and let $\{x_m\}\subseteq X$ be a sequence such that 
 $x_{m}\rightharpoonup x$ in $X.$ Then
 \begin{equation*}
  \|x\|_{X}\leq\liminf_{m}\|x_{m}\|_{X}.
 \end{equation*}
\end{lemma}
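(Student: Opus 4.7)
The plan is to invoke the Hahn--Banach theorem to produce a continuous linear functional that witnesses the norm of the weak limit, and then use the defining property of weak convergence to pass the bound to the limit. This is the standard route to weak lower semicontinuity of the norm on a Banach space.

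First I would recall that, by a corollary of the Hahn--Banach theorem, for every $x\in X$ there exists $f\in X^{\ast}$ with $\|f\|_{X^{\ast}}=1$ and $f(x)=\|x\|_{X}$. I would fix such an $f$ associated with the weak limit $x$. Second, I would use the definition of weak convergence $x_{m}\rightharpoonup x$, which gives $f(x_{m})\to f(x)$ for every $f\in X^{\ast}$, in particular for the functional chosen above; hence $f(x_{m})\to\|x\|_{X}$. Third, for each $m$ the basic inequality $|f(x_{m})|\leq\|f\|_{X^{\ast}}\,\|x_{m}\|_{X}=\|x_{m}\|_{X}$ holds.

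Putting these together, since $f(x_{m})$ is a convergent sequence of scalars whose limit equals $\|x\|_{X}$, one has
\begin{equation*}
\|x\|_{X}=\lim_{m}f(x_{m})=\liminf_{m}f(x_{m})\leq\liminf_{m}|f(x_{m})|\leq\liminf_{m}\|x_{m}\|_{X},
\end{equation*}
which is the desired inequality. No real obstacle arises here: the only non-elementary input is the norm-attaining form of Hahn--Banach, and once it is available the argument is a two-line estimate. If one wishes to avoid invoking Hahn--Banach for the limit point directly (for instance, when $x=0$ so the normalization is vacuous), the inequality is trivial, so there is no separate case to worry about.
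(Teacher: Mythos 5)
Your proof is correct and complete: the norm-attaining functional from Hahn--Banach, combined with the definition of weak convergence and the bound $|f(x_m)|\leq\|f\|_{X^{\ast}}\|x_m\|_{X}$, is exactly the standard argument for weak lower semicontinuity of the norm, and your handling of the trivial case $x=0$ closes the only possible loophole. The paper states Lemma \ref{liminfBanach} without any proof, treating it as a known fact from functional analysis, so there is nothing to compare against; your argument would serve as a perfectly adequate proof to insert there.
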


\begin{lemma}
\label{efremcalzelunghe}
 Let $(X,\|\cdot\|_{X})$ and $(Y,\|\cdot\|_{Y})$ be Banach spaces and let $\{x_m\}\subseteq X,$ 
 $\{L_m\}\subseteq\mathcal{L}(X,Y)$ be sequences such that
 \begin{enumerate}
  \item $x_m\rightharpoonup x$ in $X,$
  \item $L_m\longrightarrow L$ in $\mathcal{L}(X,Y).$
 \end{enumerate}
 Then
 \begin{equation*}
  L_mx_m\rightharpoonup Lx\hspace{0.5cm}\text{ in }\hspace{0.2cm}Y.
 \end{equation*}
\end{lemma}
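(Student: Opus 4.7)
The plan is to show that for every continuous linear functional $y^{\ast}\in Y^{\ast}$, the scalar sequence $\langle y^{\ast},L_m x_m\rangle$ converges to $\langle y^{\ast},Lx\rangle$. To this end I would split the difference as
\begin{equation*}
\langle y^{\ast},L_m x_m - Lx\rangle = \langle y^{\ast},(L_m-L)x_m\rangle + \langle y^{\ast},L(x_m-x)\rangle,
\end{equation*}
and handle the two terms separately.

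For the first term I would use the basic fact that a weakly convergent sequence in a Banach space is norm-bounded (a consequence of the uniform boundedness principle applied to the canonical embedding $X\hookrightarrow X^{\ast\ast}$). Hence there exists $C>0$ such that $\|x_m\|_X\leq C$ for all $m$, and then
\begin{equation*}
|\langle y^{\ast},(L_m-L)x_m\rangle|\leq \|y^{\ast}\|_{Y^{\ast}}\,\|L_m-L\|_{\mathcal{L}(X,Y)}\,\|x_m\|_X\leq C\,\|y^{\ast}\|_{Y^{\ast}}\,\|L_m-L\|_{\mathcal{L}(X,Y)},
\end{equation*}
which tends to $0$ thanks to hypothesis (2). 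For the second term I would pass to the adjoint, writing $\langle y^{\ast},L(x_m-x)\rangle = \langle L^{\ast}y^{\ast},x_m-x\rangle$, and observe that $L^{\ast}y^{\ast}\in X^{\ast}$, so that hypothesis (1) immediately gives convergence to $0$.

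Combining the two estimates shows that $\langle y^{\ast},L_m x_m\rangle\to \langle y^{\ast},Lx\rangle$ for every $y^{\ast}\in Y^{\ast}$, which is precisely the definition of $L_m x_m\rightharpoonup Lx$ in $Y$. I do not expect any serious obstacle here: the only non-trivial ingredient is the boundedness of $\{x_m\}_{m}$, which is a standard consequence of Banach--Steinhaus and is typically used without comment in this kind of argument.
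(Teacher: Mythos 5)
Your proof is correct and follows essentially the same route as the paper's: the same splitting of $\langle y^{\ast},L_mx_m-Lx\rangle$ into a term controlled by operator-norm convergence plus boundedness of $\{x_m\}$, and a term handled by weak convergence tested against $L^{\ast}y^{\ast}$ (which the paper writes as $\varphi L$). No gaps.
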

\begin{proof}
 First, from $x_m\longrightarrow x$ in $X$ we deduce that the sequence $\{x_m\}$ is bounded in $X,$ then 
 there exists $M>0$ such that
 \begin{equation*}
  \|x_m\|_{X}\leq M
 \end{equation*}
and $M$ does not depend on $m.$ 
Let $\varphi\in Y',$ then $\varphi L_{m}\longrightarrow\varphi L$ in $X',$ in fact
\begin{equation*}
 \|\varphi L_{m}-\varphi L\|_{X'}\leq\|\varphi\|_{Y'}\|L_{m}-L\|_{\mathcal{L}(X,Y)}\longrightarrow0.
\end{equation*}
So that, since $x_{m}\rightharpoonup x$ in $X$ and since $\varphi L,\varphi L_{m}\in X',$
\begin{equation*}
 \begin{split}
  |\langle\varphi,L_{m}x_{m}\rangle-\langle\varphi,Lx\rangle|&\leq 
  |\langle\varphi,L_{m}x_{m}\rangle-\langle\varphi,Lx_{m}\rangle|+
  |\langle\varphi,Lx_{m}\rangle-\langle\varphi,Lx\rangle|\leq\\
  &\leq|\langle\varphi L_{m},x_{m}\rangle-\langle\varphi L,x_{m}\rangle|+
  |\langle\varphi L,x_{m}\rangle-\langle\varphi L,x\rangle|\leq\\
  &\leq\|\varphi L_{m}-\varphi L\|_{X'}\|x_{m}\|_{X}+
  |\langle\varphi L,x_{m}\rangle-\langle\varphi L,x\rangle|\leq\\
  &\leq M\|\varphi L_{m}-\varphi L\|_{X'}+|\langle\varphi L,x_{m}\rangle-\langle\varphi L,x\rangle|
  \longrightarrow0.
 \end{split}
\end{equation*}
\end{proof}

\chapter{Approximate Hermitian-Yang-Mills Metrics and Semistability}

In this chapter we review the notions of (weak) Hermitian-Yang-Mills structure and approximate 
Hermitian-Yang-Mills structure for Higgs bundles. Then, we construct the Donaldson functional for Higgs bundles
over 
compact K\"ahler manifolds and present some basic properties of it. In particular, we study
the properites of the Donaldson heat flow and we establish a relation between this gradient flow 
and the mean curvature of the Hitchin-Simpson connection. We also study some properties of the solutions 
of the evolution equation associated with that functional. Finally, we study the problem of the 
existence of approximate Hermitian-Yang-Mills strucutres and its relation with the notion of semistability.

In particular in this chapter we show that for a Higgs bundle $\frak{E}=(E,\phi)$ over a compact Riemann surface
$X$ with K\"ahler form $\omega,$ thefollowing conditions are equivalent:
\begin{enumerate}
 \item There exists an approximate Hermitian-Yang-Mills metric structure,
 \item The Higgs bundle $\frak{E}=(E,\phi)$ is $\omega\text{-semistable}.$
\end{enumerate}

\section{Higgs sheaves and Higgs bundles}
We start this section with some basic definitions.
\begin{defin}
 Let $(X,\omega)$ be a compact K\"ahler manifold of (complex) dimension $n.$
 We define $\Omega_{X}^{1}$ the holomorphic cotangent bundle to $X.$ It is the dual of 
 the holomorphic tangent bundle to $X.$ 
 \begin{note}
   From the decomposition
 $TX^{\mathbb{C}}=TX^{\mathbb{C+}}\oplus TX^{\mathbb{C-}}$ we identify the holomorphic tangent
 bundle to $X$ with the sheaf of holomorphic sections of $TX^{\mathbb{C+}}.$
 \end{note}
\end{defin}

\begin{defin}
Let $(X,\omega)$ be a compact K\"ahler manifold of (complex) dimension $n.$
 A Higgs sheaf $\frak{E}$ over $X$ is a coherent sheaf $E$ over $X,$ together with a morphism of
 $\mathcal{O}_X\text{-modules}$ $\phi:E\longrightarrow E\otimes\Omega_{X}^{1},$ such that the morphism
 $\phi\wedge\phi:E\longrightarrow E\otimes\Omega_{X}^{2}$ vanishes, i.e., $\phi\wedge\phi=0.$
 The morphism $\phi$ is called the Higgs field of $\frak{E}.$
\end{defin}

\begin{defin}
 A Higgs sheaf $\frak{E}$ is said to be torsion-free if the sheaf $E$ is torsion free. A Higgs bundle
 $\frak{E}$ is just a Higgs sheaf in which the sheaf $E$ is locally-free.
\end{defin}

\begin{defin}
 A Higgs subsheaf $\frak{F}$ is a subsheaf $F$ of $E$ such that 
 \linebreak$\phi(F)\subseteq F\otimes\Omega_{X}^{1}.$
\end{defin}

\begin{defin}
 Let $(X,\omega)$ be a compact K\"ahler manifold and let $\frak{E}=(E,\phi)$ be a Higgs sheaf over $X.$ A 
 section $s\in\Gamma(X,E)$ is $\phi\text{-invariant}$ if there exists a section $\lambda$ of $\Omega_{X}^{1}$ 
 such that $\phi(s)=s\otimes\lambda.$
\end{defin}

\begin{defin}
Let $(X,\omega)$ be a K\"ahler manifold of (complex) dimension $n$ and
 let $\frak{E}_1$ and $\frak{E}_2$ be two Higgs sheaves over $X.$ 
 A morphism between $\frak{E}_1$ and $\frak{E}_2$ is a map $f:E_1\longrightarrow E_2$ such that the 
 diagram
 \begin{equation*}
 \begin{tikzpicture}[node distance=2cm, auto]
 \node (A) {$E_1$};
 \node (B) [right of=A] {$E_1\otimes\Omega_{X}^{1}$};
 \node (C) [below of=A] {$E_2$};
 \node (D) [below of=B] {$E_2\otimes\Omega_{X}^{1}$};
 \draw[->] (A) to node {$\phi_1$} (B);
 \draw[->] (A) to node [swap] {$f$} (C);
 \draw[->] (B) to node {$f\otimes\mathrm{Id}$} (D);
 \draw[->] (C) to node {$\phi_2$} (D);
\end{tikzpicture}
\end{equation*}
is commutative. We will denote such a morphism by $f:\frak{E}_1\longrightarrow\frak{E}_2.$
A sequence of Higgs sheaves is a sequence of their corresponding coherent sheaves where each map is a
morphism of Higgs sheaves. A short exact sequence of Higgs sheaves is defined in the obvious way.
\end{defin}

\begin{note}
 Using local coordinates on $X$ we can write $\phi=\phi_{\alpha}\mathrm{d}z^{\alpha},$ where the index takes
values $\alpha=1,\ldots,n$ and each $\phi_{\alpha}$ is an endomorphism of $E.$ The condition 
$\phi\wedge\phi=0$ is then equivalent to the commutativity of the endomorphisms $\phi_{\alpha}.$
\end{note}

\begin{note}
 Let $(X,\omega)$ be a compact K\"ahler manifold of (complex) dimension $n$ and let $\frak{E}=(E,\phi)$
 be a Higgs bundle of rnak $r$ over $X.$
 If $(s_{1},\ldots,s_{r})$ is a local frame field on $X$ and $\eta^{1},\ldots,\eta^{r}$ is its dual,
using local coordinates on $X$ one can write 
$\phi=\phi^{\gamma}_{\alpha\beta}s_{\gamma}\otimes\eta^{\beta}\otimes\mathrm{d}z^{\alpha},$
where the indexes take values $\alpha=1,\ldots,n$ and $\beta,\gamma=1,\ldots,r$ while 
$\phi^{\gamma}_{\alpha\beta}$ are functions locally defined on $X.$ 
\end{note}

We give the definitions of dual Higgs bundle and Higgs pull-back bundle.

\begin{defin}
 Let $(X,\omega)$ be a compact K\"ahler manifold of (complex) dimension $n$ and let $\frak{E}=(E,\phi)$ be a 
 Higgs bundle of rank $r$ over $X.$
 Let us consider the Higgs field $\phi$  as a section of $\mathrm{End}(E)\otimes\Omega_{X}^{1}.$ 
 Since $\mathrm{End}(E^{\ast})\cong\mathrm{End}(E)$
 there is a natural dual morphism $\phi^{\ast}:E^{\ast}\longrightarrow E^{\ast}\otimes\Omega_{X}^{1}.$
 From this it follows that $\frak{E}^{\ast}=(E^{\ast},\phi^{\ast})$ is a Higgs bundle, called the dual
 Higgs bundle of $\frak{E}=(E,\phi).$
\end{defin}

\begin{defin}
 Let $(X,\omega)$ be a compact K\"ahler manifold of (complex) dimension $n$ and let $\frak{E}=(E,\phi)$ be a 
 Higgs bundle of rank $r$ over $X.$ Let $Y$ be another compact K\"ahler manifold and  let $f:Y\longrightarrow X$
 be a holomorphic map. 
 Since in the cathegory of holomorphic bundles there is an isomorphism
\begin{equation*}
 f^{\ast}E\cong f^{-1}E\otimes_{f^{-1}\mathcal{O}_{X}}\mathcal{O}_{Y},
\end{equation*}
if $\phi$ is a Higgs field on $E,$ there is a pull-back Higgs field $f^{\ast}\phi$ on $f^{\ast}E$ defined by
\begin{equation*}
 (f^{\ast}\phi)(\sum\varphi_{j}s_{j})=\sum\varphi_{j}\phi(s_{j}),
\end{equation*}
where $\varphi_{j}\in A^{0}_{Y}$ are $\mathcal{C}^{\infty}$ functions on $Y$ and $s_{j}\in\Gamma(X,E)$ are
$\mathcal{C}^{\infty}$ sections of $E.$
 From this it follows that $f^{\ast}\frak{E}=(f^{\ast}E,f^{\ast}\phi)$ is a Higgs bundle on $Y,$ called th.e 
 pull-back bundle of $\frak{E}=(E,\phi).$ 
\end{defin}

\begin{defin}
Let $(X,\omega)$ be a compact K\"ahler manifold of (complex) dimension $n$ and 
 let $\frak{E}$ be a Higgs sheaf over $X$ of rank $r.$ The degree of $\frak{E}$ is defined 
 by
 \begin{equation*}
  \mathrm{deg}(\frak{E})=\int_{X}c_1(\frak{E})\wedge\omega^{n-1},
 \end{equation*}
 and the slope of $\frak{E}$ is defined by
 \begin{equation*}
  \mu(\frak{E})=\frac{\mathrm{deg}(\frak{E})}{\mathrm{rk}(\frak{E})}=
  \frac{\int_{X}c_1(\frak{E})\wedge\omega^{n-1}}{\mathrm{rk}(\frak{E})}.
 \end{equation*}
\end{defin}

As in the ordinary case (see Kobayashi \cite{KOB} for more details) there is a notion of stability
for Higgs sheaves, which depends on the K\"ahler form $\omega$ and makes reference only to Higgs subsheaves.
Namely we have:

\begin{defin}
Let $(X,\omega)$ be a compact K\"ahler manifold of (complex) dimension $n$ and let $\frak{E}$ be a Higgs sheaf 
over $X$ of rank $r.$
 $\frak{E}$ is $\omega\text{-stable}$ (resp. $\omega\text{-semistable}$) if it is torsion-free
 and for any Higgs subsheaf $\frak{F}$ with 
 \linebreak$0<\mathrm{rk}(\frak{F})<\mathrm{rk}(\frak{E})$ one has the inequality
 $\mu(\frak{F})<\mu(\frak{E})$ (resp. $\mu(\frak{F})\leq\mu(\frak{E})$).
\end{defin}

Let $(X,\omega)$ be a compact K\"ahler manifold of (complex) dimension $n$ and let
let $\frak{E}=(E,\phi)$ be a Higgs bundle of rank $r$ over $X.$ Let $h$ be an Hermitian metric 
on this bundle.
Let $D_h=D'_{h}+D''$ be the Hermitian connection on $E.$ From section \ref{compatibility} 
we already know that 
this connection exists and is the unique connection compatible 
with the metric $h$ and the holomorphic structure of the bundle $E.$ (For more
details see \cite{KOB}). Here $D'_{h}$ and $D''=\mathrm{d''}_{E}$ are the components of type $(1,0)$ and $(0,1).$
Using this decomposition
of $D_h$ and the Higgs field $\phi,$ Simpson \cite{SIM} introduced a connection on $\frak{E}$ in the 
following way:
\begin{equation*}
 \mathcal{D}''=D''+\phi,\hspace{0.5cm}\mathcal{D}'_h=D'_h+\overline{\phi}_h,
\end{equation*}
where $\overline{\phi}_h$ is the usual adjoint of the Higgs field with respect to the hermitian structure
$h,$ and it is defined by the formula
\begin{equation*}
 h(\overline{\phi}_hs,s')=h(s,\phi s'),
\end{equation*}
where $s$ and $s'$ are sections of the Higgs bundle.

\begin{note}
 $\mathcal{D}'_h$ and $\mathcal{D}''$ are not of type $(1,0)$ and $(0,1).$
\end{note}

\begin{defin}
 The resulting connection $\mathcal{D}_h=\mathcal{D}'_h+\mathcal{D}''$ is called the Hitchin-Simpson
 connection. Clearly
 \begin{equation*}
  \mathcal{D}_h=D_h+\phi+\overline{\phi}_h
 \end{equation*}
depends on the Higgs field $\phi$ and there is an extra dependence on $h$ via $\overline{\phi}_h.$
\end{defin}

\begin{defin}
 The curvature of the Hitchin-Simpson connection is defined by 
 $\mathcal{R}_h=\mathcal{D}_h\circ\mathcal{D}_h$ and we say that the pair $(\frak{E},h)$ is Hermitian flat
 if this curvature vanishes.
\end{defin}

From the previous definition we immediately have
\begin{equation*}
  \mathcal{R}_h=(D_h+\phi+\overline{\phi}_h)\wedge(D_h+\phi+\overline{\phi}_h),
\end{equation*}
then using the decomposition $D_h=D'_h+D''$ and defining
\begin{equation*}
 [\phi,\overline{\phi}_h]=\phi\wedge\overline{\phi}_h+\overline{\phi}_h\wedge\phi
\end{equation*}
we obtain the following formula of the Hitchin-Simpson curvature in terms of the curvature of the 
Hermitian connection $D_h$
\begin{equation*}
 \mathcal{R}_h=R_h+{D'}_h(\phi)+D''(\overline{\phi}_h)+[\phi,\overline{\phi}_h].
\end{equation*}
In the previous formula, ${D'}_h(\phi)$ and $D''(\overline{\phi}_h)$ are the components of type 
$(2,0)$ and $(0,2),$
respectively, while the $(1,1)$ component is given by
\begin{equation*}
 \mathcal{R}_{h}^{1,1}=R_h+[\phi,\overline{\phi}_h].
\end{equation*}

We denote by $\text{Herm}(\frak{E})$ the space of Hermitian forms in $\frak{E}$ and by 
$\text{Herm}^{+}(\frak{E})$ the space of Hermitian structures (i.e., the positive definite Hermitian forms) in
$\frak{E}.$ For any Hermitian structure $h$ it is possible to identify $\text{Herm}(\frak{E})$ with the 
tangent space of $\text{Herm}^{+}(\frak{E})$ at the point $h$ (see Kobayashi \cite{KOB} for more details). Hence
\begin{equation}
\label{hermitiantangent}
 \text{Herm}(\frak{E})\cong T_h\text{Herm}^{+}(\frak{E}).
\end{equation}
If $v$ denotes an element in $\text{Herm}(\frak{E}),$ one defines the endomorphism $h^{-1}v$
by setting $s'\mapsto h^{-1}vs',$
where $h^{-1}vs'$ is the unique section of $E$ such that
\begin{equation*}
 v(s,s')=h(s,h^{-1}vs')\hspace{0.5cm}\text{ for all }s\in\Gamma(X,\frak{E}).
\end{equation*}
We can also define a Riemann structure in $\text{Herm}^{+}(\frak{E}).$ 
From \eqref{hermitiantangent}, for any $v,v'\in\text{Herm}(\frak{E})$ we define the inner product 
\begin{equation*}
 (v,v')_h=\int_{X}\mathrm{tr}(h^{-1}v\cdot h^{-1}v')\frac{\omega^{n}}{n!}
\end{equation*}

We have some natural properties associated with tensor products and direct sums. In particular we have 

\begin{prop}
 Let $\frak{E}_1$ and $\frak{E}_2$ two Higgs bundles with Higgs fields $\phi_1$ and $\phi_2$ respectively.
 Then
 \begin{enumerate}
  \item The pair $\frak{E}_1\otimes\frak{E}_2=(E_1\otimes E_2,\phi)$ is a Higgs bundle with Higgs field
        \linebreak$\phi=\phi_1\otimes I_2+I_1\otimes\phi_2.$
  \item If $pr_i:E_1\oplus E_2\longrightarrow E_i$ with $i=1,2$ denote the natural projections, then
        $\frak{E}_1\oplus\frak{E}_2=(E_1\oplus E_2,\phi)$ is a Higgs bundle with 
        $\phi=pr_{1}^{\ast}\phi_1+pr_{2}^{\ast}\phi_2.$
 \end{enumerate}
\end{prop}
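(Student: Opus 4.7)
The plan is to verify the two defining properties of a Higgs bundle in each case: that the proposed $\phi$ is a morphism of $\mathcal{O}_X$-modules $E\to E\otimes\Omega^1_X$, and that $\phi\wedge\phi=0$. The first property is clear in both constructions, since we are combining $\mathcal{O}_X$-linear maps via tensor products with identities and via projections/inclusions. So the real content is in checking the integrability condition $\phi\wedge\phi=0$, and this is what I would spend the argument on.

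For the tensor product, I would work in a local holomorphic chart and write $\phi_i=\sum_{\alpha}\phi_{i,\alpha}\,\mathrm{d}z^{\alpha}$, where each $\phi_{i,\alpha}$ is a local endomorphism of $E_i$. The assumption $\phi_i\wedge\phi_i=0$ translates to the commutation relations $[\phi_{i,\alpha},\phi_{i,\beta}]=0$ for all $\alpha,\beta$. Then the Higgs field of $\frak{E}_1\otimes\frak{E}_2$ has local components $\Phi_{\alpha}=\phi_{1,\alpha}\otimes I_2+I_1\otimes\phi_{2,\alpha}$. Using the elementary identity
\begin{equation*}
[A\otimes I_2+I_1\otimes B,\,A'\otimes I_2+I_1\otimes B']=[A,A']\otimes I_2+I_1\otimes[B,B'],
\end{equation*}
which follows at once from the fact that $(A\otimes I_2)$ and $(I_1\otimes B)$ commute in $\mathrm{End}(E_1\otimes E_2)$, the commutators $[\Phi_{\alpha},\Phi_{\beta}]$ reduce to a sum of two terms which vanish individually by the integrability of $\phi_1$ and $\phi_2$. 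Since $\phi\wedge\phi=0$ is equivalent to $[\Phi_{\alpha},\Phi_{\beta}]=0$ for all $\alpha,\beta$, this gives the result.

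For the direct sum, the construction $\phi=pr_1^{\ast}\phi_1+pr_2^{\ast}\phi_2$ simply assembles $\phi_1$ and $\phi_2$ into a block-diagonal endomorphism-valued $1$-form on $E_1\oplus E_2$. In local coordinates its components are $\Psi_{\alpha}=\phi_{1,\alpha}\oplus\phi_{2,\alpha}$, and since products (hence commutators) of block-diagonal endomorphisms are block-diagonal, one gets
\begin{equation*}
\phi\wedge\phi=(\phi_1\wedge\phi_1)\oplus(\phi_2\wedge\phi_2)=0.
\end{equation*}

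There is no genuine obstacle here; the statement is essentially bookkeeping. The only point worth checking carefully is the sign conventions for the wedge product of endomorphism-valued forms, because the cross-term $(\phi_1\otimes I_2)\wedge(I_1\otimes\phi_2)+(I_1\otimes\phi_2)\wedge(\phi_1\otimes I_2)$ in the tensor-product calculation vanishes not because each summand is zero but because the symmetry in the endomorphism factor meets the antisymmetry of $\mathrm{d}z^{\alpha}\wedge\mathrm{d}z^{\beta}$; writing out this cancellation explicitly (as the relabelling $\alpha\leftrightarrow\beta$ in one of the two sums) is the only step that deserves more than a sentence.
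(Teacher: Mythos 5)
Your proof is correct. The paper actually states this proposition without any proof, so there is nothing to compare against; your argument is the standard one and is exactly in line with the paper's own earlier remark that, writing $\phi=\phi_\alpha\,\mathrm{d}z^\alpha$ locally, the condition $\phi\wedge\phi=0$ is equivalent to the commutativity of the endomorphisms $\phi_\alpha$ --- from which the tensor-product case follows via $[A\otimes I_2+I_1\otimes B,\,A'\otimes I_2+I_1\otimes B']=[A,A']\otimes I_2+I_1\otimes[B,B']$ and the direct-sum case from block-diagonality, just as you wrote.
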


Let $(X,\omega)$ be a compact K\"ahler manifold of (complex) dimension $n$ and let $\frak{E}=(E,\phi)$ be a Higgs 
bundle of rank $r$ over $X.$
In a similar way as in the ordinary case (see Kobayashi, \cite{KOB}) we have a notion of 
Hermitian-Yang-Mills metric 
structure for the Higgs bundle $\frak{E}.$
Let us consider the usual operator $\ast:A^{p,q}\longrightarrow A^{p+1,q+1}$ and the operator
$L:A^{p,q}\longrightarrow A^{p+1,q+1}$ defined by $L\varphi=\omega\wedge\varphi,$ where 
$\varphi\in A^{p,q}$ is a form on $X$ of type $(p,q).$ Then we define as usual the operator 
$\varLambda=\ast\circ L\ast^{-1}:A^{p,q}\longrightarrow A^{p-1,q-1}.$

\begin{defin}
 Consider now a Hermitian metric $h\in\text{Herm}^{+}(\frak{E})$ and let $\mathcal{R}_h$ be its 
 Hitchin-Simpson curvature. We can define the mean curvature of the Hitchin-Simpson connection 
 $\mathcal{D}_h$ just by contraction of this curvature with the operator $i\varLambda.$ In other words,
 \begin{equation}
 \label{curvature1}
  \mathcal{K}_h=i\varLambda\mathcal{R}_h,
 \end{equation}
 or equivalently 
 \begin{equation}
 \label{curvature2}
  in\mathcal{R}_h\wedge\omega^{n-1}=\mathcal{K}\omega^{n}.
 \end{equation}
\end{defin}

\begin{note}
 $\mathcal{K}_{h}$ is selfadjoint with respect to the Hermitian metric $h,$ i.e.,
 \linebreak$h(\mathcal{K}_{h}s,s')=h(s,\mathcal{K}_{h}s')$ for any section $s,s'\in\Gamma(X,E).$
\end{note}

\begin{defin}
 We define the operators
 \begin{equation*}
  \Box_0=i\varLambda\mathrm{d''}\mathrm{d}'\hspace{0.5cm}\text{ and }\hspace{0.5cm}
  \tilde{\Box}_{h}=i\varLambda\mathcal{D}''\mathcal{D}'_h.
 \end{equation*}
Note that operator $\tilde{\Box}_{h}$ depends on the K\"ahler form $\omega$ via the the action of the operator 
$\varLambda$ and also on the metric $h,$ while $\Box_{0}$ depends on the K\"ahler form $\omega$ via the 
application of $\varLambda.$
\end{defin}

\begin{prop}
 $\mathcal{K}_h\in A^{0}(\mathrm{End}(E)).$ In particular the $(2,0)$ and $(0,2)$ components of 
 $\mathcal{R}_h$ do not contribute to $\mathcal{K}_h.$
\end{prop}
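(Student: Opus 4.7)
The plan is to prove this by a straightforward bidegree decomposition argument, relying on the action of the operator $\varLambda = \ast \circ L \ast^{-1}$ on forms of pure bidegree. First I would recall that the Hitchin-Simpson curvature decomposes, as computed in the excerpt, according to bidegree as
\begin{equation*}
\mathcal{R}_h = D'_h(\phi) + \bigl(R_h + [\phi,\overline{\phi}_h]\bigr) + D''(\overline{\phi}_h),
\end{equation*}
where the three summands lie respectively in $A^{2,0}(\mathrm{End}(E))$, $A^{1,1}(\mathrm{End}(E))$, and $A^{0,2}(\mathrm{End}(E))$. (The Higgs field $\phi$ is of type $(1,0)$ with values in $\mathrm{End}(E)$, and $\overline{\phi}_h$ is of type $(0,1)$; applying $D'_h$ or $D''$ raises the corresponding degree by one.)

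Next, since $\varLambda : A^{p,q} \to A^{p-1,q-1}$, I would invoke the standard fact that $\varLambda$ annihilates every form of pure bidegree $(p,q)$ with $p=0$ or $q=0$. Indeed, for any $\varphi \in A^{2,0}$ (respectively $A^{0,2}$), the form $\varLambda \varphi$ would have to live in $A^{1,-1}$ (respectively $A^{-1,1}$), and these spaces are trivial. This statement extends to forms with values in $\mathrm{End}(E)$ by $\mathcal{C}^\infty$-linearity of $\varLambda$ in the form part. Consequently,
\begin{equation*}
\varLambda\,D'_h(\phi) = 0, \qquad \varLambda\,D''(\overline{\phi}_h) = 0,
\end{equation*}
and the $(2,0)$ and $(0,2)$ components of $\mathcal{R}_h$ contribute nothing to $\mathcal{K}_h = i\varLambda \mathcal{R}_h$.

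Combining these two observations, I obtain
\begin{equation*}
\mathcal{K}_h = i\varLambda\bigl(R_h + [\phi,\overline{\phi}_h]\bigr) = i\varLambda\,\mathcal{R}_h^{1,1},
\end{equation*}
which is $\varLambda$ applied to a $(1,1)$-form with values in $\mathrm{End}(E)$, hence an element of $A^{0,0}(\mathrm{End}(E)) = A^0(\mathrm{End}(E))$, as claimed. There is essentially no obstacle here; the only thing to be slightly careful about is the convention on $\varLambda$ for endomorphism-valued forms, which acts on the form factor alone and leaves the $\mathrm{End}(E)$-factor untouched, so that the decomposition by bidegree commutes with $\varLambda$. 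In local coordinates one can, if desired, verify the formula $\mathcal{K}_h = g^{\alpha\bar\beta}(\mathcal{R}_h^{1,1})_{\alpha\bar\beta}$ directly to confirm that the output is a genuine endomorphism of $E$ at each point.
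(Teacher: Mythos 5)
Your proposal is correct and follows essentially the same route as the paper: decompose $\mathcal{R}_h$ by bidegree and observe that $i\varLambda$ annihilates the $(2,0)$ and $(0,2)$ components, leaving $\mathcal{K}_h=i\varLambda\mathcal{R}_h^{1,1}\in A^{0}(\mathrm{End}(E))$. The only difference is that you spell out why $\varLambda$ kills the pure-type components (the target spaces $A^{1,-1}$ and $A^{-1,1}$ are trivial), which the paper simply takes as known.
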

\begin{proof}
 We already know that $\mathcal{R}_h\in A^{2}(\mathrm{End}(E))$ and that the operator 
 $i\varLambda$ kills the components of $(2,0)$ and $(0,2)$ type in $\mathcal{R}_h.$ Then, from \eqref{curvature1}
 we have
 \begin{equation*}
 \begin{split}
      \mathcal{K}_h&=i\varLambda\mathcal{R}_h=
  i\varLambda(R_h+[\phi,\overline{\phi}_h]+D'_h(\phi)+D''(\overline{\phi}_h))=\\
               &=
  i\varLambda(R_h+[\phi,\overline{\phi}_h])+i\varLambda D'_h(\phi)+i\varLambda D''(\overline{\phi}_h)=\\
               &=i\varLambda(R_h+[\phi,\overline{\phi}_h])=i\varLambda\mathcal{R}_{h}^{1,1}.
 \end{split}
 \end{equation*}
\end{proof}

\begin{defin}
 We say that a Hermitian metric $h\in\text{Herm}^{+}(\frak{E})$ is a weak Hermitian-Yang-Mills structure
 with factor $\gamma$ for $\frak{E}$ if
 \begin{equation*}
  \mathcal{K}_h=\gamma I_E,
 \end{equation*}
where $\gamma$ is a real valued function on $X$ and $I_E$ is the identity endomorphism on $E.$
If $\gamma=c$ is a real positive constant, we say that $h$ is a Hermitian-Yang-Mills structure.
\end{defin}

\begin{note}
 The mean curvature can be considered also a Hermitian form, by defining 
 \begin{equation}
  \label{curvaturehermitianform}
  \mathcal{K}_h(s,s')=h(s,\mathcal{K}_hs')
 \end{equation}
where $s,s'$ are section of the Higgs bundle $\frak{E}=(E,\phi).$
\end{note}

\section{Weak Hermitian-Yang-Mills structures: elementary results}

Let $(X,\omega)$ be a compact K\"ahler manifold of (complex) dimension $n.$
From the expression of the curvature of tensor product and direct sum of Higgs bundles over $X,$ 
we immediately have the following
\begin{prop}
 \begin{enumerate}
  \item If $h_1$ and $h_2$ are two weak Hermitian-Yang-Mills structures with factors $\gamma_1$ and
        $\gamma_2$ for Higgs bundles $\frak{E}_1$ and $\frak{E}_2$  over $X,$ then $h_1\otimes h_2$ is a weak 
        Hermitian-Yang-Mills structure with factor $\gamma_1+\gamma_2$ for the tensor product bundle
        $\frak{E}_1\otimes\frak{E}_2.$
  \item The metric $h_1\oplus h_2$ is a weak Hermitian-Yang-Mills structure with factor $\gamma$ for the 
        Whitney sum $\frak{E}_1\oplus\frak{E}_2$ if and only if both metrics $h_1$ and $h_2$ are
        weak Hermitian-Yang-Mills structures with the same factor $\gamma$ for $\frak{E}_1$ and 
        $\frak{E}_2.$ 
 \end{enumerate}
\end{prop}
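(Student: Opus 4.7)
The plan is to exploit the additivity of the Hitchin-Simpson connection under both tensor products and direct sums, which then transfers to its curvature and, after contraction with $i\varLambda$, to the mean curvature. Crucially, the mean-curvature operator $\mathcal{K}_h = i\varLambda\mathcal{R}_h$ is built entirely from linear operations that respect these two algebraic constructions, so each statement should reduce to a clean algebraic identity.

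For part (1), I would first verify that $\mathcal{D}_{h_1\otimes h_2} = \mathcal{D}_{h_1}\otimes I_{E_2} + I_{E_1}\otimes\mathcal{D}_{h_2}$. The Hermitian-connection component $D_{h_1\otimes h_2} = D_{h_1}\otimes I + I\otimes D_{h_2}$ is the standard tensor-product formula from Chapter~3. The Higgs field decomposes as $\phi = \phi_1\otimes I + I\otimes\phi_2$ by the previous proposition, and taking adjoints respects this decomposition, i.e., $\overline{\phi}_{h_1\otimes h_2} = \overline{\phi}_{1,h_1}\otimes I + I\otimes\overline{\phi}_{2,h_2}$; this follows from the defining identity $(h_1\otimes h_2)(\overline{\phi}\,s,s') = (h_1\otimes h_2)(s,\phi s')$ on decomposable sections. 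Next I compute $\mathcal{R}_{h_1\otimes h_2}$ using $\mathcal{R}_h = R_h + D'_h(\phi) + D''(\overline{\phi}_h) + [\phi,\overline{\phi}_h]$. The term $R_{h_1\otimes h_2} = R_{h_1}\otimes I + I\otimes R_{h_2}$ is already established in Chapter~3, while $D'(\phi_1\otimes I) = D'_{h_1}(\phi_1)\otimes I$ and similarly for $D''(\overline{\phi})$, since the identity endomorphism is parallel. For the bracket, the only nontrivial check is the vanishing of the cross terms $(\phi_1\otimes I)\wedge(I\otimes\overline{\phi}_2) + (I\otimes\overline{\phi}_2)\wedge(\phi_1\otimes I)$: the endomorphism factors agree (both give $\phi_1\otimes\overline{\phi}_2$) but the form factors $\mathrm{d}z^\alpha\wedge \mathrm{d}\overline{z}^\beta$ and $\mathrm{d}\overline{z}^\beta\wedge \mathrm{d}z^\alpha$ are opposite. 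Thus $[\phi,\overline{\phi}_{h_1\otimes h_2}] = [\phi_1,\overline{\phi}_1]\otimes I + I\otimes[\phi_2,\overline{\phi}_2]$, yielding $\mathcal{R}_{h_1\otimes h_2} = \mathcal{R}_{h_1}\otimes I + I\otimes\mathcal{R}_{h_2}$. Applying $i\varLambda$, which is linear on forms and commutes with tensoring by the identity endomorphism, produces $\mathcal{K}_{h_1\otimes h_2} = \mathcal{K}_{h_1}\otimes I + I\otimes\mathcal{K}_{h_2}$, and the hypothesis $\mathcal{K}_{h_i} = \gamma_i I_{E_i}$ gives $(\gamma_1+\gamma_2)\,I_{E_1\otimes E_2}$, as required.

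For part (2), the entire construction is block-diagonal with respect to the splitting $E_1\oplus E_2$: the Higgs field $\phi = pr_1^\ast\phi_1 + pr_2^\ast\phi_2$ preserves the decomposition, the Hermitian connection $D_{h_1\oplus h_2} = D_{h_1}\oplus D_{h_2}$ is block-diagonal, and taking adjoints with respect to the orthogonal metric $h_1\oplus h_2$ preserves blocks. Therefore $\mathcal{D}_{h_1\oplus h_2}$, $\mathcal{R}_{h_1\oplus h_2}$ and $\mathcal{K}_{h_1\oplus h_2} = \mathcal{K}_{h_1}\oplus\mathcal{K}_{h_2}$ are all block-diagonal. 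The equation $\mathcal{K}_{h_1}\oplus\mathcal{K}_{h_2} = \gamma\,I_{E_1\oplus E_2}$ then decouples into two independent equations $\mathcal{K}_{h_i} = \gamma\,I_{E_i}$, giving both directions of the equivalence at once.

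The main obstacle is really just the careful bookkeeping of the cross terms in the bracket $[\phi,\overline{\phi}_{h_1\otimes h_2}]$ for the tensor product; once their vanishing is checked, the remainder of the argument is a direct transcription of the standard tensor-product and direct-sum formulas for ordinary connections into the Hitchin-Simpson framework. No analytic input is required, and the computation is entirely local and algebraic.
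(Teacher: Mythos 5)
Your proposal is correct and follows essentially the same route as the paper: both reduce the statement to the additivity formulas $\mathcal{R}_{1\otimes2}=\mathcal{R}_1\otimes I_2+I_1\otimes\mathcal{R}_2$ and $\mathcal{K}_{1\oplus2}=\mathcal{K}_1\oplus\mathcal{K}_2$ and then apply $i\varLambda$. The only difference is that the paper simply quotes the tensor-product curvature formula, whereas you verify it, including the correct cancellation of the cross terms in $[\phi,\overline{\phi}]$ (commuting endomorphism factors against anticommuting $(1,0)$ and $(0,1)$ form factors), which is a welcome but not divergent addition.
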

\begin{proof}
\begin{enumerate}
 \item As in the classical case condider the formula for the curvature of the Hitchin-Simpson
       connection in a tensor product
       \begin{equation*}
          \mathcal{R}_{1\otimes2}=\mathcal{R}_1\otimes I_2+I_1\otimes\mathcal{R}_2.
       \end{equation*}
       Hence, taking the trace with respect to $\omega$ 
       (that is, applying the operator $i\varLambda$) we have the following expression involving
       the mean curvature of a tensor product
       \begin{equation*}
        \mathcal{K}_{1\otimes2}=\mathcal{K}_1\otimes I_2+I_1\otimes\mathcal{K}_2.
       \end{equation*}
 \item To prove (2) we use the following identity
       \begin{equation*}
        \mathcal{K}_{1\oplus2}=\mathcal{K}_1\oplus\mathcal{K}_2.
       \end{equation*}
\end{enumerate}
\end{proof}

From the previous result and the definition of the dual Higgs bundle we have the following
\begin{cor}
 Let $h\in\text{Herm}^{+}(\frak{E})$ be a (weak) Hermitian-Yang-Mills structure with factor $\gamma$ 
 for the Higgs bundle $\frak{E}$ over $X.$ Then
 \begin{enumerate}
  \item The induced metric on the tensor product 
        $\frak{E}^{\otimes p}\otimes\frak{E}^{\ast\otimes q}$ is a (weak) Hermitian-Yang-Mills structure
        with factor $(p-q)\gamma,$
  \item The induced Hermitian metric on $\bigwedge^{p}\frak{E}$ is a (weak) Hermitian-Yang-Mills structure 
        with factor $p\gamma$ for every $0\leq p\leq r=\mathrm{rk}(\frak{E}).$
 \end{enumerate}
\end{cor}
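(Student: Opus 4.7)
The plan is to reduce both parts of the corollary to the two identities established in the preceding proposition (tensor product and Whitney sum), together with one additional ingredient: the behaviour of the mean curvature under dualization. First I would verify that the induced metric $h^{\ast}$ on $\frak{E}^{\ast}$ is itself a weak Hermitian-Yang-Mills structure, with factor $-\gamma$. Under the canonical identification $\mathrm{End}(E^{\ast})\cong\mathrm{End}(E)$, the Hitchin-Simpson curvature of $(\frak{E}^{\ast},h^{\ast})$ satisfies $\mathcal{R}_{h^{\ast}}=-\mathcal{R}_{h}^{\,t}$; applying $i\varLambda$ gives $\mathcal{K}_{h^{\ast}}=-\mathcal{K}_{h}^{\,t}$, so the weak HYM condition $\mathcal{K}_{h}=\gamma\,I_{E}$ propagates to $\mathcal{K}_{h^{\ast}}=-\gamma\,I_{E^{\ast}}$.

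With the dual case in hand, part (1) follows by induction on $p+q$ from the tensor-product formula of the preceding proposition: iterating $\mathcal{K}_{1\otimes 2}=\mathcal{K}_{1}\otimes I_{2}+I_{1}\otimes\mathcal{K}_{2}$ over $p$ copies of $\frak{E}$ and $q$ copies of $\frak{E}^{\ast}$, each tensor factor contributes either $+\gamma$ or $-\gamma$, so the resulting mean curvature is the scalar
\[
\mathcal{K}_{\frak{E}^{\otimes p}\otimes\frak{E}^{\ast\otimes q}}=\bigl(p\gamma+q(-\gamma)\bigr)\,I=(p-q)\gamma\,I,
\]
which is exactly the claim.

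For part (2), I would observe that $\bigwedge^{p}\frak{E}$ embeds as an invariant (holomorphic and $\phi$-invariant) direct summand of $\frak{E}^{\otimes p}$ via the antisymmetrization projector $\frac{1}{p!}\sum_{\sigma}(-1)^{\sigma}\sigma$. This projector commutes with the Hitchin-Simpson connection on $\frak{E}^{\otimes p}$ (since the induced Higgs field and Hermitian connection are $S_{p}$-equivariant), and with the Hermitian metric $h^{\otimes p}$. Consequently, the induced Hitchin-Simpson connection and Hermitian structure on $\bigwedge^{p}\frak{E}$ are the restrictions of those on $\frak{E}^{\otimes p}$. Since by part (1) (with $q=0$) the mean curvature on $\frak{E}^{\otimes p}$ is the scalar endomorphism $p\gamma\,I$, its restriction to any invariant subbundle is again $p\gamma$ times the identity, so $\bigwedge^{p}\frak{E}$ inherits a weak HYM structure with factor $p\gamma$.

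The only nontrivial step is the dualization identity $\mathcal{R}_{h^{\ast}}=-\mathcal{R}_{h}^{\,t}$, which requires checking that the Hitchin-Simpson construction is functorial under taking duals: the dual Hermitian connection $D_{h^{\ast}}$ is the transpose-dual of $D_{h}$ (with a sign from the Leibniz rule applied to the duality pairing), and $\overline{\phi^{\ast}}_{h^{\ast}}$ corresponds under the canonical pairing to $(\overline{\phi}_{h})^{t}$, so the bracket term $[\phi^{\ast},\overline{\phi^{\ast}}_{h^{\ast}}]$ matches $-[\phi,\overline{\phi}_{h}]^{\,t}$. This is a mechanical local-frame computation, but it is the only ingredient not directly supplied by the preceding proposition; once it is in place, the rest of the corollary follows from a purely formal induction.
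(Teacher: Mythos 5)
Your proposal is correct and follows exactly the route the paper indicates (the paper states the corollary ``from the previous result and the definition of the dual Higgs bundle'' without writing out a proof): the tensor-product formula for $\mathcal{K}$ from the preceding proposition, combined with the dualization identity $\mathcal{K}_{h^{\ast}}=-\mathcal{K}_{h}^{\,t}$, and the realization of $\bigwedge^{p}\frak{E}$ as an invariant orthogonal summand of $\frak{E}^{\otimes p}$. You have in fact supplied the details (the sign check on $[\phi^{\ast},\overline{\phi^{\ast}}_{h^{\ast}}]$ and the compatibility of the antisymmetrizer with the Hitchin--Simpson connection) that the paper leaves implicit.
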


In general, if $h$ is a weak Hermitian-Yang-Mills structure with factor $\gamma,$ the slope 
$\mu(\frak{E})$ can be written in terms of $\gamma.$ In fact, we have

\begin{prop}
 Let $(X,\omega)$ be a compact K\"ahler manifold of (complex) dimension $n$ and let $\frak{E}=(E,\phi)$ be 
 a Higgs bundle of rank $r$ over $X.$ If \linebreak$h\in\text{Herm}^{+}(\frak{E})$ is a weak Hermitian-Yang-Mills
 structure with factor $\gamma,$ then
 \begin{equation}
 \label{gammaslope}
  \mu(\frak{E})=\frac{1}{2n\pi}\int_{X}\gamma\omega^{n}.
 \end{equation}
\end{prop}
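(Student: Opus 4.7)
The plan is to compute $\deg(\frak{E})$ via a Chern-Weil representative and then rewrite it in terms of the Hitchin-Simpson curvature so that the weak Hermitian-Yang-Mills condition can be plugged in. Recall that $c_1(\frak{E})=c_1(\det\frak{E})$ is represented by the closed form $-\frac{1}{2\pi i}\mathrm{tr}(R_h)$, where $R_h$ is the curvature of the Hermitian connection $D_h$ associated with $h$. Consequently
\begin{equation*}
\deg(\frak{E})=\int_{X}c_1(\frak{E})\wedge\omega^{n-1}=-\frac{1}{2\pi i}\int_{X}\mathrm{tr}(R_h)\wedge\omega^{n-1}.
\end{equation*}

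The first key step is to replace $R_h$ by the Hitchin-Simpson curvature $\mathcal{R}_h=R_h+D'_h(\phi)+D''(\overline{\phi}_h)+[\phi,\overline{\phi}_h]$ inside this integral. The commutator term has vanishing trace, so it drops out. The terms $D'_h(\phi)$ and $D''(\overline{\phi}_h)$ are of pure bidegree $(2,0)$ and $(0,2)$ respectively, and $\omega^{n-1}$ is of bidegree $(n-1,n-1)$ on the complex $n$-dimensional manifold $X$; since $(n+1,n-1)$ and $(n-1,n+1)$ forms vanish identically on $X$, those traces also contribute nothing. Therefore
\begin{equation*}
\int_{X}\mathrm{tr}(R_h)\wedge\omega^{n-1}=\int_{X}\mathrm{tr}(\mathcal{R}_h)\wedge\omega^{n-1}.
\end{equation*}

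The second step is to invoke the defining relation \eqref{curvature2} of the mean curvature, namely $in\,\mathcal{R}_h\wedge\omega^{n-1}=\mathcal{K}_h\,\omega^{n}$. Taking the trace of this $\mathrm{End}(E)$-valued identity and using the weak Hermitian-Yang-Mills hypothesis $\mathcal{K}_h=\gamma I_E$, which gives $\mathrm{tr}(\mathcal{K}_h)=r\gamma$ with $r=\mathrm{rk}(\frak{E})$, we obtain
\begin{equation*}
\mathrm{tr}(\mathcal{R}_h)\wedge\omega^{n-1}=\frac{1}{in}\,\mathrm{tr}(\mathcal{K}_h)\,\omega^{n}=\frac{r\gamma}{in}\,\omega^{n}.
\end{equation*}

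Substituting this into the degree formula and simplifying $-\frac{1}{2\pi i}\cdot\frac{1}{in}=\frac{1}{2\pi n}$ yields $\deg(\frak{E})=\frac{r}{2\pi n}\int_{X}\gamma\,\omega^{n}$, whence $\mu(\frak{E})=\deg(\frak{E})/r$ gives exactly \eqref{gammaslope}. There is no real obstacle here; the only subtle point is step one, verifying that the non-Hermitian pieces of the Hitchin-Simpson curvature do not affect the Chern-Weil integral, which is precisely what makes Higgs-type characteristic classes coincide with the ordinary ones and what allows the Higgs field to enter the theory through $\mathcal{K}_h$ rather than through the degree.
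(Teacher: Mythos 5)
Your proof is correct and follows essentially the same route as the paper: represent $\deg(\frak{E})$ by the Chern--Weil integral, pass to the Hitchin--Simpson curvature, and apply the trace of the identity $in\,\mathcal{R}_h\wedge\omega^{n-1}=\mathcal{K}_h\,\omega^{n}$ together with $\mathcal{K}_h=\gamma I_E$. The one point where you are more careful than the paper is the first step: the paper simply writes $c_1(\frak{E})=-\frac{1}{2\pi i}\mathrm{tr}(\mathcal{R}_h)$ without comment, whereas you explicitly check that the commutator $[\phi,\overline{\phi}_h]$ is traceless and that the $(2,0)$ and $(0,2)$ pieces die against $\omega^{n-1}$, which is exactly the justification that tacit substitution needs.
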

\begin{proof}
 Let $\mathcal{R}_h$ be the Hitchin-Simpson curvature of the Hitchin-Simpson connection 
 $\mathcal{D}_h$ associated with the Hermitian connection $D_h.$ From \eqref{curvature2} we have the 
 identity
 \begin{equation*}
  in\mathcal{R}_h\wedge\omega^{n-1}=\mathcal{K}_h\omega^n.
 \end{equation*}
 Taking the trace we obtain
 \begin{equation*}
  in\mathrm{tr}\mathcal{R}_h\wedge\omega^{n-1}=\mathrm{tr}\mathcal{K}_h\omega^n.
 \end{equation*}
By hypothesis $h$ is a weak Hermitian-Yang-Mills structure with factor $\gamma.$ Integrating over $X$ we
obtain
\begin{equation*}
 \begin{split}
  \mu(\frak{E})&=\frac{1}{r}\int_{X}c_1(\frak{E})\wedge\omega^{n-1}=
  \frac{1}{r}\int_{X}-\frac{1}{2\pi i}\mathrm{tr}(\mathcal{R}_h)\wedge\omega^{n-1}=\\
               &=-\frac{1}{2\pi ir}\int_{X}\mathrm{tr}(\mathcal{R}_h)\wedge\omega^{n-1}=
  -\frac{1}{2\pi ir}\int_{X}\frac{1}{in}\mathrm{tr}(\mathcal{K}_h)\omega^n=\\
               &=\frac{1}{2n\pi r}\int_{X}\mathrm{tr}(\mathcal{K}_h)\omega^n=
  \frac{1}{2n\pi r}\int_{X}r\gamma\omega^n=\frac{1}{2n\pi }\int_{X}\gamma\omega^n.
 \end{split}
\end{equation*}
\end{proof}

\begin{lemma}
 Let $h$ be a weak Hermitian-Yang-Mills structure with factor $\gamma$ for $\frak{E}$ and let $a=a(x)$
 be a real positive definite function on $X,$ then $h'=ah$ is a weak Hermitian-Yang-Mills structure 
 with factor $\gamma'=\gamma+\Box_0(\ln a).$
\end{lemma}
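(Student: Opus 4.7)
The plan is to track how each ingredient of the Hitchin-Simpson connection changes under the rescaling $h \mapsto h' = ah$, and then apply $i\varLambda$ at the end.

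First I would compute the change in the Hermitian connection. Using the formula $\omega^t = \mathrm{d}'H \cdot H^{-1}$ from the end of Section~3.3, if $H$ is the local matrix of $h$ with respect to a holomorphic frame, then $H' = aH$ gives
\begin{equation*}
  \omega_{h'}^{t} = \mathrm{d}'(aH)\cdot(aH)^{-1}
                   = \omega_h^{t} + \mathrm{d}'(\ln a)\, I,
\end{equation*}
so $D'_{h'} = D'_h + \mathrm{d}'(\ln a)\, I$, while $D'' = \mathrm{d}''_E$ does not depend on $h$. Since $\mathrm{d}'(\ln a) \wedge \mathrm{d}'(\ln a) = 0$ and $\mathrm{d}'(\ln a)\,I$ commutes with any matrix-valued form, the curvature of the Hermitian connection transforms simply as
\begin{equation*}
  R_{h'} = R_h + \mathrm{d}''\mathrm{d}'(\ln a)\, I.
\end{equation*}

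Next I would check the behavior of the Higgs piece. Because $a$ is a real positive function, the defining relation $h(\overline{\phi}_{h}s,s') = h(s,\phi s')$ is preserved upon multiplying both sides by $a$, so $\overline{\phi}_{h'} = \overline{\phi}_h$; hence $[\phi,\overline{\phi}_{h'}] = [\phi,\overline{\phi}_h]$ and $D''(\overline{\phi}_{h'}) = D''(\overline{\phi}_h)$. For $D'_{h'}(\phi)$, the induced connection on $\mathrm{End}(E) \cong E \otimes E^{*}$ acquires a term $\mathrm{d}'(\ln a)\,I$ from the $E$-factor and $-\mathrm{d}'(\ln a)\,I$ from the $E^{*}$-factor, which cancel, so $D'_{h'}(\phi) = D'_h(\phi)$. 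Assembling these,
\begin{equation*}
  \mathcal{R}_{h'} = \mathcal{R}_h + \mathrm{d}''\mathrm{d}'(\ln a)\, I.
\end{equation*}

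Finally, applying $i\varLambda$ and recalling that $\Box_0 = i\varLambda\,\mathrm{d}''\mathrm{d}'$, I obtain
\begin{equation*}
  \mathcal{K}_{h'} = i\varLambda\,\mathcal{R}_{h'}
                   = \mathcal{K}_h + \Box_0(\ln a)\, I
                   = \bigl(\gamma + \Box_0(\ln a)\bigr) I,
\end{equation*}
which is exactly the claim. The only delicate point is bookkeeping: making sure that the $\mathrm{d}'(\ln a)\,I$ correction neither reappears in the $\mathrm{End}(E)$-induced connection nor interacts with $\omega_h$ in the curvature (both of which follow from the fact that the identity endomorphism commutes with everything and that scalar $(1,0)$-forms square to zero). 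Reality of $a$, used to conclude $\overline{\phi}_{h'} = \overline{\phi}_h$, is what keeps the Higgs contribution intact and guarantees that the full correction is the purely scalar term $\Box_0(\ln a)\,I$.
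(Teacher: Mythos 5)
Your proof is correct and follows essentially the same route as the paper: the key point in both is that $\overline{\phi}_{h'}=\overline{\phi}_h$ because $a$ is a real scalar, so the entire correction comes from the ordinary Hermitian curvature, yielding $\mathcal{K}_{h'}=\mathcal{K}_h+\Box_0(\ln a)I_E$. The only difference is that you derive the classical transformation $R_{h'}=R_h+\mathrm{d}''\mathrm{d}'(\ln a)I$ explicitly from $\omega^t=\mathrm{d}'H\cdot H^{-1}$, where the paper simply cites Kobayashi for that step.
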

\begin{proof}
 Clearly $h'$ defines another Hermitian metric on $\frak{E}.$ Since $h'$ is a conformal change of $h,$
 we have in particular $\overline{\phi}_{h'}=\overline{\phi}_{h},$ in fact, for every sections
 $s,s'$ of $\frak{E}$ we have
 \begin{equation*}
  \begin{split}
   h(\phi s,s')&=h(s,\overline{\phi}_{h}s')\\
   h'(\phi s,s')&=h'(s,\overline{\phi}_{h'}s').
  \end{split}
 \end{equation*}
Since $h'=ah,$ we have
\begin{equation*}
 h(\phi s,s')=h(s,\overline{\phi}_{h'}s'),
\end{equation*}
and this proves that $\overline{\phi}_{h'}=\overline{\phi}_{h}.$
Then from \eqref{curvature2}, $K'=K+\Box_0(\ln a)$ (see Kobayashi \cite{KOB} for the classical
case). Since taking the wedge product with $\omega^{n-1}$ kills the $(2,0)$ and $(0,2)$ components, we obtain
\begin{equation*}
 \begin{split}
  \mathcal{K}'\omega^n&=in\mathcal{R}'\wedge\omega^{n-1}=in(R'+[\phi,\overline{\phi}_{h'}])\wedge\omega^{n-1}=\\
                      &=in(R'+[\phi,\overline{\phi}_h])\wedge\omega^{n-1}=inR'\wedge\omega^{n-1}+
                      in[\phi,\overline{\phi}_h]\wedge\omega^{n-1}=\\
                      &=K'\omega^n+in[\phi,\overline{\phi}_h]\wedge^{n-1}=(K+\Box_0(\ln a)I_E)\omega^n+
                      in[\phi,\overline{\phi}_h]\wedge\omega^{n-1}=\\
                      &=in(R+[\phi,\overline{\phi}_h])\wedge\omega^{n-1}+\Box_0(\ln a)I_E\omega^n=\\
                      &=\mathcal{K}\omega^n+\Box_0(\ln a)I_E\omega^n=(\mathcal{K}+\Box_0(\ln a)I_E)\omega^n=
                      (\gamma+\Box_0(\ln a))I_E\omega^n.
 \end{split}
\end{equation*}
\end{proof}

\begin{lemma}
\label{rescalinglemma}
 If $h\in\text{Herm}^{+}(\frak{E})$ is a weak Hermitian-Yang-Mills structure with factor $\gamma,$ then there 
 exists a conformal change $h'=ah$ such that $h'$ is a Hermitian-Yang-Mills structure with constant factor $c,$
 given by
 \begin{equation}
  \label{rescale}
  c\int_{X}\omega^n=\int_{X}\gamma\omega^n.
 \end{equation}
Such a conformal change is unique up to homotety.
\end{lemma}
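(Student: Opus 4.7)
The plan is to reduce the problem to a linear Poisson-type equation on the compact Kähler manifold $(X,\omega)$ and then invoke Fredholm theory to solve it.

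First I would invoke the previous lemma: writing $h' = ah$ with $a$ a positive real function on $X$, we have that $h'$ is a weak Hermitian--Yang--Mills structure with factor $\gamma' = \gamma + \Box_0(\ln a)$. Demanding that $\gamma' = c$ be a constant is therefore equivalent to solving
\begin{equation*}
\Box_0(u) = c - \gamma,
\end{equation*}
where $u = \ln a$ is a real $\mathcal{C}^\infty$ function on $X$. Any real solution $u$ yields a positive conformal factor $a = e^u$, so the positivity requirement on $a$ is automatic.

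Next I would treat this as a linear elliptic PDE. On a compact Kähler manifold the operator $\Box_0 = i\varLambda \mathrm{d''}\mathrm{d}'$ is, up to a positive multiplicative constant, the Laplace--Beltrami operator acting on $\mathcal{C}^\infty$ complex-valued functions (for real-valued functions it coincides with the real Laplacian up to a sign and factor). In particular $\Box_0$ is self-adjoint and elliptic, its kernel on the connected compact manifold $X$ consists precisely of the constants, and by the Fredholm alternative (Theorem \ref{Fredholm}), applied to the resolvent of $\Box_0$ on an appropriate Sobolev space, the image of $\Box_0$ is the $L^2$-orthogonal complement of its kernel. Consequently the equation $\Box_0 u = c - \gamma$ admits a $\mathcal{C}^\infty$ solution $u$ if and only if $c - \gamma$ is $L^2$-orthogonal to the constants, i.e.\
\begin{equation*}
\int_X (c-\gamma)\,\omega^n = 0,
\end{equation*}
which is exactly the condition \eqref{rescale} determining $c$. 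Elliptic regularity upgrades the resulting weak solution to a smooth one.

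Finally, for uniqueness, if $u_1$ and $u_2$ are two smooth solutions of $\Box_0 u = c - \gamma$ with the same constant $c$ (which is forced by \eqref{rescale}), then $u_1 - u_2 \in \ker \Box_0$, hence $u_1 - u_2$ is a real constant $\kappa$, so $a_1 = e^\kappa a_2$ and therefore $h_1' = e^\kappa h_2'$ is a homothety of $h_2'$. The main technical obstacle is the identification of $\Box_0$ with (a constant multiple of) the Laplace--Beltrami operator on $X$ together with the application of Fredholm theory in the right function spaces; once this is in place, both existence (via the compatibility condition on the right-hand side) and uniqueness up to homothety follow from standard linear elliptic theory.
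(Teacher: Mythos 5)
Your proof is correct and follows essentially the same route as the paper: reduce to the Poisson equation $\Box_0 u = c-\gamma$ via the preceding conformal-change lemma, and solve it by Hodge theory and the Fredholm alternative using that the $\Box_0$-harmonic functions on a compact manifold are the constants, so that solvability is exactly the normalization \eqref{rescale}. You additionally spell out the uniqueness-up-to-homothety claim, which the paper asserts but does not argue.
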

\begin{proof}
 Since $X$ is compact the integrals $\int_{X}\gamma\omega^n$ and $\int_{X}\omega^n$ are finite real numbers. 
 Then there exists a finite real number $c$ such that
 \begin{equation*}
  c\int_{X}\omega^n=\int_{X}\gamma\omega^n.
 \end{equation*}
Hence,
\begin{equation*}
  \int_{X}(c-\gamma)\omega^n=0.
 \end{equation*}
It is sufficient to prove that there exists a function $u$ satisying the equation
\begin{equation}
\label{firstpoisson}
 \Box_0u=c-\gamma.
\end{equation}
In fact, setting $h'=e^{u}h,$ from the previous Lemma it follows that $h'$ is a weak Hermitian-Yang-Mills 
structure with factor $\gamma'=\gamma+(c-\gamma)=c.$
Now from Hodge theory and Fredholm alternative Theorem we know that \eqref{firstpoisson} has a solution 
if and only if $c-\gamma$ is orthogonal to all $\Box_0\text{-harmonic}$ functions.
Since $X$ is a compact complex manifold, a function on $X$ is $\Box_0\text{-harmonic}$ if and only if
it is constant.
So \eqref{firstpoisson} has solution if and only if 
\begin{equation*}
 \int_{X}(c-\gamma)\omega^{n}=0.
\end{equation*}
But this equality always holds from the choice of the constant $c,$ and this completes the proof.
\end{proof}

From the previous Lemma we immediately see that if a Higgs bundle admits a weak Hermitian-Yang-Mills structure,
then it also admits, by an appropriate conformal change of the metric, a Hermitian-Yang-Mills structure. In 
particular, if the Higgs bundle has rank $1,$ we have the following

\begin{cor}
\label{always1}
Let $(X,\omega)$ be a compact K\"ahler manifold of (complex) dimension $n$ and 
 let $\frak{E}=(E,\phi)$ be a Higgs bundle of rank $1$ over $X.$ Then $\frak{E}$ admits a Hermitian-Yang-Mills
 structure.
\end{cor}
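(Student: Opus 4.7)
The plan is to reduce the rank-$1$ case to an essentially classical computation, then invoke Lemma \ref{rescalinglemma} to promote the resulting weak structure to a genuine Hermitian-Yang-Mills structure. Start with any Hermitian metric $h_0$ on the line bundle $E$; such a metric always exists by a partition-of-unity argument on any paracompact $M$. The goal is to show that $h_0$ is automatically a \emph{weak} Hermitian-Yang-Mills structure on $\mathfrak{E}=(E,\phi)$, after which Lemma \ref{rescalinglemma} produces a conformal rescaling $h=e^{u}h_0$ that is a Hermitian-Yang-Mills structure with constant factor.

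The key observation is that in rank $1$ the bracket term $[\phi,\overline{\phi}_{h_0}]$ entering $\mathcal{R}^{1,1}_{h_0}=R_{h_0}+[\phi,\overline{\phi}_{h_0}]$ vanishes identically. Indeed, $\mathrm{End}(E)\cong\mathcal{O}_X$ when $E$ is a line bundle, so $\phi$ and $\overline{\phi}_{h_0}$ may be regarded locally as ordinary differential forms: $\phi=f_\alpha\,\mathrm{d}z^\alpha$ of type $(1,0)$ and $\overline{\phi}_{h_0}=\overline{f}_\beta\,\mathrm{d}\overline{z}^\beta$ of type $(0,1)$, with scalar (hence commuting) coefficients. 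Consequently
\begin{equation*}
\phi\wedge\overline{\phi}_{h_0}+\overline{\phi}_{h_0}\wedge\phi
=f_\alpha\overline{f}_\beta\bigl(\mathrm{d}z^\alpha\wedge\mathrm{d}\overline{z}^\beta+\mathrm{d}\overline{z}^\beta\wedge\mathrm{d}z^\alpha\bigr)=0,
\end{equation*}
so $[\phi,\overline{\phi}_{h_0}]=0$ and therefore $\mathcal{R}^{1,1}_{h_0}=R_{h_0}$.

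It then follows from the earlier proposition showing that only the $(1,1)$-part contributes to $\mathcal{K}$ that the mean curvature of the Hitchin-Simpson connection equals the ordinary mean curvature of the Chern connection:
\begin{equation*}
\mathcal{K}_{h_0}=i\varLambda\mathcal{R}^{1,1}_{h_0}=i\varLambda R_{h_0}=\gamma_0\,I_E,
\end{equation*}
where $\gamma_0$ is a real-valued smooth function on $X$ (it is real-valued because the Chern curvature of a Hermitian metric on a holomorphic line bundle is purely imaginary of type $(1,1)$, so multiplication by $i\varLambda$ yields a real function; this is standard, see for instance \cite{KOB}). Hence $h_0$ is, by definition, a weak Hermitian-Yang-Mills structure on $\mathfrak{E}$ with factor $\gamma_0$.

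Finally, apply Lemma \ref{rescalinglemma}: choose the real constant $c$ by the averaging condition \eqref{rescale}, solve the Poisson equation $\Box_0 u=c-\gamma_0$ (solvable on the compact Kähler manifold $X$ because both sides integrate to zero against $\omega^n$, and the cokernel of $\Box_0$ consists of the constants by the maximum principle/Hodge theory argument used in the proof of that lemma), and set $h=e^{u}h_0$. Then $h$ is a Hermitian-Yang-Mills structure on $\mathfrak{E}$ with the constant factor $c$. The only thing that could conceivably be an obstacle is the vanishing of $[\phi,\overline{\phi}_{h_0}]$, but this is automatic in the rank-$1$ setting; the remainder is a direct invocation of already-proved lemmas.
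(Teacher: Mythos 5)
Your proof is correct and follows essentially the same route as the paper: observe that in rank one any Hermitian metric is automatically a weak Hermitian-Yang-Mills structure with a real-valued factor, and then invoke Lemma \ref{rescalinglemma} to rescale conformally to a constant factor. The only difference is cosmetic --- the paper deduces $\mathcal{K}_h=\gamma I_E$ with $\gamma$ real directly from the selfadjointness of $\mathcal{K}_h$ on a rank-one bundle, whereas you additionally verify the vanishing of $[\phi,\overline{\phi}_{h_0}]$ and the reality of $i\varLambda R_{h_0}$ explicitly, which is a correct but not strictly necessary refinement.
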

\begin{proof}
 Let $h\in\text{Herm}^{+}(\frak{E})$ be a Hermitian metric on $E,$ which always exists from a partition of
 unity argument. Since $\frak{E}$ has rank $1$ and since $\mathcal{K}_h$ is selfadjoint, 
 we have 
 \begin{equation*}
  \mathcal{K}_h=\gamma I_E,
 \end{equation*}
 where $\gamma$ is a positive real function on the manifold $X.$ Then the thesis comes from Lemma
 \ref{rescalinglemma}.
\end{proof}

\section{Approximate Hermitian-Yang-Mills metrics}

In this section we define approximate Hermitian-Yang-Mills metric structures on Higgs bundles, and study
some of their properties.

\begin{defin}
Let $(X,\omega)$ be a compact K\"ahler manifold of (complex) dimension $n$ and 
 let $\frak{E}=(E,\phi)$ be a Higgs bundle of rank $r$ over $X.$
 We define a positive real constant $c$ as
 \begin{equation}
 \label{cdefinition}
 c=\frac{2\pi\mu(\frak{E})}{(n-1)!\mathrm{Vol}(X)}.
\end{equation}
\end{defin}

\begin{defin}
Let $(X,\omega)$ be a compact K\"ahler manifold of (complex) dimension $n$ and
 let $\frak{E}=(E,\phi)$ be a Higgs bundle of rank $r$ over $X.$ Let $h$ be a Hermitian metric on $\frak{E}$ and 
 let $\mathcal{K}$ be its Hitchin-Simpson mean 
 curvature. We define the lenght of the endomorphism $\mathcal{K}-cI_E$ by the formula
 \begin{equation}
  \label{Klenght}
  |\mathcal{K}-cI_E|^2=\mathrm{tr}[(\mathcal{K}-cI_E)\cdot(\mathcal{K}-cI_E)].
 \end{equation}
Since $\mathcal{K}$ is selfadjoint with respect to the metric $h$ and $c$ is real, 
$|\mathcal{K}-cI_E|^2$ is a real function
on $X$ and $|\mathcal{K}-cI_E|^2\geq0.$
\end{defin}

\begin{defin}
 In the hypotheses of the previous definition, we introduce the following norms:
 \begin{equation}
  \label{Knorm}
  \begin{split}
   \|\mathcal{K}-cI_E\|_{L^1}&=\int_{X}|\mathcal{K}-cI_E|\frac{\omega^n}{n!}\\
   \|\mathcal{K}-cI_E\|_{L^2}^{2}&=\int_{X}|\mathcal{K}-cI_E|^2\frac{\omega^n}{n!}\\
   \|\mathcal{K}-cI_E\|_{L^{\infty}}&=\max_{X}|\mathcal{K}-cI_E|.
  \end{split}
 \end{equation}
\end{defin}

\begin{defin}
Let $(X,\omega)$ be a compact K\"ahler manifold of (complex) dimension $n$ and let $\frak{E}=(E,\phi)$ be a 
Higgs bundle 
of rank $r$ over $X.$
 We say that $\frak{E}$ admits an approximate
 Hermitian-Yang-Mills structure if for any $\epsilon>0$ there exists a metric $h_{\epsilon}$ such that
 \begin{equation*}
  \|\mathcal{K}_{h_{\epsilon}}-cI_E\|_{L^{\infty}}=\max_{X}|\mathcal{K}_{h_{\epsilon}}-cI_E|<\epsilon.
 \end{equation*}
Here $\mathcal{K}_{h_{\epsilon}}$ is the mean curvature of the Hitchin-Simpson connection associated with
$h_{\epsilon}.$
\end{defin}

This notion satisfies some simple properties with respect to tensor product an direct sums.

\begin{prop}
\label{approximatetensorproduct}
 Let $(X,\omega)$ be a compact K\"ahler manifold of (complex) dimension $n.$ If the Higgs bundles $\frak{E}_1$
 and $\frak{E}_2$ over $X$ admit 
 approximate Hermitian-Yang-Mills structures, so does their tensor product $\frak{E}_1\otimes\frak{E}_2.$
 Furthermore, if $\mu(\frak{E}_1)=\mu(\frak{E}_2),$ so does their Whitney sum.
\end{prop}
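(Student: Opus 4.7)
The plan is to reduce both statements to the earlier identities for the mean curvature of a tensor product and of a Whitney sum of Higgs bundles, namely
\begin{equation*}
\mathcal{K}_{1\otimes 2} = \mathcal{K}_1\otimes I_2 + I_1\otimes\mathcal{K}_2, \qquad \mathcal{K}_{1\oplus 2} = \mathcal{K}_1\oplus \mathcal{K}_2,
\end{equation*}
combined with the elementary slope formulas $\mu(\frak{E}_1\otimes\frak{E}_2)=\mu(\frak{E}_1)+\mu(\frak{E}_2)$ (from $\det(E_1\otimes E_2)\cong(\det E_1)^{r_2}\otimes(\det E_2)^{r_1}$) and $\mu(\frak{E}_1\oplus\frak{E}_2)=(r_1\mu(\frak{E}_1)+r_2\mu(\frak{E}_2))/(r_1+r_2)$. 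Via the definition \eqref{cdefinition}, these translate into $c_{1\otimes 2}=c_1+c_2$, and, when $\mu(\frak{E}_1)=\mu(\frak{E}_2)$, $c_{1\oplus 2}=c_1=c_2$.

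For the tensor product, the first step is to rewrite the difference
\begin{equation*}
\mathcal{K}_{1\otimes 2}-c_{1\otimes 2}I = (\mathcal{K}_1-c_1 I_1)\otimes I_2 + I_1\otimes(\mathcal{K}_2-c_2 I_2).
\end{equation*}
Then I will exploit the multiplicative behavior of the pointwise norm under tensoring with the identity, namely $|A\otimes I_2|=\sqrt{r_2}\,|A|$ and $|I_1\otimes B|=\sqrt{r_1}\,|B|$, which follows immediately from the definition \eqref{Klenght} and $\mathrm{tr}(I_j)=r_j$. Applying the pointwise triangle inequality and taking maxima over $X$ gives
\begin{equation*}
\|\mathcal{K}_{1\otimes 2}-c_{1\otimes 2}I\|_{L^{\infty}} \leq \sqrt{r_2}\,\|\mathcal{K}_1-c_1 I_1\|_{L^{\infty}} + \sqrt{r_1}\,\|\mathcal{K}_2-c_2 I_2\|_{L^{\infty}}.
\end{equation*}
Given $\epsilon>0$, I use the hypothesis to pick metrics $h_1,h_2$ on $\frak{E}_1,\frak{E}_2$ making each summand strictly less than $\epsilon/2$, and take $h_1\otimes h_2$ on $\frak{E}_1\otimes\frak{E}_2$. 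This shows the tensor product admits an approximate Hermitian-Yang-Mills structure.

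For the Whitney sum, under the assumption $\mu(\frak{E}_1)=\mu(\frak{E}_2)$ the constants agree, so
\begin{equation*}
\mathcal{K}_{1\oplus 2}-c_{1\oplus 2}I = (\mathcal{K}_1-c_1 I_1)\oplus(\mathcal{K}_2-c_2 I_2),
\end{equation*}
and a block-diagonal trace computation yields the pointwise identity $|\mathcal{K}_{1\oplus 2}-c_{1\oplus 2}I|^2=|\mathcal{K}_1-c_1I_1|^2+|\mathcal{K}_2-c_2I_2|^2$. Hence
\begin{equation*}
\|\mathcal{K}_{1\oplus 2}-c_{1\oplus 2}I\|_{L^{\infty}} \leq \|\mathcal{K}_1-c_1I_1\|_{L^{\infty}} + \|\mathcal{K}_2-c_2I_2\|_{L^{\infty}},
\end{equation*}
and the same $\epsilon/2$ argument closes the proof with the metric $h_1\oplus h_2$.

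There is no real obstacle here; the only thing that requires care is the bookkeeping of the constants $c_i$ and the ranks $r_i$ in the tensor product norm estimate, and in particular noticing that without the hypothesis $\mu(\frak{E}_1)=\mu(\frak{E}_2)$ the direct-sum argument fails because $c_{1\oplus 2}$ would not coincide with either $c_1$ or $c_2$ and the diagonal blocks of $\mathcal{K}_{1\oplus 2}-c_{1\oplus 2}I$ would pick up extra non-vanishing terms of size $|c_i-c_{1\oplus 2}|$.
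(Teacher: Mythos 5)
Your proof is correct and follows essentially the same route as the paper: decompose $\mathcal{K}_{1\otimes 2}-c_{1\otimes 2}I$ (resp.\ $\mathcal{K}_{1\oplus 2}-cI$) using the curvature formulas for tensor products and Whitney sums, then apply the triangle inequality for the $L^{\infty}$ norm and choose the two metrics to make each summand small. In fact you are slightly more careful than the paper, which silently uses $|A\otimes I_{E_2}|\leq|A|$ where the trace norm actually gives $|A\otimes I_{E_2}|=\sqrt{r_2}\,|A|$; your explicit constants $\sqrt{r_1},\sqrt{r_2}$ repair this harmless slip without changing the conclusion.
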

\begin{proof}
\begin{enumerate}
 \item Assume that $\frak{E}_1$ and $\frak{E}_2$ admit approximate Hermitian-Yang-Mills structures with factors $c_1$
 and $c_2,$ respectively. Let $\epsilon>0.$ Then there exist $h_1$ and $h_2$ such that
 \begin{equation*}
  \max_{X}|\mathcal{K}_1-c_1I_{E_1}|<\epsilon/2,\hspace{0.5cm}\max_{X}|\mathcal{K}_2-c_2I_{E_2}|<\epsilon/2,
 \end{equation*}
 where $\mathcal{K}_{1}$ and $\mathcal{K}_{2}$ are the mean curvature endomorphisms of the Hitchin-Simpson
 connection associated with $h_1$ and to $h_2,$ respectively. 
Defining $h=h_1\otimes h_2$ and setting $c=c_1+c_2,$ $I=I_{E_1}\otimes I_{E_2},$ the mean curvature
\begin{equation*}
 \mathcal{K}=\mathcal{K}_1\otimes I_{E_2}+I_{E_1}\otimes\mathcal{K}_2
\end{equation*}
satisfies the inequality
\begin{equation*}
\begin{split}
 \max_{X}|\mathcal{K}-cI|& = \max_{X}|\mathcal{K}_1\otimes I_{E_2}+I_{E_1}\otimes\mathcal{K}_2-
                                c_1I_{E_1}\otimes I_{E_2}-c_2I_{E_1}\otimes I_{E_2}|\leq\\
                         & \leq \max_{X}|\mathcal{K}_1\otimes I_{E_2}-c_1I_{E_1}\otimes I_{E_2}|+\\
                         &   +  \max_{X}|I_{E_1}\otimes\mathcal{K}_2-c_2I_{E_1}\otimes I_{E_2}|\leq\\
                         & \leq \max_{X}|\mathcal{K}_1-c_1I_{E_1}|+\max_{X}|\mathcal{K}_2-c_2I_{E_2}|<\\
                         & <\epsilon/2+\epsilon/2=\epsilon,
\end{split}
\end{equation*}
and this completes the proof.
\item If $\mu(\frak{E}_1)=\mu(\frak{E}_2),$ necessarly $c_1=c_2=c.$ Defining this time $h=h_1\oplus h_2,$ from 
$\mathcal{K}=\mathcal{K}_1\oplus\mathcal{K}_2$ we have
\begin{equation*}
\begin{split}
\max_{X}|\mathcal{K}-cI_{E_1\oplus E_2}|& = \max_{X}|\mathcal{K}_1\oplus\mathcal{K}_2-cI_{E_1\oplus E_2}|\leq\\
                                &  \leq  \max_{X}|\mathcal{K}_1-cI_{E_1}|+\max_{X}|\mathcal{K}_2-cI_{E_2}|=\\
                                &= \max_{X}|\mathcal{K}_1-c_1I_{E_1}|+\max_{X}|\mathcal{K}_2-c_2I_{E_2}|<\\
                                &< \epsilon/2+\epsilon/2=\epsilon.
\end{split}
\end{equation*}
\end{enumerate}
\end{proof}

\begin{cor}
 If $\frak{E}$ admits an approximate Hermitian-Yang-Mills stucture, so do the tensor product 
 $\frak{E}^{\otimes p}\otimes\frak{E}^{\ast\otimes q}$ and the exterior product bundle $\bigwedge^p\frak{E}$ 
 whenever $0\leq p\leq r=\mathrm{rk}(\frak{E}).$
\end{cor}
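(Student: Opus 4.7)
The plan is to mimic the argument used for the analogous Corollary in the weak Hermitian-Yang-Mills setting, replacing pointwise identities by uniform estimates and making repeated use of Proposition \ref{approximatetensorproduct}. The key auxiliary fact is that the dual Higgs bundle $\frak{E}^{\ast}$ also admits an approximate Hermitian-Yang-Mills structure, with factor $-c$. Once this is established, iterating Proposition \ref{approximatetensorproduct} $(p+q-1)$ times produces an approximate Hermitian-Yang-Mills structure on $\frak{E}^{\otimes p}\otimes\frak{E}^{\ast\otimes q}$ with factor $(p-q)c$.

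For the dual bundle step, given $\epsilon>0$ I would choose a metric $h_{\epsilon}$ on $\frak{E}$ with $\max_X|\mathcal{K}_{h_{\epsilon}}-cI_E|<\epsilon$ and take the induced dual metric $h_{\epsilon}^{\ast}$ on $\frak{E}^{\ast}$. Using the formulas for the Hitchin-Simpson curvature on a dual Higgs bundle, one verifies the pointwise identity $\mathcal{K}_{h_{\epsilon}^{\ast}} = -{}^{t}\mathcal{K}_{h_{\epsilon}}$, which together with the fact that $c$ is real gives
\[
|\mathcal{K}_{h_{\epsilon}^{\ast}}-(-c)I_{E^{\ast}}| \;=\; |\mathcal{K}_{h_{\epsilon}}-cI_E|
\]
pointwise on $X$, so the uniform bound is preserved and $\frak{E}^{\ast}$ is approximate Hermitian-Yang-Mills with factor $-c$.

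For the exterior power $\bigwedge^p\frak{E}$ I would not appeal to the Whitney sum part of Proposition \ref{approximatetensorproduct} (which would require equality of slopes of the factors, not available here), but instead use the induced metric $\bigwedge^p h_{\epsilon}$ directly. In a local $h_{\epsilon}$-orthonormal frame diagonalising $\mathcal{K}_{h_{\epsilon}}$ with eigenvalues $\lambda_1,\ldots,\lambda_r$, the mean curvature of $\bigwedge^p h_{\epsilon}$ is diagonal in the induced basis $s_{i_1}\wedge\cdots\wedge s_{i_p}$ of $\bigwedge^p E$ with eigenvalues $\lambda_{i_1}+\cdots+\lambda_{i_p}$. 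Writing $\lambda_i = c+\delta_i$ with $\max_i|\delta_i|\leq|\mathcal{K}_{h_{\epsilon}}-cI_E|$, the triangle inequality yields
\[
|\mathcal{K}^{\wedge p}-pc\,I_{\bigwedge^p E}| \;\leq\; p\,|\mathcal{K}_{h_{\epsilon}}-cI_E|,
\]
so starting from $h_{\epsilon/p}$ in place of $h_{\epsilon}$ produces the desired approximate Hermitian-Yang-Mills structure with factor $pc$ on $\bigwedge^p\frak{E}$.

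The main technical point, and the only one that is not entirely routine, is the dual-bundle identification: one has to check that the dual Hermitian metric together with the dualised Higgs field $\phi^{\ast}$ assembles into a Hitchin-Simpson connection whose mean curvature really is $-{}^{t}\mathcal{K}_{h}$. This requires being careful with sign conventions and with the way $\overline{\phi}_h$ depends on $h$ under the isomorphism $\mathrm{End}(E^{\ast})\cong\mathrm{End}(E)$. Once this is settled, propagating the $L^{\infty}$ estimates through tensor products and through the projection onto the antisymmetric part is elementary.
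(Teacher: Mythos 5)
Your proof is correct and is precisely the argument the paper leaves implicit: the corollary is stated without proof as a direct consequence of Proposition \ref{approximatetensorproduct}, combined with the dual-bundle identity $\mathcal{K}_{h^{\ast}}=-{}^{t}\mathcal{K}_{h}$ (so that $\frak{E}^{\ast}$ is approximate Hermitian-Yang-Mills with factor $-c$) and the fact that $\bigwedge^{p}\frak{E}$ sits inside $\frak{E}^{\otimes p}$ as an invariant summand, exactly as in the weak Hermitian-Yang-Mills case treated earlier. The only nitpick is that with the norm $|A|^{2}=\mathrm{tr}(A\cdot A)$ used in the paper your exterior-power estimate acquires an extra dimensional constant of order $\binom{r}{p}^{1/2}$ from summing over the induced frame, which is of course harmless since one simply shrinks $\epsilon$ accordingly.
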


\begin{cor}
 \label{corBruOter}
 Let $(X,\omega)$ be a compact K\"ahler manifold of (complex) dimension $n$ and let $\frak{E}=(E,\phi)$ be a 
 Higgs bundle of rank $r$ over $X.$ If $\mathrm{deg}(\frak{E})<0,$ then $\frak{E}$ has no nonzero 
 $\phi\text{-invariant}$ sections.
\end{cor}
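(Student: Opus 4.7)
The plan is to argue by contradiction: from a nonzero $\phi$-invariant section I will produce a rank-one Higgs subsheaf of $\frak{E}$ of nonnegative degree, which will collide with $\mathrm{deg}(\frak{E})<0$ via the $\omega$-semistability of $\frak{E}$. Semistability is not listed among the hypotheses of the corollary, but it is precisely what the Bruzzo--Gra\~na Otero theorem cited in the introduction supplies for any Higgs bundle admitting an approximate Hermitian-Yang-Mills structure, which is the running assumption of this section (and which is preserved by the tensor/direct-sum constructions of Proposition~\ref{approximatetensorproduct}).

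Concretely, suppose $s\in\Gamma(X,E)$ is $\phi$-invariant and nonzero, so that $\phi(s)=s\otimes\lambda$ for some $\lambda\in\Gamma(X,\Omega^1_X)$. The $\mathcal{O}_X$-linear evaluation map $\mathcal{O}_X\to E$, $1\mapsto s$, has kernel a torsion subsheaf of $\mathcal{O}_X$, hence zero, and its image $L\subseteq E$ is therefore a rank-one subsheaf isomorphic to $\mathcal{O}_X$. The identity $\phi(s)=s\otimes\lambda$ reads $\phi(L)\subseteq L\otimes\Omega^1_X$, so $(L,\phi|_L)$ is a Higgs subsheaf of $\frak{E}$ with $\mathrm{deg}(L)=0$.

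I would then pass to the saturation $\overline{L}\subseteq E$ of $L$, that is, the unique coherent subsheaf of $E$ containing $L$ with $E/\overline{L}$ torsion-free. This gives $\mathrm{deg}(\overline{L})\geq\mathrm{deg}(L)=0$; moreover, by the singularity-set results recalled in Chapter 5, $L$ and $\overline{L}$ agree off a closed analytic subset of codimension $\geq 2$. Combining the $\mathcal{O}_X$-linearity of $\phi$ with a Hartogs-type extension argument for coherent sheaves, the inclusion $\phi(L)\subseteq L\otimes\Omega^1_X$ extends to $\phi(\overline{L})\subseteq\overline{L}\otimes\Omega^1_X$, so $(\overline{L},\phi|_{\overline{L}})$ is again a Higgs subsheaf of $\frak{E}$, of rank one, with $\mu(\overline{L})\geq 0>\mu(\frak{E})$.

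The contradiction with the $\omega$-semistability of $\frak{E}$ is then immediate. The main obstacle in this plan is the saturation step: one must check carefully that saturation preserves the Higgs subsheaf property, which rests on the codimension estimates for singularity sets of torsion-free coherent sheaves and on the $\mathcal{O}$-linearity of $\phi$. Once that is in hand, the rest reduces to the elementary slope comparison $0\leq\mu(\overline{L})\leq\mu(\frak{E})<0$.
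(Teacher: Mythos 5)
You have correctly identified the implicit hypothesis (that $\frak{E}$ admits an approximate Hermitian-Yang-Mills structure, which the statement omits but the paper's own proof uses), but your route is circular in the context where this corollary lives. Corollary \ref{corBruOter} is precisely the engine of the implication $(2)\Rightarrow(3)$ in Theorem \ref{teorema53} --- the statement that an approximate Hermitian-Yang-Mills structure forces $\omega$-semistability, i.e.\ the Bruzzo--Gra\~{n}a Otero theorem of \cite{BRU}, whose proof the paper reproduces by applying this corollary to $\frak{G}=\bigwedge^{p}\frak{E}\otimes(\det\frak{F})^{-1}$. Your plan derives the semistability of $\frak{E}$ from that very theorem and then uses it to exclude the $\phi$-invariant section; you are therefore assuming the conclusion that this corollary exists to establish. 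The point of the corollary is exactly to get the degree inequality \emph{without} semistability, directly from the metric.

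The paper's proof is short and purely differential-geometric: since $\mathrm{deg}(\frak{E})<0$ the constant $c$ in \eqref{cdefinition} is negative, so for $\epsilon$ small enough the bound $-\epsilon h_{\epsilon}<\mathcal{K}_{h_{\epsilon}}-cI_E<\epsilon h_{\epsilon}$ makes the Hitchin--Simpson mean curvature $\mathcal{K}_{h_{\epsilon}}$ negative definite, and a Weitzenb\"ock-type vanishing theorem for Higgs bundles (Theorem 3.2 in \cite{CAS}) then kills any nonzero $\phi$-invariant section. That vanishing theorem --- negative definite mean curvature implies no $\phi$-invariant sections, via the maximum principle applied to $h_\epsilon(s,s)$ --- is the key idea missing from your proposal. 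Separately, even granting your scheme, the case $\mathrm{rk}(\frak{E})=1$ is not covered: there your rank-one subsheaf $\overline{L}$ has full rank, so semistability says nothing about it and you would need the direct observation $\mathrm{deg}(E)\geq\mathrm{deg}(L)=0$ coming from the torsion quotient. Your sheaf-theoretic construction (saturating the image of $\mathcal{O}_X\to E$ and checking that saturation preserves the Higgs condition) is sound and is essentially what the paper does elsewhere, but it belongs to the proof of Theorem \ref{teorema53}, not of this corollary.
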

\begin{proof}
 Suppose $\frak{E}$ admits an approximate Hermitian-Yang-Mills structure, then for each $\epsilon$ there exists 
 a Hermitian metric $h_{\epsilon}$ on $\frak{E}$ such that
 \begin{equation*}
  \max_{X}|\mathcal{K}_{h_{\epsilon}}-cI_E|<\epsilon,
 \end{equation*}
 where $\mathcal{K}_{h_{\epsilon}}$ is the Hitchin-Simpson mean curvature endomorphism associated with 
 $h_{\epsilon}.$

This implies that the mean curvature, seen as a Hermitian form, satisfies the following inequality
\begin{equation*}
 -\epsilon h_{\epsilon}<\mathcal{K}_{h_{\epsilon}}-cI_E<\epsilon h_{\epsilon}.
\end{equation*}
If we assume that $\mathrm{deg}(\frak{E})<0,$ then $c<0$ in the above inequality and then for some sufficient 
small $\epsilon$ the mean curvature $\mathcal{K}_{h_{\epsilon}}$ is negative definite. Then 
the result follows from a vanishing Theorem for Higgs bundles. (See Theorem $3.2$ in \cite{CAS} for details).
\end{proof}

\section{The Donaldson functional}

Let $(X,\omega)$ be a compact K\"ahler manifold of (complex) dimension $n$ and
let $\frak{E}=(E,\phi)$ a be Higgs bundle of rank $r$ $X.$ Let $h,k\in\text{Herm}^{+}(\frak{E}).$ Since 
$\text{Herm}^{+}(\frak{E})$ is a connected Riemannian manifold (see Chapter VI in \cite{KOB} for more details),
we connect $h$ and $k$ by a curve $h_t,0\leq t\leq1,$ in $\text{Herm}^{+}(\frak{E})$ so that
$h_0=k$ and $h_1=h.$ We set
\begin{equation*}
 \mathcal{Q}_1=\ln(\det(k^{-1}h)),\hspace{0.5cm}\mathcal{Q}_2=i\int_{0}^{1}
 \mathrm{tr}(v_t\cdot\mathcal{R}_t)\mathrm{d}t.
\end{equation*}
where $v_t=h_{t}^{-1}\partial_th_t$ and $\mathcal{R}_t$ denotes the curvature of the Hitchin-Simpson connection
associated with $h_t.$ Since $h$ and $k$ are Hermitian structures, $\det(k^{-1}h)$ is a strictly positive real 
function.

Notice that $\mathcal{Q}_1(h,k)$ does not involve the curve $h_t.$ On the other hand to define 
$\mathcal{Q}_2(h,k)$ we use explicitly the curve $h_t.$

\begin{defin}
Let $(X,\omega)$ be a compact K\"ahler manifold of (complex) dimension $n$ and 
 let $\frak{E}=(E,\phi)$ be a Higgs bundle of rank $r$ over $X.$
Let \linebreak$h,k\in\text{Herm}^{+}(\frak{E}).$
We define the Donaldson functional by
\begin{equation}
 \label{donaldsonfunctional}
 \mathcal{L}(h,k)=\int_{X}\left[\mathcal{Q}_2(h,k)-\frac{c}{n}\mathcal{Q}_1(h,k)\omega\right]
 \wedge\frac{\omega^{n-1}}{(n-1)!},
\end{equation}
where $c$ is, as usual, the constant given by
\begin{equation*}
 c=\frac{2\pi\mu(\frak{E})}{(n-1)!\mathrm{Vol}(X)}.
\end{equation*}
\end{defin}

\begin{note}
 Notice that the $(2,0)$ and $(0,2)$ components of $\mathcal{R}_t$ do not contribute to $\mathcal{L}(h,k).$
 In fact, if $\mathcal{R}_h$ is the curvature of the Hitchin-Simpson connection associated with the Hermitian 
 metric $h,$ from $D'_{h}(\phi)\wedge\omega^{n-1}=D''(\overline{\phi_{h}})\wedge\omega^{n-1}=0$ we have
 \begin{equation*}
 \begin{split}
  \mathcal{R}_h\wedge\omega^{n-1}&=(R_h+D'_h(\phi)+D''(\overline{\phi}_h)+[\phi,\overline{\phi}_h])
  \wedge\omega^{n-1}\\
                                 &=(R_h+[\phi,\overline{\phi}_h])\wedge\omega^{n-1}=
                                 \mathcal{R}_{h}^{1,1}\wedge\omega^{n-1}.
 \end{split}
 \end{equation*}
\end{note}

We want to prove that the Donaldson functional does not depend on the curve joining the metrics $h$ an $k.$
First of all we prove the following result

\begin{lemma}
\label{holomorphicstokes}
Let $(X,\omega)$ be a compact K\"ahler manifold of (complex) dimension $n.$ Let 
 $\eta\in\mathrm{d}'A^{0,1}+\mathrm{d''}A^{1,0},$ one has
 \begin{equation*}
  \int_{X}\eta\wedge\omega^{n-1}=0.
 \end{equation*}
\end{lemma}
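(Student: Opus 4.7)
The plan is to invoke Stokes' theorem on the compact manifold $X$, after observing that wedging with $\omega^{n-1}$ collapses the relevant $d$ into the desired $d'$ or $d''$ component.

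By linearity it suffices to treat the two summands separately. Write $\eta = d'\alpha + d''\beta$ with $\alpha\in A^{0,1}$ and $\beta\in A^{1,0}$. First I would form the $(2n-1)$-form $\alpha\wedge\omega^{n-1}$, which has type $(n-1,n)$. Since $(X,\omega)$ is K\"ahler, $d\omega=0$, and therefore
\begin{equation*}
d(\alpha\wedge\omega^{n-1})=d\alpha\wedge\omega^{n-1}=d'\alpha\wedge\omega^{n-1}+d''\alpha\wedge\omega^{n-1}.
\end{equation*}
The second term vanishes for bidegree reasons: $d''\alpha\in A^{0,2}$, so $d''\alpha\wedge\omega^{n-1}$ would have type $(n-1,n+1)$, which is zero on an $n$-dimensional complex manifold. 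Hence $d(\alpha\wedge\omega^{n-1})=d'\alpha\wedge\omega^{n-1}$, and Stokes' theorem (applied to the compact manifold $X$ without boundary) gives $\int_X d'\alpha\wedge\omega^{n-1}=0$.

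The argument for $\beta$ is symmetric: $\beta\wedge\omega^{n-1}$ has type $(n,n-1)$, the term $d'\beta\wedge\omega^{n-1}$ has type $(n+1,n-1)=0$, so $d(\beta\wedge\omega^{n-1})=d''\beta\wedge\omega^{n-1}$, and Stokes again yields $\int_X d''\beta\wedge\omega^{n-1}=0$. Adding the two identities gives $\int_X\eta\wedge\omega^{n-1}=0$, as required.

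There is essentially no obstacle here: the two inputs (the K\"ahler condition $d\omega=0$ and the vanishing of $(p,q)$-forms with $p>n$ or $q>n$) do all the work, and Stokes' theorem is applicable because $X$ is compact without boundary. The only point to be careful about is keeping track of bidegrees so that the ``wrong-type'' component is discarded correctly in each of the two cases.
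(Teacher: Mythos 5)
Your proof is correct and follows essentially the same route as the paper's: use $\mathrm{d}\omega=0$ to pull the exterior derivative outside the wedge with $\omega^{n-1}$, discard the wrong-bidegree components, and apply Stokes' theorem on the compact boundaryless manifold $X$. In fact you make explicit the bidegree bookkeeping (why $\mathrm{d''}\alpha\wedge\omega^{n-1}$ and $\mathrm{d}'\beta\wedge\omega^{n-1}$ vanish) that the paper's proof passes over silently, so nothing is missing.
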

\begin{proof}
 We write $\eta=\mathrm{d}(\alpha+\beta)=\mathrm{d}'\alpha+\mathrm{d''}\beta$ where $\alpha\in A^{1,0}$ and
 $\beta\in A^{0,1}.$ Since $X$ is K\"ahler, i.e., $\mathrm{d}\omega=0,$ we have
 \begin{equation*}
  \mathrm{d}[(\alpha+\beta)\wedge\omega^{n-1}]=[\mathrm{d}(\alpha+\beta)]\wedge\omega^{n-1}=
  \eta\wedge\omega^{n-1}.
 \end{equation*}
From Stokes' Theorem, since $\partial X=\emptyset$ we conclude
\begin{equation*}
 \int_{X}\eta\wedge\omega^{n-1}=\int_{X}\mathrm{d}[(\alpha+\beta)\wedge\omega^{n-1}]=
 \int_{\partial X}(\alpha+\beta)\wedge\omega^{n-1}=0.
\end{equation*}
\end{proof}

The following Lemma and the subsequent Propositions are straightforward generalizations of a result of Kobayashi
(see \cite{KOB}, Chapter VI) to the Higgs case. Part of the proof is similar to the proof presented in
\cite{KOB}. However, some differences arise because of the term involving the commutator of the Higgs field
in the Hitchin-Simpson curvature.

\begin{lemma}
 Let $h_t$ (for $a\leq t\leq b$) be any differentiable curve in $\text{Herm}^{+}(\frak{E})$ and $k$ any fixed
 Hermitian structure of $\frak{E}.$ Then the $(1,1)$ component of
 \begin{equation*}
  i\int_{0}^{1}\mathrm{tr}(v_t\mathcal{R}_t)\mathrm{d}t+\mathcal{Q}_2(h_a,k)-\mathcal{Q}_2(h_b,k)
 \end{equation*}
is an element in $\mathrm{d}'A^{0,1}+\mathrm{d''}A^{1,0}.$
\end{lemma}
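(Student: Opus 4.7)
The goal is to show that the value of the path-integral $i\int_a^b \mathrm{tr}(v_t \mathcal{R}_t)\,dt$ depends on the connecting curve $h_t$ only modulo the $(1,1)$-subspace $\mathrm{d}'A^{0,1}+\mathrm{d''}A^{1,0}$; combined with Lemma \ref{holomorphicstokes}, this will imply that the Donaldson functional $\mathcal{L}(h,k)$ is well-defined, independent of the chosen curve. My plan is to verify this by a first-variation calculation under a deformation of $h_t$, allowing the endpoints to move and showing that their motion is exactly absorbed by the boundary contributions $\mathcal{Q}_2(h_a,k)$, $\mathcal{Q}_2(h_b,k)$ of the stated expression.

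Concretely, I would extend $h_t$ to a two-parameter family $h_{s,t}$ and introduce the Hermitian endomorphisms $u=h^{-1}\partial_s h$ and $v=h^{-1}\partial_t h$. Equality of mixed partials gives the identity $\partial_t u - \partial_s v = [u,v]$. The key variation formula, generalizing the classical $\delta R_h = D''D'_h u$, is
\begin{equation*}
\partial_s \mathcal{R}_t = \mathcal{D}''(\mathcal{D}'_h u),
\end{equation*}
which packages together the variation of the Chern curvature and that of the commutator $[\phi,\overline{\phi}_h]$, using $\phi\wedge\phi=0$ and the holomorphicity of $\phi$. Plugging these into
\begin{equation*}
\partial_s\mathrm{tr}(v_t\mathcal{R}_t) = \mathrm{tr}((\partial_t u-[u,v])\mathcal{R}_t) + \mathrm{tr}(v_t\mathcal{D}''\mathcal{D}'_h u),
\end{equation*}
cyclic invariance of the trace disposes of the $[u,v]$-commutator, while integration by parts in $t$ rewrites $\mathrm{tr}(\partial_t u\cdot\mathcal{R}_t)$ as $\partial_t\mathrm{tr}(u\mathcal{R}_t)-\mathrm{tr}(u\partial_t\mathcal{R}_t)$; the resulting $t$-boundary terms at $t=a$ and $t=b$ match, up to sign, the $s$-variations of $\mathcal{Q}_2(h_a,k)$ and $\mathcal{Q}_2(h_b,k)$.

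The remaining bulk integrand can then be reorganized via the Bianchi identity $\mathcal{D}\mathcal{R}=0$ and the graded Leibniz rule for $\mathcal{D}'$ and $\mathcal{D}''$ applied to trace forms (which kill commutators) into an expression of the form $\mathrm{d}'\alpha+\mathrm{d''}\beta$ with $\alpha\in A^{0,1}$ and $\beta\in A^{1,0}$, yielding the claim. The main technical obstacle, absent from Kobayashi's classical treatment, is the term $[\phi,\overline{\phi}_h]$ in the Hitchin-Simpson curvature, whose dependence on $h$ through $\overline{\phi}_h$ generates extra variation pieces. Handling these cleanly requires working uniformly with the Hitchin-Simpson connection $\mathcal{D}=\mathcal{D}'+\mathcal{D}''$ rather than decomposing into the Chern and $\phi$-contributions at each step, so that Bianchi applies directly; a subsidiary bookkeeping issue is keeping track of which summands of the trace forms have pure type $(1,1)$ (the ones relevant to the assertion) versus $(2,0)$ or $(0,2)$, which will ultimately be eliminated by wedging with $\omega^{n-1}$.
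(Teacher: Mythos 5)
Your proposal is correct and follows essentially the same route as the paper: a two-parameter family $h_{s,t}$ interpolating between the constant curve at $k$ and the given curve, the identity $\partial_t u-\partial_s v=[u,v]$, the variation formula for the Hitchin-Simpson curvature, and an integration by parts (Stokes on the parameter rectangle) whose boundary terms reproduce $\mathcal{Q}_2(h_a,k)$ and $\mathcal{Q}_2(h_b,k)$, leaving a bulk term in $\mathrm{d}'A^{0,1}+\mathrm{d''}A^{1,0}$. The only (minor) divergence is that you keep the Higgs contribution packaged inside $\mathcal{D}=\mathcal{D}'_h+\mathcal{D}''$ and invoke its Bianchi identity, whereas the paper decomposes $\mathcal{R}=R+[\phi,\overline{\phi}_h]$, computes $\partial_s\overline{\phi}_h=[\overline{\phi}_h,u]$ explicitly, and cancels the extra traces by the Jacobi identity and cyclicity of the trace.
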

\begin{proof}
 Following \cite{KOB}, we consider the domain $\Delta$ in $\mathbb{R}^2$ defined by
 \begin{equation*}
  \Delta=\{(t,s)|a\leq t\leq b,0\leq s\leq1\},
 \end{equation*}
and let $h:\Delta\longrightarrow\text{Herm}^{+}(\frak{E})$ be a smooth mapping such that $h(t,0)=k$ and
$h(t,1)=h_t$ for $a\leq t\leq b.$ Let $h(a,s)$ and $h(b,s)$ line segments from $k$ to $h_a$ and from
$h_b$ to $k,$ respectively. There is a simple expression for $h$ given by
$h(t,s)=sh_t+(1-s)k.$
Define the endomorphisms $u=h^{-1}\partial_sh,$ $v=h^{-1}\partial_th$ and we put
\begin{equation*}
 \mathcal{R}=\mathrm{d''}(h^{-1}\mathrm{d}'h)+[\phi,\overline{\phi}_h]
\end{equation*}
and
\begin{equation*}
 \Psi=i\mathrm{tr}[h^{-1}\tilde{\mathrm{d}}h\mathcal{R}],
\end{equation*}
where $\tilde{\mathrm{d}}=\partial_sh\mathrm{d}s+\partial_th\mathrm{d}t$ is the exterior 
differential of the smooth mapping $h$ in the domain $\Delta.$
Hence $\Psi$ can be written in the form
\begin{equation}
\label{psistokes}
 \Psi=i\mathrm{tr}[(u\mathrm{d}s+v\mathrm{d}t)\mathcal{R}].
\end{equation}
Applying Stokes' Theorem to $\Psi$ (which is considered here as a $1\text{-form}$ in the domain $\Delta$) we get
\begin{equation*}
 \int_{\Delta}\tilde{\mathrm{d}}\Psi=\int_{\partial\Delta}\Psi.
\end{equation*}
The right hand side of the above expression can be computed straightforwardly from definition.
In fact, after a short computation we obtain
\begin{equation*}
 \begin{split}
  \int_{\partial\Delta}\Psi&=
   -\int_{t=a}^{t=b}\left.\Psi\right|_{s=0}+\int_{s=0}^{s=1}\left.\Psi\right|_{t=a}+
   \int_{t=a}^{t=b}\left.\Psi\right|_{s=1}-\int_{s=0}^{s=1}\left.\Psi\right|_{t=b}=\\
                          &=i\int_{a}^{b}\mathrm{tr}(v_t\mathcal{R}_t)\mathrm{d}t+
                          \mathcal{Q}_2(h_a,k)-\mathcal{Q}_2(h_b,k).
 \end{split}
\end{equation*}
Therefore, we need to show that the left hand side of \eqref{psistokes} is an element of 
$\mathrm{d}'A^{0,1}+\mathrm{d''}A^{1,0},$ and hence, it suffices to show that
$\tilde{\mathrm{d}}\Psi\in\mathrm{d}'A^{0,1}+\mathrm{d''}A^{1,0}.$
Now, from the definition of $\Psi$ we have
\begin{equation*}
 \begin{split}
  \tilde{\mathrm{d}}\Psi&=i\mathrm{tr}[\tilde{\mathrm{d}}(u\mathrm{d}s+v\mathrm{d}t)\mathcal{R}-
                                    (u\mathrm{d}s+v\mathrm{d}t)\tilde{\mathrm{d}}\mathcal{R}]=\\
                        &=i\mathrm{tr}[(\partial_sv-\partial_tu)\mathcal{R}-u\partial_t\mathcal{R}+
                           v\partial_s\mathcal{R}]\mathrm{d}s\wedge\mathrm{d}t.
 \end{split}
\end{equation*}
On the other hand, a simple computation shows that
\begin{equation*}
 \begin{split}
  \partial_tu&=-vu+h^{-1}\partial_t\partial_sh,\hspace{0.5cm}\partial_sv=-uv+h^{-1}\partial_s\partial_th,\\
  \partial_t\mathcal{R}&=\mathrm{d''}D'v+[\phi,\partial_t\overline{\phi}_h],\hspace{0.5cm}
  \partial_t\mathcal{R}=\mathrm{d''}D'u+[\phi,\partial_s\overline{\phi}_h].
 \end{split}
\end{equation*}
Replacing these expressions in the formula for $\tilde{\mathrm{d}}\Psi$ and writing 
$\mathcal{R}=R+[\phi,\overline{\phi}_h]$ (since the  $(2,0)$ and $(0,2)$ components do not contribute), we have
\begin{equation*}
 \begin{split}
  \tilde{\mathrm{d}}\Psi&=i\mathrm{tr}[(vu-uv)R-u\mathrm{d''}D'v+v\mathrm{d''}D'u]\mathrm{d}s\wedge\mathrm{d}t+\\
                        &+i\mathrm{tr}[v[\phi,\partial_s\overline{\phi}_h]-u[\phi,\partial_t\overline{\phi}_h]]+
                          (vu-uv)[\phi,\overline{\phi}_h]\mathrm{d}s\wedge\mathrm{d}t.
 \end{split}
\end{equation*}
The first trace in the expression above does not depend on the Higgs field $\phi$ (in fact, it is the same 
expression that it is found in \cite{KOB} for the classical case). Then we only have to show that the second trace
is identically zero.

First of all, we need explicit expressions for $\partial_t\overline{\phi}_h$ and $\partial_s\overline{\phi}_h.$
Omitting the parameter $t$ for simplicity, from \cite{SIM} we know that 
\begin{equation*}
 \overline{\phi}_{h_s+\delta s}=u_{0}^{-1}\overline{\phi}_{h_s}u_0=\overline{\phi}_{h_s}+
 u_{0}^{-1}[\overline{\phi}_{h_s},u_0]
\end{equation*}
where $u_0$ is a selfadjoint endomorphism such that $h_{s+\delta s}=h_s+u_0.$ Now
\begin{equation*}
 h_{s+\delta s}=h_s+\partial_sh_s+\mathcal{O}(\delta s^2)
\end{equation*}
and hence, at the first order in $\delta s,$ we obtain $u_0=1+\delta s$ and consequently 
$\partial_s\overline{\phi}_h=[\overline{\phi}_h,u].$ In a similar way we obtain the formula
$\partial_t\overline{\phi}_h=[\overline{\phi}_h,v].$ Therefore, using these relations, the Jacobi identity and 
the cyclic property of the trace, we see that the second trace is identically zero. On the other hand, the term
involving the curvature $R$ can be written in terms of $u,$ $v$ and their covariant derivatives. So, finally, we 
get
\begin{equation*}
 \tilde{\mathrm{d}}\Psi=-i\mathrm{tr}[vD'\mathrm{d''}u+u\mathrm{d''}D'v]\mathrm{d}s\wedge\mathrm{d}t.
\end{equation*}
As it is shown in \cite{KOB} in the classical case, defining the $(0,1)\text{-form}$
$\alpha=i\mathrm{tr}[v\mathrm{d''}u]$ we finally obtain
\begin{equation*}
 \tilde{\mathrm{d}}\Psi=-[\mathrm{d}'\alpha+\mathrm{d''}\overline{\alpha}+
                          i\mathrm{d''}\mathrm{d'}\mathrm{tr}(vu)]
 \mathrm{d}s\wedge\mathrm{d}t
\end{equation*}
and hence $\tilde{\mathrm{d}}\Psi$ is an element of $\mathrm{d}'A^{0,1}+\mathrm{d''}A^{1,0}.$
\end{proof}

As a consequence of the above Lemma we have an important result for piecewise differentiable closed curves.
Namely, we have

\begin{prop}
 Let $h_t,\alpha\leq t\leq\beta,$ be a piecewise differentiable closed curve in $\text{Herm}^{+}(\frak{E}).$
 Then
 \begin{equation*}
  i\int_{\alpha}^{\beta}\mathrm{tr}(v_t\cdot\mathcal{R}_{t}^{1,1})\mathrm{d}t=0\hspace{0.5cm}\text{ mod }
  \mathrm{d}'A^{0,1}+\mathrm{d''}A^{1,0}.
 \end{equation*}
\end{prop}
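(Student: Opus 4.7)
The plan is to deduce this proposition as a direct consequence of the previous lemma by the standard telescoping argument for closed curves. The previous lemma tells us that for any differentiable curve $h_t$, $a \leq t \leq b$, and any fixed reference Hermitian structure $k$, the expression
\begin{equation*}
i\int_{a}^{b} \mathrm{tr}(v_t \cdot \mathcal{R}_t^{1,1})\,\mathrm{d}t + \mathcal{Q}_2(h_a,k) - \mathcal{Q}_2(h_b,k)
\end{equation*}
lies in $\mathrm{d}'A^{0,1}+\mathrm{d''}A^{1,0}$ (the passage to the $(1,1)$ component is automatic since $\mathrm{d}'A^{0,1}, \mathrm{d''}A^{1,0} \subseteq A^{1,1}$).

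First, I would fix an arbitrary auxiliary Hermitian structure $k \in \mathrm{Herm}^{+}(\frak{E})$ (which exists by a partition of unity argument). Given the piecewise differentiable closed curve $h_t$, $\alpha \leq t \leq \beta$, I would choose a partition $\alpha = t_0 < t_1 < \cdots < t_N = \beta$ such that the restriction of $h_t$ to each subinterval $[t_i, t_{i+1}]$ is differentiable. The previous lemma can then be applied on each subinterval, producing for every $i = 0, \ldots, N-1$:
\begin{equation*}
i\int_{t_i}^{t_{i+1}} \mathrm{tr}(v_t \cdot \mathcal{R}_t^{1,1})\,\mathrm{d}t + \mathcal{Q}_2(h_{t_i},k) - \mathcal{Q}_2(h_{t_{i+1}},k) \;\in\; \mathrm{d}'A^{0,1}+\mathrm{d''}A^{1,0}.
\end{equation*}

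Next I would sum these $N$ relations. Since $\mathrm{d}'A^{0,1}+\mathrm{d''}A^{1,0}$ is a vector subspace of $A^{1,1}$, the sum still lies in it, and the integrals add up to $i\int_{\alpha}^{\beta} \mathrm{tr}(v_t \cdot \mathcal{R}_t^{1,1})\,\mathrm{d}t$. The boundary contributions telescope to $\mathcal{Q}_2(h_\alpha, k) - \mathcal{Q}_2(h_\beta, k)$. The closedness hypothesis $h_\alpha = h_\beta$ forces this difference to vanish, since $\mathcal{Q}_2(h,k)$ depends only on the endpoint pair once $k$ is fixed (indeed, the very fact that $\mathcal{Q}_2$ descends to a function of the endpoints modulo $\mathrm{d}'A^{0,1}+\mathrm{d''}A^{1,0}$ is essentially what the previous lemma encodes). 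Hence
\begin{equation*}
i\int_{\alpha}^{\beta} \mathrm{tr}(v_t \cdot \mathcal{R}_t^{1,1})\,\mathrm{d}t \;\equiv\; 0 \quad \text{mod } \mathrm{d}'A^{0,1}+\mathrm{d''}A^{1,0},
\end{equation*}
which is the desired conclusion.

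There is essentially no hard step here: the previous lemma does all the analytic work, and the only subtlety is checking that $\mathcal{Q}_2(h,k)$ is well defined as an element modulo $\mathrm{d}'A^{0,1}+\mathrm{d''}A^{1,0}$ independent of the connecting path; this is needed to justify the telescoping when the closed curve is only piecewise differentiable and passes through a vertex $h_{t_i}$ shared by two adjacent pieces. This independence is itself an immediate corollary of the previous lemma applied to the concatenation of any two paths between the same endpoints, so no additional input is required.
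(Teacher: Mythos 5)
Your proof is correct and follows essentially the same route as the paper: partition the closed curve into its differentiable pieces, apply the preceding lemma on each with a fixed auxiliary metric $k$, and sum. You are in fact more careful than the paper's own write-up, which asserts that each sub-integral separately vanishes modulo $\mathrm{d}'A^{0,1}+\mathrm{d''}A^{1,0}$ without accounting for the $\mathcal{Q}_2$ boundary terms; your telescoping step, using $h_{\alpha}=h_{\beta}$ to cancel them, supplies the justification that the paper leaves implicit.
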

\begin{proof}
 Let $\alpha=a_0<a_1<\cdots<a_p=\beta$ be the values of $t$ where the curve $h_t$ is not differentiable. Now take
 a fixed metric $k$ in $\text{Herm}^{+}(\frak{E}).$ We have
 \begin{equation*}
  i\int_{\alpha}^{\beta}\mathrm{tr}(v_t\cdot\mathcal{R}_{t}^{1,1})\mathrm{d}t=
  \sum_{j=1}^{p} i\left(\int_{a_{j-1}}^{a_j}\mathrm{tr}(v_t\cdot\mathcal{R}_{t}^{1,1})\mathrm{d}t\right).
 \end{equation*}
From the previous Lemma, for each $j=1,\ldots,p,$ we have 
\begin{equation*}
 \int_{a_{j-1}}^{a_j}\mathrm{tr}(v_t\cdot\mathcal{R}_{t}^{1,1})\mathrm{d}t=0\hspace{0.5cm}\text{ mod }
 \mathrm{d}'A^{0,1}+\mathrm{d''}A^{1,0},
\end{equation*}
and this completes the proof.
\end{proof}

\begin{cor}
 The Donaldson functional $\mathcal{L}(h,k)$ does not depend on the curve joining the Hermitian metrics $h$ and
 $k$ in $\text{Herm}^{+}(\frak{E}).$
\end{cor}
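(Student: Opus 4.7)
The plan is to reduce this corollary to the preceding proposition by the standard device of concatenating two competing curves into a closed loop. Observe first that the term $\mathcal{Q}_1(h,k)=\ln(\det(k^{-1}h))$ is manifestly intrinsic to the endpoints and does not enter the discussion at all; only the contribution $\int_X \mathcal{Q}_2(h,k)\wedge \omega^{n-1}/(n-1)!$ needs to be shown curve-independent.

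So I would take two piecewise differentiable curves $h_t,\tilde h_t:[0,1]\to \text{Herm}^{+}(\frak{E})$ with common endpoints $h_0=\tilde h_0=k$ and $h_1=\tilde h_1=h$. Concatenating $h_t$ with the reverse of $\tilde h_t$ produces a piecewise differentiable closed curve $\gamma$ in $\text{Herm}^{+}(\frak{E})$. By the definition of $\mathcal{Q}_2$, the difference $\mathcal{Q}_2^{(h_t)}(h,k)-\mathcal{Q}_2^{(\tilde h_t)}(h,k)$ equals exactly the closed-loop integral $i\int_\gamma \mathrm{tr}(v_t\cdot \mathcal{R}_t)\,\mathrm{d}t$. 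Since wedging with $\omega^{n-1}$ annihilates the $(2,0)$ and $(0,2)$ components of $\mathcal{R}_t$ (as noted in the remark following the definition of $\mathcal{L}$), only the $(1,1)$ part contributes, and the previous proposition states precisely that
\begin{equation*}
i\int_\gamma \mathrm{tr}(v_t\cdot \mathcal{R}_t^{1,1})\,\mathrm{d}t\equiv 0\pmod{\mathrm{d}'A^{0,1}+\mathrm{d''}A^{1,0}}.
\end{equation*}

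Finally, I would invoke Lemma \ref{holomorphicstokes}: any element of $\mathrm{d}'A^{0,1}+\mathrm{d''}A^{1,0}$ integrates to zero against $\omega^{n-1}$ over the compact K\"ahler manifold $X$, since $\mathrm{d}\omega=0$ allows one to rewrite the integrand as a total differential and then apply Stokes' Theorem with $\partial X=\emptyset$. Hence the difference of the two expressions for $\mathcal{L}(h,k)$ built from the two curves vanishes, proving the claim. There is no real obstacle here, as all the analytical work has been packaged into the preceding Lemma and Proposition; the only step that requires care is verifying that the concatenation is indeed piecewise differentiable so that the previous proposition (which explicitly allows piecewise differentiable closed loops) applies.
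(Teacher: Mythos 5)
Your proposal is correct and follows essentially the same route as the paper's own proof: reduce to a piecewise differentiable closed loop by concatenating the two curves, invoke the preceding Proposition to conclude the loop integral vanishes modulo $\mathrm{d}'A^{0,1}+\mathrm{d''}A^{1,0}$, and finish with Lemma \ref{holomorphicstokes}. The only cosmetic difference is that you explicitly single out the $(1,1)$ component and the piecewise-differentiability of the concatenation, both of which the paper handles implicitly.
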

\begin{proof}
 Let $\gamma_1$ and $\gamma_2$ be two differentiable curves from $h$ to $k,$ and let 
 $\mathcal{L}_{\gamma_1}(h,k)$ and $\mathcal{L}_{\gamma_2}(h,k)$ be the Donaldson functionals computed along
 the curves $\gamma_1$ and $\gamma_2,$ respectively. We have to show that
 \begin{equation*}
  \mathcal{L}_{\gamma_1}(h,k)=\mathcal{L}_{\gamma_2}(h,k).
 \end{equation*}
 From the definition of Donaldson functional we have
\begin{equation*}
 \begin{split}
  \mathcal{L}_{\gamma_1}(h,k)-\mathcal{L}_{\gamma_2}(h,k)&=
  \int_{X}\left[\mathcal{Q}_{2}^{\gamma_1}(h,k)-\frac{c}{n}\mathcal{Q}_{1}^{\gamma_1}(h,k)\omega\right]
  \wedge\frac{\omega^{n-1}}{(n-1)!}+\\
                                                         &-
  \int_{X}\left[\mathcal{Q}_{2}^{\gamma_2}(h,k)-\frac{c}{n}\mathcal{Q}_{1}^{\gamma_2}(h,k)\omega\right]
  \wedge\frac{\omega^{n-1}}{(n-1)!}=\\                                                
                                                         &=
  \int_{X}\left[(\mathcal{Q}_{2}^{\gamma_1}-\mathcal{Q}_{2}^{\gamma_2})-
  \frac{c}{n}(\mathcal{Q}_{1}^{\gamma_1}-\mathcal{Q}_{1}^{\gamma_2})\omega\right]
  \wedge\frac{\omega^{n-1}}{(n-1)!}.
 \end{split}
\end{equation*}
Since 
\begin{equation*}
\mathcal{Q}_{1}^{\gamma_1}(h,k)=\mathcal{Q}_{1}^{\gamma_2}(h,k)=\ln(\det(k^{-1}h)),
\end{equation*}
we have
\begin{equation*}
\begin{split}
  \mathcal{L}_{\gamma_1}(h,k)-\mathcal{L}_{\gamma_2}(h,k)&=\int_{X}[(\mathcal{Q}_{2}^{\gamma_1}(h,k)-
 \mathcal{Q}_{2}^{\gamma_2}(h,k))-]\wedge\frac{\omega^{n-1}}{(n-1)!}=\\
							   &=\int_{X}\left[
i\int_{\gamma_1}\mathrm{tr}(v_t\cdot\mathcal{R}_t)\mathrm{d}t-i\int_{\gamma_2}\mathrm{tr}
(v_t\cdot\mathcal{R}_t)\mathrm{d}t\right]\wedge\frac{\omega^{n-1}}{(n-1)!}=\\
&=
\int_{X}\left[i\int_{\gamma_1-\gamma_2}\mathrm{tr}(v_t\cdot\mathcal{R}_t)\mathrm{d}t\right]
\wedge\frac{\omega^{n-1}}{(n-1)!}.
\end{split}
\end{equation*}
But $\gamma_1-\gamma_2$ is a piecewise differentiable closed curve in $\text{Herm}^{+}(\frak{E}),$ then by the
previous Proposition 
\begin{equation*}
i\int_{\gamma_1-\gamma_2}\mathrm{tr}(v_t\cdot\mathcal{R}_t)=0\hspace{0.5cm}\text{ mod }
\mathrm{d}'A^{0,1}+\mathrm{d''}A^{1,0},
\end{equation*}
so from Lemma \ref{holomorphicstokes} we finally have
\begin{equation*}
 \mathcal{L}_{\gamma_1}(h,k)-\mathcal{L}_{\gamma_2}(h,k)=
 \int_{X}\left[i\int_{\gamma_1-\gamma_2}\mathrm{tr}(v_t\cdot\mathcal{R}_t)\mathrm{d}t\right]\wedge
 \frac{\omega^{n-1}}{(n-1)!}=0,
\end{equation*}
and this completes the proof.
\end{proof}

\begin{prop}
 For any Hermitian metric $h\in\text{Herm}^{+}(\frak{E})$ and any real constant $a>0,$ the Donaldson functional
 satisfies $\mathcal{L}(h,ah)=0.$
\end{prop}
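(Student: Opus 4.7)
The plan is to exploit the fact that the Donaldson functional is independent of the path joining $h$ and $ah$, and to choose a particularly convenient curve along which the Hitchin--Simpson curvature is constant.

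First I would connect $h$ and $ah$ by the conformal curve $h_t = f(t) h$ with $f : [0,1] \to (0,+\infty)$ smooth, $f(0) = a$ and $f(1) = 1$ (for instance $f(t) = a^{1-t}$). For such a curve $h_t^{-1} \partial_t h_t = (f'(t)/f(t)) I_E$, so $v_t$ is a scalar multiple of the identity depending only on $t$. Since $f(t)$ is constant on $X$, the local expression $\omega_{h_t}^{t} = \mathrm{d}' H_t H_t^{-1} = \mathrm{d}' H H^{-1}$ shows that the Hermitian connection is unchanged, $D_{h_t} = D_h$, and as established in the proof of the conformal rescaling Lemma one has $\overline{\phi}_{h_t} = \overline{\phi}_h$. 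Consequently the Hitchin--Simpson curvature $\mathcal{R}_t = R_h + D'_h(\phi) + D''(\overline{\phi}_h) + [\phi,\overline{\phi}_h] = \mathcal{R}_h$ is independent of $t$ along this curve.

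With this setup both pieces of the functional become elementary. For $\mathcal{Q}_1$ one has $\det((ah)^{-1}h) = a^{-r}$, where $r = \operatorname{rk}(\frak{E})$, so $\mathcal{Q}_1(h, ah) = -r \ln a$. For $\mathcal{Q}_2$,
\begin{equation*}
\mathcal{Q}_2(h, ah) = i\int_0^1 \frac{f'(t)}{f(t)}\,\operatorname{tr}(\mathcal{R}_h)\,\mathrm{d}t = -i\ln a \cdot \operatorname{tr}(\mathcal{R}_h).
\end{equation*}
Substituting into \eqref{donaldsonfunctional} yields
\begin{equation*}
\mathcal{L}(h, ah) = -\ln a \int_X \Bigl[i \operatorname{tr}(\mathcal{R}_h) - \tfrac{cr}{n}\,\omega\Bigr] \wedge \frac{\omega^{n-1}}{(n-1)!}.
\end{equation*}

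The final step is to verify that the bracketed integral vanishes. Using the Chern--Weil representation of $c_1(\frak{E})$ one has $i\operatorname{tr}(\mathcal{R}_h) = 2\pi\, c_1(\frak{E})$ at the level of forms, so the first term integrates to $\frac{2\pi}{(n-1)!}\deg(\frak{E})$, while $\int_X \omega^n/(n-1)! = n\,\mathrm{Vol}(X)$. Plugging in the definition $c = 2\pi \mu(\frak{E})/((n-1)!\,\mathrm{Vol}(X))$ from \eqref{cdefinition} gives $\frac{cr}{n} \cdot n\,\mathrm{Vol}(X) = \frac{2\pi \deg(\frak{E})}{(n-1)!}$, so the two contributions cancel and $\mathcal{L}(h, ah) = 0$. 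There is really no obstacle here; the only thing to verify carefully is the invariance $\overline{\phi}_{h_t} = \overline{\phi}_h$ and the triviality $D_{h_t} = D_h$ for a spatially constant conformal rescaling, both of which were already recorded earlier in the chapter.
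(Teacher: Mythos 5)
Your proof is correct and follows essentially the same route as the paper: both connect $ah$ to $h$ by the conformal curve $h_t=a^{1-t}h$, observe that $v_t$ is a scalar multiple of $I_E$ and that the relevant curvature data are unchanged along the curve (the paper records $\overline{\phi}_{h_t}=\overline{\phi}_h$ and the invariance of $\mathrm{d''}(h^{-1}\mathrm{d}'h)$ exactly as you do), compute $\mathcal{Q}_1(h,ah)=-r\ln a$ and $\mathcal{Q}_2(h,ah)=-i(\ln a)\,\mathrm{tr}\,R$, and conclude by the Chern--Weil cancellation coming from the definition of $c$. No gaps.
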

\begin{proof}
 Let $r=\mathrm{rk}(\frak{E})$ be the rank of the Higgs bundle $\frak{E}=(E,\phi).$ Clearly
 \begin{equation*}
  \mathcal{Q}_1(h,ah)=\ln\det[(ah)^{-1}h]=-r\ln a.
 \end{equation*}
Now, let us consider the curve $h_t=e^{\ln a(1-t)}h$ from $ah$ to $h.$ For this curve 
$v_t=h_{t}^{-1}\partial_th_t=-\ln aI_E$ and
\begin{equation*}
 \mathcal{R}_{t}^{1,1}=\mathrm{d''}(h_{t}^{-1}\mathrm{d}'h_t)+[\phi,\overline{\phi}_t]=
 \mathrm{d''}(h^{-1}\mathrm{d}'h)+[\phi,\overline{\phi}_t],
\end{equation*}
where $\overline{\phi}_t=\overline{\phi}_{h_t}$ is just an abbreviation. Therefore the $(1,1)$ component of
$\mathcal{Q}_2(h,ah)$ becomes
\begin{equation*}
 \mathcal{Q}_{2}^{1,1}(h,ah)=i\int_{0}^{1}\mathrm{tr}(v_t\cdot\mathcal{R}_{t}^{1,1})\mathrm{d}t=
 i\int_{0}^{1}\mathrm{tr}[-\ln a(R+[\phi,\overline{\phi}_t])]\mathrm{d}t=-i(\ln a)\mathrm{tr}R.
\end{equation*}
and hence, from the above formula we obtain
\begin{equation*}
 \begin{split}
  \mathcal{L}(h,ah)&=\int_{X}\left[\mathcal{Q}_{2}(h,ah)-\frac{c}{n}\mathcal{Q}_{1}(h,ah)\right]
  \wedge\frac{\omega^{n-1}}{(n-1)!}=\\
  &=\int_{X}\left[\mathcal{Q}_{2}^{1,1}(h,ah)-\frac{c}{n}\mathcal{Q}_{1}(h,ah)\right]
  \wedge\frac{\omega^{n-1}}{(n-1)!}=\\
  &=\int_{X}\left[-i(\ln{a})\mathrm{tr}R+\frac{c}{n}r\ln{a}\right]\wedge\frac{\omega^{n-1}}{(n-1)!}=\\
  &=-\frac{in\ln{a}}{n!}\int_{X}\mathrm{tr}R\wedge\omega^{n-1}+cr(\ln{a})\mathrm{Vol}(X)=\\
  &=-\frac{2\pi r}{(n-1)!}\ln{a}\frac{1}{r}\int_{X}\frac{i}{2\pi}\mathrm{tr}R\wedge\omega^{n-1}+
  cr(\ln{a})\mathrm{Vol}(X)=\\
  &=-\frac{2\pi r\ln{a}}{(n-1)!}\frac{1}{r}\int_{X}c_1(\frak{E})\wedge\omega^{n-1}+
  cr(\ln{a})\mathrm{Vol}(X)=\\
  &=-\frac{2\pi r\ln{a}}{(n-1)!}\mu(\frak{E})+\frac{2\pi\mu(\frak{E})}{(n-1)!\mathrm{Vol}(X)}
  r\ln{a}\mathrm{Vol}(X)=0.
 \end{split}
\end{equation*}
\end{proof}

\begin{lemma}
 For any differentiable curve $h_t$ in in the Riemannian manifold $\text{Herm}^{+}(\frak{E})$ and any fixed 
 point $k\in\text{Herm}^{+}(\frak{E})$ we have
 \begin{equation*}
  \begin{split}
   \partial_t\mathcal{Q}_1(h_t,k)&=\mathrm{tr}(v_t),\\
   \partial_t\mathcal{Q}_{2}^{1,1}(h_t,k)&=i\mathrm{tr}(v_t\cdot\mathcal{R}_{t}^{1,1})\hspace{0.5cm}\text{ mod }
   \mathrm{d}'A^{0,1}+\mathrm{d''}A^{1,0}.
  \end{split}
 \end{equation*}
\end{lemma}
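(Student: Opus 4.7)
The plan is to treat the two identities separately, since $\mathcal{Q}_1$ and $\mathcal{Q}_2$ are of different natures: $\mathcal{Q}_1(h_t,k) = \ln\det(k^{-1}h_t)$ is manifestly independent of any auxiliary curve, while $\mathcal{Q}_2$ is defined by integrating along such a curve and is well-defined only modulo the subspace $\mathrm{d}'A^{0,1} + \mathrm{d''}A^{1,0}$ (after restriction to the $(1,1)$ component).

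For the first identity, I would apply the classical Jacobi formula $\partial_t\ln\det A_t = \mathrm{tr}(A_t^{-1}\partial_t A_t)$ to the endomorphism $A_t = k^{-1}h_t$. Since $k$ is constant in $t$ one has $\partial_t A_t = k^{-1}\partial_t h_t$ and $A_t^{-1} = h_t^{-1}k$, so the product inside the trace collapses to $h_t^{-1}\partial_t h_t = v_t$, giving $\partial_t\mathcal{Q}_1(h_t,k) = \mathrm{tr}(v_t)$.

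For the second identity, the key input is the previous Lemma, applied to the restriction of the curve $h_s$ to a small interval $[t, t+\epsilon]$. That Lemma asserts that the $(1,1)$ component of
\[
i\int_t^{t+\epsilon}\mathrm{tr}(v_s\cdot\mathcal{R}_s)\,\mathrm{d}s + \mathcal{Q}_2(h_t,k) - \mathcal{Q}_2(h_{t+\epsilon},k)
\]
lies in $\mathrm{d}'A^{0,1} + \mathrm{d''}A^{1,0}$. Since $v_s$ has bidegree $(0,0)$, extracting $(1,1)$ components gives $[\mathrm{tr}(v_s\cdot\mathcal{R}_s)]^{1,1} = \mathrm{tr}(v_s\cdot\mathcal{R}_s^{1,1})$, and we obtain
\[
\mathcal{Q}_2^{1,1}(h_{t+\epsilon},k) - \mathcal{Q}_2^{1,1}(h_t,k) \equiv i\int_t^{t+\epsilon}\mathrm{tr}(v_s\cdot\mathcal{R}_s^{1,1})\,\mathrm{d}s \pmod{\mathrm{d}'A^{0,1} + \mathrm{d''}A^{1,0}}.
\]
Dividing by $\epsilon$ and letting $\epsilon \to 0^{+}$, the right-hand side tends to $i\,\mathrm{tr}(v_t\cdot\mathcal{R}_t^{1,1})$ by continuity of the integrand along the differentiable curve $s \mapsto h_s$, which yields the claim.

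The only delicate point is the passage to the limit modulo the subspace $\mathrm{d}'A^{0,1} + \mathrm{d''}A^{1,0}$: one must verify that this subspace is closed in the natural topology on smooth forms so that the equivalence relation survives the limit. This is routine, as the difference quotients of $\mathcal{Q}_2^{1,1}(h_t,k)$ converge smoothly (being integrals of a continuous family of smooth forms), and membership in the said subspace is preserved under uniform convergence of smooth forms. No subtler obstacle appears, because the more substantial computation involving Stokes' theorem, the covariant derivatives of $u, v$, and the commutator $[\phi,\overline{\phi}_h]$ has already been absorbed into the previous Lemma.
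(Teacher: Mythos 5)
Your proposal is correct and follows essentially the same route as the paper: the first identity via the Jacobi formula for $\partial_t\ln\det$, and the second by applying the preceding Lemma on a shrinking interval and differentiating (the paper phrases this as "consider $b$ as a variable and differentiate with respect to $b$," which is exactly your difference-quotient argument). Your extra remark about the subspace $\mathrm{d}'A^{0,1}+\mathrm{d''}A^{1,0}$ surviving the limit is a point of care the paper omits, but it does not change the substance of the argument.
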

\begin{proof}
\begin{enumerate}
 \item Since $k$ does not depend on $t,$ we get
       \begin{equation}
      \partial_t\mathcal{Q}_1(h_t,k)=\partial_t\ln(\det k^{-1})+\partial_t\ln(\det h_t)=\partial_t\ln(\det h_t)=
       \mathrm{tr}(v_t).
       \end{equation}
 \item It suffices to consider $b$ as a variable in the equation
       \begin{equation*}
        i\int_{0}^{1}\mathrm{tr}(v_t\cdot\mathcal{R}_t)\mathrm{d}t+\mathcal{Q}_2(h_a,k)-\mathcal{Q}_2(h_b,k)=0
        \hspace{0.5cm}\text{ mod }\mathrm{d}'A^{0,1}+\mathrm{d''}A^{1,0},
       \end{equation*}
       and differentiate this expression with respect to $b.$ 
\end{enumerate}
\end{proof}

Let $h$ be a metric in $\text{Herm}^{+}(\frak{E})$ and let $\mathcal{K}$ be the mean curvature of the Hitchin-
Simpson connection associated with $h.$ Notice that the endomorphism 
$\mathcal{K}\in A^0(\mathrm{End}(E))$ can 
be written as $\mathcal{K}=h^{-1}\mathcal{K}(\cdot,\cdot),$ where $\mathcal{K}(\cdot,\cdot)$ denotes
the mean curvature as a Hermitian form.

\begin{teo}
Let $(X,\omega)$ be a compact K\"ahler manifold of (complex) dimension $n$ and
 let $\frak{E}=(E,\phi)$ be a Higgs bundle of rank $r$ over $X.$ Let $\text{Herm}^{+}(\frak{E})$ be the 
 Riemannian manifold of the Hermitian
 metric on $E,$ and let $k$ be a fixed element in $\text{Herm}^{+}(\frak{E}).$ Then, $h$ is a critical point of
 $\mathcal{L},$ i.e., a critical point of the function $\mathcal{L}(\ast,k)\longrightarrow\mathbb{R},$ if and
 only if $\mathcal{K}-ch=0,$ if and only if $h$ is an Hermitian-Yang-Mills structure for $\frak{E}.$
\end{teo}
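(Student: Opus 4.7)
The plan is to compute the first variation of $\mathcal{L}(\,\cdot\,,k)$ along an arbitrary curve $h_t$ through $h$ and identify the gradient in terms of $\mathcal{K}_h - cI_E$. Concretely, pick a differentiable curve $h_t \in \text{Herm}^+(\frak{E})$ with $h_0 = h$ and set $v_t = h_t^{-1}\partial_t h_t$. Under the identification $T_h\text{Herm}^+(\frak{E}) \cong \text{Herm}(\frak{E})$, the endomorphism $v_0$ is $h$-selfadjoint, and conversely every $h$-selfadjoint endomorphism arises in this way by taking $h_t = h\exp(tv_0)$.

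The next step is to differentiate under the integral sign. By the previous Lemma,
\begin{equation*}
\partial_t \mathcal{Q}_1(h_t,k) = \mathrm{tr}(v_t), \qquad
\partial_t \mathcal{Q}_2^{1,1}(h_t,k) \equiv i\,\mathrm{tr}(v_t \cdot \mathcal{R}_t^{1,1}) \pmod{\mathrm{d}'A^{0,1}+\mathrm{d}''A^{1,0}}.
\end{equation*}
Since $\mathcal{Q}_2 \wedge \omega^{n-1}$ picks out only the $(1,1)$-component of $\mathcal{Q}_2$, and since Lemma \ref{holomorphicstokes} kills the ambiguity modulo $\mathrm{d}'A^{0,1}+\mathrm{d}''A^{1,0}$ upon integration against $\omega^{n-1}$, one obtains
\begin{equation*}
\left.\frac{\mathrm{d}}{\mathrm{d}t}\mathcal{L}(h_t,k)\right|_{t=0} = \int_X \left[i\,\mathrm{tr}(v_0 \cdot \mathcal{R}_h^{1,1}) - \frac{c}{n}\,\mathrm{tr}(v_0)\,\omega\right]\wedge \frac{\omega^{n-1}}{(n-1)!}.
\end{equation*}
Using the defining relation $in\mathcal{R}_h^{1,1}\wedge \omega^{n-1} = \mathcal{K}_h\,\omega^n$, the first term rewrites as $\frac{1}{n}\mathrm{tr}(v_0\,\mathcal{K}_h)\,\omega^n$, and combining,
\begin{equation*}
\left.\frac{\mathrm{d}}{\mathrm{d}t}\mathcal{L}(h_t,k)\right|_{t=0} = \int_X \mathrm{tr}\bigl(v_0(\mathcal{K}_h - cI_E)\bigr)\,\frac{\omega^n}{n!}.
\end{equation*}

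The Euler--Lagrange step is then straightforward: $h$ is critical if and only if the right-hand side vanishes for every $h$-selfadjoint $v_0$. The endomorphism $A := \mathcal{K}_h - cI_E$ is itself $h$-selfadjoint (since $\mathcal{K}_h$ is, and $cI_E$ trivially so), hence it is an admissible test direction; taking $v_0 = A$ yields $\int_X \mathrm{tr}(A^2)\,\omega^n/n! = 0$. Pointwise, in an $h$-orthonormal frame, $A$ is diagonalizable with real eigenvalues $\lambda_i$, so $\mathrm{tr}(A^2) = \sum \lambda_i^2 \geq 0$; vanishing of the integral forces $A \equiv 0$, i.e.\ $\mathcal{K}_h = cI_E$. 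Conversely, this equation clearly makes the variation vanish for every $v_0$.

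Finally, one verifies the equivalence with the Hermitian--Yang--Mills condition: by \eqref{gammaslope} together with $\int_X \omega^n = n!\,\mathrm{Vol}(X)$, the factor $c$ of \eqref{cdefinition} is precisely the constant making $\mathcal{K}_h = cI_E$ a genuine Hermitian--Yang--Mills structure in the sense of the earlier definition. I expect the only delicate points to be the careful bookkeeping of the $(2,0)$ and $(0,2)$ contributions (which drop out because $\omega^{n-1}$ kills them and because Lemma \ref{holomorphicstokes} handles the $\mathrm{d}'A^{0,1}+\mathrm{d}''A^{1,0}$ remainder) and the identification of admissible tangent vectors as $h$-selfadjoint endomorphisms; the remainder of the argument is a pointwise positivity observation.
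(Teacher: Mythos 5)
Your proposal is correct and follows essentially the same route as the paper: the same first-variation computation via the lemma on $\partial_t\mathcal{Q}_1$ and $\partial_t\mathcal{Q}_2^{1,1}$, the same use of Lemma \ref{holomorphicstokes} to discard the $\mathrm{d}'A^{0,1}+\mathrm{d''}A^{1,0}$ ambiguity, and the same identity $in\mathcal{R}^{1,1}\wedge\omega^{n-1}=\mathcal{K}\omega^n$ leading to $\frac{\mathrm{d}}{\mathrm{d}t}\mathcal{L}(h_t,k)=\int_X\mathrm{tr}[(\mathcal{K}_t-cI_E)v_t]\frac{\omega^n}{n!}$. The only difference is that you make explicit (via the test direction $v_0=\mathcal{K}_h-cI_E$ and pointwise positivity of $\mathrm{tr}(A^2)$) the nondegeneracy step that the paper leaves implicit when it identifies $\nabla\mathcal{L}=\mathcal{K}-ch$ with respect to the Riemannian structure on $\text{Herm}^{+}(\frak{E})$; this is a welcome clarification but not a different argument.
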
 
\begin{proof}
 By using the above Lemma, from $in\mathcal{R}_{t}^{1,1}\wedge\omega^{n-1}=\mathcal{K}_t\omega^n$
 we have the following formula for the derivative with respect to $t$ of the Donaldson functional
 \begin{equation*}
  \begin{split}
   \frac{\mathrm{d}}{\mathrm{d}t}\mathcal{L}(h_t,k)&=
   \frac{\mathrm{d}}{\mathrm{d}t}\int_{X}\left[\mathcal{Q}_2(h_t,k)-\frac{c}{n}\mathcal{Q}_1(h_t,k)\omega\right]
 \wedge\frac{\omega^{n-1}}{(n-1)!}=\\
 &=\int_{X}\left[\partial_t\mathcal{Q}_2(h_t,k)-\frac{c}{n}\partial_t\mathcal{Q}_1(h_t,k)\omega\right]
 \wedge\frac{\omega^{n-1}}{(n-1)!}=\\
 &=\int_{X}\left[i\mathrm{tr}(v_t\cdot\mathcal{R}_{t}^{1,1})-\frac{c}{n}\mathrm{tr}(v_t)\omega\right]
 \frac{\omega^{n-1}}{(n-1)!}=\\
 &=\int_{X}[\mathrm{tr}(v_t\cdot\mathcal{K}_t)-c\mathrm{tr}(v_t)]\frac{\omega^n}{n!}=\\
 &=\int_{X}\mathrm{tr}[(\mathcal{K}_t-cI_E)v_t]\frac{\omega^n}{n!}.
  \end{split}
\end{equation*}
We consider the endomorphism $\mathcal{K}_t$ as a Hermitian form by defining
\linebreak$\mathcal{K}_t(s,s')=h_t(s,\mathcal{K}_ts'),$ where $s$ and $s'$ are section of the Higgs bundle 
$\frak{E}=(E,\phi).$ 
Since $v_t=h_{t}^{-1}\partial_{t}h_{t},$ 
for any fixed Hermitian metric \linebreak$k\in\text{Herm}^{+}(\frak{E})$
and any differentiable curve $h_t$ in $\text{Herm}^{+}(\frak{E})$ we obtain
\begin{equation*}
 \frac{\mathrm{d}}{\mathrm{d}t}\mathcal{L}(h_t,k)=(\mathcal{K}_t-ch_t,\partial_th_t),
\end{equation*}
where $\mathcal{K}_t$ is considered here as a form and $(\cdot,\cdot)$ is the inner product in the Riemannian
manifold $\text{Herm}^{+}(\frak{E}).$ For each $t,$ we can consider $\partial_th_t\in\text{Herm}(\frak{E})$ as
a tangent vector of $\text{Herm}^{+}(\frak{E})$ at $h_t.$ (See \cite{KOB}, Chapter VI for more details).
Therefore, the differential $\mathrm{d}\mathcal{L}$ of the functional evaluated at $\partial_th_t$
is given by
\begin{equation*}
 \mathrm{d}\mathcal{L}(\partial_{t}h_t,k)=\frac{\mathrm{d}}{\mathrm{d}t}\mathcal{L}(h_t,k).
\end{equation*}
Then, the gradient of $\mathcal{L}(\ast,k)$ is given by $\nabla\mathcal{L}=\mathcal{K}-ch,$ and this
completes the proof. Notice that here
$\mathcal{K}$ still denotes the mean curvature as an Hermitian form.
\end{proof}

Now, using the decomposition $\mathcal{D}=\mathcal{D}'_h+\mathcal{D}'',$ we show that all critical points of 
$\mathcal{L}$ correspond to an absolute minimum.

\begin{teo}
\label{teorema47}
 Let $k$ be a fixed Hermitian structure of the Higgs bundle $\frak{E}$ and let $h_0$ be a critical point of 
 $\mathcal{L}(h,k).$ The Donaldson functional attains an absolute minimum at $h_0.$
\end{teo}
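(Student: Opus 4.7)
The plan is to prove convexity of the function $t\mapsto\mathcal{L}(h_t,k)$ along geodesics of $\mathrm{Herm}^+(\frak{E})$ emanating from $h_0$, and then to combine this convexity with the fact (established in the theorem just proved) that a critical point of $\mathcal{L}$ is a Hermitian--Yang--Mills structure, so in particular $\frac{d}{dt}\big|_{t=0}\mathcal{L}(h_t,k)=0$. Once convexity of a differentiable function on $[0,1]$ with vanishing derivative at $0$ is known, its value at $1$ cannot be less than its value at $0$, and the theorem follows.

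Given an arbitrary $h\in\mathrm{Herm}^+(\frak{E})$, the endomorphism $h_0^{-1}h$ is positive-definite and self-adjoint with respect to $h_0$, hence equals $e^{s_0}$ for a unique $s_0\in\mathrm{End}(E)$ that is self-adjoint with respect to $h_0$. I would then choose the geodesic
\begin{equation*}
h_t \;=\; h_0\,e^{t s_0}, \qquad 0\leq t\leq 1,
\end{equation*}
so that $h_0$ and $h_1=h$ are its endpoints. A direct computation shows that $v_t:=h_t^{-1}\partial_t h_t=s_0$ is independent of $t$, and that $s_0$ remains self-adjoint with respect to every $h_t$.

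Starting from the first-derivative formula
\begin{equation*}
\frac{d}{dt}\mathcal{L}(h_t,k) \;=\; \int_X \mathrm{tr}\bigl[(\mathcal{K}_{h_t}-cI_E)\,s_0\bigr]\,\frac{\omega^n}{n!}
\end{equation*}
derived just before the theorem, I would differentiate once more in $t$. Using the variation identities $\partial_t(h_t^{-1}\mathrm{d}' h_t)=D'_{h_t}s_0$ and $\partial_t\overline{\phi}_{h_t}=[\overline{\phi}_{h_t},s_0]$ already obtained in the excerpt, together with the decompositions $\mathcal{D}''=D''+[\phi,\,\cdot\,]$ and $\mathcal{D}'_{h_t}=D'_{h_t}+[\overline{\phi}_{h_t},\,\cdot\,]$ acting on endomorphisms, one finds
\begin{equation*}
\partial_t \mathcal{R}^{1,1}_{h_t} \;=\; D''(D'_{h_t}s_0) + \bigl[\phi,[\overline{\phi}_{h_t},s_0]\bigr],
\end{equation*}
which is precisely the $(1,1)$-component of $\mathcal{D}''(\mathcal{D}'_{h_t}s_0)$; contracting with $i\Lambda$ yields $\partial_t\mathcal{K}_{h_t}=\tilde{\Box}_{h_t}(s_0)$. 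Substituting into the derivative of the first-derivative formula and integrating by parts (justified by Lemma~\ref{holomorphicstokes}) gives
\begin{equation*}
\frac{d^2}{dt^2}\mathcal{L}(h_t,k) \;=\; \bigl\|\mathcal{D}''(s_0)\bigr\|^2_{L^2(h_t)} \;\geq\; 0,
\end{equation*}
so $f(t):=\mathcal{L}(h_t,k)$ is convex on $[0,1]$. Since $h_0$ is a critical point, $f'(0)=0$, hence $f(1)\geq f(0)$, i.e.\ $\mathcal{L}(h,k)\geq\mathcal{L}(h_0,k)$; as $h$ was arbitrary, $h_0$ is an absolute minimum.

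The main obstacle I anticipate is the identification of $\partial_t\mathcal{R}^{1,1}_{h_t}$ with the $(1,1)$-component of $\mathcal{D}''(\mathcal{D}'_{h_t}s_0)$. In the classical (non-Higgs) setting this reduces to the well-known identity $\partial_t R_t=D''(D'_{h_t}s_0)$, but in the Higgs case one must track carefully how the extra cross term $[\phi,[\overline{\phi}_{h_t},s_0]]$ coming from $\partial_t[\phi,\overline{\phi}_{h_t}]$ reassembles with the Chern-connection part into the Hitchin--Simpson composition $\mathcal{D}''\circ\mathcal{D}'_{h_t}$. Once this identity is in place, the integration by parts leading to the non-negative $L^2$-norm is routine and the convexity argument closes the proof.
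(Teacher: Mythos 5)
Your proposal is correct and follows essentially the same route as the paper: both arguments join $h_0$ to an arbitrary $h$ by a geodesic $h_t=h_0e^{ts_0}$ along which $v_t$ is constant, establish $\partial_t\mathcal{K}_t=i\varLambda\mathcal{D}''\mathcal{D}'_{h_t}v_t$ by observing that $i\varLambda$ kills the extra $(2,0)$ and $(0,2)$ cross terms, and integrate by parts to get $\partial_t^2\mathcal{L}(h_t,k)=\|\mathcal{D}'_{h_t}v_t\|^2_{L^2}\geq0$, concluding by convexity plus $f'(0)=0$. The only cosmetic difference is that the paper first records the local-minimum computation at $t=0$ before running the same calculation for arbitrary $t$ along the geodesic.
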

\begin{proof}
 The second derivative of $\mathcal{L}$ is
 \begin{equation*}
  \begin{split}
   \partial_{t}^{2}\mathcal{L}(h_t,k)&=\partial_t\int_{X}\mathrm{tr}[(\mathcal{K}_t-cI_E)v_t]\frac{\omega^n}{n!}=\\
    &=\int_{X}\mathrm{tr}[\partial\mathcal{K}_t\cdot v_t+(\mathcal{K}_t-cI_E)\partial_tv_t]\frac{\omega^n}{n!}.
  \end{split}
 \end{equation*}
Here $\mathcal{K}_t$ is an element of $A^0(\mathrm{End}(E)).$
Since $h_0$ is a critical point of the Donaldson functional, $\mathcal{K}_t-cI_E=0$ at $t=0,$ hence
\begin{equation*}
 \left.\partial_{t}^{2}\mathcal{L}(h_t,k)\right|_{t=0}=
 \int_{X}\left.\mathrm{tr}(\partial_t\mathcal{K}_t\cdot v_t)\frac{\omega^n}{n!}\right|_{t=0}.
\end{equation*}
On the oher hand, $\partial_t\mathcal{K}_t$ can be written in terms of $v_t.$ In fact, we have
\begin{equation*}
 \begin{split}
  \mathcal{D}''\mathcal{D}'_{h_t}v_t&=\mathcal{D}''(D'_{h_t}v_t+[\overline{\phi}_{h_t},v_t])=\\
  &=D''D'_{h_t}v_t+[\phi,D'_{h_t}v_t]+D''[\overline{\phi}_{h_t},v_t]+[\phi,[\overline{\phi}_{h_t},v_t]].
 \end{split}
\end{equation*}
Since $\partial_t\phi_{h_t}=[\overline{\phi}_{h_t},v_t]$ we get
\begin{equation*}
 \partial_t\mathcal{R}_{t}^{1,1}=\partial_tR_t+[\phi,\partial_t\overline{\phi}_{h_t}]=
 D''D'_{h_t}v_t+[\phi,[\overline{\phi}_{h_t},v_t]].
\end{equation*}
Therefore, since the operator $i\varLambda$ kills the $(2,0)$ and $(0,2)$ components, we have
\begin{equation*}
 i\varLambda[\phi,D'_{h_t}v_t]=0\hspace{0.5cm}\text{ and }\hspace{0.5cm}
 i\varLambda D''[\overline{\phi}_{h_t},v_t]=0.
\end{equation*}
From the previous equations and since the linear operators $i\varLambda$ and $\partial_t$ commute, we obtain
\begin{equation*}
 \begin{split}
  \partial_t\mathcal{K}_t&=\partial_ti\varLambda\mathcal{R}_{t}^{1,1}=
  i\varLambda\partial_t\mathcal{R}_{t}^{1,1}=\\
  &=i\varLambda(D''D'_{h_t}v_t+[\phi,[\overline{\phi}_{h_t},v_t]])=\\
  &=i\varLambda(D''D'_{h_t}v_t+[\phi,[\overline{\phi}_{h_t},v_t]])+i\varLambda[\phi,D'_{h_t}v_t]+
  i\varLambda D''[\overline{\phi}_{h_t},v_t]=\\
  &=i\varLambda(D''D'_{h_t}v_t+[\phi,D'_{h_t}v_t]
  +D''[\overline{\phi}_{h_t},v_t]+[\phi,[\overline{\phi}_{h_t},v_t]])=\\
  &=i\varLambda\mathcal{D}''\mathcal{D}'_{h_t}v_t.
 \end{split}
\end{equation*}
Hence, replacing this in the expression for the second derivative of $\mathcal{L}$ we find
\begin{equation*}
 \left.\partial_{t}^{2}\mathcal{L}(h_t,k)\right|_{t=0}=
  \int_{X}\left.\mathrm{tr}(i\varLambda\mathcal{D}''\mathcal{D}'_{h_t}v_t\cdot v_t)
  \frac{\omega^n}{n!}\right|_{t=0}=\|\mathcal{D}'_{h_t}v_t\|_{t=0}^{2}.
\end{equation*}
That is, $h_0$ must be a local minimum of $\mathcal{L}$. Now, suppose in addition that $h_1$ is an 
arbitrary element of $\text{Herm}^{+}(\frak{E})$ and 
assume $h_t$ is a geodesic which joins the point $h_{0}$ and $h_{1}.$
Hence, $\partial_tv_t=0$ (see \cite{KOB} for more details). Therefore,
for such a geodesic we have
\begin{equation*}
  \partial_{t}^{2}\mathcal{L}(h_t,k)=
   \int_{X}\mathrm{tr}(\partial_t\mathcal{K}_t\cdot v_t)\frac{\omega^n}{n!}.
\end{equation*}
Following the same procedure we have done before, but this time for $t$ arbitrary, we get for $0\leq t\leq1$
\begin{equation*}
 \partial_{t}^{2}\mathcal{L}(h_t,k)=\|\mathcal{D}'_{h_t}v_t\|_{L^2}^{2}\geq0.
\end{equation*}

Note that the right hand side implicity depends on $t$ via $\mathcal{D}'_{h_t}.$ It follows 
that $\mathcal{L}(h_0,k)\leq\mathcal{L}(h_1,k).$ If we assume $h_1$ is also a critical point of 
$\mathcal{L},$ we necessarily obtain the equality $\mathcal{L}(h_0,k)=\mathcal{L}(h_1,k),$ so the local minimum
defined for any critical point of $\mathcal{L}$ is an absolute minimum.
\end{proof}

\section{The evolution equation}

For the construction of approximate Hermitian-Yang-Mills structures, the standard procedure is to start with a 
fixed Hermitian metric $h_0$ and try to find from it an approximate Hermitian-Yang-Mills metric structure using
a curve $h_t,0\leq t<+\infty.$ In other words, we try to find the approximate structure by deforming $h_0$ through
the $1\text{-parameter}$ family of Hermitian metrics $h_t.$

\begin{teo}
\label{EXISTANCE}
 Given a Hermitian metric $h_0$ on the Higgs bundle $\frak{E},$ the non-linear evolution problem
 \begin{equation*}
  \left\lbrace\begin{aligned}
         \partial_th_t&=-(\mathcal{K}_t-ch_t)\\
           h(0)&=h_0
        \end{aligned}
  \right.
 \end{equation*}
has a unique smooth solution defined for every positive time $0\leq t<+\infty.$ Notice that here $\mathcal{K}_t$
is the Hermitian form associated with the mean curvature of the Hitchin-Simpson connection of $h_t.$
\end{teo}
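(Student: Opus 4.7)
The plan is to prove this in two stages: short-time existence and uniqueness via standard parabolic theory, followed by long-time extension via a priori estimates that rule out finite-time blow-up. First I would rewrite the evolution in terms of the self-adjoint (with respect to $h_0$) endomorphism $s_t$ defined by $h_t = h_0\, e^{s_t}$, so that $v_t = h_t^{-1}\partial_t h_t$ becomes an explicit linear expression in $\partial_t s_t$. The identity $\partial_t \mathcal{K}_t = i\varLambda\, \mathcal{D}''\mathcal{D}'_{h_t} v_t$ derived in the proof of Theorem \ref{teorema47} identifies the principal symbol of the flow: up to lower-order terms coming from $\phi$, the equation reads $\partial_t s_t \sim -\Box_0 s_t + (\text{lower order})$, which is strongly parabolic. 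Standard quasi-linear parabolic theory (an implicit-function argument on suitable H\"older spaces, or a contraction-mapping argument) then yields a unique smooth solution on a maximal time interval $[0, T_{\max})$ with $0 < T_{\max} \leq +\infty$.

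To show $T_{\max} = +\infty$, I would control the solution by a hierarchy of a priori estimates. The trace part is easy: taking the pointwise trace of the evolution and using that $\mathrm{tr}[\phi,\overline{\phi}_{h_t}] = 0$ gives a linear heat equation for $\ln\det(h_0^{-1} h_t)$ with a $t$-independent source, so $\det h_t / \det h_0$ stays uniformly bounded above and away from zero. For the trace-free part, a direct computation along the flow, using again the formula for $\partial_t \mathcal{K}_t$, yields an evolution inequality of the form $(\partial_t + \Box)|\mathcal{K}_t - cI_E|^2 \leq 0$, and Theorem \ref{MPPE} then produces a uniform $L^\infty$ bound on $\mathcal{K}_t - cI_E$ independent of $t$; in particular the Donaldson functional stays bounded along the flow, so $\mathcal{L}(h_t, h_0)$ is monotone and bounded.

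The hard part will be promoting the $L^\infty$ bound on $\mathcal{K}_t$ to a $C^0$ bound on the endomorphism $s_t$ itself; once this is in hand, parabolic Schauder estimates bootstrap to $C^{k,\alpha}$-bounds for every $k$, and the flow extends smoothly across any finite time, giving $T_{\max} = +\infty$. The standard Donaldson--Simpson strategy here is to show that $\Box_0\,\mathrm{tr}(s_t)$ is controlled by $|\mathcal{K}_t - cI_E|$ plus a manifestly nonnegative quantity involving $\mathcal{D}'_{h_t} s_t$, and then to run a Moser iteration passing from a uniform $L^1$-bound on $s_t$ (extracted from the bounded Donaldson functional by a compactness argument) to a uniform $L^\infty$-bound. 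In the Higgs setting the extra curvature term $[\phi, \overline{\phi}_{h_t}]$ produces additional contributions throughout these computations; the crucial input, already used in the proof of Theorem \ref{teorema47} via $\partial_t \overline{\phi}_{h_t} = [\overline{\phi}_{h_t}, v_t]$, is that the identity $\phi \wedge \phi = 0$ forces these contributions to carry a favourable sign, so the classical argument goes through with essentially the same structure. Combining short-time existence with the resulting uniform estimates yields a unique smooth solution on $[0, +\infty)$.
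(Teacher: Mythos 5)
The paper itself does not prove this theorem: it defers entirely to Kobayashi \cite{KOB}, pp.~205--223 (and, implicitly, to Simpson \cite{SIM} for the Higgs terms). Your skeleton --- short-time existence by quasilinear parabolic theory in the variable $s_t$ with $h_t=h_0e^{s_t}$, then a priori estimates to rule out finite-time blow-up --- is exactly the strategy of that reference, and your first two ingredients are sound: the principal part of the flow is indeed $-i\varLambda\mathrm{d''}\mathrm{d}'$, the trace part closes into a scalar heat equation because $\mathrm{tr}[\phi,\overline{\phi}_{h_t}]=0$, and the inequality $(\partial_t+\tilde{\Box}_t)|\mathcal{K}_t-cI_E|^2\leq0$ together with Theorem \ref{MPPE} gives the uniform bound on $\max_X|\mathcal{K}_t-cI_E|$ (this is precisely \eqref{HSCardona}).

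The long-time step, however, contains two genuine errors. First, the uniform bound on $\|\mathcal{K}_t-cI_E\|_{L^\infty}$ does \emph{not} make the Donaldson functional bounded: since $\frac{\mathrm{d}}{\mathrm{d}t}\mathcal{L}(h_t,h_0)=-\|\mathcal{K}_t-cI_E\|_{L^2}^2$, it only yields $\mathcal{L}(h_t,h_0)\geq\mathcal{L}(h_0,h_0)-Ct$, and the entire point of Chapter 7 is that $\mathcal{L}(h_t,h_0)$ may tend to $-\infty$. Second, and consequently, the uniform-in-$t$ $C^0$ bound on $s_t$ that you propose to extract by Moser iteration from an $L^1$ bound simply does not exist in general: Lemma \ref{NormaSinftygoesinfty} shows $\|S(t)\|_{L^\infty}\to+\infty$ whenever the functional is unbounded below. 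The estimate $\|s\|_{L^\infty}\leq C_1\|s\|_{L^1}+C_2$ is Simpson's tool for producing limiting metrics under a (semi)stability hypothesis, which is neither available nor needed here. For $T_{\max}=+\infty$ you only need control on each finite interval $[0,T]$, and the standard way to get it (Donaldson's trick, used by both Kobayashi and Simpson) is to apply the maximum principle to the distance-type function $\sigma(h_t,h_{t'})=\mathrm{tr}(h_t^{-1}h_{t'})+\mathrm{tr}(h_{t'}^{-1}h_t)-2r$, which satisfies $(\partial_t+\Box)\sigma\leq0$ for two solutions: this simultaneously gives uniqueness and shows $h_t$ is $C^0$-Cauchy as $t\to T<+\infty$, after which the uniform curvature bound yields $L^p_2$ and then $C^{k,\alpha}$ control and the solution continues past $T$. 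With that substitution your argument closes; as written, the passage from the curvature bound to the extension of the flow does not.
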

\begin{proof}
 See Kobayashi \cite{KOB}, p. 205-223 for details.
\end{proof}

\begin{note}
 Let $k$ be a fixed Hermitian metric on the Higgs bundle $\frak{E}.$ We have
 \linebreak$\nabla\mathcal{L}=\mathcal{K}-ch,$ where
 $\mathcal{K}$ is the Hermitian form associated with the mean Hitchin-Simpson curvature of $h.$ Hence,
 we can rewrite the evolution problem in the gradient form
 \begin{equation*}
  \left\lbrace\begin{aligned}
         \partial_th_t&=-\nabla\mathcal{L}\\
           h(0)&=h_0
        \end{aligned}
  \right..
 \end{equation*}
 Here $\nabla\mathcal{L}$ is a vector field on the Riemannian manifold $\text{Herm}^{+}(\frak{E}).$
 This non-linear evolution problem is called the Donaldson heat flow problem.
\end{note}

In this section we study some properties of the solutions of the Donaldson heat flow problem. In 
particular, we are interested in the study of the mean curvature when the parameter $t$ goes to infinity.

\begin{prop}
\label{proposizione52}
 Let $h_t,0\leq t<+\infty$ be the solution of the Donaldson heat flow with initial condition $h_0.$ Then:
 \begin{enumerate}
  \item For any fixed Hermitian metric $k\in\text{Herm}^{+}(\frak{E}),$ the functional 
        $\mathcal{L}(h_t,k)$ is a monotone decreasing function of $t;$ that is,
        \begin{equation}
        \label{firstderivative}
         \frac{\mathrm{d}}{\mathrm{d}t}\mathcal{L}(h_t,k)=-\|\mathcal{K}_t-cI_E\|_{L^2}^{2}\leq0,
        \end{equation}
  \item $\max_{X}|\mathcal{K}_t-cI_E|^{2}=\|\mathcal{K}_t-cI_E\|_{L^{\infty}}^{2}$ is a monotone decreasing 
        function,
  \item If $\mathcal{L}(h_t,k)$ is bounded from below, i.e., there exists a real constant $A$ such that 
        $\mathcal{L}(h_t,k)\geq A>-\infty$ for $0\leq t<+\infty,$ then
        \begin{equation*}
         \max_{X}|\mathcal{K}_t-cI_E|^{2}\longrightarrow0\hspace{0.5cm}\text{ as }\hspace{0.5cm}
         t\longrightarrow+\infty.
        \end{equation*}        
 \end{enumerate}
 Here $\mathcal{K}_t\in A^0(\mathrm{End}(E))$ is the Hitchin-Simpson mean curvature endomorphism.
\end{prop}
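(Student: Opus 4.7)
The plan is to prove the three assertions in the given order, since each relies on the preceding one. The first is a direct consequence of the gradient flow interpretation of $\mathcal{L}$; the second uses the parabolic maximum principle once one establishes a suitable heat inequality for $|\mathcal{K}_t-cI_E|^2$; and the third combines the integral bound from (1) with the monotonicity from (2) plus a smoothing argument.

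For (1), I would invoke the computation carried out in the proof of Theorem \ref{teorema47}, which gave
\begin{equation*}
\frac{\mathrm{d}}{\mathrm{d}t}\mathcal{L}(h_t,k)=\int_{X}\mathrm{tr}[(\mathcal{K}_t-cI_E)v_t]\frac{\omega^{n}}{n!},
\end{equation*}
where $v_t=h_t^{-1}\partial_t h_t$. Along the Donaldson heat flow one has $\partial_t h_t=-(\mathcal{K}_t-ch_t)$ as Hermitian forms, which translates to $v_t=-(\mathcal{K}_t-cI_E)$ as endomorphisms. Substituting gives $\frac{\mathrm{d}}{\mathrm{d}t}\mathcal{L}(h_t,k)=-\int_X\mathrm{tr}[(\mathcal{K}_t-cI_E)^2]\omega^n/n!=-\|\mathcal{K}_t-cI_E\|_{L^2}^{2}$, which is non-positive because $\mathcal{K}_t-cI_E$ is selfadjoint with respect to $h_t$.

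For (2), the key step is to compute $\partial_t|\mathcal{K}_t-cI_E|^2$ and produce a differential inequality of the form $\partial_t f+\Delta f\leq 0$. Using $\partial_t\mathcal{K}_t=i\varLambda\mathcal{D}''\mathcal{D}'_{h_t}v_t$ (established in the proof of Theorem \ref{teorema47}) together with $v_t=-(\mathcal{K}_t-cI_E)$, one differentiates the squared norm and regroups terms: the leading piece produces an elliptic operator applied to $\mathcal{K}_t-cI_E$, while the commutator piece $[\phi,\overline{\phi}_{h_t}]$ contributes a pointwise nonnegative term (since $|[\phi,\overline{\phi}_{h_t}]|^2\geq 0$) that carries a favorable sign. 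After absorbing cross terms via Cauchy--Schwarz, one obtains the parabolic inequality
\begin{equation*}
\partial_t|\mathcal{K}_t-cI_E|^2+\Delta|\mathcal{K}_t-cI_E|^2\leq 0,
\end{equation*}
and Theorem \ref{MPPE} then gives the desired monotonicity of $\max_X|\mathcal{K}_t-cI_E|^2$. This Weitzenb\"ock-type calculation, with the extra Higgs-field terms handled correctly, is the technical heart of the proof and the main obstacle.

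For (3), from (1) the lower bound $\mathcal{L}(h_t,k)\geq A$ gives $\int_0^{+\infty}\|\mathcal{K}_t-cI_E\|_{L^2}^2\,\mathrm{d}t\leq\mathcal{L}(h_0,k)-A<+\infty$. Integrating the heat inequality from (2) over $X$, the Laplacian term vanishes by Stokes' theorem, so $t\mapsto\|\mathcal{K}_t-cI_E\|_{L^2}^2$ is itself non-increasing; combined with integrability on $[0,+\infty)$ this forces $\|\mathcal{K}_t-cI_E\|_{L^2}^2\to 0$. Finally, to pass from $L^2$ decay to $L^\infty$ decay, I would use the heat inequality from (2) as a smoothing estimate: a standard parabolic mean-value argument yields a pointwise bound $|\mathcal{K}_t-cI_E|^2(x)\leq C\int_{t-1}^{t}\|\mathcal{K}_s-cI_E\|_{L^2}^2\,\mathrm{d}s$, whose right-hand side tends to zero as $t\to+\infty$. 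Since $\max_X|\mathcal{K}_t-cI_E|^2$ is already known to be monotone (by (2)), this is enough to conclude that the limit of the maximum is $0$.
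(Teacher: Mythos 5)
Your overall route coincides with the paper's: (1) is the gradient-flow identity, (2) is a parabolic inequality for $|\mathcal{K}_t-cI_E|^2$ plus the maximum principle, and (3) combines the integral bound from (1), monotonicity, and a heat-kernel (parabolic mean value) smoothing estimate. Parts (1) and (3) are fine as written; in (3) your observation that integrating the pointwise inequality over $X$ makes $t\mapsto\|\mathcal{K}_t-cI_E\|_{L^2}^2$ non-increasing, so that integrability on $[0,+\infty)$ forces it to zero, is actually a cleaner justification of that step than the paper supplies.

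The gap is in (2), which you yourself flag as ``the main obstacle'' and leave as a sketch, and the mechanism you sketch is not the one that works. There is no separate commutator term $[\phi,\overline{\phi}_{h_t}]$ to be given a ``favorable sign,'' and no Cauchy--Schwarz absorption is needed: the Higgs field is already built into the operators $\mathcal{D}''=D''+\phi$ and $\mathcal{D}'_{h_t}=D'_{h_t}+\overline{\phi}_{h_t}$. From the two facts you already quote, namely $\partial_t\mathcal{K}_t=i\varLambda\mathcal{D}''\mathcal{D}'_{h_t}v_t=\tilde{\Box}_{t}v_t$ and $v_t=-(\mathcal{K}_t-cI_E),$ one gets the \emph{exact} heat equation $(\partial_t+\tilde{\Box}_{t})\mathcal{K}_t=0.$ The Leibniz rule applied to $|\mathcal{K}_t-cI_E|^2=\mathrm{tr}[(\mathcal{K}_t-cI_E)^2]$ then gives
\begin{equation*}
(\partial_t+\tilde{\Box}_{t})|\mathcal{K}_t-cI_E|^2=
2\,\mathrm{tr}\bigl[(\mathcal{K}_t-cI_E)\cdot(\partial_t+\tilde{\Box}_{t})\mathcal{K}_t\bigr]
+2i\varLambda\mathrm{tr}\bigl[\mathcal{D}'_{h_t}\mathcal{K}_t\cdot\mathcal{D}''\mathcal{K}_t\bigr]
=-2|\mathcal{D}''\mathcal{K}_t|^2\leq0,
\end{equation*}
where the first-order term has an exact sign because $-i\varLambda\mathrm{tr}[\mathcal{D}'_{h_t}\mathcal{K}_t\cdot\mathcal{D}''\mathcal{K}_t]=|\mathcal{D}''\mathcal{K}_t|^2$ is a nonnegative real function ($\mathcal{K}_t$ being selfadjoint). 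This is the inequality to which the maximum principle is applied. Note also that the relevant operator is $\tilde{\Box}_{t},$ not the Riemannian Laplacian $\Delta$ appearing in your display, so one must still check (as the paper does implicitly) that Theorem \ref{MPPE} applies to $\partial_t+\tilde{\Box}_{t}$ acting on scalar functions.
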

\begin{proof}
 The proofs of $(2)$ and $(3)$ are similar to the proofs in the classical case \cite{KOB}, but this time 
 we need to 
 work with the operator $\tilde{\Box}_h=i\varLambda\mathcal{D}''\mathcal{D}'_h$ instead of the operator
 $\Box_h=i\varLambda D''D'_h.$
 \begin{enumerate} 
  \item From a previous calculation we already know that
        \begin{equation*}
         \frac{\mathrm{d}}{\mathrm{d}t}\mathcal{L}(h_t,k)=(\mathcal{K}_t-ch_t,\partial_th_t),
        \end{equation*}
        where $\mathcal{K}_t$ is now thought as a Hermitian form.
        Since $h_t$ is the solution of the Donaldson heat flow, 
        from the definition of the Riemannian structure in $\text{Herm}^{+}(\frak{E})$ we get
        \begin{equation*}
         \frac{\mathrm{d}}{\mathrm{d}t}\mathcal{L}(h_t,k)=-(\mathcal{K}_t-ch_t,\mathcal{K}_t-ch_t)=
         -\|\mathcal{K}_t-cI_E\|_{L^2}^{2}.
        \end{equation*}
  \item Let $\mathcal{K}_t\in A^{0}(\mathrm{End}(E))$ be the mean curvature of the Hitchin-Simpson connection 
  associated with the metric $h_t$ and let $v_t=h_{t}^{-1}\partial_th_t.$
  Consider the operator $\tilde{\Box}_{h}=i\varLambda\mathcal{D''}\mathcal{D'}_{h}$ which depends on the 
 K\"ahler form $\omega$ via the adjoint multiplication $\varLambda$ and also on the metric $h.$ Using
 $\tilde{\Box}_{h},$ we can rewrite
 \begin{equation*}
  i\varLambda\mathcal{D''}\mathcal{D'}_{h_t}v_t=i\varLambda\partial_{t}\mathcal{R}_{t}^{1,1}=
  \partial_{t}\mathcal{K}_t
 \end{equation*}
as
\begin{equation*}
 \partial_{t}\mathcal{K}_{t}=\tilde{\Box}_{t}v_t,
\end{equation*}
where $\tilde{\Box}_{t}=\tilde{\Box}_{h_t}$ and the subscript $t$ remember us the dependence on the metric 
$h_t.$
From the evolution equation we have $v_t=-(\mathcal{K}_t-cI_E),$ and hence we get
$\tilde{\Box}_{t}v_t=-\tilde{\Box}_{t}\mathcal{K}_t.$ Therefore we obtain
\begin{equation*}
 \partial_{t}\mathcal{K}_t=-\tilde{\Box}_{t}\mathcal{K}_t
\end{equation*}
or
\begin{equation*}
 (\partial t+\tilde{\Box}_{t})\mathcal{K}_t=0.
 \end{equation*}
        On the other hand, since $\mathrm{tr}(AB)=\mathrm{tr}(BA),$ 
        \begin{equation*}
         \begin{split}
          \mathcal{D''}\mathcal{D'}_{h_t}|\mathcal{K}_t-cI_E|^2&=
          \mathcal{D''}\mathcal{D'}_{h_t}\mathrm{tr}[(\mathcal{K}_t-cI_E)\cdot(\mathcal{K}_t-cI_E)]=\\
          &=2\mathrm{tr}[(\mathcal{K}_t-cI_E)\cdot\mathcal{D''}\mathcal{D'}_{h_t}\mathcal{K}_t]+
            2\mathrm{tr}[(\mathcal{D''}\mathcal{K}_t\cdot\mathcal{D'}_{h_t}\mathcal{K}_t].
         \end{split}
        \end{equation*}
        Applying the $i\varLambda$ operator (this will kill the $(2,0)$ and $(0,2)$ components),
        from $\mathrm{tr}(AB)=\mathrm{tr}(BA)$ and since the operator $i\varLambda$ and the trace commute we get
        \begin{equation*}
         \begin{split}
           \tilde{\Box}_{t}|\mathcal{K}_t-cI_E|^2&=
           i\varLambda\mathcal{D''}\mathcal{D'}_{h_t}|\mathcal{K}_t-cI_E|^2=\\
           &=i\varLambda\mathcal{D''}\mathcal{D'}_{h_t}
           \mathrm{tr}[(\mathcal{K}_t-cI_E)\cdot(\mathcal{K}_t-cI_E)]=\\   
           &=2\mathrm{tr}[(\mathcal{K}_t-cI_E)\cdot\tilde{\Box}_{t}\mathcal{K}_t]+\\
           &+i\varLambda\mathrm{tr}[\mathcal{D'}_{h_t}(\mathcal{K}_t-cI_E)\cdot\mathcal{D''}
           (\mathcal{K}_t-cI_E)]+\\
           &+i\varLambda\mathrm{tr}[\mathcal{D''}(\mathcal{K}_t-cI_E)\cdot\mathcal{D'}_{h_t}
           (\mathcal{K}_t-cI_E)]=\\
           &=2\mathrm{tr}[(\mathcal{K}_t-cI_E)\cdot\tilde{\Box}_{t}\mathcal{K}_t]+
             i\varLambda\mathrm{tr}[\mathcal{D'}_{h_t}\mathcal{K}_t\cdot\mathcal{D''}\mathcal{K}_t]+\\
           &+i\varLambda\mathrm{tr}[\mathcal{D''}\mathcal{K}_t\cdot\mathcal{D'}_{h_t}\mathcal{K}_t]=\\
           &=2\mathrm{tr}[(\mathcal{K}_t-cI_E)\cdot\tilde{\Box}_{t}\mathcal{K}_t]+
             2i\varLambda\mathrm{tr}[\mathcal{D'}_{h_t}\mathcal{K}_t\cdot\mathcal{D''}\mathcal{K}_t]=\\
           &=-2\mathrm{tr}[(\mathcal{K}_t-cI_E)\cdot\partial_t\mathcal{K}_t]-2|\mathcal{D}''\mathcal{K}_t|^2=\\
           &=-\partial_t|\mathcal{K}_t-cI_E|^2-2|\mathcal{D''}\mathcal{K}_t|^2,
         \end{split}
        \end{equation*}
        where $|\mathcal{D''}\mathcal{K}_t|^{2}=-i\varLambda\mathrm{tr}[\mathcal{D'}_{h_t}\mathcal{K}_t\cdot
        \mathcal{D''}\mathcal{K}_t]$ is a positive real valued function on $X.$ (See p. 225 in \cite{KOB} for 
        details in the classical case).
         So, finally we obtain
         \begin{equation}
          \label{HSCardona}
          \begin{split}
           (\partial_t+\tilde{\Box}_{t})|\mathcal{K}_t-cI_E|^2&=\partial_t|\mathcal{K}_t-cI_E|^2+
           \tilde{\Box}_{t}|\mathcal{K}_t-cI_E|^2=\\
           &=\partial_t|\mathcal{K}_t-cI_E|^2-\partial_t|\mathcal{K}_t-cI_E|^2-2|\mathcal{D}''\mathcal{K}_t|^2=\\
           &=-2|\mathcal{D}''\mathcal{K}_t|^2\leq0,
          \end{split}
         \end{equation}
         and (2) follows from the Maximum Principle \ref{MPPE}.
 \item Finally (3) follows from (1) and (2) in a similar way to the classical case \cite{KOB}.
       Integrating the equality \eqref{firstderivative} from $0$ to $s,$ we obtain
       \begin{equation*}
        \mathcal{L}(h_s,k)-\mathcal{L}(h_0,k)=-\int_{0}^{s}\|\mathcal{K}_t-cI_E\|_{L^2}^{2}\mathrm{d}t.
       \end{equation*}
       Since $\mathcal{L}(h_s,k)$ is bounded below by a constant $A$ indipendent of $s$ and since is a monotone 
       decreasing of $s,$ there exists the limit
       \begin{equation*}
        \lim_{s\rightarrow+\infty}\mathcal{L}(h_s,k)=L,
       \end{equation*}
       where $L$ is a finite real number.
       Hence
       \begin{equation*}
        \int_{0}^{+\infty}\|\mathcal{K}_t-cI_E\|_{L^2}^{2}\mathrm{d}t=
        \lim_{s\rightarrow+\infty}\mathcal{L}(h_s,k)-\mathcal{L}(h_0,k)=L-\mathcal{L}(h_0,k)<+\infty.
       \end{equation*}
        In particular we deduce
        \begin{equation}
         \label{norm2tendto0}
         \|\mathcal{K}_t-cI_E\|_{L^2}^{2}\longrightarrow0\hspace{0.5cm}\text{ as}\hspace{0.5cm}
         s\longrightarrow+\infty.
        \end{equation}
        Let $\chi=\chi(x,y,t)$ be the heat kernel for the differential operator $\partial_t+\tilde{\Box}_{t}.$
        Set
 \begin{equation*}
  f(x,t)=(|\mathcal{K}_t-cI_E|^2)(x)\hspace{0.5cm}\text{ for }\hspace{0.5cm}(x,t)\in X\times[0,+\infty).
 \end{equation*}
Now fix $t_0\in[0,+\infty)$ and set
\begin{equation*}
 u(x,t)=\int_X\chi(x,y,t-t_0)(|\mathcal{K}_t-cI_E|^2)(y)\mathrm{d}y,
\end{equation*}
where $\mathrm{d}y$ is the volume form $\mathrm{d}y=\frac{\omega^n}{n!}.$
Then $u(x,y)$ is of class $\mathcal{C}^{\infty}$ on \linebreak$X\times(t_0,+\infty)$ and extends to a 
continuous function on $X\times[t_0,+\infty).$ 
From the definition of the heat kernel we immediately have
\begin{equation*}
 (\partial_t+\tilde{\Box}_{t})u(x,t)=0\hspace{0.5cm}\text{ for }\hspace{0.5cm}(x,t)\in X\times(t_0,+\infty),
\end{equation*}
and
\begin{equation*}
 u(x,t_0)=f(x,t_0)=(|\mathcal{K}_{t_0}-cI_E|^2)(x).
\end{equation*}
Combined with the inequality \eqref{HSCardona}
this yields
\begin{equation*}
 (\partial_t+\tilde{\Box}_{t})(|\mathcal{K}_t-cI_E|^2-u(x,t))\leq0\hspace{0.5cm}\text{ for }\hspace{0.5cm}
 (x,t)\in X\times(t_0,+\infty).
\end{equation*}
By the Maximun Principle \ref{MPPE} and the properties of $u(x,t)$ we find
\begin{equation*}
 \max_{X}(|\mathcal{K}_t-cI_E|^2-u(x,t))\leq\max_{X}(|\mathcal{K}_{t_0}-cI_E|^2-u(x,t_0))=0,\hspace{0.2cm}
 t\geq t_0.
\end{equation*}
Hence
\begin{equation*}
\begin{split}
 \max_{X}|\mathcal{K}_{t_0+a}-cI_E|^2 & \leq\max_{X}u(x,a,t_0+a)=\\
                                      &  =  
 \max_{X}\int_X\chi(x,y,a)|\mathcal{K}_{t_0}-cI_E|^2(y)\mathrm{d}y\leq\\
                                      & \leq
 C_a\int_X|\mathcal{K}_{t_0}-cI_E|^2(y)\mathrm{d}y=\\
                                       &  =
 C_a\|\mathcal{K}_{t_0}-cI_E\|_{L^2}^{2},
 \end{split}
\end{equation*}
where 
\begin{equation*}
 C_a=\max_{X\times X}\chi(x,y,a).
\end{equation*}
Fix $a,$ say $a=1,$ and let $t_0\longrightarrow+\infty.$
Since $\mathcal{L}(h_t,k)$ is bounded below, using \eqref{norm2tendto0} we conclude
\begin{equation*}
 \max_{X}|\mathcal{K}_{t_0+1}-cI_E|^2\leq C_1\|\mathcal{K}_{t_0}-cI_E\|_{L^2}^{2}\longrightarrow0,
\end{equation*}
and this completes the proof.
  \end{enumerate}
\end{proof}

\begin{cor}
 The Donaldson functional is a real valued function.
\end{cor}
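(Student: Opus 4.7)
The plan is to use the formula for the derivative of $\mathcal{L}$ along a curve, which was established in the proof of the theorem characterizing critical points, together with the fact that the constant curve gives $\mathcal{L}(k,k) = 0$. Fix any $k \in \mathrm{Herm}^{+}(\frak{E})$ and let $h$ be an arbitrary element of $\mathrm{Herm}^{+}(\frak{E})$. Since $\mathrm{Herm}^{+}(\frak{E})$ is a connected Riemannian manifold, we can join $k$ to $h$ by a smooth curve $h_t$, $0 \leq t \leq 1$, with $h_0 = k$ and $h_1 = h$.

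First I would check the base case $\mathcal{L}(k,k)$: evaluating the functional along the constant curve $h_t \equiv k$, we have $v_t = h_t^{-1}\partial_t h_t = 0$, hence $\mathcal{Q}_2(k,k) = 0$, and also $\det(k^{-1}k) = 1$ gives $\mathcal{Q}_1(k,k) = 0$. Thus $\mathcal{L}(k,k) = 0 \in \mathbb{R}$. By the fundamental theorem of calculus,
\begin{equation*}
\mathcal{L}(h,k) = \mathcal{L}(k,k) + \int_{0}^{1}\frac{\mathrm{d}}{\mathrm{d}t}\mathcal{L}(h_t,k)\,\mathrm{d}t = \int_{0}^{1}\frac{\mathrm{d}}{\mathrm{d}t}\mathcal{L}(h_t,k)\,\mathrm{d}t,
\end{equation*}
so it suffices to show that the integrand on the right is a real number for every $t$.

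From the derivation preceding Theorem \ref{teorema47} we already have the explicit identity
\begin{equation*}
\frac{\mathrm{d}}{\mathrm{d}t}\mathcal{L}(h_t,k) = \int_{X}\mathrm{tr}\bigl[(\mathcal{K}_t - cI_E)\cdot v_t\bigr]\frac{\omega^n}{n!},
\end{equation*}
where $\mathcal{K}_t \in A^0(\mathrm{End}(E))$ is the Hitchin--Simpson mean curvature of $h_t$ and $v_t = h_t^{-1}\partial_t h_t$. The endomorphism $\mathcal{K}_t$ is $h_t$-selfadjoint (as remarked after the definition of the mean curvature), and $v_t$ is also $h_t$-selfadjoint because $\partial_t h_t$ is a Hermitian form (tangent vector to $\mathrm{Herm}^{+}(\frak{E})$ at $h_t$, identified with $\mathrm{Herm}(\frak{E})$). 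Working in a local frame in which $h_t$ becomes the identity, both endomorphisms are represented by Hermitian matrices, and an elementary computation shows that the trace of a product of two Hermitian matrices is a real number.

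Consequently the integrand is pointwise real on $X$, the integral over $X$ is real for every $t \in [0,1]$, and hence $\mathcal{L}(h,k) \in \mathbb{R}$. The main (and essentially only) subtlety is the reality of the pointwise trace, which could alternatively be phrased as an instance of the fact that the Riemannian metric $(\cdot,\cdot)_h$ on $\mathrm{Herm}^{+}(\frak{E})$ is genuinely real-valued, so that the gradient identity $\frac{\mathrm{d}}{\mathrm{d}t}\mathcal{L}(h_t,k) = (\mathcal{K}_t - ch_t,\partial_t h_t)_{h_t}$ is manifestly a real quantity; the heat flow computation in Proposition \ref{proposizione52} is just the special case in which this quantity equals $-\|\mathcal{K}_t - cI_E\|_{L^2}^2$.
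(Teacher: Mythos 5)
Your proposal is correct and follows essentially the same route as the paper: both write $\mathcal{L}(h,k)$ as the integral over $t$ of $\frac{\mathrm{d}}{\mathrm{d}t}\mathcal{L}(h_t,k)=(\mathcal{K}_t-ch_t,\partial_t h_t)$ and observe that this inner product in $\text{Herm}^{+}(\frak{E})$ is real. The only difference is cosmetic: you spell out the reality of the pointwise trace of a product of two $h_t$-selfadjoint endomorphisms, where the paper simply cites Kobayashi.
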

\begin{proof}
 Let $h_t,$ $0\leq t\leq1,$ be a curve in $\mathrm{Herm}^{+}(E)$ such that $h_{0}=k$ and $h_{1}=h.$ From a 
 previous calculation we already know that
 \begin{equation*}
  \frac{\mathrm{d}}{\mathrm{d}t}\mathcal{L}(h_{t},k)=(\mathcal{K}_{t}-ch_{t},\partial_{t}h_{t})
 \end{equation*}
where $\mathcal{K}_{t}$ is now thought as a Hermitian form and $(\cdot,\cdot)$ is the inner product of 
$\mathrm{Herm}^{+}(E).$ So that
$(\mathcal{K}_{t}-ch_{t},\partial_{t}h_{t})\in\mathbb{R}$ for any $t\in[0,1]$ 
(see p.196-197 in \cite{KOB} for details), and then
\begin{equation*}
 \mathcal{L}(h,k)=\int_{0}^{1}\frac{\mathrm{d}}{\mathrm{d}t}\mathcal{L}(h_{t},k)\mathrm{d}t\in\mathbb{R}.
\end{equation*}
\end{proof}

At this point we introduce the main result of this section. This establishes a relation among the boundedness
property of Donaldson functional, semistability and the existence of approximate Hermitian-Yang-Mills
metric structures on the Higgs bundle $\frak{E}.$

\begin{teo}
\label{teorema53}
Let $(X,\omega)$ be a compact K\"ahelr manifold of (complex) dimension $n$ and
let $\frak{E}=(E,\phi)$ be a Higgs bundle of rank $r$ over $X.$
We have implications $(1)\longrightarrow(2)\longrightarrow(3)$ for the following statements:
 \begin{enumerate}
  \item for any fixed Hermitian metric structure $k\in\text{Herm}^{+}(\frak{E}),$ there exists a constant $B$
        such that 
        $\mathcal{L}(h,k)\geq B$ for all Hermitian metrics $h\in\text{Herm}^{+}(\frak{E}),$
  \item $\frak{E}$ admits an approximate Hermitian-Yang-Mills structure, i.e., for each $\epsilon>0$ there 
        exists an Hermitian metric $h$ in $\frak{E}$ such that
        \begin{equation*}
         \max_{X}|\mathcal{K}-cI_E|<\epsilon,
        \end{equation*}
        where $h$ depends on $\epsilon$ and $\mathcal{K}$ is the mean curvature endomorphism of the 
        Hitchin-Simpson connection associated with the metric $h,$
  \item $\frak{E}$ is $\omega\text{-semistable}.$
 \end{enumerate}
\end{teo}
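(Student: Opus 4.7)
The plan is to prove the two implications separately using very different tools.

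For $(1) \Rightarrow (2)$, I would invoke the Donaldson heat flow of Theorem~\ref{EXISTANCE}. Starting from an arbitrary initial metric $h_{0} \in \text{Herm}^{+}(\frak{E})$ there is a unique smooth solution $h_{t}$ defined for every $t \in [0,+\infty)$. Setting $k = h_{0}$, Proposition~\ref{proposizione52}(1) asserts that $\mathcal{L}(h_{t}, k)$ is monotone decreasing in $t$, while hypothesis (1) supplies a uniform lower bound $\mathcal{L}(h_{t}, k) \geq B$. The hypotheses of Proposition~\ref{proposizione52}(3) are then met, and so
\begin{equation*}
\max_{X}|\mathcal{K}_{t} - c I_{E}|^{2} \longrightarrow 0 \quad \text{as } t \to +\infty.
\end{equation*}
Given $\epsilon > 0$, choosing $t_{\epsilon}$ large enough furnishes a Hermitian metric $h_{t_{\epsilon}}$ meeting the definition of an approximate Hermitian-Yang-Mills structure.

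For $(2) \Rightarrow (3)$, I would argue by contradiction. Suppose that some Higgs subsheaf $\frak{F} \subset \frak{E}$ with $0 < r = \mathrm{rk}(\frak{F}) < r' = \mathrm{rk}(\frak{E})$ satisfies $\mu(\frak{F}) > \mu(\frak{E})$. The core idea is to build from $\frak{F}$ an auxiliary Higgs bundle of negative degree that still admits an approximate Hermitian-Yang-Mills structure and that possesses a nonzero $\phi$-invariant section, thereby contradicting Corollary~\ref{corBruOter}. The natural candidate is
\begin{equation*}
\frak{G} \;=\; \det(\frak{F})^{\ast} \otimes {\bigwedge}^{r}\frak{E},
\end{equation*}
endowed with the induced Higgs structure; note that $\det(\frak{F})$ is a well-defined Higgs line bundle by the construction reviewed in Chapter~5, even when $\frak{F}$ is not locally free, since its singularity locus has codimension at least two.

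Three properties of $\frak{G}$ must then be verified. First, the slope identity $\mu(\bigwedge^{r}\frak{E}) = r\mu(\frak{E})$ gives $\mu(\frak{G}) = r(\mu(\frak{E}) - \mu(\frak{F})) < 0$, so $\deg(\frak{G}) < 0$. Second, $\det(\frak{F})^{\ast}$ admits a Hermitian-Yang-Mills structure by Corollary~\ref{always1}, $\bigwedge^{r}\frak{E}$ admits an approximate Hermitian-Yang-Mills structure as a corollary of Proposition~\ref{approximatetensorproduct}, and Proposition~\ref{approximatetensorproduct} itself then produces an approximate Hermitian-Yang-Mills structure on the tensor product $\frak{G}$. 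Third, the Higgs subsheaf inclusion $\frak{F} \hookrightarrow \frak{E}$ induces a nonzero Higgs morphism $\det(\frak{F}) \to \bigwedge^{r}\frak{E}$, corresponding to a nonzero section $s$ of $\frak{G}$; a local computation in a frame $f_{1},\ldots,f_{r}$ of $\frak{F}$ shows that the Higgs contribution $\mathrm{tr}(\phi|_{\frak{F}})$ coming from $\bigwedge^{r}\frak{E}$ is cancelled exactly by the dual contribution from $\det(\frak{F})^{\ast}$, giving $\phi(s) = 0$ and in particular making $s$ a $\phi$-invariant section. Applying Corollary~\ref{corBruOter} to $\frak{G}$ now forces $s$ to vanish, a contradiction, so $\mu(\frak{F}) \leq \mu(\frak{E})$ and $\frak{E}$ is $\omega$-semistable.

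The hardest step will be the third property when $\frak{F}$ is only a coherent Higgs subsheaf rather than a subbundle. Both the definition of $\det(\frak{F})$ and the construction of the induced morphism into $\bigwedge^{r}\frak{E}$ must accommodate the singularity set of $\frak{F}$, and the global $\phi$-invariance of the resulting section must be checked. The codimension-two estimate for the singularity set, together with a Hartogs-type extension across it, is what keeps this step under control; but it is where the sheaf-theoretic bookkeeping is most delicate.
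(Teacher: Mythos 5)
Your proposal is correct and follows essentially the same route as the paper: the implication $(1)\Rightarrow(2)$ via the Donaldson heat flow together with Proposition \ref{proposizione52}, and the implication $(2)\Rightarrow(3)$ via the Bruzzo--Gra\~{n}a Otero construction of the auxiliary Higgs bundle $\frak{G}=\bigwedge^{p}\frak{E}\otimes(\det\frak{F})^{-1}$, its induced approximate Hermitian-Yang-Mills structure, the $\phi$-invariant section coming from the inclusion $\frak{F}\hookrightarrow\frak{E}$, and the vanishing theorem of Corollary \ref{corBruOter}. The only cosmetic difference is that you phrase the second implication as a contradiction while the paper argues the inequality $\deg(\frak{G})\geq0$ directly; the content is identical.
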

\begin{proof}
 \begin{enumerate}
  \item Assume $(1).$ The Donaldson functional is bounded below by a constant $B.$ Let $h_0$
        be a fixed Hermitian metric in $\frak{E}$ and let $h_t$ be the solution of the Donaldson heat flow with 
        initial condition $h_0.$ Then from the previous Proposition there exists a real consant $B$ such that
        \linebreak$\mathcal{L}(h_t,h_0)\geq B>-\infty$ for every positive time $0\leq t<+\infty.$
        From Theorem \ref{teorema53},
        \begin{equation*}
         \max_{X}|\mathcal{K}_t-cI_E|^{2}\longrightarrow0\hspace{0.5cm}\text{ as }\hspace{0.5cm}
         t\longrightarrow+\infty,
        \end{equation*}   
        where $\mathcal{K}_t$ is the mean curvature endomorphism $\mathcal{K}_t\in A^0(End(\frak{E})).$
        Hence, there exists an approximate Hermitian-Yang-Mills structure.
 \item  On the other hand, that $(2)$ impies $(3)$ has been proved in \cite{BRU} by Bruzzo and Gra\~{n}a Otero. 
        Here we reproduce their proof.
        Assume $(2)$ and let $\frak{F}$ be a proper nontrivial Higgs subsheaf of $\frak{E}.$ Then 
        $\mathrm{rk}(F)=p$ for some \linebreak$0<p<r=\mathrm{rk}(E)$ and the 
        inclusion $\frak{F}\longrightarrow\frak{E}$
        induces a morphism $\det\frak{F}\longrightarrow\bigwedge^{p}\frak{E}.$
        Tensoring by $(\det\frak{F})^{-1}$ we have a nonzero section $s$ of the Higgs bundle
        \begin{equation*}
         \frak{G}=\bigwedge^{p}\frak{E}\otimes(\det\frak{F})^{-1}.
        \end{equation*}
        If $\psi$ represents the Higgs field naturally defined on $\frak{G}$ by the Higgs field of $\frak{E}$ and
        $\frak{F},$ then $s$ is $\psi\text{-invariant},$ i.e., $\psi(s)=s.$ Now, since by hypothesis $\frak{E}$
        admits an 
        approximate Hermitian-Yang-Mills structure, from Proposition \ref{approximatetensorproduct} we know 
        that so does $\frak{G}$ and in particular
        \begin{equation*}
         c_{\frak{G}}=\frac{2p\pi(\mu(\frak{E})-\mu(\frak{F}))}{(n-1)!\mathrm{Vol}(X)}.
        \end{equation*}
        From \ref{corBruOter}, since $s$ is a nonzero $\psi\text{-invariant}$ section, $\deg(\frak{G})\geq0$ 
        and so $c_{\frak{G}}\geq0.$  Hence $\mu(\frak{E})\geq\mu(\frak{F}),$ showing that $\frak{E}$ is
        $\omega\text{-semistable}.$ 
 \end{enumerate}
\end{proof}

\section{The one-dimensional case}

In this section we establish a boundedness property for the Donaldson functional for semistable Higgs bundles 
over compact Riemann surfaces. As a consequence we get that in the one-dimensional case all three conditions
in Theorem \ref{teorema53} are equivalent.

We introduce some properites that will be useful in proving some statements using induction on the rank of 
Higgs bundles.
This section is essentially an extension to Higgs bundles of the classical case. (See 
p. 226-233 in \cite{KOB} for more details).

Now, let $(X,\omega)$ be a compact K\"ahler manifold of (complex) dimension $n$ and let
\begin{equation}
\label{sequenzaesattahiggs}
 0\longrightarrow\frak{E}'\longrightarrow\frak{E}\longrightarrow\frak{E}''\longrightarrow0
\end{equation}
be an exact sequence of Higgs bundles over $X.$ As in the ordinary case 
(see Chapter I, \S 6 in \cite{KOB}), a Hermitian metric $h$ in $\frak{E}$ induces Hermitian structures $h'$ and
$h''$ in $\frak{E}'$ and $\frak{E}'',$ respectively. We have also a second foundamental form 
\linebreak$A_h\in A^{1,0}(\mathrm{Hom}(E',E''))$ and its adjoint 
$B_h\in A^{0,1}(\mathrm{Hom}(E'',E')),$ where 
\linebreak$B_{h}^{\ast}=-A_h.$
As usual, some properties which hold in the ordinary case, also hold in the Higgs case.

\begin{prop}
 Given an exact sequence \eqref{sequenzaesattahiggs} and a pair of Hermitian structures $h$ and $k$ in 
 $\frak{E},$ the function $\mathcal{Q}_1(h,k)$ and the form $\mathcal{Q}_2(h,k)$ satisfy the following
 relations:
 \begin{equation}
  \label{relation1}
  \mathcal{Q}_1(h,k)=\mathcal{Q}_1(h',k')+\mathcal{Q}_1(h'',k''),
 \end{equation}
\begin{equation}
 \label{relation2}
 \begin{split}
    \mathcal{Q}_2(h,k)&=\mathcal{Q}_2(h',k')+\mathcal{Q}_2(h'',k'')-
  i\mathrm{tr}[B_h\wedge B_{h}^{\ast}-B_k\wedge B_{k}^{\ast}]\\
                      &\text{mod}\hspace{0.5cm}\mathrm{d}'A^{0,1}+\mathrm{d''}A^{1,0}.
 \end{split}
\end{equation} 
\end{prop}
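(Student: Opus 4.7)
The plan is to reduce both identities to local computations in a $\mathcal{C}^\infty$ frame adapted to the exact sequence, and then integrate/trace carefully using the block structure of the Hitchin-Simpson connection.

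For \eqref{relation1}, I would work on a trivializing open set $U$ and choose a $\mathcal{C}^\infty$ frame $s=(s_1,\ldots,s_r)$ of $\frak{E}$ such that $(s_1,\ldots,s_p)$ is a holomorphic frame of $\frak{E}'$ and $(\pi(s_{p+1}),\ldots,\pi(s_r))$ is a holomorphic frame of $\frak{E}''$. In this frame every Hermitian metric on $\frak{E}$ has a block form $H=\begin{pmatrix}A & B\\ B^{\ast} & C\end{pmatrix}$ in which the upper-left block $A$ is precisely the matrix of the restricted metric on $\frak{E}'$, while the Schur complement $C - B^{\ast} A^{-1} B$ is the matrix of the quotient metric on $\frak{E}''$ (using the identification $\frak{E}''\cong (\frak{E}')^{\perp_h}$ from Section 3.4 of the excerpt). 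The standard block-determinant identity $\det H = \det A \cdot \det(C-B^{\ast}A^{-1}B)$ then yields $\det(k^{-1}h) = \det((k')^{-1}h')\cdot\det((k'')^{-1}h'')$, and taking logarithms gives \eqref{relation1}.

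For \eqref{relation2}, I would first set up the block decomposition of the Hitchin-Simpson connection relative to the $\mathcal{C}^\infty$ orthogonal splitting $E = E'\oplus (E')^{\perp_h}\cong E'\oplus E''$. Since $\frak{E}'$ is a Higgs subsheaf, $\phi$ is upper-triangular, $\phi=\begin{pmatrix}\phi' & \beta\\ 0 & \phi''\end{pmatrix}$, and its $h$-adjoint is therefore lower-triangular with diagonal $(\bar\phi'_{h'},\bar\phi''_{h''})$. Combining with the decomposition $D_h=\begin{pmatrix}D_{h'} & B_h\\ A_h & D_{h''}\end{pmatrix}$ from Section 3.4, the Hitchin-Simpson connection becomes
\begin{equation*}
\mathcal{D}_h=\begin{pmatrix}\mathcal{D}_{h'} & B_h+\beta\\ A_h+\bar\beta_h & \mathcal{D}_{h''}\end{pmatrix}.
\end{equation*}
Next I would choose a curve $h_t$ from $k$ to $h$, compute $v_t=h_t^{-1}\partial_t h_t$ and the Hitchin-Simpson curvature $\mathcal{R}_t^{1,1}$ in block form, and extract the trace $\mathrm{tr}(v_t\cdot\mathcal{R}_t^{1,1})$. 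The diagonal blocks of $\mathcal{R}_t^{1,1}$ split as $\mathcal{R}_{t'}^{1,1} - B_{h_t}\wedge B_{h_t}^{\ast}$ on the $\frak{E}'$-block and $\mathcal{R}_{t''}^{1,1} - B_{h_t}^{\ast}\wedge B_{h_t}$ on the $\frak{E}''$-block, plus commutator contributions from $\beta$ that get absorbed into $[\phi',\bar\phi'_{h'}]$ and $[\phi'',\bar\phi''_{h''}]$. Integrating along $t$ then produces $\mathcal{Q}_2(h',k')+\mathcal{Q}_2(h'',k'')$ plus the boundary contribution $-i\mathrm{tr}[B_h\wedge B_h^{\ast}-B_k\wedge B_k^{\ast}]$ obtained as the integrated total derivative $-i\int_0^1 \partial_t \mathrm{tr}(B_{h_t}\wedge B_{h_t}^{\ast})\,\mathrm{d}t$, modulo a remainder that is exact of the form $\mathrm{d}'\alpha + \mathrm{d''}\bar\alpha$ after an application of Stokes' argument analogous to Lemma 6.4.3 in the excerpt.

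The main technical obstacle is the bookkeeping of the Higgs off-diagonal term $\beta$: one must verify that the $\beta$-dependent contributions to the diagonal traces are exactly what is needed to reconstruct the Hitchin-Simpson curvatures on $\frak{E}'$ and $\frak{E}''$, rather than producing extra terms. This is where the identity $\partial_t\bar\phi_{h_t} = [\bar\phi_{h_t},v_t]$ (used in the proof of Theorem 6.4.7) combined with the cyclicity of the trace and the fact that $v_t$ respects the filtration (it is block diagonal up to a contribution killed by the trace) is crucial. Once this cancellation is checked, the rest is a direct adaptation of Kobayashi's classical computation in Chapter VI of \cite{KOB}.
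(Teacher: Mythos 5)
Your proposal follows essentially the same route as the paper: for \eqref{relation1} the paper simply invokes multiplicativity of the determinant under the splittings $h=h'\oplus h''$ and $k=k'\oplus k''$ (your Schur-complement computation is just the careful version of that one-liner), and for \eqref{relation2} the paper likewise writes $v_t$ and $\mathcal{R}_t^{1,1}$ in block form relative to the $\mathcal{C}^{\infty}$ orthogonal splitting, takes the trace, and integrates in $t,$ the boundary term arising as $-\partial_t\mathrm{tr}(B_t\wedge B_t^{\ast})$ modulo $\mathrm{d}'A^{0,1}+\mathrm{d''}A^{1,0}.$ One small correction to your parenthetical remark: the off-diagonal block $-\partial_tS_t$ of $v_t$ is not ``killed by the trace''; it pairs with the off-diagonal curvature blocks $D'B_t$ and $-D''B_{t}^{\ast}$ to give the cross terms $\mathrm{tr}(\partial_tS_t\cdot D''B_{t}^{\ast})-\mathrm{tr}((\partial_tS_t)^{\ast}\cdot D'B_t),$ and it is precisely these, together with $\mathrm{tr}(v'_t\cdot B_t\wedge B_{t}^{\ast})-\mathrm{tr}(v''_t\cdot B_{t}^{\ast}\wedge B_t),$ that assemble into the total derivative $-\partial_t\mathrm{tr}(B_t\wedge B_{t}^{\ast}),$ so that interaction is essential rather than negligible.
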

\begin{proof}
 \begin{enumerate}
  \item \eqref{relation1} is straightforward from the definition of $\mathcal{Q}_1.$ Since $h=h'\oplus h''$ and
       $k=k'\oplus k'',$ we immediately have
       \begin{equation*}
       \begin{split}
         \mathcal{Q}_1(h,k)&=\ln(\det(k^{-1}h))=\\
                           &=\ln(\det(k'^{-1}h'\oplus k''^{-1}h''))=\\
                           &=\ln(\det(k'^{-1}h'))+\ln(\det(k''^{-1}h''))=
                           \mathcal{Q}_1(h',k')+\mathcal{Q}_1(h'',k'').
       \end{split}
       \end{equation*}
  \item On the other hand, \eqref{relation2} follows from an analysis similar to the ordinary case. Since the 
        sequence \eqref{sequenzaesattahiggs} in particular is an exact sequence of holomorphic vector bundles
        over the complex manifold $X,$ for any Hermitian metric $h$ we have a splitting of the exact sequence
        by $\mathcal{C}^{\infty}$ homomorphisms $\mu_h:E\longrightarrow E'$ and
        $\lambda_h:E''\longrightarrow E.$ In particular
        \begin{equation*}
         B_h=\mu_h\circ\mathrm{d''}\circ\lambda_h.
        \end{equation*}
         We consider now a curve of Hermitian structures $h_t$ for $0\leq t\leq1$ such that $h_0=k$ and
         $h_1=h.$ Corresponding to $h_t$ we have a family of homomorphisms $\mu_t$ and $\lambda_t.$ We define the 
         homomorphism $S_t:E''\longrightarrow E'$ given by
         \begin{equation*}
          S_t=\lambda_t-\lambda_0.
         \end{equation*}
         A short computation shows that $\partial_tB_t=\mathrm{d''}(\partial_tS_t).$ Choosing convenient 
         orthonormal local frame fields for $\frak{E}'$ and $\frak{E}''$ (see \cite{KOB} for more details), the 
         endomorphism $v_t=h_{t}^{-1}\partial_th_t$ can be presented by the matrix
         \begin{equation*}
             v_t=\left(\begin{array}{cc}
	          v'_t		 	                 &-\partial_tS_t\\
                  -(\partial_tS_t)^{\ast}                  &v''_t
                          \end{array}\right).
         \end{equation*}
         Here $v'_t$ and $v''_t$ are the natural endomorphisms associated with $h'_t$ and $h''_t,$ respectively.
         Now, from the ordinary case we have
         \begin{equation*}
             R_t=\left(\begin{array}{cc}
	          R'_t-B_t\wedge B_{t}^{\ast}		 	                 &D'B_t\\
                  -D''B_{t}^{\ast}                 &R''_t-B_{t}^{\ast}\wedge B_t
                          \end{array}\right)
         \end{equation*}
         where $R'_t$ and $R''_t$ are the Hermitian curvatures of $\frak{E}'$ and $\frak{E}''$ associated with 
         the metrics $h'_t$ and $h''_t,$ respectively.
         
         Now, $\mathcal{R}_{t}^{1,1}=R_t+[\phi,\overline{\phi}_{h_t}].$ Since $\frak{E}'$ and $\frak{E}''$ are 
         Higgs subbundles of $\frak{E},$ we obtain an expression for the (1,1)-component of the Hitchin-Simpson
         curvature
          \begin{equation*}
             \mathcal{R}_{t}^{1,1}=\left(\begin{array}{cc}
	          \mathcal{R'}_{t}^{1,1}-B_t\wedge B_{t}^{\ast}               &D'B_t\\
                  -D''B_{t}^{\ast}                      &\mathcal{R''}_{t}^{1,1}-B_{t}^{\ast}\wedge B_t
                          \end{array}\right),
         \end{equation*}
         where $\mathcal{R'}_{t}^{1,1}=R'_t+[\phi,\overline{\phi}_{h_t}]_{E'}$ and
         $\mathcal{R''}_{t}^{1,1}=R''_t+[\phi,\overline{\phi}_{h_t}]_{E''}.$ Hence we can compute the trace
         \begin{equation*}
          \begin{split}
           \mathrm{tr}(v_t\cdot\mathcal{R}_{t}^{1,1})&=
           \mathrm{tr}(v'_t\cdot\mathcal{R'}_{t}^{1,1})+\mathrm{tr}(v''_t\cdot\mathcal{R''}_{t}^{1,1})+\\
           &+\mathrm{tr}(\partial_tS_t\cdot D''B_{t}^{\ast})-\mathrm{tr}((\partial_tS_t)^{\ast}\cdot D'B_t)+\\
           &+\mathrm{tr}(v'_t\cdot B_t\wedge B_{t}^{\ast})-\mathrm{tr}(v''_t\cdot B_{t}^{\ast}\wedge B_t).
          \end{split}
         \end{equation*}
         The last four terms are the same as in the ordinary case \cite{KOB}. After a short computation we finally
         get that
         \begin{equation*}
         \begin{split}
         \mathrm{tr}(v_t\cdot\mathcal{R}_{t}^{1,1})&=\mathrm{tr}(v'_t\cdot\mathcal{R'}_{t}^{1,1})+
              \mathrm{tr}(v''_t\cdot\mathcal{R''}_{t}^{1,1})-\partial_t\mathrm{tr}(B_t\wedge B_{t}^{\ast})\\
              &\text{ mod }\hspace{0.5cm}\mathrm{d}'A^{0,1}+\mathrm{d''}A^{1,0}.
         \end{split}
         \end{equation*}
         Then, multiplying the last expression by $i$ and integrating from $t=0$ to $t=1$ we finally obtain
         \eqref{relation2}.
 \end{enumerate}
\end{proof}

\begin{defin}
 In the hypothesis of the previous Proposition, we define $|B|$ as the nonnegative real function on the compact 
 K\"ahler manifold $(X,\omega)$ that satisfies
 \begin{equation*}
  |B|^2\omega^n=-in\mathrm{tr}(B\wedge B^{\ast})\wedge\omega^{n-1}.
 \end{equation*}
\end{defin}

\begin{lemma}
\label{SplittingLemma}
Let $(X,\omega)$ be a compact K\"ahler manifold of (complex) dimension $n.$
 Given an exact sequence of Higgs bundles
 \begin{equation*}
 0\longrightarrow\frak{E}'\longrightarrow\frak{E}\longrightarrow\frak{E}''\longrightarrow0
\end{equation*}
 over $X,$ with $\mu(\frak{E})=\mu(\frak{E}')$ and a pair of Hermitian 
 structures $h$ and $k$ in $\frak{E},$ we have the identity
 \begin{equation}
  \label{Ldecomposition}
  \mathcal{L}(h,k)=\mathcal{L}(h',k')+\mathcal{L}(h'',k'')+\|B_h\|_{L^2}^{2}-\|B_k\|_{L^2}^{2}.
 \end{equation}
\end{lemma}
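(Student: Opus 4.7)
The plan is to reduce the identity to a direct computation by substituting the two structural identities for $\mathcal{Q}_1$ and $\mathcal{Q}_2$ established in the preceding Proposition into the definition \eqref{donaldsonfunctional} of the Donaldson functional, and then carefully accounting for the terms that appear.

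First, I would note that the slope hypothesis $\mu(\mathfrak{E})=\mu(\mathfrak{E}')$ automatically forces $\mu(\mathfrak{E})=\mu(\mathfrak{E}'')$ as well. Indeed, because the sequence is short exact, $\mathrm{rk}(\mathfrak{E})=\mathrm{rk}(\mathfrak{E}')+\mathrm{rk}(\mathfrak{E}'')$ and $\deg(\mathfrak{E})=\deg(\mathfrak{E}')+\deg(\mathfrak{E}'')$, from which the equation $\mu(\mathfrak{E})\mathrm{rk}(\mathfrak{E})=\mu(\mathfrak{E}')\mathrm{rk}(\mathfrak{E}')+\mu(\mathfrak{E}'')\mathrm{rk}(\mathfrak{E}'')$ combined with $\mu(\mathfrak{E})=\mu(\mathfrak{E}')$ yields $\mu(\mathfrak{E})=\mu(\mathfrak{E}'')$. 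Consequently the three constants $c,c',c''$ attached (through formula \eqref{cdefinition}) to $\mathfrak{E},\mathfrak{E}',\mathfrak{E}''$ all coincide, which is essential for the Donaldson functionals of the three bundles to assemble linearly.

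Next, I would plug relations \eqref{relation1} and \eqref{relation2} into $\mathcal{L}(h,k)$. Expanding, the $\mathcal{Q}_1$ part splits cleanly into the $\mathcal{Q}_1$ contributions of $(h',k')$ and $(h'',k'')$, and the $\mathcal{Q}_2$ part splits similarly modulo an element of $\mathrm{d}'A^{0,1}+\mathrm{d}''A^{1,0}$ plus the extra piece $-i\,\mathrm{tr}[B_h\wedge B_h^{\ast}-B_k\wedge B_k^{\ast}]$. Here Lemma \ref{holomorphicstokes} is decisive: since the ambient manifold is K\"ahler and compact without boundary, any representative of $\mathrm{d}'A^{0,1}+\mathrm{d}''A^{1,0}$ integrates to zero against $\omega^{n-1}$, so the congruence in \eqref{relation2} becomes an equality after the final integration. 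Combining with the equality of constants $c=c'=c''$, the resulting expression collects exactly into $\mathcal{L}(h',k')+\mathcal{L}(h'',k'')$ plus the extra term $-\tfrac{i}{(n-1)!}\int_{X}\mathrm{tr}[B_h\wedge B_h^{\ast}-B_k\wedge B_k^{\ast}]\wedge\omega^{n-1}$.

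Finally I would use the pointwise identity $|B|^{2}\omega^{n}=-in\,\mathrm{tr}(B\wedge B^{\ast})\wedge\omega^{n-1}$ to rewrite
\begin{equation*}
-\frac{i}{(n-1)!}\int_{X}\mathrm{tr}(B\wedge B^{\ast})\wedge\omega^{n-1}=\int_{X}|B|^{2}\,\frac{\omega^{n}}{n!}=\|B\|_{L^{2}}^{2},
\end{equation*}
applied both to $B_h$ and to $B_k$, which yields \eqref{Ldecomposition}. The only real obstacle I anticipate is bookkeeping: keeping track of which integrals live against $\omega^{n}/n!$ versus $\omega^{n-1}/(n-1)!$, and verifying the normalization that makes the factor of $n$ cancel against the $1/n$ appearing in the coefficient of $\mathcal{Q}_1$ in $\mathcal{L}$; beyond that, the proof is a direct substitution enabled by the two preparatory results and Lemma \ref{holomorphicstokes}.
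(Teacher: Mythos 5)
Your proposal is correct and follows essentially the same route as the paper's proof: substitute \eqref{relation1} and \eqref{relation2} into the definition of $\mathcal{L}$, kill the $\mathrm{d}'A^{0,1}+\mathrm{d''}A^{1,0}$ ambiguity with Lemma \ref{holomorphicstokes}, use $c=c'=c''$, and convert the $-i\,\mathrm{tr}(B\wedge B^{\ast})$ terms into $L^2$ norms via the definition of $|B|$. Your explicit verification that $\mu(\frak{E})=\mu(\frak{E}')$ forces $\mu(\frak{E})=\mu(\frak{E}'')$ (hence $c=c'=c''$) is a welcome detail that the paper merely asserts.
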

\begin{proof}
 First of all, from $\mu(\frak{E})=\mu(\frak{E}')$ we have $c=c'=c''.$ From Lemma \ref{holomorphicstokes} and 
 from
 \eqref{relation1} and \eqref{relation2} we obtain
 \begin{equation*}
  \begin{split}
   \mathcal{L}(h,k)&=\int_{X}\left[\mathcal{Q}_2(h,k)-\frac{c}{n}\mathcal{Q}_1(h,k)\omega\right]
 \wedge\frac{\omega^{n-1}}{(n-1)!}=\\
                   &=\int_{X}\left[\mathcal{Q}_2(h',k')-\frac{c'}{n}\mathcal{Q}_1(h',k')
                   \omega\right]\wedge\frac{\omega^{n-1}}{(n-1)!}+\\
                   &+\mathcal{L}(h,k)=\int_{X}\left[\mathcal{Q}_2(h'',k'')-\frac{c''}{n}\mathcal{Q}_1(h'',k'')
                   \omega\right]\wedge\frac{\omega^{n-1}}{(n-1)!}+\\
                   &+\int_{X}-i\mathrm{tr}[B_h\wedge B_{h}^{\ast}-B_k\wedge B_{k}^{\ast}]\wedge
                   \frac{\omega^{n-1}}{(n-1)!}=\\
                   &=\mathcal{L}(h',k')+\mathcal{L}(h'',k'')+\\
                   &+\int_{X}|B_h|^2\frac{\omega^n}{n!}-\int_{X}|B_k|^2\frac{\omega^n}{n!}=\\
                   &=\mathcal{L}(h',k')+\mathcal{L}(h'',k'')+\|B_h\|_{L^2}^{2}-\|B_k\|_{L^2}^{2}.
  \end{split}
 \end{equation*}
\end{proof}

In dimension greater or equal than two, the notion stability 
(resp. semistability) depends on the K\"ahler 
form, as the degree depends on it. Now, in dimension one, the degree does not depend on the 
K\"ahler form and hence the notion of stability (resp. semistability) does not depend on it and we can 
establish all our results without any explicit reference to $\omega.$
Since the degree and the rank of any Higgs sheaf is the same degree and rank of the corresponding coherent sheaf,
we have the following (see \cite{KOB}, Ch. V, Lemma 7.3).

\begin{lemma}
\label{Pippone1}
 Let consider the exact sequence of Higgs sheaves
  \begin{equation*}
0\longrightarrow\frak{F}\longrightarrow\frak{E}\longrightarrow\frak{G}\longrightarrow0,
\end{equation*}
then
\begin{equation*}
 \mathrm{rk}(\frak{F})(\mu(\frak{E})-\mu(\frak{F}))+\mathrm{rk}(\frak{G})(\mu(\frak{E})-\mu(\frak{G}))=0.
\end{equation*}
\end{lemma}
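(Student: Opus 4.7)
The plan is to reduce the identity to the additivity of rank and degree in a short exact sequence of Higgs sheaves, and then perform a direct algebraic manipulation. Since the rank and degree of a Higgs sheaf coincide with those of the underlying coherent sheaf (Higgs subsheaves are in particular coherent subsheaves, and the morphisms in the sequence are morphisms of coherent sheaves), I can work entirely at the level of the underlying coherent sheaves.

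First I would establish the two additivity properties. For the ranks, the exactness of
\begin{equation*}
0\longrightarrow F\longrightarrow E\longrightarrow G\longrightarrow 0
\end{equation*}
at the generic point gives $\mathrm{rk}(\frak{E})=\mathrm{rk}(\frak{F})+\mathrm{rk}(\frak{G})$. For the degrees, I would invoke the construction of the determinant bundle, recalled in the excerpt. From the local-free resolution picture and the definition of $\det$ via alternating tensor products it follows that $\det\frak{E}\cong\det\frak{F}\otimes\det\frak{G}$, and hence, by the additivity of the first Chern class under tensor products of line bundles,
\begin{equation*}
c_1(\frak{E})=c_1(\det\frak{E})=c_1(\det\frak{F})+c_1(\det\frak{G})=c_1(\frak{F})+c_1(\frak{G}).
\end{equation*}
Wedging with $\omega^{n-1}$ and integrating over $X$ gives $\deg(\frak{E})=\deg(\frak{F})+\deg(\frak{G})$.

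With these two facts in hand, the proof is a one-line manipulation. Expanding and using $\mu(\frak{F})\,\mathrm{rk}(\frak{F})=\deg(\frak{F})$ and similarly for $\frak{G}$:
\begin{equation*}
\begin{split}
&\mathrm{rk}(\frak{F})(\mu(\frak{E})-\mu(\frak{F}))+\mathrm{rk}(\frak{G})(\mu(\frak{E})-\mu(\frak{G}))\\
&\quad=\bigl(\mathrm{rk}(\frak{F})+\mathrm{rk}(\frak{G})\bigr)\mu(\frak{E})-\deg(\frak{F})-\deg(\frak{G})\\
&\quad=\mathrm{rk}(\frak{E})\mu(\frak{E})-\deg(\frak{E})=0.
\end{split}
\end{equation*}

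There is no real obstacle here; the only point that requires some care is the additivity of the determinant bundle in a short exact sequence when the middle (or outer) term may fail to be locally free, but this is precisely what the construction of $\det\frak{E}$ via finite locally-free resolutions is designed to handle, and it is taken as a standing property in the excerpt. Thus the lemma is essentially a bookkeeping identity once additivity of rank and degree is established.
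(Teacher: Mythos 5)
Your proof is correct and is exactly the standard argument the paper relies on: the paper gives no proof of its own, simply citing Kobayashi (Ch.\ V, Lemma 7.3) after noting that rank and degree of a Higgs sheaf agree with those of the underlying coherent sheaf, and that reference reduces the identity to additivity of rank and degree in the short exact sequence followed by the same one-line algebraic manipulation you perform.
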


From Lemma \ref{Pippone1} it follows that the condition of stability (resp. semistability) can be written in 
terms of quotient Higgs sheaves instead of Higgs subsheaves. To be precise we have

\begin{cor}
 Let $(X,\omega)$ be a compact K\"ahler manifold of (complex) dimension $n$ and let $\frak{E}$ be a torsion-free 
 Higgs sheaf over $X.$ Then $\frak{E}$ is $\omega\text{-stable}$ (resp. semistable) if for every quotient 
 Higgs sheaf $\frak{G}$ with $0<\mathrm{rk}(\frak{G})<\mathrm{rk}(\frak{E})$ it follows 
 $\mu(\frak{G})<\mu(\frak{E})$ (resp. $\mu(\frak{G})\leq\mu(\frak{E})$).
\end{cor}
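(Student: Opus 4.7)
The plan is to derive this corollary directly from Lemma \ref{Pippone1}, which already does essentially all the work. First, given an arbitrary Higgs subsheaf $\frak{F}$ of $\frak{E}$ with $0<\mathrm{rk}(\frak{F})<\mathrm{rk}(\frak{E})$, I would form the quotient $\frak{G}:=\frak{E}/\frak{F}$ and verify that it naturally inherits a Higgs structure from $\frak{E}$: the inclusion $\phi(\frak{F})\subseteq \frak{F}\otimes\Omega_X^1$ built into the definition of Higgs subsheaf guarantees that $\phi$ descends to a well-defined morphism $\frak{G}\to\frak{G}\otimes\Omega_X^1$ whose wedge square still vanishes. Hence $\frak{G}$ is a genuine Higgs sheaf fitting into a short exact sequence
\[
0\longrightarrow\frak{F}\longrightarrow\frak{E}\longrightarrow\frak{G}\longrightarrow 0.
\]

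Next, by rank additivity $\mathrm{rk}(\frak{G})=\mathrm{rk}(\frak{E})-\mathrm{rk}(\frak{F})$, so the assumption on the rank of $\frak{F}$ forces $0<\mathrm{rk}(\frak{G})<\mathrm{rk}(\frak{E})$, which means the hypothesis of the corollary applies to $\frak{G}$ and produces an inequality between $\mu(\frak{G})$ and $\mu(\frak{E})$. Substituting this information into the identity
\[
\mathrm{rk}(\frak{F})(\mu(\frak{E})-\mu(\frak{F}))+\mathrm{rk}(\frak{G})(\mu(\frak{E})-\mu(\frak{G}))=0
\]
supplied by Lemma \ref{Pippone1}, and using that both rank coefficients are strictly positive, immediately pins down the sign of $\mu(\frak{E})-\mu(\frak{F})$ from the sign of $\mu(\frak{E})-\mu(\frak{G})$, yielding the strict (resp.\ non-strict) inequality on the subsheaf side required by the definition of $\omega$-stability (resp.\ $\omega$-semistability). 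Since $\frak{F}$ was arbitrary, the subsheaf characterisation holds, so $\frak{E}$ is $\omega$-stable (resp.\ $\omega$-semistable).

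The main obstacle, if one can even call it that, is not analytical but a matter of bookkeeping: one must check carefully that the quotient of a torsion-free Higgs sheaf by a Higgs subsheaf inherits a Higgs structure, that rank and degree are additive on short exact sequences of Higgs sheaves (so that slope is linearly constrained as in Lemma \ref{Pippone1}), and that the sign manipulation is done consistently in the strict and non-strict cases. Once these verifications are in hand, the corollary is a one-line consequence of the rank-weighted slope identity and requires no further geometric input.
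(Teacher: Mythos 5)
Your strategy is exactly the one the paper intends: the corollary is placed immediately after Lemma \ref{Pippone1} precisely so that one forms the short exact sequence $0\to\frak{F}\to\frak{E}\to\frak{E}/\frak{F}\to0$, checks that the quotient inherits a Higgs field (your argument for this is fine), and reads off the subsheaf inequality from the quotient inequality via the rank-weighted slope identity. Everything up to the last line is correct and matches the paper.

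The last line is where the one genuine check lives, and it is exactly the step you wave through. The identity
\[
\mathrm{rk}(\frak{F})\bigl(\mu(\frak{E})-\mu(\frak{F})\bigr)+\mathrm{rk}(\frak{G})\bigl(\mu(\frak{E})-\mu(\frak{G})\bigr)=0
\]
forces the two bracketed quantities to have \emph{opposite} signs, both ranks being strictly positive; hence $\mu(\frak{F})<\mu(\frak{E})$ for all proper subsheaves is equivalent to $\mu(\frak{G})>\mu(\frak{E})$ for all proper quotients. If you feed in the hypothesis exactly as printed in the corollary, namely $\mu(\frak{G})<\mu(\frak{E})$, the identity delivers $\mu(\frak{F})>\mu(\frak{E})$ --- the destabilising inequality, not the one required by the definition of $\omega$-stability. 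The inequality in the statement is reversed (compare part (2) of the Proposition that follows it in the paper, which correctly reads $\mu(\frak{E})<\mu(\frak{G})$), and a correct write-up must either note and fix this or the argument proves the wrong thing at its final step. Asserting that the identity ``pins down the sign \dots yielding the \dots inequality required,'' and listing the ``sign manipulation'' as mere bookkeeping, conceals precisely the point at which the proof, as literally set up, fails.
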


From the definition of degree, one has that any torsion Higgs sheaf $\frak{T}$ has 
$\mathrm{deg}(\frak{T})\geq0.$ Therefore, in a similar way to the classical case, this implies that in the 
definition of stability (resp. semistability) we do not have to consider all quotient Higgs sheaves. To be 
precise we have

\begin{prop}
 Let $(X,\omega)$ be a compact K\"ahler manifold of (complex) dimension $n$ and let $\frak{E}$ be a torsion-free 
 Higgs sheaf over $X.$ Then
 \begin{enumerate}
  \item $\frak{E}$ is $\omega\text{-stable}$ (resp. semistable) if and only if $\mu(\frak{F})<\mu(\frak{E})$
        (resp. $\leq$) for any Higgs subsheaf $\frak{F}$ with 
        $0<\mathrm{rk}(\frak{F})<\mu(\frak{E})$ and such that the quotient $\faktor{\frak{E}}{\frak{F}}$ is 
        torsion-free.
  \item $\frak{E}$ is $\omega\text{-stable}$ (resp. semistable) if and only if $\mu(\frak{E})<\mu(\frak{G})$
        (resp. $\leq$) for any torsion-free quotient Higgs sheaf $\frak{G}$ with 
        $0<\mathrm{rk}(\frak{G})<\mu(\frak{E}).$ 
 \end{enumerate}
\end{prop}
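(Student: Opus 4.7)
The plan is to prove both parts by using the operation of \emph{saturation} of a Higgs subsheaf and then invoking the numerical identity from Lemma \ref{Pippone1}. In both statements the forward implications are immediate from the definitions, so the content lies in the converses.

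For part (1), the strategy is the following. Assume $\mu(\frak{F})<\mu(\frak{E})$ (resp. $\leq$) for every Higgs subsheaf $\frak{F}$ whose quotient $\frak{E}/\frak{F}$ is torsion-free, and let $\frak{F}\subseteq\frak{E}$ be an arbitrary Higgs subsheaf with $0<\mathrm{rk}(\frak{F})<\mathrm{rk}(\frak{E})$. Let $T\subseteq\frak{E}/\frak{F}$ be the torsion subsheaf of the quotient, and define the saturation $\frak{F}^{\mathrm{sat}}$ of $\frak{F}$ as the kernel of the composition $\frak{E}\to\frak{E}/\frak{F}\to(\frak{E}/\frak{F})/T$. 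By construction $\frak{F}\subseteq\frak{F}^{\mathrm{sat}}\subseteq\frak{E}$, the quotient $\frak{E}/\frak{F}^{\mathrm{sat}}\cong(\frak{E}/\frak{F})/T$ is torsion-free, and $\frak{F}^{\mathrm{sat}}/\frak{F}\cong T$ is a torsion sheaf, so that $\mathrm{rk}(\frak{F}^{\mathrm{sat}})=\mathrm{rk}(\frak{F})$. Since the degree of any torsion sheaf is nonnegative, the short exact sequence
\begin{equation*}
0\longrightarrow\frak{F}\longrightarrow\frak{F}^{\mathrm{sat}}\longrightarrow T\longrightarrow0
\end{equation*}
yields $\mathrm{deg}(\frak{F}^{\mathrm{sat}})=\mathrm{deg}(\frak{F})+\mathrm{deg}(T)\geq\mathrm{deg}(\frak{F})$, and dividing by the common rank gives $\mu(\frak{F})\leq\mu(\frak{F}^{\mathrm{sat}})$. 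By the hypothesis applied to $\frak{F}^{\mathrm{sat}}$, we obtain $\mu(\frak{F})\leq\mu(\frak{F}^{\mathrm{sat}})<\mu(\frak{E})$ (resp. $\leq$), which is exactly the definition of $\omega$-stability (resp. $\omega$-semistability).

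The main technical point, and the only nontrivial step, is verifying that $\frak{F}^{\mathrm{sat}}$ is actually a \emph{Higgs} subsheaf, i.e.\ that the Higgs field of $\frak{E}$ sends $\frak{F}^{\mathrm{sat}}$ into $\frak{F}^{\mathrm{sat}}\otimes\Omega_{X}^{1}$. This follows by a diagram chase: since $\phi(\frak{F})\subseteq\frak{F}\otimes\Omega_{X}^{1}$, the induced morphism $\bar\phi:\frak{E}/\frak{F}\to(\frak{E}/\frak{F})\otimes\Omega_{X}^{1}$ is well-defined, and because $\Omega_{X}^{1}$ is locally free, $\bar\phi$ necessarily maps the torsion subsheaf $T$ into $T\otimes\Omega_{X}^{1}$ (the image of a torsion section remains torsion). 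Hence $\bar\phi$ descends to the torsion-free quotient, and pulling back to $\frak{E}$ shows $\phi(\frak{F}^{\mathrm{sat}})\subseteq\frak{F}^{\mathrm{sat}}\otimes\Omega_{X}^{1}$, as required.

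For part (2), let $\frak{G}$ be a torsion-free quotient Higgs sheaf of $\frak{E}$ with $0<\mathrm{rk}(\frak{G})<\mathrm{rk}(\frak{E})$, and let $\frak{F}$ be the kernel, so that we have an exact sequence $0\to\frak{F}\to\frak{E}\to\frak{G}\to0$ with $\frak{G}=\frak{E}/\frak{F}$ torsion-free. By part (1), $\frak{E}$ is $\omega$-stable (resp.\ semistable) if and only if $\mu(\frak{F})<\mu(\frak{E})$ (resp.\ $\leq$) for every such $\frak{F}$. Applying Lemma \ref{Pippone1} to this sequence gives
\begin{equation*}
\mathrm{rk}(\frak{F})(\mu(\frak{E})-\mu(\frak{F}))+\mathrm{rk}(\frak{G})(\mu(\frak{E})-\mu(\frak{G}))=0,
\end{equation*}
so the two quantities $\mu(\frak{E})-\mu(\frak{F})$ and $\mu(\frak{G})-\mu(\frak{E})$ have the same sign. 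Therefore $\mu(\frak{F})<\mu(\frak{E})$ (resp.\ $\leq$) is equivalent to $\mu(\frak{E})<\mu(\frak{G})$ (resp.\ $\leq$), and the statement follows. The only possible obstacle here is to check that the correspondence between Higgs subsheaves with torsion-free quotient and torsion-free quotient Higgs sheaves is bijective, which is immediate from the first isomorphism theorem applied in the category of Higgs sheaves.
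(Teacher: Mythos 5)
Your proof is correct and follows essentially the same route as the paper: both arguments saturate an arbitrary Higgs subsheaf $\frak{F}$ by pulling back the torsion subsheaf $T$ of the quotient (checking that $T$ is preserved by the induced Higgs field), use $\mathrm{deg}(T)\geq 0$ together with equality of ranks to get $\mu(\frak{F})\leq\mu(\frak{F}^{\mathrm{sat}})$ and $\mu(\frak{G}_1)\leq\mu(\frak{G})$, and pass between subsheaf and quotient formulations via the identity of Lemma \ref{Pippone1}. Your explicit justification that $\bar\phi$ preserves $T$ (torsion maps to torsion, and local freeness of $\Omega_X^1$ identifies the torsion of $(\frak{E}/\frak{F})\otimes\Omega_X^1$ with $T\otimes\Omega_X^1$) is a welcome elaboration of a step the paper merely asserts.
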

\begin{proof}
 Is true in one direction. For the converse, suppose the inequality between slopes in $(1)$
 (resp. in $(2)$) holds for proper Higgs subsheaves with torsion-free quotient (resp. for torsion-free quotient
 Higgs sheaves) and let consider an exact sequence of Higgs sheaves
   \begin{equation*}
0\longrightarrow\frak{F}\longrightarrow\frak{E}\longrightarrow\frak{G}\longrightarrow0.
\end{equation*}
 Let $\frak{E}=(E,\phi)$ and denote by $\phi_{F}$ and $\phi_{G}$ the Higgs fields of $\frak{F}$ and $\frak{G}$
 respectively. That is $\frak{F}=(F,\phi_{F})$ and $\frak{G}=(G,\phi_{G}).$ Now, let $T$ be the torsion 
 subsheaf of $G.$ Since the Higgs field satisfies $\phi(T)\subseteq T\otimes\Omega_{X}^{1},$ the pair 
 $\frak{T}=(T,\left.\phi\right|_{T})$ is a Higgs subsheaf of $\frak{E}$ with Higgs quotient, say $\frak{G}_{1}.$
 Then if we define $\frak{F}_{1}$ by the kernel of the Higgs morphism $\frak{E}\longrightarrow\frak{G}_{1},$ we 
 have the following commutative diagram of Higgs sheaves
    \begin{equation*}
 \begin{tikzpicture}[node distance=1.5cm, auto]
 \node (A) {$0$};
 \node (B) [below of=A] {$\frak{T}$};
 \node (C) [below of=B] {$\frak{G}$};
 \node (D) [right of=C] {$0$};
 \node (E) [left of=C] {$\frak{E}$};
 \node (F) [left of=E] {$\frak{F}$};
 \node (Z) [left of=B] {$$};
 \node (G) [left of=Z] {$0$};
 \node (H) [left of=F] {$0$};
 \node (I) [below of=H] {$0$};
 \node (L) [right of=I] {$\frak{F}_{1}$};
 \node (M) [right of=L] {$\frak{E}$};
 \node (N) [right of=M] {$\frak{G}_{1}$};
 \node (O) [right of=N] {$0$};
 \node (P) [below of=L] {$\faktor{\frak{F}_{1}}{\frak{F}}$};
 \node (Q) [below of=P] {$0$};
 \node (R) [below of=N] {$0$};
 \draw[->] (A) to node {$$} (B);
 \draw[->] (B) to node {$$} (C);
 \draw[->] (C) to node {$$} (D);
 \draw[->] (E) to node {$$} (C);
 \draw[->] (F) to node {$$} (E);
 \draw[->] (G) to node {$$} (F);
 \draw[->] (H) to node {$$} (F);
 \draw[->] (F) to node {$$} (L);
 \draw[->] (E) to node {$Id$} (M);
 \draw[->] (C) to node {$$} (N);
 \draw[->] (I) to node {$$} (L);
 \draw[->] (L) to node {$$} (M);
 \draw[->] (M) to node {$$} (N);
 \draw[->] (N) to node {$$} (O);
 \draw[->] (L) to node {$$} (P);
 \draw[->] (P) to node {$$} (Q);
 \draw[->] (N) to node {$$} (R);
\end{tikzpicture}
\end{equation*}
in which all rows and columns are exact. From this diagram we have that $\frak{F}$ is a Higgs subsheaf of 
$\frak{F}_{1}$ with $\frak{T}=\faktor{\frak{F}_{1}}{\frak{F}}.$ Since $\frak{T}$ is a torsion Higgs sheaf
$\mathrm{deg}(\frak{T})\geq0$ and we also obtain
\begin{equation*}
 \mathrm{deg}(\frak{G})=\mathrm{deg}(\frak{T})+\mathrm{deg}(\frak{G}_{1})\geq\mathrm{deg}(\frak{G}_{1})
\end{equation*}
and
\begin{equation*}
 \mathrm{deg}(\frak{F}_{1})=\mathrm{deg}(\frak{F})+\mathrm{deg}(\frak{T})\geq\mathrm{deg}(\frak{F}).
\end{equation*}
Now, since $\frak{T}$ is torsion we have $\mathrm{rk}(\frak{G})=\mathrm{rk}(\frak{G}_{1})$ and 
$\mathrm{rk}(\frak{F}_{1})=\mathrm{rk}(\frak{F})$ and hence finally we obtain
\begin{equation*}
 \mu(\frak{F})\leq\mu(\frak{F}_{1})\hspace{0.5cm}\text{ and }\hspace{0.5cm}
 \mu(\frak{G}_{1})\leq\mu(\frak{G}).
\end{equation*}
At this point, the converse direction in $(1)$ and $(2)$ follows from the hypothesis and the last two 
inequalities.
 \end{proof}

Now we can establish a boundedness property for the Donaldson functional for semistable Higgs bundles in the 
one-dimensional case, i.e., for compact Riemann surfaces. Namely we have

\begin{teo}
Let $(X,\omega)$ be a compact Riemann surface and
 let $\frak{E}=(E,\phi)$ be a Higgs bundle of rank $r$ over $X.$
 If $\frak{E}$ is $\omega\text{-semistable},$ then for any fixed Hermitian metric $k\in\text{Herm}^{+}(\frak{E})$
 the set $\{\mathcal{L}(h,k)|h\in\text{Herm}^{+}(\frak{E})\}$ is bounded below.
\end{teo}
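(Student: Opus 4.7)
The plan is to prove the theorem by induction on the rank $r=\mathrm{rk}(\frak{E})$, using the decomposition of the Donaldson functional provided by Lemma \ref{SplittingLemma} together with the fact that on a compact Riemann surface every torsion-free coherent sheaf is locally-free (Corollary \ref{riemannsurface}), so that Higgs subsheaves and Higgs quotient sheaves of $\frak{E}$ are Higgs bundles.

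For the base case $r=1$, any rank-one Higgs bundle admits a Hermitian-Yang-Mills structure by Corollary \ref{always1}. Such a metric is a critical point of $\mathcal{L}(\cdot,k)$ and hence, by Theorem \ref{teorema47}, an absolute minimum. In particular $\mathcal{L}(h,k)$ is bounded below for every $h\in\mathrm{Herm}^{+}(\frak{E})$. For the inductive step, suppose the result holds for all $\omega$-semistable Higgs bundles of rank strictly less than $r$. If $\frak{E}$ of rank $r$ happens to be $\omega$-stable, then by Hitchin's theorem for Higgs bundles on compact Riemann surfaces $\frak{E}$ admits a genuine Hermitian-Yang-Mills structure; again Theorem \ref{teorema47} gives that this metric is an absolute minimum of $\mathcal{L}(\cdot,k)$, so $\mathcal{L}$ is bounded below.

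The remaining case is when $\frak{E}$ is strictly semistable. Using the result announced earlier in the chapter, one can fit $\frak{E}$ into a short exact sequence of Higgs sheaves
\begin{equation*}
0\longrightarrow\frak{E}'\longrightarrow\frak{E}\longrightarrow\frak{E}''\longrightarrow 0
\end{equation*}
with $\frak{E}'$ a stable Higgs subsheaf and $\frak{E}''$ a semistable Higgs quotient, both of slope $\mu(\frak{E})$; by Corollary \ref{riemannsurface}, $\frak{E}'$ and $\frak{E}''$ are in fact Higgs bundles. Given any $h,k\in\mathrm{Herm}^{+}(\frak{E})$, the induced Hermitian metrics $h',h''$ and $k',k''$ on the subquotients satisfy, by Lemma \ref{SplittingLemma},
\begin{equation*}
\mathcal{L}(h,k)=\mathcal{L}(h',k')+\mathcal{L}(h'',k'')+\|B_h\|_{L^2}^{2}-\|B_k\|_{L^2}^{2}.
\end{equation*}
Since $\|B_h\|_{L^2}^{2}\geq 0$ and $\|B_k\|_{L^2}^{2}$ is a constant depending only on the fixed metric $k$, and since by induction (applied to $\frak{E}'$ and $\frak{E}''$, or by the stable case for $\frak{E}'$) both $\mathcal{L}(h',k')$ and $\mathcal{L}(h'',k'')$ are bounded below, we conclude that $\mathcal{L}(h,k)$ is bounded below independently of $h$.

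The main obstacle will be handling the $\omega$-stable case, where one must invoke Hitchin's existence theorem for Hermitian-Yang-Mills metrics on stable Higgs bundles over compact Riemann surfaces; this is the non-trivial analytic input that breaks the induction. The rest of the argument is essentially bookkeeping: verifying that the splitting into a stable part and a semistable quotient of the same slope is genuinely available in the strictly semistable case, and then observing that the $\|B_h\|_{L^2}^{2}$ term in the splitting identity has the right sign so as not to spoil the lower bound obtained inductively.
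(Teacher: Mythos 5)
Your proposal is correct and follows essentially the same route as the paper: induction on the rank, the rank-one and stable cases handled via the existence of a Hermitian-Yang-Mills metric (an absolute minimum of $\mathcal{L}$ by Theorem \ref{teorema47}), and the strictly semistable case handled by extracting a stable Higgs subsheaf and semistable quotient of the same slope (locally-free by Corollary \ref{riemannsurface}) and applying the decomposition of Lemma \ref{SplittingLemma}, discarding the nonnegative $\|B_h\|_{L^2}^{2}$ term. The only detail the paper adds that you flag but do not carry out is the construction of the stable subsheaf, obtained by choosing a slope-equal Higgs subsheaf of minimal rank with torsion-free quotient and checking stability by contradiction.
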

\begin{proof}
Fix $k$ and assume $\frak{E}$ is $\omega\text{-semistable}.$ The proof runs by induction on the rank of 
$\frak{E}.$
\begin{enumerate}
 \item If $\mathrm{rk}(\frak{E})=1,$ from Corollary \ref{always1} there exists a
       Hermitian-Yang-Mills structure $h_0.$ The Donaldson functional must attain an absolute minimum
        at $h_0,$ i.e., for any other Hermitian metric $h$
        \begin{equation*}
         \mathcal{L}(h,k)\geq\mathcal{L}(h_0,k).
        \end{equation*}
        Then the set $\{\mathcal{L}(h,k)|h\in\text{Herm}^{+}(\frak{E})\}$ is bounded below.
 \item Let us assume $\mathrm{rk}(\frak{E})\geq2$ and let us assume the thesis true for every Higgs bundle 
       $\frak{F}$
       over $X$ such that $0<\mathrm{rk}(\frak{F})<\mathrm{rk}(\frak{E}).$
       
        We have to distinguish between two cases.
    \begin{enumerate}
         \item If $\frak{E}$ is $\omega\text{-stable},$ there exists a Hermitian-Yang-Mills structure $h_0$
         on it (see \cite{SIM} for a detailed proof) and the Donaldson functional must 
         attain an absolute minimum at $h_0,$ i.e., for any other Hermitian metric $h$
        \begin{equation*}
         \mathcal{L}(h,k)\geq\mathcal{L}(h_0,k).
        \end{equation*}
         Hence the set $\{\mathcal{L}(h,k)|h\in\}\text{Herm}^{+}(\frak{E})$ is bounded below.
         \item Suppose $\frak{E}$ is $\omega\text{-semistable},$ but not $\omega\text{-stable},$ with 
              $\mathrm{rk}(\frak{E})\geq2.$
              Among all proper nontrivial Higgs subsheaves with 
              $0<\mathrm{rk}(\frak{F})<\mathrm{rk}(\mathrm{E}),$ torsion-free quotien and the same slope of 
              $\frak{E}$ we choose one, say $\frak{E'},$ with minimal rank. 
              Since $\mu(\frak{E'})=\mu(\frak{E}),$ the sheaf $\frak{E'}$ is necessarly $\omega\text{-stable}.$
              If not, there exists a proper Higgs subsheaf $\frak{F'}$ of
              $\frak{E'}$ with $\mu(\frak{F'})\geq\mu(\frak{E'}).$ Since $\frak{F'}$ is clearly a subsheaf of 
              $\frak{E}$ and $\frak{E}$ is $\omega\text{-semistable},$ we obtain 
              \linebreak$\mu(\frak{E'})\leq\mu(\frak{F'})\leq\mu(\frak{E})=\mu(\frak{E'}),$ which
              is a contradiction, because  $\frak{E'}$ was chosen of minimal rank.
              Now let $\frak{E''}=\faktor{\frak{E}}{\frak{E'}}.$ Using Lemma 7.3 in \cite{KOB} it follows 
              that $\mu(\frak{E})=\mu(\frak{E'})=\mu(\frak{E''})$ and $\frak{E''}$ is 
              $\omega\text{-semistable}.$
              Hence we have the following exact sequence of Higgs sheaves
              \begin{equation*}
               0\longrightarrow\frak{E'}\longrightarrow\frak{E}\longrightarrow\frak{E''}\longrightarrow0,
              \end{equation*}
              where both $\frak{E'}$ and $\frak{E''}$ are torsion-free.
              Since $\text{dim}_{\mathbb{C}}X=1,$ from Corollary \ref{riemannsurface} we deduce that 
              they are also locally-free. Hence the above sequence is in fact an exact sequence of Higgs bundles. 
              
              Assume now $h$ is an arbitrary metric on $\frak{E},$ by applying Lemma 
              \ref{SplittingLemma} to the
              metrics $h$ an $k$ we obtain
              \begin{equation}
              \label{boundedness}
              \begin{split}
               \mathcal{L}(h,k)&=\mathcal{L}(h',k')+\mathcal{L}(h'',k'')+
                                 \|B_h\|_{L^2}^{2}-\|B_k\|_{L^2}^{2}\geq\\
                                &\geq\mathcal{L}(h',k')+\mathcal{L}(h'',k'')-\|B_k\|_{L^2}^{2},
              \end{split}
              \end{equation}
              where $h',$ $k'$ and $h'',$ $k''$ are the Hermitian structures induced by $h$ and $k$ in 
              $\frak{E'}$ and $\frak{E''},$ respectively. By induction, $\mathcal{L}(h',k')$ and
              $\mathcal{L}(h'',k'')$ are bounded below by consants depending only on $k'$ and $k''.$ Then,
              from \eqref{boundedness} it follows that $\mathcal{L}(h,k)$ is bounded below by a 
              constant depending only on $k.$
  \end{enumerate}
\end{enumerate}
\end{proof}

As consequence of this Theorem we obtain the following

\begin{teo}
\label{correspondenceinRiemannsurfaces}
 Let $(X,\omega)$ be a compact Riemann surface and let $\frak{E}=(E,\phi)$ be a Higgs sheaf of rank $r$ over $X.$
 The following are equivalent:
 \begin{enumerate}
  \item $\frak{E}$ is $\omega\text{-semistable},$
  \item $\frak{E}$ admits an approximate Hermitian-Yang-Mills structure.
 \end{enumerate}
\end{teo}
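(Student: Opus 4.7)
The plan is simply to chain together the theorems already established in this chapter. Since the base $X$ is a compact Riemann surface, Corollary \ref{riemannsurface} tells us that every torsion-free coherent sheaf over $X$ is automatically locally-free, so the torsion-free Higgs sheaf $\frak{E}=(E,\phi)$ can be treated as a Higgs bundle without loss of generality; this lets us apply all the results of this chapter that were stated for Higgs bundles.

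First I would prove the direction $(1)\Rightarrow(2)$. Assume $\frak{E}$ is $\omega$-semistable. Fix any reference Hermitian metric $k\in\text{Herm}^{+}(\frak{E})$. By the previous theorem (the boundedness theorem for the Donaldson functional on a compact Riemann surface), the set $\{\mathcal{L}(h,k)\mid h\in\text{Herm}^{+}(\frak{E})\}$ is bounded below by a constant depending only on $k$. This is condition (1) of Theorem \ref{teorema53}, and the implication $(1)\Rightarrow(2)$ of that theorem then yields the existence of an approximate Hermitian-Yang-Mills structure on $\frak{E}$.

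Conversely, to prove $(2)\Rightarrow(1)$, assume $\frak{E}$ admits an approximate Hermitian-Yang-Mills structure. This is exactly condition (2) of Theorem \ref{teorema53}, and the implication $(2)\Rightarrow(3)$ of that theorem (which was proved via the Bruzzo--Gra\~{n}a Otero argument using the vanishing result for $\phi$-invariant sections, Corollary \ref{corBruOter}, applied to the auxiliary Higgs bundle $\bigwedge^p\frak{E}\otimes(\det\frak{F})^{-1}$) gives that $\frak{E}$ is $\omega$-semistable.

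There is no real obstacle here: both non-trivial directions have already been done. The only delicate point worth remarking on is that the chain $(1)\Rightarrow(2)\Rightarrow(3)\Rightarrow(1)$ of Theorem \ref{teorema53} closes up in the one-dimensional case precisely because the boundedness theorem supplies the missing implication $(3)\Rightarrow(1)$, which in higher dimension is still open in general. Thus in dimension one all three conditions of Theorem \ref{teorema53} are equivalent, and in particular the two conditions of the current theorem are equivalent.
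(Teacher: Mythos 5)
Your proposal is correct and follows exactly the same route as the paper: reduce to the bundle case via Corollary \ref{riemannsurface}, obtain $(1)\Rightarrow(2)$ from the boundedness theorem combined with Theorem \ref{teorema53}, and obtain $(2)\Rightarrow(1)$ from the implication $(2)\Rightarrow(3)$ of Theorem \ref{teorema53}. Your write-up simply makes explicit the steps that the paper's two-line proof leaves implicit.
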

\begin{proof}
Since $X$ is a Riemann surface, from \ref{riemannsurface} we deduce that $(E,\phi)$ is a 
Higgs bundle over $X.$ Hence, the thesis comes from the previous result and Theorem \ref{teorema53}. 
\end{proof}

As a consequence of this Theorem we immediately deduce that in the one-dimensional case, many results about Higgs 
bundles written in terms of approximate Hermitian-Yang-Mills structures can be translated in terms of 
semistability. In particular we have the following

\begin{cor}
 If $(X,\omega)$ is a compact Riemann surface and $\frak{E}_1,$ $\frak{E}_2$ are 
 $\omega\text{-semistable}$ Higgs bundles over $X,$ then so is their tensor product 
 $\frak{E}_1\otimes\frak{E}_2.$ Furthermore, if $\mu(\frak{E}_1)=\mu(\frak{E}_2),$ so is the Whitney sum 
 $\frak{E}_1\oplus\frak{E}_2.$
\end{cor}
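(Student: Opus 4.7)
The plan is to combine the equivalence established in Theorem \ref{correspondenceinRiemannsurfaces} with the closure properties of the class of Higgs bundles admitting approximate Hermitian-Yang-Mills structures proved in Proposition \ref{approximatetensorproduct}. This reduces everything to a transfer along the equivalence
\begin{equation*}
\text{$\omega$-semistable}\iff\text{admits an approximate Hermitian-Yang-Mills structure},
\end{equation*}
which is available precisely because $X$ is one-dimensional.

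More concretely, I would proceed as follows. First, since $\frak{E}_1$ and $\frak{E}_2$ are $\omega$-semistable Higgs bundles over the compact Riemann surface $X$, Theorem \ref{correspondenceinRiemannsurfaces} produces, for every $\varepsilon>0$, Hermitian metrics $h_{1,\varepsilon}\in\mathrm{Herm}^{+}(\frak{E}_1)$ and $h_{2,\varepsilon}\in\mathrm{Herm}^{+}(\frak{E}_2)$ whose Hitchin-Simpson mean curvatures satisfy the approximate Hermitian-Yang-Mills bound with constants $c_1$ and $c_2$ determined by $\mu(\frak{E}_1)$ and $\mu(\frak{E}_2)$. Then I would invoke Proposition \ref{approximatetensorproduct}: part (1) gives that $\frak{E}_1\otimes\frak{E}_2$, endowed with the metric $h_{1,\varepsilon}\otimes h_{2,\varepsilon}$, admits an approximate Hermitian-Yang-Mills structure with factor $c_1+c_2$, and part (2) gives the same conclusion for $\frak{E}_1\oplus\frak{E}_2$ with the metric $h_{1,\varepsilon}\oplus h_{2,\varepsilon}$ under the hypothesis $\mu(\frak{E}_1)=\mu(\frak{E}_2)$ (which forces $c_1=c_2$).

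Finally, I would apply Theorem \ref{correspondenceinRiemannsurfaces} in the reverse direction to each of $\frak{E}_1\otimes\frak{E}_2$ and (under the slope hypothesis) $\frak{E}_1\oplus\frak{E}_2$: the existence of an approximate Hermitian-Yang-Mills structure on a Higgs bundle over a compact Riemann surface implies its $\omega$-semistability. This yields both statements of the corollary.

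There is no genuine obstacle here, because the nontrivial work has already been done: the equivalence in Theorem \ref{correspondenceinRiemannsurfaces} rests on the boundedness of the Donaldson functional for semistable Higgs bundles in dimension one (together with the implications $(1)\Rightarrow(2)\Rightarrow(3)$ of Theorem \ref{teorema53}), and the compatibility with tensor products and Whitney sums is exactly the content of Proposition \ref{approximatetensorproduct}. The only point meriting care is keeping the constants straight in the direct-sum case, where the common value of $c$ is needed to apply part (2) of Proposition \ref{approximatetensorproduct}; this is precisely why the hypothesis $\mu(\frak{E}_1)=\mu(\frak{E}_2)$ appears.
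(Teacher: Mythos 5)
Your proposal is correct and follows exactly the paper's own argument: transfer semistability to the existence of approximate Hermitian-Yang-Mills structures via Theorem \ref{correspondenceinRiemannsurfaces}, apply Proposition \ref{approximatetensorproduct} to the tensor product and (under the slope hypothesis) the Whitney sum, and transfer back. The remark about the constants $c_1=c_2$ in the direct-sum case is the same point the paper relies on.
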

\begin{proof}
 \begin{enumerate}
  \item Let $\frak{E}_1$ and $\frak{E}_2$ be $\omega\text{-semistable}$ Higgs bundles over $X.$ From the previous 
        Theorem
        $\frak{E}_1$ and $\frak{E}_2$ admit approximate Hermitian-Yang-Mills metric structures, so does 
        their tensor product $\frak{E}_1\otimes\frak{E}_2$ (Proposition \ref{approximatetensorproduct}).
        Hence, using the above Theorem we conclude that
        $\frak{E}_1\otimes\frak{E}_2$ is $\omega\text{-semistable}.$
  \item It is similar to $(1),$ using the second part of Proposition \ref{approximatetensorproduct}.
 \end{enumerate}
\end{proof}

\begin{cor}
 If $\frak{E}$ is $\omega\text{-semistable},$ then so is the tensor product bundle
 $\frak{E}^{\otimes p}\otimes\frak{E}^{\ast\otimes q}$ and the exterior product bundle 
 $\bigwedge^{p}\frak{E}$ whenever $0\leq p\leq r=\mathrm{rk}(\frak{E}).$
\end{cor}

\chapter{General Case}
In the previous chapter we have proved the equivalence between semistability and the existence of 
approximate Hermitian-Yang-Mills metric structures for Higgs bundles over compact Riemann surfaces. 
In this chapter we extend this result to higher dimensions.
Let $(X,\omega)$ be a compact K\"ahler manifold of (complex) dimension $n$ and let $\frak{E}=(E,\phi)$ be a 
Higgs bundle of rank $r$ over $X.$
Let $h_t$ be the solution of the Donaldson heat flow with initial metric 
$h_0,$ i.e., the solution of the non-linear evolution problem
 \begin{equation*}
  \left\lbrace \begin{aligned}
                \partial_th_t&=-\nabla\mathcal{L}\\
                         h(0)&=h_0
               \end{aligned}
\right..
 \end{equation*}
From Theorem \ref{EXISTANCE} we know that this problem has a unique solution
$h_t$ defined for every positive time $0\leq t<+\infty,$ and $\mathcal{L}(h_t,h_0)$ is a real monotone 
decreasing function on $t$ for $0\leq t<+\infty.$ 
If we assume that $(E,\phi)$ is $\omega\text{-semistable},$ we can distinguish between two cases:
\begin{enumerate}
 \item $\mathcal{L}(h_t,h_0)$ is bounded below. From Theorem \ref{teorema53} 
       there exists an approximate Hermitian-Yang-Mills structure.
 \item $\mathcal{L}(h_t,h_0)$ is not bounded below, i.e., $\mathcal{L}(h_t,h_0)\longrightarrow-\infty$
       for $t\longrightarrow+\infty.$ Under the assumption that $(E,\phi)$ is 
       $\omega\text{-semistable},$ we can still prove the existence of approximate 
       Hermitian-Yang-Mills metrics.
\end{enumerate}

\section{Some useful results}

\begin{lemma}
 Let $(X,\omega)$ be a compact K\"ahler manifold of (complex) dimension $n$ and let $\frak{E}=(E,\phi)$ be
 a Higgs bundle of rank $r$ over $X.$ Let $h\in\text{Herm}^{+}(E)$ be a Hermitian metric and let
 $\mathcal{R}$ and $\mathcal{K}$
 be its Hitchin-Simpson curvature and mean curvature, respectively. Then
 \begin{equation}
  \label{nulltrace}
  \int_X\mathrm{tr}(\mathcal{K}-cI_E)\frac{\omega^{n}}{n!}=0
 \end{equation}
\end{lemma}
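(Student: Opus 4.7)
The plan is to compute $\int_{X}\mathrm{tr}(\mathcal{K})\,\omega^{n}/n!$ and $\int_{X}\mathrm{tr}(cI_{E})\,\omega^{n}/n!$ separately, via Chern--Weil theory on one side and the definition of $c$ on the other, and then to verify that they coincide.

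First I would start from equation \eqref{curvature2}, $in\mathcal{R}_{h}\wedge\omega^{n-1}=\mathcal{K}\omega^{n}$, and take the pointwise trace in $\mathrm{End}(E)$. This gives $\mathrm{tr}(\mathcal{K})\,\omega^{n}=in\,\mathrm{tr}(\mathcal{R}_{h}^{1,1})\wedge\omega^{n-1}$, the reduction to the $(1,1)$ part being automatic because wedging with $\omega^{n-1}$ kills the $(2,0)$ and $(0,2)$ components. The crucial simplification is that $\mathcal{R}_{h}^{1,1}=R_{h}+[\phi,\overline{\phi}_{h}]$ and the trace of the commutator term vanishes: writing $\phi=\phi_{\alpha}\mathrm{d}z^{\alpha}$ and $\overline{\phi}_{h}=\overline{\phi}_{\beta}\mathrm{d}\overline{z}^{\beta}$ in local coordinates, one computes $[\phi,\overline{\phi}_{h}]=[\phi_{\alpha},\overline{\phi}_{\beta}]\,\mathrm{d}z^{\alpha}\wedge\mathrm{d}\overline{z}^{\beta}$, and $\mathrm{tr}[\phi_{\alpha},\overline{\phi}_{\beta}]=0$ by cyclicity. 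Hence $\mathrm{tr}(\mathcal{R}_{h}^{1,1})=\mathrm{tr}(R_{h})$.

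Next I would invoke the Chern--Weil representation developed in Chapter 4: the closed $(1,1)$-form $-\frac{1}{2\pi i}\mathrm{tr}(R_{h})$ represents the first Chern class $c_{1}(\frak{E})$ of the underlying holomorphic bundle. Integrating against $\omega^{n-1}$ and using the definition \eqref{slope} of the slope yields
\begin{equation*}
\int_{X}\mathrm{tr}(\mathcal{K})\,\frac{\omega^{n}}{n!}=\frac{in}{n!}\int_{X}\mathrm{tr}(R_{h})\wedge\omega^{n-1}=\frac{2\pi n}{n!}\int_{X}c_{1}(\frak{E})\wedge\omega^{n-1}=\frac{2\pi r\mu(\frak{E})}{(n-1)!}.
\end{equation*}
On the other hand, from the definition \eqref{cdefinition} of $c$ and the identity $\mathrm{Vol}(X)=\int_{X}\omega^{n}/n!$, one has
\begin{equation*}
\int_{X}\mathrm{tr}(cI_{E})\,\frac{\omega^{n}}{n!}=cr\,\mathrm{Vol}(X)=\frac{2\pi r\mu(\frak{E})}{(n-1)!},
\end{equation*}
so the two expressions agree and \eqref{nulltrace} follows by subtraction.

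The argument is essentially bookkeeping; the only subtle point is verifying that $\mathrm{tr}[\phi,\overline{\phi}_{h}]=0$ despite $\phi$ and $\overline{\phi}_{h}$ being $\mathrm{End}(E)$-valued forms rather than plain endomorphisms, which requires combining the graded wedge of a $(1,0)$- and a $(0,1)$-form with the cyclicity of the endomorphism trace. Everything else is a direct consequence of the Chern--Weil identity for $c_{1}$ and the definition of $c$.
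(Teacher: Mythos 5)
Your proof is correct and follows essentially the same route as the paper: identify $\int_X\mathrm{tr}(\mathcal{K})\,\omega^n/n!$ with $2\pi r\mu(\frak{E})/(n-1)!$ via the Chern--Weil representative of $c_1(\frak{E})$, and match it against $cr\,\mathrm{Vol}(X)$ using the definition \eqref{cdefinition} of $c.$ The only difference is that you make explicit the step the paper leaves implicit, namely that $\mathrm{tr}[\phi,\overline{\phi}_h]=0$ so that $\mathrm{tr}(\mathcal{R}_h)\wedge\omega^{n-1}=\mathrm{tr}(R_h)\wedge\omega^{n-1}$, which is a worthwhile clarification but not a different argument.
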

\begin{proof}
First we note that $i\varLambda\mathrm{tr}(\mathcal{R})=\mathrm{tr}(i\varLambda\mathcal{R}),$ 
since $i\varLambda$ and the trace are linear differential operator.
From Stokes' formula and the representation of Chern classes in terms of curvature we have:
 \begin{equation*}
  \begin{split}
   \frac{1}{2\pi}\int_X\mathrm{tr}(\mathcal{K}-cI_E)\frac{\omega^{n}}{n!}&=
   \frac{1}{2\pi}\int_X\mathrm{tr}(\mathcal{K})\frac{\omega^{n}}{n!}-\frac{cr\mathrm{Vol}(X)}{2\pi}=\\
                                                                       &=
   \frac{1}{2\pi}\int_X\mathrm{tr}(i\varLambda\mathcal{R})\frac{\omega^{n}}{n!}-
   \frac{\mu(E)r\mathrm{Vol}(X)}{(n-1)!\mathrm{Vol}(X)}=\\
                                                                       &=
   \frac{1}{2\pi}\int_Xi\varLambda\mathrm{tr}(\mathcal{R})\frac{\omega^{n}}{n!}-
   \frac{r}{(n-1)!}\frac{1}{r}\int_Xc_1(E)\wedge\omega^{n-1}=\\
                                                                       &=
   \int_X\frac{i}{2\pi}\mathrm{tr}(\mathcal{R})\wedge\varLambda\omega^n-
    \int_Xc_1(E)\wedge\frac{\omega^{n-1}}{(n-1)!}=\\
                                                                       &=
   \int_X-\frac{1}{2\pi i}\mathrm{tr}(\mathcal{R})\wedge\frac{\omega^{n-1}}{(n-1)!}-
    \int_Xc_1(E)\wedge\frac{\omega^{n-1}}{(n-1)!}=\\
                                                                       &=
   \int_Xc_1(E)\wedge\frac{\omega^{n-1}}{(n-1)!}-
   \int_Xc_1(E)\wedge\frac{\omega^{n-1}}{(n-1)!}=0.
  \end{split}
 \end{equation*}
\end{proof}

Given a metric $h\in\text{Herm}^{+}(E),$ let $\mathcal{K}$ be the mean curvature of the Hitchin-Simpson 
connection associated with $h.$ There always exists a real positive function $a=a(x)$ on $X$ such that, setting 
$h'=ah,$ $\mathrm{tr}(\mathcal{K'}-cI_{E})=0.$ To be more precise, we have the following:

\begin{lemma}
\label{conformalchangetrace}
 Let $(X,\omega)$ be a compact K\"ahler manifold of (complex) dimension $n$ and let $\frak{E}=(E,\phi)$ be a 
 Higgs bundle of rank $r$ over $X.$
 Let $h\in\text{Herm}^{+}(E)$ be a Hermitian metric on $E,$ and let $\mathcal{K}$ be the mean
 curvature of the Hitchin-Simpson connection associated with $h.$ By an appropriate conformal
 change of the metric $h,$ we can assume $\mathrm{tr}(\mathcal{K}-cI_E)=0.$
\end{lemma}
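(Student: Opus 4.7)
The plan is to mimic the proof of Lemma \ref{rescalinglemma}, but now working at the level of the trace rather than requiring $\mathcal{K}$ to be proportional to the identity. The key input is the conformal rescaling formula from the earlier lemma on conformal changes: if one writes $h' = ah$ with $a$ a positive real function and $u = \ln a$, then the proof given earlier in fact established the endomorphism-level identity
\begin{equation*}
\mathcal{K}_{h'} = \mathcal{K}_h + \Box_0(u)\, I_E,
\end{equation*}
since the conformal factor is scalar, $\overline{\phi}_{h'} = \overline{\phi}_h$, and only the diagonal $\Box_0(\ln a) I_E$ term gets added by the pure $(1,1)$-curvature contribution. Taking the trace of this identity and using $\mathrm{tr}(I_E) = r$ yields
\begin{equation*}
\mathrm{tr}(\mathcal{K}_{h'} - c I_E) = \mathrm{tr}(\mathcal{K}_h - c I_E) + r\,\Box_0(u).
\end{equation*}

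So the task reduces to solving the scalar Poisson-type equation
\begin{equation*}
\Box_0 u \;=\; -\frac{1}{r}\,\mathrm{tr}(\mathcal{K}_h - c I_E)
\end{equation*}
on the compact K\"ahler manifold $X$. By Hodge theory, combined with the Fredholm alternative (Theorem \ref{Fredholm}) applied to the self-adjoint elliptic operator $\Box_0$ on $\mathcal{C}^\infty(X,\mathbb{R})$, such a $u$ exists if and only if the right-hand side is $L^2$-orthogonal to $\ker \Box_0$. Since $X$ is compact, $\ker \Box_0$ consists precisely of the constant functions, so the solvability condition reads
\begin{equation*}
\int_X \mathrm{tr}(\mathcal{K}_h - c I_E)\, \frac{\omega^n}{n!} = 0.
\end{equation*}

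This is exactly the content of the preceding lemma (equation \eqref{nulltrace}), which was established using the de Rham representation of $c_1(\frak{E})$ via the Hitchin--Simpson curvature and the defining formula \eqref{cdefinition} for $c$. Hence a smooth real solution $u$ exists, and setting $a = e^u$ gives a positive real function on $X$ for which $h' = ah$ satisfies $\mathrm{tr}(\mathcal{K}_{h'} - c I_E) = 0$, as required. No real obstacle arises; the only thing one must check carefully is that the conformal rescaling formula for $\mathcal{K}$ holds as an endomorphism identity (not merely after specializing to the weak Hermitian-Yang-Mills situation), which is immediate from the proof of the earlier lemma since the commutator $[\phi,\overline{\phi}_h]$ is invariant under scalar conformal changes.
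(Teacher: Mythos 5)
Your proposal is correct and follows essentially the same route as the paper: the paper likewise uses the identity $\mathcal{K}'\omega^{n}=\mathcal{K}\omega^{n}+\Box_0\ln(a)\,I_E\,\omega^{n}$, takes the trace, reduces to the Poisson equation $\Box_0 u=-\tfrac{1}{r}\mathrm{tr}(\mathcal{K}-cI_E)$, and solves it via Hodge theory and the Fredholm alternative, with the orthogonality condition supplied by the preceding lemma \eqref{nulltrace}. No discrepancies.
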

\begin{proof}
 Consider a real positive function $a=a(x)$ on $X$ and set $h'=ah.$ Then $h'=ah$ defines another Hermitian
 metric $h'\in\text{Herm}^{+}(E).$
 From the identity
 \begin{equation}
  \label{firstlaplacian}
  \mathcal{K}'\omega^{n}=\mathcal{K}\omega^{n}+\Box_0\ln(a)I_E\omega^{n}
 \end{equation}
and since the K\"ahler form is nowhere vanishing, taking the trace in the previous identity we obtain
\begin{equation}
 \label{traceidentity}
 \mathrm{tr}(\mathcal{K}'-cI_E)=\mathrm{tr}(\mathcal{K}-cI_E)+r\Box_0\ln(a).
\end{equation}
Hence, it suffices to prove that there is a function $u$ satisfying the equation
\begin{equation}
 \label{secondpoisson}
 \Box_0u=-\frac{1}{r}\mathrm{tr}(\mathcal{K}-cI_E).
\end{equation}
In fact, setting $h'=e^{u}h,$ from \eqref{traceidentity} it follows that $\mathrm{tr}(\mathcal{K}'-cI_E)=0.$
Now from Hodge theory and Fredholm alternative Theorem we know that \eqref{secondpoisson} has a solution 
if and only if $\mathrm{tr}(\mathcal{K}-cI_E)$ is orthogonal to all $\Box_0\text{-harmonic}$ functions.
Since $X$ is a compact complex manifold, a function on $X$ is $\Box_0\text{-harmonic}$ if and only if it
is constant.
So \eqref{secondpoisson} has solution if and only if 
\begin{equation*}
 \int_X\mathrm{tr}(\mathcal{K}-cI_E)\frac{\omega^{n}}{n!}=0.
\end{equation*}
But this equality always holds from the previous Lemma and this completes the proof.
\end{proof}

\begin{lemma}
 Let $h_t$ be a solution of the Donaldson heat flow. Then
 \linebreak$\|\mathcal{K}_t-cI_E\|_{L^2}^{2}$ is a monotone decreasing function of $t$ for $0\leq t<+\infty.$
\end{lemma}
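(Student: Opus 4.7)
The plan is to differentiate $\|\mathcal{K}_t-cI_E\|_{L^2}^2$ under the integral sign and recognize that, thanks to the pointwise heat-type inequality \eqref{HSCardona} already established in the proof of Proposition~\ref{proposizione52}, the integrand can be rewritten in a way that produces a manifestly non-positive derivative.

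First I would write
\begin{equation*}
\frac{\mathrm{d}}{\mathrm{d}t}\|\mathcal{K}_t-cI_E\|_{L^2}^{2}
=\int_{X}\partial_t|\mathcal{K}_t-cI_E|^{2}\,\frac{\omega^{n}}{n!},
\end{equation*}
which is justified by the smoothness of $h_t$ on $X\times[0,+\infty)$ given by Theorem~\ref{EXISTANCE}. Then I would invoke identity~\eqref{HSCardona}, namely
\begin{equation*}
\partial_{t}|\mathcal{K}_t-cI_E|^{2}
=-\tilde{\Box}_{t}|\mathcal{K}_t-cI_E|^{2}-2|\mathcal{D}''\mathcal{K}_t|^{2},
\end{equation*}
to obtain
\begin{equation*}
\frac{\mathrm{d}}{\mathrm{d}t}\|\mathcal{K}_t-cI_E\|_{L^2}^{2}
=-\int_{X}\tilde{\Box}_{t}|\mathcal{K}_t-cI_E|^{2}\,\frac{\omega^{n}}{n!}
-2\int_{X}|\mathcal{D}''\mathcal{K}_t|^{2}\,\frac{\omega^{n}}{n!}.
\end{equation*}

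The key step, and what I expect to be the main technical point, is to show that the first integral on the right vanishes. The function $f_t=|\mathcal{K}_t-cI_E|^{2}=\mathrm{tr}[(\mathcal{K}_t-cI_E)\cdot(\mathcal{K}_t-cI_E)]$ is a scalar-valued function on $X$. On such a function the Hitchin--Simpson operators $\mathcal{D}'_{h_t}$ and $\mathcal{D}''$ act simply as $\mathrm{d}'$ and $\mathrm{d}''$, because the bracket contributions $[\overline{\phi}_{h_t},f_t]$ and $[\phi,f_t]$ vanish (a scalar commutes with any endomorphism of $E$). Consequently $\tilde{\Box}_{t}f_t=i\varLambda\,\mathrm{d}''\mathrm{d}'f_t=\Box_{0}f_t$, and the identity $i\,\mathrm{d}''\mathrm{d}'f_t\wedge\omega^{n-1}=\Box_{0}f_t\cdot\omega^{n}/n$ together with $\mathrm{d}\omega=0$ exhibit the integrand as an exact form:
\begin{equation*}
\int_{X}\tilde{\Box}_{t}f_t\,\frac{\omega^{n}}{n!}
=\int_{X}i\,\mathrm{d}''\mathrm{d}'f_t\wedge\frac{\omega^{n-1}}{(n-1)!}
=\int_{X}\mathrm{d}\!\left[i\,\mathrm{d}'f_t\wedge\frac{\omega^{n-1}}{(n-1)!}\right]=0
\end{equation*}
by Stokes' theorem (recall $X$ is compact without boundary).

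Combining the two displays gives
\begin{equation*}
\frac{\mathrm{d}}{\mathrm{d}t}\|\mathcal{K}_t-cI_E\|_{L^2}^{2}
=-2\int_{X}|\mathcal{D}''\mathcal{K}_t|^{2}\,\frac{\omega^{n}}{n!}\leq 0,
\end{equation*}
which is the required monotonicity. Once the reduction of $\tilde{\Box}_t$ to $\Box_0$ on scalar functions is done, everything else is a routine application of Stokes' theorem; the only subtlety worth double-checking is the sign conventions linking $i\varLambda\,\mathrm{d}''\mathrm{d}'$ with the Laplace--Beltrami operator on $(X,\omega)$, and the fact that the Higgs-theoretic commutator contributions genuinely drop out when the argument is scalar-valued.
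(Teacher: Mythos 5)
Your proof is correct, but it takes a genuinely different route from the paper's. The paper's argument is purely variational and takes two lines: it combines the identity $\frac{\mathrm{d}}{\mathrm{d}t}\mathcal{L}(h_t,h_0)=-\|\mathcal{K}_t-cI_E\|_{L^2}^{2}$ of \eqref{firstderivative} with the second-variation computation of Theorem \ref{teorema47}, which along the flow gives $\partial_{t}^{2}\mathcal{L}(h_t,h_0)=\|\mathcal{D}'_{h_t}v_t\|^{2}\geq0$, so that $\frac{\mathrm{d}}{\mathrm{d}t}\|\mathcal{K}_t-cI_E\|_{L^2}^{2}=-\partial_{t}^{2}\mathcal{L}(h_t,h_0)\leq0$. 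You instead integrate the pointwise identity \eqref{HSCardona} over $X$ and must then dispose of the term $\int_{X}\tilde{\Box}_{t}|\mathcal{K}_t-cI_E|^{2}\,\frac{\omega^{n}}{n!}$; your observation that $\tilde{\Box}_{t}$ reduces to $\Box_{0}=i\varLambda\mathrm{d''}\mathrm{d}'$ on scalar-valued functions (the Higgs commutators vanish against a scalar) and that $\int_{X}\Box_{0}f\,\frac{\omega^{n}}{n!}=0$ by Stokes is exactly right, and is the same fact the paper uses implicitly when solving $\Box_{0}u=c-\gamma$ in Lemma \ref{rescalinglemma}. The two answers agree: along the flow $v_t=-(\mathcal{K}_t-cI_E)$, so $\|\mathcal{D}'_{h_t}v_t\|^{2}$ and $2\int_{X}|\mathcal{D''}\mathcal{K}_t|^{2}\frac{\omega^{n}}{n!}$ coincide after an integration by parts (up to the factor-of-two bookkeeping, which does not affect the sign). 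The paper's route is shorter because both derivative formulas are already on record; yours buys independence from the second-variation formula, relying only on the parabolic inequality that is in any case needed for the maximum-principle step of Proposition \ref{proposizione52}.
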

\begin{proof}
 From a previous calculation we have
 \begin{equation*}
 \frac{\mathrm{d}}{\mathrm{d}t}\mathcal{L}(h_t,h_0)=-\|\mathcal{K}_t-cI_E\|_{L^2}^{2} 
 \end{equation*}
and
\begin{equation*}
 \partial_{t}^{2}\mathcal{L}(h_t,h_0)=\|\mathcal{D}'v_t\|_{h_t}^{2}\geq0.
\end{equation*}
Hence,
\begin{equation*}
 \frac{\mathrm{d}}{\mathrm{d}t}\|\mathcal{K}_t-cI_E\|_{L^2}^{2}=-\|\mathcal{D}'v_t\|_{h_t}^{2}\leq0
\end{equation*}
and this completes the proof.
\end{proof}

\begin{lemma}
 \label{heatnull}
 Let $h_t,$ $0\leq t<+\infty,$ be the solution of the Donaldson heat flow. 
 We have 
 \begin{equation*}
  (\partial t+\tilde{\Box}_{t})\mathrm{tr}(\mathcal{K}_t-cI_E)=0,
 \end{equation*}
where $\mathcal{K}_t$ is the mean curvature of the Hitchin-Simpson connection associated with $h_t$ and
$\tilde{\Box}_{t}=\tilde{\Box}_{h_t}.$ Here the subscript $t$ remember us the dependence on the metric $h_t.$
\end{lemma}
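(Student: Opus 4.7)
The plan is to derive the heat equation for the full endomorphism $\mathcal{K}_t-cI_E$ first and then take the pointwise trace, exploiting the fact that the Higgs-type commutator terms in $\tilde\Box_t$ vanish when the trace is applied.

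First I would use the Donaldson flow to rewrite $v_t$. By definition $v_t=h_t^{-1}\partial_t h_t$, and the flow $\partial_t h_t=-(\mathcal{K}_t-ch_t)$ (viewing $\mathcal{K}_t$ as a Hermitian form via \eqref{curvaturehermitianform}) gives, at the level of endomorphisms,
\begin{equation*}
v_t=-(\mathcal{K}_t-cI_E).
\end{equation*}
Next I would invoke the identity established inside the proof of Theorem \ref{teorema47}, namely $\partial_t\mathcal{K}_t=i\varLambda\mathcal{D}''\mathcal{D}'_{h_t}v_t=\tilde\Box_t v_t$. Substituting the expression for $v_t$ and noting that $cI_E$ is parallel for $\mathcal{D}_h$ (since $D_{h_t}I_E=0$ and $[\phi,I_E]=[\overline{\phi}_{h_t},I_E]=0$, so $\tilde\Box_t(cI_E)=0$) yields
\begin{equation*}
\partial_t\mathcal{K}_t=-\tilde\Box_t(\mathcal{K}_t-cI_E)=-\tilde\Box_t\mathcal{K}_t,
\end{equation*}
and hence $(\partial_t+\tilde\Box_t)(\mathcal{K}_t-cI_E)=0$ as an identity in $A^0(\mathrm{End}(E))$.

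The final step is to take the fibrewise trace of this endomorphism-valued equation and check that it commutes with $\tilde\Box_t$. Writing $\mathcal{D}'_{h_t}=D'_{h_t}+\overline{\phi}_{h_t}\wedge\cdot$ and $\mathcal{D}''=D''+\phi\wedge\cdot$, the trace of any commutator $[\phi,A]$ or $[\overline{\phi}_{h_t},A]$ vanishes, while $\mathrm{tr}(D_{h_t}A)=\mathrm{d}\,\mathrm{tr}(A)$ because the connection $D_{h_t}$ is compatible with $h$ and the induced connection on $\det E$ acts on the trace by the ordinary exterior derivative. Combining these observations and using that $\varLambda$ is tensorial, one obtains $\mathrm{tr}(\tilde\Box_t A)=\Box_0\,\mathrm{tr}(A)$, which in particular coincides with $\tilde\Box_t\bigl(\mathrm{tr}(A)\bigr)$ in the sense of the statement (on functions, $\tilde\Box_t$ reduces to $\Box_0$). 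Applying this to $A=\mathcal{K}_t-cI_E$ and using the endomorphism identity above gives $(\partial_t+\tilde\Box_t)\mathrm{tr}(\mathcal{K}_t-cI_E)=0$.

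The main technical point I expect to need care with is the commutation of the trace with the Hitchin-Simpson half-Laplacian $\tilde\Box_t$: one must verify that each of the two extra terms $[\phi,\cdot]$ and $[\overline{\phi}_{h_t},\cdot]$ introduced by $\phi$ contributes nothing under the trace, and that what survives is exactly the scalar operator $\Box_0=i\varLambda\,\mathrm{d}''\mathrm{d}'$ applied to the scalar function $\mathrm{tr}(\mathcal{K}_t-cI_E)$. Once this is in place, the conclusion is immediate.
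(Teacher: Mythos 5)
Your proposal is correct and follows essentially the same route as the paper: it first establishes the endomorphism-level identity $(\partial_t+\tilde{\Box}_{t})\mathcal{K}_t=0$ from $v_t=-(\mathcal{K}_t-cI_E)$ and $\partial_t\mathcal{K}_t=\tilde{\Box}_{t}v_t$, and then takes the fibrewise trace. Your extra verification that the trace kills the commutator terms $[\phi,\cdot]$ and $[\overline{\phi}_{h_t},\cdot]$, so that $\mathrm{tr}(\tilde{\Box}_{t}A)=\Box_0\,\mathrm{tr}(A)$, is a point the paper states without proof ("the trace and the linear operators commute"), and it is correctly justified.
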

\begin{proof}
 We already know that
 \begin{equation*}
  (\partial t+\tilde{\Box}_{t})\mathcal{K}_t=0.
 \end{equation*}
Since the trace and the linear operators $\tilde{\Box}_{t}$ and $\partial_{t}$ commute, we obtain
\begin{equation*}
 \begin{split}
  (\partial t+\tilde{\Box}_{t})\mathrm{tr}(\mathcal{K}_t-cI_E)&=
  \mathrm{tr}[(\partial t+\tilde{\Box}_{t})(\mathcal{K}_t-cI_E)]=\\
  &=\mathrm{tr}[(\partial t+\tilde{\Box}_{t})\mathcal{K}_t]=0.
 \end{split}
\end{equation*}
\end{proof}

Let $\text{Herm}_{\mathrm{int}}^{+}(E)$ denote the set of all Hermitian metrics $h$ satifsfying
\begin{equation*}
 \|\mathcal{K}_{h}\|_{L^1}=\int_{X}|\mathcal{K}_{h}|\frac{\omega^{n}}{n!}<+\infty
\end{equation*}
where $\mathcal{K}_h$ is the mean curvature of the Hitchin-Simpson connection of $h.$ If $X$ is compact this 
space coincides with $\text{Herm}^{+}(E).$ The space $\text{Herm}_{\mathrm{int}}^{+}(E)$ was studied 
by Simpson in \cite{SIM}. It is an analytic manifold, which in general is not connected, and has 
the following properties. If $k\in\text{Herm}_{\mathrm{int}}^{+}(E)$ is a fixed element, then any other 
metric in the same connected component is given by $h=k\exp(a)$ with $a$ a smooth endomorphism of $E$ which is 
selfadjoint with respect to $k.$ Moreover Simpson showed in \cite{SIM} that for any Higgs 
bundle $\frak{E}=(E,\phi)$ over a compact K\"ahler manifold $X$ the solution of the Donaldson heat flow remains 
in the same connected component of the initial metric. To be precise, if $k$ is a fixed metric in 
$\text{Herm}_{\mathrm{int}}^{+}(E),$ then the unique solution of the Donaldson heat flow $h_t$ with $h_0=k$ is 
contained in the same connected component as $k.$
 
Now, let $(X,\omega)$ be a compact K\"ahler manifold of (complex) dimension $n$ and let 
$\frak{E}=(E,\phi)$ be a Higgs bundle of rank $r$ over $X.$ Let $h_0\in\text{Herm}^{+}(E)$ be an initial metric 
with the condition $\mathrm{tr}(\mathcal{K}_{0}-cI_E)=0,$ and let $h_t,$ $0\leq t<+\infty,$ be the 
corresponding solution of 
the Donaldson heat flow. 
Then $h_t$ will be of the form $h_t=h_{0}\exp(S(t))$ for some section $S(t)$ of $\mathrm{End}(E)$
over $X,$ and $S(t)$ 
will be selfadjoint with respect to $h_0.$

\begin{defin}
Let $(X,\omega)$ be a compact K\"ahler manifold of (complex) dimension $n$ and let $\frak{E}=(E,\phi)$ be a Higgs 
bundle of rank $r$ over $X.$ Let $h_0\in\text{Herm}^{+}(E)$ be an initial metric 
with the condition $\mathrm{tr}(\mathcal{K}_{0}-cI_E)=0,$ where $\mathcal{K}_0$ is the mean curvature of the 
Hitchin-Simpson connection associated with $h_0.$ Let $h_t,$ $0\leq t<+\infty,$ be the solution of 
the Donaldson heat flow with initial condition $h_0,$
and let $S(t)$ be a section of $\mathrm{End}(E)$ such that $h_t=h_0\exp(S(t))$ and $S(t)$ is selfadjoint with 
respect to $h_0.$ Then $|S(t)|^2=\mathrm{tr}(S(t)\cdot S(t))$
 is a nonnegative real valued function on $X$ and we have the following norms:
 \begin{enumerate}
  \item $\|S(t)\|_{L^1}=\int_{X}|S(t)|\frac{\omega^n}{n!},$
  \item $\|S(t)\|_{L^2}=\left(\int_{X}|S(t)|^2\frac{\omega^n}{n!}\right)^{\frac{1}{2}},$
  \item $\|S(t)\|_{L^{\infty}}=\max_{X}|S(t)|.$
 \end{enumerate}
 Moreover, since $h_t$ is differentiable for $0<t<+\infty$, so does $S(t).$
\end{defin}

From the initial condition $\mathrm{tr}(\mathcal{K}_0-cI_E)=0$ and Lemma \ref{heatnull}, using the theory 
of heat equations on Riemannian manifolds, we deduce that
\linebreak$\mathrm{tr}(\mathcal{K}_t-cI_E)=0.$ From $v_t=-(\mathcal{K}_t-cI_E)$ and  
\begin{equation*}
 \begin{split}
  \frac{\partial}{\partial t}e^{\mathrm{tr}S(t)}&=\frac{\partial}{\partial t}\det e^{S(t)}=
  \frac{\partial}{\partial t}\det(h_{0}^{-1}h_t)=\\
  &=\frac{\partial}{\partial t}\frac{\det h_t}{\det h_0}=\frac{1}{\det h_0}\frac{\partial}{\partial t}\det h_t=
  \frac{1}{\det h_0}\mathrm{tr}(h_{t}^{-1}\partial_{t}h_t)=\\
  &=\frac{1}{\det h_0}\mathrm{tr}v_t=-\frac{1}{\det h_0}\mathrm{tr}(\mathcal{K}_t-cI_E)=0
 \end{split}
\end{equation*}
we deduce that $\mathrm{tr}S(t)$ is constant.
Since 
\begin{equation*}
 e^{\mathrm{tr}S(0)}=\det e^{S(0)}=\det(h_{0}^{-1}h_0)=1,
\end{equation*}
one has $\mathrm{tr}S(0)=0,$ and hence
\begin{equation}
\label{traceSnull}
 \mathrm{tr}S(t)=0\hspace{0.5cm}\text{ for }\hspace{0.5cm}0\leq t<+\infty.
\end{equation}

Now, after introducing two useful results we prove one of the most important inequalities of 
this section. 

\begin{lemma}
 \label{lemmelemme0}
 Let $A,B\in M_{n,n}(\mathbb{C})$ such that $A$ and $B$ are selfadjoint and assume that $B$ is 
 positive semi-definite. Then
 \begin{equation*}
  |\mathrm{tr}(AB)|\leq\sqrt{\mathrm{tr}(AA)}\left|\mathrm{tr}(B)\right|.
 \end{equation*}
\end{lemma}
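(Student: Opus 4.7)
The plan is to exploit the Loewner ordering together with the spectral theorem. I would first reduce to the case in which $B$ is replaced by its positive semi-definite square root. Since $B$ is positive semi-definite, there exists a unique positive semi-definite matrix $B^{1/2}$ with $B^{1/2} B^{1/2} = B$. Using the cyclic property of the trace one has
\begin{equation*}
\mathrm{tr}(AB) = \mathrm{tr}(A B^{1/2} B^{1/2}) = \mathrm{tr}(B^{1/2} A B^{1/2}),
\end{equation*}
and the matrix $B^{1/2} A B^{1/2}$ is selfadjoint because $A$ and $B^{1/2}$ are.

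Next I would bound the operator norm. Let $\|A\|_{\mathrm{op}}$ denote the operator norm of $A$. Since $A$ is selfadjoint, the spectral theorem gives the Loewner inequalities
\begin{equation*}
-\|A\|_{\mathrm{op}} I_n \leq A \leq \|A\|_{\mathrm{op}} I_n.
\end{equation*}
Conjugating by $B^{1/2}$ preserves the Loewner order, so
\begin{equation*}
-\|A\|_{\mathrm{op}} B \leq B^{1/2} A B^{1/2} \leq \|A\|_{\mathrm{op}} B,
\end{equation*}
and taking the trace (which is monotone on the Loewner cone) yields
\begin{equation*}
|\mathrm{tr}(AB)| = |\mathrm{tr}(B^{1/2} A B^{1/2})| \leq \|A\|_{\mathrm{op}} \, \mathrm{tr}(B) = \|A\|_{\mathrm{op}} |\mathrm{tr}(B)|,
\end{equation*}
where the last equality uses $\mathrm{tr}(B) \geq 0$.

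Finally I would compare the operator norm of $A$ with the Hilbert--Schmidt norm. Diagonalising the selfadjoint matrix $A$ by a unitary change of basis gives real eigenvalues $\lambda_1,\dots,\lambda_n$, and
\begin{equation*}
\|A\|_{\mathrm{op}} = \max_i |\lambda_i| \leq \Bigl(\sum_i \lambda_i^2\Bigr)^{1/2} = \sqrt{\mathrm{tr}(A^2)} = \sqrt{\mathrm{tr}(AA)}.
\end{equation*}
Combining this with the previous inequality gives the claim. No step looks truly difficult; the only thing to be careful about is that the monotonicity of the trace on the Loewner cone is applied correctly, since the manipulation relies on $B$ being positive semi-definite (equivalently, $B^{1/2}$ being selfadjoint) to make the sandwich argument work.
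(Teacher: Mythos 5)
Your proof is correct, but it takes a genuinely different route from the paper's. The paper argues via the Hilbert--Schmidt inner product: it first notes that for a selfadjoint positive semi-definite $B$ with eigenvalues $\lambda_i\geq0$ one has $\mathrm{tr}(B^2)=\sum\lambda_i^2\leq\bigl(\sum\lambda_i\bigr)^2=\mathrm{tr}(B)^2$, and then applies the Cauchy--Schwarz inequality $|\mathrm{tr}(AB^{\ast})|\leq\sqrt{\mathrm{tr}(AA^{\ast})}\sqrt{\mathrm{tr}(BB^{\ast})}$ together with selfadjointness to conclude. You instead pass through the positive square root $B^{1/2}$, use the cyclic property of the trace, sandwich $A$ between $\pm\|A\|_{\mathrm{op}}I_n$ in the Loewner order, conjugate by $B^{1/2}$, and only at the end compare $\|A\|_{\mathrm{op}}$ with the Hilbert--Schmidt norm. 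Every step is sound: conjugation by a fixed matrix preserves the Loewner order, the trace is monotone on the positive cone, and $\max_i|\lambda_i|\leq\bigl(\sum_i\lambda_i^2\bigr)^{1/2}$ for the real eigenvalues of the selfadjoint matrix $A$. Your route in fact proves the sharper intermediate bound $|\mathrm{tr}(AB)|\leq\|A\|_{\mathrm{op}}\,\mathrm{tr}(B)$, which strictly implies the stated lemma; the price is invoking the existence of the positive square root and the order-theoretic machinery, whereas the paper's argument is a two-line consequence of Cauchy--Schwarz and an elementary eigenvalue inequality. Either proof serves equally well for the application in Lemma \ref{superinequality}.
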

\begin{proof}
 Since $B$ is selfadjoint and positive semi-definite we obtain
 \begin{equation*}
  \mathrm{tr}(B^{2})\leq\mathrm{tr}^{2}(B).
 \end{equation*}
In fact, let $\lambda_{1},\ldots,\lambda_{n}\in[0,+\infty]$ be the eigenvalues of $B.$ Hence,
\begin{equation*}
 \mathrm{tr}(B^{2})=\sum_{1\leq i\leq n}\lambda_{i}^{2}\leq\left(\sum_{1\leq i\leq n}\lambda_{i}\right)^{2}=
 \mathrm{tr}^{2}(B).
\end{equation*}
Finally, from the Cauchy-Schwarz inequality and since $A$ and $B$ are selfadjoint we conclude
\begin{equation*}
 \begin{split}
  \left|\mathrm{tr}(AB)\right|&=\left|\mathrm{tr}(AB^{\ast})\right|\leq 
  \sqrt{\mathrm{tr}(AA^{\ast})}\sqrt{\mathrm{tr}(BB^{\ast})}=\\
  &=\sqrt{\mathrm{tr}(AA)}\sqrt{\mathrm{tr}(BB)}\leq\sqrt{\mathrm{tr}(AA)}|\mathrm{tr}(B)|.
 \end{split}
\end{equation*}
\end{proof}

\begin{lemma}
 \label{lemmelemme2}
 Let $a_{1},\ldots,a_{n}\in\mathbb{R}.$ Then
 \begin{equation*}
  \sqrt{a_{1}^{2}+\cdots+a_{n}^{2}}\leq 
  n^{1/2}\ln(e^{a_{1}}+\cdots+e^{a_{n}}+e^{-a_{1}}+\cdots+e^{-a_{n}}).
 \end{equation*}
\end{lemma}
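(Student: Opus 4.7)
The plan is to reduce the claim to the trivial bound on the maximum entry via two elementary estimates. The point is that the right-hand side already dominates each $|a_j|$ separately, and the left-hand side is dominated by $n^{1/2}$ times the largest $|a_j|$, so everything will follow at once.

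First I would bound the left-hand side by the $\ell^{\infty}$ norm: for every real tuple $(a_1,\dots,a_n)$,
\begin{equation*}
\sqrt{a_1^2+\cdots+a_n^2}\;\leq\;n^{1/2}\,\max_{1\leq j\leq n}|a_j|.
\end{equation*}
This is just the standard inequality $\|a\|_2\leq n^{1/2}\|a\|_\infty$ on $\mathbb{R}^n$. Next I would bound $\max_j|a_j|$ by the logarithm on the right-hand side. For any index $j$, since $|a_j|=\max(a_j,-a_j)$, one of $e^{a_j}$ or $e^{-a_j}$ equals $e^{|a_j|}$, so
\begin{equation*}
e^{|a_j|}\;\leq\;e^{a_j}+e^{-a_j}\;\leq\;\sum_{k=1}^{n}\bigl(e^{a_k}+e^{-a_k}\bigr).
\end{equation*}
All summands on the right are strictly positive, so taking logarithms yields
\begin{equation*}
|a_j|\;\leq\;\ln\!\Bigl(e^{a_1}+\cdots+e^{a_n}+e^{-a_1}+\cdots+e^{-a_n}\Bigr),
\end{equation*}
and since the right-hand side is independent of $j$ we can pass to the maximum over $j$.

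Chaining the two estimates gives precisely
\begin{equation*}
\sqrt{a_1^2+\cdots+a_n^2}\;\leq\;n^{1/2}\max_j|a_j|\;\leq\;n^{1/2}\ln\!\Bigl(\sum_{k=1}^{n}\bigl(e^{a_k}+e^{-a_k}\bigr)\Bigr),
\end{equation*}
which is the claim. There is no real obstacle here: the only thing to be careful about is that the argument of the logarithm is strictly positive (it is, since each summand is), so the inequality $|a_j|\leq\ln(\cdots)$ is well-posed and sharp enough. No use of the Higgs-bundle machinery is needed, and the inequality holds for all real $a_1,\dots,a_n$ without any additional hypothesis.
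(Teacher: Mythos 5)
Your proof is correct and follows essentially the same route as the paper: bound $\|a\|_2$ by $n^{1/2}\max_j|a_j|$, then observe that $e^{|a_j|}$ is one of the summands inside the logarithm, so monotonicity of $\ln$ gives $|a_j|\leq\ln\bigl(\sum_k(e^{a_k}+e^{-a_k})\bigr)$. No differences worth noting.
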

\begin{proof}
 Since $f(x)=\ln(x)$ is a monotone increasing function of $x$ on $(0,+\infty),$ for $1\leq i\leq n$ we have
 \begin{equation*}
  |a_{i}|=\ln(e^{|a_{i}|})\leq 
  \ln(e^{a_{1}}+\cdots+e^{a_{n}}+e^{-a_{1}}+\cdots+e^{-a_{n}}).
 \end{equation*}
Hence,
\begin{equation*}
 \sqrt{a_{1}^{2}+\cdots+a_{n}^{2}}\leq n^{1/2}\max_{1\leq i\leq n}|a_{i}|\leq 
 n^{1/2}\ln(e^{a_{1}}+\cdots+e^{a_{n}}+e^{-a_{1}}+\cdots+e^{-a_{n}}).
\end{equation*}
\end{proof}

\begin{lemma}
Let $(X,\omega)$ be a compact K\"ahler manifold of (complex) dimension $n$ and let $\frak{E}=(E,\phi)$ be a 
Higgs bundle of rank $r$ over $X.$
Let \linebreak$h_0\in\text{Herm}^{+}(E)$ be a metric with the condition 
$\mathrm{tr}(\mathcal{K}_{0}-cI_E)=0,$ where $\mathcal{K}_0$ is the mean curvature of the Histchin-Simpson 
connection of $h_0.$ 
Let $h_t$ be the solution of the Donaldson heat flow with initial condition $h_0$ and let $S(t)$ be a section
of $\mathrm{End}(E)$ such that $h_t=h_0\exp(S(t))$ and $S(t)$ is selfadjoint with respect to $h_0.$
The following inequality holds:
\begin{equation}
\label{superinequality}
 \left(\frac{1}{\sqrt{\mathrm{rk}(E)}}\|S(t)\|_{L^1}-\mathrm{Vol}(X)\ln(2\mathrm{rk}(E))\right)
 \|\mathcal{K}_t-cI_E\|_{L^2}\leq-\sqrt{\mathrm{Vol}(X)}\mathcal{L}(h_t,h_0).
\end{equation}
\end{lemma}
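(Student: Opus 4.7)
The plan is to combine three ingredients: a pointwise upper bound on $|S(t)|$ coming from Lemma \ref{lemmelemme2}; a differential inequality controlling the auxiliary integral
\begin{equation*}
\Phi(t) := \int_X \ln\!\Bigl(\tfrac{\mathrm{tr}(e^{S(t)}) + \mathrm{tr}(e^{-S(t)})}{2r}\Bigr)\,\frac{\omega^n}{n!}\geq 0
\end{equation*}
by $\|\mathcal{K}_t - cI_E\|_{L^1}$; and the heat-flow identity $-\mathcal{L}(h_t,h_0) = \int_0^t\|\mathcal{K}_s - cI_E\|_{L^2}^2\,ds$ from Proposition \ref{proposizione52}(1) together with the monotonicity of $\|\mathcal{K}_s - cI_E\|_{L^2}$ in $s$. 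For the first ingredient, applying Lemma \ref{lemmelemme2} to the real eigenvalues $\lambda_1(x),\ldots,\lambda_r(x)$ of $S(t,x)$ yields $|S(t,x)|\leq\sqrt{r}\,\ln(\mathrm{tr}(e^{S(t,x)})+\mathrm{tr}(e^{-S(t,x)}))$. Since $\mathrm{tr}(S(t)) = 0$ by \eqref{traceSnull}, AM-GM on the eigenvalues of $e^{\pm S(t,x)}$ (whose product is $1$) gives $\mathrm{tr}(e^{\pm S(t,x)})\geq r$, so the integrand of $\Phi$ is nonnegative and integration yields $\tfrac{1}{\sqrt{r}}\|S(t)\|_{L^1} - V\ln(2r) \leq \Phi(t)$.

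Next I would bound $\Phi(t)$ by controlling its time derivative. Using the Duhamel formula $\frac{d}{dt}e^{S(t)} = \int_0^1 e^{(1-s)S(t)}\dot S(t)e^{sS(t)}\,ds$ and cyclicity of the trace,
\begin{equation*}
\Phi'(t) = \int_X \frac{\mathrm{tr}\bigl(\dot S(t)(e^{S(t)} - e^{-S(t)})\bigr)}{\mathrm{tr}(e^{S(t)}) + \mathrm{tr}(e^{-S(t)})}\,\frac{\omega^n}{n!}.
\end{equation*}
Applying Lemma \ref{lemmelemme0} twice, with $A = \dot S(t)$ (selfadjoint w.r.t.\ $h_0$) and $B = e^{\pm S(t)}$ (positive-definite, $h_0$-selfadjoint), gives the pointwise estimate $|\mathrm{tr}(\dot S(e^S - e^{-S}))| \leq |\dot S|(\mathrm{tr}(e^S) + \mathrm{tr}(e^{-S}))$, so $\Phi'(t)\leq \|\dot S(t)\|_{L^1}$. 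To replace $\dot S$ by the curvature, the heat-flow equation combined with the same Duhamel formula gives $-(\mathcal{K}_t - cI_E) = v_t = \int_0^1 e^{-sS(t)}\dot S(t)\,e^{sS(t)}\,ds$. Diagonalising $S(t,x)=\mathrm{diag}(\lambda_1,\ldots,\lambda_r)$ in an $h_0$-orthonormal frame and conjugating to an $h_t$-orthonormal frame by $e^{S/2}$, the matrix $\tilde v_t := e^{S/2}v_te^{-S/2}$ has entries $\tilde v_{ij} = \dot S_{ij}\cdot\bigl(\sinh((\lambda_j-\lambda_i)/2)/((\lambda_j-\lambda_i)/2)\bigr)$, and since $|\sinh y|\geq|y|$ for $y\in\mathbb{R}$ and $|v_t|^2 = \mathrm{tr}(v_t^2) = \mathrm{tr}(\tilde v_t^2) = \sum_{ij}|\tilde v_{ij}|^2$,
\begin{equation*}
|v_t(x)|^2 = \sum_{i,j}|\dot S_{ij}(x)|^2\Bigl(\tfrac{\sinh((\lambda_j - \lambda_i)/2)}{(\lambda_j - \lambda_i)/2}\Bigr)^2 \geq \sum_{i,j}|\dot S_{ij}(x)|^2 = |\dot S(t,x)|^2.
\end{equation*}
Hence $\|\dot S(t)\|_{L^1} \leq \|v_t\|_{L^1} = \|\mathcal{K}_t - cI_E\|_{L^1} \leq \sqrt{V}\,\|\mathcal{K}_t - cI_E\|_{L^2}$ by Cauchy-Schwarz, so $\Phi'(t)\leq \sqrt{V}\,\|\mathcal{K}_t - cI_E\|_{L^2}$.

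Finally, integrating from $0$ to $t$ (with $\Phi(0) = 0$ since $S(0) = 0$), combining with $-\mathcal{L}(h_t, h_0) = \int_0^t\|\mathcal{K}_s - cI_E\|_{L^2}^2\,ds$, and using the monotonicity of $\|\mathcal{K}_s - cI_E\|_{L^2}$ (established earlier in this chapter) in the form $\|\mathcal{K}_s - cI_E\|_{L^2}\leq\|\mathcal{K}_s - cI_E\|_{L^2}^2/\|\mathcal{K}_t - cI_E\|_{L^2}$ for $s\leq t$, one obtains
\begin{equation*}
\Phi(t) \leq \sqrt{V}\int_0^t\|\mathcal{K}_s - cI_E\|_{L^2}\,ds \leq \frac{\sqrt{V}}{\|\mathcal{K}_t - cI_E\|_{L^2}}\int_0^t\|\mathcal{K}_s - cI_E\|_{L^2}^2\,ds = \frac{-\sqrt{V}\,\mathcal{L}(h_t, h_0)}{\|\mathcal{K}_t - cI_E\|_{L^2}}.
\end{equation*}
Multiplying by $\|\mathcal{K}_t - cI_E\|_{L^2}$ and chaining with the first step produces the inequality (when the left-hand side is negative the bound is automatic, since the right-hand side is nonnegative). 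The main technical obstacle is the pointwise identity $|v_t| \geq |\dot S|$, which rests on the Duhamel-plus-diagonalisation calculation outlined above; everything else is a careful bookkeeping of Lemmas \ref{lemmelemme0} and \ref{lemmelemme2} against Cauchy-Schwarz and the previously established heat-flow monotonicity.
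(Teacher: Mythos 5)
Your proof is correct and follows essentially the same route as the paper's: the same auxiliary quantity $\int_X\bigl[\ln(\mathrm{tr}\,e^{S(t)}+\mathrm{tr}\,e^{-S(t)})-\ln(2r)\bigr]\frac{\omega^n}{n!}$, the same use of Lemmas \ref{lemmelemme0} and \ref{lemmelemme2}, and the same combination of $-\mathcal{L}(h_t,h_0)=\int_0^t\|\mathcal{K}_s-cI_E\|_{L^2}^2\,\mathrm{d}s$ with the monotonicity of $\|\mathcal{K}_s-cI_E\|_{L^2}.$ The only local deviation is your detour through $\dot S$ and the $\sinh$ estimate $|v_t|\geq|\dot S|$, which the paper avoids by applying Lemma \ref{lemmelemme0} directly with $A=v_t=H_t^{-1}\partial_tH_t$ and $B=H_t^{\pm1}$ to bound $\partial_t\ln(\mathrm{tr}H_t+\mathrm{tr}H_t^{-1})$ pointwise by $|v_t|=|\mathcal{K}_t-cI_E|$; both arguments then integrate to the same estimate.
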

\begin{proof}
 Let $V=\mathrm{Vol}(X)$ and $r=\mathrm{rk}(E).$  
 First of all, from the H\"older inequality we have
 \begin{equation}
 \label{Holder1}
  \begin{split}
   \|\mathcal{K}_s-cI_E\|_{L^1}&=\int_{X}|\mathcal{K}_s-cI_E|\frac{\omega^{n}}{n!}=
   \int_{X}1\cdot|\mathcal{K}_s-cI_E|\frac{\omega^{n}}{n!}\leq\\
   &\leq(V)^{1/2}\left(\int_{X}|\mathcal{K}_s-cI_E|^{2}\frac{\omega^{n}}{n!}\right)^{1/2}=
   (V)^{1/2}\|\mathcal{K}_s-cI_E\|_{L^2}
  \end{split}
 \end{equation}
 
Set $H_{t}=e^{S(t)}=h_{0}^{-1}h_{t}$ and 
let $\lambda_{1}(t),\ldots,\lambda_{r}(t)$ be the eigenvalues of $S(t).$ Since $S(t)$ is selfadjoint with 
respect to $h_{0}$ we deduce that, for each $t\geq0,$ they are real valued functions on $X$ and then, using 
Lemma \ref{lemmelemme2}, we obtain
 \begin{equation}
  \label{Pippa2}
   \begin{split}
    |S(t)|&=\sqrt{\lambda_{1}^{2}(t)+\cdots+\lambda_{r}^{2}(t)}\leq\\
          &\leq r^{1/2}\ln(e^{\lambda_{1}(t)}+\cdots+e^{\lambda_{r}(t)}+e^{-\lambda_{1}(t)}+\cdots+
          e^{-\lambda_{r}(t)})=\\ 
          &=r^{1/2}\ln(\mathrm{tr}H_t+\mathrm{tr}H_{t}^{-1}),
   \end{split}
 \end{equation}
 
so that
\begin{equation}
 \label{Pippa3}
 r^{-1/2}|S(t)|\leq\ln(\mathrm{tr}H_{t}+\mathrm{tr}H_{t}^{-1}).
\end{equation} 
 
Since $S(t)$ is selfadjoint with respect to $h_{0},$ it follows that
$H_{t}=e^{S(t)}$ and $H_{t}^{-1}=e^{-S(t)}$ are also selfadjoint with respect to $h_{0}.$ Moreover, 
from the definition of $H_{t}$  and the properties of the exponential of matrices 
we deduce that $H_t$ and $H_{t}^{-1}$ are positive definite and then 
$\mathrm{tr}H_{t}>0$ and $\mathrm{tr}H_{t}^{-1}>0.$
In fact
\begin{equation*}
 \mathrm{tr}H_{t}=e^{\lambda_{1}(t)}+\cdots+e^{\lambda_{r}(t)}>0
\end{equation*}
and
\begin{equation*}
 \mathrm{tr}H_{t}^{-1}=e^{-\lambda_{1}(t)}+\cdots+e^{-\lambda_{r}(t)}>0.
\end{equation*}

We also notice that 
$H_{t}^{-1}\partial_{t}H_{t}=h_{t}^{-1}h_{0}\partial_{t}(h_{0}^{-1}h_{t})=h_{t}^{-1}\partial_{t}h_{t}$ and hence
$H_{t}^{-1}\partial_{t}H_{t}$ is selfadjoint with respect to $h_{0}.$

By direct calculation, from Lemma \ref{lemmelemme0}, since $\partial_{t}$ and the 
trace commute and since the trace is $GL(r,\mathbb{C})\text{-invariant}$ we find
\begin{equation}
 \label{Pippa1}
 \begin{split}
  \frac{\partial}{\partial t}\ln(\mathrm{tr}H_t+\mathrm{tr}H_{t}^{-1})&=
   \frac{\partial_{t}(\mathrm{tr}H_{t}+\mathrm{tr}H_{t}^{-1})}{\mathrm{tr}(H_t+H_{t}^{-1})}=
  \frac{\partial_{t}\mathrm{tr}H_{t}+\partial_{t}\mathrm{tr}H_{t}^{-1}}{\mathrm{tr}(H_t+H_{t}^{-1})}=\\
  &=\frac{\mathrm{tr}(\partial_{t}H_{t})+\mathrm{tr}(\partial_{t}H_{t}^{-1})}
  {\mathrm{tr}H_t+\mathrm{tr}H_{t}^{-1}}=\\
  &=\frac{\mathrm{tr}(H_{t}^{-1}\cdot\partial_{t}H_{t}^{-1}\cdot H_{t})-
  \mathrm{tr}(H_{t}^{-1}\cdot\partial_{t}H_{t}\cdot H_{t}^{-1})}
  {\mathrm{tr}H_t+\mathrm{tr}H_{t}^{-1}}\leq\\
  &\leq\frac{\left|\mathrm{tr}(H_{t}^{-1}\partial_{t}H_{t}H_{t})-
  \mathrm{tr}(H_{t}^{-1}\partial_{t}H_{t}H_{t}^{-1})\right|}
  {\mathrm{tr}H_t+\mathrm{tr}H_{t}^{-1}}\leq\\
  &\leq\frac{\left|\mathrm{tr}(H_{t}^{-1}\partial_{t}H_{t}H_{t})\right|+
  \left|\mathrm{tr}(H_{t}^{-1}\partial_{t}H_{t}H_{t}^{-1})\right|}
  {\mathrm{tr}H_t+\mathrm{tr}H_{t}^{-1}}=\\
  &=\frac{\left|\mathrm{tr}(h_{t}^{-1}\partial_{t}h_{t}H_{t})\right|+
  \left|\mathrm{tr}(h_{t}^{-1}\partial_{t}h_{t}H_{t}^{-1})\right|}
  {\mathrm{tr}H_t+\mathrm{tr}H_{t}^{-1}}\leq\\
  &\leq\frac{\sqrt{\mathrm{tr}(h_{t}^{-1}\partial_{t}h_{t}\cdot h_{t}^{-1}\partial_{t}h_{t})}
  (\left|\mathrm{tr}H_{t}\right|+
  \left|\mathrm{tr}H_{t}^{-1}\right|)}
  {\mathrm{tr}H_t+\mathrm{tr}H_{t}^{-1}}=\\
  &=\frac{\sqrt{\mathrm{tr}(h_{t}^{-1}\partial_{t}h_{t}\cdot h_{t}^{-1}\partial_{t}h_{t})}
  (\mathrm{tr}H_{t}+\mathrm{tr}H_{t}^{-1})}
  {\mathrm{tr}H_t+\mathrm{tr}H_{t}^{-1}}=\\
  &=\sqrt{\mathrm{tr}(h_{t}^{-1}\partial_{t}h_{t}\cdot h_{t}^{-1}\partial_{t}h_{t})}=
  \left|\mathcal{K}_t-cI_E\right|.
  \end{split}
\end{equation}

On one hand, since $\|\mathcal{K}_{t}-cI_E\|_{L^2}$ is a nonnegative real valued monotone decreasing
function 
on $t,$ $0\leq t<+\infty,$ we have
\begin{equation}
 \label{Grossa2}
 t\|\mathcal{K}_{t}-cI_E\|_{L^2}^{2}\leq\int_{0}^{t}\|\mathcal{K}_{s}-cI_E\|_{L^2}^{2}\mathrm{d}s=
 -\mathcal{L}(h_{t},h_{0}),
\end{equation}
and then, for $t>0$
\begin{equation}
 \label{Grossa3}
 \|\mathcal{K}_{t}-cI_E\|_{L^2}\leq t^{-1/2}(-\mathcal{L}(h_{t},h_{0}))^{1/2}.
\end{equation}

On the other hand, since $X$ is compact we can use the Fubini-Tonelli Theorem.
Integrating over $X,$ from \eqref{Pippa1} and \eqref{Pippa3} and from the H\"older inequality we have
\begin{equation}
 \label{Grossa1}
 \begin{split}
  r^{-1/2}\|S(t)\|_{L^1}-V\ln(2r)&=\int_{X}r^{-1/2}|S(t)|-\ln(2r)\frac{\omega^{n}}{n!}\leq\\
  &\leq\int_{X}[\ln(\mathrm{tr}H_{t}+\mathrm{tr}H_{t}^{-1})-\ln(2r)]\frac{\omega^{n}}{n!}=\\
  &=\int_{X}\frac{\omega^{n}}{n!}\int_{0}^{t}\frac{\partial}{\partial s}
  \ln(\mathrm{tr}H_{s}+\mathrm{tr}H_{s}^{-1})\mathrm{d}s=\\
  &=\int_{0}^{t}\mathrm{d}s\int_{X}\frac{\partial}{\partial s}
  \ln(\mathrm{tr}H_{s}+\mathrm{tr}H_{s}^{-1})\frac{\omega^{n}}{n!}\leq\\
  &\leq\int_{0}^{t}\mathrm{d}s\int_{X}|\mathcal{K}_{s}-cI_E|\frac{\omega^{n}}{n!}=\\
  &=\int_{0}^{t}\|\mathcal{K}_{s}-cI_E\|_{L^1}\mathrm{d}s=\\
  &=\int_{0}^{t}1\cdot\|\mathcal{K}_{s}-cI_E\|_{L^1}\mathrm{d}s\leq\\
  &\leq\left(\int_{0}^{t}\mathrm{d}s\right)^{1/2}\left(\int_{0}^{t}\|\mathcal{K}_{s}-cI_E\|_{L^1}^{2}
  \mathrm{d}s\right)^{1/2}=\\
  &=t^{1/2}\left(\int_{0}^{t}\|\mathcal{K}_{s}-cI_E\|_{L^1}^{2}\mathrm{d}s\right)^{1/2}\leq\\
  &\leq t^{1/2}\left(\int_{0}^{t}V\|\mathcal{K}_{s}-cI_E\|_{L^2}^{2}\mathrm{d}s\right)^{1/2}=\\
  &=V^{1/2}t^{1/2}\left(\int_{0}^{t}\|\mathcal{K}_{s}-cI_E\|_{L^2}^{2}\mathrm{d}s\right)^{1/2}=\\
  &=V^{1/2}t^{1/2}(-\mathcal{L}(h_{t},h_{0}))^{1/2}. 
 \end{split}
\end{equation}

Combining \eqref{Grossa1} and \eqref{Grossa3}, since $\|\mathcal{K}_{t}-cI_E\|_{L^2}\geq0$ finally we obtain
\begin{equation*}
 \begin{split}
  (r^{-1/2}\|S(t)\|_{L^1}-V\ln(2r))\|\mathcal{K}_{t}-cI_E\|_{L^2}&\leq
  (V^{1/2}t^{1/2}(-\mathcal{L}(h_{t},h_{0}))^{1/2})\cdot\\
  &\cdot t^{-1/2}(-\mathcal{L}(h_{t},h_{0}))^{1/2}=\\
  &=-V^{1/2}\mathcal{L}(h_{t},h_{0})
 \end{split}
\end{equation*}
and this proves \eqref{superinequality}.
\end{proof}

Now we review some constructions involving Hermitian matrices. Let $E$ be a Higgs bundle with a fixed
Hermitian metric $k,$ and let $S=S(E)$ be the real vector bundle of selfadjoint endomorphisms of $E.$ Suppose 
$\varphi:\mathbb{R}\longrightarrow\mathbb{R}$ is a smooth function. Then we define a map of fibre bundles over 
$X$
\begin{equation*}
 \varphi:S\longrightarrow S
\end{equation*}
as follows: suppose $s\in S,$ then, at each point in $X,$ choose an orthonormal frame $\{e_{i}\}$ for $E$ with 
$s(e_{i})=\lambda_{i}e_{i},$ and set 
\begin{equation*}
 \varphi(s)(e_{i})=\varphi(\lambda_{i})e_{i}.
\end{equation*}
Suppose $\Psi:\mathbb{R}\times\mathbb{R}\longrightarrow\mathbb{R}$ is a smoot function of two variables. Then we 
define a map of fibre bundles
\begin{equation*}
 \Psi:S\longrightarrow S(\mathrm{End}(E)),
\end{equation*}
where $S(\mathrm{End}(E))$ consists of elements of $\mathrm{End}(\mathrm{End}(E))$ which are selfadjoint with respect to
the usual metric 
$\mathrm{tr}(A\cdot B^{\ast}).$ 
The function $\Psi$ is described as follows. Suppose $s\in S$ and $A\in\mathrm{End}(E).$
Choose an 
orthonormal frame field $\{e_{i}\}$ of eigenvectors of $s$ with eigenvalues $\lambda_{i}.$ Let $\{\hat{e}_{i}\}$ 
be the dual frame field in $E^{\ast},$ and write $A=\sum_{i,j}A_{ij}\hat{e}_{i}\otimes e_{j}.$ Then set 
\begin{equation*}
\Psi(s)(A)=\sum_{i,j}\Psi(\lambda_{i},\lambda_{j})A_{ij}\hat{e}_{i}\otimes e_{j}.
\end{equation*}
Again this is well defined, smooth and linear in $A.$

If the functions $\varphi$ and $\Psi$ are analytic, then we can express the constructions above as a power series. 
If
\begin{equation*}
 \varphi(\lambda)=\sum a_{n}\lambda^{n}
\end{equation*}
then
\begin{equation*}
 \varphi(s)=\sum a_{n}s^{n}.
\end{equation*}
If
\begin{equation*}
 \Psi(\lambda_{1},\lambda_{2})=\sum b_{mn}\lambda_{1}^{m}\lambda_{2}^{n}
\end{equation*}
then
\begin{equation*}
 \Psi(s)(A)=\sum b_{mn}s^{m}As^{n}.
\end{equation*}
Now, suppose $\varphi:\mathbb{R}\longrightarrow\mathbb{R}$ is a smooth function. Define
$\mathrm{d}\varphi:\mathbb{R}\times\mathbb{R}\longrightarrow\mathbb{R}$ by 
\begin{equation*}
 \mathrm{d}\varphi(\lambda_{1},\lambda_{2})=\frac{\varphi(\lambda_1)-\varphi(\lambda_2)}{\lambda_{1}-\lambda_{2}},
\end{equation*}
which is taken as $(\mathrm{d}\varphi/\mathrm{d}\lambda)(\lambda_1)$ if $\lambda_{1}=\lambda_{2}.$ If $s\in S,$
then
$\mathcal{D''}\varphi(s)=\mathrm{d}\varphi(s)(\mathcal{D''}s)$
where the right side uses the obvious extension to form-coefficient in the second variable. To see this for 
example when $\varphi$ is analytic, note that if $\varphi(\lambda)=\lambda^{n}$ then
\begin{equation*}
 \mathrm{d}\varphi(\lambda_{1},\lambda_{2})=\sum_{i+j=n-1}\lambda_{1}^{i}\lambda_{2}^{j}
\end{equation*}
whereas
\begin{equation*}
 \mathcal{D''}(s^{n})=\sum_{i+j=n-1}s^{i}\mathcal{D''}(s)s^{j}.
\end{equation*}
The construction $\varphi(s)$ and $\Psi(S)$ retain the same positivity properties as $\varphi$ and $\Psi.$ For 
example if $\varphi(\lambda)>0$ for all $\lambda,$ then $\varphi(s)$ is positive definite for all $s.$ And
if $\Psi(\lambda_{1},\lambda_{2})>0$ for all $\lambda_{1},\lambda_{2}$ then 
$\mathrm{tr}(\Psi(A)\cdot A^{\ast})>0$ for all $s$ and all $A\in\mathrm{End}(E).$
\\

We will describe how these constructions behave with respect to Sobolev norms.
Fix a Hermitian metric $k$ on a Higgs bundle 
$E.$ Using the metric we can define the space 
\begin{equation*}
 L_{0}^{p}(S)=\{s\in S|\int_{X}|s|^{p}\frac{\omega^{n}}{n!}<+\infty\},
\end{equation*}
where $s$ is selfadjoint with respect to the metric $k$ and $|s|^{2}=\mathrm{tr}(s\cdot s).$
In particular $L_{0}^{2}(S)$ is a Hilbert space with the inner product
\begin{equation}
\label{Hilbert1}
 \langle s,u\rangle=\int_{X}\mathrm{tr}(s\cdot u)\frac{\omega^n}{n!}
\end{equation}
and then
\begin{equation}
 \label{Hilbert2}
 \|s\|_{L^2}^{2}=\int_{X}\mathrm{tr}(s\cdot s)\frac{\omega^n}{n!}.
\end{equation}

\begin{note}
 In the above formulas $s$ and $u$ are selfadjoint with respect to the metric $k$ since they are 
element of $S.$ Then $\int_{X}\mathrm{tr}(s\cdot u)\frac{\omega^n}{n!}\in\mathbb{R},$
in fact, since the K\"ahler form is real, from $\mathrm{tr}(A)=\mathrm{tr}(A^{t})$ and 
$\mathrm{tr}(AB)=\mathrm{BA},$ we have
\begin{equation*}
\begin{split}
 \overline{\int_{X}\mathrm{tr}(s\cdot u)\frac{\omega^n}{n!}}&=
 \int_{X}\overline{\mathrm{tr}(s\cdot u)}\frac{\overline{\omega}^n}{n!}=
 \int_{X}\mathrm{tr}(\overline{s}\cdot\overline{u})\frac{\omega^n}{n!}=
 \int_{X}\mathrm{tr}((\overline{s}\cdot\overline{u})^{t})\frac{\omega^n}{n!}=\\
 &=\int_{X}\mathrm{tr}(u^{\ast}\cdot s^{\ast})\frac{\omega^n}{n!}=
 \int_{X}\mathrm{tr}(u\cdot s)\frac{\omega^n}{n!}=
 \int_{X}\mathrm{tr}(s\cdot u)\frac{\omega^n}{n!}.
\end{split}
\end{equation*}
\end{note}

From \eqref{Hilbert1} and \eqref{Hilbert2} we can deduce the following

\begin{lemma}
\label{FUNZIONALI}
 Let $k$ be a fixed Hermitian metric and let $\mathcal{K}_k$ be the mean curvature of the Hitchin-Simpson 
 connection associated with the metric $k.$ If \linebreak$u_m\rightharpoonup u_{\infty}$ in $L_{0}^{2}(S),$ then
 \begin{equation}
 \label{innerhilbert}
  \int_{X}\mathrm{tr}(u_m\cdot\mathcal{K}_{k})\frac{\omega^n}{n!}\longrightarrow
  \int_{X}\mathrm{tr}(u_{\infty}\cdot\mathcal{K}_{k})\frac{\omega^n}{n!}.
 \end{equation}
Moreover, if $\mathrm{tr}(u_m)=0$ for each $m,$ then $\mathrm{tr}(u_{\infty})=0.$
\end{lemma}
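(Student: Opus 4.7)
The proof is essentially a direct unpacking of the Hilbert-space structure on $L^2_0(S)$ together with the definition of weak convergence, so my plan has two short steps.

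First, I would observe that $\mathcal{K}_k$, being a smooth section of $\mathrm{End}(E)$ over the compact manifold $X$, is bounded, and is already known to be selfadjoint with respect to $k$; hence $\mathcal{K}_k \in L^2_0(S)$. Then the map
\begin{equation*}
 \Lambda: L^2_0(S)\longrightarrow\mathbb{R},\qquad \Lambda(u)=\langle u,\mathcal{K}_k\rangle=\int_{X}\mathrm{tr}(u\cdot\mathcal{K}_k)\frac{\omega^n}{n!}
\end{equation*}
is the continuous linear functional associated with $\mathcal{K}_k$ via the inner product of the Hilbert space $L^2_0(S)$ defined in \eqref{Hilbert1}. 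The first assertion of the lemma is therefore immediate from the definition of weak convergence applied to $\Lambda$.

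For the second assertion, I would test the sequence against elements of the form $\varphi I_E$, where $\varphi:X\to\mathbb{R}$ is an arbitrary smooth real-valued function. Such an endomorphism is selfadjoint with respect to any Hermitian metric (in particular $k$) and is bounded on the compact $X$, hence it lies in $L^2_0(S)$. From $\mathrm{tr}(u_m)=0$ we get
\begin{equation*}
 \langle u_m,\varphi I_E\rangle = \int_{X}\varphi\,\mathrm{tr}(u_m)\frac{\omega^n}{n!}=0 \qquad\text{for every } m,
\end{equation*}
and passing to the weak limit yields
\begin{equation*}
 \int_{X}\varphi\,\mathrm{tr}(u_\infty)\frac{\omega^n}{n!}=0 \qquad\text{for every smooth real } \varphi.
\end{equation*}

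Finally, I would note that $\mathrm{tr}(u_\infty)$ is a real-valued $L^2$ function on $X$ (real-valued because $u_\infty$ is selfadjoint with respect to $k$, so its pointwise eigenvalues are real), and by density of smooth real functions in $L^2(X,\mathbb{R})$ the vanishing of all such integrals forces $\mathrm{tr}(u_\infty)=0$ as an element of $L^2$, which is the desired conclusion. There is essentially no obstacle here: the only point requiring a moment's care is verifying that the test endomorphisms $\varphi I_E$ belong to $L^2_0(S)$, which reduces to compactness of $X$ and reality of $\varphi$.
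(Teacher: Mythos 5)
Your proof is correct. The first assertion is handled exactly as in the paper: the map $s\mapsto\int_X\mathrm{tr}(s\cdot\mathcal{K}_k)\frac{\omega^n}{n!}$ is the continuous linear functional given by the inner product \eqref{Hilbert1} with the fixed element $\mathcal{K}_k\in L_0^2(S)$, and weak convergence does the rest.

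For the second assertion you take a genuinely different route. The paper localizes: it sets $\Omega=\{x\in X\mid\mathrm{tr}(u_\infty(x))>0\}$, argues that $\Omega$ is open because $u_\infty$ is (asserted to be) a continuous section, applies the convergence of traces over $\Omega$ to get $\int_\Omega\mathrm{tr}(u_\infty)\frac{\omega^n}{n!}=0$, and concludes $\Omega=\emptyset$ from strict positivity of a continuous function on an open set (and symmetrically for the negative set). You instead test $u_m$ against the family $\varphi I_E$ with $\varphi$ smooth and real, pass to the weak limit, and invoke density of smooth functions in $L^2(X,\mathbb{R})$ to conclude $\mathrm{tr}(u_\infty)=0$ almost everywhere. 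Your argument is the more robust of the two: it requires only that $\mathrm{tr}(u_\infty)\in L^2$, whereas the paper's argument leans on continuity of the weak limit $u_\infty$, which is not automatic for a weak $L^2$ limit and is not justified at that point in the text. The trade-off is that the paper's version (granting continuity) yields pointwise vanishing everywhere, while yours yields vanishing a.e.; for the use made of this lemma in the Key Lemma, the a.e.\ statement is all that is needed, so nothing is lost.
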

\begin{proof}
 From \eqref{Hilbert1} and \eqref{Hilbert2} we immediately deduce that
 \begin{equation*}
  s\longmapsto\int_{X}\mathrm{tr}(s\cdot\mathcal{K}_{k})\frac{\omega^{n}}{n!}
 \end{equation*}
is a continuous linear functional on $L_{0}^{2}(S)$ and then \eqref{innerhilbert} follows from the definition of 
weak convergence in Hilbert spaces.
In order to prove that \linebreak$\mathrm{tr}(u_{\infty})=0$ if $\mathrm{tr}(u_m)=0$ for each $m$
let us consider the set 
\begin{equation*}
 \Omega=\{x\in X|\mathrm{tr}(u_{\infty}(x))>0\}.
\end{equation*}
Since $u_{\infty}$ is a continuous section of $\mathrm{End}(E)$ and it is selfadjoint with respect to the 
metric $k$
we have that $\mathrm{tr}(u_{\infty})$ is a continuous 
real valued function on $X,$ so that $\Omega$ is an open subset of $X.$
Then, since $X$ is compact and the volume form $\frac{\omega^n}{n!}$ defines a finite measure on $X,$ if we apply 
\eqref{innerhilbert} to the open set $\Omega$ we obtain
\begin{equation*}
\int_{\Omega}\mathrm{tr}(u_m)\frac{\omega^n}{n!}=\int_{\Omega}\mathrm{tr}(u_m\cdot I_E)\frac{\omega^n}{n!}
\longrightarrow\int_{\Omega}\mathrm{tr}(u_{\infty}\cdot I_E)\frac{\omega^n}{n!}=
\int_{\Omega}\mathrm{tr}(u_{\infty})\frac{\omega^n}{n!}.
\end{equation*}
Since $\mathrm{tr}(u_m)=0$ we deduce that
\begin{equation*}
 \int_{\Omega}\mathrm{tr}(u_{\infty})\frac{\omega^n}{n!}=0.
\end{equation*}
But the continuous function $\mathrm{tr}(u_{\infty})$ is strictly positive on the open set $\Omega$ and then
we conclude that $\Omega=\emptyset.$ In a similar way the open set 
\linebreak$\Theta=\{x\in X|\mathrm{tr}(u_{\infty}(x))<0\}$
is empty, so $\mathrm{tr}(u_{\infty})=0$ on $X.$
\end{proof}

Let $L_{1}^{p}(S)$ denote the space of sections $s\in S$ such that $s\in L^{p}(S)$ and 
$\mathcal{D''}s\in L^{p}(E).$ Note that this is a condition on $\mathrm{d''}_{E}s=D''s$ and also a growth 
condition involving the Higgs field $\phi$ if $X$ is noncompact.
For a given number $b$ denote the closed subspaces of sections $s\in S$ with $|s|\leq b$ by 
$L_{0,b}^{p}$ and $L_{1,b}^{p}.$
Finally let $P(S)$ be the normed space of smooth sections $s\in S$ with norm 
\begin{equation*}
 \|s\|_{p}=\max_{X}|s|+\|\mathcal{D''}s\|_{L^2}+\|\mathcal{D'}_{k}s\|_{L^1}.
\end{equation*}
The construcions $\varphi$ and $\Psi$ behave in a rather delicate fashion on $L^{p}(S)$ and $L_{1}^{p}(S)$
as it is shown in the following Proposition. They behave better on $P(S)$ since their 
$\|\cdot\|_{L^{\infty}}\text{-norm}$ is controlled.

\begin{prop}
 \label{proposizione41}
 Let $\varphi$ and $\Psi$ be functions as above.
 \begin{enumerate}
  \item The map $\varphi$ extends to a continuous nonlinear map 
        \begin{equation*}
         \varphi:L_{0,b}^{p}(S)\longrightarrow L_{0,b'}^{p}(S)
        \end{equation*}
        for some $b'.$
  \item The map $\Psi$ extends to a map 
        \begin{equation*}
         \Psi:L_{0,b}^{p}(S)\longrightarrow\mathrm{Hom}(L^{p}(\mathrm{End}(E)),L^{q}(\mathrm{End}(E)))
        \end{equation*}
        for $q\leq p,$ and for $q<p$ it is continuous in the norm operator topology. 
  \item The map $\varphi$ extends to a map 
        \begin{equation*}
         \varphi:L_{1,b}^{p}(S)\longrightarrow L_{1,b'}^{p}(S)
        \end{equation*}
        for $q\leq p,$ and it is continuous for $q<p.$ The formula 
        $\mathcal{D''}\varphi(s)=\mathrm{d}\varphi(s)(\mathcal{D''}s)$ holds in this context.
  \item If $\varphi$ and $\Psi$ are analytic with infinite radius of convergence, the maps
        \begin{equation*}
         \varphi:P(S)\longrightarrow P(S),
        \end{equation*}
        \begin{equation*}
         \Psi:P(S)\longrightarrow P(\mathrm{End}(\mathrm{End}(E)))
        \end{equation*}
        are analytic.
 \end{enumerate}
\end{prop}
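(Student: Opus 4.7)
The plan is to exploit two basic observations. First, a uniform bound $|s|\leq b$ on sections of $S$ confines their pointwise eigenvalues to the compact interval $[-b,b]\subset\mathbb{R}$, so $\varphi$ and $\Psi$ need only be controlled on compact subsets of $\mathbb{R}$ and $\mathbb{R}\times\mathbb{R}$, where they are automatically Lipschitz with computable constants. Second, all required estimates can be reduced to fiberwise linear-algebraic bounds, which become global integral estimates after applying H\"older's inequality on the compact manifold $X$.

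For part (1), I would first observe that the pointwise spectral calculus gives $|\varphi(s(x))|\leq b'$ with $b'=r^{1/2}\sup_{|\lambda|\leq b}|\varphi(\lambda)|$, settling the mapping property $\varphi:L^p_{0,b}\to L^p_{0,b'}$. For continuity, given $s,s'$ with $|s|,|s'|\leq b$, I would invoke the pointwise Lipschitz estimate $|\varphi(s(x))-\varphi(s'(x))|\leq C_b|s(x)-s'(x)|$, where $C_b$ is the Lipschitz constant of the spectral functional calculus restricted to selfadjoint matrices with eigenvalues in $[-b,b]$; integrating the $p$-th power against the volume form of $X$ then yields the $L^p$-continuity. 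For part (2), the same circle of ideas applies to the two-variable function $\Psi$: fiberwise one has a bound $|\Psi(s(x))(A(x))|\leq M_b|A(x)|$ with $M_b=\sup_{|\lambda_1|,|\lambda_2|\leq b}|\Psi(\lambda_1,\lambda_2)|$, and H\"older on $X$ gives $\|\Psi(s)(A)\|_{L^q}\leq M_b\,\mathrm{Vol}(X)^{1/q-1/p}\|A\|_{L^p}$ whenever $q\leq p$. For continuity in the operator topology when $q<p$, I would use a Lipschitz estimate for $\Psi$ on $[-b,b]\times[-b,b]$ to obtain the pointwise bound $|\Psi(s(x))(A(x))-\Psi(s'(x))(A(x))|\leq L_b|s(x)-s'(x)||A(x)|$, then apply H\"older's inequality together with dominated convergence, using the uniform bound $|s-s'|\leq 2b$ to convert an $L^p$-perturbation of $s$ into an $L^r$-perturbation for an $r$ determined by the interpolation relation; the strict inequality $q<p$ is what keeps this $r$ finite.

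For (3), the identity $\mathcal{D}''\varphi(s)=\mathrm{d}\varphi(s)(\mathcal{D}''s)$ at the smooth level already gives the mapping property once (1) and (2) are in hand: $\mathcal{D}''s\in L^p$ by hypothesis, and $\mathrm{d}\varphi(s)$ is precisely an operator of the type considered in (2) applied to the smooth two-variable function $\mathrm{d}\varphi(\lambda_1,\lambda_2)=(\varphi(\lambda_1)-\varphi(\lambda_2))/(\lambda_1-\lambda_2)$. Continuity decomposes into two pieces, one handled by (1) applied to $\varphi$ and the other by (2) applied to $\mathrm{d}\varphi$, and the extension of the derivative formula from the smooth to the $L^p_1$ setting is a density argument. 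For (4), analyticity on $P(S)$ follows by expanding $\varphi(\lambda)=\sum a_n\lambda^n$ and $\Psi(\lambda_1,\lambda_2)=\sum b_{mn}\lambda_1^m\lambda_2^n$ in power series: the seminorm $\max_X|s|$ built into $P(S)$ provides the uniform control needed to make the operator-valued series $\sum a_ns^n$ and $\sum b_{mn}s^mAs^n$ converge absolutely in each of the three seminorms $\max_X|\cdot|$, $\|\mathcal{D}''\cdot\|_{L^2}$ and $\|\mathcal{D}'_k\cdot\|_{L^1}$, once one checks that $\mathcal{D}''$ and $\mathcal{D}'_k$ satisfy Leibniz-type bounds on products $s^n$.

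The main obstacle will be the continuity of $\Psi$ in the operator norm topology for $q<p$. The operator $\Psi(s)$ depends on $s$ through its full fiberwise spectral decomposition, and an $L^p$ perturbation of $s$ produces no a priori pointwise or uniform control on the perturbation of $\Psi(s)$ as a multiplier between $L^p$ and $L^q$. The estimate must therefore combine the Lipschitz bound for $\Psi$ on the compact square $[-b,b]\times[-b,b]$ with an interpolation that genuinely exploits $q<p$; checking that $\mathrm{d}\varphi$ retains the regularity needed to feed back into (3), and that the Lipschitz constants depend controllably on the bound $b$, is where the most delicate bookkeeping will be required.
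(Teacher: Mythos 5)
Your proposal is sound and follows essentially the same route as the source: the paper gives no proof of this proposition at all, deferring entirely to Proposition 4.1 of Simpson \cite{SIM}, and Simpson's argument is precisely the one you sketch --- the uniform bound $|s|\leq b$ confines the fibrewise spectra to a compact interval where $\varphi$ and $\Psi$ are Lipschitz, the strict inequality $q<p$ is spent on exactly the H\"older/interpolation step you describe (converting the $L^{\infty}$-plus-$L^{p}$ control of $s-s'$ into an $L^{r}$ bound with $\frac{1}{q}=\frac{1}{r}+\frac{1}{p}$), and part (4) is handled by power series using the $\max_{X}|\cdot|$ seminorm built into $P(S)$ together with the Leibniz expansion $\mathcal{D''}(s^{n})=\sum_{i+j=n-1}s^{i}\mathcal{D''}(s)s^{j}$. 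The one point worth pinning down is your pointwise Lipschitz estimate for $s\mapsto\Psi(s)$: it should be taken in the Frobenius norm $|s|^{2}=\mathrm{tr}(s\cdot s)$ used throughout the paper, where it does hold uniformly on $\{|s|\leq b\}$ (for instance by approximating $\Psi$ by polynomials on $[-b,b]\times[-b,b]$, for which $\Psi(s)(A)=\sum b_{mn}s^{m}As^{n}$ is manifestly Lipschitz in $s$), even though the individual eigenprojections entering the spectral formula are discontinuous at eigenvalue crossings.
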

For the proof see Proposition $4.1$ in \cite{SIM}.
\\

Now, let $k\in\text{Herm}^{+}(E)$ be a fixed Hermitian structure. We already know that any Hermitian metric $h$ 
will be of the form $k\exp(v)$ for some section $v$ of $\mathrm{End}(E)$ over $X.$ Moreover, $v$ 
is selfadjoint with 
respect to $k.$ We can join $k$ to $h$ by the 
geodesic $h_{\tau}=k\exp(\tau v)$ where $0\leq\tau\leq1.$ Note that here 
$v_{\tau}=h_{\tau}^{-1}\partial_{\tau}h_{\tau}=v$ is constant,
i.e., it does not depend on $\tau.$ (See Chapter VI, \S 1 and \S 2 in \cite{KOB} for more details).
Now, in the proof of Theorem \ref{teorema47} we got an expression for the second derivative 
$\partial_{\tau}^{2}\mathcal{L}(h_{\tau},k)$ for any curve $h_{\tau}=k\exp(\tau v),$ namely:
\begin{equation*}
 \partial_{\tau}^{2}\mathcal{L}(h_{\tau},k)=\int_{X}\mathrm{tr}[\partial_{\tau}\mathcal{K}_{\tau}\cdot v_{\tau}+
 (\mathcal{K}_{\tau}-cI_E)\cdot\partial_{\tau}v_{\tau}]\frac{\omega^{n}}{n!},
\end{equation*}
where $\mathcal{K}_{\tau}$ is the mean curvature endomorphism of the Hitchin-Simpson connection associated with 
the 
metric $h_{\tau}=k\exp(\tau v).$
Notice that in our case, the chosen curve is such that $h_{0}=k,$ since it is also a geodesic 
$\partial_{\tau}v_{\tau}=0$ we have
\begin{equation}
\label{equazione53cardona}
 \partial_{\tau}^{2}\mathcal{L}(h_{\tau},k)=\int_{X}\mathrm{tr}(\partial_{\tau}\mathcal{K}_{\tau}\cdot v)=
 \frac{\omega^{n}}{n!}=\|\mathcal{D'}_{h_{\tau}}v\|_{h_{\tau}}^{2}.
\end{equation} 
Therefore, following \cite{SIU}, the idea is to find a simple expression for
$\|\mathcal{D'}_{h_{\tau}}v\|_{h_{\tau}}^{2}$ 
or equivalently for $\|\mathcal{D''}v\|_{h_{\tau}}^{2}$ and to integrate it twice with respect to $\tau.$ 
We can do 
this using local coordinates, indeed, at any point of $X$ we can choose a local frame field so that $h_{0}=I$
and $v=\mathrm{diag}(\beta_{1},\ldots,\beta_{r}).$
In particular, using such a local frame field we have, for 
$h_{\tau}=k\exp(\tau v),$ $h_{\tau}^{ij}=e^{-\beta_{j}\tau}\delta_{ij},$ and hence, (after a short computation)
we obtain
\begin{equation*}
 \|\mathcal{D''}v\|_{h_{\tau}}^{2}=\int_{X}\sum_{i,j=1}^{r}e^{(\beta_{i}-\beta_{j})\tau}
 |\mathcal{D''}v_{j}^{i}|^{2}\frac{\omega^{n-1}}{(n-1)!}.
\end{equation*}
Now, at $\tau=0$ the functional $\mathcal{L}(h_{\tau},k)$ vanishes and since $h_{0}=k$ is not a 
Hermitian-Yang-Mills structure, we have
\begin{equation*}
 \left.\partial_{\tau}\mathcal{L}(h_{\tau},k)\right|_{\tau=0}=\int_{X}\mathrm{tr}[(\mathcal{K}_{0}-cI_E)\cdot v]
 \frac{\omega^{n}}{n!}.
\end{equation*}
Then, integrating \eqref{equazione53cardona} twice we obtain

\begin{equation}
 \label{quasidonaldson}
 \mathcal{L}(h_{\tau},k)=\tau\int_{X}\mathrm{tr}[(\mathcal{K}_{0}-cI_E)\cdot v]\frac{\omega^{n}}{n!}+
 \int_{X}\sum_{i,j=1}^{r}\varPsi_{\tau}(\beta_{i},\beta_{j})|\mathcal{D''}v_{j}^{i}|^{2}
 \frac{\omega^{n-1}}{(n-1)!}
\end{equation}
where $\varPsi_{\tau}$ is the analyitic function given by
\begin{equation*}
 \varPsi_{\tau}(\beta_{i},\beta_{j})
 =\frac{e^{(\beta_{j}-\beta_{i})\tau}-(\beta_{j}-\beta_{i})\tau-1}{(\beta_{j}-\beta_{i})^{2}}.
\end{equation*}

In particular, at $\tau=1$ the expression \eqref{quasidonaldson} corresponds (up to a constant term) to the 
definition of the Donaldson functional given by Simpson \cite{SIM}. 
In fact, setting $\Psi(x_1,x_2)=\varPsi_{1}(x_1,x_2)$ for $(x_1,x_2)\in\mathbb{R}\times\mathbb{R},$ for two 
metrics in the same component $k$ and $h=e^{s},$ Simpson defines in \cite{SIM} the Donaldson functional as
\begin{equation*}
 \mathcal{L}(h,k)=\int_{X}\mathrm{tr}(s\cdot\mathcal{K}_{k})\frac{\omega^{n}}{n!}+
 \int_{X}\langle\Psi(s)(\mathcal{D''}s),\mathcal{D''}s\rangle_{k}\frac{\omega^{n-1}}{(n-1)!}.
\end{equation*}

Notice also that if the initial metric 
$k=h_{0}$ is Hermitian-Yang-Mils, the first term of the right hand side of \eqref{quasidonaldson} vanishes and 
the functional coincides with the Donaldson functional in \cite{SIU}.
\\

Now, let $h_{0}\in\text{Herm}^{+}(E)$ be a metric with the condition $\mathrm{tr}(\mathcal{K}_{0}-cI_E)=0$ and 
let $h_{t}$ be the solution of the Donaldson heat flow with initial metric $h_{0}.$
Since $\mathrm{tr}S(t)=0$ and since $\mathrm{tr}(AB)=\mathrm{tr}(BA),$ from \eqref{quasidonaldson} we have 
\begin{equation}
\label{quasidonaldson2}
  \mathcal{L}(h_{t},h_{0})=\int_{X}\mathrm{tr}(S(t)\cdot\mathcal{K}_{0})\frac{\omega^{n}}{n!}+
 \int_{X}\sum_{i,j=1}^{r}\varPsi_{1}(\beta_{i}(t),\beta_{j}(t))|\mathcal{D''}S(t)_{j}^{i}|^{2}
 \frac{\omega^{n-1}}{(n-1)!}
\end{equation}
where $\beta_{1}(t),\ldots,\beta_{r}(t)$ are the eigenvalues of $S(t).$ Following Simpson in \cite{SIM} 
we can rewrite \eqref{quasidonaldson2} in the equivalent form
\begin{equation}
\label{quasidonaldson3}
 \mathcal{L}(h_{t},h_{0})=\int_{X}\mathrm{tr}(S(t)\cdot\mathcal{K}_{0})\frac{\omega^{n}}{n!}+
 \int_{X}\langle\Psi(S(t))(\mathcal{D''}S(t)),\mathcal{D''}S(t)\rangle_{h_{0}}
 \frac{\omega^{n-1}}{(n-1)!}.
\end{equation}

From the previous constructions it follows that if the Donaldson functional $\mathcal{L}(h_t,h_0)$ is not 
bounded below, then $\|S(t)\|_{L^{\infty}}\longrightarrow+\infty$ as $t\longrightarrow+\infty.$ Namely we have

\begin{lemma}
 \label{NormaSinftygoesinfty}
 Let $(X,\omega)$ be a compact K\"ahler manifold of (complex) dimension $n$ and let $\frak{E}=(E,\phi)$ be a 
 Higgs bundle of rank $r\geq2$ over $X.$ Let $h_0\in\text{Herm}^{+}(E)$ be a Hermitian metric such that 
 $\mathrm{tr}(\mathcal{K}_0-cI_E)=0,$ where $\mathcal{K}_0$ is the mean curvature of the Hitchin-Simpson
 connection associated with $h_0.$ 
 Let $h_{t}$ be the solution of the Donaldson heat flow with initial 
 metric $h_{0}$ and assume $h_t=h_0e^{S(t)},$ where $S(t)$ is a section of $\mathrm{End}(E)$ and it is 
 selfadjoint with respect to $h_0.$ If 
 \begin{equation*}
  \mathcal{L}(h_t,h_0)\longrightarrow-\infty\hspace{0.5cm}\text{ as }\hspace{0.5cm}t\longrightarrow+\infty,
 \end{equation*}
then
\begin{equation*}
 \|S(t)\|_{L^{\infty}}\longrightarrow+\infty\hspace{0.5cm}\text{ as }\hspace{0.5cm}t\longrightarrow+\infty.
\end{equation*}
 \end{lemma}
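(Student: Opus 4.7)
The plan is to argue by contradiction: assume $\|S(t)\|_{L^\infty}$ stays bounded uniformly in $t$, and show that this forces $\mathcal{L}(h_t,h_0)$ to remain bounded below, contradicting the hypothesis. The key tool is the explicit representation \eqref{quasidonaldson3} of the Donaldson functional along the Donaldson heat flow, which splits $\mathcal{L}(h_t,h_0)$ into a linear term in $S(t)$ and a nonnegative quadratic term involving $\mathcal{D}''S(t)$ weighted by the operator $\Psi(S(t))$.

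First I would observe that the function $\Psi(x_1,x_2)=\dfrac{e^{x_2-x_1}-(x_2-x_1)-1}{(x_2-x_1)^2}$ is strictly positive on $\mathbb{R}\times\mathbb{R}$, since the elementary inequality $e^y\geq 1+y$ gives $e^y-y-1\geq 0$ with equality only at $y=0$. Using the functional calculus construction recalled before Proposition \ref{proposizione41}, this pointwise positivity of $\Psi$ translates into the pointwise positivity of the symmetric operator $\Psi(S(t))\in S(\mathrm{End}(E))$, so that
\begin{equation*}
 \int_{X}\langle\Psi(S(t))(\mathcal{D''}S(t)),\mathcal{D''}S(t)\rangle_{h_{0}}\frac{\omega^{n-1}}{(n-1)!}\geq 0.
\end{equation*}
Hence \eqref{quasidonaldson3} yields the lower bound $\mathcal{L}(h_t,h_0)\geq \int_{X}\mathrm{tr}(S(t)\cdot\mathcal{K}_0)\frac{\omega^n}{n!}$.

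Next I would control the linear term. Suppose, toward a contradiction, that there exists $M>0$ with $\|S(t)\|_{L^{\infty}}\leq M$ for all $t\geq 0$. Since $S(t)$ is selfadjoint with respect to $h_0$, a pointwise Cauchy--Schwarz estimate in the inner product $\mathrm{tr}(A\cdot B^{\ast})$ on $\mathrm{End}(E)$ gives
\begin{equation*}
 |\mathrm{tr}(S(t)\cdot\mathcal{K}_0)|\leq |S(t)|\,|\mathcal{K}_0|\leq M|\mathcal{K}_0|
\end{equation*}
at every point of $X$. Since $X$ is compact and $\mathcal{K}_0$ is smooth, integration yields
\begin{equation*}
 \left|\int_{X}\mathrm{tr}(S(t)\cdot\mathcal{K}_0)\frac{\omega^n}{n!}\right|\leq M\|\mathcal{K}_0\|_{L^1}<+\infty,
\end{equation*}
and combining with the previous step we get $\mathcal{L}(h_t,h_0)\geq -M\|\mathcal{K}_0\|_{L^1}$ for all $t\geq 0$, contradicting $\mathcal{L}(h_t,h_0)\to -\infty$.

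The only delicate point that is not routine is the justification that the quadratic term in \eqref{quasidonaldson3} is nonnegative: strictly speaking one must pass from the scalar inequality $\Psi(\lambda_1,\lambda_2)\geq 0$ to the operator statement $\langle \Psi(S(t))(A),A\rangle_{h_0}\geq 0$ for every $A\in\mathrm{End}(E)$. This is exactly the positivity property of the construction $\Psi:S\to S(\mathrm{End}(E))$ described just before Proposition \ref{proposizione41}: in a local orthonormal frame of eigenvectors of $S(t)$ with real eigenvalues $\beta_i(t)$, one has $\langle\Psi(S(t))(A),A\rangle_{h_0}=\sum_{i,j}\Psi(\beta_i(t),\beta_j(t))|A^i_j|^2\geq 0$. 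Once this is in place the argument above is immediate, and the lemma follows by contraposition.
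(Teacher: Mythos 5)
Your proof is correct and, while it starts from the same representation formula \eqref{quasidonaldson2}--\eqref{quasidonaldson3} as the paper, it handles the quadratic term in a genuinely different and cleaner way. The paper works along a sequence $t_m$ with $\|S(t_m)\|_{L^{\infty}}\leq B$, invokes the Gerschgorin circle theorem to bound the eigenvalues $\beta_i^{(m)}$ uniformly, uses the analyticity of $\varPsi_1$ to bound $\sum_{i,j}|\varPsi_1(\beta_i^{(m)},\beta_j^{(m)})|$ by a constant, and then estimates $|\mathcal{L}(h_{t_m},h_0)|$ from above --- which forces it to also control $\int_X|\mathcal{D}''S(t_m)^i_j|^2\,\omega^{n-1}/(n-1)!$ in terms of $\|S(t_m)\|_{L^{\infty}}$, a step that is at best delicate since it amounts to bounding a derivative of $S$ by its sup norm. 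You avoid this entirely: since $\varPsi_1(x_1,x_2)=(e^{x_2-x_1}-(x_2-x_1)-1)/(x_2-x_1)^2\geq0$ by $e^y\geq1+y$ (with value $1/2$ on the diagonal), the quadratic term is manifestly nonnegative and can simply be discarded in a lower bound, leaving only the linear term, which is controlled by $\|S(t)\|_{L^{\infty}}\|\mathcal{K}_0\|_{L^1}$ via pointwise Cauchy--Schwarz for the Hilbert--Schmidt pairing. This is shorter and sidesteps the weakest estimate in the paper's version.

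One small logical point: the negation of $\|S(t)\|_{L^{\infty}}\to+\infty$ is not uniform boundedness for all $t\geq0$, but the existence of a constant $M$ and a sequence $t_m\to+\infty$ with $\|S(t_m)\|_{L^{\infty}}\leq M$. Since your estimate $\mathcal{L}(h_t,h_0)\geq-M\|\mathcal{K}_0\|_{L^1}$ is pointwise in $t$, it applies along such a sequence and contradicts $\mathcal{L}(h_{t_m},h_0)\to-\infty$ directly, so nothing is lost; just phrase the contradiction hypothesis with a sequence, as the paper does, rather than for all $t$.
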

\begin{proof}
 We will show that if the limit does not hold, there is a sequence 
 \linebreak$t_m\longrightarrow+\infty$ such that 
 $\mathcal{L}(h_{t_m},h_0)$ is bounded below.\\
 Suppose the required limit does not hold. Hence, we can find a positive constant $B>0$ and a sequence 
 $t_m\longrightarrow+\infty$ such that
 \begin{equation*}
  \|S(t_m)\|_{L^{\infty}}\leq B
 \end{equation*}
From the theory of the Gerschgorin circles (see \cite{BIN} for details), there exists a positive radius $R>0,$ 
which does not depend on $m,$ such that
\begin{equation*}
 |\beta_{i}^{(m)}|\leq R\hspace{0.5cm}\text{ for all }\hspace{0.5cm}m\geq0,\hspace{0.5cm}1\leq i\leq r,
\end{equation*}
where $\beta_{i}^{(m)}$ are the eigenvalues of $S(t_m).$
Since
\begin{equation*}
 \Psi(x_1,x_2)=\frac{e^{(x_2-x_1)}-(x_2-x_1)-1}{(x_2-x_1)^2}
\end{equation*}
is analytic with infinite radius of convergence, there exists a constant $C$ such that
\begin{equation*}
 \sum_{i,j=1}^{r}|\Psi(\beta_{i}^{(m)},\beta_{j}^{(m)})|\leq C
\end{equation*}
and $C$ does not depend on $m.$
Therefore, from \eqref{quasidonaldson2} and since $X$ is compact and we can locally consider 
$\mathcal{D''}=D''+\phi$ as 
a matrix of forms of degree $1$ which does not depend on $m,$ we have 
\begin{equation*}
 \begin{split}
  \mathcal{L}(h_{t_m},h_0)&\geq-|\mathcal{L}(h_{t_m},h_0)|=\\
  &=-\left|\int_{X}\mathrm{tr}(S(t_m)\cdot\mathcal{K}_0)\frac{\omega^{n}}{n!}\right.+\\
  &+\left.\int_{X}\sum_{i,j=1}^{r}\varPsi_{1}(\beta_{i}^{(m)},\beta_{j}^{(m)})|\mathcal{D''}S(t_m)_{j}^{i}|^{2}
 \frac{\omega^{n-1}}{(n-1)!}\right|\geq\\
 &\geq-\left|\int_{X}\mathrm{tr}(S(t_m)\cdot\mathcal{K}_0)\frac{\omega^{n}}{n!}\right|-\\
 &+\left|\int_{X}\sum_{i,j=1}^{r}\varPsi_{1}(\beta_{i}^{(m)},\beta_{j}^{(m)})|\mathcal{D''}S(t_m)_{j}^{i}|^{2}
 \frac{\omega^{n-1}}{(n-1)!}\right|\geq\\
 &\geq-\int_{X}|\mathrm{tr}(S(t_m)\cdot\mathcal{K}_0)|\frac{\omega^{n}}{n!}-\\
 &+\int_{X}\left|\sum_{i,j=1}^{r}\varPsi_{1}(\beta_{i}^{(m)},\beta_{j}^{(m)})\right|
 |\mathcal{D''}S(t_m)_{j}^{i}|^{2}\frac{\omega^{n-1}}{(n-1)!}\geq\\
 &\geq-\int_{X}|\mathrm{tr}(S(t_m)\cdot\mathcal{K}_0)|\frac{\omega^{n}}{n!}-\\
 &+\int_{X}\sum_{i,j=1}^{r}|\varPsi_{1}(\beta_{i}^{(m)},\beta_{j}^{(m)})|
 |\mathcal{D''}S(t_m)_{j}^{i}|^{2}\frac{\omega^{n-1}}{(n-1)!}\geq\\
 &\geq-C_1\|S(t_m)\|_{L^{\infty}}-C\int_{X}|\mathcal{D''}S(t_m)_{j}^{i}|^{2}\frac{\omega^{n-1}}{(n-1)!}\geq\\
 &\geq-C_1\|S(t_m)\|_{L^{\infty}}-CC_2C_1\|S(t_m)\|_{L^{\infty}}\geq\\
 &\geq-(C_1+CC_2)B
 \end{split}
\end{equation*}
and this estimate does not depend on $m.$ Therefore $\mathcal{L}(h_{t_m},h_0)$ is bounded below as 
$t_m\longrightarrow+\infty,$ which is a contradiction.
\end{proof}

Now, let $\{s_m\}$ be a sequence of sections in $S$ with $\mathrm{tr}(s_m)=0$ such that 
\begin{equation*}
 \|s_m\|_{L^1}\longrightarrow+\infty
\end{equation*}
and let us assume 
\begin{equation*}
 \max_{X}|s_m|\leq C_1\|s_m\|_{L^1}+C_2
\end{equation*}
where $C_1$ and $C_2$ do not depend on $m.$
Set $l_m=\|s_m\|_{L^1}$ and $u_m=l_{m}^{-1}s_m,$ so $\|u_m\|_{L^1}=1.$ Since 
$l_m\longrightarrow+\infty,$ from 
\begin{equation*}
 \|u_j\|_{L^{\infty}}=\frac{\|S(t_j)\|_{L^{\infty}}}{\|S(t_j)\|_{L^1}}\leq C_1+\frac{C_2}{\|S(t_j)\|_{L^1}}
\end{equation*}
we conclude that $\max_{X}|u_m|\leq C,$ where $C$ does not depend on $m.$ Moreover, since $\mathrm{tr}$ is 
linear, from $u_m=l_{m}^{-1}s_m$ and $\mathrm{tr}(s_m)=0$ we deduce that $\mathrm{tr}(u_m)=0.$

\begin{lemma}
 \label{Simpsonliminf}
 Up to extracting a subsequence, $u_{m}\rightharpoonup u_{\infty}$ weakly in $L_{1}^{2}(S).$ 
 The limit $u_{\infty}$ is not $0.$
 If $\Phi:\mathbb{R}\times\mathbb{R}\longrightarrow\mathbb{R}$ is a nonnegative smooth function with compact 
 support such that 
 $\Phi(x_1,x_2)\leq(x_{1}-x_{2})^{-1}$ whenever $x_1>x_2,$ then
 \begin{equation}
 \label{MACROSCOPICA}
  \begin{split}
  &\quad\int_{X}\mathrm{tr}(u_{\infty}\cdot\mathcal{K}_{k})\frac{\omega^{n}}{n!}+
  \int_{X}\langle\Phi(u_{\infty})(\mathcal{D''}u_{\infty}),\mathcal{D''}u_{\infty}\rangle_{k}
  \frac{\omega^{n-1}}{(n-1!)}\leq\\
  &\leq\liminf_{m}
  \left[\int_{X}\mathrm{tr}(u_{m}\cdot\mathcal{K}_{k})\frac{\omega^{n}}{n!}+
  \int_{X}\langle l_{m}\Psi(l_{m}u_{m})(\mathcal{D''}u_{m}),\mathcal{D''}u_{m}\rangle_{k}
  \frac{\omega^{n-1}}{(n-1!)}\right],
  \end{split}
 \end{equation}
where $k$ is a Hermitian metric on the Higgs bundle $(E,\phi).$ 
\end{lemma}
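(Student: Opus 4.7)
The plan is to extract a weakly convergent subsequence in $L_1^2(S)$, identify the properties of the limit, and then pass to the limit in \eqref{MACROSCOPICA} term by term. We may assume the right-hand side of \eqref{MACROSCOPICA} is finite, otherwise the inequality is trivial, and by passing to a subsequence we can assume the liminf is a limit. Since $\|u_m\|_{L^\infty}\le C$ and $X$ is compact, $\{u_m\}$ is bounded in $L^2(S)$. The bound on the second integrand, together with pointwise positivity of $\Psi$ and the uniform lower bound of $l_m\Psi(l_m x_1,l_m x_2)$ away from the diagonal (see the asymptotic below), yields a uniform $L^2$ bound on $\mathcal{D}''u_m$. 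Hence $\{u_m\}$ is bounded in $L_1^2(S)$ and, by reflexivity, admits a weakly convergent subsequence $u_m\rightharpoonup u_\infty$. By Rellich--Kondrachov, the embedding $L_1^2\hookrightarrow L^1$ is compact, so $u_m\to u_\infty$ strongly in $L^1$; hence $\|u_\infty\|_{L^1}=\lim_m\|u_m\|_{L^1}=1$, so in particular $u_\infty\neq 0$, and Lemma \ref{FUNZIONALI} gives $\mathrm{tr}(u_\infty)=0$.

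For the inequality \eqref{MACROSCOPICA} itself, the linear term is immediate from Lemma \ref{FUNZIONALI}:
\[
\int_X\mathrm{tr}(u_m\cdot\mathcal{K}_k)\,\tfrac{\omega^n}{n!}\longrightarrow\int_X\mathrm{tr}(u_\infty\cdot\mathcal{K}_k)\,\tfrac{\omega^n}{n!}.
\]
For the quadratic term, the key observation is the pointwise asymptotic
\[
l\,\Psi(lx_1,lx_2)=\frac{e^{l(x_2-x_1)}-l(x_2-x_1)-1}{l(x_2-x_1)^2}\xrightarrow{l\to+\infty}\frac{1}{x_1-x_2}\qquad(x_1>x_2),
\]
and the convergence is monotone from below. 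Consequently, given a nonnegative compactly supported $\Phi$ with $\Phi(x_1,x_2)\le(x_1-x_2)^{-1}$ on $\{x_1>x_2\}$, there exists $m_0$ such that $l_m\Psi(l_m\cdot,l_m\cdot)\ge\Phi$ on $\mathrm{supp}(\Phi)$ for all $m\ge m_0$. The problem therefore reduces to showing
\[
\int_X\langle\Phi(u_\infty)(\mathcal{D}''u_\infty),\mathcal{D}''u_\infty\rangle_k\,\tfrac{\omega^{n-1}}{(n-1)!}\le\liminf_{m}\int_X\langle\Phi(u_m)(\mathcal{D}''u_m),\mathcal{D}''u_m\rangle_k\,\tfrac{\omega^{n-1}}{(n-1)!}.
\]

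This last step is a weak lower semicontinuity statement. By Proposition \ref{proposizione41} the map $u\mapsto\Phi(u)$ is continuous from $L_{0,b}^2(S)$ into $\mathrm{Hom}(L^2,L^q)$ for every $q<2$, so $\Phi(u_m)\to\Phi(u_\infty)$ in the operator norm along our bounded sequence. Combined with the weak convergence $\mathcal{D}''u_m\rightharpoonup\mathcal{D}''u_\infty$ in $L^2$, Lemma \ref{efremcalzelunghe} yields $\Phi(u_m)(\mathcal{D}''u_\infty)\rightharpoonup\Phi(u_\infty)(\mathcal{D}''u_\infty)$, and the standard expansion
\[
\langle\Phi(u_m)v_m,v_m\rangle=\langle\Phi(u_m)(v_m-v_\infty),(v_m-v_\infty)\rangle+2\,\Re\langle\Phi(u_m)v_\infty,v_m\rangle-\langle\Phi(u_m)v_\infty,v_\infty\rangle
\]
with $v_m=\mathcal{D}''u_m$ delivers the liminf inequality: the first term is nonnegative since $\Phi\ge 0$, while the latter two terms converge to $\pm\langle\Phi(u_\infty)v_\infty,v_\infty\rangle$ (the sign giving a net $+$).

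The hard part will be twofold. First, extracting the uniform $L^2$ bound on $\mathcal{D}''u_m$ from the hypothesis that the right-hand side of \eqref{MACROSCOPICA} is finite requires a careful off-diagonal lower bound on $l_m\Psi(l_m\cdot,l_m\cdot)$ (the product vanishes on the diagonal $x_1=x_2$, so some care is needed to localize the $L^2$ estimate to the region where the quadratic form is strictly positive). Second, once the limit is extracted, the nonlinear dependence of $\Phi$ on $u$ forces us to rely on the delicate operator-norm continuity of $u\mapsto\Phi(u)$ encapsulated in Proposition \ref{proposizione41}; it is precisely this statement that converts the weak convergence of $u_m$ (reinforced by the uniform $L^\infty$ bound, so that the $u_m$ stay in $L_{0,b}^2$) into the functional analytic ingredient needed to close the semicontinuity argument.
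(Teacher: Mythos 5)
Your overall architecture is sound and in places follows a genuinely different route from the paper: you establish the weak convergence and $u_{\infty}\neq 0$ directly (via a uniform $L_{1}^{2}$ bound and Rellich--Kondrachov into $L^{1}$, so that $\|u_{\infty}\|_{L^1}=1$), where the paper simply defers these two assertions to Simpson's Proposition 5.3 and Lemma 5.4; and for the quadratic term you argue by the convexity expansion $\langle\Phi(u_m)v_m,v_m\rangle=\langle\Phi(u_m)(v_m-v_{\infty}),v_m-v_{\infty}\rangle+2\,\Re\langle\Phi(u_m)v_{\infty},v_m\rangle-\langle\Phi(u_m)v_{\infty},v_{\infty}\rangle$, whereas the paper factors through $\Phi^{1/2}$, obtains weak convergence of $\Phi^{1/2}(u_m)\mathcal{D''}u_m$ in $L^{q}$ for $q<2$ via Lemmas \ref{efremcalzelunghe} and \ref{liminfBanach}, and then lets $q\rightarrow 2$ with a H\"older/volume factor. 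Your expansion is arguably cleaner, but note that Proposition \ref{proposizione41} only gives continuity of $u\mapsto\Phi(u)$ into $\mathrm{Hom}(L^{2},L^{q})$ for $q<2$, not into $\mathrm{Hom}(L^{2},L^{2})$; to pair $\Phi(u_m)v_{\infty}$ against the weakly convergent $v_m$ in $L^{2}$ you should instead observe that $\Phi(u_m)v_{\infty}\longrightarrow\Phi(u_{\infty})v_{\infty}$ strongly in $L^{2}$, using a.e.\ convergence of $u_m$ along a further subsequence, the uniform bound $\|u_m\|_{L^{\infty}}\leq C$, and dominated convergence.

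The genuine gap is the domination step. You claim there is $m_0$ with $l_m\Psi(l_m\cdot,l_m\cdot)\geq\Phi$ on $\mathrm{supp}(\Phi)$ for all $m\geq m_0$. Under the stated hypothesis $\Phi(x_1,x_2)\leq(x_1-x_2)^{-1}$ this is false: for $x_1>x_2$ the quantity $l\Psi(lx_1,lx_2)$ increases to $(x_1-x_2)^{-1}$ strictly from below and never attains it, so at any point where $\Phi$ equals $(x_1-x_2)^{-1}$ your pointwise inequality fails for every $m$. The equality case cannot be discarded, since Lemma \ref{pippopollina} applies the present lemma to a function that equals $(\lambda_i-\lambda_j)^{-1}$ exactly at the eigenvalue pairs of $u_{\infty}$ --- precisely the extremal situation. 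The paper therefore runs the argument first for $\Phi$ satisfying the strict inequality $\Phi<(x_1-x_2)^{-1}$, where compactness of $\mathrm{supp}(\Phi)$ and the monotonicity of $l\mapsto l\Psi(la,lb)$ produce a single $l$ with $\Phi<l\Psi(l\cdot,l\cdot)$ everywhere (and hence the bound for all $l_m\geq l$), and only afterwards passes to the non-strict case by approximation and a continuity/boundedness argument. You need to insert this two-step reduction; without it the pointwise bound on which your whole estimate of the quadratic term rests is not available.
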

\begin{proof}
In Proposition $5.3$ and Lemma $5.4$ in \cite{SIM} Simpson proved that, up to considering a subsequence, 
$u_{m}\rightharpoonup u_{\infty}$ weakly in $L_{1}^{2}(S)$ and $u_{\infty}\neq0.$
Hence, we have to prove the estimate \eqref{MACROSCOPICA}.
First, assume $\Phi:\mathbb{R}\times\mathbb{R}\longrightarrow\mathbb{R}$ is a nonnegative smooth function with 
compact support $K$ such that $\Psi(x_{1},x_{2})<(x_{1}-x_{2})^{-1}$ whenever $x_{1}>x_{2}.$
Then, there exists $l\gg0$ such that
\begin{equation}
\label{cuscricc}
 \Psi(x_{1},x_{2})<l\Psi(lx_{1},lx_{2})\hspace{0.5cm}\text{ for all }\hspace{0.5cm}(x_{1},x_{2})\in
 \mathbb{R}\times\mathbb{R}.
\end{equation}
To see this fix $(a,b)\in\mathbb{R}\times\mathbb{R}.$ Hence, since as $l\longrightarrow+\infty$ the 
quantity $l\Psi(la,lb)$ increases monotonically to $(a-b)^{-1}$ if $a>b$ and to $+\infty$ if $a\leq b,$ there 
exists $l_{(a,b)}\gg0$ such that
\begin{equation*}
 \Phi(a,b)<l_{(a,b)}\Psi(l_{(a,b)}a,l_{(a,b)}b)
\end{equation*}
so that, in order to prove \eqref{cuscricc}, it suffices to take 
\begin{equation*}
 l=\max_{(a,b)\in K}l_{(a,b)}.
\end{equation*}

From \eqref{cuscricc} and since $l_{m}\longrightarrow+\infty,$ for $m\gg0$ 
 \begin{equation*}
  \begin{split}
  \|\Phi^{1/2}(u_{m})\mathcal{D''}u_{m}\|_{L^2}^{2}&=
  \int_{X}\langle\Phi(u_{\infty})(\mathcal{D''}u_{\infty}),\mathcal{D''}u_{\infty}\rangle_{k}
  \frac{\omega^{n-1}}{(n-1!)}\leq\\
  &\leq
  \int_{X}\langle l_{m}\Psi(l_{m}u_{m})(\mathcal{D''}u_{m}),\mathcal{D''}u_{m}\rangle_{k}
  \frac{\omega^{n-1}}{(n-1!)}
  \end{split}
 \end{equation*}
 and then
 \begin{equation}
  \label{ottomana}
  \liminf_{m}\|\Phi^{1/2}(u_{m})\mathcal{D''}u_{m}\|_{L^2}^{2}\leq 
  \liminf_{m}\int_{X}\langle l_{m}\Psi(l_{m}u_{m})(\mathcal{D''}u_{m}),\mathcal{D''}u_{m}\rangle_{k}
  \frac{\omega^{n-1}}{(n-1!)}.
 \end{equation}

 We already know that $\max|u_{m}|\leq C,$ where $C$ does not depend on $m.$ Then, since $X$ is compact,
 $\{u_{m}\}\subseteq L_{0,b}^{2}(S).$
 
 Now, consider the compact immersion of Sobolev spaces 
 \begin{equation}
  \label{immersionesobolev}
  L_{1}^{2}(S)\hookrightarrow L_{0}^{2}(S).
 \end{equation}
From $u_{m}\rightharpoonup u_{\infty}$ weakly in $L_{1}^{2}(S)$ and from \eqref{immersionesobolev} 
we can deduce 
that, up to consider a subsequence,
$u_{m}\longrightarrow u_{\infty}$ strongly in $L_{0,b}^{2}(S).$

So, we can apply \ref{proposizione41} (b) to conclude that for any $q<2$
\begin{equation*}
 \Phi^{1/2}(u_{m})\longrightarrow\Phi^{1/2}(u_{\infty})\hspace{0.5cm}\text{ strongly in }
 \mathrm{Hom}(L^{2},L^{q}).
\end{equation*}

Moreover, from $u_{m}\rightharpoonup u_{\infty}$ weakly in $L_{1}^{2}(S),$ we have 
$\mathcal{D''}u_{m}\rightharpoonup\mathcal{D''}u_{\infty}$ weakly in $L_{0}^{2}(S)$ and hence, from 
Lemma \ref{efremcalzelunghe} we find that
\begin{equation*}
 \Phi^{1/2}(u_{m})\mathcal{D''}u_{m}\rightharpoonup\Phi^{1/2}(u_{\infty})\mathcal{D''}u_{\infty}
 \hspace{0.5cm}\text{ weakly in }L_{0}^{q}(S)
\end{equation*}
for any $q<2.$ So that, from Lemma \ref{liminfBanach} 
\begin{equation}
 \label{referenzaA}
 \|\Phi^{1/2}(u_{\infty})\mathcal{D''}u_{\infty}\|_{L^{q}}^{2}\leq\liminf_{m}
 \|\Phi^{1/2}(u_{m})\mathcal{D''}u_{m}\|_{L^{q}}^{2}
\end{equation}
for any $q<2.$

On the other hand, from Lemma \ref{FUNZIONALI} we know that
\begin{equation}
 \label{referenzaB}
 \int_{X}\mathrm{tr}(u_m\cdot\mathcal{K}_{k})\frac{\omega^n}{n!}\longrightarrow
 \int_{X}\mathrm{tr}(u_{\infty}\cdot\mathcal{K}_{k})\frac{\omega^n}{n!}.
\end{equation}

Set $V=\mathrm{Vol}(X).$ From \eqref{ottomana}, \eqref{referenzaA} and 
\eqref{referenzaB} we have
\begin{equation*}
 \begin{split}
  &\quad\int_{X}\mathrm{tr}(u_{\infty}\cdot\mathcal{K}_{k})\frac{\omega^n}{n!}+
        \|\Phi^{1/2}(u_{\infty})\mathcal{D''}u_{\infty}\|_{L^{q}}^{2}\leq\\
  & \leq\int_{X}\mathrm{tr}(u_{\infty}\cdot\mathcal{K}_{k})\frac{\omega^n}{n!}+
        \liminf_{m}\|\Phi^{1/2}(u_{m})\mathcal{D''}u_{m}\|_{L^{q}}^{2}\leq\\ 
  & \leq\int_{X}\mathrm{tr}(u_{\infty}\cdot\mathcal{K}_{k})\frac{\omega^n}{n!}+
        \liminf_{m}V^{\frac{2-q}{q}}\|\Phi^{1/2}(u_{m})\mathcal{D''}u_{m}\|_{L^{2}}^{2}\leq\\
  &    =\int_{X}\mathrm{tr}(u_{\infty}\cdot\mathcal{K}_{k})\frac{\omega^n}{n!}+
        V^{\frac{2-q}{q}}\liminf_{m}\|\Phi^{1/2}(u_{m})\mathcal{D''}u_{m}\|_{L^{2}}^{2}\leq\\
  & \leq\int_{X}\mathrm{tr}(u_{\infty}\cdot\mathcal{K}_{k})\frac{\omega^n}{n!}+
        V^{\frac{2-q}{q}}
        \liminf_{m}\int_{X}\langle l_{m}\Psi(l_{m}u_{m})(\mathcal{D''}u_{m}),\mathcal{D''}u_{m}\rangle_{k}
                   \frac{\omega^{n-1}}{(n-1!)}=\\
  &=\left(1-V^{\frac{2-q}{q}}\right)\int_{X}\mathrm{tr}(u_{\infty}\cdot\mathcal{K}_{k})\frac{\omega^n}{n!}+\\
  &+V^{\frac{2-q}{q}}\left[\int_{X}\mathrm{tr}(u_{\infty}\cdot\mathcal{K}_{k})\frac{\omega^n}{n!}+
         \liminf_{m}\int_{X}\langle l_{m}\Psi(l_{m}u_{m})(\mathcal{D''}u_{m}),\mathcal{D''}u_{m}\rangle_{k}
         \frac{\omega^{n-1}}{(n-1!)}\right]=\\
  &=\left(1-V^{\frac{2-q}{q}}\right)\int_{X}\mathrm{tr}(u_{\infty}\cdot\mathcal{K}_{k})\frac{\omega^n}{n!}+\\
  &+V^{\frac{2-q}{q}}\liminf_{m}
  \left[\int_{X}\mathrm{tr}(u_{m}\cdot\mathcal{K}_{k})\frac{\omega^{n}}{n!}+
  \int_{X}\langle l_{m}\Psi(l_{m}u_{m})(\mathcal{D''}u_{m}),\mathcal{D''}u_{m}\rangle_{k}
  \frac{\omega^{n-1}}{(n-1!)}\right].
 \end{split}
\end{equation*}

This works for any $q<2$ and then, in the limit, using some continuity and boundedness arguments, 
this implies inequality \eqref{MACROSCOPICA}.

Since \eqref{MACROSCOPICA} holds for all nonnegative smooth functions 
$\Phi:\mathbb{R}\times\mathbb{R}\longrightarrow\mathbb{R}$ with compact support such that 
$\Phi(x_{1},x_{2})<(x_{1}-x_{2})^{-1}$ whenever $x_{1}>x_{2},$ again using some continuity and boundedness
arguments, we can conclude that the inequality in the Lemma 
also holds if we assume $\Phi(x_{1},x_{2})\leq(x_{1}-x_{2})^{-1}$ whenever $x_{1}>x_{2}.$ 
\end{proof} 

\begin{lemma}
 \label{pippopollina}
 In the hypothesis of the previous Lemma, let $\tilde{\Phi}:\mathbb{R}\times\mathbb{R}\longrightarrow\mathbb{R}$ 
 be a nonnegative smooth function such that 
 \begin{equation*}
  \tilde{\Phi}(\lambda_{i},\lambda_{j})=\frac{1}{\lambda_{i}-\lambda_{j}}\hspace{0.5cm}\text{ if }
  \hspace{0.5cm}\lambda_{i}>\lambda_{j},
 \end{equation*}
where $\lambda_{1}<\cdots<\lambda_{l}$ are the distinct eigenvalues of $u_{\infty}.$
Then
 \begin{equation*}
  \begin{split}
  &\quad\int_{X}\mathrm{tr}(u_{\infty}\cdot\mathcal{K}_{k})\frac{\omega^{n}}{n!}+
  \int_{X}\langle\tilde{\Phi}(u_{\infty})(\mathcal{D''}u_{\infty}),\mathcal{D''}u_{\infty}\rangle_{k}
  \frac{\omega^{n-1}}{(n-1!)}\leq\\
  &\leq\liminf_{m}
  \left[\int_{X}\mathrm{tr}(u_{m}\cdot\mathcal{K}_{k})\frac{\omega^{n}}{n!}+
  \int_{X}\langle l_{m}\Psi(l_{m}u_{m})(\mathcal{D''}u_{m}),\mathcal{D''}u_{m}\rangle_{k}
  \frac{\omega^{n-1}}{(n-1!)}\right].
  \end{split}
 \end{equation*}
\end{lemma}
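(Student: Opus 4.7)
The plan is to deduce this from Lemma \ref{Simpsonliminf} by approximating $\tilde{\Phi}$ by a suitable family of nonnegative smooth compactly supported functions $\Phi_{N}$ for which the hypothesis $\Phi_{N}(x_1,x_2)\leq(x_1-x_2)^{-1}$ when $x_1>x_2$ does hold, and then passing to the limit. The key observation that makes this work is that, since $u_{\infty}$ has only finitely many distinct eigenvalues $\lambda_{1}<\cdots<\lambda_{l}$ (constant as pointed out in the discussion following Proposition~\ref{proposizione41}), the endomorphism $\Phi(u_{\infty})(\mathcal{D''}u_{\infty})$ only depends on the finitely many values $\Phi(\lambda_{i},\lambda_{j})$. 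Hence to recover $\tilde{\Phi}(u_{\infty})(\mathcal{D''}u_{\infty})$ in the limit it suffices that $\Phi_{N}(\lambda_{i},\lambda_{j})\nearrow\tilde{\Phi}(\lambda_{i},\lambda_{j})=(\lambda_{i}-\lambda_{j})^{-1}$ for every pair with $\lambda_{i}>\lambda_{j}.$

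First I would construct the $\Phi_{N}$ explicitly. For each ordered pair $(\lambda_{i},\lambda_{j})$ with $\lambda_{i}>\lambda_{j}$, choose a small bump $\rho_{ij}^{N}$ centered at $(\lambda_{i},\lambda_{j})$, supported in a disc small enough to stay away from the diagonal and disjoint from the other selected pairs, whose maximum value equals $(1-1/N)(\lambda_{i}-\lambda_{j})^{-1}$ and is attained at the center. By continuity of $(x_1,x_2)\mapsto(x_1-x_2)^{-1}$ away from the diagonal, after shrinking the radius of the discs if necessary we may ensure that on its support one has $\rho_{ij}^{N}(x_1,x_2)\leq(x_1-x_2)^{-1}$. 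Then set
\[
\Phi_{N}=\sum_{\lambda_{i}>\lambda_{j}}\rho_{ij}^{N}.
\]
Since the supports of the bumps are pairwise disjoint, $\Phi_{N}$ is smooth, nonnegative, compactly supported, and satisfies $\Phi_{N}(x_1,x_2)\leq(x_1-x_2)^{-1}$ whenever $x_1>x_2$. Moreover, by construction $\Phi_{N}(\lambda_{i},\lambda_{j})=(1-1/N)(\lambda_{i}-\lambda_{j})^{-1}\nearrow\tilde{\Phi}(\lambda_{i},\lambda_{j})$ as $N\to+\infty$.

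Next I would apply Lemma \ref{Simpsonliminf} to each $\Phi_{N}$, obtaining
\[
\int_{X}\mathrm{tr}(u_{\infty}\cdot\mathcal{K}_{k})\frac{\omega^{n}}{n!}
+\int_{X}\langle\Phi_{N}(u_{\infty})(\mathcal{D''}u_{\infty}),\mathcal{D''}u_{\infty}\rangle_{k}\frac{\omega^{n-1}}{(n-1)!}
\]
is bounded above by the $\liminf_{m}$ on the right-hand side of the target inequality, which does not depend on $N$. Using the local description of $\Phi_{N}(u_{\infty})(\mathcal{D''}u_{\infty})$ via orthonormal frames of eigenvectors of $u_{\infty}$, the integrand on the left equals
\[
\sum_{i,j}\Phi_{N}(\lambda_{i},\lambda_{j})\,|(\mathcal{D''}u_{\infty})_{ij}|^{2},
\]
which is a pointwise nondecreasing, nonnegative sequence converging to $\sum_{i,j}\tilde{\Phi}(\lambda_{i},\lambda_{j})|(\mathcal{D''}u_{\infty})_{ij}|^{2}$ as $N\to+\infty$ (entries with $\lambda_{i}=\lambda_{j}$ contribute zero by the construction of $\Phi_{N}$ and by the definition of $\tilde{\Phi}(u_{\infty})$ in that case).

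The conclusion then follows from the monotone convergence theorem, which lets me exchange $\lim_{N}$ with the integral over $X$ on the left-hand side. The main obstacle will be the careful construction of the bumps $\rho_{ij}^{N}$: one must choose their radii small enough that the pointwise inequality against $(x_1-x_2)^{-1}$ is preserved throughout their supports, while simultaneously keeping their supports disjoint and bounded away from the diagonal. Once this technical construction is in place, both the application of Lemma \ref{Simpsonliminf} and the final passage to the limit are straightforward.
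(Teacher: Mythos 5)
Your overall strategy is essentially the paper's: both arguments reduce the claim to Lemma \ref{Simpsonliminf} by exploiting the fact that $\Phi(u_{\infty})(\mathcal{D''}u_{\infty})$ depends only on the finitely many values of $\Phi$ at the pairs of (constant) eigenvalues of $u_{\infty}$, and by manufacturing compactly supported test functions with the right values there. The paper uses a single $\Phi$ equal to $\tilde{\Phi}$ at \emph{all} eigenvalue pairs and satisfying the non-strict bound $\Phi(x_{1},x_{2})\leq(x_{1}-x_{2})^{-1}$ with equality at the relevant points; you approximate monotonically from below and finish with monotone convergence, which has the genuine merit of only needing the strict-inequality case of Lemma \ref{Simpsonliminf} (the only case the paper proves in detail, the extension to $\leq$ being dispatched there with an unspecified ``continuity and boundedness'' argument).

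There is, however, a gap in your passage to the limit. Your $\Phi_{N}$ is a sum of bumps centered only at the pairs $(\lambda_{i},\lambda_{j})$ with $\lambda_{i}>\lambda_{j}$, so $\Phi_{N}(\lambda_{i},\lambda_{j})=0$ whenever $\lambda_{i}\leq\lambda_{j}$. But $\tilde{\Phi}(u_{\infty})$ on the left-hand side of the claimed inequality is built from the values $\tilde{\Phi}(\lambda_{i},\lambda_{j})$ for \emph{all} pairs, and the hypothesis constrains $\tilde{\Phi}$ only where $\lambda_{i}>\lambda_{j}$: at pairs with $\lambda_{i}<\lambda_{j}$, and on the diagonal, $\tilde{\Phi}$ is merely nonnegative and may be strictly positive --- in the application in the Key Lemma (Lemma \ref{keylemma}) the function $\tilde{\Phi}=\sum_{\alpha}(\lambda_{\alpha+1}-\lambda_{\alpha})(\mathrm{d}P_{\alpha})^{2}$ is symmetric and equals $(\lambda_{j}-\lambda_{i})^{-1}>0$ when $\lambda_{i}<\lambda_{j}$. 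Consequently your monotone limit is only $\sum_{\lambda_{i}>\lambda_{j}}(\lambda_{i}-\lambda_{j})^{-1}|(\mathcal{D''}u_{\infty})_{ij}|^{2}$, which is a priori smaller than $\langle\tilde{\Phi}(u_{\infty})(\mathcal{D''}u_{\infty}),\mathcal{D''}u_{\infty}\rangle_{k}$, and your parenthetical claim that the remaining entries contribute zero ``by the definition of $\tilde{\Phi}(u_{\infty})$'' is not justified by the statement of the Lemma. The repair is cheap: also place a bump of exact height $\tilde{\Phi}(\lambda_{i},\lambda_{j})$ at each pair with $\lambda_{i}\leq\lambda_{j}$. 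The constraint $\Phi_{N}(x_{1},x_{2})\leq(x_{1}-x_{2})^{-1}$ is vacuous on the open half-plane $\{x_{1}<x_{2}\}$, and near a diagonal point it is automatically satisfied on a sufficiently small support because $(x_{1}-x_{2})^{-1}\longrightarrow+\infty$ as $x_{1}-x_{2}\longrightarrow0^{+}$; no approximation by $(1-1/N)$ is needed at these extra points. (Alternatively, one can invoke the fact, used later in the proof of the Key Lemma via Lemma $5.6$ of \cite{SIM}, that the matrix entries $(\mathcal{D''}u_{\infty})_{ij}$ with $\lambda_{i}\leq\lambda_{j}$ vanish.) With either amendment your argument goes through.
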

\begin{proof}
 We can construct a nonnegative smooth function ${\Phi}:\mathbb{R}\times\mathbb{R}\longrightarrow\mathbb{R}$ 
 with compact support such that
 \begin{enumerate}
  \item ${\Phi}(\lambda_{i},\lambda_{j})=\tilde{\Phi}(\lambda_{i},\lambda_{j}),$ where 
         $\lambda_{1}<\cdots<\lambda_{l}$ are the distinct eigenvalues of $u_{\infty},$
  \item $\Phi(x_{1},x_{2})\leq(x_{1}-x_{2})^{-1}$ whenever $x_{1}>x_{2}.$
 \end{enumerate}
Hence, from $(1)$ and the above Lemma we conclude that
\begin{equation*}
 \begin{split}
  &\quad\int_{X}\mathrm{tr}(u_{\infty}\cdot\mathcal{K}_{k})\frac{\omega^{n}}{n!}+
  \int_{X}\langle\tilde{\Phi}(u_{\infty})(\mathcal{D''}u_{\infty}),\mathcal{D''}u_{\infty}\rangle_{k}
  \frac{\omega^{n-1}}{(n-1!)}=\\
  &=\int_{X}\mathrm{tr}(u_{\infty}\cdot\mathcal{K}_{k})\frac{\omega^{n}}{n!}+
  \int_{X}\langle{\Phi}(u_{\infty})(\mathcal{D''}u_{\infty}),\mathcal{D''}u_{\infty}\rangle_{k}
  \frac{\omega^{n-1}}{(n-1!)}\leq\\
  &\leq\liminf_{m}
       \left[\int_{X}\mathrm{tr}(u_{m}\cdot\mathcal{K}_{k})\frac{\omega^{n}}{n!}+
             \int_{X}\langle l_{m}\Psi(l_{m}u_{m})(\mathcal{D''}u_{m}),\mathcal{D''}u_{m}\rangle_{k}
             \frac{\omega^{n-1}}{(n-1!)}\right].
 \end{split}
\end{equation*}

\end{proof}

The notion of weak subbundle of a holomorphic vector bundle was introduced in \cite{YAU}, and we can make a 
similar definition. 

\begin{defin}
 Let $(X,\omega)$ be a compact K\"ahler manifold of (complex) dimension $n$ and let $\frak{E}=(E,\phi)$ be a 
 Higgs sheaf of rank $r$ over $X.$ Let $k$ be a Hermitian metric on $E$ and let 
 $\mathcal{D}_{k}=\mathcal{D'}_{k}+\mathcal{D''}$ be the Hitchin-Simpson connection associated with  $k.$  
 A $L_{1}^{2}(S)\text{-subbundle}$ of $E$ is a section of $\pi\in L_{1}^{2}(S)$ such that 
 \begin{enumerate}
  \item $\pi^{2}=\pi=\pi^{\ast k},$
  \item $(I_E-\pi)\mathcal{D''}(\pi)=0.$
 \end{enumerate}
\end{defin}
Following Uhlenbeck and Yau in \cite{YAU} one can prove that $\pi$ is smooth outside a subvariety of complex 
codimension greater than or equal to $2$ and that it defines a Higgs subsheaf of $E.$
In fact, since $\mathcal{D''}=D''+\phi,$ we can separate the components of type $(0,1)$ from the components of 
type $(1,0).$ Hence we have
\begin{enumerate}
 \item $(I_E-\pi)D''(\pi)=0,$
 \item $(I_E-\pi)\phi(\pi)=0.$
\end{enumerate}
From \cite{YAU}, $(1)$ can be identified as the holomorphic condition, while $(2)$ can be identified as the 
Higgs subsheaf condition.

\begin{lemma}
\emph{(Key Lemma)}
\label{keylemma}
Let $(X,\omega)$ be a compact K\"ahler manifold of (complex) dimension $n$ and let 
 $\frak{E}=(E,\phi)$ be a Higgs bundle of rank $r\geq2$ over $X.$ 
 Let $h_t$ be the solution of the Donaldson heat flow with initial condition $h_0,$ and suppose
 $\mathrm{tr}(\mathcal{K}_0-cI_E)=0,$ where $\mathcal{K}_{0}$ is the mean curvature endomorphism of the 
 Hitchin-Simpson connection associated with $h_{0}.$ Let us assume $(E,\phi)$ is $\omega\text{-semistable}$ and
 $\mathcal{L}(h_t,h_0)$ is not bounded below, i.e. 
 $\mathcal{L}(h_t,h_0)\longrightarrow-\infty$ as $t\longrightarrow+\infty.$ Then
 \begin{equation*}
  -\frac{\mathcal{L}(h_t,h_0)}{\|S(t)\|_{L^1}}\longrightarrow0\hspace{0.5cm}\text{ as }\hspace{0.5cm}
  t\longrightarrow+\infty,
 \end{equation*}
 where $S(t)$ is a section of $\mathrm{End}(E)$ such that $h_{t}=h_{0}e^{S(t)}$ and $S(t)$ is selfdajoint with
 respect to 
 $h_{0}.$
\end{lemma}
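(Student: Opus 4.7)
The plan is to argue by contradiction. Suppose there exist $\epsilon>0$ and a sequence $t_m\to+\infty$ with $-\mathcal{L}(h_{t_m},h_0)\geq\epsilon\|S(t_m)\|_{L^1}$, and write $s_m=S(t_m)$, $l_m=\|s_m\|_{L^1}$. First I would show $l_m\to+\infty$: since $\Psi(x_1,x_2)=(e^{x_2-x_1}-(x_2-x_1)-1)/(x_2-x_1)^2\geq0$ pointwise, the second term in \eqref{quasidonaldson3} is non-negative, hence
\begin{equation*}
\mathcal{L}(h_t,h_0)\geq\int_X\mathrm{tr}(S(t)\cdot\mathcal{K}_0)\frac{\omega^n}{n!}\geq -\|\mathcal{K}_0\|_{L^\infty}\|S(t)\|_{L^1},
\end{equation*}
and the assumption $\mathcal{L}(h_t,h_0)\to-\infty$ forces $l_m\to+\infty$. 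Combined with the Simpson $C^0$-estimate $\|s_m\|_{L^\infty}\leq C_1 l_m+C_2$ (which comes from the parabolic maximum principle applied to $|\mathcal{K}_t-cI_E|^2$, via Proposition \ref{proposizione52}(2), together with the structure of the heat equation for $S(t)$), the hypotheses preceding Lemma \ref{Simpsonliminf} are satisfied.

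Next I would extract a subsequence so that $u_m:=l_m^{-1}s_m\rightharpoonup u_\infty$ weakly in $L_1^2(S)$ with $u_\infty\neq 0$, $\mathrm{tr}(u_\infty)=0$ and $\|u_\infty\|_{L^\infty}\leq C_1$. Dividing \eqref{quasidonaldson3} by $l_m$ and using $\mathcal{D''}s_m=l_m\mathcal{D''}u_m$, I obtain
\begin{equation*}
\frac{\mathcal{L}(h_{t_m},h_0)}{l_m}=\int_X\mathrm{tr}(u_m\cdot\mathcal{K}_0)\frac{\omega^n}{n!}+\int_X\langle l_m\Psi(l_m u_m)(\mathcal{D''}u_m),\mathcal{D''}u_m\rangle_{h_0}\frac{\omega^{n-1}}{(n-1)!}.
\end{equation*}
Let $\lambda_1<\cdots<\lambda_l$ be the distinct eigenvalues of $u_\infty$ and choose $\tilde\Phi$ as in Lemma \ref{pippopollina}. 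Since the left-hand side is bounded above by $-\epsilon$ along the chosen subsequence, that lemma yields
\begin{equation*}
\int_X\mathrm{tr}(u_\infty\cdot\mathcal{K}_0)\frac{\omega^n}{n!}+\int_X\langle\tilde\Phi(u_\infty)(\mathcal{D''}u_\infty),\mathcal{D''}u_\infty\rangle_{h_0}\frac{\omega^{n-1}}{(n-1)!}\leq -\epsilon<0.
\end{equation*}

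Finally, I would use semistability to contradict this strict negativity. For each $\alpha\in[\lambda_1,\lambda_l)$ define $\pi_\alpha=\chi_{(-\infty,\alpha]}(u_\infty)$ through the spectral construction of Proposition \ref{proposizione41}. The identity $\tilde\Phi(\lambda_i,\lambda_j)=(\lambda_i-\lambda_j)^{-1}$ across jumps implies, via the product rule for $\mathcal{D''}$, that $\pi_{\lambda_i}\in L_1^2(S)$ satisfies $\pi_{\lambda_i}^2=\pi_{\lambda_i}=\pi_{\lambda_i}^{\ast h_0}$ and $(I_E-\pi_{\lambda_i})\mathcal{D''}\pi_{\lambda_i}=0$, i.e.~each $\pi_{\lambda_i}$ is an $L_1^2$-Higgs-subbundle of $\mathfrak{E}$. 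By the Uhlenbeck--Yau regularity theorem adapted to the Higgs setting (cf.~\cite{SIM}), $\pi_{\lambda_i}$ determines a coherent Higgs subsheaf $\mathfrak{F}_{\lambda_i}\subseteq\mathfrak{E}$ for which a Chern--Weil computation gives
\begin{equation*}
\int_X\mathrm{tr}(\pi_{\lambda_i}\cdot(\mathcal{K}_0-cI_E))\frac{\omega^n}{n!}-\|\mathcal{D''}\pi_{\lambda_i}\|_{L^2}^2=2\pi\bigl[\mathrm{deg}(\mathfrak{F}_{\lambda_i})-\mathrm{rk}(\mathfrak{F}_{\lambda_i})\,\mu(\mathfrak{E})\bigr].
\end{equation*}
Using $\mathrm{tr}(u_\infty)=0$ and the Abel-type decomposition $u_\infty=\lambda_l I_E-\sum_{i=1}^{l-1}(\lambda_{i+1}-\lambda_i)\pi_{\lambda_i}$, the left-hand side of the liminf inequality rewrites as $-2\pi\sum_{i=1}^{l-1}(\lambda_{i+1}-\lambda_i)[\mathrm{deg}(\mathfrak{F}_{\lambda_i})-\mathrm{rk}(\mathfrak{F}_{\lambda_i})\mu(\mathfrak{E})]$. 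By $\omega$-semistability each bracketed term is $\leq 0$ and the $(\lambda_{i+1}-\lambda_i)$ are strictly positive, so the whole sum is $\geq 0$, contradicting the bound $\leq-\epsilon<0$.

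The main obstacle is the last step: identifying the spectral projections $\pi_{\lambda_i}\in L_1^2(S)$ with genuine coherent Higgs subsheaves and rewriting the liminf quantity as the displayed combination of slope defects. This requires invoking the Uhlenbeck--Yau--Simpson regularity result and a careful integration-by-parts calculation in which the new contribution $[\phi,\overline\phi_{h_0}]$ inside $\mathcal{K}_0$ accounts precisely for the fact that $\mathfrak{F}_{\lambda_i}$ is $\phi$-invariant, so that its degree is computed with the correct Higgs Chern--Weil formula.
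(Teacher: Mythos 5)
Your proposal is correct and follows essentially the same route as the paper: contradiction via a sequence $t_m$ with $-\mathcal{L}(h_{t_m},h_0)\geq\epsilon\,\|S(t_m)\|_{L^1}$, normalization $u_m=l_m^{-1}S(t_m)$ with Simpson's $C^0$-estimate, weak limit $u_\infty\neq0$ with $\mathrm{tr}(u_\infty)=0$, the lower-semicontinuity inequality of Lemma \ref{pippopollina}, spectral projections realized as $L_1^2$-Higgs subbundles and hence (by Uhlenbeck--Yau regularity) coherent Higgs subsheaves, and finally the Chern--Weil/Abel-summation identity contradicting semistability. The only cosmetic deviation is your more direct derivation of $\|S(t_m)\|_{L^1}\to+\infty$ from the pointwise nonnegativity of $\Psi$, where the paper instead passes through $\|S(t)\|_{L^\infty}\to+\infty$ and the estimate $\|S(t)\|_{L^\infty}\leq C_1\|S(t)\|_{L^1}+C_2$; both are valid.
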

\begin{proof}
 Let $\mathcal{K}_{t}$ be the mean curvature of the Hitchin-Simpson connection associated with the metric $h_{t}.$
 We will show that if the estimate does not hold, there is a Higgs subsheaf which condradicts semistability.
 Suppose the required estimate does not hold. Hence, we can find a positive constant $\epsilon>0$ and a 
 sequence $t_{m}\longrightarrow+\infty$ such that
 \begin{equation}
  \label{Grossina2}
  -\frac{\mathcal{L}(h_{t_{m}},h_{0})}{\|S(t_{m})\|_{L^{1}}}\geq\epsilon>0.
 \end{equation}
 Since $\max_{X}|\mathcal{K}_t-cI_E|$ is a monotone decreasing function 
 (see Proposition \ref{proposizione52} for details), from 
 \begin{equation*}
  \max_{X}|\mathcal{K}_t|\leq\max_{X}|\mathcal{K}_t-cI_E|+c
 \end{equation*}
we deduce that $\max_{X}|\mathcal{K}_{t}|$ is uniformly 
 bounded with respect to $t.$ So from Lemma $(3.1)\text{(d)}$ in \cite{SIM} and the hypotesis that $\max_{X}|\mathcal{K}_{t}|$ are 
 uniformly bounded, we have the following Simpson's estimate (p. 885 in \cite{SIM})
 \begin{equation}
  \label{Grossina3}
  \|S(t)\|_{L^{\infty}}\leq C_{1}\|S(t)\|_{L^{1}}+C_{2},
 \end{equation}
where $C_{1}$ and $C_{2}$ depend only on the curvature of the initial metric $h_{0}$ and the K\"aher form 
$\omega.$ 
Since $\mathcal{L}(h_{t},h_{0})$ is not bounded below, $\mathcal{L}(h_{t_{m}},h_{0})\longrightarrow-\infty$ as 
$t_{m}\longrightarrow+\infty$ and then 
$\|S(t_{m})\|_{L^{1}}\longrightarrow+\infty.$ In fact, from Lemma \ref{NormaSinftygoesinfty} and
\eqref{Grossina3} we have 
\begin{equation*}
 \frac{\|S(t_{m})\|_{L^{\infty}}-C_{2}}{C_{1}}\leq\|S(t_{m})\|_{L^{1}}\longrightarrow+\infty\hspace{0.5cm}
 \text{ as }\hspace{0.5cm}t_m\longrightarrow+\infty.
\end{equation*}
Set $u_m=l_{m}^{-1}S(t_m),$ where $l_m=\|S(t_m)\|_{L^1},$ then $\|u_m\|_{L^1}=1.$ Since $S(t)$ is selfadjoint 
with respect to $h_0,$ the $u_{m}$ are also selfadjoint with respect to $h_0.$ 
From the hypothesis
$\mathrm{tr}(\mathcal{K}_0-cI_E)=0$ and from \eqref{traceSnull} we deduce that
\begin{equation}
 \label{tracciaumnulla}
 \mathrm{tr}u_m=\mathrm{tr}[l_{m}^{-1}S(t_m)]=l_{m}^{-1}\mathrm{tr}S(t_m)=0.
\end{equation}
Since $\|S(t_m)\|_{L^1}\longrightarrow+\infty,$ from \eqref{Grossina3} we find
\begin{equation*}
 \|u_m\|_{L^{\infty}}=\frac{\|S(t_m)\|_{L^{\infty}}}{\|S(t_m)\|_{L^1}}\leq C_1+\frac{C_2}{\|S(t_m)\|_{L^1}}
\end{equation*}
and then for $m$ largely enough, $\|u_j\|_{L^{\infty}}\leq C_1+1.$
\\

Hence, up to extracing a subsequence, from Lemma $5.4$ in \cite{SIM}
$u_m\rightharpoonup u_{\infty}$ weakly in $L_{1}^{2},$ and the limit is not $0.$

Moreover, from \eqref{tracciaumnulla} and from Lemma \ref{FUNZIONALI} we deduce that 
$\mathrm{tr}u_{\infty}=0.$ From Lemma $5.5$ in \cite{SIM} we know that the eigenvalues of $u_{\infty}$
are real and constant almost everywhere, in other words, there are 
$\lambda_1<\cdots<\lambda_l$ which are the distinc eigenvalues of $u_{\infty}(x)$ for almost all $x\in X.$

Since 
$\mathrm{tr}u_{\infty}=0$ and since $u_{\infty}$ is not $0$ we must have $l\geq2,$ otherwise
if $\mathrm{tr}u_{\infty}=0$ and $l=1$ it follows that $u_{\infty}=0$ contraddicting the fact that
$u_{\infty}\neq0.$ 
\\

The weak limit $u_{\infty}$ gives rise to a flag of $L_{1}^{2}(S)\text{-subbundles}.$
For any integer $1\leq\alpha<l,$ define $\mathcal{C}^{\infty}$ functions 
$P_{\alpha}:\mathbb{R}\longrightarrow\mathbb{R}$ sucht that 
 \begin{equation*}
        P_{\alpha}=\left\lbrace \begin{aligned}
                                 1\hspace{0.2cm}&\text{if}\hspace{0.2cm}x\leq\lambda_{\alpha}\\
                                 0\hspace{0.2cm}&\text{if}\hspace{0.2cm}x\geq\lambda_{\alpha+1}                      
                                  \end{aligned}
                   \right.
 \end{equation*}
and set
\begin{equation*}
 \pi_{\alpha}=P_{\alpha}(u_{\infty}).
 \end{equation*}
 From the definition of $P_{\alpha}$ it follows that, if $\lambda_{1}<\cdots<\lambda_{l}$ are the distinct 
 eigenvalues of $u_{\infty},$
 
 \begin{equation}
 \label{Psuautovalori}
        P_{\alpha}(\lambda_{i})=\left\lbrace \begin{aligned}
                                 1\hspace{0.2cm}&\text{if}\hspace{0.2cm}i\leq\alpha\\
                                 0\hspace{0.2cm}&\text{if}\hspace{0.2cm}i\geq\alpha+1                      
                                  \end{aligned}
                   \right.\hspace{0.7cm}1\leq i\leq l.
 \end{equation}

We contend that the $\pi_{\alpha},$ $1\leq\alpha\leq l,$ are $L_{1}^{2}(S)\text{-subbundles}$ of $E.$ In fact we have
\begin{enumerate}
 \item The $\pi_{\alpha}$ are in $L_{1}^{2}(S)$ by Proposition $4.1\text{(c)}$ in \cite{SIM},
 \item From \eqref{Psuautovalori} $P_{\alpha}^{2}-P_{\alpha}$ vanishes at $\lambda_{1},\ldots,\lambda_{l}$
       and then $\pi_{\alpha}^{2}=\pi_{\alpha},$
 \item From \S $4$ in \cite{SIM} 
       $\mathcal{D''}(\pi_{\alpha})=\mathrm{d}P_{\alpha}(u_{\infty})(\mathcal{D''}u_{\infty}).$
       Set $\Phi_{\alpha}(y_1,y_2)=(1-P_{\alpha})(y_2)\cdot\mathrm{d}P_{\alpha}(y_1,y_2).$ It is easy to see that 
       $(I_E-\pi_{\alpha})\mathcal{D''}(\pi_{\alpha})=\Phi_{\alpha}(u_{\infty})(\mathcal{D''}u_{\infty}).$ 
       On the other hand, $\Phi_{\alpha}(\lambda_{i},\lambda_{j})=0$ if $\lambda_{i}>\lambda_{j},$ in fact
       \begin{enumerate}
        \item If $\lambda_{j}\leq\lambda_{\alpha},$ then $(1-P_{\alpha})(\lambda_{j})=0,$
        \item If $\lambda_{i}>\lambda_{j}\geq0,$ then $P_{\alpha}(\lambda_{i})=P_{\alpha}(\lambda_{j})=0$ and
              then 
              \linebreak$\mathrm{d}P_{\alpha}(\lambda_{i},\lambda_{j})=
              \frac{P_{\alpha}(\lambda_{i}-P_{\alpha}(\lambda_{j})}{\lambda_{i}-\lambda{j}}=0.$ 
       \end{enumerate}
       By Lemma $5.6$ in \cite{SIM}, $\Phi_{\alpha}(u_{\infty})(\mathcal{D''}u_{\infty})=0,$ so we conclude that 
       \linebreak
       $(I_E-\pi_{\alpha})\mathcal{D''}(\pi_{\alpha})=\Phi_{\alpha}(u_{\infty})(\mathcal{D''}u_{\infty})=0,$
\end{enumerate}  
and then the $\pi_{\alpha}$ are $L_{1}^{2}(S)\text{-subbundles}.$
\\

By Uhlenbeck and Yau's regularity result of $L_{1}^{2}\text{-subbundles}$ 
(see \cite{YAU} for a detailed proof), $\pi_{\alpha}$ represents a coherent torsion-free Higgs subsheaf 
$E_{\alpha}$ of $(E,\phi).$

Set 
\begin{equation*}
 \nu=\lambda_{l}\mathrm{deg}(E)-\sum_{\alpha=1}^{l-1}(\lambda_{\alpha+1}-\lambda_{\alpha})
 \mathrm{deg}(E_{\alpha}).
\end{equation*}

Now, choose an orthonormal frame field $\{e_j\}$ of eigenvectors of $u_{\infty}$ with eigenvalues $\lambda_{j}.$
Here $\lambda_{1}\leq\cdots\leq\lambda_{r}$ are the all eigenvalues of $u_{\infty}.$ From \eqref{Psuautovalori}
we have
\begin{equation*}
 \begin{split}
  \left[\lambda_{l}I_E-\sum_{\alpha=1}^{l-1}(\lambda_{\alpha+1}-\lambda_{\alpha})\pi_{\alpha}\right](e_j)&=
  \lambda_{l}e_j-\sum_{\alpha=1}^{l-1}(\lambda_{\alpha+1}-\lambda_{\alpha})P_{\alpha}(\lambda_j)e_j=\\
  &=\lambda_{l}e_j-\sum_{\alpha=j}^{l-1}(\lambda_{\alpha+1}-\lambda_{\alpha})e_j=\\
  &=\lambda_{j}e_j=u_{\infty}(e_j).
 \end{split}
\end{equation*}

Then we can write
\begin{equation}
\label{limitedeboleugualeproiettori}
 u_{\infty}=\lambda_{l}I_E-\sum_{\alpha=1}^{l-1}(\lambda_{\alpha+1}-\lambda_{\alpha})\pi_{\alpha}. 
\end{equation}

From $\mathrm{tr}u_{\infty}=0,$  
taking the trace in \eqref{limitedeboleugualeproiettori} we have
\begin{equation*}
 \lambda_{l}\mathrm{rk}(E)-\sum_{\alpha=1}^{l-1}(\lambda_{\alpha+1}-\lambda_{\alpha})\mathrm{rk}(E_{\alpha})=0
\end{equation*}
and then
\begin{equation}
\label{nupositivo}
 \begin{split}
  \nu&=\nu-\lambda_{l}\mathrm{deg}(E)+\lambda_{l}\mathrm{deg}(E)=\\
     &=[\nu-\lambda_{l}\mathrm{deg}(E)]+\lambda_{l}\mathrm{rk}(E)\mu(E)=\\
     &=[\nu-\lambda_{l}\mathrm{deg}(E)]+
       \sum_{\alpha=1}^{l-1}(\lambda_{\alpha+1}-\lambda_{\alpha})\mathrm{rk}(E_{\alpha})\mu(E)=\\
     &=-\sum_{\alpha=1}^{l-1}(\lambda_{\alpha+1}-\lambda_{\alpha})\mathrm{deg}(E_{\alpha})+
       \sum_{\alpha=1}^{l-1}(\lambda_{\alpha+1}-\lambda_{\alpha})\mathrm{rk}(E_{\alpha})\mu(E)=\\
     &=\sum_{\alpha=1}^{l-1}(\lambda_{\alpha+1}-\lambda_{\alpha})\mathrm{rk}(E_{\alpha})\mu(E)-
       \sum_{\alpha=1}^{l-1}(\lambda_{\alpha+1}-\lambda_{\alpha})\mathrm{rk}(E_{\alpha})\mu(E_{\alpha})=\\
     &=\sum_{\alpha=1}^{l-1}(\lambda_{\alpha+1}-\lambda_{\alpha})\mathrm{rk}(E_{\alpha})[\mu(E)-\mu(E_{\alpha})].  
 \end{split}
\end{equation}

On the other hand, from the Chern-Weil formula (Lemma $3.2$ in \cite{SIM}) we have
\begin{equation*}
 \mathrm{deg}(E_{\alpha})=\int_{X}
 \mathrm{tr}(\pi_{\alpha}\mathcal{K}_{0})\frac{\omega^{n}}{n!}-
 \int_{X}|\mathcal{D''}\pi_{\alpha}|_{h_{0}}^{2}\frac{\omega^{n-1}}{(n-1)!},
\end{equation*}

while from the definition of Chern classes in terms of curvature and from \eqref{curvature2} we can write

\begin{equation*}
 \mathrm{deg}(E)=\int_{X}\mathrm{tr}(\mathcal{K}_0)\frac{\omega^n}{n!}.
\end{equation*}

Therefore, from \eqref{limitedeboleugualeproiettori} and since 
$\mathcal{D''}(\pi_{\alpha})=\mathrm{d}P_{\alpha}(u_{\infty})(\mathcal{D''}u_{\infty}),$ we find 

\begin{equation}
 \label{Grassa1}
 \nu=\int_{X}\mathrm{tr}(u_{\infty}\mathcal{K}_{0})\frac{\omega^{n}}{n!}+\int_{X}\langle\sum_{\alpha=1}^{l-1}
 (\lambda_{\alpha+1}-\lambda_{\alpha})(\mathrm{d}P_{\alpha})^{2}(u_{\infty})(\mathcal{D''}u_{\infty}),
 \mathcal{D''}u_{\infty}\rangle_{h_{0}}\frac{\omega^{n-1}}{(n-1)!}.
\end{equation}
 
In fact, after a straightforward computation, we have

\begin{equation*}
 \begin{split}
  \nu&=\lambda_{l}\mathrm{deg}(E)-\sum_{\alpha=1}^{l-1}(\lambda_{\alpha+1}-\lambda_{\alpha})
       \mathrm{deg}(E_{\alpha})=\\
  &=\lambda_{l}\int_{X}\mathrm{tr}(\mathcal{K}_0)\frac{\omega^n}{n!}-
  \sum_{\alpha=1}^{l-1}(\lambda_{\alpha+1}-\lambda_{\alpha})
  \left[\mathrm{tr}(\pi_{\alpha}\mathcal{K}_{0})\frac{\omega^{n}}{n!}-
 \int_{X}|\mathcal{D''}\pi_{\alpha}|_{h_{0}}^{2}\frac{\omega^{n-1}}{(n-1)!}\right]=\\
 &=\int_{X}\mathrm{tr}(\lambda_{l}I_E\mathcal{K}_0)\frac{\omega^n}{n!}-
 \int_{X}\sum_{\alpha=1}^{l-1}(\lambda_{\alpha+1}-\lambda_{\alpha})
 \mathrm{tr}(\pi_{\alpha}\mathcal{K}_{0})\frac{\omega^{n}}{n!}+\\
 &+\int_{X}\sum_{\alpha=1}^{l-1}(\lambda_{\alpha+1}-\lambda_{\alpha})
 |\mathcal{D''}\pi_{\alpha}|_{h_{0}}^{2}\frac{\omega^{n-1}}{(n-1)!}=\\
 &=\int_{X}\mathrm{tr}(\lambda_{l}I_E\mathcal{K}_0)\frac{\omega^n}{n!}-
 \int_{X}\sum_{\alpha=1}^{l-1}
 \mathrm{tr}((\lambda_{\alpha+1}-\lambda_{\alpha})\pi_{\alpha}\mathcal{K}_{0})\frac{\omega^{n}}{n!}+\\
 &+\int_{X}\sum_{\alpha=1}^{l-1}(\lambda_{\alpha+1}-\lambda_{\alpha})
 |\mathcal{D''}\pi_{\alpha}|_{h_{0}}^{2}\frac{\omega^{n-1}}{(n-1)!}=\\
 &=\int_{X}\mathrm{tr}((\lambda_{l}I_E-\sum_{\alpha=1}^{l-1}(\lambda_{\alpha+1}-\lambda_{\alpha})\pi_{\alpha})
    \cdot\mathcal{K}_{0})\frac{\omega^{n}}{n!}+\\
 &+\int_{X}\langle\sum_{\alpha=1}^{l-1}
 (\lambda_{\alpha+1}-\lambda_{\alpha})(\mathrm{d}P_{\alpha})^{2}(u_{\infty})(\mathcal{D''}u_{\infty}),
 \mathcal{D''}u_{\infty}\rangle_{h_{0}}\frac{\omega^{n-1}}{(n-1)!}=\\
 &=\int_{X}\mathrm{tr}(u_{\infty}\cdot\mathcal{K}_{0})\frac{\omega^{n}}{n!}+\\
 &+\int_{X}\langle\sum_{\alpha=1}^{l-1}
 (\lambda_{\alpha+1}-\lambda_{\alpha})(\mathrm{d}P_{\alpha})^{2}(u_{\infty})(\mathcal{D''}u_{\infty}),
 \mathcal{D''}u_{\infty}\rangle_{h_{0}}\frac{\omega^{n-1}}{(n-1)!}.
 \end{split} 
\end{equation*}

From \eqref{Psuautovalori}, if $\lambda_{i}>\lambda_{j},$ we have 
\begin{equation*}
\begin{split}
  \sum_{\alpha=1}^{l-1}
 (\lambda_{\alpha+1}-\lambda_{\alpha})(\mathrm{d}P_{\alpha})^{2}(\lambda_{i},\lambda_{j})&=
 \sum_{\alpha=j}^{i}(\lambda_{\alpha+1}-\lambda_{\alpha})
  \left(\frac{P_{\alpha}(\lambda_{i})-P_{\alpha}(\lambda_{j})}{\lambda_{i}-\lambda_{j}}\right)^{2}=\\
 &=\frac{1}{(\lambda_{i}-\lambda_{j})^{2}}\sum_{\alpha=j}^{i}(\lambda_{\alpha+1}-\lambda_{\alpha})=\\
 &=\frac{1}{(\lambda_{i}-\lambda_{j})^{2}}\cdot(\lambda_{i}-\lambda_{j})=
 \frac{1}{(\lambda_{i}-\lambda_{j})}.
\end{split}
\end{equation*}

Finally, if we apply Lemma \ref{pippopollina} to the function 
\begin{equation*}
 \tilde{\Phi}(x_{1},x_{2})=\sum_{\alpha=1}^{l-1}
 (\lambda_{\alpha+1}-\lambda_{\alpha})(\mathrm{d}P_{\alpha})^{2}(x_1,x_2),
\end{equation*}
from \eqref{quasidonaldson3}
we obtain
\begin{equation*}
 \begin{split}
   0&>-\epsilon\geq\liminf_{m}\frac{\mathcal{L}(h_{t_m},h_0)}{\|S(t_m)\|_{L^1}}=\\
  &=\liminf_{m}\left[\frac{1}{\|S(t_m)\|_{L^1}}
  \left(\int_{X}\mathrm{tr}(S(t_m)\cdot\mathcal{K}_{0})\frac{\omega^{n}}{n!}\right.\right.+\\
 &+\left.\left.\int_{X}\langle\Psi(S(t_m))(\mathcal{D''}S(t_m)),\mathcal{D''}S(t_m)\rangle_{h_{0}}
 \frac{\omega^{n-1}}{(n-1)!}\right)\right]=\\
 &=\liminf_{m}\left[\frac{1}{l_m}
  \left(\int_{X}\mathrm{tr}(S(t_m)\cdot\mathcal{K}_{0})\frac{\omega^{n}}{n!}\right.\right.+\\
 &+\left.\left.\int_{X}\langle\Psi(l_mu_m)(\mathcal{D''}S(t_m)),\mathcal{D''}S(t_m)\rangle_{h_{0}}
 \frac{\omega^{n-1}}{(n-1)!}\right)\right]=\\
 &=\liminf_{m}
  \left(\int_{X}\mathrm{tr}(\frac{S(t_m)}{l_m}\cdot\mathcal{K}_{0})\frac{\omega^{n}}{n!}+\right.\\
 &\left.+\int_{X}\langle\frac{1}{l_m}\Psi(l_mu_m)(\mathcal{D''}S(t_m)),\mathcal{D''}S(t_m)\rangle_{h_{0}}
 \frac{\omega^{n-1}}{(n-1)!}\right)=\\
 &=\liminf_{m}
  \left(\int_{X}\mathrm{tr}(u_m\cdot\mathcal{K}_{0})\frac{\omega^{n}}{n!}+\right.\\
 &+\left.\int_{X}\langle l_m\Psi(l_mu_m)(\mathcal{D''}\frac{S(t_m)}{l_m}),\mathcal{D''}\frac{S(t_m)}{l_m}
 \rangle_{h_{0}}\frac{\omega^{n-1}}{(n-1)!}\right)=\\
 &=\liminf_{m}
  \left(\int_{X}\mathrm{tr}(u_m\cdot\mathcal{K}_{0})\frac{\omega^{n}}{n!}+
 \int_{X}\langle l_m\Psi(l_mu_m)(\mathcal{D''}u_m),\mathcal{D''}u_m\rangle_{h_{0}}
 \frac{\omega^{n-1}}{(n-1)!}\right)\geq\\
&\geq\int_{X}\mathrm{tr}(u_{\infty}\mathcal{K}_{0})\frac{\omega^{n}}{n!}+\\
&+\int_{X}\langle\sum_{\alpha=1}^{l-1}
 (\lambda_{\alpha+1}-\lambda_{\alpha})(\mathrm{d}P_{\alpha})^{2}(u_{\infty})(\mathcal{D''}u_{\infty}),
 \mathcal{D''}u_{\infty}\rangle_{h_{0}}\frac{\omega^{n-1}}{(n-1)!}=\nu.
 \end{split}
\end{equation*}

On the other hand, \eqref{nupositivo} and the $\omega\text{-semistability}$ imply $\nu\geq0,$ so we get a 
contradiction.
\end{proof}

\begin{lemma}
\label{normL20}
Let $(X,\omega)$ be a compact K\"ahler manifold of (complex) dimension $n$ and let 
 $\frak{E}=(E,\phi)$ be a Higgs bundle of rank $r$ over $X.$ 
 Let $h_t$ be the solution of the Donaldson heat flow with initial condition $h_0,$ and suppose
 $\mathrm{tr}(\mathcal{K}_0-cI_E)=0.$
 Let us assume $(E,\phi)$ is $\omega\text{-semistable}$ and $\mathcal{L}(h_t,h_0)$ is not bounded below, i.e., 
 $\mathcal{L}(h_t,h_0)\longrightarrow-\infty$ as $t\longrightarrow+\infty.$ Then
 \begin{equation*}
  \|\mathcal{K}_t-cI_E\|_{L^2}\longrightarrow0\hspace{0.5cm}\text{ as }\hspace{0.5cm}
  t\longrightarrow+\infty.
 \end{equation*}
\end{lemma}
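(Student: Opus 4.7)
The plan is to combine the Key Lemma with inequality \eqref{superinequality}, treating the latter as a quantitative bound that transfers the Key Lemma's ratio estimate into the desired $L^2$ convergence.

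First I would handle the trivial case $r=1$ separately: by Corollary \ref{always1} a rank-one Higgs bundle admits a Hermitian-Yang-Mills structure, hence $\mathcal{L}$ is bounded below and the hypothesis is vacuous. So I may assume $r\geq 2$.

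Next I would establish that $\|S(t)\|_{L^{1}}\longrightarrow+\infty$ as $t\to+\infty$. By Lemma \ref{NormaSinftygoesinfty}, the assumption $\mathcal{L}(h_t,h_0)\to-\infty$ forces $\|S(t)\|_{L^{\infty}}\to+\infty$; then Simpson's estimate \eqref{Grossina3}, which applies because $\max_X|\mathcal{K}_t|$ is uniformly bounded (a consequence of the monotonicity of $\max_X|\mathcal{K}_t-cI_E|$ proved in Proposition \ref{proposizione52}(2) together with the constant $c$), gives
\begin{equation*}
\|S(t)\|_{L^{1}}\;\geq\;\frac{\|S(t)\|_{L^{\infty}}-C_{2}}{C_{1}}\longrightarrow+\infty.
\end{equation*}

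With this in hand, I would simply rearrange the superinequality \eqref{superinequality}. Setting $V=\mathrm{Vol}(X)$, for all $t$ large enough that $\tfrac{1}{\sqrt{r}}\|S(t)\|_{L^{1}}-V\ln(2r)>0$, division by this positive quantity yields
\begin{equation*}
\|\mathcal{K}_{t}-cI_{E}\|_{L^{2}}\;\leq\;\frac{\sqrt{V}\,\bigl(-\mathcal{L}(h_{t},h_{0})/\|S(t)\|_{L^{1}}\bigr)}{\tfrac{1}{\sqrt{r}}-V\ln(2r)/\|S(t)\|_{L^{1}}}.
\end{equation*}
As $t\to+\infty$, the denominator converges to the strictly positive constant $1/\sqrt{r}$ because $\|S(t)\|_{L^{1}}\to+\infty$, while by the Key Lemma \ref{keylemma} the numerator tends to $0$. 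Hence the right-hand side tends to $0$, giving the claimed convergence.

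There is no serious obstacle here: all the analytic work is already absorbed in Lemma \ref{NormaSinftygoesinfty}, inequality \eqref{superinequality}, Simpson's estimate, and, crucially, the Key Lemma \ref{keylemma}, which is where semistability enters via the subbundle-flag construction. The present lemma is merely the clean algebraic consequence of combining these ingredients; the only mild point to check carefully is the positivity of the denominator, which is ensured for $t$ large once $\|S(t)\|_{L^{1}}$ exceeds $\sqrt{r}\,V\ln(2r)$.
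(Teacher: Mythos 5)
Your proof is correct and follows essentially the same route as the paper: both rest on combining the Key Lemma \ref{keylemma} with inequality \eqref{superinequality}, after first checking that $\|S(t)\|_{L^1}\to+\infty$ so that the prefactor $\tfrac{1}{\sqrt{r}}\|S(t)\|_{L^1}-\mathrm{Vol}(X)\ln(2r)$ is eventually positive. The only (harmless) difference is that you derive $\|S(t)\|_{L^1}\to+\infty$ from Lemma \ref{NormaSinftygoesinfty} together with Simpson's estimate \eqref{Grossina3}, whereas the paper gets it by a direct contradiction with the ratio estimate of the Key Lemma; both arguments are valid.
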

\begin{proof}
 From the hypothesis we know that $\lim_{t\rightarrow+\infty}\mathcal{L}(h_t,h_0)=-\infty,$ then from 
 Lemma \ref{keylemma} $\|S(t)\|_{L^1}\longrightarrow+\infty$ as $t\longrightarrow+\infty.$
 Otherwise there exists $\epsilon>0$ and a sequence $t_j\longrightarrow+\infty$ such that 
 $\|S(t_j)\|_{L^1}\leq\epsilon$ and then 
 \begin{equation*}
  -\frac{\mathcal{L}(h_{t_j},h_0)}{\|S(t_j)\|_{L^1}}\geq
  -\frac{\mathcal{L}(h_{t_j},h_0)}{\epsilon}
  \longrightarrow+\infty\hspace{0.5cm}\text{ as }
  \hspace{0.2cm}j\longrightarrow+\infty,
 \end{equation*}
contraddicting \ref{keylemma}.
 For $t$ large enough,  
 \begin{equation}
 \label{divide}
  \left(\frac{1}{\sqrt{\mathrm{rk}(E)}}\|S(t)\|_{L^1}-\mathrm{Vol}(X)\ln(2\mathrm{rk}(E))\right)
 \end{equation}
is positive, so we can divide both terms of \eqref{superinequality} by \eqref{divide} obtaining
 \begin{equation*}
   \|\mathcal{K}_t-cI_E\|_{L^2}\leq
   \frac{-\sqrt{\mathrm{Vol}(X)}\mathcal{L}(h_t,h_0)}
   {\left(\frac{1}{\sqrt{\mathrm{rk}(E)}}\|S(t)\|_{L^1}-\mathrm{Vol}(X)\ln(2\mathrm{rk}(E))\right)}.
 \end{equation*}
Now, applying again Lemma \ref{keylemma}
 \begin{equation*}
   0\leq
   \|\mathcal{K}_t-cI_E\|_{L^2}\leq
   \frac{-\sqrt{\mathrm{Vol}(X)}\mathcal{L}(h_t,h_0)}
   {\left(\frac{1}{\sqrt{\mathrm{rk}(E)}}\|S(t)\|_{L^1}-\mathrm{Vol}(X)\ln(2\mathrm{rk}(E))\right)}
   \longrightarrow0
 \end{equation*}
 as $t\longrightarrow+\infty$ and this completes the proof.
\end{proof}

\section{Proof of the main Theorem}
In this section we prove the equivalence between semistability and the existance of approximate 
Hermitian-Yang-Mills metric structures for Higgs bundles in every dimension.
First of all we need to prove a preliminary result under the assumption 
$\mathrm{tr}(\mathcal{K}_{0}-cI_E)=0.$
\begin{prop}
\label{casetracenull}
Let $(X,\omega)$ be a compact K\"ahler manifold of (complex) dimension $n$ and let 
 $\frak{E}=(E,\phi)$ be a Higgs bundle of rank $r$ over $X.$  
 Let $h_t$ be the solution of the Donaldson heat flow with initial condition $h_0$ and let 
 $\mathcal{K}_0$ be the mean curvature of the Hitchin-Simpson connection associated with $h_0.$
 Let us assume $h_0$ satisfies
 the condition $tr(\mathcal{K}_0-cI_E)=0$ and the Donaldson functional 
 $\mathcal{L}(h_t,h_0)$ is not bounded below, i.e., $\mathcal{L}(h_t,h_0)\longrightarrow-\infty$ as
 $t\longrightarrow+\infty.$ If $(E,\phi)$ is $\omega\text{-semistable},$ then
 \begin{equation*}
  \max_{X}|\mathcal{K}_t-cI_E|\longrightarrow0.
 \end{equation*}
So there exists an approximate Hermitian-Yang-Mills metric structure on the semistable Higgs bundle
$(E,\phi).$
\end{prop}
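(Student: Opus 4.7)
The plan is to combine two ingredients already established in the paper: the $L^{2}$ decay of the mean curvature defect, which follows from semistability via Lemma \ref{normL20}, and the parabolic smoothing estimate derived from the Maximum Principle in the proof of Proposition \ref{proposizione52}(3). Under the hypotheses of the proposition — namely $\mathrm{tr}(\mathcal{K}_{0}-cI_{E})=0$, $\omega$-semistability of $\frak{E}$, and $\mathcal{L}(h_{t},h_{0})\longrightarrow-\infty$ — Lemma \ref{normL20} applies directly and yields
\begin{equation*}
 \|\mathcal{K}_{t}-cI_{E}\|_{L^{2}}\longrightarrow0\hspace{0.3cm}\text{ as }\hspace{0.3cm}t\longrightarrow+\infty.
\end{equation*}

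The next step is to upgrade this $L^{2}$ convergence to $L^{\infty}$ convergence. The argument used in the proof of Proposition \ref{proposizione52}(3) does not actually require that $\mathcal{L}(h_{t},h_{0})$ be bounded below; what it really uses is the differential inequality \eqref{HSCardona}, namely $(\partial_{t}+\tilde{\Box}_{t})|\mathcal{K}_{t}-cI_{E}|^{2}\leq0$, together with the heat kernel $\chi(x,y,s)$ of the operator $\partial_{t}+\tilde{\Box}_{t}$. Exactly as in that proof, for any fixed $t_{0}\geq 0$ one compares $|\mathcal{K}_{t}-cI_{E}|^{2}$ with the function $u(x,t)=\int_{X}\chi(x,y,t-t_{0})|\mathcal{K}_{t_{0}}-cI_{E}|^{2}(y)\,\mathrm{d}y$ and applies the Maximum Principle \ref{MPPE} to get, for $a=1$,
\begin{equation*}
 \max_{X}|\mathcal{K}_{t_{0}+1}-cI_{E}|^{2}\leq C_{1}\|\mathcal{K}_{t_{0}}-cI_{E}\|_{L^{2}}^{2},
\end{equation*}
where $C_{1}=\max_{X\times X}\chi(x,y,1)$ depends only on the background geometry.

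Letting $t_{0}\longrightarrow+\infty$ and invoking the $L^{2}$ decay from the previous step gives
\begin{equation*}
 \max_{X}|\mathcal{K}_{t}-cI_{E}|\longrightarrow0\hspace{0.3cm}\text{ as }\hspace{0.3cm}t\longrightarrow+\infty,
\end{equation*}
which is precisely the statement that for every $\epsilon>0$ there exists a time $t_{\epsilon}$ such that $h_{\epsilon}:=h_{t_{\epsilon}}$ satisfies $\|\mathcal{K}_{h_{\epsilon}}-cI_{E}\|_{L^{\infty}}<\epsilon$. By definition, the family $\{h_{\epsilon}\}$ is an approximate Hermitian-Yang-Mills structure on $\frak{E}=(E,\phi)$, completing the proof.

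I expect the main obstacle to be essentially bookkeeping: verifying that the heat-kernel/Maximum-Principle argument from Proposition \ref{proposizione52}(3) is genuinely independent of the boundedness of $\mathcal{L}(h_{t},h_{0})$ and only uses the pointwise differential inequality \eqref{HSCardona}. Once this observation is in place, the proof is a direct assembly of Lemma \ref{normL20} and the smoothing estimate, with no further analytic input required; in particular, no new use of the assumption $\mathrm{tr}(\mathcal{K}_{0}-cI_{E})=0$ is needed at this stage, since that hypothesis has already been exploited upstream in Lemma \ref{normL20} via the key Lemma \ref{keylemma}.
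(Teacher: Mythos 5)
Your proposal is correct and follows essentially the same route as the paper: the paper's proof of this proposition is precisely the heat-kernel/Maximum-Principle smoothing argument from Proposition \ref{proposizione52}(3), applied verbatim with the $L^{2}$ decay now supplied by Lemma \ref{normL20} instead of by the boundedness of $\mathcal{L}$. Your observation that the smoothing step only uses the pointwise inequality \eqref{HSCardona} and not the lower bound on the functional is exactly the point the paper relies on.
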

\begin{proof}
 We follow Kobayashi's argument (see \cite{KOB}, p.224-226 for details).
 Let $\chi=\chi(x,y,t)$ be the heat kernel for the differential operator $\partial_t+\tilde{\Box}_{t},$
 where $\tilde{\Box}_{t}=\tilde{\Box}_{h_t}$ and the subscript $t$ remember us the dependence on the metric 
 $h_t.$ Set
 \begin{equation*}
  f(x,t)=(|\mathcal{K}_t-cI_E|^2)(x)\hspace{0.5cm}\text{ for }\hspace{0.5cm}(x,t)\in X\times[0,+\infty).
 \end{equation*}
Now fix $t_0\in[0,+\infty)$ and set
\begin{equation*}
 u(x,t)=\int_X\chi(x,y,t-t_0)(|\mathcal{K}_t-cI_E|^2)(y)\mathrm{d}y
\end{equation*}
where $\mathrm{d}y$ is the volume form $\mathrm{d}y=\frac{\omega^n}{n!}.$
Then $u(x,y)$ is of class $\mathcal{C}^{\infty}$ on \linebreak$X\times(t_0,+\infty)$ and extends to a 
continuous function on $X\times[t_0,+\infty).$ 
From the definition of the heat kernel we immediately have
\begin{equation*}
 (\partial_t+\tilde{\Box}_{t})u(x,t)=0\hspace{0.5cm}\text{ for }\hspace{0.5cm}(x,t)\in X\times(t_0,+\infty),
\end{equation*}
and
\begin{equation*}
 u(x,t_0)=f(x,t_0)=(|\mathcal{K}_{t_0}-cI_E|^2)(x).
\end{equation*}
Combined with the inequality \eqref{HSCardona} this yields
\begin{equation*}
 (\partial_t+\tilde{\Box}_{t})(|\mathcal{K}_t-cI_E|^2-u(x,t))\leq0\hspace{0.5cm}\text{ for }\hspace{0.5cm}
 (x,t)\in X\times(t_0,+\infty).
\end{equation*}
By the Maximum Principle \ref{MPPE} and the properties of $u(x,t)$ we find
\begin{equation*}
 \max_{X}(|\mathcal{K}_t-cI_E|^2-u(x,t))\leq\max_{X}(|\mathcal{K}_{t_0}-cI_E|^2-u(x,t_0))=0,\hspace{0.2cm}
 t\geq t_0.
\end{equation*}
Hence,
\begin{equation*}
\begin{split}
 \max_{X}|\mathcal{K}_{t_0+a}-cI_E|^2 & \leq\max_{X}u(x,a,t_0+a)=\\
                                      &  =  
 \max_{X}\int_X\chi(x,y,a)|\mathcal{K}_{t_0}-cI_E|^2(y)\mathrm{d}y\leq\\
                                      & \leq
 C_a\int_X|\mathcal{K}_{t_0}-cI_E|^2(y)\mathrm{d}y=\\
                                       &  =
 C_a\|\mathcal{K}_{t_0}-cI_E\|_{L^2}^{2},
 \end{split}
\end{equation*}
where 
\begin{equation*}
 C_a=\max_{X\times X}\chi(x,y,a).
\end{equation*}
Fix $a,$ say $a=1,$ and let $t_0\longrightarrow+\infty.$ Using Lemma \ref{normL20} we conclude
\begin{equation*}
 \max_{X}|\mathcal{K}_{t_0+1}-cI_E|^2\leq C_1\|\mathcal{K}_{t_0}-cI_E\|_{L^2}^{2}\longrightarrow0,
\end{equation*}
and this completes the proof.
\end{proof}

Now, using the previous Proposition and Lemma \ref{conformalchangetrace} we can give the proof of the main 
Theorem of this section:
\begin{teo}
\label{MAINTHEOREM}
 Let $(X,\omega)$ be a compact K\"ahler manifold of (complex) dimension $n$ and let 
 $\frak{E}=(E,\phi)$ be a Higgs bundle of rank $r$ over $X.$
 If $(E,\phi)$ is $\omega\text{-semistable},$ then it admits an approximate Hermitian-Yang-Mills structure. 
\end{teo}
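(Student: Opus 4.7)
The plan is to assemble the ingredients already established in the chapter, reducing \ref{MAINTHEOREM} to a clean case analysis on the behaviour of the Donaldson functional along the heat flow. First I would fix an arbitrary Hermitian metric $h_0'$ on $\frak{E}$. By Lemma \ref{conformalchangetrace}, after a conformal change $h_0 = e^{u}h_0'$ with $u$ the solution of the Poisson equation $\Box_0 u = -\tfrac{1}{r}\mathrm{tr}(\mathcal{K}_{h_0'}-cI_E)$, I may assume the initial condition
\begin{equation*}
\mathrm{tr}(\mathcal{K}_0 - cI_E) = 0.
\end{equation*}
This reduction costs nothing, and by Lemma \ref{heatnull} together with the Maximum Principle for the parabolic operator $\partial_t + \tilde{\Box}_t$ it is preserved along the heat flow, so the subsequent application of Proposition \ref{casetracenull} is justified.

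Next I would invoke Theorem \ref{EXISTANCE} to produce the unique smooth solution $h_t$, $0 \leq t < +\infty$, of the Donaldson heat flow with $h(0) = h_0$. Proposition \ref{proposizione52}(1) shows that $t \mapsto \mathcal{L}(h_t, h_0)$ is monotone decreasing, so exactly one of the following dichotomies holds:
\begin{equation*}
\text{(i)} \quad \inf_{t \geq 0} \mathcal{L}(h_t, h_0) > -\infty, \qquad \text{(ii)} \quad \lim_{t \to +\infty} \mathcal{L}(h_t, h_0) = -\infty.
\end{equation*}

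In case (i) the functional $\mathcal{L}(\cdot, h_0)$ achieves a finite lower bound on the heat-flow trajectory; combined with Proposition \ref{proposizione52}(3), this already forces $\max_X|\mathcal{K}_t - cI_E|^2 \to 0$ as $t \to +\infty$, producing the desired approximate Hermitian-Yang-Mills metrics $h_\epsilon := h_{t(\epsilon)}$ for $t(\epsilon)$ sufficiently large; the $\omega$-semistability hypothesis is actually unused in this branch. In case (ii) the $\omega$-semistability enters crucially through Proposition \ref{casetracenull}: the hypotheses there are precisely $\mathrm{tr}(\mathcal{K}_0 - cI_E) = 0$, $\omega$-semistability of $\frak{E}$, and the blow-down $\mathcal{L}(h_t,h_0) \to -\infty$, and its conclusion is exactly $\max_X|\mathcal{K}_t - cI_E| \to 0$. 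In either case, selecting for each $\epsilon > 0$ a time $t(\epsilon)$ with $\max_X|\mathcal{K}_{t(\epsilon)}-cI_E| < \epsilon$ exhibits an approximate Hermitian-Yang-Mills structure on $\frak{E}$.

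The proof is thus an essentially formal case split once the two key facts are in hand, and there is no genuine obstacle left at this stage: the analytic heavy lifting lives in Proposition \ref{casetracenull}, whose proof in turn rests on the Key Lemma \ref{keylemma} and the Uhlenbeck-Yau-type regularity for the weak limit $u_\infty$ constructed via Simpson's functional identity \eqref{quasidonaldson3}. The only subtlety to double-check is that the conformal reduction to $\mathrm{tr}(\mathcal{K}_0 - cI_E)=0$ does not spoil semistability, which is immediate since semistability is a holomorphic notion independent of the chosen Hermitian metric, and that the resulting approximate HYM structures on the conformally rescaled bundle pull back to approximate HYM structures for the original metric, which follows from the behaviour of the mean curvature under conformal change recorded in \eqref{firstlaplacian}.
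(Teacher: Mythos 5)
Your proposal is correct and follows essentially the same route as the paper: normalize the initial metric via Lemma \ref{conformalchangetrace} so that $\mathrm{tr}(\mathcal{K}_0-cI_E)=0$, run the Donaldson heat flow, and split on whether $\mathcal{L}(h_t,h_0)$ is bounded below (handled by Proposition \ref{proposizione52}/Theorem \ref{teorema53}) or tends to $-\infty$ (handled by Proposition \ref{casetracenull}, where semistability enters). The only cosmetic difference is that the paper isolates the rank-one case separately, and your closing worry about "pulling back" approximate structures through the conformal change is vacuous since the existence of an approximate Hermitian-Yang-Mills structure is a property of the Higgs bundle, not of any particular background metric.
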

\begin{proof}
 Let $h_0\in\text{Herm}^{+}(E)$ be a fixed Hermitian metric structure on $E,$ and let 
 $\mathcal{K}_0$ be the mean curvature of the Hitchin-Simpson connection associated with $h_0.$ From Lemma
 \ref{conformalchangetrace} we may assume $\mathrm{tr}(\mathcal{K}_0-cI_E)=0.$ Let $h_t$ be a
 solution of the Donaldson heat flow with initial condition $h_0.$ We already know that $h_t$ is 
 defined for every positive time $0\leq t<+\infty$ and $\mathcal{L}(h_t,h_0)$ is a real monotone 
 decreasing function of $t$ for  $0\leq t<+\infty.$
 Now we can distinguish between three cases:
 \begin{enumerate}
  \item If $\mathrm{rk}(E)=1,$ from $\mathrm{tr}(\mathcal{K}_0-cI_E)=0$ we deduce that $\mathcal{K}_{0}=cI_{E}.$
        So that $h_{0}$ is a Hermitian-Yang-Mills metric.
  \item If $\mathrm{rk}(E)\geq2$ and $\mathcal{L}(h_t,h_0)$ is bounded below, the thesis comes from 
        Theorem \ref{teorema53}. 
  \item If $\mathrm{rk}(E)\geq2$ and $\mathcal{L}(h_t,h_0)$ is not bounded below, since
        $\mathrm{tr}(\mathcal{K}_0-cI_E)=0,$ the thesis comes from Proposition \ref{casetracenull}.
 \end{enumerate}
\end{proof}

Hence, from the previous result and Theorem \ref{teorema53} we have the following
\begin{cor}
 \label{EQTOT}
 Let $(X,\omega)$ be a compact K\"ahler manifold of (complex) dimension $n$ and let $\frak{E}=(E,\phi)$ be a 
 Higgs bundle of rank $r$ over $X.$ The following conditions are equivalent:
 \begin{enumerate}
  \item $\frak{E}$ is $\omega\text{-semistable},$
  \item $\frak{E}$ admits an approximate Hermitian-Yang-Mills structure.
 \end{enumerate}

\end{cor}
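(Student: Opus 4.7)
The plan is to observe that this corollary is essentially an immediate consequence of the two main theorems already proved in the chapter, combined in opposite directions. I would present it as a short deduction rather than a new proof, emphasizing which previously established results supply each implication.

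For the implication $(1)\Rightarrow(2)$, I would simply invoke Theorem \ref{MAINTHEOREM}: if $\frak{E}=(E,\phi)$ is $\omega$-semistable, fix any $h_0\in\mathrm{Herm}^{+}(E)$ and, after the conformal rescaling of Lemma \ref{conformalchangetrace}, assume $\mathrm{tr}(\mathcal{K}_0-cI_E)=0$. Run the Donaldson heat flow from $h_0$; Theorem \ref{EXISTANCE} gives a global smooth solution $h_t$, and Proposition \ref{proposizione52} shows that $\mathcal{L}(h_t,h_0)$ is monotone decreasing. Either the functional stays bounded below, and then Theorem \ref{teorema53} (implication $(1)\Rightarrow(2)$ there) yields an approximate Hermitian-Yang-Mills structure, or it tends to $-\infty$, and then Proposition \ref{casetracenull} (which itself rests on the Key Lemma \ref{keylemma} and the weak-limit construction of a destabilizing $L_1^2$-subbundle) still produces an approximate Hermitian-Yang-Mills structure. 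In the rank-one case the condition $\mathrm{tr}(\mathcal{K}_0-cI_E)=0$ already forces $\mathcal{K}_0=cI_E$, so $h_0$ itself is Hermitian-Yang-Mills.

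For the reverse implication $(2)\Rightarrow(1)$, I would quote Theorem \ref{teorema53}, implication $(2)\Rightarrow(3)$, whose proof (due to Bruzzo and Gra\~na Otero, reproduced in the text) goes as follows: for a proper nontrivial Higgs subsheaf $\frak{F}\subset\frak{E}$ of rank $p$, one forms the Higgs bundle $\frak{G}=\bigwedge^{p}\frak{E}\otimes(\det\frak{F})^{-1}$, which inherits an approximate Hermitian-Yang-Mills structure from $\frak{E}$ by Proposition \ref{approximatetensorproduct}; the inclusion provides a nonzero $\psi$-invariant section of $\frak{G}$, and then Corollary \ref{corBruOter} forces $\deg(\frak{G})\geq 0$, i.e.\ $\mu(\frak{F})\leq\mu(\frak{E})$.

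Since this corollary is genuinely a one-paragraph assembly of Theorem \ref{MAINTHEOREM} and the relevant part of Theorem \ref{teorema53}, there is no real obstacle in the proof itself; all the difficulty has been absorbed into those earlier statements, in particular into the analysis of the Donaldson heat flow when the functional is unbounded below, where the Simpson-type weak compactness argument in Lemma \ref{Simpsonliminf} and the construction of a destabilizing subsheaf via Uhlenbeck--Yau regularity in the Key Lemma do the decisive work.
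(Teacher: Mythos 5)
Your proposal is correct and follows exactly the route the paper takes: the corollary is obtained by combining Theorem \ref{MAINTHEOREM} for the implication $(1)\Rightarrow(2)$ with the implication $(2)\Rightarrow(3)$ of Theorem \ref{teorema53} for the converse. Your added summary of what feeds into each of those theorems (the heat-flow dichotomy, Proposition \ref{casetracenull}, and the Bruzzo--Gra\~{n}a Otero argument via Corollary \ref{corBruOter}) is accurate and consistent with the text.
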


As a consequence of this we deduce that many results about Higgs 
bundles written in terms of approximate Hermitian-Yang-Mills structures can be translated in terms of 
semistability. In particular we have the following:

\begin{cor}
 If $(X,\omega)$ is a compact K\"ahler manifold of (complex) dimension $n$ and $\frak{E}_1,$ $\frak{E}_2$ are 
 $\omega\text{-semistable}$ Higgs bundles over $X,$ then so is their tensor product 
 $\frak{E}_1\otimes\frak{E}_2.$ Furthermore, if $\mu(\frak{E}_1)=\mu(\frak{E}_2),$ so is the Whitney sum 
 $\frak{E}_1\oplus\frak{E}_2.$
\end{cor}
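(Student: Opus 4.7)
The plan is to combine the main equivalence just established (Corollary \ref{EQTOT}) with the compatibility properties of approximate Hermitian-Yang-Mills structures under tensor products and direct sums proved earlier (Proposition \ref{approximatetensorproduct}). In other words, the strategy is to pass from semistability to the differential-geometric side via Corollary \ref{EQTOT}, perform the construction there (where it is easy), and then return to the algebro-geometric side via Corollary \ref{EQTOT} again.

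In detail, I would proceed as follows. First, assume $\frak{E}_1$ and $\frak{E}_2$ are $\omega$-semistable Higgs bundles over $X.$ By Corollary \ref{EQTOT}, each of them admits an approximate Hermitian-Yang-Mills structure. Next, I would invoke part (1) of Proposition \ref{approximatetensorproduct}, which shows that the tensor product $\frak{E}_1 \otimes \frak{E}_2$ then also admits an approximate Hermitian-Yang-Mills structure (with constant $c = c_1 + c_2$). Finally, applying Corollary \ref{EQTOT} in the reverse direction yields that $\frak{E}_1 \otimes \frak{E}_2$ is $\omega$-semistable.

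For the statement about the Whitney sum, the argument is entirely analogous: under the extra hypothesis $\mu(\frak{E}_1) = \mu(\frak{E}_2),$ the two approximate Hermitian-Yang-Mills structures share the same constant factor $c,$ so part (2) of Proposition \ref{approximatetensorproduct} applies and produces an approximate Hermitian-Yang-Mills structure on $\frak{E}_1 \oplus \frak{E}_2.$ A final appeal to Corollary \ref{EQTOT} then gives $\omega$-semistability of the direct sum.

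There is no real obstacle in this argument once Corollary \ref{EQTOT} is in hand; the content of the corollary is precisely that the equivalence between semistability and the existence of approximate Hermitian-Yang-Mills structures (proved in every dimension via the Donaldson heat flow) upgrades the tensor-product and direct-sum behaviour of approximate Hermitian-Yang-Mills structures, which is straightforward from the curvature formulas for $\mathcal{R}_{1\otimes 2}$ and $\mathcal{R}_{1\oplus 2},$ into the corresponding purely algebraic statement about semistability. The only point to keep in mind is that in the direct-sum case the hypothesis $\mu(\frak{E}_1) = \mu(\frak{E}_2)$ is indispensable, since otherwise the two constants $c_1$ and $c_2$ differ and one cannot form a common approximate Hermitian-Yang-Mills structure on the Whitney sum.
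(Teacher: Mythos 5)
Your proposal is correct and follows essentially the same route as the paper: the paper likewise passes from semistability to approximate Hermitian-Yang-Mills structures via the main equivalence, applies Proposition \ref{approximatetensorproduct} to the tensor product (respectively the Whitney sum under the slope hypothesis), and then returns to semistability via the equivalence. Nothing further is needed.
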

\begin{proof}
 \begin{enumerate}
  \item Let $\frak{E}_1$ and $\frak{E}_2$ be $\omega\text{-semistable}$ Higgs bundles over $X.$ From Theorem 
        \ref{MAINTHEOREM}
        $\frak{E}_1$ and $\frak{E}_2$ admit approximate Hermitian-Yang-Mills metric structures, so does 
        their tensor product $\frak{E}_1\otimes\frak{E}_2$ (Proposition \ref{approximatetensorproduct}).
        Hence, using the above corollary we conclude that
        $\frak{E}_1\otimes\frak{E}_2$ is $\omega\text{-semistable}.$
  \item It is similar to $(1),$ using the second part of Proposition \ref{approximatetensorproduct}.
 \end{enumerate}
\end{proof}

\begin{cor}
 If $\frak{E}$ is $\omega\text{-semistable},$ then so is the tensor product bundle
 $\frak{E}^{\otimes p}\otimes\frak{E}^{\ast\otimes q}$ and the exterior product bundle 
 $\bigwedge^{p}\frak{E}$ whenever $0\leq p\leq r=\mathrm{rk}(\frak{E}).$
\end{cor}

\end{document}